\DeclareOldFontCommand{\rm}{\normalfont\rmfamily}{\mathrm}
\DeclareOldFontCommand{\sf}{\normalfont\sffamily}{\mathsf}
\DeclareOldFontCommand{\tt}{\normalfont\ttfamily}{\mathtt}
\DeclareOldFontCommand{\bf}{\normalfont\bfseries}{\mathbf}
\DeclareOldFontCommand{\it}{\normalfont\itshape}{\mathit}
\DeclareOldFontCommand{\sl}{\normalfont\slshape}{\@nomath\sl}
\DeclareOldFontCommand{\sc}{\normalfont\scshape}{\@nomath\sc}
\DeclareRobustCommand*\cal{\@fontswitch\relax\mathcal}
\DeclareRobustCommand*\mit{\@fontswitch\relax\mathnormal}
\definecolor{red3}{rgb}{0.9,0.15,0.0}
\definecolor{dgreen}{rgb}{0.00 0.35 0.00}
\definecolor{labelkey}{HTML}{0455BF}
\definecolor{refkey}{rgb}{0,0.6,0.0}
\definecolor{dblue}{HTML}{0455BF}
\definecolor{dgreen}{HTML}{02724A}
\definecolor{myellow}{HTML}{D97904}
\definecolor{dred}{HTML}{D90404}
\definecolor{dbrown}{rgb}{0.65 0.40 0.35}
\DeclareFontFamily{U}{BOONDOX-calo}{\skewchar\font=45 }
\DeclareFontShape{U}{BOONDOX-calo}{m}{n}{
  <-> s*[1.05] BOONDOX-r-calo}{}
\DeclareFontShape{U}{BOONDOX-calo}{b}{n}{
  <-> s*[1.05] BOONDOX-b-calo}{}
\DeclareMathAlphabet{\mathcalboondox}{U}{BOONDOX-calo}{m}{n}
\SetMathAlphabet{\mathcalboondox}{bold}{U}{BOONDOX-calo}{b}{n}
\DeclareMathAlphabet{\mathbcalboondox}{U}{BOONDOX-calo}{b}{n}
\newcommand{\MM}{\ensuremath{\mathcalboondox{m}}}
\newcommand{\LL}{\ensuremath{\mathcalboondox{l}}}
\newcommand{\CC}{\ensuremath{\mathcalboondox{c}}}
\renewcommand{\leq}{\ensuremath{\leqslant}}
\renewcommand{\geq}{\ensuremath{\geqslant}}
\newcommand{\Argmin}{\ensuremath{\text{\rm Argmin}\,}}
\newcommand{\Argmind}[2]{\ensuremath{%
\underset{\substack{#1}}{\text{\rm Argmin}}\;\;#2}}
\newcommand{\minmax}[3]{\ensuremath{%
\underset{\substack{#1}}{\text{\rm minimize}}\;\:
\underset{\substack{#2}}{\text{\rm maximize}}\;\;#3}}
\newcommand{\minimize}[2]{\ensuremath{\underset{\substack{{#1}}}%
{\text{\rm minimize}}\;\;#2}}
\newcommand{\Scal}[2]{\bigg\langle{#1}\;\bigg|\:{#2}\bigg\rangle} 
\newcommand{\scal}[2]{{\langle{{#1}\mid{#2}}\rangle}}
\newcommand{\menge}[2]{\big\{{#1}~|~{#2}\big\}} 
\newcommand{\Menge}[2]{\left\{{#1}~\middle|~{#2}\right\}} 
\newcommand{\GGG}{\ensuremath{\boldsymbol{\mathcal{G}}}}
\newcommand{\HHH}{\ensuremath{\boldsymbol{\mathcal{H}}}}
\newcommand{\UUU}{\ensuremath{\boldsymbol{\mathcal{U}}}}
\newcommand{\VVV}{\ensuremath{\boldsymbol{\mathcal{V}}}}
\newcommand{\Qq}{\ensuremath{{\mathsf{Q}}}}
\newcommand{\HH}{\ensuremath{{\mathcal{H}}}}
\newcommand{\VV}{\ensuremath{{\mathcal{V}}}}
\newcommand{\GG}{\ensuremath{{\mathcal{G}}}}
\newcommand{\KK}{\ensuremath{{\mathbb{K}}}}
\newcommand{\WC}{\ensuremath{\mathfrak{W}}}
\newcommand{\NN}{\ensuremath{\mathbb{N}}}
\newcommand{\sad}{\ensuremath{\boldsymbol{\EuScript{S}}}}
\newcommand{\kut}{\ensuremath{\boldsymbol{\EuScript{K}}}}
\newcommand{\MMM}{\ensuremath{\boldsymbol{\EuScript{M}}}}
\newcommand{\TTT}{\ensuremath{\boldsymbol{\EuScript{T}}}}
\newcommand{\proxc}[2]{{#1}{\ensuremath{\raisebox{0.2mm}%
{\mbox{\scriptsize\rotatebox[origin=c]{45}%
{\tiny$\,\square\,$}}}{#2}}}}
\newcommand{\proxcc}[2]{{#1}{\ensuremath{\raisebox{0.2mm}%
{\mbox{\scriptsize\rotatebox[origin=c]{45}%
{\tiny$\,\blacksquare\,$}}}{#2}}}}
\newcommand{\pnabla}[1]{\ensuremath{\nabla_{\!#1}}}
\newcommand{\Sum}{\ensuremath{\displaystyle\sum}}
\newcommand{\emp}{\ensuremath{\varnothing}}
\newcommand{\Id}{\ensuremath{\mathrm{Id}}}
\newcommand{\RR}{\ensuremath{\mathbb{R}}}
\newcommand{\RP}{\ensuremath{\left[0,{+}\infty\right[}}
\newcommand{\BL}{\ensuremath{\EuScript{B}}}
\newcommand{\RPP}{\ensuremath{\left]0,{+}\infty\right[}}
\newcommand{\RX}{\ensuremath{\left]{-}\infty,{+}\infty\right]}}
\newcommand{\RXX}{\ensuremath{\left[{-}\infty,{+}\infty\right]}}
\newcommand{\XXX}{\ensuremath{\boldsymbol{\mathsf{X}}}}
\newcommand{\xx}{\ensuremath{\boldsymbol{\mathsf{x}}}}
\newcommand{\intdom}{\ensuremath{\mathrm{int\,dom}\,}}
\newcommand{\weakly}{\ensuremath{\rightharpoonup}}
\newcommand{\exi}{\ensuremath{\exists\,}}
\newcommand{\pinf}{\ensuremath{{{+}\infty}}}
\newcommand{\minf}{\ensuremath{{{-}\infty}}}
\DeclareMathOperator{\dom}{dom}
\DeclareMathOperator{\prox}{prox}
\DeclareMathOperator{\proj}{proj}
\DeclareMathOperator{\gra}{gra}
\DeclareMathOperator{\inte}{int}
\DeclareMathOperator{\sri}{sri}
\DeclareMathOperator{\reli}{ri}
\DeclareMathOperator{\ran}{ran}
\DeclareMathOperator{\zer}{zer}
\DeclareMathOperator{\Fix}{Fix}
\DeclareMathOperator{\cone}{cone}
\newcommand{\bdry}{\ensuremath{\text{\rm bdry}\,}}
\newcommand{\infconv}{\ensuremath{\mbox{\small$\,\square\,$}}}
\newcommand{\pushfwd}{\ensuremath{\mbox{\Large$\,\triangleright\,$}}}
\newcommand{\zeroun}{\ensuremath{\left]0,1\right[}} 
\newcommand{\rzeroun}{\ensuremath{\left]0,1\right]}}
\newcommand{\moyo}[2]{\prescript{#2}{}{\!#1}}
\def\abstract{\noindent{\bfseries Abstract}. \ignorespaces}
\newtheorem{theorem}{Theorem}[section]
\newtheorem{lemma}[theorem]{Lemma}
\newtheorem{proposition}[theorem]{Proposition}
\newtheorem{assumption}[theorem]{Assumption}
\theoremstyle{plain}{\theorembodyfont{\rmfamily}%
}
\theoremstyle{plain}{\theorembodyfont{\rmfamily}%
\newtheorem{example}[theorem]{Example}}
\theoremstyle{plain}{\theorembodyfont{\rmfamily}%
\newtheorem{remark}[theorem]{Remark}}
\theoremstyle{plain}{\theorembodyfont{\rmfamily}%
\newtheorem{framework}[theorem]{Framework}}
\theoremstyle{plain}{\theorembodyfont{\rmfamily}%
\newtheorem{algorithm}[theorem]{Algorithm}}
\theoremstyle{plain}{\theorembodyfont{\rmfamily}%
}
\theoremstyle{plain}{\theorembodyfont{\rmfamily}%
\newtheorem{definition}[theorem]{Definition}}
\theoremstyle{plain}{\theorembodyfont{\rmfamily}%
\newtheorem{problem}[theorem]{Problem}}
\theoremstyle{plain}{\theorembodyfont{\rmfamily}%
}
\setlist[enumerate]{itemsep=2pt}
\setlist[itemize]{itemsep=2pt}
\numberwithin{equation}{section}
\numberwithin{figure}{section}
\DeclareOldFontCommand{\rm}{\normalfont\rmfamily}{\mathrm}
\newcommand{\email}[1]{\href{mailto:#1}{\nolinkurl{#1}}}
\author{{\large Patrick L. Combettes}\thanks{%
This work was supported by the National Science
Foundation under grant CCF-2211123.}}
\affil{North Carolina State University,
Department of Mathematics\\ 
Raleigh, NC 27695-8205, USA\\
\email{plc@math.ncsu.edu}
}
\begin{document}

\title{\bfseries\LARGE The geometry of monotone operator 
splitting methods}

\date{~}

\maketitle

\begin{abstract}
We propose a geometric framework to describe and analyze a wide
array of operator splitting methods for solving monotone inclusion
problems. The initial inclusion problem, which typically involves
several operators combined through monotonicity-preserving
operations, is seldom solvable in its original form. We embed it in
an auxiliary space, where it is associated with a surrogate 
monotone inclusion problem with a more tractable structure and
which allows for easy recovery of solutions to the initial problem.
The surrogate problem is solved by successive projections onto
half-spaces containing its solution set. The outer approximation
half-spaces are constructed by using the individual operators
present in the model separately. This geometric framework is shown
to encompass traditional methods as well as state-of-the-art
asynchronous block-iterative algorithms, and its flexible structure
provides a pattern to design new ones. 
\end{abstract}
\tableofcontents
\newpage

\vskip 12mm

\section{Introduction} 
\label{sec:1} 
Throughout, $\HH$ is a real Hilbert space with scalar product
$\scal{\cdot}{\cdot}$ and $2^{\HH}$ stands for the power set of
$\HH$. Our main focus is on the following monotone inclusion
problem.

\begin{problem}
\label{prob:1}
Let $M\colon\HH\to 2^{\HH}$ be a monotone operator, that is, 
\begin{equation}
\label{e:0}
(\forall x\in\HH)(\forall y\in\HH)(\forall x^*\in Mx)
(\forall y^*\in My)\quad\scal{x-y}{x^*-y^*}\geq 0.
\end{equation}
The task is to find $x\in\HH$ such that $0\in Mx$.
\end{problem}

Monotone inclusion problems are intimately linked to the birth of
nonlinear analysis. They first appeared as a powerful models to
establish existence, uniqueness, and stability results for various
nonlinear problems \cite{Brow68,Ghiz69,Kach60,Zara60,Zara71}. Over
the past six decades, monotone inclusion models have penetrated
almost all areas of mathematics and its applications. Nowadays,
Problem~\ref{prob:1} models a broad range of equilibria in areas
such as 
dynamical systems \cite{Adly17},
ill-posed problems \cite{Albe06},
domain decomposition methods \cite{Aldu23,Nume16,Atto11},
circuit theory \cite{Ande76,Cha23b,Chaf23,Sepu23,Goel17},
machine learning \cite{Argy12,MaPa18,Jena11,Vait18},
evolution equations \cite{Sico10,Brez73,Show97},
partial differential equations 
\cite{Barb10,Brez98,Clas17,Ghou09,Pasc78,Show97,Zei90B},
signal processing \cite{Beck10,Banf11,Smms05,Pott93},
image processing \cite{Bedn18,Cham16,Siim22,Glow16,Pesq21},
game theory 
\cite{Belg21,Boer21,Bric13,Joca22,Cohe87,Facc07,Facc03,Gaut21},
network flow problems \cite{Bert98,Buim22,Rock84,Rock95},
equilibrium theory \cite{Bric12,Jnca05,Moud99},
mean-field games \cite{Bri23b,Bri18b},
control theory \cite{Brog07,Brog20,Caml16,Dol79a,Sing22},
data science \cite{Chan17,Sign21,Wrig22},
optimization \cite{Bord18,Ecks92,Gols96,Tsen90,Tsen91},
statistics \cite{Ejst20,Bien21},
neural networks \cite{Svva20,Wins20,Yich20},
traffic equilibrium \cite{Dafe80,Fuku96},
systems theory \cite{Deso75,Dol79b},
mechanics \cite{Glow83,Merc80},
optimal transportation \cite{Papa14},
and minimax theory \cite{Roc70b}.

Early numerical solution methods to solve Problem~\ref{prob:1}
can be found in \cite{Anti76,Bruc73,Bruc74,Korp76,Lion67,%
Petr66,Sibo70,Vain60,Vain61,Zara60,Zara64}.
These methods are of the explicit Euler type, meaning that, at
iteration $n$, the update $x_{n+1}$ is determined by finding a
point in $Mx_n$. An alternative method, which first appeared
in \cite{Lieu68} and then in more detail in 
\cite{Roc76a}, is the proximal point algorithm, where the
update is obtained through the implicit relation 
$x_n-x_{n+1}\in Mx_{n+1}$. Such approaches have limited
potential since they can be directly implemented only in specific
situations. For instance, the Euler step methods of
\cite{Bruc73,Bruc74,Bruc75} impose certain properties on $M$ and
asymptotically vanishing step sizes, which is detrimental to
numerical stability and speed of convergence. On the other hand,
the proximal point algorithm requires explicit expressions for the
resolvent of $M$, which is seldom possible. In most problems,
however, $M$ has a complex structure and it is typically expressed
in terms of monotonicity-preserving operations involving simpler
operators. The principle governing \emph{splitting methods} is to
devise algorithms in which each of the elementary operators arising
in the decomposition of $M$ are used individually, hence breaking
up Problem~\ref{prob:1} into tasks that are more manageable.

The first monotone operator splitting methods arose in the late
1970s and were motivated by applications in mechanics and partial
differential equations \cite{Glow83,Glow89,Merc80}. 
The three main algorithms that dominated the field 
were designed for problems in which 
\begin{equation}
M=A+B, 
\end{equation}
where $A\colon\HH\to 2^{\HH}$ and $B\colon\HH\to 2^{\HH}$ are
maximally monotone: 
the forward-backward method \cite{Merc79},
the Douglas--Rachford method \cite{Lion79}, and Tseng's
forward-backward-forward method \cite{Tsen00}. 
In recent years, the field of monotone operator splitting
algorithms has benefited from a new impetus, fueled by the emerging
application areas mentioned above and their demand for solving
efficiently increasingly complex large-dimensional problems. Thus,
duality techniques have arisen to address composite models of the
form 
\begin{equation}
M=A+L^*\circ B\circ L, 
\end{equation}
where $L$ is a linear operator from $\HH$
to a Hilbert space $\GG$ and $A\colon\HH\to 2^{\HH}$ and
$B\colon\GG\to 2^{\GG}$ are maximally
monotone \cite{Siop11}. These techniques have been
further developed to devise splitting algorithms for the more
structured model \cite{Botr14,Svva12,Bang13}
\begin{equation}
\label{e:2012}
M=A+\sum_{k=1}^pL_k^*\circ\bigl(B_k^{-1}+D_k^{-1}\bigl)^{-1}
\circ L_k+C,
\end{equation}
where each linear operator $L_k$ maps $\HH$ to a Hilbert space
$\GG_k$, and the operators $A\colon\HH\to 2^{\HH}$,
$B_k\colon\GG_k\to 2^{\GG_k}$, $D_k\colon\GG_k\to 2^{\GG_k}$, and
$C\colon\HH\to\HH$ are maximally monotone. Splitting algorithms for
models which are more finely structured than \eqref{e:2012} have
also been proposed as well as multivariate versions that capture
coupled systems of monotone inclusions; see \cite{Moor22} and the
references therein. On a different front, block-iterative
algorithms, which allow for the activation of only a subgroup of
operators present in the model at a given iteration, have also been
developed \cite{Buin22,Moor22,MaPr18,John22}. At the same time,
a multitude of splitting algorithms tailored to specific models
have been elaborated. For instance, if $A\colon\HH\to 2^{\HH}$ and
$B\colon\HH\to 2^{\HH}$ are maximally monotone and
$C\colon\HH\to\HH$ is cocoercive, splitting algorithms have been
proposed in \cite{Davi17,Ragu19} for the decomposition $M=A+B+C$
and in particular in \cite{Bric18} if $B\colon\HH\to\HH$ is
Lipschitzian and in \cite{Lata17} if $B\colon\HH\to\HH$ is linear
and bounded. 

Given the abundance of activity in monotone operator splitting
techniques, it is important to identify general structures and
principles, as well as possible bonds between algorithm design
methodologies in order not only to simplify and clarify the state
of the art, but also to facilitate the developments of new methods
in the future. From the outset, fixed point theory has been a tool
of choice to achieve this goal. For instance, it has played an
important role in the analysis of the proximal point algorithm
\cite{Krya73,Mart72,Roc76a}. In \cite{Opti04}, fixed point
iterations of averaged operators were shown to provide a
convenient framework to investigate the asymptotic behavior of
classical splitting algorithms such as the forward-backward,
backward-backward, Douglas--Rachford, and Peaceman--Rachford
algorithms. Further applications of averaged operator iterations
to design and analyze splitting methods can be found in
\cite{Bric23,Cham16,Siop17,Sign21,Yama15,Cond23,Davi17,%
Ragu19,Ragu13,Ragu15,Ryut20,Xue23a}. Fixed point modeling is also
a central algorithmic development tool in recent works such as
\cite{Arag23,Bric18,Mali23}. In spite of these achievements, fixed
point methods seem less well suited to capture in simple terms the
most flexible splitting methods such as the block-iterative
asynchronous methods of \cite{Buin22,Moor22,MaPr18,John22}, which
were built using geometric arguments. The purpose of the present
paper is to provide a standardized pattern for building and
analyzing splitting methods around the following geometric
framework. It comprises an embedding step, where the initial
Problem~\ref{prob:1} is replaced by a more tractable surrogate
inclusion problem in an auxiliary space $\XXX$ from which the
solutions to the original problem can be easily recovered. The
second step is an iterative process in which the current iterate
is projected onto a closed half-space that serves as an outer
approximation to the surrogate solution set.

\begin{framework}
\label{f:1}
Geometric algorithmic template for solving Problem~\ref{prob:1}.
\begin{enumerate}
\item
{\bfseries Embedding:}
Find a real Hilbert space $\XXX$, a maximally monotone operator 
$\MMM\colon\XXX\to 2^{\XXX}$, and an operator
$\TTT\colon\XXX\to\HH$ such that $\TTT(\zer\MMM)\subset\zer M$. We
call $(\XXX,\MMM,\TTT)$ an \emph{embedding} of
Problem~\ref{prob:1}.
\item
{\bfseries Iterations:}
\begin{equation}
\label{e:99}
\begin{array}{l}
\text{for}\;n=0,1,\ldots\\
\left\lfloor
\begin{array}{l}
\boldsymbol{\mathsf{H}}_n\;
\text{is a closed half-space of $\XXX$ such that}\;
\zer\MMM\subset\boldsymbol{\mathsf{H}}_n\\
\xx_{n+1}\;\text{is a relaxed projection of $\xx_n$ onto\;}
{\boldsymbol{\mathsf{H}}_n}.
\end{array}
\right.\\
\end{array}
\end{equation}
\end{enumerate}
\end{framework}

In optimization, the use of half-spaces as outer approximations to
the solution set goes back to the cutting plane methods of
\cite{Che59b,Kell60,Levi66}; see also \cite{Laur70,Vein67,Zang69}.
In monotone inclusion problems, modeling iterations as successive
projections onto separating half-spaces occurs in several
papers \cite{Moor01,Eoop01,Sol99b,Sol99c}. We aim at showing that
Framework~\ref{f:1} is sufficiently broad and flexible to encompass
a wide array of existing methods while providing a template to
create new ones. It will allow us to derive in a unified fashion
simple proofs of existing convergence results. It will also make it
possible to establish seamlessly strongly convergent variants of
these algorithms. The proofs we provide are new, and so are some of
the results.

The remainder of the paper is organized as follows. To make our
presentation self-contained, Section~\ref{sec:2} covers the
necessary mathematical background on monotone operator theory. It
also contains various examples of maximally monotone operators and
a detailed history of the field. In Section~\ref{sec:3}, we present
several models for decomposing $M$ in Problem~\ref{prob:1}. These
decompositions will generate the embeddings required in
Framework~\ref{f:1} and form the backbone of the splitting methods
discussed in the paper. The geometric principles underlying our
approach are presented in Section~\ref{sec:4}, where the main
convergence theorems are laid out. In
Section~\ref{sec:ppa}, we study the proximal point algorithm and
explore several of its facets. In Sections~\ref{sec:dr},
\ref{sec:fbf}, and \ref{sec:fb}, we study, respectively, the
Douglas--Rachford, forward-backward-forward, and forward-backward
methods through the lens of Framework~\ref{f:1} and capture a broad
range of algorithms and applications by embedding them in bigger
spaces. Block-iterative Kuhn--Tucker and saddle projective
splitting methods are addressed in Sections~\ref{sec:ps} and
\ref{sec:sad}, respectively. Finally, several extensions and
variants of the results are discussed in Section~\ref{sec:var}.

\section{Monotone operators}
\label{sec:2}

\subsection{Notation and basic definitions}

The material of this section can be found in \cite{Livre1}.

\subsubsection{General notation} 
$\HH$ and $\GG$ are real Hilbert spaces, $\BL(\HH,\GG)$ is the
space of bounded linear operators from $\HH$ to $\GG$,
$\BL(\HH)=\BL(\HH,\HH)$, and $\HH\oplus\GG$ denotes the Hilbert
direct sum of $\HH$ and $\GG$. The identity operator of 
$\HH$ is denoted by $\Id_{\HH}$, its scalar product by
$\scal{\cdot}{\cdot}_{\HH}$, and the associated norm by
$\|\cdot\|_{\HH}$ (the subscripts will be omitted when the context
is clear). The weak convergence of a sequence $(x_n)_{n\in\NN}$ to
$x$ is denoted by $x_n\weakly x$, whereas $x_n\to x$ denotes its
strong convergence; the set of weak sequential cluster points of 
$(x_n)_{n\in\NN}$ is denoted by $\WC(x_n)_{n\in\NN}$.
\subsubsection{Sets}
\label{sec:sets}

Let $C$ be a subset of $\HH$. The interior of $C$ is $\inte C$,
the \emph{indicator function} of $C$ is 
\begin{equation}
\label{e:iota}
\iota_C\colon\HH\to\RX\colon x\mapsto 
\begin{cases}
0,&\text{if}\;\:x\in C;\\
\pinf,&\text{otherwise},
\end{cases}
\end{equation}
the \emph{support function} of $C$ is 
\begin{equation}
\label{e:si}
\sigma_C\colon\HH\to\RXX\colon x^*\mapsto
\sup_{x\in C}\scal{x}{x^*},
\end{equation}
and the \emph{distance function} to $C$ is 
\begin{equation}
d_C\colon\HH\to\RX\colon x\mapsto\inf_{y\in C}\|x-y\|.
\end{equation}
Suppose that $C$ is convex. We denote by $\cone C$ the smallest
cone that contains $C$ and by $\sri C$ the \emph{strong relative
interior} of $C$, i.e., 
\begin{equation}
\label{e:sri}
\sri C=\menge{x\in C}{\cone(-x+C)\;\text{is a closed vector 
subspace of}\;\HH}. 
\end{equation}
If $\HH$ is finite-dimensional, $\sri C$ coincides with the 
\emph{relative interior} $\reli C$ of $C$, i.e., 
the interior of $C$ relative to the smallest affine subspace of
$\HH$ containing $C$.
Suppose that $C$ is nonempty, closed, and convex. For every
$x\in\HH$, 
\begin{equation}
\label{e:pU1}
\proj_C\hspace{.2mm}x\;
\text{is the unique point in $C$ such that}\;
d_C(x)=\|x-\proj_Cx\|. 
\end{equation}
This process defines the \emph{projection operator}
$\proj_C\colon\HH\to\HH$ of $C$. The simple case of a closed
half-space is central to our approach.

\begin{example}[{\protect{\cite[Example~29.20]{Livre1}}}]
\label{ex:H}
Let $u^*\in\HH$, let $\eta\in\RR$, and suppose that
$H=\menge{z\in\HH}{\scal{z}{u^*}\leq\eta}\neq\emp$. 
Let $x\in\HH$ and set
\begin{equation}
d=
\begin{cases}
\dfrac{\scal{x}{u^*}-\eta}
{\|u^*\|^2}u^*,&\text{if}\:\:\scal{x}{u^*}>\eta;\\
0,&\text{otherwise.}\\
\end{cases}\\
\end{equation}
Then $\proj_Hx=x-d$.
\end{example}

\subsubsection{Functions}
The set of minimizers of a function $f\colon\HH\to\RX$ is denoted
by $\Argmin f$ and, if it is a singleton, its unique element is
denoted by $\text{argmin}_{x\in\HH}f(x)$.
The \emph{infimal convolution} of $f\colon\HH\to\RX$ and 
$h\colon\HH\to\RX$ is
\begin{equation}
\label{e:infconv1}
f\infconv h\colon\HH\to\RXX\colon x\mapsto
\inf_{y\in\HH}\big(f(y)+h(x-y)\big).
\end{equation}
We denote by $\Gamma_0(\HH)$ the class of functions 
$f\colon\HH\to\RX$ which are lower semicontinuous, convex, and 
such that $\dom f=\menge{x\in\HH}{f(x)<\pinf}\neq\emp$. Let
$f\in\Gamma_0(\HH)$. The \emph{conjugate} of $f$ is 
\begin{equation}
\label{e:conj}
\Gamma_0(\HH)\ni f^*\colon x^*\mapsto
\sup_{x\in\HH}\bigl(\scal{x}{x^*}-f(x)\bigr).
\end{equation}
For every $x\in\HH$,
\begin{equation}
\label{e:pU2}
\prox_f\hspace{.2mm}x\;\text{is the unique minimizer over $\HH$ of}
\;y\mapsto f(y)+\dfrac{1}{2}\|x-y\|^2.
\end{equation}
This process defines the \emph{proximity operator}
$\prox_f\colon\HH\to\HH$ of $f$. We have
\begin{equation}
\label{e:jjm7}
(\forall\gamma\in\RPP)(\forall x\in\HH)\quad 
x=\prox_{\gamma f}x+\gamma\,\prox_{f^*/\gamma}\big(x/\gamma\big).
\end{equation}
The \emph{Moreau envelope} of $f$ of parameter 
$\gamma\in\RPP$ is 
\begin{equation}
\label{e:jjm3}
\moyo{f}{\gamma}=f\infconv\bigg(\frac{1}{2\gamma}\|\cdot\|^2\bigg).
\end{equation}

\subsubsection{Set-valued operators}
Let $M\colon\HH\to 2^{\HH}$. The \emph{graph} of $M$ is
\begin{equation}
\label{e:d-1}
\gra M=\menge{(x,x^*)\in\HH\times\HH}{x^*\in Mx}.
\end{equation}
The \emph{inverse} of $M$ is the operator 
$M^{-1}\colon\HH\to 2^{\HH}$ defined through the relation
\begin{equation}
\bigl(\forall(x,x^*)\in\HH\times\HH\bigr)\quad x^*\in
Mx\quad\Leftrightarrow\quad x\in M^{-1}x^*.
\end{equation}
Thus,
\begin{equation}
\label{e:d-2}
\gra M^{-1}=\menge{(x^*,x)\in\HH\times\HH}{(x,x^*)\in\gra M}.
\end{equation}
The set of \emph{fixed points} of $M$ is
\begin{equation}
\label{e:f-2}
\Fix M=\menge{x\in\HH}{x\in Mx},
\end{equation}
the set of \emph{zeros} of $M$ is
\begin{equation}
\label{e:d6}
\zer M=M^{-1}0=\menge{x\in\HH}{0\in Mx},
\end{equation}
and the \emph{resolvent} of $M$ is the operator
\begin{equation}
\label{e:d7}
J_M=(\Id+M)^{-1}.
\end{equation}
In other words, 
\begin{equation}
\label{e:ee14}
(\forall x\in\HH)(\forall p\in\HH)\quad
p\in J_Mx\;\Leftrightarrow\;(p,x-p)\in\gra M
\end{equation}
and therefore
\begin{equation}
\label{e:z}
\zer M=\Fix J_M.
\end{equation}
We have
\begin{equation}
\label{e:jjm8}
(\forall\gamma\in\RPP)(\forall x\in\HH)\quad 
x-J_{\gamma M}x=\gamma\,J_{M^{-1}/\gamma}\big(x/\gamma\big).
\end{equation}
The \emph{Yosida approximation} of index $\gamma\in\RPP$ of $M$ is
\begin{equation}
\label{e:yosi1}
\moyo{M}{\gamma}=\frac{\Id-J_{\gamma M}}{\gamma}=
\bigl(\gamma\Id+M^{-1}\bigr)^{-1}
=\bigl(J_{\gamma^{-1}M^{-1}}\bigr)\circ\gamma^{-1}\Id
\end{equation}
and it satisfies
\begin{equation}
\label{e:yosi2}
\zer M=\zer\moyo{M}{\gamma}.
\end{equation}
The \emph{domain} of $M$ is
\begin{equation}
\label{e:d-3}
\dom M=\menge{x\in\HH}{Mx\neq\emp}
\end{equation}
and the \emph{range} of $M$ is
\begin{equation}
\label{e:d-4}
\ran M=\bigcup_{x\in\dom M}Mx
=\menge{x^*\in\HH}{(\exi x\in\dom M)\;x^*\in Mx}.
\end{equation}
We have 
\begin{equation}
\label{e:d5}
\dom M^{-1}=\ran M\;\;\text{and}\;\;\ran M^{-1}=\dom M.
\end{equation}
If, for some $x\in\HH$, $Mx$ is a singleton, we let $Mx$ denote its
single element. We say that $M$ is \emph{injective} if 
$(\forall x\in\HH)(\forall y\in\HH)$
$Mx\cap My\neq\emp$ $\Rightarrow$ $x=y$.
Finally, given $A\colon\HH\to 2^{\HH}$, $B\colon\GG\to 2^{\GG}$, 
$L\in\BL(\HH,\GG)$, and $\alpha\in\RR$, we set
\begin{equation}
\label{e:d8}
\begin{array}{ccll}
A+\alpha L^*\circ B\circ L\colon
&\!\!\HH&\!\!\to&\!\!2^{\HH}\\
&\!\!x&\!\!\mapsto&\!\!
\menge{x^*+\alpha L^*y^*}{x^*\in Ax\;\text{and}\;y^*\in B(Lx)}.
\end{array}
\end{equation}

\subsubsection{Monotone operators}
Let $M\colon\HH\to 2^{\HH}$. Then $M$ is \emph{monotone} if
\begin{equation}
\label{e:m1}
\big(\forall (x,x^*)\in\gra M\big)
\big(\forall (y,y^*)\in\gra M\big)
\quad\scal{x-y}{x^*-y^*}\geq 0
\end{equation}
and \emph{maximally monotone} if, further, there exists no
monotone operator $A\colon\HH\to 2^{\HH}$ such that 
$\gra M\subset\gra A\neq\gra M$, that is 
(see Figure~\ref{fig:6}),
\begin{multline} 
\label{e:m2}
\hskip -1mm\big(\forall (x,x^*)\in\HH\times\HH\big)\\
\big[\:(x,x^*)\in\gra M\;\Leftrightarrow\;
\bigl(\forall (y,y^*)\in\gra M\bigr)\;\;
\scal{x-y}{x^*-y^*}\geq 0\:\big].
\end{multline}
We have
\begin{equation}
\label{e:zero}
M\;\text{maximally monotone}\;\Rightarrow\;\zer M\;
\text{is closed and convex}.
\end{equation}
Let $\beta\in\RPP$. Then $M$ is $\beta$-\emph{strongly monotone} if
$M-\beta\Id$ is monotone, that is, 
\begin{equation}
\label{e:strmonotone}
\bigl(\forall (x,x^*)\in\gra M\bigr)
\bigl(\forall (y,y^*)\in\gra M\bigr)\quad
\scal{x-y}{x^*-y^*}\geq\beta\|x-y\|^2.
\end{equation}
Now let $D$ be a nonempty subset of $\HH$, let $\alpha\in\RPP$, 
and let $M\colon D\to\HH$. Then $M$ is \emph{nonexpansive} if
\begin{equation}
\label{e:nex}
(\forall x\in D)(\forall y\in D)\quad\|Mx-My\|\leq\|x-y\|,
\end{equation}
$\alpha$-\emph{averaged} if $\alpha\leq 1$ and 
$\Id+\alpha^{-1}(M-\Id)$ is nonexpansive,
$\alpha$-\emph{cocoercive} if $M^{-1}$ is $\alpha$-strongly
monotone, that is, 
\begin{equation}
\label{e:coco}
(\forall x\in D)(\forall y\in D)\quad
\scal{x-y}{Mx-My}\geq\alpha\|Mx-My\|^2,
\end{equation}
and \emph{firmly nonexpansive} if it is $1$-cocoercive.
Alternatively, 
\begin{equation}
\label{e:ne}
M\;\text{is firmly nonexpansive}\;\Leftrightarrow\;
2M-\Id\;\text{is nonexpansive}.
\end{equation}

\begin{figure}
\begin{center}
\includegraphics[width=110mm]{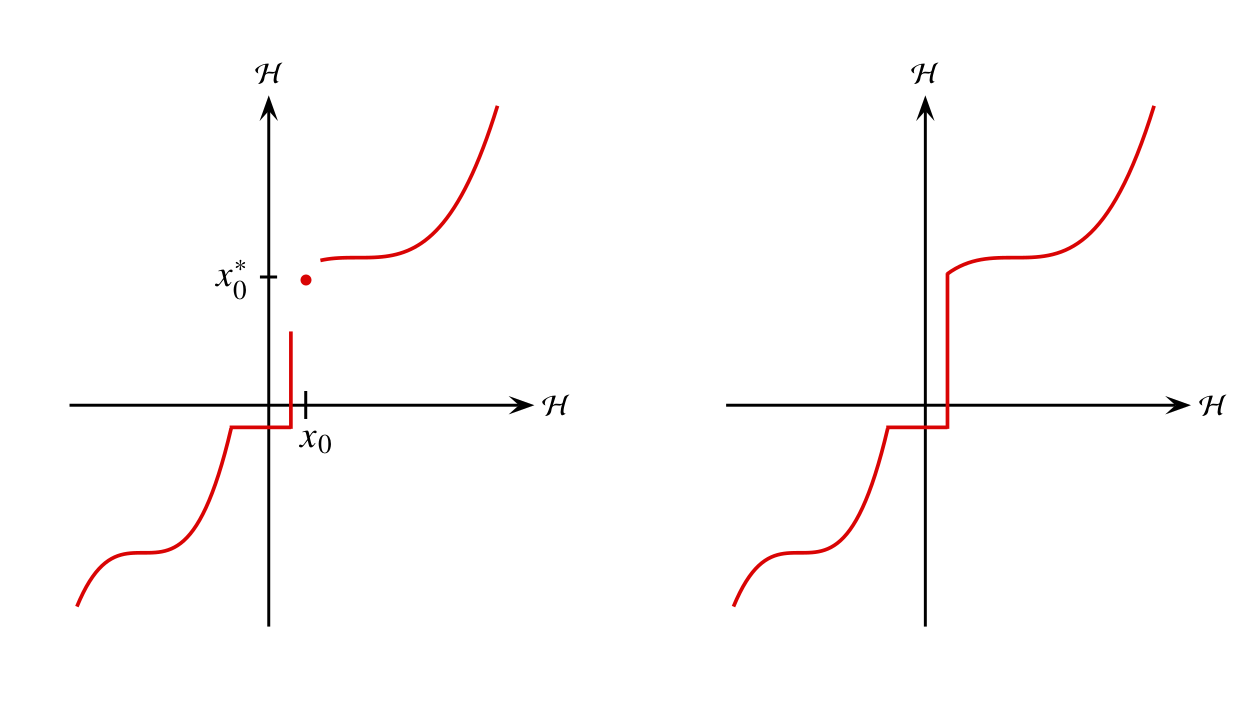}
\end{center}
\caption{Left: Graph of a monotone, but not maximally monotone, 
operator: the point $(x_0,x^*_0)$ can be added to the graph and the
resulting graph remains monotone. Right: Graph of a maximally
monotone operator: adding any point to the graph does not preserve
its monotonicity.
}
\label{fig:6}
\end{figure}

The following result is known as the Baillon--Haddad theorem.

\begin{lemma}[{\protect{\cite[Corollaire~10]{Bail77}}}]
\label{l:bh}
Let $\alpha\in\RPP$ and let $f\colon\HH\to\RR$ be convex, 
Fr\'echet differentiable, and such that $\nabla f$ is
$1/\alpha$-Lipschitzian. Then $\nabla f$ is $\alpha$-cocoercive.
\end{lemma}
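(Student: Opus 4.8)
The plan is to set $\beta=1/\alpha$, so that the hypothesis reads ``$\nabla f$ is $\beta$-Lipschitzian'' and the target is the cocoercivity estimate $\scal{x-y}{\nabla f(x)-\nabla f(y)}\geq(1/\beta)\|\nabla f(x)-\nabla f(y)\|^2$ for all $x,y\in\HH$. The backbone of the argument is the classical descent inequality. Since $\nabla f$ is $\beta$-Lipschitzian, applying the fundamental theorem of calculus to $t\mapsto f(x+t(y-x))$ and estimating the integrand via the Lipschitz bound yields $f(y)\leq f(x)+\scal{y-x}{\nabla f(x)}+(\beta/2)\|y-x\|^2$ for all $x,y\in\HH$. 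This upper bound, coming purely from Lipschitz continuity, will be paired with convexity in the next step.

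Next I would exploit convexity through a shifting trick. Fixing $z\in\HH$, I set $\phi_z=f-\scal{\cdot}{\nabla f(z)}$. Then $\phi_z$ is convex and differentiable with $\nabla\phi_z=\nabla f-\nabla f(z)$, which is again $\beta$-Lipschitzian, and $\nabla\phi_z(z)=0$; convexity then forces $z$ to be a global minimizer of $\phi_z$. Applying the descent inequality to $\phi_z$ at an arbitrary $x\in\HH$ and the point $x-\beta^{-1}\nabla\phi_z(x)$ gives $\phi_z\bigl(x-\beta^{-1}\nabla\phi_z(x)\bigr)\leq\phi_z(x)-(2\beta)^{-1}\|\nabla\phi_z(x)\|^2$, and bounding the left-hand side below by $\phi_z(z)=\min\phi_z$ produces, after unwinding the definition of $\phi_z$, the key estimate
\[
f(x)-f(z)-\scal{x-z}{\nabla f(z)}\geq\frac{1}{2\beta}\|\nabla f(x)-\nabla f(z)\|^2.
\]

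The final step is symmetrization: interchanging $x$ and $z$ yields the companion inequality with the roles reversed, and adding the two collapses all the function-value terms, leaving precisely $\scal{x-z}{\nabla f(x)-\nabla f(z)}\geq(1/\beta)\|\nabla f(x)-\nabla f(z)\|^2$, that is, the asserted $\alpha$-cocoercivity. I expect the main obstacle to be the middle step: verifying that the $\beta^{-1}$-scaled gradient step from $x$ cannot undershoot the minimal value of $\phi_z$, which is exactly where the Lipschitz upper bound and the convexity-driven minimality of $z$ must be combined correctly. Once the key estimate is secured, the symmetrization is purely algebraic and requires no further structure.
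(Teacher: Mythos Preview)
Your argument is correct and is precisely the classical proof of the Baillon--Haddad theorem: descent inequality from the Lipschitz gradient, the shifted function $\phi_z=f-\scal{\cdot}{\nabla f(z)}$ minimized at $z$ by convexity, optimization of the descent bound at $x-\beta^{-1}\nabla\phi_z(x)$, and symmetrization. Each step is sound, including the one you flagged as a potential obstacle.

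As for comparison: the paper does not supply a proof of this lemma. It states the result and cites the original source \cite[Corollaire~10]{Bail77}, treating it as background. So there is nothing in the paper to compare your proof against; you have simply filled in the omitted argument with the standard one.
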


\subsection{History}

Monotonicity goes back to classical calculus and the notion of an
increasing real-valued function defined on an interval
$D\subset\RR$, i.e., a function $f\colon D\to\RR$ that satisfies 
\begin{equation}
\label{e:d1}
(\forall x\in D)(\forall y\in D)\quad
\big(x-y\big)\big(f(x)-f(y)\big)\geq 0.
\end{equation}
The special properties enjoyed by such functions have long been 
recognized; see for instance \cite{Darb75,Frod29,Hahn21}. The
monotonicity condition \eqref{e:d1} is also tied to the infancy of
the theory of convex functions. Thus, it was shown in \cite{Jens06}
that, if $D$ is open and $g\colon D\to\RR$ is a twice
differentiable function with derivative $f$, then \eqref{e:d1}
implies that $g$ is convex. On the numerical side, \eqref{e:d1} is
an important property in connection with solving iteratively the 
root finding problem \cite{Papa13}
\begin{equation}
\label{e:d2}
\text{find}\;\;x\in D\;\;\text{such that}\;\;f(x)=0.
\end{equation}
Monotone operators on $\RR$ also appeared in nonlinear circuit
theory in the 1940s in the form of quasi-linear resistors
\cite{Duff46,Duff47,Duff48}. A quasi-linear resistor is a two-pole
circuit element characterized by the property that the current
going through it increases smoothly with the voltage across it. In
other words, the transformation underlying its current-voltage
characteristic is differentiable and increasing. Dipoles with
monotonic characteristics were further investigated in
\cite{Mill51}. To study networks involving a broader range of
devices, this concept was extended by Minty in \cite{Mint60,Mint61}
to maximally monotone set-valued transformations on $\RR$ (see
Figure~\ref{fig:10} and \cite{Cede62} for examples). Interestingly,
as will be discussed shortly, Minty turned out to be one of the
founders of monotone operator theory. For further relevant early
work on the connections between monotone operators and network
theory, see \cite{Berg62,Deso74} and, for more abstract 
ramifications, see \cite{Dol79b,Rock84}.

\begin{figure}
\begin{center}
\includegraphics[width=110mm]{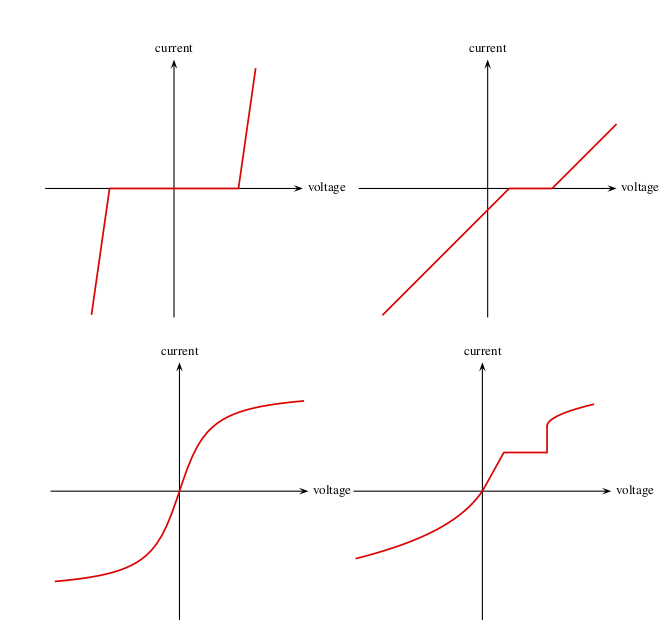}
\caption{Current-voltage characteristics of quasi-linear resistors
as monotone operators from $\RR$ to $2^\RR$.
Top left: breakdown diodes in series \cite{Reic61}.
Top right: breakdown diode and resistance in series \cite{Reic61}.
Bottom left: anode-dynode beam-deflection tube \cite{Reic61}.
Bottom right: the maximally monotone current-voltage characteristic
of \cite{Mint61}.}
\label{fig:10}
\end{center}
\end{figure}

Another precursor of monotonicity is found in linear functional
analysis, where a linear operator $M\colon\HH\supset D\to\HH$ is
declared accretive if \cite{Kato80} 
\begin{equation}
\label{e:po}
(\forall x\in D)\quad\scal{x}{Mx}\geq 0.
\end{equation}
In this context, the notion of a maximally accretive operator was
introduced in \cite{Phil59}. Accretive operators are also central
to passive linear network theory \cite{Belt72,Zame68}. One of the
first instances of \eqref{e:po} in electrical networks is the
current-voltage transformation of the four-pole circuit element
known as an ideal gyrator \cite{Tell48}.

The above notions of increasing functions and positive operators
can be brought together by considering an operator
$M\colon\HH\supset D\to\HH$ such that
\begin{equation}
\label{e:d4}
(\forall x\in D)(\forall x\in D)\quad\scal{x-y}{Mx-My}\geq 0.
\end{equation}
Instances of \eqref{e:d4} appear implicitly in \cite{Golo35} and,
more explicitly, in \cite{Vain56,Vai59a} in connection with the
existence of solutions to Hammerstein integral equations; see also
\cite{Golo36} for more general types of equations. Another
instance, which corresponds to what is now called strict
monotonicity, appears in \cite{Buck56}, where $\HH$ is the standard
Euclidean space. The systematic study of operators satisfying
\eqref{e:d4} started in 1960 an opened an important new chapter of
nonlinear functional analysis. Three independent papers submitted
that year are associated with the birth of monotone operator
theory.
\begin{itemize}
\item
In an article submitted in February 1960, Ka{\v{c}}urovski{\u\i}
\cite{Kach60} called \emph{monotone} an operator that satisfies
\eqref{e:d4}. This paper concerned the monotonicity of the gradient
of a differentiable convex function (see also \cite{Vai59b}) and
the existence of solutions to certain nonlinear equations. It also
introduced strongly monotone operators. 
\item
In a technical report completed in June 1960, Zarantonello called
\eqref{e:d4} an (isotonically) monotonicity property and discussed
supra-unitary (in modern language, strongly monotone) operators. In
connection with the solution of nonlinear equations, an important
result of \cite{Zara60} is that, if $M\colon\HH\to\HH$ is monotone
and Lipschitzian, then $\Id+M$ is surjective.
\item
In an article submitted in December 1960, Minty \cite{Mint62} also
called $M\colon D\to\HH$ monotone if it satisfies \eqref{e:d4}. In
addition, he introduced the fundamental concept of maximal
monotonicity and established key connections with nonexpansive
operators. Although, strictly speaking, his definitions dealt with
single-valued operators, he established results on monotone
relations that naturally suggest extensions to the set-valued case
\eqref{e:0}. According to Browder \cite{Brow65}, who initiated the
study of set-valued monotone operators in Banach spaces, the
Hilbertian setting was worked out by Minty in unpublished notes.
\end{itemize} 
Accounts of the history of the development of monotone operator
theory in the 1960s can be found in \cite{Borw10}, \cite{Brow68},
\cite{Kach68}, \cite[Section~2.12]{Lion69}, \cite{Mint68}, and
\cite[Chapter~VI]{Vain72}. In that period, the main mathematical
areas of applications were nonlinear equations, partial
differential equations, boundary-value problems,
nonexpansive semigroups, convex analysis,
evolution equations, and variational inequalities; see
\cite{Brez66,Brow63,Brow68,Ghiz69,Komu67,Lera65,More66,%
Vish61,Zara71} and
their bibliographies. At the same time, monotonicity continued to
be used in the analysis of networks and systems, for instance in
\cite{Zam66a,Zam66b}, where it is known as incremental
positiveness; see also \cite{Deso75} where monotonicity is called
incremental passivity. The main use of monotone operators was to
establish existence, uniqueness, or stability results in a variety
of nonlinear problems in analysis.

\subsection{Examples of maximally monotone operators}
\label{sec:23}

The following example concerns single-valued operators;
Examples~\ref{ex:12}--\ref{ex:12b} follow from it 
\cite[Chapter~20]{Livre1}.

\begin{example}[{\protect{\cite[Lemma~1]{Mint63}}}]
\label{ex:11}
Let $A\colon\HH\to\HH$ be monotone and \emph{hemicontinuous}
(in particular, continuous) in the sense that
\begin{equation}
\bigl(\forall (x,y,z)\in\HH^3\bigr)\quad
\lim_{0<\alpha\downarrow 0}\scal{z}{A(x+\alpha y)}=\scal{z}{Ax}.
\end{equation}
Then $A$ is maximally monotone.
\end{example}

\begin{example}
\label{ex:12}
Let $T\colon\HH\to\HH$ be nonexpansive and let $\alpha\in[-1,1]$. 
Then $\Id+\alpha T$ is maximally monotone. In particular, set
$A=\Id-T$. Then $A$ is maximally monotone and $\zer A=\Fix T$.
\end{example}

\begin{example} 
\label{ex:13}
Let $A\colon\HH\to\HH$ be cocoercive. Then $A$ is maximally 
monotone.
\end{example}

\begin{example} 
\label{ex:12f}
Let $M\colon\HH\to 2^{\HH}$ be maximally monotone and set $A=J_M$.
Then $A$ is maximally monotone and $\zer A=\zer M^{-1}$.
\end{example}

\begin{example} 
\label{ex:13y}
Let $M\colon\HH\to 2^{\HH}$ be maximally monotone, let
$\gamma\in\RPP$, and set $A=\moyo{M}{\gamma}$ (see
\eqref{e:yosi1}). Then $A$ is $\gamma$-cocoercive, hence
maximally monotone, and $\zer A=\zer M$.
\end{example}

\begin{example} 
\label{ex:12g}
Let $f\in\Gamma_0(\HH)$ and set $A=\prox_f$. Then $A$ is maximally
monotone.
\end{example}

\begin{example} 
\label{ex:12h}
Let $C$ be a nonempty closed convex subset of $\HH$ and set 
$A=\proj_C$. Then $A$ is maximally monotone.
\end{example}

\begin{example} 
\label{ex:12b}
Let $A\in\BL(\HH)$ be a \emph{skew} operator, i.e., $A^*=-A$. Then 
$A$ is maximally monotone.
\end{example}

Here is an elementary example of a maximally monotone set-valued
operator on the real line.

\begin{example}
\label{ex:0}
Let $a\in\RR$ and $b\in\RR$ be such that $a<b$, let
$f\colon[a,b]\to\RR$ be increasing (see \eqref{e:d1}), and define
\begin{equation}
(\forall x\in\RR)\quad Ax=
\begin{cases}
\emp,&\text{if}\;\;x\notin[a,b];\\
\left]\minf,f(a)\right],&\text{if}\;\;x=a;\\
\left[f(b),\pinf\right[,&\text{if}\;\;x=b;\\
\big[\sup f(\left[a,x\right[),\inf f(\left]x,b\right])
\big],&\text{if}\;\;x\in\left]a,b\right[.
\end{cases}
\end{equation}
Then $A$ is maximally monotone.
\end{example}

The following example is a central result in variational methods
(see \cite[Corollary p.~244]{Mint64} for a special case).

\begin{example}[{\protect{\cite{More65}}}]
\label{ex:1}
Let $f\colon\HH\to\RX$ be proper. Then the \emph{subdifferential}
\begin{equation}
\label{e:subdiff}
\partial f\colon\HH\to 2^{\HH}\colon x\mapsto\menge{x^*\in\HH}
{(\forall y\in\HH)\;\;\scal{y-x}{x^*}+f(x)\leq f(y)}
\end{equation}
of $f$ is monotone and (\emph{Fermat's rule}) $\zer\partial
f=\Argmin f$. If $f\in\Gamma_0(\HH)$, then $\partial f$ is
maximally monotone and $(\partial f)^{-1}=\partial f^*$.
\end{example}

\begin{example}[{\protect{\cite[Theorem~24.3]{Rock70}}}]
\label{ex:rocky}
Let $A\colon\RR\to 2^{\RR}$ be maximally monotone. Then there
exists $f\in\Gamma_0(\RR)$ such that $A=\partial f$. 
\end{example}

\begin{example}
\label{ex:2}
Let $C$ be a nonempty convex subset of $\HH$. Then, setting
$f=\iota_C$ in Example~\ref{ex:1}, we conclude that the
\emph{normal cone} operator 
\begin{equation}
\begin{array}{lll}
N_C=\partial\iota_C\colon
&\hspace{-2mm}\HH&\hspace{-2mm} \to 2^{\HH}\\
&\hspace{-1mm} x&\hspace{-2mm}\mapsto
\begin{cases}
\menge{x^*\in\HH}{(\forall y\in C)\;\scal{y-x}{x^*}\leq 0},
&\text{if}\;\;x\in C;\\
\emp,&\text{otherwise}
\end{cases}
\end{array}
\end{equation}
of $C$ is monotone and that it is maximally monotone if $C$ is
closed, in which case $(N_C)^{-1}=\partial\sigma_C$.
\end{example}

\begin{example}
\label{ex:2V}
Let $V$ be a closed vector subspace of $\HH$. Then it follows from
Example~\ref{ex:2} that 
\begin{equation}
N_V\colon\HH\to2^{\HH}\colon x\mapsto
\begin{cases}
V^\bot,&\text{if}\;\;x\in V;\\
\emp,&\text{otherwise}
\end{cases}
\end{equation}
is maximally monotone and $(N_V)^{-1}=N_{V^\bot}$.
\end{example}

The next two examples involve the Laplacian operator and are
central to partial differential equations
\cite{Atto14,Barb10,Brez71,Ghou09,Zei90B}.

\begin{example}[{\protect{\cite[Theorem~17.2.10]{Atto14}}}]
\label{ex:1Ld}
Let $\Omega$ be a nonempty bounded open subset of $\RR^N$, 
suppose that $\HH=L^2(\Omega)$, and set
\begin{equation}
A\colon\HH\to 2^{\HH}\colon x\mapsto
\begin{cases}
-\Delta x,&\text{if}\;x\in H_0^1(\Omega)\;
\text{and}\;\Delta x\in\HH;\\
\emp,&\text{otherwise.}
\end{cases}
\end{equation}
Then it follows from Example~\ref{ex:1} that $A$ is maximally 
monotone as the subdifferential of the function
\begin{equation}
f\colon\HH\to\RX\colon x\mapsto
\begin{cases}
\dfrac{1}{2}{\displaystyle\int_\Omega}
\|\nabla x(\omega)\|^2d\omega,&\text{if}\;\;
x\in H_0^1(\Omega);\\
\pinf,&\text{otherwise},
\end{cases}
\end{equation}
which is in $\Gamma_0(\HH)$. In addition, if $\bdry\Omega$ is of
class $\mathscr{C}^2$, then 
$\dom\partial f=H^2(\Omega)\cap H_0^1(\Omega)$.
\end{example}

\begin{example}[{\protect{\cite[Section~17.2.9]{Atto14}}}]
\label{ex:1Lv}
Let $\Omega$ be a nonempty bounded open subset of $\RR^N$ such that
$\bdry\Omega$ is of class $\mathscr{C}^2$, let 
$\partial/\partial\nu$ denote the outward normal derivative to
$\bdry\Omega$, suppose that $\HH=L^2(\Omega)$, let $h\in\HH$,
and set
\begin{equation}
\begin{array}{lll}
A\colon&\HH&\!\!\!\!\to 2^{\HH}\\
&x&\!\!\!\!\mapsto
\begin{cases}
-\Delta x-h,&\text{if}\;x\in H^2(\Omega)\;
\text{and}\;\partial x/\partial\nu=0\;\text{a.e.\ on}
\;\bdry\Omega;\\
\emp,&\text{otherwise.}
\end{cases}
\end{array}
\end{equation}
Then it follows from Example~\ref{ex:1} that $A$ is maximally 
monotone as the subdifferential of the function
\begin{equation}
\begin{array}{lll}
f\colon&\HH&\!\!\!\!\to\RX\\
&x&\!\!\!\!\mapsto
\begin{cases}
\dfrac{1}{2}{\displaystyle\int_\Omega}
\|\nabla x(\omega)\|^2d\omega-{\displaystyle\int_\Omega}
x(\omega)h(\omega)d\omega,&\text{if}\;\;x\in H^1(\Omega);\\
\pinf,&\text{otherwise},
\end{cases}
\end{array}
\end{equation}
which is in $\Gamma_0(\HH)$.
\end{example}

The next scenario arises in the study of evolution equations by
monotonicity methods \cite{Brez71,Brez73,Show97,Zei90B}.

\begin{example}[{\protect{\cite[Example~4]{Brez71},
\cite[Chapter~IV]{Show97}, \cite[Chapter~32]{Zei90B}}}]\
\label{ex:7} 
Let $\mathsf{H}$ be a separable real Hilbert space, let 
$T\in\RPP$, and suppose that $\HH=L^2([0,T];{\mathsf H})$.
For every $y\in\HH$, the function
$x\colon[0,T]\to{\mathsf H}\colon t\mapsto\int_0^ty(s)ds$ is
differentiable a.e.\ on $\left]0,T\right[$ with 
$x'=y$ a.e. Define 
\begin{equation}
H^1\bigl([0,T];{\mathsf H}\bigr)=
\menge{x\in\HH}{x'\in L^2\bigl([0,T];{\mathsf H}\bigr)},
\end{equation}
let $\mathsf{x}_0\in\mathsf{H}$, and set
\begin{equation}
\label{e:ex7}
A\colon\HH\to 2^{\HH}\colon x\mapsto
\begin{cases}
\{x'\},&\text{if}\;\;x\in H^1\bigl([0,T];{\mathsf H}\bigr)\;
\text{and}\;x(0)=\mathsf{x}_0;\\
\emp,&\text{otherwise}
\end{cases}
\end{equation}
and
\begin{equation}
\label{e:ex7p}
B\colon\HH\to 2^{\HH}\colon x\mapsto
\begin{cases}
\{x'\},&\text{if}\;\;x\in H^1\bigl([0,T];{\mathsf H}\bigr)\;
\text{and}\;x(0)=x(T);\\
\emp,&\text{otherwise}.
\end{cases}
\end{equation}
Then $A$ and $B$ are maximally monotone.
\end{example}

\begin{example}[{\protect{\cite[Exemple~2.3.3]{Brez73}}}]
\label{ex:44}
Let $(\Omega,\mathcal{F},\mu)$ be a measure space, let 
$\mathsf{H}$ be a separable real Hilbert space, let
$\mathsf{A}\colon\mathsf{H}\to 2^\mathsf{H}$ be maximally monotone,
and set $\HH=L^2((\Omega,\mathcal{F},\mu);\mathsf{H})$. Define
$A\colon\HH\to 2^{\HH}$ via
\begin{multline}
\label{e:ex44}
(\forall x\in\HH)(\forall x^*\in\HH)\quad (x,x^*)\in\gra A\;\;
\Leftrightarrow\\
\text{for $\mu$-almost every}\;\omega\in\Omega,\;\;
\bigl(x(\omega),x^*(\omega)\bigr)\in\gra\mathsf{A}
\end{multline}
and suppose that one of the following holds:
\begin{enumerate}
\item
$\mu(\Omega)<\pinf$.
\item
$\mathsf{0}\in\mathsf{A}\mathsf{0}$.
\end{enumerate}
Then $A$ is maximally monotone.
\end{example}

We now turn to an equilibrium problem in the sense of
\cite{Blum94}.

\begin{example}[{\protect{\cite[Theorem~3.5]{Aoya08}}}]
\label{ex:40}
Let $C$ be a nonempty closed convex subset of $\HH$ and suppose
that $F\colon C\times C\to\RR$ satisfies the following:
\begin{enumerate}
\item
$(\forall x\in C)$ $F(x,x)=0$.
\item
$(\forall x\in C)(\forall y\in C)$ $F(x,y)+F(y,x)\leq 0$.
\item
For every $x\in C$, $F(x,\cdot)\colon C\to\RR$ is lower 
semicontinuous and convex.
\item
$(\forall x\in C)(\forall y\in C)(\forall z\in C)$
$\underset{0<\varepsilon\to 0}{\varlimsup}
F\big((1-\varepsilon)x+\varepsilon z,y\big)\leq F(x,y)$.
\end{enumerate}
Set 
\begin{equation}
\hspace{-1mm}
\begin{array}{lll}
\!\!A\colon\!\!\!&\!\!\!\HH&\!\!\!\!\to 2^{\HH}\\
       &x&\!\!\!\!\mapsto
\begin{cases}
\menge{x^*\in\HH}{(\forall y\in C)\;F(x,y)+
\scal{x-y}{x^*}\geq 0},&\text{if}\;x\in C;\\
\emp,&\text{otherwise}.
\end{cases}
\end{array}
\end{equation}
Then $A$ is maximally monotone and
$\zer A=\menge{x\in C}{(\forall y\in C)\;F(x,y)\geq 0}$ is the set
of \emph{equilibria} of $F$.
\end{example}

We conclude with an example in the theory of saddle functions.

\begin{example}[{\protect{\cite[Theorem~3]{Roc70b}}}]
\label{ex:3} 
Let $F\colon\HH\oplus\GG\to\RXX$ be a \emph{saddle function},
i.e., a convex-concave function which
is proper and closed in the sense of \cite{Roc70b,Roc71d} (for
instance, for every $x\in\HH$ and every $y\in\GG$, 
$-F(x,\cdot)\in\Gamma_0(\GG)$ and $F(\cdot,y)\in\Gamma_0(\HH)$). 
Set
\begin{equation}
\label{e:rocky70}
(\forall x\in\HH)(\forall y\in\GG)\quad 
A(x,y)=\partial F(\cdot,y)(x)\times
\partial\bigl(-F(x,\cdot)\bigr)(y).
\end{equation}
Then $A$ is maximally monotone and
\begin{equation}
\zer{A}=\menge{(x,y)\in\HH\oplus\GG}
{F(x,y)=\inf F(\HH,y)=\sup F(x,\GG)}
\end{equation}
is the set of \emph{saddle points} of $F$.
\end{example}

The following illustration is set in the powerful perturbation
framework of Rockafellar \cite{Rock69,Roc70b,Rock74} (see also 
\cite{Joly71}), which provides a systematic tool to construct
duality frameworks in minimization problems.

\begin{example}
\label{ex:pjl}
Let $\VV$ be a real Hilbert space, let $f\colon\HH\to\RX$ be a
proper function, and consider the primal problem
\begin{equation}
\label{e:71p}
\minimize{x\in\HH}{f(x)}.
\end{equation}
Let $F\colon\HH\oplus\VV\to\RX$ be a \emph{perturbation} of $f$,
i.e., $(\forall x\in\HH)$ $f(x)=F(x,0)$. The associated
\emph{Lagrangian} is
\begin{equation}
\label{e:L}
\mathscr{L}_{F}\colon\HH\oplus\VV\mapsto\RXX\colon 
(x,v^*)\mapsto\inf_{v\in\VV}\big(F(x,v)-\scal{v}{v^*}\big),
\end{equation}
the associated \emph{dual problem} is
\begin{equation}
\label{e:71d}
\minimize{v^*\in\VV}
{\sup_{x\in\HH}\bigl(-\mathscr{L}_{F}(x,v^*)\bigr)},
\end{equation}
and the associated \emph{saddle operator} is
\begin{equation}
\label{e:K}
\sad_{F}\colon\HH\oplus\VV\to 2^{\HH\oplus\,\VV}
\colon (x,v^*)\mapsto\partial
\big(\mathscr{L}_{F}(\cdot,v^*)\big)(x)\times
\partial\bigl(-\mathscr{L}_{F}(x,\cdot)\bigr)(v^*).
\end{equation}
It follows from Example~\ref{ex:3} that $\sad_F$ is maximally
monotone. In addition, if $(x,v^*)\in\zer\sad_F$, then $x$ solves
\eqref{e:71p} and $v^*$ solves \eqref{e:71d}.
\end{example}

\subsection{Basic theory}

\subsubsection{Operations preserving maximal monotonicity}

The examples of Section~\ref{sec:23} can be combined in various
fashions to create maximally monotone operators.

\begin{lemma}[{\protect{\cite[Proposition~20.22]{Livre1}}}]
\label{l:0602}
Let $A\colon\HH\to 2^{\HH}$ be maximally monotone, 
let $z\in\HH$, let $u\in\HH$, and let $\gamma\in\RPP$. Then 
$A^{-1}$ and $x\mapsto u+\gamma A(x+z)$ are maximally monotone.
\end{lemma}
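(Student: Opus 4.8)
The plan is to establish the two assertions separately, treating the inverse first and then the map $x\mapsto u+\gamma A(x+z)$ built up from elementary transformations. For the inverse, I would argue directly from the characterization \eqref{e:m2} of maximal monotonicity. By \eqref{e:d-2}, the graph of $A^{-1}$ is obtained from $\gra A$ by swapping the two coordinates, and the monotonicity inequality $\scal{x-y}{x^*-y^*}\geq 0$ is manifestly symmetric under interchanging each pair $(x,x^*)$ and $(y,y^*)$. Hence $A^{-1}$ is monotone. For maximality, suppose $(x^*,x)$ is monotonically related to every element of $\gra A^{-1}$; this is exactly the statement that $(x,x^*)$ is monotonically related to every element of $\gra A$, so by the maximality of $A$ we get $(x,x^*)\in\gra A$, i.e.\ $(x^*,x)\in\gra A^{-1}$. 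Thus $A^{-1}$ is maximally monotone.

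For the affine part, I would decompose the map into three steps and verify that each preserves maximal monotonicity, since maximal monotonicity is characterized by the graph condition \eqref{e:m2}. \textbf{Translation of the argument:} setting $B=A(\cdot+z)$, one has $(x,x^*)\in\gra B\Leftrightarrow(x+z,x^*)\in\gra A$. The monotonicity inequality for two points $(x,x^*),(y,y^*)\in\gra B$ is $\scal{x-y}{x^*-y^*}=\scal{(x+z)-(y+z)}{x^*-y^*}\geq 0$, which holds because $(x+z,x^*),(y+z,y^*)\in\gra A$; the same substitution run through the maximality criterion \eqref{e:m2} gives maximality of $B$. \textbf{Positive scaling:} setting $C=\gamma B$ with $\gamma>0$, the graph transforms as $(x,x^*)\in\gra C\Leftrightarrow(x,x^*/\gamma)\in\gra B$, and $\scal{x-y}{x^*-y^*}=\gamma\scal{x-y}{x^*/\gamma-y^*/\gamma}\geq 0$ because $\gamma>0$; maximality transfers the same way. \textbf{Translation of the output:} setting $D=u+C$, one has $(x,x^*)\in\gra D\Leftrightarrow(x,x^*-u)\in\gra C$, and $\scal{x-y}{x^*-y^*}=\scal{x-y}{(x^*-u)-(y^*-u)}\geq 0$; maximality again follows verbatim. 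Composing, $x\mapsto u+\gamma A(x+z)$ is maximally monotone.

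The only place requiring a little care is the scaling step, where positivity of $\gamma$ is essential: the factor $\gamma$ pulled out of the inner product must not flip the sign, which is why the hypothesis $\gamma\in\RPP$ rather than $\gamma\in\RR$ appears. I do not expect a genuine obstacle here, since each of the three elementary bijections of $\HH\times\HH$ preserves both the monotonicity inequality and the maximality criterion \eqref{e:m2} by a direct substitution; the proof is essentially a bookkeeping exercise in transporting the defining inequalities through invertible affine changes of variables in the graph. An alternative, slightly slicker route would be to observe that each transformation is induced by an invertible affine map on the product space $\HH\times\HH$ that preserves the sign of $\scal{\cdot}{\cdot}$ on differences, and to invoke \eqref{e:m2} once in that generality; but the step-by-step verification is transparent enough that I would simply present it directly.
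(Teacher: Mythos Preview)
Your proof is correct. The paper does not give its own proof of this lemma; it is quoted directly from \cite[Proposition~20.22]{Livre1} without argument. Your step-by-step verification via the graph characterization \eqref{e:m2}---handling the inverse by coordinate swap, and the affine rescaling by factoring into argument translation, positive scaling, and output translation---is precisely the standard textbook argument and is essentially what one finds in the cited reference.
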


\begin{lemma}[{\protect{\cite[Proposition~23.18]{Livre1}}}]
\label{l:0616}
Let $(\HH_i)_{i\in I}$ be a finite family of 
real Hilbert spaces, set 
\begin{equation}
\HHH=\bigoplus_{i\in I}\HH_i, 
\end{equation}
and, for every $i\in I$, let $A_i\colon\HH_i\to 2^{\HH_i}$ be 
maximally monotone. Set 
\begin{equation}
\boldsymbol{A}\colon\HHH\to 2^{\HHH}\colon
(x_i)_{i\in I}\mapsto\mbox{\LARGE{$\times$}}_{i\in I}A_ix_i. 
\end{equation}
Then $\boldsymbol{A}$ is maximally monotone.
\end{lemma}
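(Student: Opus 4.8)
The plan is to establish the two defining properties of maximal monotonicity in turn: first that $\boldsymbol{A}$ is monotone, and then, invoking Minty's surjectivity characterization, that it admits no proper monotone extension.

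\textbf{Monotonicity.} Equip $\HHH=\bigoplus_{i\in I}\HH_i$ with its direct-sum scalar product, so that $\scal{\boldsymbol{x}}{\boldsymbol{y}}=\sum_{i\in I}\scal{x_i}{y_i}$ for $\boldsymbol{x}=(x_i)_{i\in I}$ and $\boldsymbol{y}=(y_i)_{i\in I}$ in $\HHH$. Let $(\boldsymbol{x},\boldsymbol{x}^*)$ and $(\boldsymbol{y},\boldsymbol{y}^*)$ lie in $\gra\boldsymbol{A}$. By the definition of $\boldsymbol{A}$, this means precisely that $(x_i,x_i^*)\in\gra A_i$ and $(y_i,y_i^*)\in\gra A_i$ for every $i\in I$. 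Hence, by the direct-sum structure of the scalar product,
\[
\scal{\boldsymbol{x}-\boldsymbol{y}}{\boldsymbol{x}^*-\boldsymbol{y}^*}
=\sum_{i\in I}\scal{x_i-y_i}{x_i^*-y_i^*}\geq 0,
\]
where each summand is nonnegative because $A_i$ is monotone. This is \eqref{e:m1}, so $\boldsymbol{A}$ is monotone.

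\textbf{Maximality.} The clean route is Minty's theorem, which asserts that a monotone operator on a real Hilbert space is maximally monotone if and only if $\ran(\Id+\boldsymbol{A})=\HHH$, equivalently its resolvent $J_{\boldsymbol{A}}=(\Id+\boldsymbol{A})^{-1}$ has full domain; this is the Hilbertian characterization recorded in \cite{Livre1}. The key observation is that the resolvent decomposes coordinatewise. Indeed, by \eqref{e:ee14}, for $\boldsymbol{x}=(x_i)_{i\in I}\in\HHH$ one has $\boldsymbol{p}=(p_i)_{i\in I}\in J_{\boldsymbol{A}}\boldsymbol{x}$ if and only if $(\boldsymbol{p},\boldsymbol{x}-\boldsymbol{p})\in\gra\boldsymbol{A}$, i.e., $(p_i,x_i-p_i)\in\gra A_i$ for every $i\in I$, i.e., $p_i\in J_{A_i}x_i$ for every $i\in I$. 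Now each $A_i$ is maximally monotone, so Minty's theorem applied in $\HH_i$ yields $\dom J_{A_i}=\ran(\Id+A_i)=\HH_i$. Consequently, given any $\boldsymbol{x}\in\HHH$, I may pick $p_i\in J_{A_i}x_i$ for each $i\in I$ to assemble a point $\boldsymbol{p}\in J_{\boldsymbol{A}}\boldsymbol{x}$; thus $\dom J_{\boldsymbol{A}}=\HHH$, equivalently $\ran(\Id+\boldsymbol{A})=\HHH$. Applying Minty's theorem once more, this time to the monotone operator $\boldsymbol{A}$ on $\HHH$, I conclude that $\boldsymbol{A}$ is maximally monotone.

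The monotonicity step is routine, and the substance of the argument---the only place where maximality, rather than mere monotonicity, of the factors is used---is the surjectivity of each $\Id+A_i$. I expect the main obstacle to be purely expository: verifying maximality directly from \eqref{e:m2} is awkward, since testing $\scal{\boldsymbol{y}-\boldsymbol{x}}{\boldsymbol{y}^*-\boldsymbol{x}^*}\geq 0$ over $\gra\boldsymbol{A}$ produces cross terms $\scal{y_i-x_i}{y_i^*-x_i^*}$ that cannot be made to vanish simultaneously without already knowing $(y_i,y_i^*)\in\gra A_i$. Minty's characterization sidesteps this cleanly by reducing maximality to coordinatewise solvability of the resolvent inclusion, which factors through the product without interaction between coordinates; here the finiteness of $I$ is what guarantees that the assembled $\boldsymbol{p}$ is a genuine element of the Hilbert direct sum $\HHH$.
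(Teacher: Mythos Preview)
Your proof is correct. The paper does not supply its own proof of this lemma; it simply cites \cite[Proposition~23.18]{Livre1} and moves on. Your argument via Minty's surjectivity characterization is exactly the standard route (and is, in fact, how the cited reference proves it): monotonicity is immediate from the sum structure of the inner product, and maximality follows because the resolvent factors coordinatewise, $J_{\boldsymbol{A}}\boldsymbol{x}=(J_{A_i}x_i)_{i\in I}$, so that $\dom J_{\boldsymbol{A}}=\HHH$ as soon as each $\dom J_{A_i}=\HH_i$. One small remark: since each $A_i$ is maximally monotone, $J_{A_i}$ is actually single-valued (Lemma~\ref{l:0}\ref{l:0iii}), so there is no choice to be made when you ``pick $p_i\in J_{A_i}x_i$''; but this does not affect the argument.
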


\begin{lemma}
\label{l:9}
Let $\beta\in\RPP$, let $A\colon\HH\to 2^{\HH}$, let 
$U\in\BL(\HH)$ be self-adjoint and $\beta$-strongly monotone, 
and let $\mathcal{X}$ be the real Hilbert space 
obtained by endowing $\HH$ with the scalar product
$(x,y)\mapsto\scal{Ux}{y}$. Then the following hold:
\begin{enumerate}
\item
\label{l:9i-}
$\zer(U^{-1}\circ A)=\zer A$.
\item
\label{l:9i}
Suppose that $A\colon\HH\to 2^{\HH}$ is maximally monotone. Then
$U^{-1}\circ A\colon\mathcal{X}\to 2^{\mathcal{X}}$ is maximally
monotone.
\item
\label{l:9ii}
Let $\alpha\in\RPP$ and suppose that $A\colon\HH\to\HH$ is
$\alpha$-cocoercive. Then 
$U^{-1}\circ A\colon\mathcal{X}\to 2^{\mathcal{X}}$ is 
$\alpha\beta$-cocoercive.
\end{enumerate}
\end{lemma}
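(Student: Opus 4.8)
The plan is to exploit the single identity $\scal{x}{y}_{\mathcal{X}}=\scal{Ux}{y}$ together with the elementary facts that $U$ is invertible with $U^{-1}\in\BL(\HH)$ self-adjoint, and that $\beta$-strong monotonicity of the self-adjoint operator $U$ gives $\scal{Uw}{w}\geq\beta\|w\|^2$ for every $w\in\HH$ and, upon passing to the inverse, $\scal{U^{-1}w}{w}\leq\beta^{-1}\|w\|^2$. For \ref{l:9i-}, since $U^{-1}$ is injective, for $x\in\HH$ one has $0\in(U^{-1}\circ A)x=U^{-1}(Ax)$ if and only if $U^{-1}x^*=0$ for some $x^*\in Ax$, i.e.\ $0\in Ax$; hence $\zer(U^{-1}\circ A)=\zer A$.

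For the monotonicity half of \ref{l:9i}, let $(x,x^*)$ and $(y,y^*)$ belong to $\gra(U^{-1}\circ A)$, so that $Ux^*\in Ax$ and $Uy^*\in Ay$. Using the self-adjointness of $U$ and the monotonicity of $A$, $\scal{x-y}{x^*-y^*}_{\mathcal{X}}=\scal{U(x-y)}{x^*-y^*}=\scal{x-y}{Ux^*-Uy^*}\geq0$, so $U^{-1}\circ A$ is monotone on $\mathcal{X}$. For maximality I would invoke Minty's surjectivity criterion in $\mathcal{X}$: since the identity operator of $\mathcal{X}$ is the set-theoretic identity, it suffices to show that $\ran(\Id+U^{-1}\circ A)=\mathcal{X}$, the range being computed in the common underlying set $\HH$. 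Now $(\Id+U^{-1}\circ A)x=U^{-1}(Ux+Ax)=U^{-1}\bigl((U+A)x\bigr)$, and since $U^{-1}$ is a bijection of $\HH$ this yields $\ran(\Id+U^{-1}\circ A)=U^{-1}\bigl(\ran(U+A)\bigr)$; thus the task reduces to proving that $U+A$ is surjective on $\HH$. Here $U$ is maximally monotone by Example~\ref{ex:11} (it is monotone, continuous, and everywhere defined) with $\dom U=\HH$, so the sum theorem makes $U+A$ maximally monotone; moreover $U+A$ is $\beta$-strongly monotone, because $\scal{(U+A)x-(U+A)y}{x-y}\geq\scal{Ux-Uy}{x-y}\geq\beta\|x-y\|^2$. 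Since a maximally monotone strongly monotone operator is surjective, $\ran(U+A)=\HH$ and \ref{l:9i} follows.

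For \ref{l:9ii}, fix $x,y\in\HH$ and put $w=Ax-Ay$. The computation $\scal{x-y}{U^{-1}Ax-U^{-1}Ay}_{\mathcal{X}}=\scal{U(x-y)}{U^{-1}w}=\scal{x-y}{w}\geq\alpha\|w\|^2$ uses only self-adjointness and the $\alpha$-cocoercivity of $A$, while $\|U^{-1}Ax-U^{-1}Ay\|_{\mathcal{X}}^2=\scal{w}{U^{-1}w}\leq\beta^{-1}\|w\|^2$ by the inverse bound above. Chaining these gives $\scal{x-y}{U^{-1}Ax-U^{-1}Ay}_{\mathcal{X}}\geq\alpha\|w\|^2\geq\alpha\beta\|U^{-1}Ax-U^{-1}Ay\|_{\mathcal{X}}^2$, which is exactly $\alpha\beta$-cocoercivity of $U^{-1}\circ A$ on $\mathcal{X}$.

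The only step that is not a one-line calculation is the maximality in \ref{l:9i}, i.e.\ passing from monotonicity to maximality of $U^{-1}\circ A$. I expect this to be the main obstacle, and my preferred resolution is the chain Minty criterion $\to$ sum theorem (legitimate because $\dom U=\HH$, so no constraint qualification is needed) $\to$ surjectivity of the $\beta$-strongly monotone maximally monotone operator $U+A$. An alternative would be to introduce the positive self-adjoint square root $U^{1/2}$, note that $U^{1/2}\colon\mathcal{X}\to\HH$ is a unitary isomorphism under which $U^{-1}\circ A$ is carried to the congruence $U^{-1/2}\circ A\circ U^{-1/2}$ on $\HH$, and appeal to the invariance of maximal monotonicity under $A\mapsto L^*\circ A\circ L$ for bijective $L\in\BL(\HH)$; this is equally valid but requires the square root and the congruence lemma, so I would present the Minty argument as the primary route.
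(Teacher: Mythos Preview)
Your proof is correct. For \ref{l:9i-} and \ref{l:9ii} your computations coincide with the paper's: it declares \ref{l:9i-} clear and proves \ref{l:9ii} exactly as you do, via $\scal{x-y}{U^{-1}Ax-U^{-1}Ay}_{\mathcal{X}}=\scal{x-y}{Ax-Ay}_{\HH}\geq\alpha\|Ax-Ay\|_{\HH}^2$ together with $\|U^{-1}w\|_{\mathcal{X}}^2=\scal{w}{U^{-1}w}_{\HH}\leq\beta^{-1}\|w\|_{\HH}^2$.

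The only difference is in \ref{l:9i}: the paper does not argue at all but simply cites \cite[Lemma~3.7(i)]{Opti14}. Your route---monotonicity by the one-line identity $\scal{x-y}{x^*-y^*}_{\mathcal{X}}=\scal{x-y}{Ux^*-Uy^*}_{\HH}$, then maximality via Minty's surjectivity criterion reduced to surjectivity of the maximally monotone and $\beta$-strongly monotone operator $U+A$---is a clean self-contained proof that stays entirely within the tools already recorded in the paper (Lemma~\ref{l:0}\ref{l:0iii}, Example~\ref{ex:11}, Lemma~\ref{l:11}\ref{l:11i}). Your alternative square-root/congruence argument is also valid but, as you note, heavier. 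So your treatment of \ref{l:9i} is more informative than the paper's, at the cost of a short paragraph; everything else matches.
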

\begin{proof}
\ref{l:9i-} is clear and \ref{l:9i} is proved in 
\cite[Lemma~3.7(i)]{Opti14}.

\ref{l:9ii}: Take $(x,y)\in\HH\times\HH$. Then 
\begin{align}
\scal{x-y}{(U^{-1}\circ A)x-(U^{-1}\circ A)y}_{\mathcal{X}}
&=\scal{x-y}{Ax-Ay}_{\HH}\nonumber\\
&\geq\alpha\|Ax-Ay\|^2_{\HH}. 
\end{align}
However, 
$\|U^{-1}x\|^2_{\mathcal{X}}=\scal{x}{U^{-1}x}_{\HH}\leq
\|U\|^{-1}\,\|x\|^2_{\HH}$ and $\|U\|^{-1}\leq\beta^{-1}$
\cite[Section~VI.2.6]{Kato80}.
\end{proof}

\begin{lemma}[{\protect{\cite[Theorem~25.3]{Livre1},
\cite[Section~24]{Botr10}, \cite[Corollary~4.2(a)]{Penn00}}}]
\label{l:10}
Let $A\colon\HH\to 2^\HH$ and $B\colon\GG\to 2^\GG$ 
be maximally monotone, let $L\in\BL(\HH,\GG)$, and suppose that 
\begin{equation}
\cone\bigl(L(\dom A)-\dom B\bigr)\;
\text{is a closed vector subspace of}\;\GG.
\end{equation}
Then $A+L^*\circ B\circ L$ is maximally monotone.
\end{lemma}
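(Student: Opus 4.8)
The plan is to deduce maximality from the Attouch--Brézis/Rockafellar sum theorem after lifting everything to the product space $\HH\oplus\GG$. Monotonicity of $M=A+L^*\circ B\circ L$ is immediate from \eqref{e:d8}: if $x^*=a^*+L^*b^*$ and $y^*=c^*+L^*d^*$ with $a^*\in Ax$, $c^*\in Ay$, $b^*\in B(Lx)$, $d^*\in B(Ly)$, then $\scal{x-y}{x^*-y^*}=\scal{x-y}{a^*-c^*}+\scal{Lx-Ly}{b^*-d^*}\ge 0$ by monotonicity of $A$ and of $B$. The whole content is therefore maximality, and the idea is to exhibit $M$ as the reduction of a maximally monotone operator on $\HH\oplus\GG$.

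First I would set $\Phi\colon\HH\to\HH\oplus\GG\colon x\mapsto(x,Lx)$, so that $\Phi^*\colon(u,w)\mapsto u+L^*w$ and, by \eqref{e:d8}, $M=\Phi^*\circ\boldsymbol{A}\circ\Phi$, where $\boldsymbol{A}\colon(x,y)\mapsto Ax\times By$ is maximally monotone by Lemma~\ref{l:0616}. The range of $\Phi$ is the closed vector subspace $V=\menge{(x,Lx)}{x\in\HH}$, with $V^\bot=\menge{(-L^*w,w)}{w\in\GG}$, and by Example~\ref{ex:2V} the normal cone $N_V$ is maximally monotone. The operator whose maximality governs everything is the sum $\boldsymbol{A}+N_V$, since $\dom(\boldsymbol{A}+N_V)\subset V$ and, for $z\in V$, $(\boldsymbol{A}+N_V)z=\boldsymbol{A}z+V^\bot$.

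Next I would check that the stated hypothesis is exactly the constraint qualification for this sum. Writing a generic element of $\dom\boldsymbol{A}-V$ as $(a-x,b-Lx)$ and substituting $u=a-x$ gives $(u,Lu+w)$ with $w\in\dom B-L(\dom A)$; hence $\dom\boldsymbol{A}-V=\Psi\bigl(\HH\times(\dom B-L(\dom A))\bigr)$ for the linear automorphism $\Psi\colon(u,w)\mapsto(u,Lu+w)$ of $\HH\oplus\GG$. Since $\Psi$ is a homeomorphism and $\cone(\dom B-L(\dom A))=-\cone(L(\dom A)-\dom B)$, the set $\cone(\dom\boldsymbol{A}-V)$ is a closed vector subspace if and only if $\cone(L(\dom A)-\dom B)$ is, which is precisely our assumption. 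The sum theorem then yields that $\boldsymbol{A}+N_V$ is maximally monotone.

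It remains to descend from $\HH\oplus\GG$ to $\HH$. Because $\dom(\boldsymbol{A}+N_V)\subset V$ and each value is stable under addition of $V^\bot$, this operator restricts canonically to a maximally monotone operator on the Hilbert space $V$; equivalently, applying the Minty surjectivity characterisation in $\HH\oplus\GG$ and eliminating the $\GG$-variable through $V^\bot=\menge{(-L^*w,w)}{w\in\GG}$ turns $\ran(\Id+\boldsymbol{A}+N_V)=\HH\oplus\GG$ into $\ran(U+M)=\HH$, where $U=\Id+L^*\circ L$ is self-adjoint and $1$-strongly monotone. The map $\Phi$ is a bijection of $\HH$ onto $V$ that becomes an isometry once $\HH$ carries the scalar product $(x,y)\mapsto\scal{Ux}{y}$, so the transported operator is $U^{-1}\circ M$ on the space $\mathcal{X}$ of Lemma~\ref{l:9}, and it is maximally monotone; a final application of Lemma~\ref{l:9}\ref{l:9i}, now with base space $\mathcal{X}$, operator $U^{-1}\circ M$, and the $\mathcal{X}$-self-adjoint strongly monotone operator $U^{-1}$, reverts the metric and gives maximal monotonicity of $M$ on $\HH$. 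The genuine obstacle is the sum theorem itself: maximality of $\boldsymbol{A}+N_V$ under the mere closed-subspace constraint qualification is the deep analytic input (via Fenchel--Rockafellar duality or the Fitzpatrick function), while the two bookkeeping subtleties are the equivalence of the product-space qualification with the stated one and the non-isometric descent through $\Phi$, which is exactly what Lemma~\ref{l:9} is designed to absorb.
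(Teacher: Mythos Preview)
Your argument is correct. The paper does not give its own proof of this lemma; it is stated with citations to \cite[Theorem~25.3]{Livre1}, \cite[Section~24]{Botr10}, and \cite[Corollary~4.2(a)]{Penn00} and no further justification. The product-space reduction you carry out---lifting to $\HH\oplus\GG$ via $\Phi\colon x\mapsto(x,Lx)$, applying the sum theorem to $\boldsymbol{A}+N_V$ under the translated constraint qualification, and then descending through the renorming of Lemma~\ref{l:9}---is precisely the standard route and is in substance how the result is proved in \cite{Livre1}.

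Two minor remarks. First, in the paper's logical ordering Lemma~\ref{l:11}\ref{l:11i-} is presented as a corollary of Lemma~\ref{l:10}, so citing it as your black box would be circular within this particular exposition; but you rightly identify the two-operator sum theorem (Attouch--Br\'ezis/Rockafellar, provable independently via Fitzpatrick functions or Fenchel duality) as the genuine analytic input, so there is no real issue. Second, the descent step can be closed slightly more directly than the double application of Lemma~\ref{l:9}: once $U^{-1}\circ M$ is maximally monotone on $\mathcal{X}$, any $(x,x^*)\in\HH\times\HH$ monotonically related to $\gra M$ in the $\HH$-pairing satisfies $\scal{x-y}{U^{-1}x^*-U^{-1}y^*}_{\mathcal{X}}=\scal{x-y}{x^*-y^*}_{\HH}\ge 0$ for all $(y,y^*)\in\gra M$, whence $U^{-1}x^*\in U^{-1}(Mx)$ and $x^*\in Mx$.
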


\begin{lemma}[{\protect{\cite[Corollary~25.5]{Livre1}}}]
\label{l:11}
Let $A\colon\HH\to 2^{\HH}$ and $B\colon\HH\to 2^{\HH}$ be 
maximally monotone and such that one of the following holds:
\begin{enumerate}
\item
\label{l:11i-}
$\cone\,(\dom A-\dom B)\;
\text{is a closed vector subspace of}\;\HH$.
\item 
\label{l:11i}
$\dom B=\HH$.
\item 
\label{l:11ii}
$\dom A\cap\intdom B\neq\emp$.
\end{enumerate}
Then $A+B$ is maximally monotone.
\end{lemma}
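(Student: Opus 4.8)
The plan is to derive all three cases as specializations of Lemma~\ref{l:10}. Applying that lemma with $\GG=\HH$ and $L=\Id_{\HH}$, we have $L^*=\Id_{\HH}$, so that $A+L^*\circ B\circ L=A+B$, while its standing hypothesis reads precisely that $\cone(\dom A-\dom B)$ is a closed vector subspace of $\HH$. Thus case~\ref{l:11i-} is nothing but this specialization of Lemma~\ref{l:10}. It therefore suffices to show that each of \ref{l:11i} and \ref{l:11ii} implies \ref{l:11i-}; in fact I would show that each forces $\cone(\dom A-\dom B)=\HH$, which is trivially a closed vector subspace.

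For \ref{l:11i}, recall that a maximally monotone operator has nonempty graph, hence $\dom A\neq\emp$. Fixing any $z\in\dom A$ and using $\dom B=\HH$, I get $\dom A-\dom B\supset\{z\}-\HH=\HH$, whence $\cone(\dom A-\dom B)=\HH$.

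For \ref{l:11ii}, pick $z\in\dom A\cap\intdom B$. Since $z\in\intdom B$, there is $\varepsilon\in\RPP$ such that $\menge{u\in\HH}{\|u-z\|<\varepsilon}\subset\dom B$. As this ball is symmetric about $z$, subtracting it from the single point $z\in\dom A$ gives $\menge{u\in\HH}{\|u\|<\varepsilon}\subset\{z\}-\dom B\subset\dom A-\dom B$. Hence $0\in\inte(\dom A-\dom B)$, and since any set containing a neighborhood of the origin has the whole space as its generated cone, I conclude $\cone(\dom A-\dom B)=\HH$, as required.

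Since the substantive analytic work is carried out in Lemma~\ref{l:10} (the Attouch--Brezis/Rockafellar-type sum theorem, assumed here), the present argument is purely a matter of reduction together with elementary topology. There is no serious obstacle: the only point deserving care is the interior-point computation in \ref{l:11ii}, where one must verify that the difference set $\dom A-\dom B$ genuinely absorbs a neighborhood of the origin. Once all three conditions have been funneled into \ref{l:11i-}, no further case-by-case treatment is needed.
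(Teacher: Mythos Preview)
Your proof is correct and follows essentially the same route as the cited source: the paper does not supply its own argument but defers to \cite[Corollary~25.5]{Livre1}, where the result is obtained precisely by specializing \cite[Theorem~25.3]{Livre1} (Lemma~\ref{l:10} here) with $L=\Id_{\HH}$ and checking that conditions \ref{l:11i} and \ref{l:11ii} each imply \ref{l:11i-}.
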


\begin{lemma}[{\protect{\cite[Theorem~2.1]{Alim14}}}]
\label{l:92}
Let $A\colon\HH\to 2^{\HH}$ be maximally monotone and
let $B\colon\HH\to 2^{\HH}$ be monotone and such that 
$\dom B=\HH$ and $A-B$ is monotone. Then $A-B$ is maximally
monotone.
\end{lemma}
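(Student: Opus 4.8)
The plan is to verify the maximality characterization \eqref{e:m2} directly for the operator $C=A-B$, using only the maximal monotonicity of $A$ and the plain monotonicity of $B$; no surjectivity (Minty) argument or hemicontinuity of $B$ will be needed. First I would record the bookkeeping facts that follow from $\dom B=\HH$: the domain of $A-B$ equals $\dom A$, and for every $y$ we have $(A-B)y=Ay-By$, so that $(y,y^*)\in\gra(A-B)$ exactly when $y^*=a-\beta$ for some $a\in Ay$ and $\beta\in By$. Since $A-B$ is monotone by hypothesis, it remains, per \eqref{e:m2}, to fix a pair $(x,x^*)$ that is monotonically related to $\gra(A-B)$, i.e.
\[
(\forall (y,y^*)\in\gra(A-B))\quad\scal{x-y}{x^*-y^*}\geq 0,
\]
and to show $(x,x^*)\in\gra(A-B)$.

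The key maneuver is to \emph{lift} $(x,x^*)$ by an element of $Bx$. Choose any $b\in Bx$ (possible because $\dom B=\HH$), and claim that the lifted pair $(x,x^*+b)$ is monotonically related to $\gra A$. To check this, take an arbitrary $(y,a)\in\gra A$ and any $\beta\in By$. Then $(y,a-\beta)\in\gra(A-B)$, so the assumption on $(x,x^*)$ gives $\scal{x-y}{x^*-a+\beta}\geq 0$, while monotonicity of $B$ applied to $b\in Bx$ and $\beta\in By$ gives $\scal{x-y}{b-\beta}\geq 0$. Adding these two inequalities cancels $\beta$ and yields $\scal{x-y}{(x^*+b)-a}\geq 0$, which is precisely the desired relation against the arbitrary point $(y,a)\in\gra A$.

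Finally, since $A$ is maximally monotone, the characterization \eqref{e:m2} applied to $A$ forces $(x,x^*+b)\in\gra A$, that is, $x^*+b\in Ax$. Because $b\in Bx$, this gives $x^*=(x^*+b)-b\in Ax-Bx=(A-B)x$, so $(x,x^*)\in\gra(A-B)$, as required. The entire argument rests on the single idea of adding back an element $b\in Bx$; the step deserving the most care is the cross term, where the everywhere-defined monotonicity of $B$ is exactly what guarantees $\scal{x-y}{b-\beta}\geq 0$ with the right sign for a fixed $b\in Bx$ and a freely chosen $\beta\in By$. This is why the hypotheses ``$\dom B=\HH$'' and ``$A-B$ monotone'' are precisely the ones that make the proof go through without any regularity assumption on $B$.
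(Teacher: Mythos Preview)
Your argument is correct. The paper does not supply its own proof of this lemma---it simply records the statement with a citation to \cite{Alim14}---so there is nothing in the paper to compare against. Your direct verification of the maximality criterion \eqref{e:m2} via the lift $(x,x^*)\mapsto(x,x^*+b)$ with $b\in Bx$ is exactly the natural elementary route: the two inequalities (one from the hypothesis on $(x,x^*)$ applied to $(y,a-\beta)\in\gra(A-B)$, the other from the monotonicity of $B$) combine as you describe, and maximality of $A$ closes the argument. The observation that $\dom B=\HH$ is used twice---to ensure $Bx\neq\emp$ so that $b$ exists, and to ensure $By\neq\emp$ so that the auxiliary $\beta$ can be chosen for every $(y,a)\in\gra A$---is accurate and worth keeping in the write-up.
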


\begin{lemma}
\label{l:15}
Let $A\colon\HH\to 2^{\HH}$ and $B\colon\HH\to 2^{\HH}$ be
maximally monotone. Define the \emph{parallel sum} of $A$ and $B$
as
\begin{equation}
\label{e:20115n}
A\infconv B=\bigl(A^{-1}+B^{-1}\bigr)^{-1}
\end{equation}
and suppose that $\cone\,(\ran A-\ran B)$ 
is a closed vector subspace of $\HH$.
Then $A\infconv B$ is maximally monotone.
\end{lemma}
\begin{proof}
This follows from \eqref{e:d5}, Lemma~\ref{l:0602}, and
Lemma~\ref{l:11}\ref{l:11i-}. 
\end{proof}

\begin{lemma}[{\protect{\cite[Lemma~2.2]{Beck14}}}]
Let $A\colon\HH\to 2^{\HH}$ and $B\colon\GG\to 2^{\GG}$ be
maximally monotone, and let $L\in\BL(\HH,\GG)$. Define
the \emph{parallel composition} of $A$ with $L$ as
\begin{equation}
\label{e:20115a}
L\pushfwd A=\big(L\circ A^{-1}\circ L^*\bigr)^{-1}.
\end{equation}
Suppose that
\begin{equation}
\label{e:JMay}
\cone\big(\ran A-L^*(\ran B)\big)\;
\text{is a closed vector subspace of}\;\HH.
\end{equation}
Then $(L\pushfwd A)\infconv B$ is a maximally monotone operator 
from $\GG$ to $2^{\GG}$.
\end{lemma}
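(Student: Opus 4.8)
The plan is to unfold the two parallel operations, reduce the assertion to the maximal monotonicity of a sum-with-precomposition operator of the type handled by Lemma~\ref{l:10}, and then invert. Using the definitions \eqref{e:20115a} and \eqref{e:20115n}, together with the identity $(L\pushfwd A)^{-1}=L\circ A^{-1}\circ L^*$, I would first write
\[
(L\pushfwd A)\infconv B=\big((L\pushfwd A)^{-1}+B^{-1}\big)^{-1}
=\big(B^{-1}+L\circ A^{-1}\circ L^*\big)^{-1}.
\]
Since the inverse of a maximally monotone operator is maximally monotone (Lemma~\ref{l:0602}), it suffices to show that $B^{-1}+L\circ A^{-1}\circ L^*$, viewed as an operator from $\GG$ to $2^{\GG}$, is maximally monotone.

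To this end, I would recognize $B^{-1}+L\circ A^{-1}\circ L^*$ as an instance of the operator $A+L^*\circ B\circ L$ of Lemma~\ref{l:10}, taken with ambient space $\GG$. Concretely, I take the outer operator to be $B^{-1}\colon\GG\to 2^{\GG}$, the inner operator to be $A^{-1}\colon\HH\to 2^{\HH}$, and the linear operator to be $L'=L^*\in\BL(\GG,\HH)$, so that $(L')^*=L$; both $A^{-1}$ and $B^{-1}$ are maximally monotone by Lemma~\ref{l:0602}. With these choices $(L')^*\circ A^{-1}\circ L'=L\circ A^{-1}\circ L^*$, so the structural hypotheses of Lemma~\ref{l:10} are met, modulo the constraint qualification.

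The one point requiring care is precisely that qualification. With the above identifications, and using $\dom B^{-1}=\ran B$ and $\dom A^{-1}=\ran A$ from \eqref{e:d5}, the condition of Lemma~\ref{l:10} becomes the requirement that $\cone\big(L^*(\ran B)-\ran A\big)$ be a closed vector subspace of $\HH$. The hypothesis of the present lemma instead supplies this property for $\cone\big(\ran A-L^*(\ran B)\big)$, the cone generated by the negated set. These two statements are equivalent: for every $D\subset\HH$ one has $\cone(-D)=-\cone(D)$, and a cone is a closed vector subspace exactly when its negative is (subspaces being symmetric). I expect this sign-matching to be the only genuine subtlety of the argument, the remainder being routine bookkeeping of adjoints and domains.

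Putting these together, Lemma~\ref{l:10} applies and yields that $B^{-1}+L\circ A^{-1}\circ L^*$ is maximally monotone from $\GG$ to $2^{\GG}$; invoking Lemma~\ref{l:0602} once more to pass to its inverse then gives the maximal monotonicity of $(L\pushfwd A)\infconv B$, completing the proof.
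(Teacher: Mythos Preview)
Your proof is correct. The paper does not give its own proof of this lemma; it simply records the result as \cite[Lemma~2.2]{Beck14}. Your argument is the natural one and mirrors exactly the proof the paper does give for the simpler Lemma~\ref{l:15} (parallel sum without the linear composition): unfold $(L\pushfwd A)\infconv B=(B^{-1}+L\circ A^{-1}\circ L^*)^{-1}$, apply Lemma~\ref{l:10} with the roles of the spaces and operators swapped (outer operator $B^{-1}$, inner operator $A^{-1}$, linear map $L^*$), use \eqref{e:d5} to rewrite the qualification condition in terms of ranges, and then invoke Lemma~\ref{l:0602} to pass to the inverse. The sign observation you flag is indeed the only point that needs a word, and you handle it correctly.
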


\begin{example}[{\protect{\cite[Proposition~4.5(i)--(ii)]%
{Svva23}}}]
\label{ex:22}
Let $L\in\BL(\HH,\GG)$ be such that $\|L\|\leq 1$ and let 
$B\colon\GG\to 2^{\GG}$ be maximally monotone. Define the
\emph{resolvent composition} of $B$ with $L$ as 
\begin{equation}
\label{e:xc}
\proxc{L}{B}=L^*\pushfwd(B+\Id_{\GG})-\Id_{\HH}
\end{equation}
and the \emph{resolvent cocomposition} of $B$ with $L$ as 
$\proxcc{L}{B}=(\proxc{L}{B^{-1}})^{-1}$.
Then $\proxc{L}{B}$ and $\proxcc{L}{B}$ are maximally monotone
operators from $\HH$ to $2^{\HH}$.
\end{example}

\begin{example}
\label{ex:23}
Let $0<p\in\NN$, let $(\omega_k)_{1\leq k\leq p}$ be a family in 
$\rzeroun$ such that $\sum_{k=1}^p\omega_k=1$, and let 
$(A_k)_{1\leq k\leq p}$ be maximally monotone operators from $\HH$
to $2^{\HH}$. Then the \emph{resolvent average}
\begin{equation}
\label{e:ra}
\Bigg(\sum_{k=1}^p\omega_kJ_{A_k}\Bigg)^{-1}-\Id_{\HH}
\end{equation}
is maximally monotone. This result was originally established in
\cite[Proposition~2.7]{Baus16} and derived from Example~\ref{ex:22}
in \cite[Remark~4.10(ii)]{Svva23}.
\end{example}

\begin{example}
\label{ex:21}
Let $A\colon\HH\to 2^\HH$ be a maximally monotone operator and let
$V$ be a closed vector subspace of $\HH$. The \emph{partial
inverse} of $A$ with respect to $V$ is the operator
$A_V\colon\HH\to 2^{\HH}$ with graph
\begin{equation}
\label{e:pi}
\gra A_V=\menge{(\proj_Vx+\proj_{V^\bot}x^*,
\proj_Vx^*+\proj_{V^\bot}x)}{(x,x^*)\in\gra A}.
\end{equation}
This construction was introduced in \cite{Spin83}, which
contains the following (see \cite[Section~2]{Spin83}): 
\begin{enumerate}
\item
\label{ex:21i}
$A_V$ is maximally monotone.
\item
\label{ex:21ii}
Let $x\in\HH$. Then $x\in\zer A_V$ $\Leftrightarrow$
$(\proj_Vx,\proj_{V^\bot}x)\in\gra A$.
\end{enumerate}
\end{example}

\subsubsection{Resolvent}

In terms of solving inclusion problems, the resolvent of
\eqref{e:d7} is the most important operator attached to a monotone
operator $A$. First, as seen in \eqref{e:ee14}, it can be employed
as a device to generate points in the graph of $A$. Second, as seen
in \eqref{e:z}, its fixed point set coincides with the set of zeros
of $A$. Third, resolvents provide an effective bridge between the
theory of nonexpansive operators and that of monotone operators.
This connection goes back to the theory of semigroups of linear
nonexpansive operators. The following result, essentially due to
Minty \cite{Mint62}, establishes such a connection in the nonlinear
case. It states in particular that the resolvent of a maximally
monotone operator is a firmly nonexpansive operator which is
defined everywhere.

\begin{figure}
\begin{center}
\includegraphics[width=110mm]{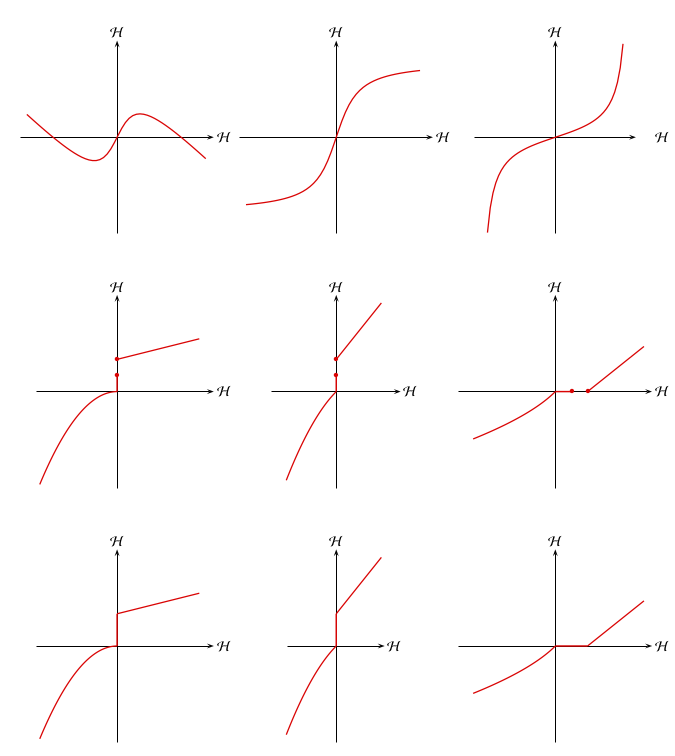}
\caption{Illustration of Minty's theorem (Lemma~\ref{l:0}).
From left to right on each row: graph of $A$, graph of $\Id+A$, and
graph of $J_A$.
Top: $A$ is not monotone: $\ran(\Id+A)=\dom J_A\neq\HH$ and
$J_A$ is not firmly nonexpansive.
Middle: $A$ is monotone but not maximally monotone: $J_A$ is 
firmly nonexpansive but $\ran(\Id+A)=\dom J_A\neq\HH$.
Bottom: $A$ is maximally monotone: $J_A$ is firmly nonexpansive
with $\ran(\Id+A)=\dom J_A=\HH$.}
\label{fig:11}
\end{center}
\end{figure}

\begin{lemma}[{\protect{\cite[Proposition~23.8]{Livre1}}}]
\label{l:0}
Let $D$ be a nonempty subset of $\HH$, let $T\colon D\to\HH$,
and set $A=T^{-1}-\Id$. Then the following hold
(see Figure~\ref{fig:11}):
\begin{enumerate}
\item
\label{l:0i}
$D=\ran(\Id+A)$ and $T=J_A$. 
\item 
\label{l:0ii}
$T$ is firmly nonexpansive if and only if $A$ is monotone.
\item 
\label{l:0iii}
$T$ is firmly nonexpansive and $D=\HH$ if and only if $A$ 
is maximally monotone. 
\end{enumerate}
\end{lemma}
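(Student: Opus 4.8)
My plan is to convert the definition $A=T^{-1}-\Id$ into an explicit description of $\gra A$ and then read each assertion off it. First I would record that, for every $(u,u^*)\in\HH\times\HH$, one has $u^*\in Au$ if and only if $u^*+u\in T^{-1}u$, i.e.\ $u^*+u\in D$ and $T(u^*+u)=u$; writing $x=u^*+u$ this says
\begin{equation}
\gra A=\menge{(Tx,x-Tx)}{x\in D}.
\end{equation}
For \ref{l:0i} I would then invoke \eqref{e:ee14}: for every $(x,p)$, $p\in J_Ax\Leftrightarrow(p,x-p)\in\gra A$, and by the display this holds exactly when $x\in D$ and $p=Tx$. Hence $J_A$ is single-valued with $\dom J_A=D$ and $J_A=T$ on $D$; since $\dom J_A=\ran(\Id+A)$ by \eqref{e:d5} and \eqref{e:d7}, this gives $D=\ran(\Id+A)$ and $T=J_A$.

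For \ref{l:0ii} I would substitute the two generic graph points $(Tx,x-Tx)$ and $(Ty,y-Ty)$, $x,y\in D$, into the monotonicity inequality \eqref{e:m1}. A one-line expansion gives $\scal{Tx-Ty}{(x-Tx)-(y-Ty)}=\scal{x-y}{Tx-Ty}-\|Tx-Ty\|^2$, so monotonicity of $A$ is equivalent to $\scal{x-y}{Tx-Ty}\geq\|Tx-Ty\|^2$ for all $x,y\in D$, which is precisely $1$-cocoercivity \eqref{e:coco}, i.e.\ firm nonexpansiveness of $T$. Since every pair of points of $\gra A$ arises in this way, the equivalence is exact.

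For \ref{l:0iii} I would combine \ref{l:0i} and \ref{l:0ii} with the two halves of Minty's theorem. The forward implication is elementary: if $T$ is firmly nonexpansive and $D=\HH$, then $A$ is monotone by \ref{l:0ii} and $\ran(\Id+A)=\HH$ by \ref{l:0i}; given any $(x,x^*)$ monotonically related to $\gra A$ as in the right-hand side of \eqref{e:m2}, surjectivity furnishes $(y,y^*)\in\gra A$ with $y+y^*=x+x^*$, whence $x^*-y^*=y-x$ and \eqref{e:m2} yields $\scal{x-y}{y-x}\geq0$, forcing $x=y$, then $x^*=y^*$, so $(x,x^*)\in\gra A$ and $A$ is maximal. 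The reverse implication is immediate once surjectivity is known: if $A$ is maximally monotone it is in particular monotone, so $T$ is firmly nonexpansive by \ref{l:0ii}, and $D=\ran(\Id+A)=\HH$ by \ref{l:0i}.

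The main obstacle is exactly the surjectivity statement used in the reverse implication, namely that $\ran(\Id+A)=\HH$ whenever $A$ is maximally monotone; this is the substantive content of Minty's theorem. My intended route is to show that $R=\ran(\Id+A)$ is nonempty, closed, and convex, and then to exclude $R\neq\HH$ by a projection argument. Closedness is clean: by \ref{l:0ii} the resolvent $J_A$ is nonexpansive on $R$, so if $z_n\in R$ and $z_n\to z$ then $(J_Az_n)_{n\in\NN}$ is Cauchy, $J_Az_n\to x$, and $z_n-J_Az_n\to z-x\in Ax$ by the strong closedness of $\gra A$ that accompanies maximal monotonicity; hence $z\in R$. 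Convexity of $R$ and the concluding separation step are where maximal monotonicity must be used in full force, and the genuinely nonelementary ingredient behind them is a finite-dimensional compactness and fixed-point argument of Debrunner--Flor type; since this surjectivity is the classical core of Minty's theorem, the alternative in keeping with the background character of the lemma is simply to cite it from \cite{Livre1}.
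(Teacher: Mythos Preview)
The paper does not prove this lemma; it is quoted verbatim as \cite[Proposition~23.8]{Livre1} with no argument supplied. Your proof is correct and follows the standard route: the graph parametrization $\gra A=\menge{(Tx,x-Tx)}{x\in D}$ immediately yields \ref{l:0i} and the equivalence in \ref{l:0ii}, and for \ref{l:0iii} the forward direction is the elementary half of Minty's theorem while the reverse direction is its substantive half (surjectivity of $\Id+A$), which you rightly identify as the one nontrivial input and propose to cite. There is nothing to compare against in the paper itself; your write-up is a faithful reconstruction of the textbook proof.
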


Here are a few examples of resolvents that will be explicitly 
needed; see \cite{Livre1,Chie18,Banf11} for additional examples
with closed form expressions and, in particular, instances of 
proximity operators.

\begin{example}[{\protect{\cite[Proposition~6.a]{More65}}}]
\label{ex:r5}
Let $f\in\Gamma_0(\HH)$. Then $J_{\partial f}=\prox_f$.
\end{example}

\begin{example}[{\protect{\cite[Exemple~p.~2897]{Mor62b}}}]
\label{ex:r6}
Let $C$ be a nonempty closed convex subset of $\HH$. Then
$J_{N_C}=\prox_{\iota_C}=\proj_C$.
\end{example}

\begin{example}[{\protect{\cite[Proposition~23.18]{Livre1}}}]
\label{ex:r1}
Let $0<m\in\NN$, let $(\HH_i)_{1\leq i\leq m}$ be real Hilbert 
spaces, set 
\begin{equation}
\HHH=\bigoplus_{i=1}^m\HH_i, 
\end{equation}
and, for every 
$i\in\{1,\ldots,m\}$, let $A_i\colon\HH_i\to 2^{\HH_i}$ be 
maximally monotone. Set 
\begin{equation}
\boldsymbol{A}\colon\HHH\to 2^{\HHH}\colon
(x_i)_{1\leq i\leq m}\mapsto
\underset{1\leq i\leq m}{\mbox{\LARGE{$\times$}}}A_ix_i. 
\end{equation}
Then
$\boldsymbol{A}$ is maximally monotone (Lemma~\ref{l:0616}) and 
\begin{equation}
J_{\boldsymbol{A}}\colon\HHH\to\HHH\colon (x_i)_{1\leq i\leq m}
\mapsto\big(J_{A_i}x_i\big)_{1\leq i\leq m}.
\end{equation}
\end{example}

\begin{example}
\label{ex:21b}
Let $A\colon\HH\to 2^{\HH}$ be maximally monotone, let $V$ be a
closed vector subspace of $\HH$, and let $A_V$ be the partial
inverse of Example~\ref{ex:21}. In addition, let $x\in\HH$ and 
$p\in\HH$. Then 
\begin{equation}
p=J_{A_V}x\quad\Leftrightarrow\quad 
\proj_Vp+\proj_{V^\bot}(x-p)=J_Ax.
\end{equation}
\end{example}
\begin{proof}
This is implicitly in \cite[Section~4]{Spin83}; see
\cite[Lemma~2.2]{Optl14} for a proof.
\end{proof}

\begin{example}[{\protect{\cite[Lemmas~3.7(iii) and~3.1]{Opti14}}}]
\label{ex:b12}
As in Lemma~\ref{l:9}\ref{l:9i}, $A\colon\HH\to 2^{\HH}$ is
maximally monotone, $U\in\BL(\HH)$ is self-adjoint and strongly
monotone, and $\mathcal{X}$ is the real Hilbert space obtained by
endowing $\HH$ with the scalar product $(x,y)\mapsto\scal{Ux}{y}$.
Then $J_{U^{-1}\circ A}=(U+A)^{-1}\circ U$.
\end{example}

\begin{example}[{\protect{\cite[Propositions~1.2 and 
4.1(v)]{Svva23}}}]
\label{ex:r3}
Let $L\in\BL(\HH,\GG)$ be such that $\|L\|\leq 1$, let 
$B\colon\GG\to 2^{\GG}$ be maximally monotone, and consider the
resolvent compositions of Example~\ref{ex:22}. Then 
\begin{equation}
J_{\proxc{L}{B}}=L^*\circ J_B\circ L\quad\text{and}\quad
J_{\proxcc{L}{B}}=\Id_\HH-L^*\circ L+L^*\circ J_B\circ L.
\end{equation}
\end{example}

\subsubsection{Warped resolvents}
\label{sec:wr}

A generalization of the notion of a resolvent is the following.

\begin{definition}[{\protect{\cite[Definition~1.1]{Jmaa20}}}]
\label{d:wr}
Let $D$ be a nonempty subset of $\HH$, let $U\colon D\to\HH$, and
let $M\colon\HH\to 2^{\HH}$ be such that $\ran U\subset\ran(U+M)$
and $U+M$ is injective. The \emph{warped resolvent} of $M$ with
kernel $U$ is $J_M^U=(U+M)^{-1}\circ U\colon D\to D$. 
\end{definition}

The properties of warped resolvent generalize those of classical
ones. In this respect, here is an extension of 
\eqref{e:ee14}--\eqref{e:z}.

\begin{lemma}
\label{l:warp}
Let $D$ and $E$ be nonempty subsets of $\HH$, 
let $U\colon D\to\HH$, let 
$C\colon E\to\HH$, and let $W\colon\HH\to 2^{\HH}$ be such that
$\ran U\subset\ran(U+W+C)$ and $U+W+C$ is injective. Then 
the following hold: 
\begin{enumerate}
\item
\label{l:warpi}
Let $x\in D$ and $p\in D$. Then $p=J_{W+C}^Ux$ $\Leftrightarrow$ 
$(p,Ux-Up-Cp)\in\gra W$.
\item
\label{l:warpii}
$\Fix J_{W+C}^U=D\cap\zer(W+C)$.
\end{enumerate}
\end{lemma}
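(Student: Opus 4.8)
The plan is to read Lemma~\ref{l:warp} as the warped-resolvent counterpart of the elementary equivalences \eqref{e:ee14}--\eqref{e:z} and to prove both parts by unwinding Definition~\ref{d:wr} together with the definition of the inverse. The two standing hypotheses do the essential bookkeeping: since $\ran U\subset\ran(U+W+C)=\dom(U+W+C)^{-1}$, the point $Ux$ lies in the domain of the inverse, and since $U+W+C$ is injective the preimage $(U+W+C)^{-1}(Ux)$ is a single point. This is exactly what makes $J_{W+C}^Ux$ well defined and lets me pass freely between the equality $p=J_{W+C}^Ux$ and the membership statements below.

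For \ref{l:warpi}, I fix $x\in D$ and $p\in D$ and chain
\[
p=J_{W+C}^Ux\;\Leftrightarrow\;p\in(U+W+C)^{-1}(Ux)\;\Leftrightarrow\;Ux\in(U+W+C)p,
\]
where the first equivalence uses the singleton-valuedness just noted and the second is the definition of the inverse \eqref{e:d-2}. Because $U$ and $C$ are single-valued while $W$ is set-valued, the pointwise sum expands as $(U+W+C)p=Up+Cp+Wp$, so the right-hand membership reads $Ux-Up-Cp\in Wp$, i.e.\ $(p,Ux-Up-Cp)\in\gra W$, which is the claim.

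Part \ref{l:warpii} is then the specialization $p=x$ of \ref{l:warpi}: an $x\in D$ satisfies $x=J_{W+C}^Ux$ iff $(x,Ux-Ux-Cx)\in\gra W$, i.e.\ $-Cx\in Wx$, i.e.\ $0\in Wx+Cx=(W+C)x$, which is $x\in\zer(W+C)$. Keeping the constraint $x\in D$ from the domain of $J_{W+C}^U$, this yields $\Fix J_{W+C}^U=D\cap\zer(W+C)$; note that $x\in\zer(W+C)$ already forces $x\in E\cap\dom W$, so $Cx$ and $Wx$ are meaningful and the domain bookkeeping stays consistent.

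The argument is purely formal, so I anticipate no genuine obstacle; the only steps needing care are the well-definedness move (using injectivity and the range inclusion to collapse the preimage to a single point, which legitimizes replacing the equality $p=J_{W+C}^Ux$ by $Ux\in(U+W+C)p$) and the bookkeeping of the domains $D$, $E$, and $\dom W$ when forming and evaluating $U+W+C$ at $p$.
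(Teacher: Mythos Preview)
Your proof is correct and follows essentially the same approach as the paper: unwind Definition~\ref{d:wr} and the definition of the inverse to obtain the chain $p=J_{W+C}^Ux\Leftrightarrow Ux\in Up+Wp+Cp\Leftrightarrow Ux-Up-Cp\in Wp$, then specialize to $p=x$ for part~\ref{l:warpii}. The only difference is that you spell out the domain bookkeeping a bit more explicitly than the paper does.
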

\begin{proof}
Note that $J_{W+C}^U\colon D\to D$ is well defined.

\ref{l:warpi}: 
$p=J_{W+C}^Ux$ 
$\Leftrightarrow$ $p=(U+W+C)^{-1}(Ux)$
$\Leftrightarrow$ $Ux\in Up+Wp+Cp$
$\Leftrightarrow$ $Ux-Up-Cp\in Wp$.

\ref{l:warpii}: Let $x\in\HH$. Then \ref{l:warpi} yields
$x=J_{W+C}^Ux$ 
$\Leftrightarrow$ [$x\in D$ and $(x,-Cx)\in\gra W$] 
$\Leftrightarrow$ [$x\in D$ and $x\in\zer(W+C)$]. 
\end{proof}

An instance of a warped resolvent with a linear kernel appears in
Example~\ref{ex:b12}, where $D=\HH$ and $U\in\BL(\HH)$ is a
self-adjoint strongly monotone operator. Self-adjoint monotone
operators which are not strongly monotone have also been used as
kernels; see \cite{Bred22,Xue23a}. The next example features a
monotone kernel in $\BL(\HH)$ which is not self-adjoint.

\begin{example}
\label{ex:59}
Let $A\colon\HH\to 2^{\HH}$ and $B\colon\GG\to 2^{\GG}$ be
maximally monotone, and suppose that $0\neq L\in\BL(\HH,\GG)$. Set
$\XXX=\HH\oplus\GG$ and 
\begin{equation}
\label{e:ex59}
\begin{cases}
\kut\colon\XXX\to 2^{\XXX}\colon(x,y^*)\mapsto
(Ax+L^*y^*)\times(B^{-1}y^*-Lx)\\
\boldsymbol{U}\colon\XXX\to\XXX\colon(x,y^*)
\mapsto(x-L^*y^*,Lx+y^*).
\end{cases}
\end{equation}
As will be seen in Lemma~\ref{l:31z}, $\kut$ is the
Kuhn--Tucker operator associated with the problem of finding a 
zero of $A+L^*\circ B\circ L$. It follows from \eqref{e:ex59} that
\begin{equation}
J_{\kut}^{\boldsymbol{U}}\colon\XXX\to\XXX\colon
(x,y^*)\mapsto\bigl(J_A(x-L^*y^*),J_{B^{-1}}(Lx+y^*)\bigr),
\end{equation}
whereas $J_{\kut}$ is typically intractable.
\end{example}

The next examples employ nonlinear kernels.

\begin{example}
\label{ex:90}
Let $M\colon\HH\to 2^{\HH}$ be maximally monotone and such that 
$\zer M\neq\emp$, let $f\colon\HH\to\RX$ be a Legendre function
such that $\dom M\subset\intdom f$, and set $D=\intdom f$ and
$U=\nabla f$. Then it follows from
\cite[Corollary~3.14(ii)]{Sico03} that $J_M^U\colon D\to D$ is a
well-defined warped resolvent, called the $D$-resolvent of $M$. It
is an essential tool in the study of algorithms based on Bregman
distances which goes back to \cite{Breg67,Cens92,Ecks93,Tebo92}.
\end{example}

\begin{example}
\label{ex:91}
Let $A\colon\HH\to 2^{\HH}$ and $B\colon\HH\to 2^{\HH}$ be
maximally monotone, and let $f\in\Gamma_0(\HH)$ be essentially 
smooth \cite{Sico03}.
Suppose that $D=(\intdom f)\cap\dom A$ is a nonempty subset
of $\intdom B$, that $B$ is single-valued on $\intdom B$,
that $\nabla f$ is strictly monotone on $D$,
and that $(\nabla f-B)(D)\subset\ran(\nabla f+A)$. Set $M=A+B$ and
$U\colon D\to\HH\colon x\mapsto\nabla f(x)-Bx$. Then the warped 
resolvent coincides with the Bregman forward-backward operator 
$J_{M}^U=(\nabla f+A)^{-1}\circ(\nabla f-B)$ investigated in 
\cite{Svva21}, where it is shown to capture a construction found in
\cite{Rena97} and known as the \emph{auxiliary principle}.
In the case when $A$ and $B$ are subdifferentials, $J_M^U$ is the
operator studied in \cite{Nguy17} and, in Euclidean spaces, in
\cite{Bolt17}. Scenarios in which $J_M^U$ is more manageable than
$J_M$ are discussed in 
\cite{Bolt17,Svva21,Lufe18,Nguy17,Rena97,Tebo18}.
\end{example}

\begin{example}
\label{ex:92}
Let $A\colon\HH\to 2^{\HH}$, let $C\colon\HH\to\HH$ be cocoercive,
let $Q\colon\HH\to\HH$ be monotone and Lipschitzian, and let
$\gamma\in\RPP$. The underlying problem is to find a point in 
$\zer(A+C+Q)$ and we recover the 
\emph{nonlinear forward-backward operator} of \cite{Gise21}
as a warped resolvent as follows. Set $M=\gamma(A+C+Q)$, let
$K\colon\HH\to\HH$ be strongly monotone and Lipschitzian, and set
$U=K-\gamma(C+Q)$. Then 
$J_M^U=(K+\gamma A)^{-1}\circ(K-\gamma(C+Q))$, which is the
operator driving the algorithms of \cite{Gise21}.
\end{example}

\begin{remark}
\label{r:ex93}
If $B$ is cocoercive and $f=\|\cdot\|^2/2$ in 
Example~\ref{ex:91}, or if $K=\Id$ and $Q=0$ and $C=B$ in 
Example~\ref{ex:92}, then 
$J_M^U=J_{\gamma A}\circ(\Id-\gamma B)$. This operator 
will arise in the forward-backward algorithm of 
Section~\ref{sec:fb}. 
\end{remark}

\begin{lemma}
\label{l:20}
Let $Q\colon\HH\to\HH$ be Lipschitzian with constant 
$\beta\in\RPP$, let $K\colon\HH\to\HH$ be strongly monotone
with constant $\alpha\in\RPP$,
let $\varepsilon\in\left]0,\alpha\right[$,
and set $U=K-\gamma Q$. Then the following hold:
\begin{enumerate}
\item
\label{l:20i}
Let $\gamma\in\left]0,(\alpha-\varepsilon)/\beta\right]$. Then 
$U$ is $\varepsilon$-strongly monotone.
{\rm(\cite[Lemma~5.1(i)]{Jmaa20})}
\item
\label{l:20ii}
Suppose that $\alpha=1$ and $K=\Id$, and let
$\gamma\in\left]0,(1-\varepsilon)/\beta\right]$,
Then $U$ is cocoercive with constant $1/(2-\varepsilon)$.
{\rm(\cite[Lemma~5.1(ii)]{Jmaa20})}
\item
\label{l:20iii}
Suppose that $\alpha=1$, $K=\Id$, and $Q$ is $1/\beta$-cocoercive,
and let $\gamma\in\left]0,2/\beta\right[$. Then 
$U$ is $\gamma\beta/2$-averaged, hence nonexpansive.
{\rm(\cite[Lemma~2.3]{Opti04})}
\end{enumerate}
\end{lemma}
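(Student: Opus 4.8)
The statement collects three independent facts about the operator $U=K-\gamma Q$, so I would treat each part separately, using the hypotheses on $K$ and $Q$ in each case. The common thread is to verify the relevant monotonicity/cocoercivity inequality directly from the definitions \eqref{e:strmonotone} and \eqref{e:coco}, exploiting that subtracting $\gamma Q$ perturbs the good properties of $K$ by a controlled amount governed by the Lipschitz constant $\beta$ and the step size $\gamma$.

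For part~\ref{l:20i}, fix $x,y\in\HH$ and expand
\[
\scal{x-y}{Ux-Uy}=\scal{x-y}{Kx-Ky}-\gamma\scal{x-y}{Qx-Qy}.
\]
The plan is to bound the first term below by $\alpha\|x-y\|^2$ using strong monotonicity of $K$, and to bound the second term using Cauchy--Schwarz together with the Lipschitz estimate $\|Qx-Qy\|\le\beta\|x-y\|$, giving $|\scal{x-y}{Qx-Qy}|\le\beta\|x-y\|^2$. This yields $\scal{x-y}{Ux-Uy}\ge(\alpha-\gamma\beta)\|x-y\|^2$, and the constraint $\gamma\le(\alpha-\varepsilon)/\beta$ forces $\alpha-\gamma\beta\ge\varepsilon$, which is exactly $\varepsilon$-strong monotonicity. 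For part~\ref{l:20ii}, specialize to $\alpha=1$ and $K=\Id$, so $Ux-Uy=(x-y)-\gamma(Qx-Qy)$. Here I would compute the cocoercivity quantity $\scal{x-y}{Ux-Uy}$ against $\|Ux-Uy\|^2$: expanding the square, $\|Ux-Uy\|^2=\|x-y\|^2-2\gamma\scal{x-y}{Qx-Qy}+\gamma^2\|Qx-Qy\|^2$, and the goal is to show $\scal{x-y}{Ux-Uy}\ge\|Ux-Uy\|^2/(2-\varepsilon)$. The mildly technical step is to choose the bookkeeping so the Lipschitz bound $\|Qx-Qy\|\le\beta\|x-y\|$ and the step-size ceiling $\gamma\le(1-\varepsilon)/\beta$ combine to give the factor $2-\varepsilon$ cleanly.

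Part~\ref{l:20iii} is the cleanest conceptually: with $\alpha=1$, $K=\Id$, and $Q$ now $1/\beta$-cocoercive, the operator $U=\Id-\gamma Q$ is a relaxation of a cocoercive operator, and I would invoke the standard fact that if $Q$ is $1/\beta$-cocoercive then for $\gamma\in\zeroun\cdot(2/\beta)$ the map $\Id-\gamma Q$ is $\gamma\beta/2$-averaged — this is precisely the content of the cited \cite[Lemma~2.3]{Opti04}. Since averaged operators are nonexpansive (indeed $\alpha$-averaged with $\alpha\le1$ implies nonexpansive by the definition \eqref{e:nex} and the characterization right after it), the second assertion of this part follows immediately.

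\textbf{Main obstacle.}
None of the three parts presents a genuine difficulty — each is a one-line expansion followed by a routine estimate. The only place requiring care is the arithmetic in part~\ref{l:20ii}: one must track how the factor $2-\varepsilon$ emerges from the interplay of the $\gamma^2\|Qx-Qy\|^2$ term, the Lipschitz bound, and the upper bound on $\gamma$, rather than simply dropping the nonnegative $\gamma^2$ term as one would for a plain monotonicity argument. Since all three results are attributed to external references, the cleanest presentation is simply to verify \ref{l:20i} and \ref{l:20ii} by the direct computations sketched above and to cite \cite{Opti04} for \ref{l:20iii}.
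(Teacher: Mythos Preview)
Your proposal is correct, and there is nothing to compare: the paper does not prove this lemma at all but simply records it with external citations (\cite[Lemma~5.1]{Jmaa20} for \ref{l:20i}--\ref{l:20ii} and \cite[Lemma~2.3]{Opti04} for \ref{l:20iii}). Your direct verifications are exactly the arguments those references contain; in particular, the bookkeeping you flag in part~\ref{l:20ii} goes through cleanly once you set $t=\gamma\|Qx-Qy\|/\|x-y\|\le\gamma\beta\le1-\varepsilon$ and factor the resulting quadratic $(1-\varepsilon)-\varepsilon t-t^2=(1-\varepsilon-t)(1+t)\ge0$.
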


\subsubsection{Topological properties}

We record key properties of the graphs of monotone operators.

\begin{lemma}[{\protect{\cite[Proposition~20.38(ii)]{Livre1}}}]
\label{l:12}
Let $M\colon\HH\to 2^{\HH}$ be maximally monotone. Then $\gra M$ is
sequentially closed in 
$\HH^{\operatorname{weak}}\times\HH^{\operatorname{strong}}$, i.e.,
for every sequence $(x_n,x^*_n)_{n\in\NN}$ in $\gra M$ and every
$(x,x^*)\in\HH\times\HH$, if $x_n\weakly x$ and $x^*_n\to x^*$,
then $(x,x^*)\in\gra M$.
\end{lemma}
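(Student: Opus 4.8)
The plan is to reduce the claim to the maximality characterization \eqref{e:m2}: a pair lies in $\gra M$ precisely when it is monotonically related to every element of $\gra M$. Thus, to establish $(x,x^*)\in\gra M$ it suffices to verify
\begin{equation}
\bigl(\forall(y,y^*)\in\gra M\bigr)\quad\scal{x-y}{x^*-y^*}\geq 0.
\end{equation}
This is where maximal monotonicity (as opposed to mere monotonicity) is indispensable: without it, the limit pair could fail to belong to the graph.

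First I would fix an arbitrary $(y,y^*)\in\gra M$. Since $(x_n,x_n^*)\in\gra M$ for every $n\in\NN$, monotonicity \eqref{e:m1} gives $\scal{x_n-y}{x_n^*-y^*}\geq 0$ for all $n$. The conclusion then follows by letting $n\to\infty$, \emph{provided} the scalar product on the left converges to $\scal{x-y}{x^*-y^*}$. Note that $x_n\weakly x$ entails $x_n-y\weakly x-y$, and $x_n^*\to x^*$ entails $x_n^*-y^*\to x^*-y^*$, so the issue is the continuity of the scalar product when one factor converges weakly and the other strongly.

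The crux is therefore the weak--strong continuity: if $u_n\weakly u$ and $v_n\to v$, then $\scal{u_n}{v_n}\to\scal{u}{v}$. Applying this with $u_n=x_n-y$ and $v_n=x_n^*-y^*$ yields $\scal{x-y}{x^*-y^*}\geq 0$, and since $(y,y^*)\in\gra M$ was arbitrary, \eqref{e:m2} delivers $(x,x^*)\in\gra M$.

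The one point requiring care --- and the only genuine obstacle --- is this weak--strong continuity. I would split
\begin{equation}
\scal{u_n}{v_n}-\scal{u}{v}=\scal{u_n}{v_n-v}+\scal{u_n-u}{v},
\end{equation}
where the second term vanishes by weak convergence of $(u_n)_{n\in\NN}$. For the first, I would bound $|\scal{u_n}{v_n-v}|\leq\|u_n\|\,\|v_n-v\|$ and invoke the fact that a weakly convergent sequence is norm-bounded (a consequence of the uniform boundedness principle), so that $\sup_{n\in\NN}\|u_n\|<\pinf$ while $\|v_n-v\|\to 0$. Boundedness of the weakly convergent sequence is the essential analytic input; everything else is routine.
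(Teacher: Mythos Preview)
Your proof is correct and is precisely the standard argument one finds in the cited reference: use monotonicity to get $\scal{x_n-y}{x_n^*-y^*}\geq 0$ for each $(y,y^*)\in\gra M$, pass to the limit via the weak--strong continuity of the scalar product (which you justify cleanly via the splitting and boundedness of weakly convergent sequences), and conclude from the maximality characterization \eqref{e:m2}. The paper does not give its own proof here but simply cites \cite{Livre1}, and your argument matches that source.
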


\begin{lemma}[{\protect{\cite[Corollary~26.6]{Livre1}}}]
\label{l:30}
Let $A\colon\HH\to 2^{\HH}$ and $B\colon\HH\to 2^{\HH}$ be 
maximally monotone, let $(x_n,x_n^*)_{n\in\NN}$ be a sequence in
$\gra A$, let $(y_n,y^*_n)_{n\in\NN}$ be a sequence in $\gra B$,
let $x\in\HH$, and let $x^*\in\HH$. Suppose that
\begin{equation}
\label{e:2013}
x_n\weakly x,\;\;
x_n^*\weakly x^*,\;\;
x_n-y_n\to 0,\;\;
\text{and}
\;\;x_n^*+y^*_n\to 0.
\end{equation}
Then $x\in\zer(A+B)$, $-x^*\in\zer(-A^{-1}\circ(-\Id)+B^{-1})$,
$(x,x^*)\in\gra A$, and $(x,-x^*)\in\gra B$.
\end{lemma}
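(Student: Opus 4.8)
The plan is to reduce everything to the single inequality
\[
\scal{x-u}{x^*-u^*}+\scal{x-v}{-x^*-v^*}\geq 0
\]
valid for all $(u,u^*)\in\gra A$ and all $(v,v^*)\in\gra B$, and then to peel off its two halves.

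First I would put $a_n=x_n-y_n$ and $b_n=x_n^*+y_n^*$, so that $a_n\to 0$ and $b_n\to 0$ by hypothesis, while $(x_n)_{n\in\NN}$ and $(x_n^*)_{n\in\NN}$ are bounded, being weakly convergent. Fixing $(u,u^*)\in\gra A$ and $(v,v^*)\in\gra B$, monotonicity of $A$ gives $\scal{x_n-u}{x_n^*-u^*}\geq 0$ and monotonicity of $B$ gives $\scal{y_n-v}{y_n^*-v^*}\geq 0$. Adding these and substituting $y_n=x_n-a_n$ and $y_n^*=-x_n^*+b_n$, the obstructive term $\scal{x_n}{x_n^*}$---which has no reason to converge since both factors converge only weakly---cancels exactly. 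This cancellation is the heart of the matter. The surviving terms are each either linear in $x_n$ or in $x_n^*$, hence convergent under weak convergence, or carry a factor $a_n$ or $b_n$ paired with a bounded sequence, hence vanishing; letting $n\to\infty$ produces the displayed inequality.

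Next I would take the infimum over $(u,u^*)\in\gra A$ and, independently, over $(v,v^*)\in\gra B$. Denoting these infima by $\alpha$ and $\beta$, the displayed inequality yields $\alpha+\beta\geq 0$. To bound each from above I would invoke Minty's theorem (Lemma~\ref{l:0}): since $A$ is maximally monotone, $J_A$ is defined on all of $\HH$, so $p=J_A(x+x^*)$ satisfies $(p,x+x^*-p)\in\gra A$ by \eqref{e:ee14}, whence $\scal{x-p}{x^*-(x+x^*-p)}=-\|x-p\|^2\leq 0$ and thus $\alpha\leq 0$; applying the same computation to $B$ with $q=J_B(x-x^*)$ gives $\beta\leq 0$. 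Together with $\alpha+\beta\geq 0$, this forces $\alpha=\beta=0$.

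Finally, $\alpha=0$ means $\scal{x-u}{x^*-u^*}\geq 0$ for every $(u,u^*)\in\gra A$, so the maximality characterization \eqref{e:m2} gives $(x,x^*)\in\gra A$; similarly $\beta=0$ gives $(x,-x^*)\in\gra B$. Hence $x^*\in Ax$ and $-x^*\in Bx$, so $0\in(A+B)x$, i.e.\ $x\in\zer(A+B)$. Rewriting the two memberships as $x\in A^{-1}x^*$ and $x\in B^{-1}(-x^*)$ gives $0=-x+x\in -A^{-1}(x^*)+B^{-1}(-x^*)=\bigl(-A^{-1}\circ(-\Id)+B^{-1}\bigr)(-x^*)$, that is, $-x^*\in\zer(-A^{-1}\circ(-\Id)+B^{-1})$. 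The one real obstacle is the non-convergence of $\scal{x_n}{x_n^*}$, and the whole argument turns on its exact cancellation when the two monotonicity inequalities are summed.
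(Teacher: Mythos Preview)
Your proof is correct. The paper does not supply its own proof of this lemma; it simply cites \cite[Corollary~26.6]{Livre1}. Your argument is self-contained and sound: the cancellation of $\scal{x_n}{x_n^*}$ against $\scal{y_n}{y_n^*}$ when the two monotonicity inequalities are summed is exactly the right observation, and the trick of evaluating each infimum at the resolvent point $J_A(x+x^*)$ (respectively $J_B(x-x^*)$) to force $\alpha\le 0$ and $\beta\le 0$ is clean and correct.

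For comparison, the proof in \cite{Livre1} proceeds differently: it first establishes a single-operator weak--weak closure principle (if $(x_n,x_n^*)\in\gra A$, $x_n\weakly x$, $x_n^*\weakly x^*$, and $\varlimsup\scal{x_n}{x_n^*}\le\scal{x}{x^*}$, then $(x,x^*)\in\gra A$), and then derives the two-operator version by bounding $\varlimsup\scal{x_n}{x_n^*}$ and $\varlimsup\scal{y_n}{y_n^*}$ jointly via the same cancellation you exploit. Your route bypasses that intermediate principle by working directly with the joint inequality $\alpha+\beta\ge 0$ and the resolvent-based upper bounds, which makes the argument slightly more elementary and entirely internal to the tools already laid out in the paper (Lemma~\ref{l:0} and \eqref{e:m2}).
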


\subsubsection{Subdifferentials}

The subdifferential operator of Example~\ref{ex:1} is an essential 
tool in variational analysis.

\begin{lemma}[{\protect{\cite[Proposition~16.6 and 
Theorem~16.47(i)]{Livre1}}}]
\label{l:22}
Let $f\in\Gamma_0(\HH)$, $g\in\Gamma_0(\GG)$, and 
$L\in\BL(\HH,\GG)$ be such that $(L(\dom f))\cap\dom g\neq\emp$. 
Then the following hold:
\begin{enumerate}
\item 
\label{l:22i}
$\zer(\partial f+L^*\circ(\partial g)\circ L)\subset
\zer\partial(f+g\circ L)=\Argmin(f+g\circ L)$.
\item 
\label{l:22ii}
Suppose that one of the following is satisfied:
\begin{enumerate}
\item 
\label{l:22iia}
$0\in\sri(L(\dom f)-\dom g)$.
\item 
$L(\dom f)-\dom g$ is a closed vector subspace of $\GG$.
\item 
$\dom g=\GG$.
\item 
$\GG$ is finite-dimensional and 
$(\reli L(\dom f))\cap(\reli\dom g)\neq\emp$.
\end{enumerate}
Then $\partial(f+g\circ L)=\partial f+L^*\circ(\partial g)\circ L$.
\end{enumerate}
\end{lemma}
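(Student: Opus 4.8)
The plan is to treat~\ref{l:22i} by elementary subgradient algebra and~\ref{l:22ii} by Fenchel duality, the point being that one of the two inclusions is available for free. For~\ref{l:22i}, I would first record the qualification-free inclusion $\partial f+L^*\circ(\partial g)\circ L\subset\partial(f+g\circ L)$. Fix $x\in\HH$ and let $x^*=u^*+L^*v^*$ with $u^*\in\partial f(x)$ and $v^*\in\partial g(Lx)$. The two subgradient inequalities supplied by Example~\ref{ex:1}, namely $\scal{y-x}{u^*}+f(x)\le f(y)$ and $\scal{Ly-Lx}{v^*}+g(Lx)\le g(Ly)$ for every $y\in\HH$, add up---after rewriting $\scal{Ly-Lx}{v^*}=\scal{y-x}{L^*v^*}$---to $\scal{y-x}{x^*}+(f+g\circ L)(x)\le(f+g\circ L)(y)$, that is, $x^*\in\partial(f+g\circ L)(x)$. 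Specializing to $x^*=0$ yields $\zer(\partial f+L^*\circ(\partial g)\circ L)\subset\zer\partial(f+g\circ L)$, while $\zer\partial(f+g\circ L)=\Argmin(f+g\circ L)$ is Fermat's rule from Example~\ref{ex:1}.

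For~\ref{l:22ii}, only the reverse inclusion $\partial(f+g\circ L)\subset\partial f+L^*\circ(\partial g)\circ L$ remains. I would route it through conjugates, the key being the \emph{exact} dual formula
\begin{equation*}
(f+g\circ L)^*(x^*)=\min_{v^*\in\GG}\bigl(f^*(x^*-L^*v^*)+g^*(v^*)\bigr)\qquad(\forall x^*\in\HH),
\end{equation*}
with the minimum attained (for $L=\Id_{\HH}$ this is the familiar $(f+g)^*=f^*\infconv g^*$). This is the crux, and it is where the qualification enters: applying the Fenchel--Rockafellar duality theorem to $\inf_{x\in\HH}\bigl(f(x)-\scal{x}{x^*}+g(Lx)\bigr)$, whose domains---hence whose qualification---are unaffected by the linear shift, the condition $0\in\sri(L(\dom f)-\dom g)$ is exactly what forces a zero duality gap together with dual attainment, making the infimal convolution on the right lower semicontinuous and exact. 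I would first check that the remaining hypotheses reduce to case~\ref{l:22iia}: if $C=L(\dom f)-\dom g$ is a closed subspace then $\cone C=C$ gives $0\in\sri C$; if $\dom g=\GG$ then $C=\GG$; and in finite dimension $\sri$ collapses to $\reli$, where $0\in\reli C$ is equivalent to $(\reli L(\dom f))\cap(\reli\dom g)\neq\emp$.

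Granting the exact formula, the reverse inclusion follows by a Fenchel--Young splitting. Let $x^*\in\partial(f+g\circ L)(x)$, equivalently $(f+g\circ L)(x)+(f+g\circ L)^*(x^*)=\scal{x}{x^*}$. Picking $v^*$ that attains the minimum and writing $\scal{x}{x^*}=\scal{x}{x^*-L^*v^*}+\scal{Lx}{v^*}$, the identity rearranges into a sum of two nonnegative Fenchel--Young gaps
\begin{equation*}
\bigl(f(x)+f^*(x^*-L^*v^*)-\scal{x}{x^*-L^*v^*}\bigr)+\bigl(g(Lx)+g^*(v^*)-\scal{Lx}{v^*}\bigr)=0,
\end{equation*}
so each gap vanishes; the first gives $x^*-L^*v^*\in\partial f(x)$ and the second $v^*\in\partial g(Lx)$, whence $x^*\in\partial f(x)+L^*(\partial g(Lx))$. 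The main obstacle is the exact conjugate formula under the strong-relative-interior condition; everything downstream is routine, the Fenchel--Young equality itself being immediate from~\eqref{e:conj} and Example~\ref{ex:1}.
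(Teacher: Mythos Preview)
The paper does not prove this lemma; it is quoted as a citation of \cite[Proposition~16.6 and Theorem~16.47(i)]{Livre1}. Your argument is correct and follows the standard textbook route---the trivial inclusion plus Fermat's rule for~\ref{l:22i}, and for~\ref{l:22ii} the exact Fenchel--Rockafellar conjugate formula under~\ref{l:22iia} (to which the remaining hypotheses reduce) followed by a Fenchel--Young splitting---so there is nothing further to compare.
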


\section{Structured monotone inclusions}
\label{sec:3}

Our master problem is the following two-operator inclusion.

\begin{problem}
\label{prob:0}
Let $A\colon\HH\to 2^{\HH}$ and $B\colon\HH\to 2^{\HH}$ be 
maximally monotone. The objective is to
\begin{equation}
\label{e:p0}
\text{find}\;\;x\in\HH\;\;\text{such that}\;\;0\in Ax+Bx.
\end{equation}
\end{problem}

\subsection{Two-operator formulations}

We provide problem formulations which correspond to specific
choices of the operators $A$ and $B$ in Problem~\ref{prob:0}
from the examples of Section~\ref{sec:23}.

\begin{problem}
\label{prob:21}
In Problem~\ref{prob:0}, let $f\in\Gamma_0(\HH)$, set 
$A=\partial f$, and suppose that $B$ is at most single-valued. Then
\eqref{e:p0} reduces to the variational inequality problem
\cite{Lion69}
\begin{equation}
\label{e:p21}
\text{find}\;\;x\in\HH\;\;\text{such that}\;\;
(\forall y\in\HH)\;\;\scal{x-y}{Bx}+f(x)\leq f(y).
\end{equation}
\end{problem}

\begin{problem}
\label{prob:22}
In Problem~\ref{prob:21}, let $C$ be a nonempty closed convex
subset of $\HH$ and set $f=\iota_C$.
Then \eqref{e:p21} reduces to the standard variational
inequality problem \cite{Fich63,Kind80}
\begin{equation}
\label{e:p22}
\text{find}\;\;x\in C\;\;\text{such that}\;\;
(\forall y\in C)\;\;\scal{x-y}{Bx}\leq 0.
\end{equation}
\end{problem}

\begin{problem}
\label{prob:24}
In Problem~\ref{prob:22}, suppose that $C$ is a cone with dual
cone $C^\oplus$. Then \eqref{e:p22} reduces to the
\emph{complementarity problem} \cite{Facc03}
\begin{equation}
\label{e:p24}
\text{find}\;\;x\in C\;\;\text{such that}\;\;x\perp Bx
\;\;\text{and}\;\;Bx\in C^\oplus.
\end{equation}
\end{problem}

\begin{problem}
\label{prob:27}
In Problem~\ref{prob:0}, let $f\in\Gamma_0(\HH)$ and 
$g\in\Gamma_0(\HH)$, and set $A=\partial f$ and $B=\partial g$. 
Suppose that one of the following holds:
\begin{enumerate}
\item
\label{prob:27i}
$0\in\sri(\dom f-\dom g)$.
\item
\label{prob:27ii}
$g\colon\HH\to\RR$ is differentiable.
\end{enumerate}
Then the objective is to
\begin{equation}
\label{e:p27}
\minimize{x\in\HH}{f(x)+g(x)}. 
\end{equation}
\end{problem}

\begin{problem}
\label{prob:50}
In Problem~\ref{prob:27}, let $C$ be a nonempty closed convex
subset of $\HH$ and set $f=\iota_C$. Suppose that one of the
following holds:
\begin{enumerate}
\item
\label{prob:27iC}
$0\in\sri(C-\dom g)$.
\item
\label{prob:27iiC}
$g\colon\HH\to\RR$ is differentiable.
\end{enumerate}
Then the objective is to
\begin{equation}
\label{e:p50}
\minimize{x\in C}{g(x)}. 
\end{equation}
\end{problem}

\subsection{Composite problems}
\label{sec:cp}

We start by presenting a duality framework for monotone inclusions 
introduced in \cite{Penn00,Robi99,Robi01} (see 
\cite{Aldu05,Atto96,Ecks99,Fuku96,Gaba83,Merc80,Mosc72,Robi98} for 
special cases).

\begin{problem}
\label{prob:3}
Let $A\colon\HH\to 2^{\HH}$ and $B\colon\GG\to 2^{\GG}$ be
maximally monotone, and let $L\in\BL(\HH,\GG)$. The objective is to
solve the primal inclusion
\begin{equation}
\label{e:p3}
\text{find}\;\;x\in\HH\;\;\text{such that}\;\;
0\in Ax+L^*\big(B(Lx)\big)
\end{equation}
together with the dual inclusion
\begin{equation}
\label{e:d3}
\text{find}\;\;y^*\in\GG\;\;\text{such that}\;\;
0\in -L\bigl(A^{-1}(-L^*y^*)\bigr)+B^{-1}y^*.
\end{equation}
\end{problem}

\begin{lemma}[{\protect{\cite[Propositions~2.7 and 2.8]%
{Siop11}}}]
\label{l:31z}
In the setting of Problem~\ref{prob:3}, 
let $\XXX=\HH\oplus\GG$, let $Z$ and $Z^*$ be the sets of solutions
to \eqref{e:p3} and \eqref{e:d3}, respectively, and set
\begin{equation}
\label{e:31z}
\begin{cases}
\boldsymbol{M}\colon\XXX\to 2^{\XXX}\colon(x,y^*)\mapsto
Ax\,\times\,B^{-1}y^*\\
\boldsymbol{S}\colon\XXX\to\XXX\colon(x,y^*)\mapsto(L^*y^*,-Lx).
\end{cases}
\end{equation}
Define the \emph{Kuhn--Tucker operator} of Problem~\ref{prob:3} as 
\begin{equation}
\label{e:kt3}
\kut=\boldsymbol{M}+\boldsymbol{S} 
\end{equation}
and the set of \emph{Kuhn--Tucker points} as $\zer\kut$.
Then the following hold:
\begin{enumerate}
\item 
\label{l:31zi--} 
$\boldsymbol{M}$ is maximally monotone.
\item 
\label{l:31zi-} 
$\boldsymbol{S}\in\BL(\XXX)$ is skew and maximally monotone, with
$\|\boldsymbol{S}\|=\|L\|$.
\item 
\label{l:31zi} 
$\kut$ is maximally monotone.
\item 
\label{l:31zii} 
$\zer\kut$ is a closed convex subset of $Z\times Z^*$
in $\XXX$.
\item 
\label{l:31ziv} 
(see also \cite{Ecks99,Penn00,Robi99}) 
$Z\neq\emp$ $\Leftrightarrow$ $\zer\kut\neq\emp$
$\Leftrightarrow$ $Z^*\neq\emp$.
\end{enumerate}
\end{lemma}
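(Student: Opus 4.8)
The plan is to dispatch the structural claims \ref{l:31zi--}--\ref{l:31zi} with the maximal-monotonicity calculus of Section~\ref{sec:2}, and then read off \ref{l:31zii}--\ref{l:31ziv} by unpacking the inclusion $0\in\kut(x,y^*)$ into a single symmetric pair of relations. For \ref{l:31zi--}, I would note that $B^{-1}$ is maximally monotone by Lemma~\ref{l:0602} and that $\boldsymbol{M}=A\times B^{-1}$ is then maximally monotone by Lemma~\ref{l:0616}. For \ref{l:31zi-}, linearity and boundedness of $\boldsymbol{S}$ are immediate from \eqref{e:31z}; a one-line computation in the direct-sum scalar product gives $\scal{\boldsymbol{S}(x,y^*)}{(u,v^*)}=\scal{y^*}{Lu}-\scal{Lx}{v^*}=-\scal{(x,y^*)}{\boldsymbol{S}(u,v^*)}$, so $\boldsymbol{S}^*=-\boldsymbol{S}$, and Example~\ref{ex:12b} yields maximal monotonicity. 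The norm identity follows from $\|\boldsymbol{S}(x,y^*)\|^2=\|L^*y^*\|^2+\|Lx\|^2\leq\|L\|^2(\|x\|^2+\|y^*\|^2)$, using $\|L^*\|=\|L\|$ for the upper bound and testing on $(x,0)$ for the reverse inequality. For \ref{l:31zi}, since $\dom\boldsymbol{S}=\XXX$, Lemma~\ref{l:11}\ref{l:11i} applied to the maximally monotone operators $\boldsymbol{M}$ and $\boldsymbol{S}$ shows that $\kut=\boldsymbol{M}+\boldsymbol{S}$ is maximally monotone.

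The heart of the argument is \ref{l:31zii}. Closedness and convexity of $\zer\kut$ are immediate from \ref{l:31zi} and \eqref{e:zero}. For the inclusion $\zer\kut\subset Z\times Z^*$, I would take $(x,y^*)\in\zer\kut$ and unpack $0\in\boldsymbol{M}(x,y^*)+\boldsymbol{S}(x,y^*)=(Ax+L^*y^*)\times(B^{-1}y^*-Lx)$ componentwise via \eqref{e:31z}: the first coordinate produces some $x^*\in Ax$ with $x^*+L^*y^*=0$, and the second produces $Lx\in B^{-1}y^*$. This distills to the single pair $-L^*y^*\in Ax$ and $y^*\in B(Lx)$. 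Adding $L^*y^*$ to the first relation gives $0\in Ax+L^*(B(Lx))$, so $x\in Z$; and since $x\in A^{-1}(-L^*y^*)$ forces $Lx\in L(A^{-1}(-L^*y^*))$ while $Lx\in B^{-1}y^*$, subtracting gives $0\in -L(A^{-1}(-L^*y^*))+B^{-1}y^*$, so $y^*\in Z^*$.

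Assertion \ref{l:31ziv} then follows by reading this correspondence in both directions. From \ref{l:31zii}, $\zer\kut\neq\emp$ forces $Z\neq\emp$ and $Z^*\neq\emp$. Conversely, given $x\in Z$ I would extract from \eqref{e:p3} a vector $y^*\in B(Lx)$ with $-L^*y^*\in Ax$; given $y^*\in Z^*$ I would extract from \eqref{e:d3} a vector $x\in A^{-1}(-L^*y^*)$ with $Lx\in B^{-1}y^*$. In either case the pair $(x,y^*)$ satisfies $-L^*y^*\in Ax$ and $y^*\in B(Lx)$, which, read backwards through \eqref{e:31z}, say precisely that $0\in\kut(x,y^*)$; hence $\zer\kut\neq\emp$.

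The only genuine obstacle I anticipate is the bookkeeping in \ref{l:31zii}: one must translate the set-valued membership $0\in\boldsymbol{M}(x,y^*)+\boldsymbol{S}(x,y^*)$ correctly into the two component relations, keeping track of which coordinate carries $A$ and which carries $B^{-1}$, and of the sign convention $\boldsymbol{S}(x,y^*)=(L^*y^*,-Lx)$. Once the symmetric pair $-L^*y^*\in Ax$, $y^*\in B(Lx)$ is isolated, every remaining implication in \ref{l:31zii} and \ref{l:31ziv} is a short manipulation, and the whole lemma reduces to recognizing that this pair simultaneously encodes primal feasibility, dual feasibility, and membership in $\zer\kut$.
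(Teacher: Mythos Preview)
Your proposal is correct and complete. The paper does not supply its own proof of this lemma but cites it from \cite[Propositions~2.7 and 2.8]{Siop11}; your argument is the standard one---invoking Lemmas~\ref{l:0602}, \ref{l:0616}, \ref{l:11}\ref{l:11i}, and Example~\ref{ex:12b} for the structural claims, then reducing \ref{l:31zii}--\ref{l:31ziv} to the symmetric pair $-L^*y^*\in Ax$, $y^*\in B(Lx)$---and is essentially what one finds in that reference.
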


The best known instance for Problem~\ref{prob:3} is the classical
Fenchel--Rockafellar duality framework \cite{Rock67}.

\begin{problem}
\label{prob:35}
Let $f\in\Gamma_0(\HH)$, $g\in\Gamma_0(\GG)$, and 
$L\in\BL(\HH,\GG)$ be such that
\begin{equation}
\label{e:p31}
0\in\sri\bigl(L(\dom f)-\dom g\bigr).
\end{equation}
Set $A=\partial f$ and $B=\partial g$ in Problem~\ref{prob:3}. Then
it follows from Lemma~\ref{l:22} that \eqref{e:p3} is the primal 
problem
\begin{equation}
\label{e:p32}
\minimize{x\in\HH}{f(x)+g(Lx)},
\end{equation}
\eqref{e:d3} is the \emph{Fenchel--Rockafellar dual} problem 
\begin{equation}
\label{e:d33}
\minimize{y^*\in\GG}{f^*(-L^*y^*)+g^*(y^*)},
\end{equation}
and \eqref{e:kt3} yields the Kuhn--Tucker operator
\begin{equation}
\label{e:kt32}
\kut\colon(x,y^*)\mapsto
\bigl(\partial f(x)+L^*y^*\bigr)\times
\bigl(-Lx+\partial g^*(y^*)\bigr).
\end{equation}
\end{problem}

\begin{problem}
\label{prob:2}
Let $V$ be a closed vector subspace of $\HH$ and let
$A\colon\HH\to 2^{\HH}$ be maximally monotone. Then, in the case
when $\GG=\HH$ and $L=\Id$, the Kuhn--Tucker operator 
\eqref{e:kt3} associated with the operators $N_V$ and $A$ is
\begin{equation}
\label{e:kut5}
\kut\colon\HH\oplus\HH\to 2^{\HH\oplus\HH}\colon
(x,x^*)\mapsto\bigl(N_Vx+x^*\bigr)\times\big(A^{-1}x^*-x\bigr).
\end{equation}
In view of Example~\ref{ex:2V}, the problem of finding a zero of
the maximally monotone operator $\kut$ reduces to
\begin{equation}
\label{e:27f}
\text{find}\;\;x\in V\;\;\text{and}\;\;x^*\in V^\bot\;\;
\text{such that}\;\;x^*\in Ax.
\end{equation}
This formulation was first considered by Spingarn in \cite{Spin83}.
\end{problem}

An extension of Problem~\ref{prob:3} involving several linearly 
composed terms is the following.

\begin{problem}
\label{prob:9}
Let $0<p\in\NN$, let $A\colon\HH\to 2^{\HH}$ be maximally 
monotone, and, for every $k\in\{1\ldots,p\}$, let $\GG_k$ be a real
Hilbert space, let $B_k\colon\GG_k\to 2^{\GG_k}$ be maximally 
monotone, and let $L_k\in\BL(\HH,\GG_k)$. The objective is to 
solve the primal inclusion
\begin{equation}
\label{e:p9}
\text{find}\;\;x\in\HH\;\;\text{such that}\;\;
0\in Ax+\sum_{k=1}^pL_k^*\big(B_k(L_kx)\big)
\end{equation}
together with the dual inclusion
\begin{multline}
\label{e:d9}
\text{find}\;\;y^*_1\in\GG_1,\ldots,y^*_p\in\GG_p\;\;
\text{such that}\\
\biggl(\exi x\in A^{-1}\biggl(-\Sum_{k=1}^pL_k^*y_k^*\biggr)
\biggr)\bigl(\forall k\in\{1,\ldots,p\}\bigr)\;\;
L_kx\in B_k^{-1}y^*_k.
\end{multline}
\end{problem}

\begin{lemma}
\label{l:37z}
In the setting of Problem~\ref{prob:9}, set 
$\XXX=\HH\oplus\GG_1\oplus\cdots\oplus\GG_p$ and let $Z$ and 
$Z^*$ be the sets of solutions to \eqref{e:p9} and \eqref{e:d9}, 
respectively. Define the \emph{Kuhn--Tucker operator} of
Problem~\ref{prob:9} as 
\begin{multline}
\label{e:kt37}
\kut\colon\XXX\to 2^{\XXX}\colon
(x,y^*_1,\ldots,y^*_p)\mapsto\\
\biggl(Ax+\sum_{k=1}^pL_k^*y_k^*\biggr)\times
\bigl(-L_1x+B_1^{-1}y^*_1\bigr)\times\cdots\times
\bigl(-L_px+B_p^{-1}y^*_p\bigr)
\end{multline}
and the set of \emph{Kuhn--Tucker points} as $\zer\kut$.
Then the following hold:
\begin{enumerate}
\item 
\label{l:37zi} 
$\kut$ is maximally monotone.
\item 
\label{l:37zii} 
$\zer\kut$ is a closed convex subset of $Z\times Z^*$
in $\XXX$.
\item 
\label{l:37ziii} 
$Z\neq\emp$ $\Leftrightarrow$ 
$\zer\kut\neq\emp$ $\Leftrightarrow$ $Z^*\neq\emp$.
\end{enumerate}
\end{lemma}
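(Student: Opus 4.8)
The plan is to recognize Problem~\ref{prob:9} as a single-composition instance of Problem~\ref{prob:3} and then import Lemma~\ref{l:31z} essentially verbatim, rather than redoing a monotonicity analysis from scratch. First I would stack the dual data by setting $\GG=\bigoplus_{k=1}^p\GG_k$, defining the product operator $B\colon\GG\to 2^{\GG}$ that sends $(y^*_k)_{1\leq k\leq p}$ to the Cartesian product of the sets $B_ky^*_k$ exactly as in Lemma~\ref{l:0616} (so that $B$ is maximally monotone by that lemma and $B^{-1}$ acts coordinatewise), together with the operator $L\colon\HH\to\GG\colon x\mapsto(L_kx)_{1\leq k\leq p}$. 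One checks that $L\in\BL(\HH,\GG)$ and that its adjoint is $L^*\colon(y^*_k)_{1\leq k\leq p}\mapsto\sum_{k=1}^pL_k^*y^*_k$, and one identifies $\XXX=\HH\oplus\GG_1\oplus\cdots\oplus\GG_p=\HH\oplus\GG$.

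Next I would verify that, under this identification, the data of Problem~\ref{prob:9} coincide with those of Problem~\ref{prob:3} for the triple $(A,B,L)$. The key bookkeeping step is that, in the sense of \eqref{e:d8}, the composition $L^*\circ B\circ L$ sends $x$ to $\{\sum_{k=1}^pL_k^*y^*_k\,|\,(\forall k)\ y^*_k\in B_k(L_kx)\}$, so that $A+L^*\circ B\circ L$ is precisely the operator appearing in \eqref{e:p9}; hence primal \eqref{e:p9} is primal \eqref{e:p3}. For the dual, $0\in-L(A^{-1}(-L^*y^*))+B^{-1}y^*$ unpacks to the existence of $x\in A^{-1}(-\sum_kL_k^*y^*_k)$ with $Lx\in B^{-1}y^*$, and the latter membership holds coordinatewise exactly when $L_kx\in B_k^{-1}y^*_k$ for every $k$; thus dual \eqref{e:d3} is dual \eqref{e:d9}. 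Finally, the Kuhn--Tucker operator \eqref{e:kt3} built from $(A,B,L)$, namely $(x,y^*)\mapsto(Ax+L^*y^*)\times(B^{-1}y^*-Lx)$, reproduces \eqref{e:kt37} once one writes $y^*=(y^*_k)_k$, $L^*y^*=\sum_kL_k^*y^*_k$, and $B^{-1}y^*-Lx=(B_k^{-1}y^*_k-L_kx)_k$.

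With these identifications in place, the three assertions transfer directly and no further work is needed: claim~\ref{l:37zi} is Lemma~\ref{l:31z}\ref{l:31zi}, claim~\ref{l:37zii} is Lemma~\ref{l:31z}\ref{l:31zii}, and claim~\ref{l:37ziii} is Lemma~\ref{l:31z}\ref{l:31ziv}. In particular, the maximal monotonicity of $\kut$ rests, through Lemma~\ref{l:31z}, on the decomposition $\kut=\boldsymbol{M}+\boldsymbol{S}$ with $\boldsymbol{M}=A\times B^{-1}$ maximally monotone and $\boldsymbol{S}$ skew, and on Lemma~\ref{l:11}\ref{l:11i} applied to the everywhere-defined bounded operator $\boldsymbol{S}$.

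I expect the only delicate point to be the translation of the set-valued composition and inversion notation: one must confirm that the product operator $B$ and the stacked $L$ genuinely collapse the $p$-term sum $\sum_kL_k^*\circ B_k\circ L_k$ into the single composition $L^*\circ B\circ L$, and that the existential quantifier displayed in \eqref{e:d9} is precisely the one implicit in the range composition defining \eqref{e:d3}. Once that rewriting is carried out carefully, everything else is mechanical, requiring no new monotonicity argument beyond Lemma~\ref{l:0616} and Lemma~\ref{l:31z}.
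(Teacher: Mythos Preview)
Your proposal is correct and matches the paper's intent: the paper's proof consists of the single sentence ``Similar to that of Lemma~\ref{l:31z},'' and your product-space reduction (which the paper itself uses elsewhere, e.g.\ in \eqref{e:GG} and \eqref{e:GGD}) is exactly the way to make that sentence precise. The identifications you spell out for the primal, dual, and Kuhn--Tucker operator are all correct, so the three claims do indeed transfer directly from Lemma~\ref{l:31z}.
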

\begin{proof}
Similar to that of Lemma~\ref{l:31z}.
\end{proof}

An alternative angle on Problem~\ref{prob:35} is provided by the 
Lagrangian approach of Example~\ref{ex:pjl}.
Set $\boldsymbol{f}\colon\HH\oplus\GG\to\RX\colon
\boldsymbol{x}=(x,y)
\mapsto f(x)+g(y)$, $\boldsymbol{L}\colon\HH\oplus\GG
\to\GG\colon(x,y)\mapsto Lx-y$, and
$\XXX=\HH\oplus\GG\oplus\GG$.
Then the primal problem \eqref{e:p32} is equivalent to 
\begin{equation}
\minimize{\boldsymbol{x}\in\ker\boldsymbol{L}}
{\boldsymbol{f}(\boldsymbol{x})}
\end{equation}
and a standard perturbation function for it is 
\cite[Example~4']{Rock74} (see also 
\cite[Proposition~19.21]{Livre1}) 
\begin{equation}
\boldsymbol{F}\colon\XXX\to\RX
\colon(\boldsymbol{x},v)\mapsto
\boldsymbol{f}(\boldsymbol{x})+\iota_{\{0\}}(\boldsymbol{Lx}+v).
\end{equation}
We derive from \eqref{e:L} that the associated Lagrangian is
\begin{equation}
\label{e:Lr}
\mathscr{L}_{\boldsymbol{F}}\colon\XXX\to\RX\colon 
(\boldsymbol{x},v^*)\mapsto\boldsymbol{f}(\boldsymbol{x})
+\scal{\boldsymbol{Lx}}{v^*},
\end{equation}
from \eqref{e:71d} that the associated dual problem is
\eqref{e:d33}, and from \eqref{e:K} that the associated
saddle operator is
\begin{equation}
\sad_{\boldsymbol{F}}\colon\XXX\to 2^{\XXX}
\colon (\boldsymbol{x},v^*)\mapsto
(\partial\boldsymbol{f}(\boldsymbol{x})+\boldsymbol{L}^*v^*)
\times\{-\boldsymbol{Lx}\}, 
\end{equation}
i.e., 
\begin{equation}
\label{e:Kr}
\begin{array}{ccll}
\sad_{\boldsymbol{F}}\colon&\XXX&\to&2^{\XXX}\\
&(x,y,v^*)&\mapsto&
\bigl(\partial f(x)+L^*v^*\bigr)\times
\bigl(\partial g(y)-v^*\bigr)\times\{-Lx+y\}.
\end{array}
\end{equation}
We saw in Example~\ref{ex:pjl} that, if 
$(x,y,v^*)\in\zer\sad_{\boldsymbol{F}}$, then $x$ solves the primal
problem \eqref{e:p32} and $v^*$ solves the dual problem
\eqref{e:d33}. A version of this result for Problem~\ref{prob:3}
is the following where, although there is no notion of a
Lagrangian, we can introduce a saddle operator.

\begin{lemma}
\label{l:lim1}
In the setting of Problem~\ref{prob:3}, set
$\XXX=\HH\oplus\GG\oplus\GG$ and let $Z$ and 
$Z^*$ be the sets of solutions to \eqref{e:p3} and \eqref{e:d3}, 
respectively. Define the Kuhn--Tucker operator $\kut$ as in 
\eqref{e:kt3} and define the \emph{saddle operator} of
Problem~\ref{prob:3} as 
\begin{equation}
\label{e:lim1}
\begin{array}{ccll}
\sad\colon&\XXX&\to&2^{\XXX}\\
&(x,y,v^*)&\mapsto&(Ax+L^*v^*)\times(By-v^*)\times\{-Lx+y\}.
\end{array}
\end{equation}
Then the following hold:
\begin{enumerate}
\item
\label{l:lim1i}
$\sad$ is maximally monotone.
\item
\label{l:lim1i+}
$\zer\sad$ is closed and convex.
\item
\label{l:lim1ii}
Suppose that $
(x,y,v^*)\in\zer\sad$. Then 
$(x,v^*)\in\zer\kut\subset Z\times Z^*$.
\item
\label{l:lim1iv}
$Z^*\neq\emp$ $\Leftrightarrow$
$\zer\sad\neq\emp$ $\Leftrightarrow$ $\zer\kut\neq\emp$
$\Leftrightarrow$ $Z\neq\emp$.
\end{enumerate}
\end{lemma}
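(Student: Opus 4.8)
The plan is to realize $\sad$ as the sum of a maximally monotone separable part and a bounded skew linear part, and then to extract the remaining assertions by unpacking the zero condition. On $\XXX=\HH\oplus\GG\oplus\GG$ I would write $\sad=\boldsymbol{D}+\boldsymbol{S}$, where
\begin{equation}
\boldsymbol{D}\colon(x,y,v^*)\mapsto Ax\times By\times\{0\}
\quad\text{and}\quad
\boldsymbol{S}\colon(x,y,v^*)\mapsto(L^*v^*,-v^*,y-Lx).
\end{equation}
Since $A$ and $B$ are maximally monotone and the zero operator on $\GG$ (the subdifferential of the null function) is maximally monotone, Lemma~\ref{l:0616} makes $\boldsymbol{D}$ maximally monotone. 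A direct computation gives $\scal{\boldsymbol{S}(x,y,v^*)}{(x,y,v^*)}=\scal{v^*}{Lx}-\scal{v^*}{y}+\scal{y}{v^*}-\scal{Lx}{v^*}=0$, so $\boldsymbol{S}\in\BL(\XXX)$ is skew and hence maximally monotone by Example~\ref{ex:12b}; in particular $\dom\boldsymbol{S}=\XXX$. Applying Lemma~\ref{l:11}\ref{l:11i} to $\boldsymbol{D}+\boldsymbol{S}$ yields \ref{l:lim1i}, and then \ref{l:lim1i+} is immediate from \eqref{e:zero}.

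For \ref{l:lim1ii} I would simply decode membership in $\zer\sad$: a triple $(x,y,v^*)$ lies in $\zer\sad$ precisely when $-L^*v^*\in Ax$, $v^*\in By$, and $y=Lx$. Substituting $y=Lx$ into the middle relation gives $v^*\in B(Lx)$, i.e.\ $Lx\in B^{-1}v^*$, so that $0\in Ax+L^*v^*$ and $0\in B^{-1}v^*-Lx$; comparing with \eqref{e:31z}--\eqref{e:kt3}, this is exactly the statement $(x,v^*)\in\zer\kut$. The inclusion $\zer\kut\subset Z\times Z^*$ is then Lemma~\ref{l:31z}\ref{l:31zii}.

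Finally, \ref{l:lim1iv} amounts to splicing the equivalence $\zer\sad\neq\emp\Leftrightarrow\zer\kut\neq\emp$ into the chain $Z\neq\emp\Leftrightarrow\zer\kut\neq\emp\Leftrightarrow Z^*\neq\emp$ already provided by Lemma~\ref{l:31z}\ref{l:31ziv}. The implication $\zer\sad\neq\emp\Rightarrow\zer\kut\neq\emp$ is immediate from \ref{l:lim1ii}. For the converse I would start from $(x,v^*)\in\zer\kut$, set $y=Lx$, and check that $(x,y,v^*)\in\zer\sad$: the three defining relations $-L^*v^*\in Ax$, $v^*\in B(Lx)=By$, and $y-Lx=0$ all hold by construction.

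I do not expect a genuine obstacle here, as each part reduces to Lemma~\ref{l:31z}, \eqref{e:zero}, and the decomposition above. The only point requiring care is the bookkeeping between the larger space $\HH\oplus\GG\oplus\GG$ that carries $\sad$ and the space $\HH\oplus\GG$ that carries $\kut$: one must observe that the auxiliary coordinate $y$ is forced to equal $Lx$ on $\zer\sad$, which is precisely what makes the passage between the two operators reversible and keeps \ref{l:lim1iv} an equivalence rather than a one-way implication.
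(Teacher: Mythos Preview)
Your argument is correct and self-contained. The paper, by contrast, does not prove this lemma directly: it simply records it as a special case of \cite[Proposition~1(i)--(v)(a)]{Moor22}, which handles a far more general saddle operator. So the approaches differ in spirit: the paper defers to an external general result, whereas you give an elementary proof entirely within the paper's toolkit. In fact your monotone$+$skew decomposition $\sad=\boldsymbol{D}+\boldsymbol{S}$ is exactly the one the paper itself writes down later in \eqref{e:r76} for the special case $A=\partial f$, $B=\partial g$, and it is the natural analogue of the Kuhn--Tucker decomposition \eqref{e:31z}. Your route has the advantage of being immediate and transparent; the paper's citation has the advantage of situating the lemma inside the general saddle framework of Section~\ref{sec:sad}.
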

\begin{proof}
A special case of \cite[Proposition~1(i)--(v)(a)]{Moor22}.
\end{proof}

\subsection{Examples of embeddings in Framework~\ref{f:1}}

\begin{example}
\label{ex:f0}
Suppose that it is computationally feasible solve 
Problem~\ref{prob:1} directly in the original space $\HH$. Then 
an embedding of Problem~\ref{prob:1} is just $(\HH,M,\Id)$.
\end{example}

\begin{example}
\label{ex:f9}
Let $M\colon\HH\to 2^{\HH}$ be a maximally monotone operator,
let $U\in\BL(\HH)$ be a self-adjoint strongly monotone operator, 
let $\XXX$ be the real Hilbert space obtained by endowing $\HH$ 
with the scalar product $(x,y)\mapsto\scal{Ux}{y}$, 
let $\MMM=U^{-1}\circ M$, and set $\TTT=\Id$. Then it follows from
Lemma~\ref{l:9}\ref{l:9i-}--\ref{l:9i} that $(\XXX,\MMM,\TTT)$ is
an embedding of Problem~\ref{prob:1}.
\end{example}

\begin{example}
\label{ex:f12}
Let $\alpha\in\rzeroun$ and let $T\colon\HH\to\HH$ be
$\alpha$-averaged. In Problem~\ref{prob:1}, suppose that
$M=\Id-T$ (see Example~\ref{ex:12}) and set 
\begin{equation}
\label{e:M1}
\XXX=\HH,\;\;
\MMM=\bigg(\Id+\dfrac{1}{2\alpha}(T-\Id)\bigg)^{-1}-\Id,\;\;
\text{and}\;\;\TTT=\Id.
\end{equation}
Then $(\XXX,\MMM,\TTT)$ is an embedding of Problem~\ref{prob:1}.
Indeed, since $\Id+\alpha^{-1}(T-\Id)$ is nonexpansive, we derive
from \cite[Proposition~4.4]{Livre1} that 
$\Id+(2\alpha)^{-1}(T-\Id)$ is firmly nonexpansive and hence 
from Lemma~\ref{l:0}\ref{l:0iii} that $\MMM$ is maximally 
monotone, with $\zer\MMM=\zer M=\Fix T$.
\end{example}

\begin{example}
\label{ex:f11}
Let $A\colon\HH\to 2^{\HH}$ and $B\colon\HH\to 2^{\HH}$ be 
maximally monotone, and let $\gamma\in\RPP$. Let 
\begin{equation}
\label{e:1968}
\XXX=\HH,\;\;
\MMM=\big(J_{\gamma A}\circ(2J_{\gamma B}-\Id)+\Id-J_{\gamma B}
\big)^{-1}-\Id,\;\;\text{and}\;\;\TTT=J_{\gamma B}.
\end{equation}
Then it follows from \cite[Section~4]{Ecks92} that 
$(\XXX,\MMM,\TTT)$ is an embedding of Problem~\ref{prob:0}.
In this setting, we actually have $\TTT(\zer\MMM)=\zer M$
\cite[Lemma~2.6(iii)]{Opti04}.
\end{example}

\begin{example}
\label{ex:f1}
Let $A\colon\HH\to 2^{\HH}$ and $B\colon\HH\to 2^{\HH}$ be 
maximally monotone. Let $\XXX=\HH\oplus\HH$,
$\MMM\colon\XXX\to 2^{\XXX}\colon (x,x^*)\mapsto
(Ax+x^*)\times(-x+B^{-1}x^*)$, and 
$\TTT\colon\XXX\to\HH\colon(x,x^*)\mapsto x$. Then applying
Lemma~\ref{l:31z} with $\GG=\HH$ and $L=\Id$ shows that
$(\XXX,\MMM,\TTT)$ is an embedding of Problem~\ref{prob:0}. 
This embedding is implicitly present in the projective splitting
algorithm of \cite{Svai08}, which is therefore an instance of
Framework~\ref{f:1}. 
\end{example}

We now discuss structured inclusion problems that offer greater
modeling flexibility by involving three or more operators. The
principle of a splitting algorithm, which is to involve each
operator individually, faces a serious challenge in the presence of
such formulations. Indeed, since inclusion is a binary relation,
for reasons discussed in \cite{Siop11,Play13} and analyzed in more
depth in \cite{Ryue20}, it is not possible to split problems that
involve more than two set-valued operators. A purpose of
Framework~\ref{f:1} is to circumvent this fundamental limitation by
seeking more tractable reformulations in bigger spaces. 

\begin{example}
\label{ex:f2}
Let $0<p\in\NN$ and, for every $k\in\{1,\ldots,p\}$, let 
$A_k\colon\HH\to 2^{\HH}$ be maximally monotone. The problem is to 
\begin{equation}
\label{e:f1}
\text{find}\;\;x\in\HH\;\;\text{such that}\;\;
0\in\sum_{k=1}^pA_kx.
\end{equation}
Let $\XXX$ be the $p$-fold Hilbert direct sum $\HH^p$ and set
\begin{equation}
\begin{cases}
\boldsymbol{V}=\menge{(x_1,\ldots,x_p)\in\XXX}{x_1=\cdots=x_p}\\
\boldsymbol{A}\colon\XXX\to 2^{\XXX}\colon
(x_1,\ldots,x_p)\mapsto A_1x_1\times\cdots\times A_px_p\\
\MMM=\boldsymbol{A}+N_{\boldsymbol{V}}\\
\TTT\colon\XXX\to\HH\colon(x_1,\ldots,x_p)\mapsto x_1.
\end{cases}
\end{equation}
Then 
\begin{equation}
\boldsymbol{V}^\bot=\Menge{(x^*_1,\ldots,x^*_p)\in\XXX}
{\sum_{k=1}^px^*_k=0} 
\end{equation}
and it follows from Example~\ref{ex:2V} that
$(\XXX,\MMM,\TTT)$ is an embedding of \eqref{e:f1}. This setting to
split the sum of $p>2$ monotone operators was introduced by
Spingarn in \cite[Section~5]{Spin83} (see also \cite{Gols87}). 
It reduces the $p$-operator problem \eqref{e:f1} to the
two-operator inclusion $\boldsymbol{0}\in\boldsymbol{A}
\boldsymbol{x}+N_{\boldsymbol{V}}\boldsymbol{x}$.
The idea of rephrasing multi-operator problems in product spaces 
finds its roots in convex feasibility problems 
\cite{Pier76,Pier84}, where the problem of finding a point in the
intersection $\bigcap_{k=1}^pC_k$ of closed convex subsets 
$(C_k)_{1\leq k\leq p}$ of $\HH$ is associated with that of 
finding a point in 
$\boldsymbol{C}\cap \boldsymbol{V}$ in $\XXX$, where
$\boldsymbol{C}=C_1\times\cdots\times C_p$.
\end{example}

\begin{example}
\label{ex:f3}
In the setting of Problem~\ref{prob:3}, set $\XXX=\HH\oplus\GG$, 
define $\boldsymbol{M}$ and $\boldsymbol{S}$ as in \eqref{e:31z},
let $\kut=\boldsymbol{M}+\boldsymbol{S}$ be the Kuhn--Tucker
operator of \eqref{e:kt3}, and let
$\TTT\colon\XXX\to\HH\colon(x,y^*)\mapsto x$. Then, in view of
Lemma~\ref{l:31z}\ref{l:31zii}, $(\XXX,\kut,\TTT)$ is an embedding
of \eqref{e:p3}. This embedding, which underlies the
\emph{monotone+skew} framework of \cite{Siop11}, reduces
Problem~\ref{prob:3}, which involves three operators in the primal
space $\HH$ (namely, $A$, $B$, and $L$), to a problem in $\XXX$
that involves the two operators $\boldsymbol{M}$ and
$\boldsymbol{S}$.
\end{example}

\begin{example}
\label{ex:f4}
In the setting of Problem~\ref{prob:9}, set 
$\XXX=\HH\oplus\GG_1\oplus\cdots\oplus\GG_p$, let $\kut$ be the
Kuhn--Tucker operator of \eqref{e:kt37}, and let
\begin{equation}
\TTT\colon\XXX\to\HH\colon(x,y^*_1,\ldots,y^*_p)\mapsto x. 
\end{equation}
Then
it follows from Lemma~\ref{l:37z}\ref{l:37zii} that
$(\XXX,\kut,\TTT)$ is an embedding of \eqref{e:p9}.
\end{example}

Next, we consider an embedding for strongly monotone problems.

\begin{example}
\label{ex:f13}
Let $\rho\in\RPP$, let $0<p\in\NN$, let $z\in\HH$, and let 
$A\colon\HH\to 2^{\HH}$ be maximally monotone. For every
$k\in\{1,\ldots,p\}$, let $B_k\colon\GG_k\to 2^{\GG_k}$ 
and $D_k\colon\GG_k\to 2^{\GG_k}$ be maximally monotone, and
suppose that $0\neq L_k\in\BL(\HH,\GG_k)$. The problem is to
\begin{equation}
\label{e:96p}
\text{find}\;\;x\in\HH\;\;\text{such that}\;\;
z\in Ax+\sum_{k=1}^pL_k^*\big((B_k\infconv D_k)
(L_kx)\big)+\rho x.
\end{equation}
Let $\XXX=\GG_1\oplus\cdots\oplus\GG_p$, let 
\[
\begin{array}{ccll}
\!\!\!\!\!\MMM\colon
&\!\!\XXX&\!\!\to&\!\!2^{\XXX}\\
&\!\!(y^*_1,\ldots,y^*_p)&\!\!\mapsto&\!\!
\Biggl(-L_1\bigg(J_{A/\rho}\bigg(\dfrac{1}{\rho}\bigg(z-
\displaystyle{\sum_{k=1}^p}L_k^*{y^*_k}\bigg)\bigg)\bigg)+
B_1^{-1}{y^*_1}+D_1^{-1}{y^*_1}\Biggr)\\
\end{array}
\]
\begin{equation}
\label{e:96d}
\hspace{17mm}\times\cdots\times
\Biggl(-L_p\bigg(J_{A/\rho}\bigg(\dfrac{1}{\rho}\bigg(z-
\sum_{k=1}^pL_k^*{y^*_k}\bigg)\bigg)\bigg)+
B_p^{-1}{y^*_p}+D_p^{-1}{y^*_p}\Biggr),
\end{equation}
and let
\begin{equation}
\TTT\colon\XXX\to\HH\colon(y^*_1,\ldots,y^*_p)\mapsto 
J_{A/\rho}\bigg(\dfrac{1}{\rho}\bigg(z-
\sum_{k=1}^pL_k^*{y^*_k}\bigg)\bigg). 
\end{equation} 
Then it follows from \cite[Proposition~5.2(iii)]{Opti14} that
$(\XXX,\MMM,\TTT)$ is an embedding of \eqref{e:96p}.
\end{example}

Our last example concerns an embedding based on a saddle operator.

\begin{example}
\label{ex:f5}
In the setting of Problem~\ref{prob:3}, set 
$\XXX=\HH\oplus\GG\oplus\GG$, let $\sad$ be the saddle 
operator of \eqref{e:lim1}, and let
$\TTT\colon\XXX\to\HH\colon(x,y,v^*)\mapsto x$. Then it follows
from Lemma~\ref{l:lim1}\ref{l:lim1ii} that $(\XXX,\sad,\TTT)$ is an
embedding of \eqref{e:p3}.
\end{example}

Additional examples of embeddings will be provided by
Examples~\ref{ex:f6}, \ref{ex:f7}, and \ref{ex:f8}.

\section{Two geometric convergence principles}
\label{sec:4}

\subsection{Overview}

The methodology of Framework~\ref{f:1} is to identify a
target set $Z$ in a suitable Hilbert space in such a way that every
point in $Z$ yields a solution to the original problem of interest.
The algorithms we shall consider are Fej\'erian in the sense that
every iteration brings the current iterate closer to every point in
$Z$.

\subsection{Fej\'er monotone scheme}

Let us first recall some basic facts about weak and strong
convergence in Hilbert spaces.

\begin{lemma}{\rm\cite[Section~2.5]{Livre1}}
\label{l:1}
Let $(x_n)_{n\in\NN}$ be a sequence in $\HH$ and let $x\in\HH$. 
Then the following hold:
\begin{enumerate}
\item
\label{l:1i}
Let $Z$ be a nonempty subset of $\HH$. Suppose that
$\WC(x_n)_{n\in\NN}\subset Z$ and that, for every $z\in Z$, 
$(\|x_n-z\|)_{n\in\NN}$ converges. Then $(x_n)_{n\in\NN}$ converges
weakly to a point in $Z$.
\item
\label{l:1ii}
$x_n\weakly x$ $\;\Leftrightarrow\;$ 
$\big[\,(x_n)_{n\in\NN}$ is bounded and
$\WC(x_n)_{n\in\NN}=\{x\}\,\big]$.
\item
\label{l:1iii}
$x_n\to x$ $\;\Leftrightarrow\;$ $\big[\,x_n\weakly x$ and 
$\;\varlimsup\|x_n\|\leq\|x\|\,\big]$.
\end{enumerate}
\end{lemma}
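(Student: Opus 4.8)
The plan is to establish the three assertions in the order (ii), (iii), (i), because the proof of (i) will ultimately invoke (ii). The single workhorse throughout is the elementary expansion $\|x_n-z\|^2=\|x_n\|^2-2\scal{x_n}{z}+\|z\|^2$, valid for every $z\in\HH$. For (ii), the forward implication is immediate: a weakly convergent sequence is bounded by Banach--Steinhaus, and every weak sequential cluster point of a weakly convergent sequence must coincide with its limit, so $\WC(x_n)_{n\in\NN}=\{x\}$. For the converse I would argue by contradiction: if $(x_n)_{n\in\NN}$ does not converge weakly to $x$, there are $u^*\in\HH$, $\varepsilon\in\RPP$, and a subsequence $(x_{k_n})_{n\in\NN}$ with $|\scal{x_{k_n}-x}{u^*}|\geq\varepsilon$; since this subsequence is bounded, reflexivity of $\HH$ yields a further subsequence converging weakly to some $y\in\WC(x_n)_{n\in\NN}=\{x\}$, whence $y=x$, contradicting the lower bound.

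For (iii), the forward direction is clear since strong convergence implies weak convergence and $\|x_n\|\to\|x\|$. For the converse, $x_n\weakly x$ gives $\scal{x_n}{x}\to\|x\|^2$, so the expansion with $z=x$ produces $\varlimsup\|x_n-x\|^2=\varlimsup\|x_n\|^2-\|x\|^2$, which is $\leq 0$ under the hypothesis $\varlimsup\|x_n\|\leq\|x\|$; hence $\|x_n-x\|\to 0$.

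For (i), since $Z\neq\emp$ I first pick $z_0\in Z$; convergence of $(\|x_n-z_0\|)_{n\in\NN}$ forces $(x_n)_{n\in\NN}$ to be bounded, and reflexivity of $\HH$ then guarantees $\WC(x_n)_{n\in\NN}\neq\emp$. The crux is to show this set is a singleton. Given two cluster points $u,w\in\WC(x_n)_{n\in\NN}\subset Z$, applying the hypothesis at $z=u$ and $z=w$ makes both $(\|x_n-u\|)_{n\in\NN}$ and $(\|x_n-w\|)_{n\in\NN}$ convergent, so subtracting the two expansions shows that $2\scal{x_n}{u-w}=\|x_n-w\|^2-\|x_n-u\|^2+\|u\|^2-\|w\|^2$ converges. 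Evaluating its limit along a subsequence with $x_{k_n}\weakly u$ gives $2\scal{u}{u-w}$, while along one with $x_{l_n}\weakly w$ it gives $2\scal{w}{u-w}$; equating these limits yields $\|u-w\|^2=0$, i.e.\ $u=w$. Thus $\WC(x_n)_{n\in\NN}=\{x\}$ for some $x\in Z$, and part (ii) delivers $x_n\weakly x$.

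The routine expansions are harmless; the step that requires genuine care is the singleton argument in (i), where one must pass to the two distinct weak limits inside the scalar product $\scal{x_n}{u-w}$. This is legitimate precisely because $u-w$ is a \emph{fixed} vector and the full real sequence $(\scal{x_n}{u-w})_{n\in\NN}$ has already been shown to converge, so all of its subsequential limits agree. The only other delicate point is the subsequence extraction via reflexivity in the converse of (ii), which must be invoked cleanly to land a weak cluster point inside the prescribed singleton.
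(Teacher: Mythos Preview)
Your proof is correct in all three parts; the argument for (i) is precisely Opial's classical trick, and your handling of (ii) and (iii) is standard and clean. Note, however, that the paper does not actually supply a proof of this lemma: it is stated with a citation to \cite[Section~2.5]{Livre1} and used as a black box thereafter, so there is no in-paper argument to compare against. Your write-up is a faithful reconstruction of the textbook proofs one finds in that reference.
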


\begin{theorem}
\label{t:1}
Let $Z$ be a nonempty closed convex subset of $\HH$, let 
$(\lambda_n)_{n\in\NN}$ be a sequence of relaxation parameters
in $\left]0,2\right[$, and let $x_0\in\HH$. Iterate
(see Figure~\ref{fig:1})
\begin{equation}
\label{e:5}
\begin{array}{l}
\text{for}\;n=0,1,\ldots\\
\left\lfloor
\begin{array}{l}
H_n\;\text{is a closed half-space such that}\;Z\subset H_n\\
p_n=\proj_{H_n}x_n\\
x_{n+1}=x_n+\lambda_n(p_n-x_n).
\end{array}
\right.\\
\end{array}
\end{equation}
Then the following hold:
\begin{enumerate}
\item
\label{t:1i}
Fej\'er monotonicity:
$(\forall z\in Z)(\forall n\in\NN)$ $\|x_{n+1}-z\|\leq\|x_n-z\|$.
\item
\label{t:1ii}
$\sum_{n\in\NN}\lambda_n(2-\lambda_n)\|p_n-x_n\|^2<\pinf$.
\item
\label{t:1iii}
Suppose that $\sup_{n\in\NN}\lambda_n<2$. Then
$\sum_{n\in\NN}\|x_{n+1}-x_n\|^2<\pinf$.
\item
\label{t:1iv}
Suppose that $\WC(x_n)_{n\in\NN}\subset Z$. Then $(x_n)_{n\in\NN}$
converges weakly to a point in $Z$.
\end{enumerate}
\end{theorem}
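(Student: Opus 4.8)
The plan is to derive all four assertions from a single Fej\'er-type inequality coming from the geometry of the projection onto a half-space. Fix $n\in\NN$ and $z\in Z$. Since $Z\subset\boldsymbol{\mathsf{H}}_n$ is replaced here by $Z\subset H_n$ and $p_n=\proj_{H_n}x_n$, the variational characterization of the projection onto a nonempty closed convex set gives $\scal{x_n-p_n}{z-p_n}\leq 0$, equivalently $\scal{p_n-z}{p_n-x_n}\leq 0$ (this can also be read off the explicit formula in Example~\ref{ex:H}). Expanding $\|x_{n+1}-z\|^2$ via $x_{n+1}-z=(x_n-z)+\lambda_n(p_n-x_n)$ and then decomposing $x_n-z=(x_n-p_n)+(p_n-z)$, one finds $\scal{x_n-z}{p_n-x_n}=-\|p_n-x_n\|^2+\scal{p_n-z}{p_n-x_n}\leq-\|p_n-x_n\|^2$, which produces the master inequality
\begin{equation}
\label{e:master}
\|x_{n+1}-z\|^2\leq\|x_n-z\|^2-\lambda_n(2-\lambda_n)\|p_n-x_n\|^2.
\end{equation}

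Since $\lambda_n\in\left]0,2\right[$ forces $\lambda_n(2-\lambda_n)>0$, assertion~\ref{t:1i} is immediate from \eqref{e:master}. For~\ref{t:1ii} I would fix any $z\in Z$, sum \eqref{e:master} over $n\in\{0,\ldots,N\}$, telescope, and use $\|x_{N+1}-z\|^2\geq 0$ to obtain $\sum_{n=0}^N\lambda_n(2-\lambda_n)\|p_n-x_n\|^2\leq\|x_0-z\|^2$; letting $N\to\pinf$ gives the claim. For~\ref{t:1iii}, writing $\bar\lambda=\sup_{n\in\NN}\lambda_n<2$ and noting $\|x_{n+1}-x_n\|^2=\lambda_n^2\|p_n-x_n\|^2$, I would invoke the elementary bound $\lambda_n^2\leq(\bar\lambda/(2-\bar\lambda))\,\lambda_n(2-\lambda_n)$, which holds because $t\mapsto t/(2-t)$ is increasing on $\left]0,2\right[$, to dominate $\sum_n\|x_{n+1}-x_n\|^2$ by a finite multiple of the sum controlled in~\ref{t:1ii}.

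For~\ref{t:1iv}, assertion~\ref{t:1i} shows that for every $z\in Z$ the sequence $(\|x_n-z\|)_{n\in\NN}$ is nonincreasing, hence convergent, and in particular $(x_n)_{n\in\NN}$ is bounded. Together with the standing hypothesis $\WC(x_n)_{n\in\NN}\subset Z$, this is precisely the setting of Lemma~\ref{l:1}\ref{l:1i}, which then delivers weak convergence of $(x_n)_{n\in\NN}$ to a point of $Z$.

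I do not anticipate a genuine obstacle, as the whole argument is the classical Fej\'er-monotonicity scheme; the only place where the specific structure enters is the projection inequality $\scal{p_n-z}{p_n-x_n}\leq 0$, which is exactly what the half-space hypothesis $Z\subset H_n$ buys. The one point requiring mild care is~\ref{t:1iii}, where the constant $\bar\lambda/(2-\bar\lambda)$ blows up as $\bar\lambda\uparrow 2$; this is what makes the \emph{strict} condition $\sup_{n\in\NN}\lambda_n<2$, rather than merely $\lambda_n<2$, the natural hypothesis for summability of the increments.
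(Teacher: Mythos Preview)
Your proof is correct and follows essentially the same route as the paper: both derive the master inequality \eqref{e:master} from the projection characterization $\scal{z-p_n}{x_n-p_n}\leq 0$, telescope it for \ref{t:1ii}, and invoke Lemma~\ref{l:1}\ref{l:1i} for \ref{t:1iv}. For \ref{t:1iii} the paper rewrites $\lambda_n(2-\lambda_n)\|p_n-x_n\|^2=\tfrac{2-\lambda_n}{\lambda_n}\|x_{n+1}-x_n\|^2$ and bounds the coefficient below by $(2-\bar\lambda)/\bar\lambda$, which is exactly your bound $\lambda_n^2\leq\tfrac{\bar\lambda}{2-\bar\lambda}\lambda_n(2-\lambda_n)$ read the other way.
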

\begin{proof}
Let $z\in Z$. Then, for every $n\in\NN$,
$H_n=\menge{u\in\HH}{\scal{u-p_n}{x_n-p_n}\leq 0}$ and, since 
$z\in H_n$, \eqref{e:5} yields
\begin{align}
\hskip -6mm \|x_{n+1}-z\|^2
&=\|x_n-z\|^2+2\lambda_n\scal{x_n-z}{p_n-x_n}+
\lambda_n^2\|p_n-x_n\|^2
\nonumber\\
&=\|x_n-z\|^2-\lambda_n(2-\lambda_n)\|p_n-x_n\|^2
+2\lambda_n\scal{z-p_n}{x_n-p_n}
\nonumber\\
&\leq\|x_n-z\|^2-\lambda_n(2-\lambda_n)\|p_n-x_n\|^2
\label{e:65}\\
&=\|x_n-z\|^2-\dfrac{2-\lambda_n}{\lambda_n}\|x_{n+1}-x_n\|^2
\label{e:66}\\
&\leq\|x_n-z\|^2.
\label{e:64}
\end{align}

\ref{t:1i}: See \eqref{e:64}.

\ref{t:1ii}: Fix $N\in\NN$. Then \eqref{e:65} yields
\begin{equation}
\sum_{n=0}^N\lambda_n(2-\lambda_n)\|p_n-x_n\|^2
\leq\|x_0-z\|^2
\end{equation}
and we conclude by letting $N\to\pinf$.

\ref{t:1ii}$\Rightarrow$\ref{t:1iii}: This follows from
\eqref{e:66}.

\ref{t:1iv}: In view of \ref{t:1i}, $(\|x_n-z\|)_{n\in\NN}$
converges. The claim therefore follows from 
Lemma~\ref{l:1}\ref{l:1i}.
\end{proof}

\begin{remark}
\label{r:lf}
In 1922, Fej\'er \cite{Feje22} studied the following problem: given
a nonempty closed set $Z\subset\RR^N$ and a point $y\notin Z$, can
one find a point $x\in\RR^N$ such that
\begin{equation}
\label{e:lf}
(\forall z\in Z)\quad\|x- z\|<\|y- z\|.
\end{equation}
This led Motzkin and Schoenberg to adopt in \cite{Motz54} the
terminology \emph{Fej\'er monotone} to describe sequences
satisfying property \ref{t:1i} in Theorem~\ref{t:1}. In their paper
(see also \cite{Agmo54}), an algorithm was developed to solve
systems of linear inequalities in $\RR^N$ by successive projections
onto the half-spaces defining the polyhedral solution set $Z$, and
Fej\'er monotonicity was shown to be an adequate tool to study
the convergence of this algorithm. Further analysis of Fej\'er
monotonicity was proposed in
\cite{Breg65,Ere68a,Ere68b,Raik67,Raik69} and nowadays it
constitutes a central tool to analyze the asymptotic behavior of
various algorithms \cite{Livre1}. 
\end{remark}

\begin{remark}
\label{r:42}
In general, the convergence of $(x_n)_{n\in\NN}$ to $x\in Z$ in
Theorem~\ref{t:1}\ref{t:1iv} is only weak and, even if it were 
strong, there exists no rate of convergence on
$(\|x_n-x\|)_{n\in\NN}$, even in Euclidean spaces 
\cite{Baus09,Gubi67,Youl87}. In particular, achieving a linear rate
of convergence, that is, securing the existence of $\kappa\in\RPP$
and $\rho\in\zeroun$ such that
\begin{equation}
\label{e:4t}
(\forall n\in\NN)\quad\|x_n-x\|\leq\kappa\rho^n,
\end{equation}
requires stringent additional assumptions on the problem. In our
inclusion context, a typical assumption is strong monotonicity; 
see \cite[Proposition~26.16]{Livre1} for an example. In the broader
context of Theorem~\ref{t:1}\ref{t:1i}, it is clear that 
$(d_C(x_n))_{n\in\NN}$ decreases and that, for every $n\in\NN$ and
$m\in\NN$, $\|x_n-x_{n+m}\|\leq\|x_n-\proj_Cx_n\|+
\|x_{n+m}-\proj_Cx_n\|\leq 2d_C(x_n)$. Hence, 
\eqref{e:4t} will hold with $\kappa=2d_C(x_0)$ if
the decreasing property can be strengthened to
$(\forall n\in\NN) $ $d_C(x_{n+1})\leq\rho d_C(x_n)$.
\end{remark}

\begin{remark}
The implementation of \eqref{e:5} is said to be \emph{unrelaxed}
if $(\forall n\in\NN)$ $\lambda_n=1$.
\end{remark}

\begin{figure}
\begin{center}
\includegraphics[width=110mm]{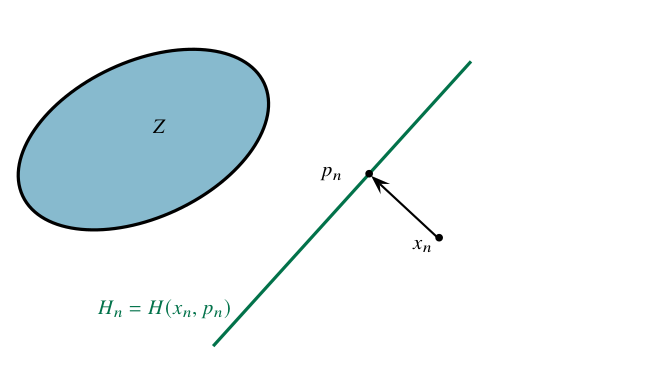}
\caption{Iteration $n$ of the Fej\'erian algorithm \eqref{e:5}.
}
\label{fig:1}
\end{center}
\end{figure}

\subsection{Haugazeau-like scheme}

Theorem~\ref{t:1} guarantees only weak convergence to an
unspecified point in $Z$ and, as will be seen on several occasions
later, strong convergence fails in general (many of these examples
will be based on a scenario of \cite{Hund04} concerning the method
of alternating projections). However, in some infinite-dimensional
applications in areas such as inverse problems, control, mechanics,
PDEs, optics, and analog computing, weak convergence does not offer
sufficient guarantees and strong convergence is required. The
geometric approach described in this section emanates from ideas
found in the work of Haugazeau on the convex feasibility problem
\cite{Haug67,Haug68}. It will provide strong convergence to a
specific point in $Z$, namely the projection of the initial point
onto $Z$. This means that the resulting algorithm is also of
interest, even in Euclidean spaces, as a best approximation method.

The following technical fact will be employed repeatedly.

\begin{lemma}{\rm(\cite[Th\'eor\`eme~3-1]{Haug68}; see also
\cite[Corollary~29.25]{Livre1})}
\label{l:2}
Let $(x,y,z)\in\HH^3$. Define
\begin{equation}
\label{e:n1}
H(x,y)=\menge{z\in\HH}{\scal{z-y}{x-y}\leq 0},
\end{equation}
$C=H(x,y)\cap H(y,z)$, and, if $C\neq\emp$, 
\begin{equation}
\label{e:01-05}
\Qq(x,y,z)=\proj_Cx.
\end{equation}
Set $\chi=\scal{x-y}{y-z}$, $\mu=\|x-y\|^2$,
$\nu=\|y-z\|^2$, and $\rho=\mu\nu-\chi^2$.
Then exactly one of the following holds:
\begin{enumerate}
\item
\label{l:2i}
$\rho=0$ and $\chi<0$, in which case $C=\emp$.
\item 
\label{l:2ii}
\rm{[}$\,\rho=0$ and $\chi\geq 0\,$\rm{]} or 
$\rho>0$, in which case $C\neq\emp$ and 
\begin{equation}
\label{e:Q}
\hskip -12mm \Qq(x,y,z)=
\begin{cases}
z,&\text{if}\;\rho=0\;\text{and}\;
\chi\geq 0;\\[+0mm]
\displaystyle
x+(1+\chi/\nu)(z-y), 
&\text{if}\;\rho>0\;\text{and}\;
\chi\nu\geq\rho;\\
\displaystyle y+(\nu/\rho)
\big(\chi(x-y)+\mu(z-y)\big), 
&\text{if}\;\rho>0\;\text{and}\;\chi\nu<\rho.
\end{cases}
\end{equation}
\end{enumerate}
\end{lemma}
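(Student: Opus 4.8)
The plan is to treat $C$ as the polyhedron cut out by the two affine constraints $g_1(u)=\scal{u-y}{x-y}\le 0$ and $g_2(u)=\scal{u-z}{y-z}\le 0$, and to identify $\proj_C x$ through the standard dual-certificate (Karush--Kuhn--Tucker) criterion: a point $p\in C$ equals $\proj_C x$ as soon as $x-p=\lambda_1(x-y)+\lambda_2(y-z)$ for some $\lambda_1,\lambda_2\ge 0$ satisfying the complementarity $\lambda_ig_i(p)=0$. Indeed, for such $p$ and any $u\in C$ one computes $\scal{u-p}{x-p}=\lambda_1g_1(u)+\lambda_2g_2(u)\le 0$ (using $\lambda_ig_i(p)=0$ and $g_i(u)\le 0$), which is exactly the variational characterization of the metric projection onto the closed convex set $C$, the same mechanism underlying the proof of Theorem~\ref{t:1}. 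Before any casework I would record that Cauchy--Schwarz gives $\chi^2\le\mu\nu$, i.e.\ $\rho\ge 0$, with equality exactly when $x-y$ and $y-z$ are linearly dependent; this already makes \ref{l:2i} and \ref{l:2ii} mutually exclusive and jointly exhaustive, so only the emptiness of $C$ and the value of $\proj_C x$ remain.

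Next I would settle emptiness. When $\rho>0$ or $\chi\ge 0$ I will exhibit a point of $C$ (below), so $C\neq\emp$; conversely, suppose $\rho=0$ and $\chi<0$. Then $x\neq y$ and $y\neq z$ (otherwise $\chi=0$), so colinearity gives $y-z=t(x-y)$ with $t=\chi/\mu<0$. Now $g_1(u)\le 0$ and $g_2(u)\le 0$ force $\scal{u}{x-y}\le\scal{y}{x-y}$ and (dividing by $t<0$) $\scal{u}{x-y}\ge\scal{z}{x-y}$, which are incompatible because $\scal{z-y}{x-y}=-\chi>0$. Hence $C=\emp$, giving \ref{l:2i}. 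A companion observation I would isolate here, from $g_1(z)=\scal{z-y}{x-y}=-\chi$ and $g_2(z)=0$, is that $z\in C\Leftrightarrow\chi\ge 0$.

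Assume now $\rho>0$, so $\mu>0$ and $\nu>0$. If $\chi\nu\ge\rho$, I take $p=x+(1+\chi/\nu)(z-y)$; direct substitution gives $g_2(p)=0$ and $g_1(p)=\mu-\chi-\chi^2/\nu$, the latter being $\le 0$ precisely because $\chi\nu\ge\rho$ (multiply by $\nu>0$). Moreover $x-p=(1+\chi/\nu)(y-z)$ with $1+\chi/\nu>0$ (since $\chi\nu\ge\rho>0$ forces $\chi>0$), so the single active normal $y-z$ certifies $p=\proj_C x$, the second line of \eqref{e:Q}. If instead $\chi\nu<\rho$, I solve $g_1(p)=g_2(p)=0$ for $p=x-\lambda_1(x-y)-\lambda_2(y-z)$; the resulting $2\times 2$ linear system yields $\lambda_1=(\rho-\chi\nu)/\rho$ and $\lambda_2=\mu\nu/\rho$, both $>0$ under $\chi\nu<\rho$, and back-substitution gives $p=y+(\nu/\rho)\bigl(\chi(x-y)+\mu(z-y)\bigr)$, the third line of \eqref{e:Q}; both constraints are active with nonnegative multipliers, so the certificate again applies. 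In both cases $p\in C$, confirming the nonemptiness asserted above.

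Finally, for $\rho=0$ and $\chi\ge 0$ the companion observation gives $z\in C$, and I claim $\proj_C x=z$. When $x=y$ or $y=z$ this is immediate from Example~\ref{ex:H}: one constraint becomes vacuous and projecting $x$ onto the other sends it to $z$. Otherwise $x-y$ and $y-z$ are colinear with $x-y=s(y-z)$, where $\chi=s\nu$ and $s>0$, so $x-z=(s+1)(y-z)$ is a nonnegative multiple of the normal $y-z$ active at $z$ (while $g_1(z)=-\chi<0$ is inactive), and the certificate yields $\proj_C x=z$, the first line of \eqref{e:Q}. I expect the main difficulty to be organizational rather than conceptual: carrying out the substitutions so that the sign of each multiplier is seen to switch exactly at the thresholds $\chi\nu=\rho$ and $\chi=0$, and ensuring the degenerate colinear configurations ($x=y$, $y=z$) are absorbed into the stated formula instead of generating spurious extra cases.
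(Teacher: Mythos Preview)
The paper does not prove this lemma; it is quoted from Haugazeau's thesis and \cite[Corollary~29.25]{Livre1} and used as a black box. So there is no ``paper's own proof'' to match.

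Your argument is correct. The KKT/dual-certificate mechanism you invoke is exactly the variational characterization of the projection onto a polyhedron, and your verification that $\scal{u-p}{x-p}=\lambda_1(g_1(u)-g_1(p))+\lambda_2(g_2(u)-g_2(p))\le 0$ under complementarity is clean. The Cauchy--Schwarz step yielding $\rho\ge 0$ with equality iff $x-y$ and $y-z$ are colinear correctly establishes the dichotomy, your emptiness argument for $\rho=0$, $\chi<0$ is sound, and the computations of $g_1(p)$, $g_2(p)$, $\lambda_1$, $\lambda_2$ in each sub-case check out (in particular $\nu g_1(p)=\rho-\chi\nu$ in the second line, and $\lambda_1=(\rho-\chi\nu)/\rho$, $\lambda_2=\mu\nu/\rho$ in the third). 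The degenerate cases $x=y$ and $y=z$ are handled, and in the remaining colinear case $\rho=0$, $\chi>0$ you correctly observe $\chi^2=\mu\nu>0$ forces $\chi\neq 0$, so $s=\chi/\nu>0$. One cosmetic remark: the aside ``the same mechanism underlying the proof of Theorem~\ref{t:1}'' is a stretch, since that proof uses the obtuse-angle characterization of a single half-space projection rather than a multiplier argument; you could drop it without loss.
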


The essential components of the following theorem are found in the
unpublished thesis of Haugazeau \cite{Haug68} (see \cite{Haug67}
for a preliminary variant), where he considered the specific
problem of projecting a point onto the intersection of finitely
many sets using their individual projection operators cyclically.

\begin{theorem}
\label{t:2}
Let $Z$ be a nonempty closed convex subset of $\HH$, let 
$(\lambda_n)_{n\in\NN}$ be a sequence of relaxation parameters
in $\rzeroun$, and let $x_0\in\HH$. Iterate 
(see Figure~\ref{fig:2})
\begin{equation}
\label{e:6}
\begin{array}{l}
\text{for}\;n=0,1,\ldots\\
\left\lfloor
\begin{array}{l}
H_n\;\text{is a closed half-space such that}\;Z\subset H_n\\
p_n=\proj_{H_n}x_n\\
r_n=x_n+\lambda_n(p_n-x_n)\\
x_{n+1}=\Qq(x_0,x_n,r_n).
\end{array}
\right.\\
\end{array}
\end{equation}
Then the sequence $(x_n)_{n\in\NN}$ is well defined and the
following hold:
\begin{enumerate}
\item
\label{t:2i}
$(\forall n\in\NN)$ $Z\subset H(x_0,x_n)\cap H(x_n,r_n)$.
\item
\label{t:2ii}
$(\exi\ell\in\RP)$ $\|x_n-x_0\|\uparrow\ell\leq d_Z(x_0)$.
\item
\label{t:2iii}
$\sum_{n\in\NN}\|x_{n+1}-x_n\|^2<\pinf$.
\item
\label{t:2iv}
$\sum_{n\in\NN}\lambda_n^2\|p_n-x_n\|^2<\pinf$.
\item
\label{t:2v}
Suppose that $\WC(x_n)_{n\in\NN}\subset Z$. Then $(x_n)_{n\in\NN}$
converges strongly to $\proj_Zx_0$.
\end{enumerate}
\end{theorem}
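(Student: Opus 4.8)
The plan is to run the classical Haugazeau induction with $x_0$ as the fixed anchor and to use the half-space projection formula of Example~\ref{ex:H} at every step. Throughout, write $C_n=H(x_0,x_n)\cap H(x_n,r_n)$, so that \eqref{e:6} reads $x_{n+1}=\Qq(x_0,x_n,r_n)=\proj_{C_n}x_0$ as soon as $C_n\neq\emp$ (see \eqref{e:01-05}). The whole argument hinges on the single invariant $Z\subset C_n$, which I would establish first.

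First I would prove simultaneously the well-definedness of $(x_n)_{n\in\NN}$ and assertion~\ref{t:2i} by showing $Z\subset C_n$ for every $n$. The inclusion $Z\subset H(x_0,x_n)$ is inductive: $H(x_0,x_0)=\HH$, and if $Z\subset C_{n-1}$ then, since $x_n=\proj_{C_{n-1}}x_0$, the supporting half-space property of the projection gives $C_{n-1}\subset H(x_0,x_n)$, whence $Z\subset H(x_0,x_n)$. The inclusion $Z\subset H(x_n,r_n)$ is the only genuinely new computation. Fixing $z\in Z\subset H_n$ and using $p_n=\proj_{H_n}x_n$ (so $\scal{z-p_n}{x_n-p_n}\le 0$) together with $x_n-r_n=-\lambda_n(p_n-x_n)$ and $z-r_n=(z-p_n)+(1-\lambda_n)(p_n-x_n)$, I would expand
\[
\scal{z-r_n}{x_n-r_n}=-\lambda_n\scal{z-p_n}{p_n-x_n}-\lambda_n(1-\lambda_n)\|p_n-x_n\|^2
\]
and note that \emph{both} terms are $\le 0$: the first since $\scal{z-p_n}{p_n-x_n}\ge 0$, the second since $\lambda_n\in\rzeroun$ forces $1-\lambda_n\ge 0$. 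Hence $z\in H(x_n,r_n)$, so $Z\subset C_n$, $C_n\neq\emp$, and Lemma~\ref{l:2}\ref{l:2ii} guarantees $\Qq(x_0,x_n,r_n)=\proj_{C_n}x_0$ is well defined.

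For assertion~\ref{t:2ii} I would use Example~\ref{ex:H} to identify $x_n=\proj_{H(x_0,x_n)}x_0$. Since $C_n\subset H(x_0,x_n)$, projecting $x_0$ onto the smaller set can only move it farther, giving $\|x_0-x_n\|\le\|x_0-x_{n+1}\|$; and since $Z\subset C_n$, we obtain $\|x_0-x_{n+1}\|=d_{C_n}(x_0)\le d_Z(x_0)$, so $(\|x_n-x_0\|)_{n\in\NN}$ increases to some $\ell\le d_Z(x_0)$. Assertion~\ref{t:2iii} then follows from the obtuse-angle inequality: $x_{n+1}\in H(x_0,x_n)$ with $x_n=\proj_{H(x_0,x_n)}x_0$ yields $\|x_{n+1}-x_n\|^2\le\|x_0-x_{n+1}\|^2-\|x_0-x_n\|^2$, which telescopes to a bound by $\ell^2$. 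For assertion~\ref{t:2iv}, Example~\ref{ex:H} also gives $r_n=\proj_{H(x_n,r_n)}x_n$; since $x_{n+1}\in H(x_n,r_n)$, the point $x_{n+1}$ lies no closer to $x_n$ than the bounding hyperplane, i.e. $\lambda_n\|p_n-x_n\|=\|r_n-x_n\|\le\|x_{n+1}-x_n\|$, and summing against \ref{t:2iii} closes the estimate.

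Finally, for the strong convergence~\ref{t:2v} I would bypass a direct Cauchy argument and instead pin down the weak limit. The sequence is bounded by \ref{t:2ii}. If $w\in\WC(x_n)_{n\in\NN}$, then $w\in Z$ by hypothesis, so $\|x_0-w\|\ge d_Z(x_0)\ge\ell$, while weak lower semicontinuity of the norm along a subsequence gives $\|x_0-w\|\le\ell$; hence $\|x_0-w\|=\ell=d_Z(x_0)$, and uniqueness of the projection onto $Z$ forces $w=\proj_Zx_0$. Thus every weak cluster point equals $\proj_Zx_0$, and Lemma~\ref{l:1}\ref{l:1ii} yields $x_n\weakly\proj_Zx_0$ with $\ell=\|x_0-\proj_Zx_0\|$. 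Strong convergence then drops out of
\[
\|x_n-\proj_Zx_0\|^2=\|x_n-x_0\|^2+\|x_0-\proj_Zx_0\|^2+2\scal{x_n-x_0}{x_0-\proj_Zx_0},
\]
whose right-hand side tends to $\ell^2+\ell^2-2\ell^2=0$, using $\|x_n-x_0\|\to\ell$ and $x_n\weakly\proj_Zx_0$. The main obstacle is securing the invariant $Z\subset C_n$—concretely the relaxed-projection membership $Z\subset H(x_n,r_n)$, which is exactly where the constraint $\lambda_n\le 1$ enters—since everything afterward is projection bookkeeping once that inclusion is in place.
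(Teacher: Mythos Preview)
Your proposal is correct and follows essentially the same approach as the paper: the invariant $Z\subset H(x_0,x_n)\cap H(x_n,r_n)$ is established by the same induction and the same relaxed-projection computation, and items \ref{t:2ii}--\ref{t:2iv} are derived from the same half-space projection bookkeeping. The only cosmetic difference is in the final step of \ref{t:2v}: the paper concludes strong convergence by combining $x_0-x_n\weakly x_0-\proj_Zx_0$ and $\|x_0-x_n\|\to\|x_0-\proj_Zx_0\|$ via Lemma~\ref{l:1}\ref{l:1iii}, whereas you expand $\|x_n-\proj_Zx_0\|^2$ directly --- both amount to the same Kadec--Klee argument.
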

\begin{proof}
First, recall that the projector onto a nonempty closed convex 
subset $D$ of $\HH$ is characterized by \cite[Theorem~3.16]{Livre1}
\begin{equation}
\label{e:2001}
(\forall x\in\HH)\quad\proj_Dx\in D\quad\text{and}\quad 
D\subset H(x,\proj_Dx).
\end{equation}
We also observe that \eqref{e:6} implies that 
\begin{align}
\label{e:2000}
&\hskip -5mm (\forall n\in\NN)\quad H(x_n,p_n)\nonumber\\
&\hskip 15mm=\menge{z\in\HH}{\scal{z-p_n}{x_n-r_n}\leq 0}
\nonumber\\
&\hskip 15mm=\menge{z\in\HH}{\scal{z-r_n}{x_n-r_n}\leq
\scal{p_n-r_n}{x_n-r_n}}\nonumber\\
&\hskip 15mm=\menge{z\in\HH}{\scal{z-r_n}{x_n-r_n}\leq
-\lambda_n(1-\lambda_n)\|x_n-p_n\|^2}\nonumber\\
&\hskip 15mm\subset H(x_n,r_n).
\end{align}

\ref{t:2i}:
Let $n\in\NN$ be such that $x_n$ exists. It follows from 
\eqref{e:6} and \eqref{e:2000} that 
$Z\subset H_n=H(x_n,p_n)\subset H(x_n,r_n)$. It is therefore enough
to show that $Z\subset H(x_0,x_n)$. This inclusion certainly holds
for $n=0$ since $H(x_0,x_0)=\HH$. Furthermore, it follows from
\eqref{e:2001} and \eqref{e:6} that
\begin{eqnarray}
\label{e:xenon}
Z\subset H(x_0,x_n)
&\Rightarrow&Z\subset 
H(x_0,x_n)\cap 
H(x_n,r_n)\nonumber\\
&\Rightarrow&Z\subset 
H\bigl(x_0,\Qq(x_0,x_n,r_n)\bigr)\nonumber\\
&\Leftrightarrow&Z\subset 
H(x_0,x_{n+1}),
\end{eqnarray}
which establishes the assertion by induction. This also shows that
$H(x_0,x_n)\cap H(x_n,r_n)\neq\emp$ and hence that $x_{n+1}$ is
well defined.

\ref{t:2ii}--\ref{t:2iii}:
Let $n\in\NN$. By construction, 
$x_{n+1}=\Qq(x_0,x_n,r_n)\in H(x_0,x_n)\cap H(x_n,r_n)$. 
Consequently, since $x_n$ is the projection of $x_0$ onto
$H(x_0,x_n)$ and $x_{n+1}\in H(x_0,x_n)$, we have 
$\|x_0-x_n\|\leq\|x_0-x_{n+1}\|$.
On the other hand, since $\proj_{Z}x_0\in Z\subset H(x_0,x_n)$, 
we have $\|x_0-x_n\|\leq\|x_0-\proj_{Z}x_0\|$.
It follows that $(\|x_0-x_k\|)_{k\in\NN}$ converges to some
$\ell\in[0,\|x_0-\proj_{Z}x_0\|]$, which establishes
\ref{t:2ii}, and that
\begin{equation}
\label{e:07g}
\lim\|x_0-x_k\|\leq\|x_0-\proj_{Z}x_0\|.
\end{equation}
However, since $x_{n+1}\in H(x_0,x_n)$, we have
\begin{align}
\|x_{n+1}-x_n\|^2
&\leq\|x_{n+1}-x_n\|^2+2\scal{x_{n+1}-x_n}{x_n-x_0}\nonumber\\
&=\|x_0-x_{n+1}\|^2-\|x_0-x_n\|^2.
\end{align}
Hence, 
\begin{equation}
\sum_{k=0}^n\|x_{k+1}-x_k\|^2
\leq\|x_0-x_{n+1}\|^2\leq\|x_0-\proj_{Z}x_0\|^2
\end{equation}
and therefore 
\begin{equation}
\sum_{k\in\NN}\|x_{k+1}-x_k\|^2<\pinf.
\end{equation}

\ref{t:2iv}:
For every $n\in\NN$, we derive from the inclusion 
$x_{n+1}\in H(x_n,r_n)$ that
\begin{align}
\label{e:07m}
\|r_n-x_n\|^2
&\leq\|x_{n+1}-r_n\|^2+
\|x_n-r_n\|^2\nonumber\\
&\leq\|x_{n+1}-r_n\|^2+
2\scal{x_{n+1}-r_n}{r_n-x_n}+\|x_n-r_n\|^2\nonumber\\
&=\|x_{n+1}-x_n\|^2.
\end{align}
Hence, by \ref{t:2iii} and \eqref{e:6}, 
\begin{equation}
\sum_{n\in\NN}\lambda_n^2\|p_n-x_n\|^2=
\sum_{n\in\NN}\|r_n-x_n\|^2<\pinf. 
\end{equation}

\ref{t:2v}:
Let us note that \ref{t:2ii} implies that
$(x_n)_{n\in\NN}$ is bounded. Now let $x\in\WC(x_n)_{n\in\NN}$,
say $x_{k_n}\weakly x$. Then, by weak lower semicontinuity of 
$\|\cdot\|$ \cite[Lemma~2.42]{Livre1} and \ref{t:2ii},
\begin{equation}
\|x_0-x\|\leq\varliminf\|x_0-x_{k_n}\|\leq\|x_0-\proj_{Z}x_0\|=
\inf_{z\in Z}\|x_0-z\|. 
\end{equation}
Hence, since $x\in Z$, $x=\proj_{Z}x_0$
is the only weak sequential cluster point of 
$(x_n)_{n\in\NN}$ and it follows from Lemma~\ref{l:1}\ref{l:1ii}
that $x_n\weakly\proj_Zx_0$. In turn, \ref{t:2ii} yields 
\begin{equation}
\|x_0-\proj_Zx_0\|\leq\varliminf\|x_0-x_n\|=
\lim\|x_0-x_n\|\leq\|x_0-\proj_Zx_0\|.
\end{equation}
Thus, $x_0-x_n\weakly x_0-\proj_Zx_0$ and 
$\|x_0-x_n\|\to\|x_0-\proj_{Z}x_0\|$.
We therefore derive from Lemma~\ref{l:1}\ref{l:1iii}
that $x_0-x_n\to x_0-\proj_Zx_0$, i.e., $x_n\to\proj_Zx_0$.
\end{proof}

\begin{figure}
\begin{center}
\includegraphics[width=110mm]{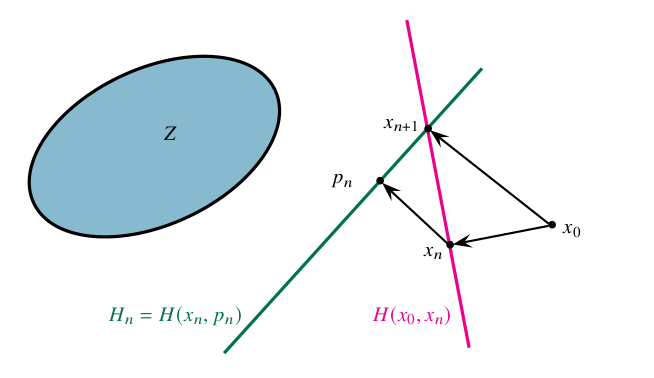}
\caption{Iteration $n$ of the Haugazeau-like algorithm 
\eqref{e:6} with $\lambda_n=1$.
}
\label{fig:2}
\end{center}
\end{figure}

\subsection{Graph-based cuts}
\label{sec:gr}

We consider the problem of finding a zero of a maximally
monotone operator $M\colon\HH\to 2^{\HH}$ decomposed as
$M=W+C$, where $W\colon\HH\to 2^{\HH}$ is maximally monotone and
$C\colon\HH\to\HH$ is cocoercive, using the geometric principles of
Theorems~\ref{t:1} and \ref{t:2}. To this end, we shall construct
half-spaces by selecting points in the graph of $W$. Let us start
with a weak convergence result.

\begin{theorem}
\label{t:1c}
Let $\alpha\in\RPP$, let $W\colon\HH\to 2^{\HH}$ be maximally 
monotone, let $C\colon\HH\to\HH$ be $\alpha$-cocoercive and
such that $Z=\zer(W+C)\neq\emp$, let $x_0\in\HH$, and let
$(\lambda_n)_{n\in\NN}$ be a sequence in $\left]0,2\right[$.
Iterate
\begin{equation}
\label{e:fejer19}
\begin{array}{l}
\text{for}\;n=0,1,\ldots\\
\left\lfloor
\begin{array}{l}
(w_n,w_n^*)\in\gra W,\;q_n\in\HH\\
t_n^*=w_n^*+Cq_n\\
\delta_n=\scal{x_n-w_n}{t_n^*}-\|w_n-q_n\|^2/(4\alpha)\\[2mm]
d_n=
\begin{cases}
\dfrac{\delta_n}
{\|t_n^*\|^2}t_n^*,&\text{if}\:\:\delta_n>0;\\
0,&\text{otherwise}\\
\end{cases}\\
x_{n+1}=x_n-\lambda_nd_n.
\end{array} 
\right. 
\end{array} 
\end{equation}
Then the following hold:
\begin{enumerate}
\item
\label{t:1ci}
$(x_n)_{n\in\NN}$ is bounded.
\item
\label{t:1cii}
$\sum_{n\in\NN}\lambda_n(2-\lambda_n)\|d_n\|^2<\pinf$.
\item
\label{t:1ciii}
Suppose that $w_n-x_n\weakly 0$, $w_n-q_n\to 0$, and $t_n^*\to 0$.
Then $(x_n)_{n\in\NN}$ converges weakly to a point in $Z$.
\end{enumerate}
\end{theorem}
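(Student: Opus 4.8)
The plan is to recognize the recursion \eqref{e:fejer19} as an instance of the Fej\'erian scheme \eqref{e:5} of Theorem~\ref{t:1}, so that parts \ref{t:1ci} and \ref{t:1cii} follow at once and part \ref{t:1ciii} reduces to a weak-cluster-point computation. First I would record that, since $C$ is $\alpha$-cocoercive, it is defined on all of $\HH$, maximally monotone (Example~\ref{ex:13}), and $\alpha^{-1}$-Lipschitzian; hence $W+C$ is maximally monotone by Lemma~\ref{l:11}\ref{l:11i}, and $Z=\zer(W+C)$ is nonempty (by hypothesis), closed, and convex by \eqref{e:zero}. This legitimizes the application of Theorem~\ref{t:1}.

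The crux is to produce, at each iteration $n$, a closed half-space containing $Z$ whose relaxed projection of $x_n$ is exactly $x_n-\lambda_n d_n$. Fix $z\in Z$, so that $-Cz\in Wz$, i.e. $(z,-Cz)\in\gra W$. Monotonicity of $W$ against $(w_n,w_n^*)\in\gra W$ gives $\scal{w_n-z}{w_n^*+Cz}\geq 0$; substituting $w_n^*=t_n^*-Cq_n$ turns this into $\scal{w_n-z}{t_n^*}\geq\scal{w_n-z}{Cq_n-Cz}$. Splitting $w_n-z=(w_n-q_n)+(q_n-z)$, applying $\alpha$-cocoercivity to the second piece and the Young inequality $\|w_n-q_n\|\,\|Cq_n-Cz\|\leq\|w_n-q_n\|^2/(4\alpha)+\alpha\|Cq_n-Cz\|^2$ to the first, the two $\alpha\|Cq_n-Cz\|^2$ contributions cancel and I obtain $\scal{z-w_n}{t_n^*}\leq\|w_n-q_n\|^2/(4\alpha)$. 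Thus $Z\subset H_n:=\menge{u\in\HH}{\scal{u-w_n}{t_n^*}\leq\|w_n-q_n\|^2/(4\alpha)}$. Invoking Example~\ref{ex:H} with $u^*=t_n^*$ and $\eta=\scal{w_n}{t_n^*}+\|w_n-q_n\|^2/(4\alpha)$, and noting that $\scal{x_n}{t_n^*}-\eta=\delta_n$, identifies $\proj_{H_n}x_n=x_n-d_n$ (the degenerate case $t_n^*=0$ forces $\delta_n\leq 0$, hence $d_n=0$ and $x_{n+1}=x_n$, so one may take $H_n=\HH$). Therefore \eqref{e:fejer19} is \eqref{e:5} with $p_n=x_n-d_n$, and Theorem~\ref{t:1}\ref{t:1i} yields \ref{t:1ci}, while Theorem~\ref{t:1}\ref{t:1ii}, together with $\|p_n-x_n\|=\|d_n\|$, yields \ref{t:1cii}.

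For \ref{t:1ciii}, the idea is to pass to the limit without appealing to any weak continuity of $C$. Let $x\in\WC(x_n)_{n\in\NN}$, say $x_{k_n}\weakly x$. Since $w_n-x_n\weakly 0$ and $w_n-q_n\to 0$, both $w_{k_n}\weakly x$ and $q_{k_n}\weakly x$. The decisive observation is that $(w_n,w_n^*+Cw_n)\in\gra(W+C)$, and that $w_n^*+Cw_n=t_n^*+(Cw_n-Cq_n)$, whose norm is at most $\|t_n^*\|+\alpha^{-1}\|w_n-q_n\|\to 0$ by the Lipschitz bound. Hence $w_{k_n}\weakly x$ and $w_{k_n}^*+Cw_{k_n}\to 0$, so the sequential weak--strong closedness of the graph of the maximally monotone operator $W+C$ (Lemma~\ref{l:12}) gives $(x,0)\in\gra(W+C)$, i.e. $x\in Z$. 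Thus $\WC(x_n)_{n\in\NN}\subset Z$, and Theorem~\ref{t:1}\ref{t:1iv} delivers weak convergence of $(x_n)_{n\in\NN}$ to a point of $Z$.

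The main obstacle is the construction and verification of the cut in the second paragraph: one must couple the monotonicity of $W$ with the cocoercivity of $C$ through precisely the right Young splitting so that the cross term $\alpha\|Cq_n-Cz\|^2$ is absorbed and the cut constant is exactly $\|w_n-q_n\|^2/(4\alpha)$, matching $\delta_n$. The secondary subtlety, in the third paragraph, is to take the limit through the \emph{sum} operator $W+C$---using the graph point $(w_n,w_n^*+Cw_n)$ rather than handling $W$ and $C$ separately---since $C$ need not be weakly sequentially continuous.
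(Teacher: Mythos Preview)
Your proposal is correct and follows essentially the same route as the paper's proof: the same half-space $H_n=\menge{u\in\HH}{\scal{u-w_n}{t_n^*}\leq\|w_n-q_n\|^2/(4\alpha)}$, the same monotonicity-plus-cocoercivity estimate to show $Z\subset H_n$ (the paper completes the square where you invoke Young, which is the same computation), the identification of \eqref{e:fejer19} with \eqref{e:5} via Example~\ref{ex:H}, and for \ref{t:1ciii} the same passage to the limit through the graph point $(w_n,w_n^*+Cw_n)\in\gra(W+C)$ and Lemma~\ref{l:12}.
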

\begin{proof}
We first observe that \eqref{e:fejer19} is well defined since
$(\forall n\in\NN)$ $\delta_n>0$ $\Rightarrow$ $t^*_n\neq 0$.
It follows from Example~\ref{ex:13} and
Lemma~\ref{l:11}\ref{l:11i} that
\begin{equation}
\label{e:z7}
W+C\;\text{is maximally monotone},
\end{equation}
and hence from \eqref{e:zero} that $Z$ is a nonempty closed convex
subset of $\HH$. Set 
\begin{equation}
\label{e:h8}
(\forall n\in\NN)\quad
H_n=\Menge{z\in\HH}{\scal{z-w_n}{t_n^*}\leq
\dfrac{\|w_n-q_n\|^2}{4\alpha}}
\end{equation}
and let $z\in Z$. For every $n\in\NN$, since $(z,-Cz)\in\gra W$ and
$(w_n,w_n^*)\in\gra W$, it results from the monotonicity of $W$ 
that $\scal{w_n-z}{w_n^*+Cz}\geq 0$. Hence, since $C$ is 
$\alpha$-cocoercive,
\begin{align}
&\hskip -3mm
(\forall n\in\NN)\quad
\scal{z-w_n}{t_n^*}\nonumber\\
&\hskip 22mm=\scal{z-w_n}{w_n^*+Cq_n}\nonumber\\
&\hskip 22mm\leq\scal{z-w_n}{Cq_n-Cz}\label{e:c35}\\
&\hskip 22mm=\scal{q_n-w_n}{Cq_n-Cz}+\scal{z-q_n}{Cq_n-Cz}
\nonumber\\
&\hskip 22mm\leq\scal{q_n-w_n}{Cq_n-Cz}-\alpha\|Cq_n-Cz\|^2
\label{e:c33}\\
&\hskip 22mm=2\Scal{\dfrac{q_n-w_n}{\sqrt{4\alpha}}}
{\sqrt{\alpha}(Cq_n-Cz)}-\big\|\sqrt{\alpha}(Cq_n-Cz)\big\|^2
\nonumber\\
&\hskip 22mm=\dfrac{\|w_n-q_n\|^2}{4\alpha}-
\bigg\|\sqrt{\alpha}(Cq_n-Cz)+\dfrac{w_n-q_n}
{\sqrt{4\alpha}}\bigg\|^2
\nonumber\\
&\hskip 22mm\leq\dfrac{\|w_n-q_n\|^2}{4\alpha}.
\label{e:c36}
\end{align}
This shows that $(\forall n\in\NN)$ $Z\subset H_n$. 
In addition, it results from \eqref{e:fejer19} and 
Example~\ref{ex:H} that
\begin{equation}
\label{e:fejer23}
(\forall n\in\NN)\quad
x_{n+1}=x_n+\lambda_n\bigl(\proj_{H_n}x_n-x_n\bigr),
\end{equation}
which corresponds to the setting of Theorem~\ref{t:1}.

\ref{t:1ci}:
This follows from Theorem~\ref{t:1}\ref{t:1i}.

\ref{t:1cii}:
This follows from Theorem~\ref{t:1}\ref{t:1ii}.

\ref{t:1ciii}:
Let $x\in\WC(x_n)_{n\in\NN}$, say $x_{k_n}\weakly x$. Then
$w_{k_n}=x_{k_n}+(w_{k_n}-x_{k_n})\weakly x$. On the other hand,
since $C$ is $1/\alpha$-Lipschitzian, 
\begin{equation}
\|w_n^*+Cw_n\|=\|t_n^*+Cw_n-Cq_n\|
\leq\|t_n^*\|+\dfrac{\|w_n-q_n\|}{\alpha}\to 0. 
\end{equation}
In addition, since $(w_n,w^*_n)_{n\in\NN}$ is in $\gra W$, 
$(w_n,w^*_n+Cw_n)_{n\in\NN}$ is in $\gra(W+C)$. 
It then follows from \eqref{e:z7} and Lemma~\ref{l:12} that 
$x\in Z$. We conclude by invoking Theorem~\ref{t:1}\ref{t:1iv}.
\end{proof}

We now turn to strong convergence.

\begin{theorem}
\label{t:2c}
Let $\alpha\in\RPP$, let $W\colon\HH\to 2^{\HH}$ be maximally 
monotone, let $C\colon\HH\to\HH$ be $\alpha$-cocoercive and
such that $Z=\zer(W+C)\neq\emp$, let $x_0\in\HH$, and let
$(\lambda_n)_{n\in\NN}$ be a sequence in $\left]0,1\right]$.
Iterate
\begin{equation}
\label{e:haug19}
\begin{array}{l}
\text{for}\;n=0,1,\ldots\\
\left\lfloor
\begin{array}{l}
(w_n,w_n^*)\in\gra W,\;q_n\in\HH\\
t_n^*=w_n^*+Cq_n\\
\delta_n=\scal{x_n-w_n}{t_n^*}-\|w_n-q_n\|^2/(4\alpha)\\[2mm]
d_n=
\begin{cases}
\dfrac{\delta_n}
{\|t_n^*\|^2}t_n^*,&\text{if}\:\:\delta_n>0;\\
0,&\text{otherwise}\\
\end{cases}\\
r_n=x_n-\lambda_nd_n\\
x_{n+1}=\Qq(x_0,x_n,r_n),
\end{array} 
\right. 
\end{array} 
\end{equation}
where $\Qq$ is defined in Lemma~\ref{l:2}. Then the following hold:
\begin{enumerate}
\item
\label{t:2ci}
$(x_n)_{n\in\NN}$ is bounded.
\item
\label{t:2cii}
$\sum_{n\in\NN}\|x_{n+1}-x_n\|^2<\pinf$.
\item
\label{t:2cii+}
$\sum_{n\in\NN}\lambda_n^2\|d_n\|^2<\pinf$.
\item
\label{t:2ciii}
Suppose that $w_n-x_n\weakly 0$, $w_n-q_n\to 0$, and $t_n^*\to 0$.
Then $(x_n)_{n\in\NN}$ converges strongly to $\proj_Zx_0$. 
\end{enumerate}
\end{theorem}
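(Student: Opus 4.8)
The plan is to recognize that the cut construction in \eqref{e:haug19} is \emph{identical} to that of Theorem~\ref{t:1c}, the only change being that the relaxed projection onto $H_n$ now feeds the Haugazeau update $\Qq(x_0,x_n,r_n)$ instead of being used directly. The entire argument therefore reduces to placing \eqref{e:haug19} in the setting of Theorem~\ref{t:2} and quoting its conclusions. First I would, exactly as in the proof of Theorem~\ref{t:1c}, invoke Example~\ref{ex:13} and Lemma~\ref{l:11}\ref{l:11i} to get that $W+C$ is maximally monotone, so that by \eqref{e:zero} the set $Z=\zer(W+C)$ is nonempty, closed, and convex.

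Next I would introduce the half-spaces
\[
(\forall n\in\NN)\quad
H_n=\Menge{z\in\HH}{\scal{z-w_n}{t_n^*}\leq\dfrac{\|w_n-q_n\|^2}{4\alpha}}
\]
and reproduce verbatim the monotonicity--cocoercivity chain \eqref{e:c35}--\eqref{e:c36} to conclude $Z\subset H_n$ for every $n$. The one point that requires care is the bookkeeping: writing $p_n=\proj_{H_n}x_n$ and applying Example~\ref{ex:H} with $u^*=t_n^*$ and $\eta=\scal{w_n}{t_n^*}+\|w_n-q_n\|^2/(4\alpha)$, so that $\scal{x_n}{u^*}-\eta=\delta_n$, yields $p_n=x_n-d_n$. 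Consequently $r_n=x_n+\lambda_n(p_n-x_n)$ and $x_{n+1}=\Qq(x_0,x_n,r_n)$, which is precisely the iteration \eqref{e:6} of Theorem~\ref{t:2} with relaxation parameters in $\rzeroun$.

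With this identification in hand, the first three assertions follow immediately: \ref{t:2ci} from Theorem~\ref{t:2}\ref{t:2ii} (which forces $(x_n)_{n\in\NN}$ to be bounded), \ref{t:2cii} from Theorem~\ref{t:2}\ref{t:2iii}, and \ref{t:2cii+} from Theorem~\ref{t:2}\ref{t:2iv}, upon noting that $\|d_n\|=\|p_n-x_n\|$.

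Finally, for \ref{t:2ciii} I would verify $\WC(x_n)_{n\in\NN}\subset Z$ exactly as in Theorem~\ref{t:1c}\ref{t:1ciii}: given $x_{k_n}\weakly x$, the hypothesis $w_n-x_n\weakly 0$ gives $w_{k_n}\weakly x$; since $C$ is $1/\alpha$-Lipschitzian, $\|w_n^*+Cw_n\|\leq\|t_n^*\|+\|w_n-q_n\|/\alpha\to 0$; and since $(w_n,w_n^*+Cw_n)_{n\in\NN}$ lies in $\gra(W+C)$, Lemma~\ref{l:12} forces $x\in Z$. Strong convergence to $\proj_Zx_0$ then follows from Theorem~\ref{t:2}\ref{t:2v}. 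No genuinely new estimate is needed; the proof is a direct parallel of the weak-convergence case, with the relaxation range tightened to $\rzeroun$ and all the analytic work delegated to Theorem~\ref{t:2}.
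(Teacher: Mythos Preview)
Your proposal is correct and follows essentially the same approach as the paper: defining the half-spaces $H_n$ as in \eqref{e:h8}, invoking \eqref{e:c36} for $Z\subset H_n$, identifying $r_n=x_n+\lambda_n(\proj_{H_n}x_n-x_n)$ via Example~\ref{ex:H}, and then reading off \ref{t:2ci}--\ref{t:2cii+} from Theorem~\ref{t:2}\ref{t:2ii}--\ref{t:2iv} and \ref{t:2ciii} from Theorem~\ref{t:2}\ref{t:2v} after establishing $\WC(x_n)_{n\in\NN}\subset Z$ exactly as in the proof of Theorem~\ref{t:1c}\ref{t:1ciii}. Your write-up is slightly more explicit about the bookkeeping (e.g., $p_n=x_n-d_n$), but the argument is the same.
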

\begin{proof}
Define $(H_n)_{n\in\NN}$ as in \eqref{e:h8} and note that
\eqref{e:c36} yields $Z\subset\bigcap_{n\in\NN}H_n$. Furthermore,
we derive from \eqref{e:haug19} and Example~\ref{ex:H} that 
$(\forall n\in\NN)$ $r_n=x_n+\lambda_n(\proj_{H_n}x_n-x_n)$.
This places us in the setting of Theorem~\ref{t:2}.

\ref{t:2ci}: This follows from Theorem~\ref{t:2}\ref{t:2ii}.

\ref{t:2cii}:
See Theorem~\ref{t:2}\ref{t:2iii}.

\ref{t:2cii+}:
This follows from Theorem~\ref{t:2}\ref{t:2iv}.

\ref{t:2ciii}:
As in the proof of Theorem~\ref{t:1c}\ref{t:1ciii}, 
$\WC(x_n)_{n\in\NN}\subset Z$. The claim follows from
Theorem~\ref{t:2}\ref{t:2v}.
\end{proof}

In the absence of the cocoercive operator $C$, we can choose
$(q_n)_{n\in\NN}=(w_n)_{n\in\NN}$ in \eqref{e:fejer19} and 
\eqref{e:haug19}, and Theorems~\ref{t:1c} and \ref{t:2c} 
simplify as follows. 

\begin{proposition}
\label{p:1}
Let $M\colon\HH\to 2^{\HH}$ be a maximally monotone operator
such that $Z=\zer M\neq\emp$, let $x_0\in\HH$, and let
$(\lambda_n)_{n\in\NN}$ be a sequence in $\left]0,2\right[$.
Iterate
\begin{equation}
\label{e:fejer14}
\begin{array}{l}
\text{for}\;n=0,1,\ldots\\
\left\lfloor
\begin{array}{l}
(m_n,m_n^*)\in\gra M\\[2mm]
d_n=
\begin{cases}
\dfrac{\scal{x_n-m_n}{m_n^*}}
{\|m_n^*\|^2}m_n^*,&\text{if}\:\:\scal{x_n-m_n}{m_n^*}>0;\\
0,&\text{otherwise}\\
\end{cases}\\
x_{n+1}=x_n-\lambda_nd_n.
\end{array} 
\right. 
\end{array} 
\end{equation}
Then the following hold:
\begin{enumerate}
\item
\label{p:1i}
$\sum_{n\in\NN}\lambda_n(2-\lambda_n)\|d_n\|^2<\pinf$.
\item
\label{p:1ii}
Suppose that $m_n-x_n\weakly 0$ and $m_n^*\to 0$. Then
$(x_n)_{n\in\NN}$ converges weakly to a point in $Z$.
\end{enumerate}
\end{proposition}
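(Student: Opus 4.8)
The plan is to obtain Proposition~\ref{p:1} as the degenerate case of Theorem~\ref{t:1c} in which the cocoercive operator vanishes. First I would set $W=M$ and $C=0$. Since the zero operator trivially satisfies \eqref{e:coco} with both sides equal to $0$, it is $\alpha$-cocoercive for every $\alpha\in\RPP$; in particular I may fix any value, say $\alpha=1$. With this choice $Z=\zer(W+C)=\zer M$, which is nonempty by hypothesis, so all the standing assumptions of Theorem~\ref{t:1c} are met.

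Next I would match the two recursions by identifying $(w_n,w_n^*)=(m_n,m_n^*)\in\gra W=\gra M$ and selecting $q_n=w_n$. Substituting into \eqref{e:fejer19} then gives $t_n^*=w_n^*+Cq_n=m_n^*$, and $\delta_n=\scal{x_n-m_n}{m_n^*}-\|w_n-q_n\|^2/(4\alpha)=\scal{x_n-m_n}{m_n^*}$ because $w_n-q_n=0$. Hence the descent vector $d_n$ of \eqref{e:fejer19} reduces verbatim to the one in \eqref{e:fejer14}, and the update $x_{n+1}=x_n-\lambda_nd_n$ coincides. Thus \eqref{e:fejer14} is literally an instance of \eqref{e:fejer19}, and the associated half-space \eqref{e:h8} collapses to $\menge{z\in\HH}{\scal{z-m_n}{m_n^*}\leq 0}$.

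With this dictionary in place, assertion~\ref{p:1i} is exactly Theorem~\ref{t:1c}\ref{t:1cii}. For \ref{p:1ii}, the hypotheses $m_n-x_n\weakly 0$ and $m_n^*\to 0$ translate into $w_n-x_n\weakly 0$ and $t_n^*\to 0$, while the remaining condition $w_n-q_n\to 0$ of Theorem~\ref{t:1c}\ref{t:1ciii} holds automatically since $w_n-q_n=0$ for every $n\in\NN$. Invoking Theorem~\ref{t:1c}\ref{t:1ciii} then yields weak convergence of $(x_n)_{n\in\NN}$ to a point in $Z$.

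The only point requiring attention---and it is bookkeeping rather than a genuine obstacle---is to confirm that $C=0$ is admissible as a cocoercive operator and that the term $\|w_n-q_n\|^2/(4\alpha)$ appearing both in $\delta_n$ and in \eqref{e:h8} drops out cleanly once $q_n=w_n$ is imposed; thereafter every trace of $\alpha$ disappears, the reduction is immediate, and no separate argument is needed.
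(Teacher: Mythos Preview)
Your proposal is correct and takes essentially the same approach as the paper: the paper derives Proposition~\ref{p:1} from Theorem~\ref{t:1c} by setting $C=0$ and $(q_n)_{n\in\NN}=(w_n)_{n\in\NN}$, exactly as you do. Your write-up simply spells out the verification that the recursions match and that the hypotheses of Theorem~\ref{t:1c}\ref{t:1ciii} are satisfied, which the paper leaves implicit.
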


\begin{proposition}
\label{p:2}
Let $M\colon\HH\to 2^{\HH}$ be a maximally monotone operator
such that $Z=\zer M\neq\emp$, let $x_0\in\HH$, and let
$(\lambda_n)_{n\in\NN}$ be a sequence in $\left]0,1\right]$.
Iterate
\begin{equation}
\label{e:haug14}
\begin{array}{l}
\text{for}\;n=0,1,\ldots\\
\left\lfloor
\begin{array}{l}
(m_n,m_n^*)\in\gra M\\[2mm]
d_n=
\begin{cases}
\dfrac{\scal{x_n-m_n}{m_n^*}}
{\|m_n^*\|^2}m_n^*,&\text{if}\:\:\scal{x_n-m_n}{m_n^*}>0;\\
0,&\text{otherwise}\\
\end{cases}\\
r_n=x_n-\lambda_nd_n\\
x_{n+1}=\Qq(x_0,x_n,r_n),
\end{array} 
\right. 
\end{array} 
\end{equation}
where $\Qq$ is defined in Lemma~\ref{l:2}. Then the following hold:
\begin{enumerate}
\item
\label{p:2i}
$\sum_{n\in\NN}\lambda_n^2\|d_n\|^2<\pinf$.
\item
\label{p:2ii}
Suppose that $m_n-x_n\weakly 0$ and $m_n^*\to 0$. Then
$(x_n)_{n\in\NN}$ converges to strongly to $\proj_Zx_0$. 
\end{enumerate}
\end{proposition}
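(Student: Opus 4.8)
The plan is to read Proposition~\ref{p:2} off Theorem~\ref{t:2c} by letting the cocoercive operator degenerate, exactly as announced in the paragraph preceding Proposition~\ref{p:1}. First I would set $W=M$ and $C=0$. The zero operator satisfies the cocoercivity inequality \eqref{e:coco} trivially, both sides vanishing, so it is $\alpha$-cocoercive for an arbitrary $\alpha\in\RPP$, which I fix. Consequently $W+C=M$, and hence $Z=\zer(W+C)=\zer M\neq\emp$, which matches the standing hypothesis of the proposition.

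Next I would align the two recursions. Taking $(w_n,w_n^*)=(m_n,m_n^*)\in\gra W=\gra M$ and, crucially, choosing $q_n=w_n=m_n$ in \eqref{e:haug19}, I obtain $t_n^*=w_n^*+Cq_n=m_n^*$ and $w_n-q_n=0$. Thus the penalty term $\|w_n-q_n\|^2/(4\alpha)$ drops out, $\delta_n=\scal{x_n-m_n}{m_n^*}$, and the direction $d_n$, the relaxed step $r_n=x_n-\lambda_n d_n$, and the Haugazeau update $x_{n+1}=\Qq(x_0,x_n,r_n)$ coincide verbatim with those in \eqref{e:haug14}. Moreover the separating half-spaces \eqref{e:h8} collapse to $H_n=\menge{z\in\HH}{\scal{z-m_n}{m_n^*}\leq 0}$, so the geometry underlying \eqref{e:haug19} is preserved.

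It then remains only to transcribe the conclusions. Assertion~\ref{p:2i} is precisely Theorem~\ref{t:2c}\ref{t:2cii+}. For assertion~\ref{p:2ii}, I would verify that the three limiting conditions required by Theorem~\ref{t:2c}\ref{t:2ciii} all hold under the present specialization: $w_n-x_n=m_n-x_n\weakly 0$ and $t_n^*=m_n^*\to 0$ are the two hypotheses assumed in the proposition, while $w_n-q_n=0\to 0$ is automatic from the choice $q_n=w_n$. Theorem~\ref{t:2c}\ref{t:2ciii} then delivers strong convergence of $(x_n)_{n\in\NN}$ to $\proj_Zx_0$. There is essentially no obstacle in this argument; the only point meriting a moment's attention is the legitimacy of the degenerate choice $C=0$, i.e.\ that the zero operator qualifies as $\alpha$-cocoercive so that Theorem~\ref{t:2c} applies with $W+C=M$, and the observation that collapsing $q_n$ onto $w_n$ annihilates the penalty term without altering the cuts, after which the result is immediate.
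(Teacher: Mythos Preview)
Your proposal is correct and follows exactly the route the paper itself takes: the paragraph preceding Propositions~\ref{p:1} and~\ref{p:2} states that they arise from Theorems~\ref{t:1c} and~\ref{t:2c} by setting $C=0$ and $(q_n)_{n\in\NN}=(w_n)_{n\in\NN}$, and the paper does not even spell out a separate proof. Your verification that the specialization collapses $t_n^*$ to $m_n^*$, kills the penalty term, and matches the hypotheses of Theorem~\ref{t:2c}\ref{t:2cii+} and~\ref{t:2ciii} is exactly what is needed.
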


\subsection{Warped resolvent cuts}
\label{sec:m+c}

Algorithms~\eqref{e:fejer19} and \eqref{e:haug19} are conceptual in
the sense that they do not provide an explicit mechanism to find
points in the graph of $W$. In this section, we propose
implementable versions that pick points in $\gra W$ using the
warped resolvents of Lemma~\ref{l:warp}. 

\begin{theorem}
\label{t:8}
Let $\alpha\in\RPP$, 
let $W\colon\HH\to 2^{\HH}$ be maximally monotone, 
let $C\colon\HH\to\HH$ be $\alpha$-cocoercive and
such that $Z=\zer(W+C)\neq\emp$, let $x_0\in\HH$, and let
$(\lambda_n)_{n\in\NN}$ be a sequence in $\left]0,2\right[$.
Further, for every $n\in\NN$, let $U_n\colon\HH\to\HH$ be an
operator such that $\ran U_n\subset\ran(U_n+W+C)$ and 
$U_n+W+C$ is injective. Iterate
\begin{equation}
\label{e:fejer8}
\begin{array}{l}
\text{for}\;n=0,1,\ldots\\
\left\lfloor
\begin{array}{l}
w_n=J_{W+C}^{U_n}x_n\\
w_n^*=U_nx_n-U_nw_n-Cw_n\\
q_n\in\HH\\
t_n^*=w_n^*+Cq_n\\
\delta_n=\scal{x_n-w_n}{t_n^*}-\|w_n-q_n\|^2/(4\alpha)\\[2mm]
d_n=
\begin{cases}
\dfrac{\delta_n}
{\|t_n^*\|^2}t_n^*,&\text{if}\:\:\delta_n>0;\\
0,&\text{otherwise}\\
\end{cases}\\
x_{n+1}=x_n-\lambda_nd_n.
\end{array} 
\right. 
\end{array} 
\end{equation}
Then the following hold:
\begin{enumerate}
\item
\label{t:8i}
$\sum_{n\in\NN}\lambda_n(2-\lambda_n)\|d_n\|^2<\pinf$.
\item
\label{t:8ii}
Suppose that one of the following is satisfied:
\begin{enumerate}
\item
\label{t:8iic}
$\sum_{n\in\NN}\lambda_n(2-\lambda_n)=\pinf$ and
$(\|d_n\|)_{n\in\NN}$ converges;
\item
\label{t:8iid}
$\inf_{n\in\NN}\lambda_n>0$ and $\sup\lambda_n<2$;
\end{enumerate}
together with one of the following:
\begin{enumerate}[resume]
\item
\label{t:8iia}
$w_n-x_n\weakly 0$, $U_nw_n-U_nx_n\to 0$, and
$w_n-q_n\to 0$;
\item
\label{t:8iib}
$q_n-x_n\to 0$ and there exist
$\beta_1\in\left]1/(4\alpha),\pinf\right[$ and $\beta_2\in\RPP$
such that the kernels $(U_n)_{n\in\NN}$ are $\beta_1$-strongly
monotone and $\beta_2$-Lipschitzian.
\end{enumerate}
Then $(x_n)_{n\in\NN}$ converges weakly to a point in $Z$.
\end{enumerate}
\end{theorem}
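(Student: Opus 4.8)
The plan is to recognize that \eqref{e:fejer8} is a concrete realization of the conceptual scheme \eqref{e:fejer19} and then to invoke Theorem~\ref{t:1c}. First I would note that, for every $n\in\NN$, the warped resolvent $J_{W+C}^{U_n}$ is well defined by Definition~\ref{d:wr} thanks to the standing hypotheses $\ran U_n\subset\ran(U_n+W+C)$ and the injectivity of $U_n+W+C$, and that Lemma~\ref{l:warp}\ref{l:warpi} applied with $U=U_n$ gives $(w_n,U_nx_n-U_nw_n-Cw_n)\in\gra W$. Since the second coordinate is exactly $w_n^*$, the pair $(w_n,w_n^*)$ lies in $\gra W$, and the remaining instructions in \eqref{e:fejer8} coincide verbatim with those of \eqref{e:fejer19}. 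Hence \eqref{e:fejer8} is a special case of \eqref{e:fejer19}, so item~\ref{t:8i} is immediate from Theorem~\ref{t:1c}\ref{t:1cii}.

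For item~\ref{t:8ii} I would first extract $d_n\to 0$ from \ref{t:8iic}/\ref{t:8iid} together with \ref{t:8i}: under \ref{t:8iid} the factor $\lambda_n(2-\lambda_n)$ is bounded below by a positive constant, so $\sum_n\|d_n\|^2<\pinf$; under \ref{t:8iic}, since $(\|d_n\|)_{n\in\NN}$ converges while $\sum_n\lambda_n(2-\lambda_n)=\pinf$, that limit must vanish. The target is then to verify the three hypotheses $w_n-x_n\weakly 0$, $w_n-q_n\to 0$, and $t_n^*\to 0$ of Theorem~\ref{t:1c}\ref{t:1ciii}. Under \ref{t:8iia} the first two are granted, and I would get the third by writing $t_n^*=(U_nx_n-U_nw_n)+(Cq_n-Cw_n)$, where the first bracket tends to $0$ by hypothesis and the second because $C$ is $1/\alpha$-Lipschitzian and $w_n-q_n\to 0$; this case does not even use $d_n\to 0$.

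The substantive case is \ref{t:8iib}, where I expect the main obstacle. Set $a_n=w_n-x_n$ and $e_n=q_n-x_n$ (so $e_n\to 0$). Using $w_n^*+Cw_n=U_nx_n-U_nw_n$ and the $\beta_1$-strong monotonicity of $U_n$, then splitting $a_n=(w_n-q_n)+e_n$ and discarding the nonnegative cocoercivity term $\scal{w_n-q_n}{Cw_n-Cq_n}\ge 0$, I obtain $\scal{x_n-w_n}{t_n^*}\ge\beta_1\|a_n\|^2+\scal{e_n}{Cw_n-Cq_n}$. Expanding $\|w_n-q_n\|^2=\|a_n-e_n\|^2$ in $\delta_n$ then yields a lower bound of the form $\delta_n\ge(\beta_1-1/(4\alpha))\|a_n\|^2-(\text{error terms each carrying a factor }\|e_n\|)$, whose leading coefficient $c:=\beta_1-1/(4\alpha)$ is strictly positive precisely because of the hypothesis $\beta_1>1/(4\alpha)$.

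Finally I would convert this into $a_n\to 0$. Combining the lower bound on $\delta_n$ with the identity $\delta_n=\|d_n\|\,\|t_n^*\|$ (valid when $\delta_n>0$, with $d_n=0$ otherwise) and the $\beta_2$-Lipschitz estimate $\|t_n^*\|\le(\beta_2+1/\alpha)\|a_n\|+(1/\alpha)\|e_n\|$ produces a quadratic inequality $c\|a_n\|^2-B_n\|a_n\|-D_n\le 0$ in which $B_n\to 0$ and $D_n\to 0$, since both are built from $\|d_n\|\to 0$ and $\|e_n\|\to 0$; solving forces $\|a_n\|\to 0$, i.e.\ $w_n-x_n\to 0$. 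Then $w_n-q_n=a_n-e_n\to 0$ and $t_n^*=(U_nx_n-U_nw_n)-(Cw_n-Cq_n)\to 0$ by the two Lipschitz bounds, so the three hypotheses of Theorem~\ref{t:1c}\ref{t:1ciii} hold and weak convergence to a point of $Z$ follows. The crux of the whole argument is this last quadratic-inequality bookkeeping, where the constants $\alpha$, $\beta_1$, and $\beta_2$ must be balanced so that the positive term $c\|a_n\|^2$ dominates the $\|e_n\|$- and $\|d_n\|$-controlled remainders.
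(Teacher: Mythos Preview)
Your proof is correct and follows the same overall skeleton as the paper: recognize \eqref{e:fejer8} as an instance of \eqref{e:fejer19} via Lemma~\ref{l:warp}\ref{l:warpi}, obtain \ref{t:8i} from Theorem~\ref{t:1c}\ref{t:1cii}, extract $d_n\to 0$ from \ref{t:8iic}/\ref{t:8iid}, and then verify the hypotheses of Theorem~\ref{t:1c}\ref{t:1ciii}. Case~\ref{t:8iia} is handled identically.

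Where you diverge from the paper is in case~\ref{t:8iib}. The paper first proves that $(w_n)_{n\in\NN}$ is bounded: since each $U_n$ is $\beta_1$-strongly monotone and $\beta_2$-Lipschitzian, $J_{W+C}^{U_n}$ is $(\beta_2/\beta_1)$-Lipschitzian, and any $z\in Z$ is a fixed point of it by Lemma~\ref{l:warp}\ref{l:warpii}, so $\|w_n-z\|\le(\beta_2/\beta_1)\|x_n-z\|$ with $(x_n)_{n\in\NN}$ bounded by Theorem~\ref{t:1c}\ref{t:1ci}. From boundedness the paper gets $\|t_n^*\|$ bounded, hence $\varlimsup\delta_n\le\lim\|t_n^*\|\,\|d_n\|=0$, and only then feeds this into the lower bound on $\delta_n$ to conclude $w_n-q_n\to 0$ (so \ref{t:8iib}$\Rightarrow$\ref{t:8iia}). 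Your route instead folds the upper bound $\delta_n\le\|d_n\|\,\|t_n^*\|$ and the Lipschitz estimate on $\|t_n^*\|$ directly into the lower bound to produce a quadratic inequality in $\|w_n-x_n\|$; this bypasses the boundedness step entirely and is slightly more economical. What the paper's detour buys is a clean structural statement (\ref{t:8iib}$\Rightarrow$\ref{t:8iia}) rather than a direct verification of Theorem~\ref{t:1c}\ref{t:1ciii}; what yours buys is not having to invoke the fixed-point property of $Z$ for the warped resolvents.
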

\begin{proof}
Lemma~\ref{l:warp}\ref{l:warpi} indicates that \eqref{e:fejer8} is
governed by the scenario of Theorem~\ref{t:1c}.

\ref{t:8i}: See Theorem~\ref{t:1c}\ref{t:1cii}.

\ref{t:8ii}: A consequence of \ref{t:8i} under
\ref{t:8iic} or \ref{t:8iid} is that 
\begin{equation}
\label{e:aqp1}
\|d_n\|\to 0.
\end{equation}
Indeed, the claim is clear under \ref{t:8iid} whereas, under 
\ref{t:8iic}, we have $\varliminf\|d_n\|=0$ and therefore
$\lim\|d_n\|=0$. Next, let us assume that \ref{t:8iia} holds.
Then it follows from \eqref{e:fejer8} and \eqref{e:coco} that
\begin{align}
(\forall n\in\NN)\quad
\|t_n^*\|
&=\|U_nw_n-U_nx_n+Cw_n-Cq_n\|\nonumber\\
&\leq\|U_nw_n-U_nx_n\|+\|Cw_n-Cq_n\|\label{e:no}\\
&\leq\|U_nw_n-U_nx_n\|+\dfrac{\|w_n-q_n\|}{\alpha}\nonumber\\
&\to 0.
\label{e:mo}
\end{align}
In view of Theorem~\ref{t:1c}\ref{t:1ciii}, the claim is
established. It remains to show that
\ref{t:8iib}$\Rightarrow$\ref{t:8iia}.
Because the operators $(U_n+W+C)_{n\in\NN}$ are $\beta_1$-strongly
monotone, the operators $(U_n+W+C)^{-1}_{n\in\NN}$ are
$\beta_1$-cocoercive, hence $1/\beta_1$-Lipschitzian. Consequently,
since the operators $(U_n)_{n\in\NN}$ are $\beta_2$-Lipschitzian, 
the operators $(J^{U_n}_{W+C})_{n\in\NN}$ are
$\beta_2/\beta_1$-Lipschitzian. Now let $z\in Z$. Then we derive
from \eqref{e:fejer8} and Lemma~\ref{l:warp}\ref{l:warpii} that 
\begin{equation}
(\forall n\in\NN)\;\;
\|w_n-z\|=\Bigl\|J^{U_n}_{W+C}x_n-J^{U_n}_{W+C}z\Bigr\|
\leq\dfrac{\beta_2}{\beta_1}\|x_n-z\|.
\end{equation}
Appealing to Theorem~\ref{t:1c}\ref{t:1ci}, we infer that
$(w_n)_{n\in\NN}$ is bounded. Thus, since $q_n-x_n\to 0$ and $C$ is
$1/\alpha$-Lipschitzian, the sequences 
\begin{equation}
\label{e:aqp7}
(\|w_n-x_n\|)_{n\in\NN},\;(\|w_n-q_n\|)_{n\in\NN},\;\text{and}\;
(\|Cw_n-Cq_n\|)_{n\in\NN}\;\text{are bounded.}
\end{equation}
However, \eqref{e:no} entails that
\begin{equation}
(\forall n\in\NN)\quad
\|t_n^*\|\leq\beta_2\|w_n-x_n\|+\dfrac{\|w_n-q_n\|}{\alpha},
\end{equation}
which verifies that $(\|t^*_n\|)_{n\in\NN}$ is bounded. In turn,
\eqref{e:fejer8} and \eqref{e:aqp1} imply that
\begin{equation}
\label{e:aqp5}
\varlimsup\delta_n\leq\lim\|t_n^*\|\,\|d_n\|=0.
\end{equation}
Moreover, for every $n\in\NN$, \eqref{e:fejer8} yields
\begin{align}
\delta_n
&=\scal{w_n-x_n}{U_nw_n-U_nx_n}+\scal{w_n-x_n}{Cw_n-Cq_n}
-\dfrac{\|w_n-q_n\|^2}{4\alpha}
\nonumber\\
&\geq\beta_1\|w_n-x_n\|^2
+\scal{w_n-q_n}{Cw_n-Cq_n}
+\scal{q_n-x_n}{Cw_n-Cq_n}\nonumber\\
&\quad-\dfrac{\|w_n-q_n\|^2}{4\alpha}
\nonumber\\
&\geq\beta_1\Bigl(\|w_n-q_n\|^2+2\scal{w_n-q_n}{q_n-x_n}
+\|q_n-x_n\|^2\Bigr)\nonumber\\
&\quad
+\alpha\|Cw_n-Cq_n\|^2
+\scal{q_n-x_n}{Cw_n-Cq_n}-\dfrac{\|w_n-q_n\|^2}{4\alpha}
\nonumber\\
&\geq\biggl(\beta_1-\dfrac{1}{4\alpha}\biggl)\|w_n-q_n\|^2
+\beta_1\Bigl(2\scal{w_n-q_n}{q_n-x_n}
+\|q_n-x_n\|^2\Bigr)\nonumber\\
&\quad
+\scal{q_n-x_n}{Cw_n-Cq_n}
\nonumber\\
&\geq\biggl(\beta_1-\dfrac{1}{4\alpha}\biggl)\|w_n-q_n\|^2
\nonumber\\
&\quad
+\|q_n-x_n\|\bigl(\beta_1\|q_n-x_n\|-2\beta_1\|w_n-q_n\|
+\|Cw_n-Cq_n\|\bigr).
\end{align}
Therefore, since $\|q_n-x_n\|\to 0$, it follows from 
\eqref{e:aqp7} and \eqref{e:aqp5} that $w_n-q_n\to 0$ and hence
that $w_n-x_n\to 0$. Since
\begin{equation}
\|U_nw_n-U_nx_n\|\leq\beta_2\|w_n-x_n\|\leq
\beta_2(\|w_n-q_n\|+\|q_n-x_n\|)\to 0, 
\end{equation}
the proof is complete.
\end{proof}

\begin{remark}
\label{r:99}
In the special case when $C=0$, $(q_n)_{n\in\NN}=(w_n)_{n\in\NN}$,
and conditions \ref{t:8iid} and \ref{t:8iia} are satisfied,
Theorem~\ref{t:8}\ref{t:8ii} is closely related to
\cite[Theorem~4.2(ii)]{Jmaa20}.
\end{remark}

We conclude this section with the strongly convergent best
approximation companion algorithm resulting from
Theorem~\ref{t:2c}.

\begin{theorem}
\label{t:8s}
Let $\alpha\in\RPP$, 
let $W\colon\HH\to 2^{\HH}$ be maximally monotone, 
let $C\colon\HH\to\HH$ be $\alpha$-cocoercive and
such that $Z=\zer(W+C)\neq\emp$, let $x_0\in\HH$, and let
$(\lambda_n)_{n\in\NN}$ be a sequence in $\rzeroun$.
Further, for every $n\in\NN$, let $U_n\colon\HH\to\HH$ be 
an operator such that $\ran U_n\subset\ran(U_n+W+C)$ and 
$U_n+W+C$ is injective. Iterate
\begin{equation}
\label{e:haug8}
\begin{array}{l}
\text{for}\;n=0,1,\ldots\\
\left\lfloor
\begin{array}{l}
w_n=J_{W+C}^{U_n}x_n\\
w_n^*=U_nx_n-U_nw_n-Cw_n\\
q_n\in\HH\\
t_n^*=w_n^*+Cq_n\\
\delta_n=\scal{x_n-w_n}{t_n^*}-\|w_n-q_n\|^2/(4\alpha)\\[2mm]
d_n=
\begin{cases}
\dfrac{\delta_n}
{\|t_n^*\|^2}t_n^*,&\text{if}\:\:\delta_n>0;\\
0,&\text{otherwise}\\
\end{cases}\\
r_n=x_n-\lambda_nd_n\\
x_{n+1}=\Qq(x_0,x_n,r_n),
\end{array} 
\right. 
\end{array} 
\end{equation}
where $\Qq$ is defined in Lemma~\ref{l:2}. Then the following hold:
\begin{enumerate}
\item
\label{t:8si+}
$\sum_{n\in\NN}\lambda_n^2\|d_n\|^2<\pinf$.
\item
\label{t:8sii}
Suppose that one of the following is satisfied:
\begin{enumerate}
\item
\label{t:8siic}
$\sum_{n\in\NN}\lambda_n^2=\pinf$ and
$(\|d_n\|)_{n\in\NN}$ converges;
\item
\label{t:8siid}
$\inf_{n\in\NN}\lambda_n>0$;
\end{enumerate}
together with one of the following:
\begin{enumerate}[resume]
\item
\label{t:8siia}
$w_n-x_n\weakly 0$, $U_nw_n-U_nx_n\to 0$, and
$w_n-q_n\to 0$;
\item
\label{t:8siib}
$q_n-x_n\to 0$ and there exist
$\beta_1\in\left]1/(4\alpha),\pinf\right[$ and $\beta_2\in\RPP$
such that the kernels $(U_n)_{n\in\NN}$ are $\beta_1$-strongly
monotone and $\beta_2$-Lipschitzian.
\end{enumerate}
Then $(x_n)_{n\in\NN}$ converges strongly to $\proj_Zx_0$. 
\end{enumerate}
\end{theorem}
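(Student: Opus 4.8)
The plan is to recognize that \eqref{e:haug8} is nothing but the warped-resolvent instantiation of the conceptual Haugazeau-type scheme \eqref{e:haug19}, in exactly the same way that \eqref{e:fejer8} instantiates \eqref{e:fejer19} in Theorem~\ref{t:8}. First I would invoke Lemma~\ref{l:warp}\ref{l:warpi} with $U=U_n$, $x=x_n$, and $p=w_n=J_{W+C}^{U_n}x_n$: it yields $(w_n,U_nx_n-U_nw_n-Cw_n)\in\gra W$, i.e.\ $(w_n,w_n^*)\in\gra W$. Since the remaining quantities $q_n$, $t_n^*$, $\delta_n$, $d_n$, $r_n$, and $x_{n+1}=\Qq(x_0,x_n,r_n)$ are computed by the very formulas appearing in \eqref{e:haug19}, the sequence $(x_n)_{n\in\NN}$ produced by \eqref{e:haug8} is governed by Theorem~\ref{t:2c}. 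Assertion \ref{t:8si+} is then immediate from Theorem~\ref{t:2c}\ref{t:2cii+}.

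For \ref{t:8sii}, the first step is to extract $\|d_n\|\to 0$ from the summability in \ref{t:8si+}. Under \ref{t:8siid}, putting $\underline{\lambda}=\inf_{n\in\NN}\lambda_n>0$ gives $\sum_{n\in\NN}\|d_n\|^2\leq\underline{\lambda}^{-2}\sum_{n\in\NN}\lambda_n^2\|d_n\|^2<\pinf$, hence $\|d_n\|\to 0$. Under \ref{t:8siic}, if $\lim\|d_n\|=\ell>0$ then $\|d_n\|^2\geq\ell^2/2$ eventually, which together with $\sum_{n\in\NN}\lambda_n^2=\pinf$ forces $\sum_{n\in\NN}\lambda_n^2\|d_n\|^2=\pinf$, contradicting \ref{t:8si+}; thus $\varliminf\|d_n\|=0$ and, since $(\|d_n\|)_{n\in\NN}$ converges, $\|d_n\|\to 0$. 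This is the only place where the hypotheses genuinely differ from those of Theorem~\ref{t:8}, the weight $\lambda_n^2$ here replacing $\lambda_n(2-\lambda_n)$ there.

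The remaining two steps are verbatim repetitions of the corresponding passages in the proof of Theorem~\ref{t:8}\ref{t:8ii}, since they depend only on the common recursion for $w_n$, $w_n^*$, $q_n$, $t_n^*$, and $d_n$ and not on the outer Fej\'erian versus Haugazeau update. Assuming \ref{t:8siia}, I would bound $\|t_n^*\|\leq\|U_nw_n-U_nx_n\|+\|Cw_n-Cq_n\|\leq\|U_nw_n-U_nx_n\|+\alpha^{-1}\|w_n-q_n\|\to 0$, using that the $\alpha$-cocoercive operator $C$ is $1/\alpha$-Lipschitzian. With $w_n-x_n\weakly 0$, $w_n-q_n\to 0$, and $t_n^*\to 0$ now in hand, Theorem~\ref{t:2c}\ref{t:2ciii} delivers the strong convergence $x_n\to\proj_Zx_0$. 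Finally, to establish \ref{t:8siib}$\Rightarrow$\ref{t:8siia}, I would argue exactly as in Theorem~\ref{t:8}: $\beta_1$-strong monotonicity of $U_n+W+C$ makes its inverse $\beta_1$-cocoercive, hence $1/\beta_1$-Lipschitzian, so each $J_{W+C}^{U_n}$ is $(\beta_2/\beta_1)$-Lipschitzian; boundedness of $(w_n)_{n\in\NN}$ then follows from Theorem~\ref{t:2c}\ref{t:2ci}, and the same chain of lower estimates on $\delta_n$, combined with $\|d_n\|\to 0$ and $q_n-x_n\to 0$, yields $w_n-q_n\to 0$, then $w_n-x_n\to 0$, and finally $U_nw_n-U_nx_n\to 0$.

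I anticipate no real obstacle: the whole content reduces to substituting the strong-convergence engine Theorem~\ref{t:2c} for Theorem~\ref{t:1c} and adapting the single elementary argument that produces $\|d_n\|\to 0$ to the weight $\lambda_n^2$. The only mildly laborious piece is the implication \ref{t:8siib}$\Rightarrow$\ref{t:8siia}, and since it is carried out identically in the proof of Theorem~\ref{t:8} it need merely be invoked rather than repeated.
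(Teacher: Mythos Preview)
Your proposal is correct and follows essentially the same approach as the paper: recognize \eqref{e:haug8} as an instance of \eqref{e:haug19} via Lemma~\ref{l:warp}\ref{l:warpi}, read off \ref{t:8si+} from Theorem~\ref{t:2c}\ref{t:2cii+}, derive $\|d_n\|\to 0$ from \ref{t:8siic} or \ref{t:8siid}, verify the hypotheses of Theorem~\ref{t:2c}\ref{t:2ciii} under \ref{t:8siia}, and refer back to the proof of Theorem~\ref{t:8} for \ref{t:8siib}$\Rightarrow$\ref{t:8siia} using boundedness from Theorem~\ref{t:2c}\ref{t:2ci}. The paper's proof is a terser version of exactly this argument.
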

\begin{proof}
In view of Lemma~\ref{l:warp}\ref{l:warpi}, \eqref{e:haug8} 
is an instance of \eqref{e:haug19} and we shall therefore employ 
Theorem~\ref{t:2c}.


\ref{t:8si+}: See Theorem~\ref{t:2c}\ref{t:2cii+}.

\ref{t:8sii}: 
It follows from \ref{t:8si+} and \eqref{e:haug8} that $d_n\to 0$.
Indeed, this is evident under \ref{t:8siid}
whereas, under \ref{t:8iic}, we have $\varliminf\|d_n\|=0$ and
therefore $\lim\|d_n\|=0$. Let us now assume that \ref{t:8siia}
holds. Then \eqref{e:mo} is satisfied and we obtain the assertion
by invoking Theorem~\ref{t:2c}\ref{t:2ciii}. Finally, to
show that \ref{t:8siib}$\Rightarrow$\ref{t:8siia}, we remark that
Theorem~\ref{t:2c}\ref{t:2ci} asserts
that $(x_n)_{n\in\NN}$ is bounded. Hence, we follow the same
pattern as in the proof of Theorem~\ref{t:8}\ref{t:8iib} to
conclude.
\end{proof}

\section{The proximal point algorithm}
\label{sec:ppa}

\subsection{Preview} 
The proximal point algorithm is an implicit
method to construct a zero of a maximally monotone operator which
goes back to a quadratic programming method proposed in
\cite[Section~5.8]{Bell66}. In the nonlinear case, it first
appeared in Lieutaud's work \cite{Lieu68} (this fact seems to have
been overlooked in the literature, see Remark~\ref{r:2}), then in
\cite{Mart70,Mart72} for subdifferentials and in \cite{Roc76a} for
the general case. Iteration $n$ of the unrelaxed form of the
algorithm can be interpreted as a backward Euler discretization 
of the Cauchy problem \cite[Section~3.2]{Aub84a}
(see Example~\ref{ex:7})
\begin{equation}
\label{e:ev}
\begin{cases}
x(0)=x_0\\\
-x'(t)\in Mx(t),\;\text{for a.e.}\;t\in\RPP
\end{cases}
\end{equation}
with time step $\gamma_n\in\RPP$, that is,
\begin{equation}
\label{e:evd}
\dfrac{x_n-x_{n+1}}{\gamma_n}\in Mx_{n+1}
\end{equation}
or, equivalently, $x_{n+1}=J_{\gamma_n M}x_n$. 

\subsection{Fej\'erian algorithm}

The following theorem, which brings together results from 
\cite{Brez78,Ecks92,Gaba83,Gols79,Lema89,Mart70,Mart72,Roc76a},
will be derived from Theorem~\ref{t:8}.

\begin{theorem}
\label{t:70}
Let $M\colon\HH\to 2^{\HH}$ be a maximally monotone operator such
that $Z=\zer M\neq\emp$, let $x_0\in\HH$, let 
$(\lambda_n)_{n\in\NN}$ be a sequence in $\left]0,2\right[$, and
let $(\gamma_n)_{n\in\NN}$ be a sequence in $\RPP$. Iterate
\begin{equation}
\label{e:1}
(\forall n\in\NN)\quad x_{n+1}=
x_n+\lambda_n\big(J_{\gamma_n M}x_n-x_n\big)
\end{equation}
and suppose that one of the following holds:
\begin{enumerate}
\item
\label{t:70i}
$\sum_{n\in\NN}\lambda_n(2-\lambda_n)=\pinf$ and 
$(\forall n\in\NN)$ $\gamma_n=1$.
\item
\label{t:70ii}
$\sum_{n\in\NN}\gamma_n^2=\pinf$ and 
$(\forall n\in\NN)$ $\lambda_n=1$.
\item
\label{t:70iii}
$\inf_{n\in\NN}\lambda_n>0$, $\sup_{n\in\NN}\lambda_n<2$, and
$\inf_{n\in\NN}\gamma_n>0$.
\end{enumerate}
Then $\|J_{\gamma_n M}x_n-x_n\|/\gamma_n\to 0$ and 
$(x_n)_{n\in\NN}$ converges weakly to a point in $Z$.
\end{theorem}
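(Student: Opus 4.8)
The plan is to realize the proximal point iteration \eqref{e:1} as an instance of the warped-resolvent algorithm \eqref{e:fejer8} and then to invoke Theorem~\ref{t:8}. First I would set $W=M$, take $C=0$ (which is $\alpha$-cocoercive for every $\alpha\in\RPP$, so the value of $\alpha$ is immaterial and the term $\|w_n-q_n\|^2/(4\alpha)$ will vanish), choose the kernels $U_n=\gamma_n^{-1}\Id$, and select $q_n=w_n$. Since $M$ is maximally monotone, Lemma~\ref{l:0}\ref{l:0iii} guarantees that $\Id+\gamma_n M$ is a bijection of $\HH$, so $U_n+M=\gamma_n^{-1}(\Id+\gamma_n M)$ is injective with $\ran(U_n+M)=\HH$; hence the warped resolvent is well defined and $w_n=J_{M}^{U_n}x_n=J_{\gamma_n M}x_n$. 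A direct computation via Lemma~\ref{l:warp}\ref{l:warpi} then gives $w_n^*=t_n^*=(x_n-w_n)/\gamma_n\in Mw_n$, $\delta_n=\|x_n-w_n\|^2/\gamma_n\geq 0$, and $d_n=x_n-w_n=\gamma_n t_n^*$, so that \eqref{e:fejer8} collapses exactly onto \eqref{e:1} and $\|t_n^*\|=\|J_{\gamma_n M}x_n-x_n\|/\gamma_n$. It then remains to verify, in each of the three cases, the hypotheses of Theorem~\ref{t:8}\ref{t:8ii}.

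For case \ref{t:70iii}, the assumptions $\inf_n\lambda_n>0$ and $\sup_n\lambda_n<2$ are precisely \ref{t:8iid}, so Theorem~\ref{t:8}\ref{t:8i} forces $d_n\to 0$, i.e.\ $w_n-x_n\to 0$; since $\inf_n\gamma_n>0$ we also get $U_nw_n-U_nx_n=(w_n-x_n)/\gamma_n\to 0$, and with $w_n-q_n=0$ this establishes \ref{t:8iia}, while $\|t_n^*\|\leq\|w_n-x_n\|/\inf_n\gamma_n\to 0$. For case \ref{t:70i}, where $\gamma_n\equiv 1$ and $U_n=\Id$, we have $d_n=(\Id-J_M)x_n=J_{M^{-1}}x_n$ by \eqref{e:jjm8}, which is firmly nonexpansive; expanding $\|d_{n+1}\|^2=\|d_n+(d_{n+1}-d_n)\|^2$ and using $x_{n+1}-x_n=-\lambda_n d_n$ together with the firm nonexpansiveness of $J_{M^{-1}}$ yields $\|d_{n+1}\|^2\leq\|d_n\|^2-(2/\lambda_n-1)\|d_{n+1}-d_n\|^2$, so $(\|d_n\|)_{n\in\NN}$ is nonincreasing because $\lambda_n<2$. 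This is the convergence required by \ref{t:8iic}, whence $d_n\to 0$ by \ref{t:8i}, and, since $U_n=\Id$, condition \ref{t:8iia} follows exactly as before.

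The delicate case is \ref{t:70ii}, and it is where I expect the main obstacle. Here $\lambda_n\equiv 1$ satisfies \ref{t:8iid}, and \ref{t:8i} only yields $\sum_n\gamma_n^2\|t_n^*\|^2=\sum_n\|d_n\|^2<\pinf$; because $(\gamma_n)_{n\in\NN}$ may be unbounded, this does not by itself force $U_nw_n-U_nx_n=-t_n^*$ to vanish, yet that is exactly what \ref{t:8iia} demands. The crucial extra fact is that $(\|t_n^*\|)_{n\in\NN}$ is nonincreasing: since $\lambda_n=1$ we have $x_{n+1}=w_n$ and $t_n^*\in Mx_{n+1}$, so applying the monotonicity of $M$ to the pairs $(x_n,t_{n-1}^*)$ and $(x_{n+1},t_n^*)$ together with $x_n-x_{n+1}=\gamma_n t_n^*$ gives $\scal{t_n^*}{t_{n-1}^*}\geq\|t_n^*\|^2$, hence $\|t_n^*\|\leq\|t_{n-1}^*\|$. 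As $(\|t_n^*\|)_{n\in\NN}$ is nonincreasing while $\sum_n\gamma_n^2\|t_n^*\|^2<\pinf$ and $\sum_n\gamma_n^2=\pinf$, its limit must be $0$, so $t_n^*\to 0$; combined with $d_n\to 0$ (from \ref{t:8iid} and \ref{t:8i}) this verifies \ref{t:8iia}. In all three cases Theorem~\ref{t:8}\ref{t:8ii} then delivers the weak convergence of $(x_n)_{n\in\NN}$ to a point of $Z$, and the residual estimate $\|J_{\gamma_n M}x_n-x_n\|/\gamma_n=\|t_n^*\|\to 0$ has been obtained along the way.
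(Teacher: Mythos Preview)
Your proposal is correct and follows essentially the same approach as the paper: reduce \eqref{e:1} to the warped-resolvent scheme \eqref{e:fejer8} with $W=M$, $C=0$, $U_n=\gamma_n^{-1}\Id$, $q_n=w_n$, and then check the hypotheses of Theorem~\ref{t:8}\ref{t:8ii} case by case. The only cosmetic differences are that in case~\ref{t:70i} the paper invokes condition~\ref{t:8iib} rather than your~\ref{t:8iia} (both work once $d_n\to 0$), and it establishes the monotonicity of $(\|d_n\|)_{n\in\NN}$ via the nonexpansiveness of $2J_M-\Id$ instead of the firm nonexpansiveness of $\Id-J_M$; your argument and the paper's are equivalent reformulations of the same fact.
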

\begin{proof}
Let us apply Theorem~\ref{t:8} with 
\begin{equation}
\label{e:aqp57}
C=0\;\;\text{and}\;\;(\forall n\in\NN)\;\;
U_n=\gamma_n^{-1}\Id\;\;\text{and}\;\;q_n=w_n. 
\end{equation}
We derive from \eqref{e:z} that the variables of the iterations
\eqref{e:fejer8} satisfy
\begin{equation}
\label{e:aqp10}
(\forall n\in\NN)\quad 
t_n^*=\dfrac{x_n-w_n}{\gamma_n},\;
\delta_n=\gamma_n\|t_n^*\|^2,
\;\text{and}\;d_n=x_n-w_n.
\end{equation}
Thus, the sequence $(x_n)_{n\in\NN}$ produced by \eqref{e:1} 
coincides with that of \eqref{e:fejer8}. In turn, 
Theorem~\ref{t:8}\ref{t:8i} yields 
\begin{equation}
\label{e:f20}
\sum_{n\in\NN}\lambda_n(2-\lambda_n)\|d_n\|^2<\pinf.
\end{equation}
We now show that one of conditions \ref{t:8iic}--\ref{t:8iid} and
one of conditions \ref{t:8iia}--\ref{t:8iib} of
Theorem~\ref{t:8}\ref{t:8ii} are fulfilled in each scenario. We
also recall from \eqref{e:aqp1} that \ref{t:8iic} and \ref{t:8iid}
in Theorem~\ref{t:8} each imply that 
\begin{equation}
\label{e:aqp9}
d_n\to 0.
\end{equation}

\ref{t:70i}: 
Let us check that conditions \ref{t:8iic} and \ref{t:8iib} are
fulfilled. For
\ref{t:8iic}, it is enough to show that $(\|d_n\|)_{n\in\NN}$
decreases. To this end, set $T=2J_M-\Id$. Then
Lemma~\ref{l:0}\ref{l:0iii} and \eqref{e:ne} assert that $T$ is
nonexpansive. Therefore, \eqref{e:aqp10} yields
\begin{align}
\label{e:f23}
(\forall n\in\NN)\quad 2\|d_{n+1}\|
&=\|Tx_{n+1}-x_{n+1}\|\nonumber\\
&=\|Tx_{n+1}-Tx_n+(1-\lambda_n/2)(Tx_n-x_n)\|\nonumber\\
&\leq\|x_{n+1}-x_n\|+(1-\lambda_n/2)\|Tx_n-x_n\|\nonumber\\
&=(\lambda_n/2)\|Tx_n-x_n\|+
(1-\lambda_n/2)\|Tx_n-x_n\|\nonumber\\
&=2\|d_n\|,
\end{align}
as desired.
For \ref{t:8iib}, note that \eqref{e:aqp9} and \eqref{e:aqp10}
imply that $q_n-x_n=w_n-x_n=-d_n\to 0$. In addition, it is clear
from \eqref{e:aqp57} that $(U_n)_{n\in\NN}$ satisfies the required
conditions with $\beta_1=\beta_2=1$.

\ref{t:70ii}:
Condition \ref{t:8iid} holds. To show that \ref{t:8iia} holds as
well, we first infer from \eqref{e:aqp10} and \eqref{e:f20}
that $\sum_{n\in\NN}\gamma_n^2\|t_n^*\|^2<\pinf$ and hence that
$w_n-x_n=-\gamma_n t_n^*\to 0$.
Furthermore, since $\sum_{n\in\NN}\gamma_n^2=\pinf$, 
$\varliminf\|t_n^*\|=0$. On the other hand, $(\forall n\in\NN)$ 
$t_n^*=\gamma_n^{-1}(x_n-w_n)=\gamma_n^{-1}(x_n-x_{n+1})$. Hence,
using \eqref{e:ee14}, the monotonicity of $M$, and the
Cauchy--Schwarz inequality, we obtain
\begin{align}
\label{e:jk41}
(\forall n\in\NN)\quad 0
&\leq\scal{w_n-w_{n+1}}{t^*_n-t^*_{n+1}}/\gamma_{n+1}
\nonumber\\
&=\scal{x_{n+1}-x_{n+2}}{t^*_n-t^*_{n+1}}/\gamma_{n+1}
\nonumber\\
&=\scal{t^*_{n+1}}{t^*_n-t^*_{n+1}}\nonumber\\
&=\scal{t^*_{n+1}}{t^*_n}-\|t^*_{n+1}\|^2\nonumber\\
&\leq\|t^*_{n+1}\|\big(\|t^*_n\|-\|t^*_{n+1}\|\big),
\end{align}
which shows that $(\|t_n^*\|)_{n\in\NN}$ decreases. Altogether, 
$U_nx_n-U_nw_n=t_n^*\to 0$.

\ref{t:70iii}:
Condition \ref{t:8iid} is assumed. Let us check 
\ref{t:8iia}. Since \eqref{e:aqp10} and \eqref{e:f20} yield
$\sum_{n\in\NN}\gamma_n^2\|t_n^*\|^2<\pinf$, we have
$x_n-w_n=\gamma_nt_n^*\to 0$. Finally, since
$\inf_{n\in\NN}\gamma_n>0$, $U_nx_n-U_nw_n=t_n^*\to 0$. 

We conclude the proof by noting that in all three cases above
we have $\|J_{\gamma_n M}x_n-x_n\|/\gamma_n=\|t^*_n\|\to 0$.
\end{proof}

\begin{remark}
\label{r:1}
Let $f\in\Gamma_0(\HH)$ and suppose that $M=\partial f$ in
Theorem~\ref{t:70}. Then, as seen in Example~\ref{ex:1}, $M$ is
maximally monotone and $Z=\Argmin f$. In this case, the condition
on $(\gamma_n)_{n\in\NN}$ in Theorem~\ref{t:70}\ref{t:70ii} can be
improved to $\sum_{n\in\NN}\gamma_n=\pinf$
\cite[Th\'eor\`eme~9]{Brez78}.
\end{remark}

\subsection{Haugazeau-like algorithm}
We employ Theorem~\ref{t:8s} to obtain a strongly convergent
variant of the proximal point algorithm; see \cite{Moor01,Solo00}
for related results. Examples of proximal point iterations that
fail to converge strongly are constructed in 
\cite{Baus04,Bord18,Gule91}.

\begin{theorem}
\label{t:70s}
Let $M\colon\HH\to 2^{\HH}$ be a maximally monotone operator such
that $Z=\zer M\neq\emp$, let $x_0\in\HH$, let
$(\lambda_n)_{n\in\NN}$ be a sequence in $\left]0,1\right]$ such
that $\inf_{n\in\NN}\lambda_n>0$, and let 
$(\gamma_n)_{n\in\NN}$ be a sequence in $\RPP$ such that
$\inf_{n\in\NN}\gamma_n>0$. Iterate
\begin{equation}
\label{e:2}
(\forall n\in\NN)\quad x_{n+1}=
\Qq\bigl(x_0,x_n,x_n+\lambda_n(J_{\gamma_n M}x_n-x_n)\bigr),
\end{equation}
where $\Qq$ is defined in Lemma~\ref{l:2}.
Then $(x_n)_{n\in\NN}$ converges strongly to $\proj_Zx_0$.
\end{theorem}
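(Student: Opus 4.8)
The plan is to derive this result directly from Theorem~\ref{t:8s}, exactly as Theorem~\ref{t:70} was obtained from Theorem~\ref{t:8}. First I would specialize the scheme \eqref{e:haug8} by choosing
\begin{equation}
C=0,\quad(\forall n\in\NN)\;\;U_n=\gamma_n^{-1}\Id,\;\;\text{and}\;\;q_n=w_n.
\end{equation}
Since the zero operator is $\alpha$-cocoercive for any $\alpha\in\RPP$, and since $\gamma_n^{-1}\Id+M$ is $\gamma_n^{-1}$-strongly monotone (hence injective) and has full range $\HH$ by Minty's theorem (Lemma~\ref{l:0}\ref{l:0iii} applied to $\Id+\gamma_n M$), each warped resolvent is well defined, and I would verify that $w_n=J_{M}^{U_n}x_n=J_{\gamma_n M}x_n$. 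A short computation paralleling \eqref{e:aqp10} then gives $t_n^*=\gamma_n^{-1}(x_n-w_n)$, $\delta_n=\gamma_n\|t_n^*\|^2$, and $d_n=x_n-w_n$, so that $r_n=x_n+\lambda_n(J_{\gamma_n M}x_n-x_n)$ and the recursion \eqref{e:haug8} collapses precisely to \eqref{e:2}. Thus the sequence produced by \eqref{e:2} coincides with an instance of \eqref{e:haug8}.

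It then remains to check the hypotheses of Theorem~\ref{t:8s}\ref{t:8sii}. Condition \ref{t:8siid} is immediate from $\inf_{n\in\NN}\lambda_n>0$. For the geometric condition I would establish \ref{t:8siia}. Indeed, Theorem~\ref{t:8s}\ref{t:8si+} yields $\sum_{n\in\NN}\lambda_n^2\|d_n\|^2<\pinf$, and the lower bound on $\lambda_n$ forces $d_n\to 0$; since $d_n=x_n-w_n$, this gives $w_n-x_n\to 0$ and a fortiori $w_n-x_n\weakly 0$. Because $\inf_{n\in\NN}\gamma_n>0$, I also obtain $\|t_n^*\|=\gamma_n^{-1}\|d_n\|\to 0$, whence $U_nw_n-U_nx_n=\gamma_n^{-1}(w_n-x_n)=-t_n^*\to 0$, while $w_n-q_n=0$ trivially. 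Hence \ref{t:8siia} holds, and Theorem~\ref{t:8s}\ref{t:8sii} delivers the strong convergence of $(x_n)_{n\in\NN}$ to $\proj_Zx_0$.

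This argument is a faithful transcription of case \ref{t:70iii} of Theorem~\ref{t:70}, now run through the Haugazeau-like scheme rather than the Fej\'erian one, so I expect no genuine obstacle. The only point deserving care is the passage from $d_n\to 0$ to $t_n^*\to 0$: unlike in the Fej\'erian proof, here one cannot exploit any monotonicity of $(\|t_n^*\|)_{n\in\NN}$, but the relation $t_n^*=\gamma_n^{-1}d_n$ together with $\inf_{n\in\NN}\gamma_n>0$ makes the implication immediate.
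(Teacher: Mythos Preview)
Your proposal is correct and is essentially identical to the paper's proof: both specialize Theorem~\ref{t:8s} with $C=0$, $U_n=\gamma_n^{-1}\Id$, $q_n=w_n$, identify \eqref{e:haug8} with \eqref{e:2}, use \ref{t:8si+} together with $\inf_n\lambda_n>0$ to get $d_n\to 0$, and then verify \ref{t:8siia} via $U_nx_n-U_nw_n=\gamma_n^{-1}d_n\to 0$ from $\inf_n\gamma_n>0$. Your added remarks on well-definedness and cocoercivity of the zero operator are sound elaborations but not present in the paper's terser version.
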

\begin{proof}
In Theorem~\ref{t:8s}, set $C=0$ and $(\forall n\in\NN)$
$U_n=\gamma_n^{-1}\Id$ and $q_n=w_n$. Then 
\eqref{e:aqp10} holds and the sequence $(x_n)_{n\in\NN}$ produced 
by \eqref{e:2} coincides with that of \eqref{e:haug8}. In turn, 
Theorem~\ref{t:8s}\ref{t:8si+} yields 
$\sum_{n\in\NN}\lambda_n^2\|d_n\|^2<\pinf$. Therefore,
$x_n-w_n=d_n\to 0$ and $U_nx_n-U_nw_n=\gamma_n^{-1}d_n\to 0$. This
confirms that condition \ref{t:8siia} in 
Theorem~\ref{t:8s}\ref{t:8sii} is fulfilled.  
Since condition \ref{t:8siid} holds by assumption, the proof is
complete. 
\end{proof}

\subsection{Special cases and variants} 
As mentioned in Section~\ref{sec:1}, direct implementations of the
proximal point algorithm are limited due to the potential
difficulty of evaluating the resolvents in \eqref{e:1} and
\eqref{e:2}. As we shall see in this section, the proximal point
framework can nonetheless be an effective device to establish
indirectly the convergence of algorithms that can be identified,
possibly in a different space, as an instance of \eqref{e:1}. Early
examples in the context of inequality-constrained minimization
problems are found in \cite{Roc76b}, where a dual application of an
approximate proximal point algorithm was shown to yield a method of
multipliers (also called the augmented Lagrangian method) that
extends some classical ones from \cite{Hest69} and \cite{Powe69}
(see also \cite{Roc73p}). A primal-dual quadratically perturbed
variant of this algorithm, known as the proximal method of
multipliers, was also introduced in \cite{Roc76b} as an application
of an approximate proximal point algorithm to find saddle points of
the Lagrangian (see also \cite{Rock24,Shef14} and their
bibliographies for recent work along these lines). The applications
described below reduce to implementations of the proximal point
algorithm that feature full operator splitting when several linear
and nonlinear operators are present in the original problem.

\subsubsection{The Euler method}
\label{sec:eul}

We derive from the proximal point algorithm a (forward)
Euler method to find a zero of a cocoercive operator.

\begin{proposition}
\label{p:19c}
Let $\alpha\in\RPP$ and let $B\colon\HH\to\HH$ be 
$\alpha$-cocoercive, with $\zer B\neq\emp$. Let 
$(\gamma_n)_{n\in\NN}$ be a sequence in $\left]0,2\alpha\right[$ 
such that $\sum_{n\in\NN}\gamma_n(2\alpha-\gamma_n)=\pinf$ and 
let $x_0\in\HH$. Iterate
\begin{equation}
\label{e:c1}
(\forall n\in\NN)\quad x_{n+1}=x_n-\gamma_nBx_n.
\end{equation}
Then $(x_n)_{n\in\NN}$ converges weakly to a point in $\zer B$. 
\end{proposition}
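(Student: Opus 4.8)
The plan is to recognize the forward Euler recursion \eqref{e:c1} as a particular relaxed proximal point iteration and then invoke Theorem~\ref{t:70}\ref{t:70i}. The crux is a reparametrization: over the full range $\gamma_n\in\left]0,2\alpha\right[$ the map $\Id-\gamma_nB$ is only averaged, not firmly nonexpansive, so it cannot be read directly as a resolvent. Instead I would keep a single firmly nonexpansive operator fixed and route the variable (possibly large) step length through the relaxation parameter.

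First I would set $T=\Id-\alpha B$. Since $B$ is $\alpha$-cocoercive, $\alpha B$ is $1$-cocoercive, i.e.\ firmly nonexpansive, and hence so is $T=\Id-\alpha B$; equivalently, by \eqref{e:ne}, $2T-\Id=\Id-2\alpha B$ is nonexpansive. As $T$ is defined on all of $\HH$, Lemma~\ref{l:0}\ref{l:0i}--\ref{l:0iii} show that $M=T^{-1}-\Id$ is maximally monotone with $J_M=T=\Id-\alpha B$. By \eqref{e:z}, $\zer M=\Fix T=\zer B$, which is nonempty by hypothesis, so $M$ meets the standing assumptions of Theorem~\ref{t:70}.

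Next I would apply Theorem~\ref{t:70} to $M$ with constant proximal parameter $1$ and relaxation parameters $\lambda_n=\gamma_n/\alpha$. Since $\gamma_n\in\left]0,2\alpha\right[$, we have $\lambda_n\in\left]0,2\right[$, and the resulting recursion \eqref{e:1} reads
\begin{equation}
x_{n+1}=x_n+\lambda_n\bigl(J_Mx_n-x_n\bigr)
=x_n+\frac{\gamma_n}{\alpha}\bigl(-\alpha Bx_n\bigr)=x_n-\gamma_nBx_n,
\end{equation}
which is exactly \eqref{e:c1}; thus the two sequences coincide. It then remains to verify hypothesis \ref{t:70i}, namely $\sum_{n\in\NN}\lambda_n(2-\lambda_n)=\pinf$. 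Here $\lambda_n(2-\lambda_n)=\gamma_n(2\alpha-\gamma_n)/\alpha^2$, so this series equals $\alpha^{-2}\sum_{n\in\NN}\gamma_n(2\alpha-\gamma_n)$, which diverges by assumption. Theorem~\ref{t:70} then yields that $(x_n)_{n\in\NN}$ converges weakly to a point of $Z=\zer M=\zer B$.

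The argument is essentially obstacle-free once the reparametrization is in place; the only genuine subtlety, and the step I would flag, is resisting the temptation to identify $\Id-\gamma_nB$ itself with a resolvent. That identification fails for $\gamma_n\in\left]\alpha,2\alpha\right[$, and the device of fixing the firmly nonexpansive operator $\Id-\alpha B$ while pushing the excess step length into $\lambda_n$ is precisely what makes the whole range $\left]0,2\alpha\right[$ — together with the sharp condition $\sum_{n\in\NN}\gamma_n(2\alpha-\gamma_n)=\pinf$ — accessible from the proximal point theorem.
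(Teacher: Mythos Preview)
Your proposal is correct and follows essentially the same approach as the paper: both set $M=(\Id-\alpha B)^{-1}-\Id$, identify $J_M=\Id-\alpha B$, reparametrize \eqref{e:c1} as the proximal point iteration \eqref{e:1} with constant $\gamma_n=1$ and $\lambda_n=\gamma_n/\alpha$, and conclude via Theorem~\ref{t:70}\ref{t:70i}. Your additional commentary on why one must not try to read $\Id-\gamma_nB$ itself as a resolvent is a helpful clarification, but the core argument is identical to the paper's.
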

\begin{proof}
Set $M=(\Id-\alpha B)^{-1}-\Id$. Since $\alpha B$ is firmly
nonexpansive with domain $\HH$, $\Id-\alpha B$ is likewise and
Lemma~\ref{l:0}\ref{l:0iii} asserts that $M$ is maximally monotone.
On the other hand, $\zer M=\zer B$, $J_M=\Id-\alpha B$, and hence
\eqref{e:c1} becomes
\begin{equation}
\label{e:groetsch3}
(\forall n\in\NN)\quad x_{n+1}=x_n+\lambda_n(J_Mx_n-x_n),
\quad\text{where}\quad\lambda_n=\gamma_n/\alpha\in\left]0,2\right[.
\end{equation}
Thus, since $\sum_{n\in\NN}\lambda_n(2-\lambda_n)=\pinf$, the claim
follows from Theorem~\ref{t:70}\ref{t:70i}.
\end{proof}

\begin{remark}
\label{r:19c}
As just shown, the Euler method \eqref{e:c1} is an instance of
the proximal point algorithm \eqref{e:1}. Conversely, we can
interpret the proximal point iterations in the format
\begin{equation}
\label{e:g4}
(\forall n\in\NN)\quad x_{n+1}=x_n+\lambda_n(J_Mx_n-x_n),
\quad\text{where}\quad\lambda_n\in\left]0,2\right[
\end{equation}
as an instance of \eqref{e:c1}. Indeed, let $M\colon\HH\to 2^{\HH}$
be maximally monotone and set $B=\moyo{M}{1}$ and
$(\forall n\in\NN)$ $\gamma_n=\lambda_n$. Then, as seen in
Example~\ref{ex:13y}, $\zer M=\zer B$ and $B$ is $1$-cocoercive,
while \eqref{e:yosi1} implies that \eqref{e:g4} reduces to
\eqref{e:c1}.
\end{remark}

The following example is about the gradient method (see
\cite{Cauc47,Curr44} for the premises of this algorithm).
 
\begin{example}
\label{ex:19c}
Let $\alpha\in\RPP$ and let $g\colon\HH\to\RR$ be convex,
differentiable, and such that $\nabla g$ is 
$1/\alpha$-Lipschitzian,
with $\Argmin g\neq\emp$. Let $(\gamma_n)_{n\in\NN}$ be a sequence 
in $\left]0,2\alpha\right[$ such that 
$\sum_{n\in\NN}\gamma_n(2\alpha-\gamma_n)=\pinf$ and let 
$x_0\in\HH$. Iterate
\begin{equation}
\label{e:c2}
(\forall n\in\NN)\quad x_{n+1}=x_n-\gamma_n\nabla g(x_n).
\end{equation}
Then $(x_n)_{n\in\NN}$ converges weakly to a point in $\Argmin g$. 
\end{example}
\begin{proof}
Combine Lemma~\ref{l:bh} and Proposition~\ref{p:19c}.
\end{proof}

As noted in \cite[Remark~4.8(ii)]{Nona05} in the context of
Example~\ref{ex:19c}, the convergence in Proposition~\ref{p:19c}
can fail to be strong. The next result, which guarantees strong
convergence, is obtained by defining $M$ and
$(\lambda_n)_{n\in\NN}$ as in the proof of Proposition~\ref{p:19c}
and using Theorem~\ref{t:70s}.

\begin{proposition}
\label{p:19cs}
Let $\alpha\in\RPP$ and let $B\colon\HH\to\HH$ be 
$\alpha$-cocoercive, with $\zer B\neq\emp$. Let 
$(\gamma_n)_{n\in\NN}$ be a sequence in $\left]0,\alpha\right]$ 
such that $\inf_{n\in\NN}\gamma_n>0$ and let $x_0\in\HH$. Iterate
\begin{equation}
\label{e:c1s}
(\forall n\in\NN)\quad
x_{n+1}=\Qq\bigl(x_0,x_n,x_n-\gamma_nBx_n\bigr),
\end{equation}
where $\Qq$ is defined in Lemma~\ref{l:2}. Then $(x_n)_{n\in\NN}$
converges strongly to $\proj_{\zer B}\,x_0$. 
\end{proposition}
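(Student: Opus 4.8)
The plan is to mirror the structure used in the proof of Proposition~\ref{p:19c}, but to invoke the strongly convergent Haugazeau-like proximal point algorithm of Theorem~\ref{t:70s} instead of the Fej\'erian Theorem~\ref{t:70}. The key observation is that the strongly convergent Euler iteration \eqref{e:c1s} should be recognized as an instance of the best-approximation proximal point iteration \eqref{e:2}.

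First I would introduce the same maximally monotone operator as before, setting $M=(\Id-\alpha B)^{-1}-\Id$. Since $B$ is $\alpha$-cocoercive, $\alpha B$ is firmly nonexpansive with full domain $\HH$, so $\Id-\alpha B$ is firmly nonexpansive as well, and Lemma~\ref{l:0}\ref{l:0iii} guarantees that $M$ is maximally monotone. As in the proof of Proposition~\ref{p:19c}, we record the two facts $\zer M=\zer B\neq\emp$ and $J_M=\Id-\alpha B$. These are exactly the properties needed to apply Theorem~\ref{t:70s} to $M$.

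Next I would translate the iteration. Setting $\lambda_n=\gamma_n/\alpha$, the hypothesis $\gamma_n\in\left]0,\alpha\right]$ becomes $\lambda_n\in\rzeroun$, and $\inf_{n\in\NN}\gamma_n>0$ becomes $\inf_{n\in\NN}\lambda_n>0$, matching the standing assumptions on the relaxation parameters in Theorem~\ref{t:70s}. Taking the step size in \eqref{e:2} to be $\gamma_n=\alpha$ (equivalently, using $J_{\alpha M}$ need not be invoked; rather we use $J_M$ directly via the unit-index resolvent, so the sequence $(\gamma_n)_{n\in\NN}$ of Theorem~\ref{t:70s} is taken constant equal to $1$, which trivially satisfies $\inf_n\gamma_n>0$). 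Then the middle argument of $\Qq$ satisfies
\begin{equation}
x_n+\lambda_n(J_Mx_n-x_n)=x_n+\dfrac{\gamma_n}{\alpha}\bigl((\Id-\alpha B)x_n-x_n\bigr)=x_n-\gamma_nBx_n,
\end{equation}
so \eqref{e:c1s} coincides exactly with \eqref{e:2}.

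Finally, Theorem~\ref{t:70s} yields that $(x_n)_{n\in\NN}$ converges strongly to $\proj_Zx_0=\proj_{\zer M}x_0=\proj_{\zer B}x_0$, which is the claim. I expect no genuine obstacle here: the entire argument is a change of variables reducing the statement to Theorem~\ref{t:70s}, exactly as Proposition~\ref{p:19c} reduced to Theorem~\ref{t:70}. The only point requiring mild care is the bookkeeping of which parameter sequence of Theorem~\ref{t:70s} absorbs the relaxation $\lambda_n=\gamma_n/\alpha$ and which absorbs the resolvent index; keeping the resolvent index fixed at $1$ and letting $\lambda_n=\gamma_n/\alpha$ carry the step size makes the correspondence transparent and confirms that the required hypotheses $\inf_n\lambda_n>0$ and $\inf_n\gamma_n>0$ of Theorem~\ref{t:70s} are met.
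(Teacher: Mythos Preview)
Your proposal is correct and follows exactly the approach indicated in the paper: define $M=(\Id-\alpha B)^{-1}-\Id$ and $\lambda_n=\gamma_n/\alpha$ as in the proof of Proposition~\ref{p:19c}, then apply Theorem~\ref{t:70s} with constant resolvent index $1$. The bookkeeping you flag (relaxation $\lambda_n=\gamma_n/\alpha\in\rzeroun$, resolvent index fixed at $1$) is precisely what is needed, and there is nothing more to it.
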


\subsubsection{Fixed point problem}

We address the basic problem of constructing a fixed point of a
nonexpansive operator $T\colon\HH\to\HH$. The following result is
derived as an instance of the proximal point algorithm of
Theorem~\ref{t:70} via the embedding of Example~\ref{ex:f12}. 

\begin{proposition}
\label{p:19}
Let $\alpha\in\rzeroun$ and let $T\colon\HH\to\HH$ be
$\alpha$-\emph{averaged}. Suppose that $\Fix T\neq\emp$, let
$(\lambda_n)_{n\in\NN}$ be a sequence in $\left]0,1/\alpha\right[$
such that $\sum_{n\in\NN}\lambda_n(1-\alpha\lambda_n)=\pinf$, and
let $x_0\in\HH$. Iterate
\begin{equation}
\label{e:groetsch1}
(\forall n\in\NN)\quad x_{n+1}=x_n+\lambda_n(Tx_n-x_n).
\end{equation}
Then $(x_n)_{n\in\NN}$ converges weakly to a point in $\Fix T$. 
\end{proposition}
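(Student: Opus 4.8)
The plan is to recast iteration \eqref{e:groetsch1} as a relaxed proximal point algorithm in the embedding space of Example~\ref{ex:f12}, and then to invoke Theorem~\ref{t:70}\ref{t:70i}. First I would set $M=\Id-T$, so that $\zer M=\Fix T\neq\emp$, and introduce the operator
\begin{equation}
\MMM=\bigg(\Id+\dfrac{1}{2\alpha}(T-\Id)\bigg)^{-1}-\Id
\end{equation}
of Example~\ref{ex:f12}, which is there shown to be maximally monotone with $\zer\MMM=\Fix T$. Thus $\MMM$ fulfills the standing hypotheses of Theorem~\ref{t:70} with $Z=\Fix T\neq\emp$.

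The key computation is the identification of the resolvent of $\MMM$. Writing $S=\Id+(2\alpha)^{-1}(T-\Id)$, we have $\MMM=S^{-1}-\Id$, hence $\Id+\MMM=S^{-1}$, and therefore $J_{\MMM}=(\Id+\MMM)^{-1}=S$. Consequently,
\begin{equation}
(\forall x\in\HH)\quad J_{\MMM}x-x=\dfrac{1}{2\alpha}(Tx-x).
\end{equation}

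Next I would run Theorem~\ref{t:70} with $\MMM$ in place of $M$, with the fixed step sizes $\gamma_n\equiv 1$, and with relaxation parameters $\mu_n=2\alpha\lambda_n$. Then the associated iteration $x_{n+1}=x_n+\mu_n(J_{\MMM}x_n-x_n)$ reduces, through the displayed resolvent identity, exactly to $x_{n+1}=x_n+\lambda_n(Tx_n-x_n)$, i.e.\ to \eqref{e:groetsch1}. It remains to check the admissibility conditions of Theorem~\ref{t:70}\ref{t:70i}: since $\lambda_n\in\left]0,1/\alpha\right[$ we get $\mu_n=2\alpha\lambda_n\in\left]0,2\right[$, and the divergence requirement transfers through the identity $\mu_n(2-\mu_n)=4\alpha\lambda_n(1-\alpha\lambda_n)$, so that $\sum_{n\in\NN}\mu_n(2-\mu_n)=4\alpha\sum_{n\in\NN}\lambda_n(1-\alpha\lambda_n)=\pinf$ by hypothesis. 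Theorem~\ref{t:70}\ref{t:70i} then yields weak convergence of $(x_n)_{n\in\NN}$ to a point in $\zer\MMM=\Fix T$.

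I expect no serious obstacle here: the content lies entirely in correctly computing $J_{\MMM}$ and in the bookkeeping that rescales the relaxation parameters by the factor $2\alpha$ so that both the interval constraint and the divergence-of-series condition migrate cleanly from $(\lambda_n)_{n\in\NN}$ to $(\mu_n)_{n\in\NN}$. The only point demanding a little care is verifying that $\alpha<1$ together with $\lambda_n<1/\alpha$ forces $\mu_n<2$, which is precisely what keeps the rescaled relaxation parameters in the admissible range $\left]0,2\right[$ required by Theorem~\ref{t:70}.
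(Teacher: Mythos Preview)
Your proposal is correct and follows essentially the same route as the paper: you use the embedding of Example~\ref{ex:f12}, identify $J_{\MMM}=\Id+(2\alpha)^{-1}(T-\Id)$, rescale via $\mu_n=2\alpha\lambda_n$, and invoke Theorem~\ref{t:70}\ref{t:70i}. One cosmetic remark: in your final paragraph you write that ``$\alpha<1$ together with $\lambda_n<1/\alpha$ forces $\mu_n<2$,'' but the bound $\mu_n=2\alpha\lambda_n<2$ follows from $\lambda_n<1/\alpha$ alone (and indeed $\alpha\in\rzeroun$ allows $\alpha=1$), so no separate appeal to $\alpha<1$ is needed.
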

\begin{proof}
We use the embedding of Example~\ref{ex:f12}. 
Define $\MMM$ as in \eqref{e:M1} and note that 
$J_{\MMM}=\Id+(2\alpha)^{-1}(T-\Id)$. We therefore rewrite
\eqref{e:groetsch1} as 
\begin{equation}
\label{e:groetsch2}
(\forall n\in\NN)\quad x_{n+1}=x_n+\mu_n(J_{\MMM}x_n-x_n),
\quad\text{where}\quad\mu_n=2\alpha\lambda_n\in\left]0,2\right[.
\end{equation}
Then $\sum_{n\in\NN}\mu_n(2-\mu_n)=\pinf$ and, appealing to
Theorem~\ref{t:70}\ref{t:70i}, we conclude that $(x_n)_{n\in\NN}$
converges weakly to a point in $\zer\MMM=\Fix T$.
\end{proof}

In the case when $\alpha=1$, Proposition~\ref{p:19} is due to
Groetsch \cite{Groe72} and \eqref{e:groetsch1} is known as the
\emph{Krasnosel'ski\u\i--Mann iteration}, owing to its connection
with iterative schemes proposed in \cite{Kras55} and \cite{Mann53},
and it is a pillar of nonlinear numerical functional analysis
\cite{Livre1,Cegi12,Dong22}. Here is a strongly convergent variant
derived from Theorem~\ref{t:70s} (see \cite{Gene75} for an
example of the failure of strong convergence in
Proposition~\ref{p:19}).

\begin{proposition}
\label{p:19s}
Let $\alpha\in\rzeroun$ and let $T\colon\HH\to\HH$ be
$\alpha$-averaged. Suppose that $\Fix T\neq\emp$, let
$(\lambda_n)_{n\in\NN}$ be a sequence in 
$\left]0,1/(2\alpha)\right]$ such that $\inf_{n\in\NN}\lambda_n>0$,
and let $x_0\in\HH$. Iterate
\begin{equation}
\label{e:groetsch4}
(\forall n\in\NN)\quad x_{n+1}=
\Qq\bigl(x_0,x_n,x_n+\lambda_n(Tx_n-x_n)\bigr),
\end{equation}
where $\Qq$ is defined in Lemma~\ref{l:2}. Then $(x_n)_{n\in\NN}$
converges strongly to $\proj_{\Fix T}\,x_0$. 
\end{proposition}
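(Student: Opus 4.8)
The plan is to recognize \eqref{e:groetsch4} as a Haugazeau-like proximal point iteration through the embedding of Example~\ref{ex:f12}, in exact parallel with the way Proposition~\ref{p:19} was obtained from the Fej\'erian proximal point algorithm of Theorem~\ref{t:70}. Accordingly, I would first define $\MMM$ as in \eqref{e:M1}, so that Example~\ref{ex:f12} guarantees that $\MMM$ is maximally monotone with $\zer\MMM=\Fix T\neq\emp$ and that $J_{\MMM}=\Id+(2\alpha)^{-1}(T-\Id)$.

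The core step is a change of relaxation variable. Since $J_{\MMM}x_n-x_n=(2\alpha)^{-1}(Tx_n-x_n)$, setting $\mu_n=2\alpha\lambda_n$ turns the update appearing inside $\Qq$ into $x_n+\mu_n(J_{\MMM}x_n-x_n)=x_n+\lambda_n(Tx_n-x_n)$. Hence \eqref{e:groetsch4} coincides with the iteration \eqref{e:2} of Theorem~\ref{t:70s} applied with $M=\MMM$ and $(\forall n\in\NN)$ $\gamma_n=1$.

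What remains is to check the hypotheses of Theorem~\ref{t:70s} for the rescaled sequence. Because $\lambda_n\in\left]0,1/(2\alpha)\right]$, we have $\mu_n=2\alpha\lambda_n\in\rzeroun$, which is precisely the admissible range there; moreover $\inf_{n\in\NN}\mu_n=2\alpha\inf_{n\in\NN}\lambda_n>0$ by hypothesis, and trivially $\inf_{n\in\NN}\gamma_n=1>0$. Theorem~\ref{t:70s} then delivers strong convergence of $(x_n)_{n\in\NN}$ to $\proj_{\zer\MMM}x_0=\proj_{\Fix T}x_0$, which is the claim.

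I anticipate no genuine obstacle: unlike the weak-convergence arguments in Section~\ref{sec:gr}, every nontrivial analytic fact has already been packaged into Theorem~\ref{t:70s} and Example~\ref{ex:f12}. The only point deserving care is the bookkeeping of the relaxation interval---one must confirm that the upper bound $1/(2\alpha)$ on $\lambda_n$ (rather than the $1/\alpha$ bound used in the weakly convergent Proposition~\ref{p:19}) is exactly what forces $\mu_n\leq 1$, as demanded by the Haugazeau-like Theorem~\ref{t:70s}.
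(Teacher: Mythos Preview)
Your proposal is correct and follows exactly the approach of the paper: define $\MMM$ via the embedding of Example~\ref{ex:f12}, rescale the relaxation parameters by $\mu_n=2\alpha\lambda_n$ to recast \eqref{e:groetsch4} as an instance of \eqref{e:2}, and invoke Theorem~\ref{t:70s}. The paper's proof is terser but carries out precisely these steps.
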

\begin{proof}
Define $\MMM$ as in \eqref{e:M1}, argue as in the proof of
Proposition~\ref{p:19} to observe that \eqref{e:groetsch4} is an
instance of \eqref{e:2}, and conclude by invoking
Theorem~\ref{t:70s}.
\end{proof}

\subsubsection{Resolvent compositions}

We focus on the inclusion problem of \cite[Section~6]{Svva23},
which is modeled by resolvent compositions (see 
Example~\ref{ex:r3}) and solvable via the proximal point algorithm.

\begin{proposition}
\label{p:23}
Suppose that $L\in\BL(\HH,\GG)$ satisfies $0<\|L\|\leq 1$,
let $B\colon\GG\to 2^{\GG}$ be maximally monotone, let $V\neq\{0\}$
be a closed vector subspace of $\HH$, and let $\gamma\in\RPP$.
Let $S$ be the set of solutions to the problem
\begin{equation}
\label{e:zp1}
\text{find}\;\:x\in V\;\:\text{such that}\;\:0\in B(Lx)
\end{equation}
and let $Z$ be the set of solutions to the problem 
\begin{equation}
\label{e:p23}
\text{find}\;\:x\in\HH\;\:\text{such that}\;\:
0\in\bigl(\proxc{\proj_V}{\big(\proxcc{L}{(\gamma B)}\big)}\bigr)x.
\end{equation}
Then \eqref{e:p23} is an exact relaxation of \eqref{e:zp1} in the
sense that $S\neq\emp$ $\Rightarrow$ $Z=S$. Now assume that
$Z\neq\emp$, let $(\lambda_n)_{n\in\NN}$ be a sequence in
$\left]0,2\right[$ such that 
$\sum_{n\in\NN}\lambda_n(2-\lambda_n)=\pinf$, and let $x_0\in V$.
Iterate
\begin{equation}
\label{e:z9}
\begin{array}{l}
\text{for}\;n=0,1,\ldots\\
\left\lfloor
\begin{array}{l}
y_n=Lx_n\\
q_n=J_{\gamma B}y_n-y_n\\
z_n=L^*q_n\\
x_{n+1}=x_n+\lambda_n\proj_Vz_n.
\end{array}
\right.
\end{array}
\end{equation}
Then $(x_n)_{n\in\NN}$ converges weakly to a point in $Z$.
\end{proposition}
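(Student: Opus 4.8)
The plan is to recognize \eqref{e:z9} as the proximal point algorithm applied to $M=\proxc{\proj_V}{\bigl(\proxcc{L}{(\gamma B)}\bigr)}$ and then invoke Theorem~\ref{t:70}. First I would check that $M$ is maximally monotone with $Z=\zer M$: since $\gamma>0$, Lemma~\ref{l:0602} makes $\gamma B$ maximally monotone, so $\proxcc{L}{(\gamma B)}$ is maximally monotone by Example~\ref{ex:22} (using $\|L\|\leq 1$); as $\proj_V$ is self-adjoint with $\|\proj_V\|=1\leq 1$ (this is where $V\neq\{0\}$ enters), a second application of Example~\ref{ex:22} shows $M$ is maximally monotone. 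By construction \eqref{e:p23} is exactly the inclusion $0\in Mx$, so $Z=\zer M$.

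Next I would compute $J_M$. Applying the second identity of Example~\ref{ex:r3} to $\proxcc{L}{(\gamma B)}$ and the first identity, with kernel $\proj_V$, to the outer resolvent composition, and using $\proj_V^*=\proj_V$, yields
\begin{equation}
J_M=\proj_V\circ\bigl(\Id_{\HH}-L^*L+L^*\circ J_{\gamma B}\circ L\bigr)\circ\proj_V.
\end{equation}
Hence, for $x\in V$ we have $\proj_Vx=x$ and therefore $J_Mx-x=\proj_VL^*\bigl(J_{\gamma B}(Lx)-Lx\bigr)$. Comparing with \eqref{e:z9}, this is precisely $\proj_Vz_n$ when $x_n\in V$, so the update reads $x_{n+1}=x_n+\lambda_n(J_Mx_n-x_n)$. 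Since $x_0\in V$ and each increment lies in $V$, all iterates remain in $V$ by induction, so throughout the run \eqref{e:z9} coincides with the proximal point iteration \eqref{e:1} for $M$ with $\gamma_n\equiv 1$. Because $\sum_{n\in\NN}\lambda_n(2-\lambda_n)=\pinf$ and $\zer M=Z\neq\emp$, Theorem~\ref{t:70}\ref{t:70i} delivers weak convergence of $(x_n)_{n\in\NN}$ to a point in $Z$.

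It remains to prove the exact-relaxation assertion. The inclusion $S\subset Z$ is unconditional: if $x\in V$ and $0\in B(Lx)$ then $0\in(\gamma B)(Lx)$, so $J_{\gamma B}(Lx)=Lx$ by \eqref{e:z}, whence $J_Mx-x=\proj_VL^*(J_{\gamma B}(Lx)-Lx)=0$ and $x\in\Fix J_M=\zer M=Z$. For the converse, assume $S\neq\emp$ and fix $x\in Z$. The outer $\proj_V$ forces $x=J_Mx\in V$, and setting $u=J_{\gamma B}(Lx)-Lx$ the relation $J_Mx=x$ gives $\proj_VL^*u=0$, i.e.\ $L^*u\in V^\bot$. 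The crux is then a monotonicity argument: picking $s\in S$, we have $-u\in(\gamma B)(J_{\gamma B}(Lx))$ by \eqref{e:ee14} and $0\in(\gamma B)(Ls)$, so monotonicity of $\gamma B$ yields $\scal{J_{\gamma B}(Lx)-Ls}{u}\leq 0$; substituting $J_{\gamma B}(Lx)=Lx+u$ gives $\scal{L(x-s)}{u}+\|u\|^2\leq 0$. Since $x-s\in V$ while $L^*u\in V^\bot$, we get $\scal{L(x-s)}{u}=\scal{x-s}{L^*u}=0$, so $\|u\|^2\leq 0$ and $u=0$. Thus $J_{\gamma B}(Lx)=Lx$, i.e.\ $0\in B(Lx)$ with $x\in V$, giving $x\in S$ and $Z\subset S$. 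I expect the orthogonality cancellation $\scal{x-s}{L^*u}=0$ to be the only subtle point; the rest is bookkeeping with the resolvent identities of Examples~\ref{ex:22} and~\ref{ex:r3}.
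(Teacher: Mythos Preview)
Your argument is correct and, for the convergence claim, coincides with the paper's proof: set $M=\proxc{\proj_V}{(\proxcc{L}{(\gamma B)})}$, use Example~\ref{ex:22} for maximal monotonicity, Example~\ref{ex:r3} to get $J_M=\proj_V\circ(\Id_\HH-L^*L+L^*\circ J_{\gamma B}\circ L)\circ\proj_V$, and invoke Theorem~\ref{t:70}\ref{t:70i}. Your additional observation that the iterates stay in $V$ (so $\proj_Vx_n=x_n$ throughout) is implicit in the paper but worth making explicit.

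Where you differ is in the exact-relaxation claim $S\neq\emp\Rightarrow Z=S$. The paper simply cites \cite[Theorem~6.3(v)]{Svva23} for this. You instead give a self-contained argument: the inclusion $S\subset Z$ is immediate from $J_{\gamma B}(Lx)=Lx$, and for $Z\subset S$ you exploit monotonicity of $\gamma B$ at the pair $\bigl(J_{\gamma B}(Lx),-u\bigr)$ and $(Ls,0)$ together with the orthogonality $\scal{x-s}{L^*u}=0$ (since $x-s\in V$ and $L^*u\in V^\bot$) to force $u=0$. This direct route is elementary and avoids the external dependency; the cited result in \cite{Svva23} is more general but your argument is exactly what is needed here. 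One small remark: your parenthetical that $V\neq\{0\}$ is needed to have $\|\proj_V\|=1$ is not quite the point, since Examples~\ref{ex:22} and~\ref{ex:r3} only require $\|\cdot\|\leq 1$; the hypothesis $V\neq\{0\}$ is there to avoid a degenerate problem, not to meet a norm constraint.
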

\begin{proof}
The exact relaxation claim is established in
\cite[Theorem~6.3(v)]{Svva23}. Now
set $M=\proxc{\proj_V}{(\proxcc{L}{(\gamma B)})}$ and note that
$\|\proj_V\|=1$ and $\proj_V^*=\proj_V$. Hence, it follows from
Example~\ref{ex:22} that $M$ is maximally monotone and from 
Example~\ref{ex:r3} that $J_M=\proj_V\circ(\Id_\HH-L^*\circ L
+L^*\circ J_{\gamma B}\circ L)\circ\proj_V$. Altogether, the
convergence result follows from Theorem~\ref{t:70}\ref{t:70i}
\end{proof}

Here is a strongly convergent algorithm based on the Haugazeau
variant.

\begin{proposition}
\label{p:23s}
Suppose that $L\in\BL(\HH,\GG)$ satisfies $0<\|L\|\leq 1$,
let $B\colon\GG\to 2^{\GG}$ be maximally monotone, let $V\neq\{0\}$
be a closed vector subspace of $\HH$, and let $\gamma\in\RPP$.
Suppose that the set $Z$ of solutions to the problem 
\begin{equation}
\label{e:p23s}
\text{find}\;\:x\in\HH\;\:\text{such that}\;\:
0\in\big(\proxc{\proj_V}{\big(\proxcc{L}{(\gamma B)}\big)}\big)x
\end{equation}
is not empty. Let $(\lambda_n)_{n\in\NN}$ be a sequence in
$\left]0,1\right]$ such that $\inf_{n\in\NN}\lambda_n>0$, and let
$x_0\in V$. Iterate
\begin{equation}
\label{e:z9s}
\begin{array}{l}
\text{for}\;n=0,1,\ldots\\
\left\lfloor
\begin{array}{l}
y_n=Lx_n\\
q_n=J_{\gamma B}y_n-y_n\\
z_n=L^*q_n\\
x_{n+1}=\Qq\bigl(x_0,x_n,x_n+\lambda_n\proj_Vz_n\bigr),
\end{array}
\right.
\end{array}
\end{equation}
where $\Qq$ is defined in Lemma~\ref{l:2}. Then $(x_n)_{n\in\NN}$
converges strongly to $\proj_Zx_0$.
\end{proposition}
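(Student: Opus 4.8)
The plan is to mirror the proof of Proposition~\ref{p:23}, simply replacing the Fej\'erian device Theorem~\ref{t:70}\ref{t:70i} by its strongly convergent Haugazeau-type companion, Theorem~\ref{t:70s}. First I would set $M=\proxc{\proj_V}{(\proxcc{L}{(\gamma B)})}$. Since $\|\proj_V\|=1$ and $\proj_V^*=\proj_V$, Example~\ref{ex:22} guarantees that $M$ is maximally monotone, and Example~\ref{ex:r3} supplies the resolvent
\begin{equation}
J_M=\proj_V\circ\bigl(\Id_\HH-L^*\circ L+L^*\circ J_{\gamma B}\circ L\bigr)\circ\proj_V.
\end{equation}
Because $Z=\zer M\neq\emp$ by hypothesis, the goal reduces to recognizing \eqref{e:z9s} as an instance of the Haugazeau recursion \eqref{e:2} driven by $M$, with step sizes $\gamma_n\equiv 1$ and the given relaxation parameters $(\lambda_n)_{n\in\NN}$, and then invoking Theorem~\ref{t:70s}.

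The crux is to show that the iterates remain in $V$ and that, on $V$, the resolvent step of \eqref{e:2} coincides with the inner loop of \eqref{e:z9s}. I would first verify by induction that $x_n\in V$ for every $n\in\NN$: this holds for $n=0$ by assumption, and if $x_n\in V$ then $r_n=x_n+\lambda_n\proj_Vz_n\in V$ (as $\proj_Vz_n\in V$), while $x_{n+1}=\Qq(x_0,x_n,r_n)$ is, by the explicit formulas of Lemma~\ref{l:2}, an affine combination of the three points $x_0,x_n,r_n$, all lying in the subspace $V$; hence $x_{n+1}\in V$. Using $\proj_Vx_n=x_n$ and the linearity of $\proj_V$, a direct computation then gives
\begin{align}
J_Mx_n
&=\proj_V\bigl(x_n+L^*(J_{\gamma B}(Lx_n)-Lx_n)\bigr)\nonumber\\
&=x_n+\proj_VL^*\bigl(J_{\gamma B}y_n-y_n\bigr)=x_n+\proj_Vz_n,
\end{align}
so that $J_Mx_n-x_n=\proj_Vz_n$.

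Consequently the point $r_n=x_n+\lambda_n\proj_Vz_n$ built in \eqref{e:z9s} equals $x_n+\lambda_n(J_Mx_n-x_n)$, and \eqref{e:z9s} is precisely the recursion \eqref{e:2} for the operator $M$ with $\gamma_n\equiv 1$. Since $(\lambda_n)_{n\in\NN}$ lies in $\rzeroun$ with $\inf_{n\in\NN}\lambda_n>0$ and trivially $\inf_{n\in\NN}\gamma_n=1>0$, all hypotheses of Theorem~\ref{t:70s} are met, and that theorem yields the strong convergence of $(x_n)_{n\in\NN}$ to $\proj_Zx_0$. The only delicate point is the bookkeeping ensuring $x_n\in V$ throughout, since it is exactly this invariance that makes the resolvent identity $J_Mx_n-x_n=\proj_Vz_n$ valid; everything else is a transcription of the assumptions into the format of Theorem~\ref{t:70s}.
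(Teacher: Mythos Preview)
Your proposal is correct and follows essentially the same route as the paper's proof, which simply says to argue as in Proposition~\ref{p:23} and apply Theorem~\ref{t:70s} with $M=\proxc{\proj_V}{(\proxcc{L}{(\gamma B)})}$ and $\gamma_n\equiv 1$. You have made explicit the two points the paper leaves implicit: the induction showing $x_n\in V$ for all $n$ (so that the outer $\proj_V$ in $J_M$ acts as the identity on $x_n$) and the resulting identification $J_Mx_n-x_n=\proj_Vz_n$, which is precisely what is needed to match \eqref{e:z9s} with \eqref{e:2}.
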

\begin{proof}
Arguing as in the proof of Proposition~\ref{p:23}, this is an
application of Theorem~\ref{t:70s} with 
$M=\proxc{\proj_V}{(\proxcc{L}{(\gamma B)})}$ and 
$(\forall n\in\NN)$ $\gamma_n=1$.
\end{proof}

Below we recover the relaxation framework of \cite{Siim22} for
signal reconstruction in the presence of possibly inconsistent
nonlinear observations.

\begin{example}
\label{ex:z23}
Let $0<p\in\NN$, let $\gamma\in\RPP$, and let $V\neq\{0\}$ be a
closed vector subspace of $\HH$. For every $k\in\{1,\ldots,p\}$,
let $\GG_k$ be a real Hilbert space, let $L_k\in\BL(\HH,\GG_k)$,
let $\omega_k\in\RPP$, let $F_k\colon\GG_k\to\GG_k$ be firmly
nonexpansive, and let $r_k\in\GG_k$. Consider the nonlinear
reconstruction problem \cite[Problem~1.1]{Siim22}
\begin{equation}
\label{e:z44}
\text{find}\;\:x\in V\;\:\text{such that}\;\:
\bigl(\forall k\in\{1,\ldots,p\}\bigr)\quad F_k(L_kx)=r_k
\end{equation}
and the relaxed variational inequality problem 
\cite[Problem~1.3]{Siim22}
\begin{equation}
\label{e:z45}
\text{find}\:\;x\in V\:\;\text{such that}\:\;
\sum_{k=1}^p\omega_kL_k^*\bigl(F_k(L_kx)-r_k\bigr)\in V^\bot.
\end{equation}
Suppose that $0<\sum_{k=1}^p\omega_k\|L_k\|^2\leq 1$ and that
\eqref{e:z45} admits solutions. Let $x_0\in V$, let
$(\lambda_n)_{n\in\NN}$ be a sequence in $\left]0,2\right[$ such
that $\sum_{n\in\NN}\lambda_n(2-\lambda_n)=\pinf$, and iterate
\begin{equation}
\label{e:z94}
\begin{array}{l}
\text{for}\;n=0,1,\ldots\\
\left\lfloor
\begin{array}{l}
\text{for}\;k=1,\ldots,p\\
\left\lfloor
\begin{array}{l}
y_{k,n}=L_kx_n\\
q_{k,n}=r_k-F_ky_{k,n}\\
\end{array}
\right.\\
z_n=\sum_{k=1}^p\omega_kL_k^*q_{k,n}\\
x_{n+1}=x_n+\lambda_n\proj_Vz_n.
\end{array}
\right.
\end{array}
\end{equation}
Then $(x_n)_{n\in\NN}$ converges weakly to a solution to 
\eqref{e:z45}.
\end{example}
\begin{proof}
Let $\GG$ be the standard product vector space
$\GG_1\times\cdots\times\GG_p$, with generic element
$\boldsymbol{y}=(y_k)_{1\leq k\leq p}$, and equipped with the
scalar product $(\boldsymbol{y},\boldsymbol{y}')\mapsto
\sum_{k=1}^p\omega_k\scal{y_k}{y'_k}$. Further, set
$L\colon\HH\to\GG\colon x\mapsto(L_1x,\ldots,L_px)$ and 
\begin{equation}
\label{e:z61}
B\colon\GG\to 2^{\GG}\colon\boldsymbol{y}\mapsto
\bigl((\Id-F_1+r_1)^{-1}y_1-y_1\bigr)\times\cdots\times 
\bigl((\Id-F_p+r_p)^{-1}y_p-y_p\bigr).
\end{equation}
In this setting, \eqref{e:z44} is a realization of \eqref{e:zp1},
\eqref{e:z45} of \eqref{e:p23}, and
\eqref{e:z94} of \eqref{e:z9} (see
\cite[Example~6.10]{Svva23} for details). The claim
therefore results from Proposition~\ref{p:23}.
\end{proof}

\subsubsection{The method of partial inverses}
\label{sec:pi}

We go back to a formulation already touched upon in
Problem~\ref{prob:2}.
Given a maximally monotone operator $A\colon\HH\to 2^{\HH}$ and 
a closed vector subspace $V$ of $\HH$, Spingarn considered in
\cite{Spin83} the problem
\begin{equation}
\label{e:27a}
\text{find}\;\;x\in V\;\;\text{and}\;\;x^*\in V^\bot\;\;
\text{such that}\;\;x^*\in Ax
\end{equation}
and solved it by applying the proximal point algorithm to the 
partial inverse $A_V$ (see Example~\ref{ex:21}). The resulting
algorithm is called the \emph{method of partial inverses}. The 
following is a relaxed version of the convergence result of 
\cite[Theorem~4.1(i)]{Spin83} (see \cite[Theorem~2.4]{Optl14}).

\begin{theorem}
\label{t:13}
Let $A\colon\HH\to 2^{\HH}$ be a maximally monotone operator, let
$V$ be a closed vector subspace of $\HH$, and let
$(\lambda_n)_{n\in\NN}$ be a sequence in $\left]0,2\right[$ such
that $\sum_{n\in\NN}\lambda_n(2-\lambda_n)=\pinf$. Suppose that
\eqref{e:27a} has solutions, let $x_0\in V$, let
$x^*_0\in V^\bot$, and iterate
\begin{equation}
\label{e:27b}
\begin{array}{l}
\text{for}\;n=0,1,\ldots\\
\begin{array}{l}
\left\lfloor
\begin{array}{l}
p_n=J_A(x_n+x^*_n)\\
p^*_n=x_n+x^*_n-p_n\\
x_{n+1}=x_n-\lambda_n \proj_Vp^*_n\\
x^*_{n+1}=x^*_n-\lambda_n \proj_{V^\bot}p_n.
\end{array}
\right.\\[2mm]
\end{array}
\end{array} 
\end{equation}
Then the following hold:
\begin{enumerate}
\item
\label{t:13i}
$\proj_Vp_n-x_n\to 0$ and $\proj_{V^\bot}p^*_n-x^*_n\to 0$.
\item
\label{t:13ii}
There exists a solution $(x,x^*)$ to \eqref{e:27a} such that
$x_n\weakly x$ and $x^*_n\weakly x^*$.
\end{enumerate}
\end{theorem}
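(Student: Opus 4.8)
The plan is to recognize the iteration \eqref{e:27b} as the proximal point algorithm \eqref{e:1}, with $\gamma_n\equiv 1$, applied to the partial inverse $M=A_V$ of Example~\ref{ex:21}, run in the variable $z_n=x_n+x^*_n$. First I would record that, since $x_0\in V$ and $x^*_0\in V^\bot$ and the updates keep $x_{n+1}\in V$ and $x^*_{n+1}\in V^\bot$, we have $x_n=\proj_Vz_n$ and $x^*_n=\proj_{V^\bot}z_n$ for every $n$. By Example~\ref{ex:21}\ref{ex:21i}, $A_V$ is maximally monotone, and by Example~\ref{ex:21}\ref{ex:21ii} its zeros are exactly the points $z$ with $(\proj_Vz,\proj_{V^\bot}z)\in\gra A$; hence $z\in\zer A_V$ if and only if $(\proj_Vz,\proj_{V^\bot}z)$ solves \eqref{e:27a}. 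Since \eqref{e:27a} is assumed solvable, $Z:=\zer A_V\neq\emp$.

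Next I would compute $J_{A_V}z_n$ explicitly. With $p_n=J_Az_n$ as in \eqref{e:27b}, Example~\ref{ex:21b} gives $J_{A_V}z_n=q$, where $\proj_Vq+\proj_{V^\bot}(z_n-q)=p_n$. Projecting this identity onto $V$ and onto $V^\bot$ yields $\proj_Vq=\proj_Vp_n$ and $\proj_{V^\bot}q=x^*_n-\proj_{V^\bot}p_n$, so that
\begin{equation}
J_{A_V}z_n=\proj_Vp_n+x^*_n-\proj_{V^\bot}p_n.
\end{equation}
Using $p^*_n=z_n-p_n$, hence $\proj_Vp^*_n=x_n-\proj_Vp_n$, the $V$- and $V^\bot$-components of $z_n+\lambda_n(J_{A_V}z_n-z_n)$ are precisely $x_n+\lambda_n(\proj_Vp_n-x_n)=x_n-\lambda_n\proj_Vp^*_n$ and $x^*_n-\lambda_n\proj_{V^\bot}p_n$, which match the updates for $x_{n+1}$ and $x^*_{n+1}$ in \eqref{e:27b}. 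Thus $(z_n)_{n\in\NN}$ is generated by $z_{n+1}=z_n+\lambda_n(J_{A_V}z_n-z_n)$.

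Since $\sum_{n\in\NN}\lambda_n(2-\lambda_n)=\pinf$, Theorem~\ref{t:70}\ref{t:70i} applied with $M=A_V$ and $(\forall n\in\NN)$ $\gamma_n=1$ gives $\|J_{A_V}z_n-z_n\|\to 0$ and $z_n\weakly z$ for some $z\in\zer A_V$. For \ref{t:13i}, the computation above yields $J_{A_V}z_n-z_n=(\proj_Vp_n-x_n)-\proj_{V^\bot}p_n$, an orthogonal decomposition into $V$ and $V^\bot$; hence $\|J_{A_V}z_n-z_n\|^2=\|\proj_Vp_n-x_n\|^2+\|\proj_{V^\bot}p_n\|^2\to 0$, giving $\proj_Vp_n-x_n\to 0$ and $\proj_{V^\bot}p_n\to 0$, the latter being $\proj_{V^\bot}p^*_n-x^*_n=-\proj_{V^\bot}p_n\to 0$. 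For \ref{t:13ii}, weak continuity of the bounded linear projectors gives $x_n=\proj_Vz_n\weakly\proj_Vz=:x$ and $x^*_n=\proj_{V^\bot}z_n\weakly\proj_{V^\bot}z=:x^*$; by Example~\ref{ex:21}\ref{ex:21ii}, $(x,x^*)\in\gra A$ with $x\in V$ and $x^*\in V^\bot$, so $(x,x^*)$ solves \eqref{e:27a}.

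The only delicate point is the explicit resolvent computation and the verification that \eqref{e:27b} reproduces the proximal point recursion; everything there hinges on decomposing along $V\oplus V^\bot$ and on the relation $p^*_n=z_n-p_n$. Once the identification with \eqref{e:1} is in place, both conclusions follow mechanically from Theorem~\ref{t:70}\ref{t:70i}, the orthogonality of the two error components, and weak continuity of $\proj_V$ and $\proj_{V^\bot}$.
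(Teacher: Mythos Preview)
Your proposal is correct and follows essentially the same approach as the paper: set $z_n=x_n+x^*_n$, use Example~\ref{ex:21b} to identify \eqref{e:27b} with the proximal point recursion $z_{n+1}=z_n+\lambda_n(J_{A_V}z_n-z_n)$, apply Theorem~\ref{t:70}\ref{t:70i}, and recover \ref{t:13i}--\ref{t:13ii} by projecting onto $V$ and $V^\bot$. Your explicit computation of $J_{A_V}z_n$ and the orthogonal decomposition of $J_{A_V}z_n-z_n$ is slightly more direct than the paper's version, but the argument is the same.
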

\begin{proof}
Set 
\begin{equation}
\label{e:19k}
(\forall n\in\NN)\quad z_n=x_n+x^*_n
\end{equation}
and note that, since $(x_n)_{n\in\NN}$ lies in $V$ and
$(x^*_n)_{n\in\NN}$ lies in $V^\bot$, \eqref{e:27b} can be
rewritten as
\begin{equation}
\label{e:27bb}
\begin{array}{l}
\text{for}\;n=0,1,\ldots\\
\begin{array}{l}
\left\lfloor
\begin{array}{l}
p_n=J_{A}(x_n+x^*_n)\\
p^*_n=x_n+x^*_n-p_n\\
x_{n+1}=x_n+\lambda_n(\proj_{V}p_n-x_n)\\
x^*_{n+1}=x^*_n+\lambda_n(\proj_{V^\bot}p^*_n-x^*_n).
\end{array}
\right.\\[2mm]
\end{array}
\end{array}
\end{equation}
Thus,
\begin{align}
\label{e:30b}
(\forall n\in\NN)\quad&
\proj_V\bigg(\frac{z_{n+1}-z_n}{\lambda_n}+z_n\bigg)
+\proj_{V^\bot}\bigg(z_n-\bigg(
\frac{z_{n+1}-z_n}
{\lambda_n}+z_n\bigg)\bigg)\nonumber\\
&=\proj_V\bigg(\frac{z_{n+1}-z_n}{\lambda_n}+z_n\bigg)
+\proj_{V^\bot}\bigg(\frac{z_n-z_{n+1}}
{\lambda_n}\bigg)\nonumber\\
&=\proj_V\bigg(\frac{x_{n+1}-x_n}{\lambda_n}+x_n
\bigg)+\proj_{V^\bot}
\bigg(\frac{x^*_n-x^*_{n+1}}
{\lambda_n}\bigg)
\nonumber\\
&=\proj_Vp_n+\proj_{V^\bot}(x^*_n-p^*_n)
\nonumber\\
&=\proj_V p_n+\proj_{V^\bot}(p_n-x_n)
\nonumber\\
&=p_n\nonumber\\
&=J_Az_n.
\end{align}
Hence, it follows from \eqref{e:19k}, \eqref{e:27bb}, and 
Example~\ref{ex:21b} that 
\begin{equation}
\label{e:20x}
(\forall n\in\NN)\quad z_{n+1}=
z_n+\lambda_n\big(J_{A_V}z_n-z_n\big). 
\end{equation}
Altogether, we derive from Theorem~\ref{t:70}\ref{t:70i} that 
\begin{equation}
\label{e:22d}
J_{A_V}z_n-z_n \to 0
\end{equation}
and that there exists 
$z\in\zer A_V$ such that
\begin{equation}
\label{e:22c}
z_n\weakly z.
\end{equation}

\ref{t:13i}: 
In view of \eqref{e:27bb}, \eqref{e:19k}, Example~\ref{ex:21b}, 
and \eqref{e:22d}, we have
\begin{equation}
\label{e:21m}
\proj_Vp_n-x_n
=\proj_V(J_{A_V}z_n)-x_n=\proj_V\big(J_{A_V}z_n-z_n\big)\to 0
\end{equation}
and 
\begin{equation}
\label{e:21n}
x^*_n-\proj_{V^\bot}p^*_n
=\proj_{V^\bot}(p_n-x_n)
=\proj_{V^\bot}J_Az_n
=\proj_{V^\bot}\big(z_n-J_{A_V}z_n\big)
\to 0.
\end{equation}

\ref{t:13ii}: 
As seen above $z\in\zer A_V$. Now set 
$(x,x^*)=(\proj_Vz,\proj_{V^\bot}z)$.
Then Example~\ref{ex:21}\ref{ex:21ii} guarantees that $(x,x^*)$
solves \eqref{e:27a}. In addition, since $\proj_V$ and 
$\proj_{V^\bot}$ are linear and continuous, they are 
weakly continuous. We conclude that 
$x_n=\proj_Vz_n\weakly\proj_Vz=x$ and 
$x^*_n=\proj_{V^\bot}z_n\weakly \proj_{V^\bot}z=x^*$. 
\end{proof}

\begin{example}
\label{ex:88}
In Theorem~\ref{t:13}, let $f\in\Gamma_0(\HH)$ be such that
$0\in\sri(\dom f-V)$, set $A=\partial f$, and suppose that $f$
admits minimizers over $V$. Then \eqref{e:27a} amounts to finding
a solution to the Fenchel dual pair
\begin{equation}
\label{e:cauq3}
\minimize{x\in V}{f(x)}\quad\text{and}\quad
\minimize{x^*\in V^\bot}{f^*(x^*)}.
\end{equation}
In this case, given $x_0\in V$ and $x^*_0\in V^\bot$, 
the method of partial inverses \eqref{e:27b} iterates
\begin{equation}
\label{e:cauq2}
\begin{array}{l}
\text{for}\;n=0,1,\ldots\\
\begin{array}{l}
\left\lfloor
\begin{array}{l}
p_n=\prox_f(x_n+x^*_n)\\
p^*_n=x_n+x^*_n-p_n\\
x_{n+1}=x_n-\lambda_n \proj_Vp^*_n\\
x^*_{n+1}=x^*_n-\lambda_n \proj_{V^\bot}p_n
\end{array}
\right.\\[2mm]
\end{array}
\end{array} 
\end{equation}
and Theorem~\ref{t:13}\ref{t:13ii} guarantees that there exists a
primal-dual solution $(x,x^*)$ of \eqref{e:cauq3} such that
$x_n\weakly x$ and $x^*_n\weakly x^*$.
\end{example}

Algorithm \eqref{e:27b} has many applications in convex
optimization, e.g.,
\cite{Idri89,Lema89,Leno17,Penn02,Spin83,Spin85,Spin87}. As shown
in \cite{Rock19}, it also constitutes the basic building block of
the \emph{progressive hedging algorithm} in stochastic programming
\cite{Rock91}.

Although the method of partial inverses \eqref{e:27b} is presented
in the context of the simple problem \eqref{e:27a}, it has far
reaching ramifications. We present below an application proposed in
\cite{Optl14}, where it is applied to Problem~\ref{prob:9}. In
terms of Framework~\ref{f:1}, this approach can be seen as
a rephrasing of Problem~\ref{prob:9} as an instance of
\eqref{e:27a} in $\XXX=\HH\oplus\GG_1\oplus\cdots\oplus\GG_p$.

\begin{proposition}
\label{p:2013}
Let $0<p\in\NN$, let $A\colon\HH\to 2^{\HH}$ be maximally 
monotone, and, for every $k\in\{1\ldots,p\}$, let $\GG_k$ be a real
Hilbert space, let $B_k\colon\GG_k\to 2^{\GG_k}$ be maximally 
monotone, and let $L_k\in\BL(\HH,\GG_k)$. Suppose that the set 
$Z$ of solutions to the inclusion
\begin{equation}
\label{e:24p}
\text{find}\;\;x\in\HH\;\;\text{such that}\;\;
0\in Ax+\sum_{k=1}^pL_k^*\big(B_k(L_kx)\big)
\end{equation}
is not empty and let $Z^*$ be the set of solutions to the dual
inclusion
\begin{multline}
\label{e:24d}
\text{find}\;\;y^*_1\in\GG_1,\ldots,y^*_p\in\GG_p\;\;
\text{such that}\\
\biggl(\exi x\in A^{-1}\biggl(-\Sum_{k=1}^pL_k^*y_k^*\biggr)
\biggr)\bigl(\forall k\in\{1,\ldots,p\}\bigr)\;\;
L_kx\in B_k^{-1}y^*_k.
\end{multline}
Let $x_0\in\HH$ and let $(\lambda_n)_{n\in\NN}$ be a sequence in
$\left]0,2\right[$ such that 
$\sum_{n\in\NN}\lambda_n(2-\lambda_n)=\pinf$. Set
\begin{equation}
U=\biggl(\Id+\sum_{k=1}^pL_k^*\circ L_k\biggr)^{-1} 
\end{equation}
and, for every
$k\in\{1,\ldots,p\}$, let $y^*_{k,0}\in\GG_k$ and set
$y_{k,0}=L_kx_0$. Additionally, set
\begin{equation}
x^*_0=-\sum_{k=1}^pL_k^*y^*_{k,0}, 
\end{equation}
and iterate
\begin{equation}
\label{e:2013j}
\begin{array}{l}
\text{for}\;n=0,1,\ldots\\
\left\lfloor
\begin{array}{l}
p_n=J_A(x_n+x^*_n)\\
p^*_n=x_n+x^*_n-p_n\\
\text{for}\;k=1,\ldots,p\\
\left\lfloor
\begin{array}{l}
q_{k,n}=J_{B_k}(y_{k,n}+y^*_{k,n})\\
q^*_{k,n}=y_{k,n}+y^*_{k,n}-q_{k,n}\\
\end{array}
\right.\\[1mm]
t_n=U\big(p^*_n+\sum_{k=1}^pL_k^*q^*_{k,n}\big)\\
w_n=U\big(p_n+\sum_{k=1}^pL_k^*q_{k,n}\big)\\
x_{n+1}=x_n-\lambda_n t_n\\
x^*_{n+1}=x^*_n+\lambda_n(w_n-p_n)\\
\text{for}\;k=1,\ldots,p\\
\left\lfloor
\begin{array}{l}
y_{k,n+1}=y_{k,n}-\lambda_n L_kt_n\\
y^*_{k,n+1}=y^*_{k,n}+\lambda_n(L_kw_n-q_{k,n}).
\end{array}
\right.\\[1mm]
\end{array}
\right.\\[2mm]
\end{array}
\end{equation}
Then there exist $x\in Z$ and $(y^*_k)_{1\leq k\leq p}\in Z^*$  
such that $x_n\weakly{x}$ ~and, 
for every $k\in\{1,\ldots,p\}$, $y^*_{k,n}\weakly y^*_k$.
\end{proposition}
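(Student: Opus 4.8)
The plan is to recognize the algorithm \eqref{e:2013j} as the method of partial inverses of Theorem~\ref{t:13} applied to a product-space reformulation of Problem~\ref{prob:9}, following the approach of \cite{Optl14}. First I would set $\XXX=\HH\oplus\GG_1\oplus\cdots\oplus\GG_p$ and introduce the operator
\[
\boldsymbol{A}\colon\XXX\to 2^{\XXX}\colon(x,y_1,\ldots,y_p)\mapsto Ax\times B_1y_1\times\cdots\times B_py_p,
\]
which is maximally monotone by Lemma~\ref{l:0616}, together with the closed vector subspace $\boldsymbol{V}=\menge{(x,y_1,\ldots,y_p)\in\XXX}{(\forall k)\;y_k=L_kx}$, i.e.\ the graph of $x\mapsto(L_kx)_{1\leq k\leq p}$. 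A direct computation gives $\boldsymbol{V}^\bot=\menge{(-\sum_{k=1}^pL_k^*y_k^*,y_1^*,\ldots,y_p^*)}{(\forall k)\;y_k^*\in\GG_k}$, so that a pair $(\boldsymbol{x},\boldsymbol{x}^*)\in\boldsymbol{V}\times\boldsymbol{V}^\bot$ with $\boldsymbol{x}^*\in\boldsymbol{A}\boldsymbol{x}$ encodes exactly $-\sum_kL_k^*y_k^*\in Ax$ together with $y_k^*\in B_k(L_kx)$ for every $k$. Thus the partial-inverse problem \eqref{e:27a} attached to $(\boldsymbol{A},\boldsymbol{V})$ is in bijection with the set $\zer\kut$ of Kuhn--Tucker points of Problem~\ref{prob:9}. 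Since $Z\neq\emp$, Lemma~\ref{l:37z}\ref{l:37ziii} yields a Kuhn--Tucker point, so \eqref{e:27a} has solutions and Theorem~\ref{t:13} is applicable.

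Next I would identify the two iterations coordinatewise. Writing $\boldsymbol{x}_n=(x_n,y_{1,n},\ldots,y_{p,n})\in\boldsymbol{V}$ and $\boldsymbol{x}_n^*=(x_n^*,y_{1,n}^*,\ldots,y_{p,n}^*)\in\boldsymbol{V}^\bot$, Example~\ref{ex:r1} gives $J_{\boldsymbol{A}}(\boldsymbol{x}_n+\boldsymbol{x}_n^*)=(p_n,q_{1,n},\ldots,q_{p,n})$ with $p_n=J_A(x_n+x_n^*)$ and $q_{k,n}=J_{B_k}(y_{k,n}+y_{k,n}^*)$, matching \eqref{e:2013j}; the residual then has coordinates $p_n^*$ and $q_{k,n}^*$. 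The crux is the projection onto the graph $\boldsymbol{V}$: minimizing $\|x-u\|^2+\sum_k\|y_k-L_ku\|^2$ shows that $\proj_{\boldsymbol{V}}(x,y_1,\ldots,y_p)=(u,(L_ku)_{1\leq k\leq p})$ with $u=U(x+\sum_kL_k^*y_k)$, where $U=(\Id+\sum_kL_k^*\circ L_k)^{-1}$ is precisely the operator of the statement. Applying this to the residual produces $t_n=U(p_n^*+\sum_kL_k^*q_{k,n}^*)$ and applying it to $(p_n,q_{1,n},\ldots,q_{p,n})$ produces $w_n=U(p_n+\sum_kL_k^*q_{k,n})$. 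Substituting $\proj_{\boldsymbol{V}}$ and $\proj_{\boldsymbol{V}^\bot}=\Id-\proj_{\boldsymbol{V}}$ into the updates of \eqref{e:27b} and reading off coordinates then reproduces \eqref{e:2013j} line by line, including the initializations $\boldsymbol{x}_0=(x_0,(L_kx_0)_k)\in\boldsymbol{V}$ and $\boldsymbol{x}_0^*=(-\sum_kL_k^*y_{k,0}^*,(y_{k,0}^*)_k)\in\boldsymbol{V}^\bot$.

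Finally, since $(\lambda_n)_{n\in\NN}$ lies in $\left]0,2\right[$ with $\sum_{n\in\NN}\lambda_n(2-\lambda_n)=\pinf$, Theorem~\ref{t:13}\ref{t:13ii} furnishes a solution $(\boldsymbol{x},\boldsymbol{x}^*)$ of \eqref{e:27a} such that $\boldsymbol{x}_n\weakly\boldsymbol{x}$ and $\boldsymbol{x}_n^*\weakly\boldsymbol{x}^*$ in $\XXX$. Because the coordinate projections are bounded and linear, hence weakly continuous, the first coordinate gives $x_n\weakly x$ and the last $p$ coordinates give $y_{k,n}^*\weakly y_k^*$; by the bijection above and Lemma~\ref{l:37z}\ref{l:37zii}, the point $(x,y_1^*,\ldots,y_p^*)$ lies in $\zer\kut\subset Z\times Z^*$, so $x\in Z$ and $(y_k^*)_{1\leq k\leq p}\in Z^*$, which is the claim. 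I expect the main obstacle to be purely computational: verifying the projection formula onto the graph $\boldsymbol{V}$ (and thereby the appearance of $U$) and then matching all $2(p+1)$ coordinate updates of \eqref{e:2013j} against \eqref{e:27b}. No new analytic difficulty arises once the embedding is in place, as all convergence content is imported from Theorem~\ref{t:13}.
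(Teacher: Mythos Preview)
Your proposal is correct and follows essentially the same route as the paper: reformulate in the product space $\XXX=\HH\oplus\GG_1\oplus\cdots\oplus\GG_p$ with the diagonal operator $\boldsymbol{A}$ and the graph subspace $\boldsymbol{V}$, identify the projection onto $\boldsymbol{V}$ via $U=(\Id+\sum_kL_k^*\circ L_k)^{-1}$, recognize \eqref{e:2013j} as the method of partial inverses \eqref{e:27b}, and conclude through Theorem~\ref{t:13}\ref{t:13ii} and Lemma~\ref{l:37z}. The paper merely packages the intermediate step by first aggregating $(\GG_k,B_k,L_k)$ into $(\GG,B,L)$ and citing \cite[Example~29.19]{Livre1} for the projection formula you derive by hand.
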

\begin{proof}
Define 
\begin{equation}
\label{e:GG}
\begin{cases}
\GG=\GG_1\oplus\cdots\oplus\GG_p\\
B\colon\GG\to 2^{\GG}\colon(y_1,\ldots,y_p)\mapsto
B_1y_1\times\cdots\times B_py_p\\
L\colon\HH\to\GG\colon x\mapsto(L_1x,\ldots,L_px)
\end{cases}
\end{equation}
and note that $L^*\colon\GG\to\HH\colon(y^*_1,\ldots,y^*_p)\mapsto 
L^*_1y^*_1+\cdots+L^*_py^*_p$. Moreover set, for every
$n\in\NN$, $q_n=(q_{k,n})_{1\leq k\leq p}$,
$q^*_n=(q^*_{k,n})_{1\leq k\leq p}$, 
$y_n=(y_{k,n})_{1\leq k\leq p}$, 
and $y^*_n=(y^*_{k,n})_{1\leq k\leq p}$.
In this setting, $B$ is maximally monotone and
$J_B\colon (y_k)_{1\leq k\leq p}\mapsto 
(J_{B_k}y_k)_{1\leq k\leq p}$
(Example~\ref{ex:r1}), so that
\eqref{e:2013j} can be rewritten as
\begin{equation}
\label{e:2013jj}
\begin{array}{l}
\text{for}\;n=0,1,\ldots\\
\left\lfloor
\begin{array}{l}
p_n=J_A(x_n+x^*_n)\\
q_n=J_B(y_n+y^*_n)\\
p^*_n=x_n+x^*_n-p_n\\
q^*_n=y_{n}+y^*_n-q_n\\
t_n=U\big(p^*_n+L^*q^*_n\big)\\
w_n=U\big(p_n+L^*q_n\big)\\
x_{n+1}=x_n-\lambda_n t_n\\
y_{n+1}=y_n-\lambda_n Lt_n\\
x^*_{n+1}=x^*_n+\lambda_n(w_n-p_n)\\
y^*_{n+1}=y^*_n+\lambda_n(Lw_n-q_n).
\end{array}
\right.\\[1mm]
\end{array}
\end{equation}
Let us introduce 
\begin{equation}
\label{e:15A}
\begin{cases}
\XXX=\HH\oplus\GG\\
\boldsymbol{V}=\menge{(x,y)\in\XXX}{Lx=y}\\
\boldsymbol{Z}=\menge{(x,y^*)\in\XXX}{-L^*y^*\in Ax\;\;
\text{and}\;\;y^*\in B(Lx)}\\
\boldsymbol{A}\colon\XXX\to 2^{\XXX}\colon(x,y)\mapsto 
Ax\times By\\
\boldsymbol{S}=\menge{(\boldsymbol{x},\boldsymbol{x}^*)\in
\boldsymbol{V}\times\boldsymbol{V}^\bot}{
\boldsymbol{x}^*\in\boldsymbol{A}\boldsymbol{x}}
\end{cases}
\end{equation}
and observe that
\begin{equation}
\label{e:15b}
\begin{cases}
\boldsymbol{V}^\bot=\menge{(x^*,y^*)\in\XXX}{x^*=-L^*y^*}\\
\boldsymbol{S}=
\menge{\big((x,Lx),(-L^*y^*,y^*)\big)\in\XXX\times\XXX}
{(x,y^*)\in\boldsymbol{Z}}.
\end{cases}
\end{equation}
Then Lemma~\ref{l:37z}\ref{l:37ziii} implies that
\begin{equation}
\label{e:2013-09-21}
\text{\eqref{e:24p} admits solutions}
\;\Leftrightarrow\;\boldsymbol{Z}\neq\emp
\;\Leftrightarrow\;\boldsymbol{S}\neq\emp.
\end{equation}
Now define $(\forall n\in\NN)$ 
$\boldsymbol{p}_n=(p_n,q_n)$,
$\boldsymbol{p}^*_n=(p^*_n,q^*_n)$, 
$\boldsymbol{x}_n=(x_n,y_n)$, and
$\boldsymbol{x}^*_n=(x^*_n,y^*_n)$.
Then $\boldsymbol{x}_0\in\boldsymbol{V}$ and 
$\boldsymbol{x}^*_0\in\boldsymbol{V}^\bot$.
Moreover, by Lemma~\ref{l:0616} and Example~\ref{ex:r1},
$\boldsymbol{A}$ is maximally monotone and 
\begin{equation}
\label{e:15R}
(\forall n\in\NN)\quad J_{\boldsymbol{A}}
(\boldsymbol{x}_n+\boldsymbol{x}^*_n)=
\big(J_A(x_n+x^*_n),J_B(y_n+y^*_n)\big).
\end{equation}
Furthermore, since $U=(\Id+L^*\circ L)^{-1}$, it follows from
\eqref{e:15A} and \cite[Example~29.19]{Livre1} that
\begin{equation}
\label{e:15S}
(\forall n\in\NN)\quad\proj_{\boldsymbol{V}^\bot}\boldsymbol{p}_n
=\Big(p_n-U(p_n+L^*q_n),q_n-L\big(U(p_n+L^*q_n)\big)\Big)
\end{equation}
and 
\begin{equation}
\label{e:15T}
(\forall n\in\NN)\quad\proj_{\boldsymbol{V}}
\boldsymbol{p}^*_n
=\Big(U(p^*_n+L^*q^*_n),L\big(U(p^*_n+L^*q^*_n)\big)\Big).
\end{equation}
Combining \eqref{e:15R}, \eqref{e:15S}, and \eqref{e:15T}, we
rewrite \eqref{e:2013jj} as 
\begin{equation}
\label{e:2013jjj}
\begin{array}{l}
\text{for}\;n=0,1,\ldots\\
\left\lfloor
\begin{array}{l}
\boldsymbol{p}_n=J_{\boldsymbol{A}}
(\boldsymbol{x}_n+\boldsymbol{x}^*_n)\\
\boldsymbol{p}^*_n=\boldsymbol{x}_n+\boldsymbol{x}^*_n-
\boldsymbol{p}_n\\
\boldsymbol{x}_{n+1}=\boldsymbol{x}_n-\lambda_n
\proj_{\boldsymbol{V}}\boldsymbol{p}^*_n\\
\boldsymbol{x}^*_{n+1}=\boldsymbol{x}^*_n-\lambda_n
\proj_{\boldsymbol{V}^\bot}\boldsymbol{p}_n.
\end{array}
\right.\\[1mm]
\end{array}
\end{equation}
In turn, Theorem~\ref{t:13}\ref{t:13ii} implies that 
there exists $({\boldsymbol{x}},{\boldsymbol{x}^*})
\in\boldsymbol{S}$ such that 
$\boldsymbol{x}_n\weakly{\boldsymbol{x}}$ and
$\boldsymbol{x}^*_n\weakly{\boldsymbol{x}^*}$.
We then derive from \eqref{e:15b} that there exists
$(x,y^*)\in\boldsymbol{Z}$ such that
$(x_n,y^*_n)\weakly(x,y^*)$. We complete the proof by invoking
Lemma~\ref{l:37z}\ref{l:37zii}.
\end{proof}

\subsubsection{Renorming}
\label{sec:ren1}

The potency of the proximal point algorithm can be further extended
by setting it up in a renormed space. In terms of
Framework~\ref{f:1}, the guiding principle lies in the embedding of
Example~\ref{ex:f9}. Here is a weak convergence result.

\begin{proposition}
\label{p:3}
Let $M\colon\HH\to 2^{\HH}$ be a maximally monotone operator such
that $Z=\zer M\neq\emp$, let $U\in\BL(\HH)$ be a self-adjoint 
strongly monotone operator, and let $\mathcal{X}$ be the real 
Hilbert space obtained by endowing $\HH$ with the scalar product 
$(x,y)\mapsto\scal{Ux}{y}$. Let $x_0\in\HH$, let 
$(\lambda_n)_{n\in\NN}$ be a sequence in $\left]0,2\right[$, and
let $(\gamma_n)_{n\in\NN}$ be a sequence in $\RPP$. Iterate
\begin{equation}
\label{e:1U}
\begin{array}{l}
\text{for}\;n=0,1,\ldots\\
\left\lfloor
\begin{array}{l}
u_n=\gamma_n^{-1}Ux_n\\
p_n=\big(\gamma_n^{-1}U+M\big)^{-1}u_n\\
x_{n+1}=x_n+\lambda_n(p_n-x_n)
\end{array}
\right.
\end{array}
\end{equation}
and suppose that one of the following holds:
\begin{enumerate}
\item
\label{p:3i}
$\sum_{n\in\NN}\lambda_n(2-\lambda_n)=\pinf$ and 
$(\forall n\in\NN)$ $\gamma_n=1$.
\item
\label{p:3ii}
$\sum_{n\in\NN}\gamma_n^2=\pinf$ and 
$(\forall n\in\NN)$ $\lambda_n=1$.
\item
\label{p:3iii}
$\inf_{n\in\NN}\lambda_n>0$, $\sup_{n\in\NN}\lambda_n<2$, and
$\inf_{n\in\NN}\gamma_n>0$.
\end{enumerate}
Then $(x_n)_{n\in\NN}$ converges weakly to a point in $Z$.
\end{proposition}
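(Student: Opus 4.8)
The plan is to recognize \eqref{e:1U} as nothing but the proximal point algorithm of Theorem~\ref{t:70} run in the renormed space $\mathcal{X}$, so that the entire proof reduces to an identification of the iterates followed by a translation of weak convergence back to $\HH$. First I would invoke the embedding of Example~\ref{ex:f9}: setting $\MMM=U^{-1}\circ M$ and recalling Lemma~\ref{l:9}\ref{l:9i-}--\ref{l:9i}, the operator $\MMM\colon\mathcal{X}\to 2^{\mathcal{X}}$ is maximally monotone and $\zer\MMM=\zer M=Z\neq\emp$. Thus all the structural hypotheses of Theorem~\ref{t:70} are satisfied in $\mathcal{X}$, provided we can match the iteration.

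The key computation is to show that $p_n=J_{\gamma_n\MMM}x_n$, where the resolvent is of course taken with respect to $\MMM$ (which is an algebraic object, so the renorming does not alter the defining relation $J=(\Id+\cdot)^{-1}$). Since $\gamma_n$ is a scalar we have $\gamma_n\MMM=U^{-1}\circ(\gamma_n M)$, and $\gamma_n M$ is maximally monotone by Lemma~\ref{l:0602}; hence Example~\ref{ex:b12} gives $J_{\gamma_n\MMM}=(U+\gamma_n M)^{-1}\circ U$. On the other hand, $(\gamma_n^{-1}U+M)^{-1}=(U+\gamma_n M)^{-1}\circ(\gamma_n\Id)$, so that, with $u_n=\gamma_n^{-1}Ux_n$, the second line of \eqref{e:1U} yields $p_n=(U+\gamma_n M)^{-1}(Ux_n)=J_{\gamma_n\MMM}x_n$. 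Consequently \eqref{e:1U} is exactly the recursion $x_{n+1}=x_n+\lambda_n(J_{\gamma_n\MMM}x_n-x_n)$ of \eqref{e:1} applied to $\MMM$ in $\mathcal{X}$, and the three cases \ref{p:3i}--\ref{p:3iii} coincide verbatim with the three cases \ref{t:70i}--\ref{t:70iii} of Theorem~\ref{t:70}. Applying that theorem in $\mathcal{X}$, I conclude that $(x_n)_{n\in\NN}$ converges weakly in $\mathcal{X}$ to some point of $Z$.

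The one point demanding care—and the main obstacle—is that Theorem~\ref{t:70} delivers weak convergence in $\mathcal{X}$, whereas the assertion is about weak convergence in $\HH$. Here I would argue that these two notions coincide because $U$ is self-adjoint and boundedly invertible (being self-adjoint and $\beta$-strongly monotone for some $\beta\in\RPP$). Indeed, for $x\in\HH$ the condition $x_n\weakly x$ in $\mathcal{X}$ means $\scal{U(x_n-x)}{y}\to 0$ for every $y\in\HH$, i.e.\ $\scal{x_n-x}{Uy}\to 0$ for every $y\in\HH$; since $U$ is surjective, $Uy$ ranges over all of $\HH$, so this is equivalent to $\scal{x_n-x}{z}\to 0$ for every $z\in\HH$, that is, $x_n\weakly x$ in $\HH$. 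This equivalence transports the limit back to the original space and completes the proof.
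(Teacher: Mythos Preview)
Your proof is correct and follows essentially the same approach as the paper: identify \eqref{e:1U} as the proximal point algorithm \eqref{e:1} for $\MMM=U^{-1}\circ M$ in $\mathcal{X}$ via Lemma~\ref{l:9}\ref{l:9i} and Example~\ref{ex:b12}, invoke Theorem~\ref{t:70}, and use that weak convergence in $\HH$ and $\mathcal{X}$ coincide. Your version is simply more explicit in the resolvent identification and in justifying the equivalence of weak topologies.
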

\begin{proof}
In view of Lemma~\ref{l:9}\ref{l:9i} and Example~\ref{ex:b12},
\eqref{e:1U} is just the proximal point algorithm \eqref{e:1}
applied to the maximally monotone operator $U^{-1}\circ M$ in
$\mathcal{X}$. Since weak convergences in $\HH$ and $\mathcal{X}$
coincide, the claims follow from Lemma~\ref{l:9}\ref{l:9i-} and
Theorem~\ref{t:70}.
\end{proof}

\begin{remark}
\label{r:wr1}
In terms of the warped resolvent of Section~\ref{sec:wr}, the
update in \eqref{e:1U} can be written as 
$x_{n+1}=x_n+\lambda_n(J_{\gamma_n M}^Ux_n-x_n)$.
\end{remark}

Likewise, Theorem~\ref{t:70s} leads to a strongly convergent
algorithm.

\begin{proposition}
\label{p:3s}
Let $M\colon\HH\to 2^{\HH}$ be a maximally monotone operator such
that $Z=\zer M\neq\emp$, let $U\in\BL(\HH)$ be a self-adjoint 
strongly monotone operator, and let $\mathcal{X}$ be the real 
Hilbert space obtained by endowing $\HH$ with the scalar product 
$(x,y)\mapsto\scal{Ux}{y}$. Let $x_0\in\HH$, let 
$(\lambda_n)_{n\in\NN}$ be a sequence in $\left]0,1\right]$ such
that $\inf_{n\in\NN}\lambda_n>0$, and let 
$(\gamma_n)_{n\in\NN}$ be a sequence in $\RPP$ such that
$\inf_{n\in\NN}\gamma_n>0$. Iterate
\begin{equation}
\label{e:1Us}
\begin{array}{l}
\text{for}\;n=0,1,\ldots\\
\left\lfloor
\begin{array}{l}
u_n=\gamma_n^{-1} Ux_n\\
p_n=\big(\gamma_n^{-1} U+M\big)^{-1}u_n\\
x_{n+1}=\Qq\bigl(x_0,x_n,x_n+\lambda_n(p_n-x_n)\bigr),
\end{array}
\right.
\end{array}
\end{equation}
where $\Qq$ is defined in Lemma~\ref{l:2}. Then $(x_n)_{n\in\NN}$
converges strongly to $\proj_Zx_0$.
\end{proposition}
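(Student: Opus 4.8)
The plan is to recognize the renormed iteration \eqref{e:1Us} as the Haugazeau-like proximal point algorithm \eqref{e:2} carried out in the Hilbert space $\mathcal{X}$, applied to the operator $\MMM=U^{-1}\circ M$, and then to invoke Theorem~\ref{t:70s}. This mirrors the proof of the weak-convergence companion Proposition~\ref{p:3}, the only genuine difference being that strong convergence and the projection $\proj_Z$ are metric-dependent, so every computation must be carried out consistently in $\mathcal{X}$ rather than in $\HH$.

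First I would record the structural facts. By Lemma~\ref{l:9}\ref{l:9i}, $\MMM=U^{-1}\circ M$ is maximally monotone on $\mathcal{X}$, and by Lemma~\ref{l:9}\ref{l:9i-} its zero set is $\zer\MMM=\zer M=Z$. Next I would identify the resolvent step. Since $\gamma_n M$ is maximally monotone and $U^{-1}\circ(\gamma_n M)=\gamma_n\MMM$, Example~\ref{ex:b12} (applied with $\gamma_n M$ in place of $A$) gives $J_{\gamma_n\MMM}=(U+\gamma_n M)^{-1}\circ U$, the resolvent being taken in $\mathcal{X}$. A one-line equivalence then confirms that the point $p_n$ in \eqref{e:1Us} is exactly this resolvent evaluated at $x_n$:
\begin{equation}
p=(\gamma_n^{-1}U+M)^{-1}(\gamma_n^{-1}Ux_n)
\;\Leftrightarrow\;
U(x_n-p)\in\gamma_n Mp
\;\Leftrightarrow\;
p=(U+\gamma_n M)^{-1}(Ux_n),
\end{equation}
so that $p_n=J_{\gamma_n\MMM}x_n$.

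With these identifications, and with the operator $\Qq$ (hence the half-spaces $H(\cdot,\cdot)$ of Lemma~\ref{l:2}) computed with respect to the $\mathcal{X}$-inner product, the iteration \eqref{e:1Us} becomes precisely an instance of \eqref{e:2} for $\MMM$ in $\mathcal{X}$. The hypotheses of Theorem~\ref{t:70s} are met, since $(\lambda_n)_{n\in\NN}$ lies in $\rzeroun$ with $\inf_n\lambda_n>0$ and $(\gamma_n)_{n\in\NN}$ lies in $\RPP$ with $\inf_n\gamma_n>0$. Theorem~\ref{t:70s} then yields strong convergence of $(x_n)_{n\in\NN}$ to $\proj_Z x_0$, the projection being taken in $\mathcal{X}$.

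Finally I would transfer the conclusion back to $\HH$: because $U$ is self-adjoint and strongly monotone, it is boundedly invertible, so the norms of $\HH$ and $\mathcal{X}$ are equivalent and strong convergence in $\mathcal{X}$ coincides with strong convergence in $\HH$. The only substantive point requiring care — and the sole difference from Proposition~\ref{p:3} — is this metric bookkeeping: unlike weak convergence, both the target $\proj_Z x_0$ and the cutting operator $\Qq$ must be interpreted in $\mathcal{X}$, and one must make explicit that it is in this sense that the stated limit $\proj_Z x_0$ is to be understood.
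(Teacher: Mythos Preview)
Your proof is correct and follows essentially the same approach as the paper: recognize \eqref{e:1Us} as the Haugazeau-like proximal point algorithm \eqref{e:2} applied to $U^{-1}\circ M$ in $\mathcal{X}$ via Lemma~\ref{l:9}\ref{l:9i-}--\ref{l:9i} and Example~\ref{ex:b12}, then invoke Theorem~\ref{t:70s} and transfer strong convergence back to $\HH$. Your additional emphasis that $\Qq$ and $\proj_Z$ must be computed in the $\mathcal{X}$-metric is a valid and useful clarification that the paper's terse proof leaves implicit.
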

\begin{proof}
It follows from Lemma~\ref{l:9}\ref{l:9i} and 
Example~\ref{ex:b12} that applying the algorithm \eqref{e:2} to 
the maximally monotone operator $U^{-1}\circ M$ in 
$\mathcal{X}$ yields \eqref{e:1Us}.
Since strong convergences in $\HH$ and $\mathcal{X}$ coincide, the
assertion follows from Lemma~\ref{l:9}\ref{l:9i-} and
Theorem~\ref{t:70s}.
\end{proof}

Although the inversion of the operators
$(\gamma_n^{-1}U+M)_{n\in\NN}$ in \eqref{e:1U} and \eqref{e:1Us}
may be intimidating, we show below that the renormed proximal point
algorithm leads to important instances of fully executable
splitting algorithms. First, we revisit a classical minimization
problem and recover an algorithm known as the \emph{proximal
Landweber method}.

\begin{example}
\label{ex:1Ua}
Let $\varphi\in\Gamma_0(\HH)$, let $\mu\in\RPP$, and let $y\in\GG$.
Suppose that $0\neq L\in\BL(\HH,\GG)$ and that the set $Z$ of
solutions to the optimization problem
\begin{equation}
\label{e:1Ua}
\minimize{x\in\HH}{\varphi(x)+\dfrac{\mu}{2}\|Lx-y\|^2}
\end{equation}
is not empty. Without loss of generality (rescale), assume that
$\mu\|L\|^2<1$. Let $x_0\in\HH$, let $(\lambda_n)_{n\in\NN}$ be a
sequence in $\left]0,2\right[$ such that
$\sum_{n\in\NN}\lambda_n(2-\lambda_n)=\pinf$, and iterate
\begin{equation}
\label{e:1U1}
\begin{array}{l}
\text{for}\;n=0,1,\ldots\\
\left\lfloor
\begin{array}{l}
u_n=x_n-\mu L^*(Lx_n)\\
p_n=\prox_\varphi(u_n+\mu L^*y)\\
x_{n+1}=x_n+\lambda_n(p_n-x_n).
\end{array}
\right.
\end{array}
\end{equation}
Then $(x_n)_{n\in\NN}$ converges weakly to a point in $Z$.
\end{example}
\begin{proof}
Set $f=\varphi-\mu\scal{\cdot}{L^*y}$, 
$M=\partial(\varphi+\mu\|L\cdot-y\|^2/2)=
\partial f+\mu L^*\circ L$, and
$U=\Id-\mu L^*\circ L$. Then $f\in\Gamma_0(\HH)$, $M$ is
maximally monotone with $\zer M=Z$ by virtue of Example~\ref{ex:1},
$U\in\BL(\HH)$ is self-adjoint and strongly monotone, and 
$(U+M)^{-1}=\prox_f=\prox_\varphi(\cdot+\mu L^*y)$.
Consequently, \eqref{e:1U1} is the implementation of \eqref{e:1U} 
with, for every $n\in\NN$, $\gamma_n=1$, and 
Proposition~\ref{p:3}\ref{p:3i} brings the conclusion.
\end{proof}

Next, we return to the primal-dual composite inclusion framework of
Problem~\ref{prob:3} and approach it via Framework~\ref{f:1} where,
as discussed in Example~\ref{ex:f3}, the embedding is based on
$\XXX=\HH\oplus\GG$ and the Kuhn--Tucker operator $\kut$ of
Lemma~\ref{l:31z}.

\begin{example}
\label{ex:1Ub}
Let $A\colon\HH\to 2^{\HH}$ and $B\colon\GG\to 2^{\GG}$ be
maximally monotone, and let $L\in\BL(\HH,\GG)$. Suppose that the 
set $Z$ of solutions to the primal inclusion 
\begin{equation}
\label{e:1Up}
\text{find}\;\;{x}\in\HH\;\;\text{such that}\;\;
0\in Ax+L^*\big(B(Lx)\big)
\end{equation}
is not empty and let $Z^*$ be the set of solutions to 
the dual inclusion
\begin{equation}
\label{e:1Ud}
\text{find}\;\;y^*\in\GG\;\;\text{such that}\;\;
0\in-L\bigl(A^{-1}(-L^*y^*)\bigr)+B^{-1}y^*.
\end{equation}
Let $(\lambda_n)_{n\in\NN}$ be a sequence in $\left]0,2\right[$
such that $\sum_{n\in\NN}\lambda_n(2-\lambda_n)=\pinf$, let
$x_0\in\HH$, let $y^*_0\in\GG$, and let $\sigma\in\RPP$ and
$\tau\in\RPP$ be such that $\tau\sigma\|L\|^2<1$. Iterate
\begin{equation}
\label{e:2013v}
\begin{array}{l}
\text{for}\;n=0,1,\ldots\\
\left\lfloor
\begin{array}{l}
x^*_n=\tau L^*y_n^*\\
p_n=J_{\tau A}(x_n-x^*_n)\\
y_n=\sigma L(2p_n-x_n)\\
q^*_n=J_{\sigma B^{-1}}(y^*_n+y_n)\\
x_{n+1}=x_n+\lambda_n(p_n-x_n)\\
y^*_{n+1}=y^*_n+\lambda_n(q^*_n-y^*_n).
\end{array}
\right.\\[1mm]
\end{array}
\end{equation}
Then there exist $x\in Z$ and $y^*\in Z^*$ such that $x_n\weakly x$
and $y^*_n\weakly y^*$.
\end{example}
\begin{proof}
Set $\XXX=\HH\oplus\GG$ and 
\begin{equation}
\label{e:UU}
\begin{cases}
\kut\colon\XXX\to 2^{\XXX}\colon(x,y^*)\mapsto
\bigl(Ax+L^*y^*\bigr)\times\bigl(-Lx+B^{-1}y^*\bigr)\\
\boldsymbol{U}\colon\XXX\to\XXX\colon(x,y^*)\mapsto
\bigl(\tau^{-1}x-L^*y^*,-Lx+\sigma^{-1}y^*\bigr).
\end{cases}
\end{equation}
As seen in Lemma~\ref{l:31z}\ref{l:31zi}--\ref{l:31zii},
$\kut$ is the maximally monotone Kuhn--Tucker operator
associated with \eqref{e:1Up}--\eqref{e:1Ud} and to prove the claim
it is enough to show that $(x_n,y^*_n)_{n\in\NN}$ converges weakly
to a point in $\zer\kut$, which we shall derive from
Proposition~\ref{p:3}\ref{p:3i}. It is clear that 
$\boldsymbol{U}\in\BL(\XXX)$ is self-adjoint. Now set
$\beta=1-\sqrt{\sigma\tau}\|L\|$. Then, since
$\tau\sigma\|L\|^2<1$, $\beta\in\zeroun$ and, for every
$(x,y^*)\in\XXX$, the Cauchy--Schwarz inequality yields
\begin{align}
\scal{\boldsymbol{U}(x,y^*)}{(x,y^*)}_{\XXX}
&=\tau^{-1}\|x\|^2-2\scal{Lx}{y^*}+\sigma^{-1}\|y^*\|^2
\nonumber\\
&\geq\tau^{-1}\|x\|^2-2\sqrt{\tau\sigma}\|L\|\,
\bigg\|\dfrac{x}{\sqrt{\tau}}\bigg\|\,
\bigg\|\dfrac{y^*}{\sqrt{\sigma}}\bigg\|
+\sigma^{-1}\|y^*\|^2
\nonumber\\
&=\tau^{-1}\|x\|^2-2(1-\beta)\,
\bigg\|\dfrac{x}{\sqrt{\tau}}\bigg\|\,
\bigg\|\dfrac{y^*}{\sqrt{\sigma}}\bigg\|
+\sigma^{-1}\|y^*\|^2
\nonumber\\
&=\bigg(\bigg\|\dfrac{x}{\sqrt{\tau}}\bigg\|-
\bigg\|\dfrac{y^*}{\sqrt{\sigma}}\bigg\|\bigg)^2
+2\beta\bigg\|\dfrac{x}{\sqrt{\tau}}\bigg\|\,
\bigg\|\dfrac{y^*}{\sqrt{\sigma}}\bigg\|
\nonumber\\
&=(1-\beta)\bigg(\bigg\|\dfrac{x}{\sqrt{\tau}}\bigg\|-
\bigg\|\dfrac{y^*}{\sqrt{\sigma}}\bigg\|\bigg)^2
+\beta\bigg(\bigg\|\dfrac{x}{\sqrt{\tau}}\bigg\|^2+
\bigg\|\dfrac{y^*}{\sqrt{\sigma}}\bigg\|^2\bigg)
\nonumber\\
&\geq\beta\big(\tau^{-1}\|x\|^2+\sigma^{-1}\|y^*\|^2\big)
\nonumber\\
&\geq\beta\min\{\tau^{-1},\sigma^{-1}\}\|(x,y^*)\|_{\XXX}^2,
\end{align}
which confirms that $\boldsymbol{U}$ is strongly monotone. It
remains to show that \eqref{e:2013v} is a realization of
\eqref{e:1U} with the above operators $\kut$ and
$\boldsymbol{U}$. Define $(\forall n\in\NN)$
$\boldsymbol{x}_n=(x_n,y^*_n)$,
$\boldsymbol{p}_n=(p_n,q^*_n)$, and
$\boldsymbol{u}_n=\boldsymbol{U}\boldsymbol{x}_n$.
Then we derive from \eqref{e:2013v} and \eqref{e:ee14} that
\begin{equation}
\label{e:dN1}
(\forall n\in\NN)\quad
\begin{cases}
x_n-p_n-\tau L^*y^*_n\in\tau Ap_n\\
y_n^*-q_n^*+\sigma L(2p_n-x_n)\in\sigma B^{-1}q_n^*
\end{cases}
\end{equation}
This yields 
$(\forall n\in\NN)$
$\boldsymbol{u}_n-\boldsymbol{U}\boldsymbol{p}_n\in
\kut\boldsymbol{p}_n$, i.e.,
$\boldsymbol{p}_n=(\boldsymbol{U}+
\kut)^{-1}\boldsymbol{u}_n$.
Altogether, \eqref{e:2013v} corresponds to the iteration
\begin{equation}
\begin{array}{l}
\text{for}\;n=0,1,\ldots\\
\left\lfloor
\begin{array}{l}
\boldsymbol{u}_n=\boldsymbol{U}\boldsymbol{x}_n\\
\boldsymbol{p}_n=\big(\boldsymbol{U}+\kut\big)^{-1}
\boldsymbol{u}_n\\
\boldsymbol{x}_{n+1}=\boldsymbol{x}_n+\lambda_n(\boldsymbol{p}_n
-\boldsymbol{x}_n),
\end{array}
\right.
\end{array}
\end{equation}
which is precisely \eqref{e:1U} with $(\forall n\in\NN)$
$\gamma_n=1$. 
\end{proof}

\begin{remark}
\label{r:U}
Here are a few observations regarding Example~\ref{ex:1Ub}. 
\begin{enumerate}
\item
\label{r:Ui}
We have derived weak convergence from 
Proposition~\ref{p:3}\ref{p:3i}. Using items 
\ref{p:3ii} or \ref{p:3iii} in Proposition~\ref{p:3} leads to
alternative forms of \eqref{e:2013v} involving proximal parameters
$(\gamma_n)_{n\in\NN}$.
\item
\label{r:Uii}
It is straightforward to derive a strongly convergent best
approximation variant of \eqref{e:2013v} from
Proposition~\ref{p:3s} by following the same pattern as in the
proof of Example~\ref{ex:1Ub}, i.e., applying \eqref{e:1Us} to the
operators $\kut$ and $\boldsymbol{U}$ of \eqref{e:UU}.
\item
Algorithm \eqref{e:2013v} can be adapted to Problem~\ref{prob:9} by
applying it to the setting of \eqref{e:GG} and using 
Example~\ref{ex:r1}.
\item
\label{r:Uiii}
Let $f\in\Gamma_0(\HH)$ and $g\in\Gamma_0(\GG)$, and set
$A=\partial f$ and $B=\partial g$ in Example~\ref{ex:1Ub}, which
corresponds to the primal-dual minimization setting of
Problem~\ref{prob:35}. The specialization of Example~\ref{ex:1Ub}
to this minimization problem appears in 
\cite[Theorem~3.2]{Cond13}, where \eqref{e:2013v} is called the
\emph{Chambolle--Pock algorithm} because it collapses to the
algorithm proposed in \cite[Algorithm~I]{Cham11} in Euclidean
spaces when $(\forall n\in\NN)$ $\lambda_n=1$ (see \cite{Cond23}
for variations on this algorithm). The fact that the
Chambolle--Pock algorithm is a renormed proximal point algorithm
was first observed in \cite{Heyu12}.
\end{enumerate}
\end{remark}

\section{Douglas--Rachford splitting}
\label{sec:dr}

\subsection{Preview}
\label{sec:odr}

The Douglas--Rachford splitting algorithm is an implicit
alternating direction method designed in \cite{Doug56} to solve the
matrix equation $Ax+Bx=f$, where $A$ and $B$ are 
positive-definite matrices arising from the discretization of
partial differentiation operators. It is described by the iteration
process
\begin{equation}
\label{e:dr1}
\begin{array}{l}
\text{for}\;n=0,1,\ldots\\
\left\lfloor
\begin{array}{l}
x_{n+1/2}-x_n+Ax_{n+1/2}+Bx_n=f\\
x_{n+1}-x_n+Ax_{n+1/2}+Bx_{n+1}=f.
\end{array}
\right.\\[2mm]
\end{array}
\end{equation}
In 1968, Lieutaud \cite{Lieu68} (see also \cite{Lieu69}) proposed
an infinite-dimensional nonlinear generalization of the method by
showing that \eqref{e:dr1} can be extended to single-valued
hemicontinuous monotone operators with $\dom A=\dom B=\HH$. In
particular, he established in \cite{Lieu68} that, with the
additional assumption that $A$ or $B$ is strongly monotone,
$(x_n)_{n\in\NN}$ converges strongly to some $x\in\HH$ which
satisfies $Ax+Bx=f$. The investigation of the method for general
set-valued maximally monotone operators was initiated in
\cite{Lion79}, with subsequent improvements in
\cite{Livre1,Baus17,Joca09,Ecks92,Svai11}. See also \cite{Xue23b}
for further analysis.

To chart the path from the original Douglas--Rachford algorithm to
its modern version for monotone set-valued operators, let us
go back to the matrix setting. Upon eliminating the
intermediate variables $(x_{n+1/2})_{n\in\NN}$ in \eqref{e:dr1} and
noting that $AJ_A=\Id-J_A$, we obtain
\begin{align}
\label{e:dr2}
(\forall n\in\NN)\quad
x_{n+1}&=J_B\big(x_n-AJ_A(x_n-Bx_n+f)+f\big)
\nonumber\\
&=J_B\big(Bx_n+J_A(x_n-Bx_n+f)\big).
\end{align}
Now set $(\forall n\in\NN)$ $x_n=J_By_n$. Then we derive from 
\eqref{e:dr2} that 
\begin{align}
\label{e:dr3}
(\forall n\in\NN)\quad y_{n+1}
&=BJ_By_n+J_A(J_By_n-BJ_By_n+f)
\nonumber\\
&=y_n-J_By_n+J_A(2J_By_n-y_n+f),
\end{align}
which leads to the recursion
\begin{equation}
\label{e:dr4}
\begin{array}{l}
\text{for}\;n=0,1,\ldots\\
\left\lfloor
\begin{array}{l}
x_n=J_By_n\\
z_n=J_A(2x_n-y_n+f)\\
y_{n+1}=y_n+z_n-x_n.
\end{array}
\right.\\[2mm]
\end{array}
\end{equation}
As noted in \cite{Lion79}, unlike \eqref{e:dr1}, this algorithm is
well defined for arbitrary maximally monotone set-valued operators
and is now referred to as the Douglas--Rachford
splitting algorithm in this context.

\begin{remark}
\label{r:2}
In particular, upon setting $B=0$ and $f=0$ in \eqref{e:dr4} and
assuming that $A\colon\HH\to\HH$ is hemicontinuous and strongly
monotone, it follows from Lieutaud's result \cite{Lieu68} that the
sequence $(x_n)_{n\in\NN}$ generated by the recursion
\begin{equation}
(\forall n\in\NN)\quad x_{n+1}=J_Ax_n
\end{equation}
converges strongly to a zero of $A$. This is actually the first 
instance of convergence of the proximal point algorithm, which has
been attributed to later work in the literature. The case when
$A$ and $B$ are gradients of convex functions was also considered
in \cite{Lieu68} in connection with the minimization of the sum of
two differentiable convex functions.
\end{remark}

\subsection{Weak convergence}

We present results for a form of the Douglas--Rachford algorithm
\eqref{e:dr4} which includes relaxation parameters and a dual
inclusion problem.

\begin{theorem}
\label{t:4}
Let $A\colon\HH\to 2^{\HH}$ and $B\colon\HH\to 2^{\HH}$ be 
maximally monotone, let $(\lambda_n)_{n\in\NN}$ be a sequence in 
$\left]0,2\right[$ such that 
$\sum_{n\in\NN}\lambda_n(2-\lambda_n)=\pinf$,
and let $\gamma\in\RPP$. Suppose that the 
set $Z$ of solutions to the inclusion 
\begin{equation}
\label{e:29p}
\text{find}\;\;x\in\HH\;\;\text{such that}\;\;0\in Ax+Bx
\end{equation}
is not empty and let $Z^*$ be the set of solutions to the dual
problem 
\begin{equation}
\label{e:29d}
\text{find}\;\;x^*\in\HH\;\;\text{such that}\;\;
0\in -A^{-1}(-x^*)+B^{-1}x^*.
\end{equation}
Let $y_0\in\HH$ and iterate
\begin{equation}
\label{e:dr6}
\begin{array}{l}
\text{for}\;n=0,1,\ldots\\
\left\lfloor
\begin{array}{l}
x_n=J_{\gamma B}y_n\\
x_n^*=\gamma^{-1}(y_n-x_n)\\
z_n=J_{\gamma A}(2x_n-y_n)\\
y_{n+1}=y_n+\lambda_n(z_n-x_n).
\end{array}
\right.\\[2mm]
\end{array}
\end{equation}
Then there exists $y\in\HH$ such that $y_n\weakly y$. 
Now set $x=J_{\gamma B}y$ and $x^*=\moyo{B}{\gamma}y$.
Then the following hold:
\begin{enumerate}
\item
\label{t:4i}
$x_n\weakly x\in Z$.
\item
\label{t:4ii}
$x_n^*\weakly x^*\in Z^*$.
\end{enumerate}
\end{theorem}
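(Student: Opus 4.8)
The plan is to read the recursion \eqref{e:dr6} as the proximal point algorithm \eqref{e:1}, with $\gamma_n\equiv 1$, applied to the Douglas--Rachford operator of Example~\ref{ex:f11}. Writing $T=J_{\gamma A}\circ(2J_{\gamma B}-\Id)+\Id-J_{\gamma B}$, so that $\MMM=T^{-1}-\Id$, one checks directly from \eqref{e:dr6} that $Ty_n=y_n+z_n-x_n$, whence $y_{n+1}=y_n+\lambda_n(Ty_n-y_n)$. First I would verify that $T$ is firmly nonexpansive: a short computation gives $2T-\Id=(2J_{\gamma A}-\Id)\circ(2J_{\gamma B}-\Id)$, a composition of two nonexpansive reflected resolvents, so \eqref{e:ne} yields the firm nonexpansiveness of $T$. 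Since $\dom T=\HH$, Lemma~\ref{l:0}\ref{l:0iii} shows that $\MMM$ is maximally monotone with $J_{\MMM}=T$, and \eqref{e:z} gives $\Fix T=\zer\MMM$, which is nonempty because $Z\neq\emp$ (Example~\ref{ex:f11}). Theorem~\ref{t:70}\ref{t:70i} then applies and delivers simultaneously $y_n\weakly y$ for some $y\in\Fix T$ and $\|Ty_n-y_n\|=\|z_n-x_n\|\to 0$.

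The \emph{main obstacle} is to pass from the weak convergence of $(y_n)$ to that of the primal and dual sequences, since the nonexpansive map $J_{\gamma B}$ is not weakly sequentially continuous. The device is to exhibit the iterates as points of $\gra A$ and $\gra B$ and invoke the demiclosedness packaged in Lemma~\ref{l:30}. Indeed, $x_n=J_{\gamma B}y_n$ gives $(x_n,x_n^*)\in\gra B$ with $x_n^*=\gamma^{-1}(y_n-x_n)=\moyo{B}{\gamma}y_n$, while $z_n=J_{\gamma A}(2x_n-y_n)$ gives $(z_n,z_n^*)\in\gra A$ with $z_n^*:=\gamma^{-1}(2x_n-y_n-z_n)=-x_n^*+\gamma^{-1}(x_n-z_n)$; consequently $z_n^*+x_n^*=\gamma^{-1}(x_n-z_n)\to 0$ together with $z_n-x_n\to 0$. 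As $(y_n)$ is bounded and $J_{\gamma B}$ is nonexpansive, $(x_n)$ is bounded, and for any weak cluster point $\bar x$ of $(x_n)$, say $x_{k_n}\weakly\bar x$, one has $y_{k_n}\weakly y$, $x_{k_n}^*\weakly\gamma^{-1}(y-\bar x)$, $z_{k_n}\weakly\bar x$, and $z_{k_n}^*\weakly-\gamma^{-1}(y-\bar x)$.

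Applying Lemma~\ref{l:30} along this subsequence, with $(z_{k_n},z_{k_n}^*)\in\gra A$ and $(x_{k_n},x_{k_n}^*)\in\gra B$, I would obtain $\bar x\in\zer(A+B)=Z$ together with $(\bar x,\gamma^{-1}(y-\bar x))\in\gra B$; the latter reads $y\in(\Id+\gamma B)\bar x$, i.e.\ $\bar x=J_{\gamma B}y=x$. Hence $x$ is the unique weak cluster point of the bounded sequence $(x_n)$, so Lemma~\ref{l:1}\ref{l:1ii} gives $x_n\weakly x\in Z$, which is \ref{t:4i}. The dual statement then follows with no extra work: from $x_n\weakly x$ and $y_n\weakly y$ we get $x_n^*=\gamma^{-1}(y_n-x_n)\weakly\gamma^{-1}(y-x)=x^*=\moyo{B}{\gamma}y$, whence also $z_n\weakly x$ and $z_n^*\weakly-x^*$; a final application of Lemma~\ref{l:30} to the full sequences yields $x^*\in\zer(-A^{-1}\circ(-\Id)+B^{-1})=Z^*$, establishing \ref{t:4ii}.
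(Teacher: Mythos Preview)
Your proof is correct and follows essentially the same route as the paper's: both recognize \eqref{e:dr6} as the proximal point algorithm for the Douglas--Rachford operator $\MMM$ of Example~\ref{ex:f11}, invoke Theorem~\ref{t:70}\ref{t:70i} to obtain $y_n\weakly y\in\zer\MMM$ and $z_n-x_n\to 0$, and then use the graph relations together with Lemma~\ref{l:30} on an arbitrary weak cluster point of $(x_n)$ to force $\bar x=J_{\gamma B}y$. The only cosmetic difference is in \ref{t:4ii}: the paper reads off $x^*\in Z^*$ directly from the identity $Z^*=\moyo{B}{\gamma}(\zer\MMM)$ (quoted from \cite[Proposition~26.1(iii)(c)]{Livre1}), whereas you obtain it by a second pass through Lemma~\ref{l:30} on the full sequences---both arguments are valid and equally short.
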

\begin{proof}
We rely on the embedding of Example~\ref{ex:f11}. Set 
\begin{equation}
\label{e:1969}
R_{\gamma A}=2J_{\gamma A}-\Id,\;\;
R_{\gamma B}=2J_{\gamma B}-\Id,\;\;\text{and}\;\;
\MMM=\Bigg(\dfrac{R_{\gamma A}\circ R_{\gamma B}+\Id}{2}
\Bigg)^{-1}-\Id.
\end{equation}
Then it follows from \eqref{e:ne} and Lemma~\ref{l:0}\ref{l:0iii}
that $(R_{\gamma A}\circ R_{\gamma B}+\Id)/2$ is firmly
nonexpansive and that $\MMM$ is maximally monotone. In addition, 
\cite[Proposition~26.1(iii)(b)]{Livre1} asserts that
\begin{equation}
\label{e:dr12}
\emp\neq Z=J_{\gamma B}(\zer\MMM),
\end{equation}
while \cite[Proposition~26.1(iii)(c)]{Livre1} asserts that
\begin{equation}
\label{e:dr19}
\emp\neq Z^*=\moyo{B}{\gamma}(\zer\MMM).
\end{equation}
Furthermore, we derive from
\eqref{e:dr6} and \eqref{e:1969} that
\begin{equation}
\label{e:dr11}
(\forall n\in\NN)\quad y_{n+1}
=y_n+\dfrac{\lambda_n}{2}
\big(R_{\gamma A}(R_{\gamma B}y_n)-y_n\big)
=y_n+\lambda_n\big(J_{\MMM}y_n-y_n\big),
\end{equation}
i.e., $(y_n)_{n\in\NN}$ is constructed by the proximal point
algorithm \eqref{e:1} for $\MMM$. Since \eqref{e:dr12} implies that
$\zer\MMM\neq\emp$, Theorem~\ref{t:70}\ref{t:70i} asserts that
\begin{equation}
\label{e:dr20}
J_{\MMM}y_n-y_n\to 0\quad\text{and}\quad
(\exi y\in\zer\MMM)\quad y_n\weakly y. 
\end{equation}
In turn, \eqref{e:dr12} yields $x=J_{\gamma B}y\in Z$,
while \eqref{e:dr6} yields
\begin{equation}
\label{e:dr13}
z_n-x_n=J_{\gamma A}(2x_n-y_n)-x_n=J_{\MMM}y_n-y_n\to 0.
\end{equation}

\ref{t:4i}: Let us set
\begin{equation}
\label{e:go9}
(\forall n\in\NN)\quad z^*_n=\gamma^{-1}(2x_n-y_n-z_n).
\end{equation}
Then \eqref{e:dr6} and \eqref{e:ee14} yield
\begin{equation}
\label{e:10-03a}
(\forall n\in\NN)\quad 
\begin{cases}
(z_n,z^*_n)\in\gra A\\
(x_n,x^*_n)\in\gra B\\
x_n-z_n=\gamma(x^*_n+z^*_n).
\end{cases}
\end{equation}
Since Lemma~\ref{l:0}\ref{l:0iii} asserts that $J_{\gamma B}$ is 
nonexpansive, 
\begin{equation}
(\forall n\in\NN)\quad
\|x_n-x_0\|=\|J_{\gamma B}y_n-J_{\gamma B}y_0\|\leq\|y_n-y_0\|.
\end{equation}
Hence, since $(y_n)_{n\in\NN}$ is bounded, so is 
$(x_n)_{n\in\NN}$. Now take $z\in\WC(x_n)_{n\in\NN}$, say
$x_{k_n}\weakly z$. Then it follows from \eqref{e:dr13},
\eqref{e:dr20}, \eqref{e:go9}, and \eqref{e:10-03a} that
\begin{equation}
z_{k_n}\weakly z,\;
z^*_{k_n}\weakly\gamma^{-1}(z-y),\;
z_n-x_n\to 0,\;\text{and}\;
z^*_n+x^*_n=\gamma^{-1}(x_n-z_n)\to 0.
\end{equation}
In turn, Lemma~\ref{l:30} yields $z\in\zer(A+B)=Z$, 
\begin{equation}
\label{e:31c}
\big(z,\gamma^{-1}(z-y)\big)\in\gra A,\quad\text{and}\quad
\big(z,\gamma^{-1}(y-z)\big)\in\gra B. 
\end{equation}
Hence, \eqref{e:ee14} implies that 
\begin{equation}
\label{e:31x}
z=J_{\gamma B}y.
\end{equation}
Thus, $x=J_{\gamma B}y$ is the unique weak sequential
cluster point of the bounded sequence $(x_n)_{n\in\NN}$ and
therefore, by Lemma~\ref{l:1}\ref{l:1ii}, $x_n\weakly x$. 

\ref{t:4ii}: 
We have $y_n\weakly y\in\zer\MMM$ and, by \ref{t:4i}, 
$x_n\weakly x$.
Hence, $x^*_n=\gamma^{-1}(y_n-x_n)\weakly\gamma^{-1}(y-x)=
\moyo{B}{\gamma}y=x^*$. In view of \eqref{e:dr19}, the
proof is complete.
\end{proof}

\begin{remark}
\label{r:dr}
The convergence result of \cite{Lion79} is that, for the unrelaxed
scheme \eqref{e:dr4}, $(y_n)_{n\in\NN}$ converges weakly to a point
$y\in\HH$ such that $J_{\gamma B}y\in Z$ (see
\cite{Opti04,Ecks92} for the relaxed case). In the special case
when $J_{\gamma B}$ is weakly sequentially continuous, as is the 
case when $\HH$ is finite-dimensional, 
$x_n=J_{\gamma B}y_n\weakly J_{\gamma B}y\in Z$. The key fact
that $(x_n)_{n\in\NN}$ converges weakly to a point in $\zer(A+B)$
without any further assumption was first proved in \cite{Svai11} in
the unrelaxed case. Theorem~\ref{t:4} was established in
\cite[Theorem~26.11]{Livre1}. The component of the proof given
above up to \eqref{e:dr20} exploits an idea from \cite{Ecks92},
that identifies the core iteration of \eqref{e:dr6} as an
instantiation of the proximal point algorithm.
\end{remark}

\begin{remark}
\label{r:dr2}
Connections between the Douglas--Rachford algorithms and the method
of partial inverses of Section~\ref{sec:pi} are discussed in 
\cite[Section~1]{Lawr87}; see also \cite[Section~5]{Ecks92} and 
\cite{Mahe95}. Let us show that we can actually derive 
Theorem~\ref{t:13}\ref{t:13ii} from Theorem~\ref{t:4}. Let
$(x_n)_{n\in\NN}$, $(x^*_n)_{n\in\NN}$, $(p_n)_{n\in\NN}$ and
$(p^*_n)_{n\in\NN}$ be the sequence generated by \eqref{e:27b}
and set $(\forall n\in\NN)$ $y_n=x_n+x_n^*$ and 
$z_n=\proj_V(2p_n-y_n)$. Then \eqref{e:27b} yields
\begin{equation}
(\forall n\in\NN)\;\;\proj_Vp_n^*+\proj_{V^{\perp}}p_n
=\proj_V(y_n-p_n)+p_n-\proj_Vp_n=p_n-z_n.
\end{equation}
Altogether,
\begin{equation}
\label{e:g1}
(\forall n\in\NN)\;\;
p_n=J_Ay_n,\;z_n=\proj_V(2p_n-y_n),\;\text{and}\;
y_{n+1}=y_n+\lambda_n(z_n-p_n).
\end{equation}
In view of Example~\ref{ex:r6}, this recursion is precisely that 
of \eqref{e:dr6} for the operators $(N_V,A)$ with $\gamma=1$. We
therefore derive the following from Theorem~\ref{t:4}:
$(y_n)_{n\in\NN}$ converges weakly to a point $y\in\HH$ and, if we
set $x=J_Ay$ and $x^*=y-J_Ay$, then 
$p_n\weakly x\in\zer(N_V+A)$ and, by Example~\ref{ex:2V},
$p^*_n\weakly x^*\in\zer(N_{V^\bot}+A^{-1})$. Furthermore, 
\eqref{e:31c}--\eqref{e:31x} implies that
$(x,-x^*)=(x,x-y)\in\gra N_V$
and $(x,x^*)=(x,y-x)\in\gra A$. Thus, 
Example~\ref{ex:2V} yields $(x,x^*)\in\gra N_V\cap\gra A$ and
$(x,x^*)$ therefore solves \eqref{e:27a}. Finally, since 
\cite[Equation~(11)]{Joca09} asserts that $J_Ay=\proj_Vy$ and
since $\proj_V$ is weakly continuous, we have
$x_n=\proj_V(x_n+x^*_n)=\proj_Vy_n\weakly\proj_Vy=x$ and
$x^*_n=\proj_{V^\bot}y_n\weakly\proj_{V^\bot}y=y-\proj_Vy=x^*$.
Let us add that, in this setting, the operator $\MMM$ of
\eqref{e:1969} is just the partial inverse $A_V$.
\end{remark}

\begin{remark}
The many application areas of the Douglas--Rachford algorithm
(in its original two-operator form or transposed in product spaces)
include 
road design \cite{Bau16r},
equilibrium problems \cite{Bric12},
biostatistics \cite{Stat21},
signal recovery \cite{Comb07},
traffic theory \cite{Fuku96},
noise removal \cite{Stei10},
and compressive sensing \cite{Yuyu17}
(see also \cite{Lind21} for additional references).
\end{remark}

\subsection{Strong convergence}
As shown in \cite[Counterexample~2]{Sico20}, the convergence of
$(x_n)_{n\in\NN}$ in Theorem~\ref{t:4}\ref{t:4i} is only weak. The
following version based on Theorem~\ref{t:70s} furnishes strong
convergence.

\begin{theorem}
\label{t:4s}
Let $A\colon\HH\to 2^{\HH}$ and $B\colon\HH\to 2^{\HH}$ be 
maximally monotone, suppose that $\zer(A+B)\neq\emp$, let
$y_0\in\HH$, let $(\lambda_n)_{n\in\NN}$ be a sequence in 
$\left]0,1\right]$ such that $\inf_{n\in\NN}\lambda_n>0$, and
let $\gamma\in\RPP$. Iterate
\begin{equation}
\label{e:42s}
\begin{array}{l}
\text{for}\;n=0,1,\ldots\\
\left\lfloor
\begin{array}{l}
x_n=J_{\gamma B}y_n\\
x_n^*=\gamma^{-1}(y_n-x_n)\\
z_n=J_{\gamma A}(2x_n-y_n)\\
y_{n+1}=\Qq\bigl(y_0,y_n,y_n+\lambda_n(z_n-x_n)\bigr),
\end{array}
\right.\\[2mm]
\end{array}
\end{equation}
where $\Qq$ is defined in Lemma~\ref{l:2}.
Let $Z$ and $Z^*$ be the sets of
solutions to \eqref{e:29p} and \eqref{e:29d}, respectively. Then
the following hold:
\begin{enumerate}
\item
$(x_n)_{n\in\NN}$ converges strongly to a point in $Z$.
\item
$(x_n^*)_{n\in\NN}$ converges strongly to a point in $Z^*$.
\end{enumerate}
\end{theorem}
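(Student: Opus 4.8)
The plan is to run the proof of Theorem~\ref{t:4} essentially verbatim up to the point where the core recursion is recognized as a proximal point iteration, and then to invoke the Haugazeau-like algorithm of Theorem~\ref{t:70s} in place of the Fej\'erian one of Theorem~\ref{t:70}. Concretely, I would set $R_{\gamma A}=2J_{\gamma A}-\Id$, $R_{\gamma B}=2J_{\gamma B}-\Id$, and define $\MMM$ as in \eqref{e:1969}. Exactly as in the proof of Theorem~\ref{t:4}, \eqref{e:ne} and Lemma~\ref{l:0}\ref{l:0iii} show that $J_{\MMM}=(R_{\gamma A}\circ R_{\gamma B}+\Id)/2$ is firmly nonexpansive and that $\MMM$ is maximally monotone, while \cite[Proposition~26.1(iii)(b)]{Livre1} and \cite[Proposition~26.1(iii)(c)]{Livre1} give $\emp\neq Z=J_{\gamma B}(\zer\MMM)$ and $Z^*=\moyo{B}{\gamma}(\zer\MMM)$. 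Since $\zer(A+B)=Z\neq\emp$ by hypothesis, it follows that $\zer\MMM\neq\emp$.

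Next I would identify the sequence $(y_n)_{n\in\NN}$ generated by \eqref{e:42s} as an instance of the Haugazeau-like proximal point algorithm \eqref{e:2} applied to $\MMM$. Indeed, the definitions of $x_n$ and $z_n$ in \eqref{e:42s} coincide with those in \eqref{e:dr6}, so the identity behind \eqref{e:dr13} yields $z_n-x_n=J_{\gamma A}(2x_n-y_n)-x_n=J_{\MMM}y_n-y_n$; hence the inner relaxed point satisfies $y_n+\lambda_n(z_n-x_n)=y_n+\lambda_n(J_{\MMM}y_n-y_n)$ and therefore $y_{n+1}=\Qq\bigl(y_0,y_n,y_n+\lambda_n(J_{\MMM}y_n-y_n)\bigr)$, which is precisely \eqref{e:2} for the operator $\MMM$ with $(\forall n\in\NN)$ $\gamma_n=1$. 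The hypotheses of Theorem~\ref{t:70s} are then met: $(\lambda_n)_{n\in\NN}$ lies in $\left]0,1\right]$ with $\inf_{n\in\NN}\lambda_n>0$, and $\inf_{n\in\NN}\gamma_n=1>0$. Consequently Theorem~\ref{t:70s} yields $y_n\to y:=\proj_{\zer\MMM}y_0$ strongly, with $y\in\zer\MMM$.

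The last step transfers this strong convergence to $(x_n)_{n\in\NN}$ and $(x^*_n)_{n\in\NN}$, and here the best-approximation setting is in fact \emph{simpler} than the weak one of Theorem~\ref{t:4}: the delicate weak cluster-point argument via Lemma~\ref{l:30} is no longer needed. Since $J_{\gamma B}$ is firmly nonexpansive, hence continuous, $x_n=J_{\gamma B}y_n\to J_{\gamma B}y=:x$ strongly, and consequently $x^*_n=\gamma^{-1}(y_n-x_n)\to\gamma^{-1}(y-x)=\moyo{B}{\gamma}y=:x^*$ strongly. Because $y\in\zer\MMM$, the identities $Z=J_{\gamma B}(\zer\MMM)$ and $Z^*=\moyo{B}{\gamma}(\zer\MMM)$ place $x\in Z$ and $x^*\in Z^*$, which establishes both assertions. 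I do not expect a genuine obstacle here; the only points deserving care are the verification that \eqref{e:42s} really reproduces \eqref{e:2} with $\gamma_n\equiv 1$, and the observation that, once $y_n$ converges strongly, no weak-to-strong upgrade is required for $x_n$ or $x^*_n$, thanks to the Lipschitz continuity of $J_{\gamma B}$ and $\moyo{B}{\gamma}$.
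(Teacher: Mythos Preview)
Your proposal is correct and follows essentially the same approach as the paper: define $\MMM$ via \eqref{e:1969}, recognize \eqref{e:42s} as the Haugazeau-like proximal iteration \eqref{e:2} for $\MMM$ with $\gamma_n\equiv 1$, apply Theorem~\ref{t:70s} to get $y_n\to\proj_{\zer\MMM}y_0$, and then transfer strong convergence to $(x_n)$ and $(x_n^*)$ via the continuity of $J_{\gamma B}$ and the identities $Z=J_{\gamma B}(\zer\MMM)$, $Z^*=\moyo{B}{\gamma}(\zer\MMM)$. Your added observation that the strong-convergence route bypasses the weak cluster-point analysis of Lemma~\ref{l:30} is apt and matches the paper's shorter argument here.
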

\begin{proof}
Define $\MMM$ as in \eqref{e:1969} and set $y=\proj_{\zer\MMM}y_0$,
$x=J_{\gamma B}y$, and $x^*=\gamma^{-1}(y-x)$. Then it follows
from \eqref{e:dr12} that $x\in Z$ and from \eqref{e:dr19} that
$x^*\in Z^*$. Additionally, we derive from \eqref{e:42s} that
\begin{equation}
\label{e:dr21}
(\forall n\in\NN)\quad y_{n+1}
=\Qq\bigl(y_0,y_n,y_n+\lambda_n(J_{\MMM}y_n-y_n)\bigr).
\end{equation}
Hence, Theorem~\ref{t:70s} yields $y_n\to y$ and, by continuity of
$J_{\gamma B}$, $x_n=J_{\gamma B}y_n\to J_{\gamma B}y=x$. Finally,
$x_n^*=\gamma^{-1}(y_n-x_n)\to\gamma^{-1}(y-x)=x^*$.
\end{proof}

\begin{remark}
\label{r:dr7}
The method of partial inverses of Theorem~\ref{t:13} may converge 
only weakly \cite[Counterexample~4]{Sico20}. A strongly convergent
version can be designed using Remark~\ref{r:dr2} and
Theorem~\ref{t:4s}.
\end{remark}

\subsection{Special cases and variants}

\subsubsection{Minimization setting}

We illustrate an application of the Douglas--Rachford algorithm
to primal-dual minimization. 

\begin{example}
\label{ex:dr1}
Let $f\in\Gamma_0(\HH)$ and $g\in\Gamma_0(\HH)$ be such that 
$Z=\Argmin(f+g)\neq\emp$ and $0\in\sri(\dom f-\dom g)$. Set
$Z^*=\Argmin(f^*\circ(-\Id)+g^*)$, let $(\lambda_n)_{n\in\NN}$ be a
sequence in $\left]0,2\right[$ such that
$\sum_{n\in\NN}\lambda_n(2-\lambda_n)=\pinf$, let $\gamma\in\RPP$,
let $y_0\in\HH$, and iterate
\begin{equation}
\label{e:dr6m}
\begin{array}{l}
\text{for}\;n=0,1,\ldots\\
\left\lfloor
\begin{array}{l}
x_n=\prox_{\gamma g}y_n\\
x_n^*=\gamma^{-1}(y_n-x_n)\\
z_n=\prox_{\gamma f}(2x_n-y_n)\\
y_{n+1}=y_n+\lambda_n(z_n-x_n).
\end{array}
\right.\\[2mm]
\end{array}
\end{equation}
Then it follows from Problem~\ref{prob:35}, Example~\ref{ex:r5},
and Theorem~\ref{t:4} that there exists $(x,x^*)\in Z\times Z^*$
such that $x_n\weakly x$ and $x^*_n\weakly x^*$.
\end{example}

\begin{remark}
\label{r:dr3}
Relations between the Douglas--Rachford algorithm \eqref{e:dr6m}
and other methods have been noted in the literature. 
\begin{enumerate}
\item
It is observed in \cite[Section~3.1.1]{Cond13} that the
Douglas--Rachford algorithm \eqref{e:dr6m} can be viewed as a
limiting case of the Chambolle--Pock algorithm (see
Remark~\ref{r:U}\ref{r:Uiii}) by implementing it in the case when
$\GG=\HH$, $L=\Id$, and $\sigma=1/\tau=\gamma$. Note, however, that
this setting violates the condition $\tau\sigma\|L\|^2<1$ used to
prove weak convergence of \eqref{e:2013v} in Example~\ref{ex:1Ub}.
\item
\label{r:dr3ii}
Consider the setting of Problem~\ref{prob:35} and note that the
primal minimization problem \eqref{e:p32} is equivalent to 
\begin{equation}
\label{e:rio-30b}
\minimize{(x,y)\in\gra L}{f(x)+g(y)}.
\end{equation}
The (unscaled) \emph{augmented Lagrangian} associated with
\eqref{e:rio-30b} is the saddle function (see Example~\ref{ex:3}) 
on $(\HH\oplus\GG)\oplus\GG$ defined as
\begin{align}
\label{e:rio-30c}
\hskip -16mm
F\colon\HH\oplus\GG\oplus\GG&\to\,\RX\nonumber\\
(x,y,v^*)~~~&\mapsto f(x)+g(y)+
\scal{Lx-y}{v^*}+\frac{1}{2}\|Lx-y\|^2.
\end{align}
Iteration $n$ of the alternating-direction method of multipliers
(ADMM) consists in minimizing $F$ over $x$ for $y_n$ and $v^*_n$
fixed to get $x_n$, then over $y$ for $x_n$ and $v^*_n$ fixed
to get $y_{n+1}$, and then applying a proximal maximization step
with respect to the Lagrange multiplier $v^*$ for $x_n$ and
$y_{n+1}$ fixed to get $v^*_{n+1}$. It was originally proposed in
\cite{Glow74}, refined in \cite{Gaba76}, and further developed in
\cite{Boyd10,Ecks92,Gaba83,Glow89}. Given $y_0\in\GG$ and
$v^*_0\in\GG$, ADMM iterates
\begin{equation}
\label{e:bp5}
\begin{array}{l}
\text{for}\;n=0,1,\ldots\\
\left\lfloor
\begin{array}{l}
x_n\in\Argmind{x\in\HH}{\Big(f(x)+
\scal{Lx}{v^*_n}+\dfrac{1}{2}\|Lx-y_n\|^2}\Big)\\[3mm]
d_n=Lx_n\\
y_{n+1}=\underset{y\in\GG}{\text{argmin}}\;
\Big(g(y)-\scal{y}{v^*_n}
+\dfrac{1}{2}\|d_n-y\|^2\Big)\\[3mm]
v^*_{n+1}=v^*_n+d_n-y_{n+1}.
\end{array}
\right.\\
\end{array}
\end{equation}
It should be emphasized that ADMM is not a splitting algorithm in
our sense since the computation of $x_n$ involves a minimization
step which does not separate $f$ and $L$, and can therefore be hard
to execute. This step is also set-valued in general. Nonetheless,
\eqref{e:bp5} can be interpreted as an application of the
Douglas--Rachford algorithm \eqref{e:dr6m} to the functions
$f^*\circ(-L^*)$ (here again, note that $f$ and $L$ are not
separated and that the typically non-explicit operator
$\prox_{f^*\circ(-L^*)}$ intervenes) and $g^*$ present in the dual
problem \eqref{e:d33} \cite{Gaba83} (see also \cite{Ecks92}). This
is merely an algorithmic identification and not a claim that ADMM
converges. Convergence requires more restrictions on the problem,
for instance finite-dimensionality of $\HH$ and $\GG$ and
invertibility of $L^*\circ L$ in \cite[Section~5]{Ecks92}. For
further analysis, see \cite{Bane21,Botr19,Ryue19}.
\end{enumerate}
\end{remark}

\subsubsection{Peaceman--Rachford splitting}
The first implicit alternating direction method \cite{Birk59} 
to solve the positive-definite matrix equation $Ax+Bx=f$ is the
Peaceman--Rachford algorithm \cite{Peac55} (see also
\cite{Doug55}). It is described by the iterative process
\begin{equation}
\label{e:pr1}
\begin{array}{l}
\text{for}\;n=0,1,\ldots\\
\left\lfloor
\begin{array}{l}
x_{n+1/2}-x_n+Ax_{n+1/2}+Bx_n=f\\
x_{n+1}-x_{n+1/2}+Ax_{n+1/2}+Bx_{n+1}=f.
\end{array}
\right.\\[2mm]
\end{array}
\end{equation}
Using the same arguments used to transition from \eqref{e:dr1} to
\eqref{e:dr4}, we rewrite \eqref{e:pr1} as
\begin{equation}
\label{e:pr4}
\begin{array}{l}
\text{for}\;n=0,1,\ldots\\
\left\lfloor
\begin{array}{l}
x_n=J_By_n\\
z_n=J_A(2x_n-y_n+f)\\
y_{n+1}=y_n+2(z_n-x_n).
\end{array}
\right.\\[2mm]
\end{array}
\end{equation}
The strong convergence of $(x_n)_{n\in\NN}$ to a solution to the
equation $Ax+Bx=f$, where $A$ and $B$ are single-valued
hemicontinuous monotone operators such that $\dom A=\dom B=\HH$ 
and $B$ is strongly monotone, was established in \cite{Lieu68} and,
with the additional assumption that $\HH$ is finite-dimensional and
the operators are continuous, in \cite{Kell69}.

Algorithm \eqref{e:pr4} was first considered for general maximally
monotone set-valued operators $A$ and $B$ in \cite{Lion79}. In the
presence of a scaling parameter $\gamma\in\RPP$ and taking $f=0$
without loss of generality, the Peaceman--Rachford algorithm
becomes 
\begin{equation} 
\label{e:pr5} 
\begin{array}{l}
\text{for}\;n=0,1,\ldots\\ 
\left\lfloor 
\begin{array}{l}
x_n=J_{\gamma B}y_n\\ 
z_n=J_{\gamma A}(2x_n-y_n)\\ 
y_{n+1}=y_n+2(z_n-x_n),
\end{array} 
\right.\\[2mm] 
\end{array} 
\end{equation} 
Upon defining $\MMM$ as in \eqref{e:1969}, we derive from
\eqref{e:pr5} that 
\begin{equation} 
\label{e:pr11} 
(\forall n\in\NN)\quad y_{n+1}=(2J_{\MMM}-\Id)y_n.
\end{equation} 
We can view \eqref{e:pr5} as a limiting case of the
Douglas--Rachford algorithm \eqref{e:dr6} in which the relaxation
parameters $(\lambda_n)_{n\in\NN}$ are allowed to be $2$. This, of
course, means that \eqref{e:pr5} operates outside of the setting of
Theorem~\ref{t:70} and hence of the geometric framework of
Theorem~\ref{t:1}. As a result, the weak convergence of
$(y_n)_{n\in\NN}$ cannot be guaranteed without additional
assumptions since \eqref{e:pr11} amounts to iterating a merely
nonexpansive operator (see \cite[Remark~6]{Lion79} for a
counterexample). Strong convergence of $(x_n)_{n\in\NN}$ to a point
in $\zer(A+B)$ takes place when $B$ is strongly monotone
\cite[Remark~2]{Lion79}. More generally, strong convergence 
occurs when $B$ is uniformly monotone on bounded sets or when 
$\inte\Fix(2J_{\gamma A}-\Id)(2J_{\gamma B}-\Id)\neq\emp$
\cite[Remark~2.2(iv)]{Joca09}.

\subsubsection{A three-operator splitting algorithm}
\label{sec:three}

An extension of the Douglas--Rachford algorithm \eqref{e:dr6}
was proposed in \cite{Davi17} by adding a cocoercive operator to
the inclusion \eqref{e:29p}.

\begin{proposition}
\label{p:dy}
Let $\tau\in\RPP$, let $A\colon\HH\to 2^{\HH}$ and 
$B\colon\HH\to 2^{\HH}$ be maximally monotone, 
and let $C\colon\HH\to\HH$ be $\tau$-cocoercive. Suppose that the
set $Z$ of solutions to the inclusion 
\begin{equation}
\label{e:r29q}
\text{find}\;\;x\in\HH\;\;\text{such that}\;\;0\in Ax+Bx+Cx
\end{equation}
is not empty and let $Z^*$ be the set of solutions to the dual
problem 
\begin{equation}
\label{e:79d}
\text{find}\;\;x^*\in\HH\;\;\text{such that}\;\;
0\in -(A+C)^{-1}(-x^*)+B^{-1}x^*.
\end{equation}
Let $\gamma\in\left]0,2\tau\right[$, set 
$\delta=2-\gamma/(2\tau)$, let $(\lambda_n)_{n\in\NN}$ be a
sequence in $\left]0,\delta\right[$ such
that $\sum_{n\in\NN}\lambda_n(\delta-\lambda_n)=\pinf$, and
let $y_0\in\HH$. Iterate
\begin{equation}
\label{e:3ops2}
\begin{array}{l}
\text{for}\;n=0,1,\ldots\\
\left\lfloor
\begin{array}{l}
x_n=J_{\gamma B}\,y_n\\
x_n^*=\gamma^{-1}(y_n-x_n)\\
r_n=y_n+\gamma C x_n\\
z_n=J_{\gamma A}(2x_n-r_n)\\
y_{n+1}=y_n+\lambda_n(z_n-x_n).
\end{array}
\right.\\[2mm]
\end{array}
\end{equation}
Then there exists $y\in\HH$ such that $y_n\weakly y$. 
Now set $x=J_{\gamma B}y$ and $x^*=\moyo{B}{\gamma}y$.
Then the following hold:
\begin{enumerate}
\item
\label{p:dyi}
$x_n\weakly x\in Z$.
\item
\label{p:dyii}
$x_n^*\weakly x^*\in Z^*$.
\end{enumerate}
\end{proposition}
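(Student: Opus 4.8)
The plan is to recognize the core recursion on $(y_n)_{n\in\NN}$ as a relaxed fixed point iteration of the \emph{Davis--Yin operator}
\[
T=\Id-J_{\gamma B}+J_{\gamma A}\circ\bigl(2J_{\gamma B}-\Id-\gamma\,C\circ J_{\gamma B}\bigr),
\]
and then to route it through the proximal point algorithm exactly as in the proof of Theorem~\ref{t:4}. Indeed, \eqref{e:3ops2} gives $x_n=J_{\gamma B}y_n$ and $z_n=J_{\gamma A}(2x_n-y_n-\gamma Cx_n)$, so that $z_n-x_n=Ty_n-y_n$ and $y_{n+1}=y_n+\lambda_n(Ty_n-y_n)$. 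Setting $\delta=2-\gamma/(2\tau)\in\left]1,2\right[$ and $\alpha=1/\delta\in\rzeroun$, I would invoke the embedding of Example~\ref{ex:f12}: define $\MMM=(\Id+(2\alpha)^{-1}(T-\Id))^{-1}-\Id$, so that $T-\Id=(2/\delta)(J_\MMM-\Id)$ and $y_{n+1}=y_n+\mu_n(J_\MMM y_n-y_n)$ with $\mu_n=(2/\delta)\lambda_n\in\left]0,2\right[$. Since $\sum_{n\in\NN}\mu_n(2-\mu_n)=(4/\delta^2)\sum_{n\in\NN}\lambda_n(\delta-\lambda_n)=\pinf$, this is precisely the proximal point recursion \eqref{e:1} of Theorem~\ref{t:70}\ref{t:70i}.

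The crux---and the one place where the three-operator setting genuinely departs from Theorem~\ref{t:4}---is to show that $T$ is $\alpha$-averaged, equivalently (by Example~\ref{ex:f12}) that $\MMM$ is maximally monotone with $\zer\MMM=\Fix T$. This is the result of \cite{Davi17}, whose argument hinges on the $\tau$-cocoercivity of $C$ together with the restriction $\gamma<2\tau$, which is exactly what forces $\delta>1$ and hence $\alpha<1$. Granting this, $\Fix T\neq\emp$: if $x\in Z$, choose $a\in Ax$ and $b\in Bx$ with $a+b+Cx=0$ and put $y=x+\gamma b$; then $x=J_{\gamma B}y$ and $2x-y-\gamma Cx=x+\gamma a$, whence $J_{\gamma A}(2x-y-\gamma Cx)=x$ and $Ty=y$. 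Thus Theorem~\ref{t:70}\ref{t:70i} applies and yields $J_\MMM y_n-y_n\to 0$, hence $z_n-x_n\to 0$, together with $y_n\weakly y$ for some $y\in\zer\MMM=\Fix T$.

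It remains to pass from $(y_n)_{n\in\NN}$ to the primal and dual sequences, which I would do by the demiclosedness argument of Theorem~\ref{t:4}, now applied to the pair $(A+C,B)$. Writing $x_n^*=\gamma^{-1}(y_n-x_n)$ and $z_n^*=\gamma^{-1}(2x_n-y_n-z_n)-Cx_n$, relation \eqref{e:ee14} gives $(x_n,x_n^*)\in\gra B$ and $(z_n,z_n^*)\in\gra A$, whence $(z_n,z_n^*+Cz_n)\in\gra(A+C)$, the latter operator being maximally monotone by Example~\ref{ex:13} and Lemma~\ref{l:11}\ref{l:11i}. A direct computation gives $x_n^*+z_n^*+Cx_n=\gamma^{-1}(x_n-z_n)\to 0$, and since $C$ is $1/\tau$-Lipschitzian and $z_n-x_n\to 0$ we also obtain $(z_n^*+Cz_n)+x_n^*\to 0$. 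Now let $x\in\WC(x_n)_{n\in\NN}$, say $x_{k_n}\weakly x$; then $z_{k_n}\weakly x$, $x_{k_n}^*\weakly\gamma^{-1}(y-x)$, and $z_{k_n}^*+Cz_{k_n}\weakly\gamma^{-1}(x-y)$, so Lemma~\ref{l:30} gives $x\in\zer(A+B+C)=Z$, together with $(x,\gamma^{-1}(y-x))\in\gra B$.

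By \eqref{e:ee14} this last inclusion forces $x=J_{\gamma B}y$, so that $(x_n)_{n\in\NN}$---which is bounded because $J_{\gamma B}$ is nonexpansive and $(y_n)_{n\in\NN}$ is bounded---has the single weak cluster point $J_{\gamma B}y$; Lemma~\ref{l:1}\ref{l:1ii} then yields \ref{p:dyi}. Finally $x_n^*=\gamma^{-1}(y_n-x_n)\weakly\gamma^{-1}(y-x)=\moyo{B}{\gamma}y=x^*$, and the inclusions $x\in(A+C)^{-1}(-x^*)$ and $x\in B^{-1}x^*$ extracted above show $x^*\in Z^*$ in the sense of \eqref{e:79d}, giving \ref{p:dyii}. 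The main obstacle is thus the averagedness of $T$ (equivalently the maximal monotonicity of $\MMM$); the remainder is the by-now-standard Douglas--Rachford bookkeeping transposed to $A+C$ and $B$.
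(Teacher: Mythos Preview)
Your proposal is correct and follows essentially the same route as the paper: recognize \eqref{e:3ops2} as a relaxed iteration of the Davis--Yin operator $T$, invoke \cite{Davi17} for the $\alpha$-averagedness of $T$ with $\alpha=1/\delta$, route through Example~\ref{ex:f12} and Theorem~\ref{t:70}\ref{t:70i} to obtain $y_n\weakly y\in\Fix T$ and $z_n-x_n\to 0$, and then run the Lemma~\ref{l:30} demiclosedness argument on the pair $(A+C,B)$ to identify the unique weak cluster point of $(x_n)_{n\in\NN}$ as $J_{\gamma B}y$. The only cosmetic differences are that the paper defines $z_n^*$ directly as an $(A+C)$-graph point (your $z_n^*+Cz_n$), obtains $\Fix T\neq\emp$ from $Z=J_{\gamma B}(\Fix T)$ rather than your explicit construction, and cites \eqref{e:dr19} for $x^*\in Z^*$ where you extract it directly from the conclusions of Lemma~\ref{l:30}.
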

\begin{proof}
Remarkably, we can closely follow the proof of Theorem~\ref{t:4}.
The key additional facts established in 
\cite[Proposition~2.1 and Lemma~2.2]{Davi17} are that, for 
$\alpha=1/\delta$,
\begin{equation}
\label{e:T3ops}
T=J_{\gamma A}\circ \big(2J_{\gamma B}-\Id-\gamma 
C\circ J_{\gamma B}\big)+\Id-J_{\gamma B}
\;\text{is $\alpha$-averaged and}\;Z=J_{\gamma B}(\Fix T). 
\end{equation}
We write the maximally monotone operator $\MMM$ of \eqref{e:M1} as
\begin{equation}
\label{e:M2}
\MMM=\bigg(\Id+\dfrac{1}{2\alpha}\Big(
J_{\gamma A}\circ\big(2J_{\gamma B}-\Id-\gamma 
C\circ J_{\gamma B}\big)-J_{\gamma B}\Big)\bigg)^{-1}-\Id
\end{equation}
and, in view of Example~\ref{ex:f12} and \eqref{e:T3ops}, work 
with the embedding $(\HH,\MMM,J_{\gamma B})$ of \eqref{e:r29q}.
Then $\emp\neq Z=J_{\gamma B}(\zer\MMM)$ and $(y_n)_{n\in\NN}$ is
produced by the proximal point algorithm $(\forall n\in\NN)$ 
$y_{n+1}=y_n+\mu_n(J_{\MMM}y_n-y_n)$, where 
$\mu_n=2\alpha\lambda_n\in\left]0,2\right[$. Using
Theorem~\ref{t:70}\ref{t:70i}, we infer that $(y_n)_{n\in\NN}$
converges weakly to a point $y\in\zer\MMM$ and that 
$J_{\MMM}y_n-y_n\to 0$. Hence, we derive from \eqref{e:T3ops},
\eqref{e:3ops2}, and \eqref{e:M2} that
\begin{equation}
\label{e:da11}
x=J_{\gamma B}y\in Z\;\;{and}\;\;
z_n-x_n=2\alpha(J_{\MMM}y_n-y_n)\to 0,
\end{equation}
and hence that
\begin{equation}
\label{e:da12}
\|Cz_n-Cx_n\|\leq\alpha^{-1}\|z_n-x_n\|\to 0.
\end{equation}

\ref{p:dyi}:
Set $(\forall n\in\NN)$ $z^*_n=\gamma^{-1}(2x_n-z_n-r_n)+Cz_n$.
In view of \eqref{e:3ops2} and \eqref{e:ee14},
\begin{equation}
\label{e:0-03a}
(\forall n\in\NN)\quad 
\begin{cases}
(z_n,z^*_n)\in\gra(A+C)\\
(x_n,x^*_n)\in\gra B\\
z^*_n+x^*_n=\gamma^{-1}(x_n-z_n)+Cz_n-Cx_n.
\end{cases}
\end{equation}
Next, fix $z\in\WC(x_n)_{n\in\NN}$, say
$x_{k_n}\weakly z$. Since $y_{k_n}\weakly y$, it follows from 
\eqref{e:da11}, \eqref{e:da12}, \eqref{e:0-03a}, and
\eqref{e:3ops2} that
\begin{equation}
\label{e:da0}
z_{k_n}\weakly z,\;
z^*_{k_n}\weakly\gamma^{-1}(z-y),\;
z_n-x_n\to 0,\;\;\text{and}\;\;
z^*_n+x^*_n\to 0.
\end{equation}
By applying Lemma~\ref{l:30} to the maximally monotone operators 
$A+C$ (see Example~\ref{ex:13} and Lemma~\ref{l:11}\ref{l:11i}) and
$B$, we deduce from \eqref{e:0-03a} and \eqref{e:da0} that
$z\in\zer(A+C+B)=Z$,
\begin{equation}
\big(z,\gamma^{-1}(z-y)\big)\in\gra(A+C),\quad\text{and}\quad
\big(z,\gamma^{-1}(y-z)\big)\in\gra B. 
\end{equation}
In turn, \eqref{e:ee14} asserts that $z=J_{\gamma B}y$, making
$x=J_{\gamma B}y$ the unique weak sequential cluster point of 
$(x_n)_{n\in\NN}$ which is bounded since $(y_n)_{n\in\NN}$ is. By
Lemma~\ref{l:1}\ref{l:1ii}, $x_n\weakly x$. 

\ref{p:dyii}:
Since $y_n\weakly y$ and $x_n\weakly x$, we have
$x^*_n=\gamma^{-1}(y_n-x_n)\weakly\gamma^{-1}(y-x)=
\moyo{B}{\gamma}y=x^*\in Z^*$ by \eqref{e:dr19}.
\end{proof}

\begin{remark}
\label{r:dy}
Here are a few comments on Proposition~\ref{p:dy}.
\begin{enumerate}
\item
The conclusion of Proposition~\ref{p:dy}\ref{p:dyi} was first
established in \cite[Theorem~2.1.1(b)]{Davi17} with a different
proof. See also \cite{Ragu19} for a discussion and connections with
\cite{Ragu13}.
\item
The duality result of Proposition~\ref{p:dy}\ref{p:dyii}
is new. 
\item
A strongly convergent version of Proposition~\ref{p:dy}
can be obtained by adapting the proof of Theorem~\ref{t:4s} to the
presence of $C$, as was done above. 
\item
When $C=0$, Proposition~\ref{p:dy} produces the Douglas--Rachford
setting of Theorem~\ref{t:4}. When $B=0$, \eqref{e:3ops2} yields a
special case of the forward-backward method of
\cite[Proposition~4.4(iii)]{Yama15} in which the proximal
parameters are all equal to $\gamma$.
\end{enumerate}
\end{remark}

\section{Tseng's forward-backward-forward splitting}
\label{sec:fbf}

\subsection{Preview}
\label{sec:korp}

In Section~\ref{sec:eul}, we have discussed a Euler method for
finding a zero of a single-valued operator $B\colon\HH\to\HH$ under
a cocoercivity condition. Under the more general assumption that
$B$ is monotone and $\beta$-Lipschitzian, the Euler method is no
longer appropriate, and we can use a scheme proposed by Antipin
\cite{Anti76} and Korpelevi\v{c} \cite{Korp76} that involves a
double activation of the operator $B$. Specifically, in this 
method, $\gamma\in\left]0,1/\beta\right[$ and $x_0\in\HH$ are fixed
and we iterate
\begin{equation}
\label{e:algo6ak}
\begin{array}{l}
\text{for}\;n=0,1,\ldots\\
\left\lfloor
\begin{array}{l}
b_n^*=\gamma Bx_n\\
m_n=x_n-b_n^*\\
m_n^*=Bm_n\\
x_{n+1}=x_n-\gamma m_n^*.
\end{array}
\right.\\
\end{array}
\end{equation}
Clearly, the sequence $(m_n,m_n^*)_{n\in\NN}$ lies $\gra B$ and
it is straightforward to see that, by choosing
$(\lambda_n)_{n\in\NN}$ suitably in \eqref{e:fejer14}, we obtain
\eqref{e:algo6ak}. The convergence properties of the 
Antipin--Korpelevi\v{c} method can therefore be deduced from the
results of Section~\ref{sec:gr} applied to $B$. 

Tseng's algorithm can be viewed as a generalization of
\eqref{e:algo6ak} for the problem of finding a zero of $A+B$, where
$A\colon\HH\to 2^{\HH}$ is maximally monotone and $B$ is as above.
It is called the forward-backward-forward algorithm because it
performs a forward step on $B$, then a backward step on $A$, and
finally another forward step on $B$. We are going to derive the
convergence of Tseng's forward-backward-forward splitting algorithm
from the principles of Section~\ref{sec:gr} and, more precisely,
from the warped resolvent algorithm of Section~\ref{sec:m+c}. 

\subsection{Fej\'erian algorithm}
\label{sec:fbfw}

We cast the forward-backward-forward algorithm as an instance
of \eqref{e:fejer8} and then prove its weak convergence via
Theorem~\ref{t:8}. This result was originally established in
\cite[Theorem~3.4(b)]{Tsen00}, where different arguments were used.

\begin{theorem}
\label{t:6}
Let $\beta\in\RPP$, let $A\colon\HH\to 2^{\HH}$ be maximally
monotone, let $B\colon\HH\to\HH$ be monotone and 
$\beta$-Lipschitzian, and suppose that $Z=\zer(A+B)\neq\emp$. Let
$x_0\in\HH$, let $\varepsilon\in\left]0,1/(\beta+1)\right[$,
and let $(\gamma_n)_{n\in\NN}$ be a sequence in
$[\varepsilon,(1-\varepsilon)/\beta]$.
Iterate
\begin{equation}
\label{e:fbf}
\begin{array}{l}
\text{for}\;n=0,1,\ldots\\
\left\lfloor
\begin{array}{l}
b_n^*=\gamma_nBx_n\\
m_n=J_{\gamma_nA}(x_n-b_n^*)\\
x_{n+1}=m_n-\gamma_nBm_n+b_n^*.
\end{array}
\right.\\
\end{array}
\end{equation}
Then $(x_n)_{n\in\NN}$ converges weakly to a point in $Z$.
\end{theorem}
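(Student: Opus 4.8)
The plan is to recognize \eqref{e:fbf} as an instance of the warped resolvent algorithm \eqref{e:fejer8} and then invoke Theorem~\ref{t:8}\ref{t:8ii}. First I would set $W=A+B$, $C=0$, and, for every $n\in\NN$, $U_n=\gamma_n^{-1}\Id-B$ and $q_n=w_n$. Since $B$ is monotone, Lipschitzian, and everywhere defined, Example~\ref{ex:11} shows it is maximally monotone, so Lemma~\ref{l:11}\ref{l:11i} guarantees that $W=A+B$ is maximally monotone with $\zer(W+C)=Z$, while the zero operator $C$ is $\alpha$-cocoercive for every $\alpha\in\RPP$. Because $U_n+W+C=\gamma_n^{-1}\Id+A$ is strongly and maximally monotone, it is a bijection of $\HH$, whence the warped resolvent $J_{W+C}^{U_n}$ is well defined and $w_n=J_{W+C}^{U_n}x_n=(\gamma_n^{-1}\Id+A)^{-1}(\gamma_n^{-1}x_n-Bx_n)=J_{\gamma_nA}(x_n-\gamma_nBx_n)=m_n$. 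A direct computation then gives $w_n^*=t_n^*=\gamma_n^{-1}(x_n-m_n)-(Bx_n-Bm_n)$ and, since $q_n=w_n$, $\delta_n=\scal{x_n-m_n}{t_n^*}$.

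The geometric heart of the argument is to check that the Tseng update coincides with the relaxed projection prescribed by \eqref{e:fejer8}. Expanding the last line of \eqref{e:fbf} yields $x_{n+1}-x_n=-\gamma_nt_n^*$, so $x_{n+1}=x_n-\lambda_nd_n$ with $d_n=(\delta_n/\|t_n^*\|^2)t_n^*$ precisely when the runtime relaxation parameter is taken to be $\lambda_n=\gamma_n\|t_n^*\|^2/\delta_n$ (and $x_{n+1}=x_n$ when $\delta_n=0$, in which case $m_n=x_n\in Z$ already). Writing $e_n=x_n-m_n$ and $f_n=Bx_n-Bm_n$, monotonicity of $B$ gives $\delta_n\geq(\gamma_n^{-1}-\beta)\|e_n\|^2\geq 0$ and $\delta_n\leq\gamma_n^{-1}\|e_n\|^2$, while $\beta$-Lipschitzianity gives $(\gamma_n^{-1}-\beta)\|e_n\|\leq\|t_n^*\|\leq(\gamma_n^{-1}+\beta)\|e_n\|$.

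I would next turn these inequalities, together with $\gamma_n\in[\varepsilon,(1-\varepsilon)/\beta]$, into the uniform bounds required by condition~\ref{t:8iid} of Theorem~\ref{t:8}\ref{t:8ii}. From the identity $2\delta_n-\gamma_n\|t_n^*\|^2=\gamma_n^{-1}\|e_n\|^2-\gamma_n\|f_n\|^2\geq\gamma_n^{-1}\varepsilon(2-\varepsilon)\|e_n\|^2$ and $\delta_n\leq\gamma_n^{-1}\|e_n\|^2$ one obtains $2-\lambda_n\geq\varepsilon(2-\varepsilon)$, hence $\sup_n\lambda_n<2$; and $\lambda_n\geq\gamma_n^2(\gamma_n^{-1}-\beta)^2=(1-\gamma_n\beta)^2\geq\varepsilon^2$ gives $\inf_n\lambda_n>0$. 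Thus condition~\ref{t:8iid} holds (the fact that $\lambda_n$ is determined at runtime rather than prescribed in advance is harmless, since the Fej\'er-monotonicity arguments underlying Theorems~\ref{t:1}, \ref{t:1c}, and \ref{t:8} only use $\lambda_n\in\left]0,2\right[$ with the stated inf/sup bounds and are valid along every trajectory). To verify condition~\ref{t:8iia}, note that $\|d_n\|=\delta_n/\|t_n^*\|\geq c\|t_n^*\|$ for a constant $c\in\RPP$ depending only on $\varepsilon$ and $\beta$, so that the summability in Theorem~\ref{t:8}\ref{t:8i} forces $\|d_n\|\to 0$ and hence $t_n^*\to 0$; then $\|e_n\|\leq\|t_n^*\|/(\gamma_n^{-1}-\beta)\to 0$ yields $w_n-x_n=-e_n\to 0$, $U_nw_n-U_nx_n=-t_n^*\to 0$, and $w_n-q_n=0$. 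Theorem~\ref{t:8}\ref{t:8ii} then delivers the weak convergence of $(x_n)_{n\in\NN}$ to a point in $Z$.

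The main obstacle is the bookkeeping of the third paragraph: because the effective relaxation $\lambda_n$ is forced upon us by the Tseng step and depends on the current iterate, the crux is to establish \emph{uniformly} in $n$ (and along every path) that $\varepsilon^2\leq\lambda_n$ and that $\lambda_n$ stays bounded away from $2$, and to convert the abstract conclusion $\|d_n\|\to 0$ of Theorem~\ref{t:8}\ref{t:8i} into the residual convergences $x_n-m_n\to 0$ and $t_n^*\to 0$ demanded by hypothesis~\ref{t:8iia}. These quantitative estimates are exactly where the admissible step-size window $[\varepsilon,(1-\varepsilon)/\beta]$ and the interplay between the monotonicity and the Lipschitzianity of $B$ enter.
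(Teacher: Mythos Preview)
Your proposal is correct and follows essentially the same approach as the paper: both cast \eqref{e:fbf} as an instance of \eqref{e:fejer8} with $W=A+B$, $C=0$, $U_n=\gamma_n^{-1}\Id-B$, $q_n=w_n$, and the runtime relaxation $\lambda_n=\gamma_n\|t_n^*\|^2/\delta_n$, and then invoke Theorem~\ref{t:8}\ref{t:8ii}. The only minor difference is that the paper bounds $\lambda_n\in[\varepsilon,2-\varepsilon]$ by citing Lemma~\ref{l:20}\ref{l:20i}--\ref{l:20ii} (strong monotonicity and cocoercivity of $\gamma_nU_n$) and then verifies condition~\ref{t:8iib}, whereas you obtain the (slightly looser but equally sufficient) bounds $\lambda_n\in[\varepsilon^2,2-\varepsilon(2-\varepsilon)]$ by direct computation and verify condition~\ref{t:8iia} instead---a cosmetic variation of the same argument.
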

\begin{proof}
Our objective is to apply Theorem~\ref{t:8} with 
\begin{equation}
\label{e:aqp28}
W=A+B,\;C=0,\;\text{and}\;(\forall n\in\NN)\;\;
U_n=\gamma_n^{-1}\Id-B\;\text{and}\;q_n=w_n. 
\end{equation}
Since $C=0$, let us rename $(w_n)_{n\in\NN}$ as $(m_n)_{n\in\NN}$.
Example~\ref{ex:11} and Lemma~\ref{l:11}\ref{l:11i} entail that $W$
is maximally monotone. Moreover, a consequence of 
Lemma~\ref{l:20}\ref{l:20i}--\ref{l:20ii} is that
\begin{equation}
\label{e:aqp23}
(\forall n\in\NN)\;\;\gamma_nU_n\;\text{is $\varepsilon$-strongly
monotone and $1/(2-\varepsilon)$-cocoercive.}
\end{equation}
Additionally, we derive from \cite[Proposition~3.9]{Jmaa20} that 
\begin{equation}
\label{e:aqp27}
(\forall n\in\NN)\quad
\ran U_n\subset\ran(U_n+W+C)\;\;\text{and}\;\;U_n+W+C\;
\text{is injective}.
\end{equation}
We also observe that
\begin{equation}
(\forall n\in\NN)\quad J_{W+C}^{U_n}=J_{A+B}^{U_n}=
\bigl(\gamma_n^{-1}\Id+A\bigr)\circ\bigl(\gamma_n^{-1}\Id-B\bigr)
=J_{\gamma_nA}\circ(\Id-\gamma_n B).
\end{equation}
Hence, the variables of \eqref{e:fejer8} in this setting become
\begin{equation}
\label{e:aqp20}
(\forall n\in\NN)\quad 
\begin{cases}
m_n=J_{\gamma_nA}(x_n-\gamma_n Bx_n)\\
t_n^*=U_nx_n-U_nm_n\\
\delta_n=\scal{m_n-x_n}{U_nm_n-U_nx_n}.
\end{cases}
\end{equation}
Now set
\begin{equation}
\label{e:aqp21}
(\forall n\in\NN)\quad 
\lambda_n=
\begin{cases}
\dfrac{\gamma_n\|t_n^*\|^2}{\delta_n},
&\text{if}\:\:\delta_n>0;\\
\varepsilon,&\text{otherwise.}
\end{cases}
\end{equation}
We derive from \eqref{e:aqp23} that
\begin{equation}
\label{e:aqp62}
(\forall n\in\NN)\quad\delta_n=
\scal{m_n-x_n}{U_nm_n-U_nx_n}\geq
\beta\varepsilon\|m_n-x_n\|^2,
\end{equation}
which implies that
\begin{equation}
\label{e:aqp16}
(\forall n\in\NN)\quad\delta_n\leq 0\;\Leftrightarrow\;m_n=x_n
\;\Leftrightarrow\;t^*_n=0.
\end{equation}
A consequence of \eqref{e:aqp23} is that, if
$\delta_n>0$,
\begin{equation}
\dfrac{\varepsilon}{\gamma_n}\leq
\dfrac{\|U_nm_n-U_nx_n\|}{\|m_n-x_n\|}
\leq\dfrac{\|U_nm_n-U_nx_n\|^2}{\scal{m_n-x_n}{U_nm_n-U_nx_n}}
\leq\dfrac{2-\varepsilon}{\gamma_n}
\end{equation}
and we therefore obtain from \eqref{e:aqp21} that
\begin{equation}
\label{e:fejer-15}
\lambda_n=\dfrac{\gamma_n\|U_nm_n-U_nx_n\|^2}
{\scal{m_n-x_n}{U_nm_n-U_nx_n}}\in[\varepsilon,2-\varepsilon].
\end{equation}
Hence, \eqref{e:fejer8} and \eqref{e:aqp16} yield
\begin{equation}
(\forall n\in\NN)\quad d_n=\dfrac{\gamma_n}{\lambda_n}t_n^*.
\end{equation}
Consequently, the sequence $(x_n)_{n\in\NN}$ produced by
\eqref{e:fbf} coincides with that of \eqref{e:fejer8}. 
We therefore appeal to Theorem~\ref{t:8}\ref{t:8ii} to conclude
since its condition \ref{t:8iid} holds thanks to 
\eqref{e:fejer-15}, whereas its condition \ref{t:8iib} holds 
thanks to \eqref{e:aqp23} and the fact that $(\gamma_n)_{n\in\NN}$
lies in $[\varepsilon,(1-\varepsilon)/\beta]$.
\end{proof}

\subsection{Haugazeau-like algorithm}
\label{sec:fbfs}

We present a strongly convergent best approximation version of the
forward-backward-forward method based on Theorem~\ref{t:8s}.

\begin{theorem}
\label{t:6s}
Let $\beta\in\RPP$, let $A\colon\HH\to 2^{\HH}$ be maximally
monotone, let $B\colon\HH\to\HH$ be monotone and 
$\beta$-Lipschitzian, and suppose that $Z=\zer(A+B)\neq\emp$. Let 
$x_0\in\HH$, let $\varepsilon\in\left]0,1/(\beta+1)\right[$,
and let $(\gamma_n)_{n\in\NN}$ be a sequence in
$\left[\varepsilon,(1-\varepsilon)/\beta\right]$. Iterate
\begin{equation}
\label{e:algo6s}
\begin{array}{l}
\text{for}\;n=0,1,\ldots\\
\left\lfloor
\begin{array}{l}
b_n^*=\gamma_nBx_n\\
m_n=J_{\gamma_nA}(x_n-b_n^*)\\
r_n=\dfrac{1}{2}\bigl(x_n+m_n-\gamma_nBm_n+b_n^*\bigr)\\
x_{n+1}=\Qq(x_0,x_n,r_n),
\end{array}
\right.\\
\end{array}
\end{equation}
where $\Qq$ is defined in Lemma~\ref{l:2}. Then $(x_n)_{n\in\NN}$
converges strongly to $\proj_Zx_0$.
\end{theorem}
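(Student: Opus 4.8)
The plan is to recognize \eqref{e:algo6s} as a special case of the warped-resolvent best-approximation algorithm \eqref{e:haug8} and then invoke Theorem~\ref{t:8s}, mirroring the derivation of the Fej\'erian version in Theorem~\ref{t:6}. Accordingly, I would apply Theorem~\ref{t:8s} with the same ingredients as in \eqref{e:aqp28}, namely $W=A+B$, $C=0$, and, for every $n\in\NN$, $U_n=\gamma_n^{-1}\Id-B$ together with $q_n=w_n=:m_n$. The facts gathered in the proof of Theorem~\ref{t:6} transfer verbatim: $W$ is maximally monotone by Example~\ref{ex:11} and Lemma~\ref{l:11}\ref{l:11i}; the requirements $\ran U_n\subset\ran(U_n+W+C)$ and injectivity of $U_n+W+C$ hold by \eqref{e:aqp27}; and, by \eqref{e:aqp23}, each $\gamma_nU_n$ is $\varepsilon$-strongly monotone and $1/(2-\varepsilon)$-cocoercive. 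In particular $J_{W+C}^{U_n}=J_{\gamma_nA}\circ(\Id-\gamma_nB)$, so the first two lines of \eqref{e:haug8} reproduce $b_n^*=\gamma_nBx_n$ and $m_n=J_{\gamma_nA}(x_n-b_n^*)$, and the auxiliary variables are again given by \eqref{e:aqp20}.

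The only genuine adjustment concerns the relaxation parameter, since \eqref{e:haug8} demands $(\lambda_n)_{n\in\NN}$ in $\rzeroun$ rather than in $\left]0,2\right[$. I would set
\[
(\forall n\in\NN)\quad
\lambda_n=
\begin{cases}
\dfrac{\gamma_n\|t_n^*\|^2}{2\delta_n}, &\text{if}\;\;\delta_n>0;\\
\varepsilon/2,&\text{otherwise,}
\end{cases}
\]
which is exactly half of the value \eqref{e:aqp21} used in Theorem~\ref{t:6}. The bound \eqref{e:fejer-15} then shows that $\lambda_n\in[\varepsilon/2,1-\varepsilon/2]\subset\rzeroun$, so $\inf_{n\in\NN}\lambda_n>0$ and condition \ref{t:8siid} of Theorem~\ref{t:8s} is met. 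With this choice, \eqref{e:aqp16} still yields $d_n=(\gamma_n/\lambda_n)t_n^*$ (and $d_n=0$ when $\delta_n\leq 0$), whence the relaxed point is $r_n=x_n-\lambda_nd_n=x_n-(\gamma_n/2)t_n^*$. Invoking the identity $x_n-\gamma_nt_n^*=m_n-\gamma_nBm_n+b_n^*$ already used in Theorem~\ref{t:6}, this reads $r_n=\tfrac12(x_n+m_n-\gamma_nBm_n+b_n^*)$, so the iterates of \eqref{e:haug8} coincide with those of \eqref{e:algo6s}.

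It remains to verify one of the convergence conditions \ref{t:8siia}--\ref{t:8siib}, and here I would reuse the argument of Theorem~\ref{t:6}. Since $C=0$ is $\alpha$-cocoercive for every $\alpha\in\RPP$, I may take $\alpha$ arbitrarily large. The kernels $U_n$ are uniformly $\beta_1$-strongly monotone with $\beta_1=\varepsilon\beta/(1-\varepsilon)$ (from \eqref{e:aqp23} and $\gamma_n\leq(1-\varepsilon)/\beta$) and uniformly $\beta_2$-Lipschitzian with $\beta_2=1/\varepsilon+\beta$ (since $\gamma_n\geq\varepsilon$ and $B$ is $\beta$-Lipschitzian); choosing $\alpha>(1-\varepsilon)/(4\varepsilon\beta)$ secures $\beta_1\in\left]1/(4\alpha),\pinf\right[$. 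Finally, $q_n-x_n=m_n-x_n\to 0$ follows as before: condition \ref{t:8siid} forces $\|d_n\|\to 0$ (via Theorem~\ref{t:8s}\ref{t:8si+}), hence $\|t_n^*\|\to 0$ because $\gamma_n/\lambda_n$ is bounded away from $0$, and then the $\varepsilon$-strong monotonicity of $\gamma_nU_n$ gives $\|x_n-m_n\|\leq(\gamma_n/\varepsilon)\|t_n^*\|\to 0$. Thus condition \ref{t:8siib} holds and Theorem~\ref{t:8s}\ref{t:8sii} delivers strong convergence of $(x_n)_{n\in\NN}$ to $\proj_Zx_0$. The one point requiring care is the factor-$\tfrac12$ bookkeeping that keeps $\lambda_n$ inside $\rzeroun$ while still reproducing the averaged forward step in \eqref{e:algo6s}; everything else is a transcription of the Fej\'erian proof.
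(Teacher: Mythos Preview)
Your proposal is correct and follows exactly the paper's approach: apply Theorem~\ref{t:8s} in the setting \eqref{e:aqp28}, halve the relaxation parameters of \eqref{e:aqp21} to stay in $\rzeroun$, and verify conditions \ref{t:8siid} and \ref{t:8siib}. One small slip: with your halved $\lambda_n$ the identity reads $d_n=\bigl(\gamma_n/(2\lambda_n)\bigr)t_n^*$, not $(\gamma_n/\lambda_n)t_n^*$, but your formula $r_n=x_n-(\gamma_n/2)t_n^*$ is unaffected and correct.
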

\begin{proof}
We prove the claim as an application of Theorem~\ref{t:8s} in the
setting of \eqref{e:aqp28}. Let us use the same variables as in
\eqref{e:aqp20} and 
\begin{equation}
\label{e:aqp24}
(\forall n\in\NN)\quad 
\lambda_n=
\begin{cases}
\dfrac{\gamma_n\|t_n^*\|^2}{2\delta_n},
&\text{if}\:\:\delta_n>0;\\
\varepsilon/2,&\text{otherwise.}
\end{cases}
\end{equation}
Then, using the same arguments as in the proof of
Theorem~\ref{t:8}, we see that $(\lambda_n)_{n\in\NN}$ lies in 
$[\varepsilon/2,1]$ and that the sequence $(x_n)_{n\in\NN}$ 
produced by \eqref{e:algo6s} coincides with that of 
\eqref{e:haug8}. Since conditions \ref{t:8siid} and \ref{t:8siib}
in Theorem~\ref{t:8s}\ref{t:8sii} are fulfilled, we obtain the
claim.
\end{proof}

\subsection{Special cases and variants}

\subsubsection{The monotone+skew algorithm}
\label{sec:m+s}

The approach presented here was proposed in
\cite{Siop11} to solve the monotone inclusion \eqref{e:p3} and it
was the first algorithm to fully split the operators $A$, $B$, and
$L$. Its methodology conforms to the program of
Framework~\ref{f:1}: we use the embedding of Example~\ref{ex:f3} to
transfer the initial 3-operator problem \eqref{e:p3} in the primal
space $\HH$ to one involving the Kuhn--Tucker operator
$\kut=\boldsymbol{M}+\boldsymbol{S}$ of \eqref{e:kt3} in the larger
primal-dual space $\XXX=\HH\oplus\GG$. The algorithmic strategy
\emph{per se} is then straightforward: since $\boldsymbol{M}$ is
maximally monotone and $\boldsymbol{S}$ is monotone and
Lipschitzian, we can apply Tseng's forward-backward-forward
algorithm (Theorem~\ref{t:6}) in $\XXX$ to find a Kuhn--Tucker
point and hence a primal-dual solution. 

We derive from Theorem~\ref{t:6} the weak convergence of the
monotone+skew algorithm of \cite[Theorem~3.1(ii)]{Siop11} (we can
derive a strongly convergent version from Theorem~\ref{t:6s} using
the same arguments).

\begin{proposition}
\label{p:31}
Let $A\colon\HH\to 2^{\HH}$ and $B\colon\GG\to 2^{\GG}$ be
maximally monotone, and assume that $0\neq L\in\BL(\HH,\GG)$.
Suppose that the set $Z$ of solutions to the primal inclusion
\begin{equation}
\label{e:31p}
\text{find}\;\;x\in\HH\;\;\text{such that}\;\;
0\in Ax+L^*\big(B(Lx)\big)
\end{equation}
is not empty and let $Z^*$ be the set of solutions to the dual
inclusion 
\begin{equation}
\label{e:31d}
\text{find}\;\;y^*\in\GG\;\;\text{such that}\;\;
0\in -L\bigl(A^{-1}(-L^*y^*)\bigr)+B^{-1}y^*.
\end{equation}
Let $x_0\in\HH$, let $y^*_0\in\GG$, let 
$\varepsilon\in\left]0,1/(\|L\|+1)\right[$, 
let $(\gamma_n)_{n\in\NN}$ be a sequence in 
$[\varepsilon,(1-\varepsilon)/\|L\|\,]$, and set
\begin{equation}
\label{e:ny27b}
\begin{array}{l}
\text{for}\;n=0,1,\ldots\\
\left\lfloor
\begin{array}{l}
y_{1,n}=x_n-\gamma_nL^*y^*_n\\
y^*_{2,n}=y^*_n+\gamma_nLx_n\\
m_{1,n}=J_{\gamma_n A}y_{1,n}\\
m^*_{2,n}=J_{\gamma_n B^{-1}}y^*_{2,n}\\
q_{1,n}=m_{1,n}-\gamma_nL^*m^*_{2,n}\\
q^*_{2,n}=m^*_{2,n}+\gamma_nLm_{1,n}\\
x_{n+1}=x_n-y_{1,n}+q_{1,n}\\
y^*_{n+1}=y^*_n-y^*_{2,n}+q^*_{2,n}.
\end{array}
\right.\\
\end{array}
\end{equation}
Then there exist $x\in Z$ and $y^*\in Z^*$ such that 
$-L^*{y^*}\in Ax$, $y^*\in B(Lx)$, $x_n\weakly{x}$, and
$y^*_n\weakly{y^*}$.
\end{proposition}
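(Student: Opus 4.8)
The plan is to realize \eqref{e:ny27b} as Tseng's forward-backward-forward algorithm \eqref{e:fbf} applied, in the product space $\XXX=\HH\oplus\GG$, to the two operators $\boldsymbol{M}$ and $\boldsymbol{S}$ of the Kuhn--Tucker embedding \eqref{e:31z} described in Example~\ref{ex:f3}. By Lemma~\ref{l:31z}\ref{l:31zi--}, $\boldsymbol{M}$ is maximally monotone, and by Lemma~\ref{l:31z}\ref{l:31zi-}, $\boldsymbol{S}\in\BL(\XXX)$ is skew, hence monotone and $\|L\|$-Lipschitzian. Since $Z\neq\emp$, Lemma~\ref{l:31z}\ref{l:31ziv} guarantees that $\zer(\boldsymbol{M}+\boldsymbol{S})=\zer\kut\neq\emp$, so the standing hypothesis of Theorem~\ref{t:6} is met with the substitutions $A\leftarrow\boldsymbol{M}$, $B\leftarrow\boldsymbol{S}$, and $\beta=\|L\|$. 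The restrictions $\varepsilon\in\left]0,1/(\|L\|+1)\right[$ and $(\gamma_n)_{n\in\NN}$ in $[\varepsilon,(1-\varepsilon)/\|L\|]$ are then precisely those demanded by Theorem~\ref{t:6}.

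Next I would expand \eqref{e:fbf} for $\boldsymbol{x}_n=(x_n,y^*_n)$. Writing $\boldsymbol{b}^*_n=\gamma_n\boldsymbol{S}\boldsymbol{x}_n=(\gamma_nL^*y^*_n,-\gamma_nLx_n)$ gives $\boldsymbol{x}_n-\boldsymbol{b}^*_n=(y_{1,n},y^*_{2,n})$; since $\boldsymbol{M}$ is the Hilbert direct sum of $A$ and $B^{-1}$, Example~\ref{ex:r1} yields $J_{\gamma_n\boldsymbol{M}}\colon(x,y^*)\mapsto(J_{\gamma_nA}x,J_{\gamma_nB^{-1}}y^*)$, so that $\boldsymbol{m}_n=(m_{1,n},m^*_{2,n})$. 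Then $\boldsymbol{m}_n-\gamma_n\boldsymbol{S}\boldsymbol{m}_n=(q_{1,n},q^*_{2,n})$, and adding back $\boldsymbol{b}^*_n$ while using the identities $\gamma_nL^*y^*_n=x_n-y_{1,n}$ and $\gamma_nLx_n=y^*_{2,n}-y^*_n$ produces exactly $\boldsymbol{x}_{n+1}=(x_n-y_{1,n}+q_{1,n},\,y^*_n-y^*_{2,n}+q^*_{2,n})$. Hence the sequence $(\boldsymbol{x}_n)_{n\in\NN}$ generated by \eqref{e:ny27b} coincides with the one produced by \eqref{e:fbf} in $\XXX$.

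Theorem~\ref{t:6} then asserts that $\boldsymbol{x}_n\weakly\boldsymbol{x}$ for some $\boldsymbol{x}=(x,y^*)\in\zer\kut$. Because weak convergence in a finite direct sum is componentwise, this reads $x_n\weakly x$ and $y^*_n\weakly y^*$. Finally, membership $\boldsymbol{x}\in\zer\kut$ unpacks, via \eqref{e:31z} and \eqref{e:kt3}, into $-L^*y^*\in Ax$ and $y^*\in B(Lx)$, while Lemma~\ref{l:31z}\ref{l:31zii} places $\boldsymbol{x}$ in $Z\times Z^*$, giving $x\in Z$ and $y^*\in Z^*$, as claimed.

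The only delicate point is purely bookkeeping: checking that the abstract update $\boldsymbol{x}_{n+1}=\boldsymbol{m}_n-\gamma_n\boldsymbol{S}\boldsymbol{m}_n+\boldsymbol{b}^*_n$ collapses to the two explicit lines of \eqref{e:ny27b}, which hinges on the cancellations $\gamma_nL^*y^*_n=x_n-y_{1,n}$ and $\gamma_nLx_n=y^*_{2,n}-y^*_n$ coming from the definitions of $y_{1,n}$ and $y^*_{2,n}$. No genuinely new estimate is needed beyond those already supplied by Theorem~\ref{t:6}; the convergence, the admissible step-size window, and the duality conclusion are all inherited from the single-space forward-backward-forward result through the embedding of Example~\ref{ex:f3}.
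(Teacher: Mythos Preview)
Your proof is correct and follows essentially the same approach as the paper: both identify \eqref{e:ny27b} as Tseng's forward-backward-forward algorithm \eqref{e:fbf} applied in $\XXX=\HH\oplus\GG$ to the pair $(\boldsymbol{M},\boldsymbol{S})$ of \eqref{e:31z}, invoke Example~\ref{ex:r1} for the resolvent, and conclude via Theorem~\ref{t:6} and Lemma~\ref{l:31z}. Your write-up is in fact slightly more explicit about the algebraic cancellations and about extracting the graph conditions $-L^*y^*\in Ax$, $y^*\in B(Lx)$ from membership in $\zer\kut$.
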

\begin{proof}
Set $\XXX=\HH\oplus\GG$, define $\boldsymbol{M}$ and
$\boldsymbol{S}$ as in \eqref{e:31z}, and set
$(\forall n\in\NN)$ $\boldsymbol{x}_n=(x_n,y_n^*)$,
$\boldsymbol{y}_n=(y_{1,n},y^*_{2,n})$,
$\boldsymbol{m}_n=(m_{1,n},m^*_{2,n})$, and
$\boldsymbol{q}_n=(q_{1,n},q^*_{2,n})$,
Then, in view of Example~\ref{ex:r1}, \eqref{e:ny27b} becomes
\begin{equation}
\label{e:al6}
\begin{array}{l}
\text{for}\;n=0,1,\ldots\\
\left\lfloor
\begin{array}{l}
\boldsymbol{y}_n=\boldsymbol{x}_n-\gamma_n\boldsymbol{S}
\boldsymbol{x}_n\\
\boldsymbol{m}_n=J_{\gamma_n\boldsymbol{M}}\boldsymbol{y}_n\\
\boldsymbol{q}_n=\boldsymbol{m}_n-\gamma_n\boldsymbol{S}
\boldsymbol{m}_n\\
\boldsymbol{x}_{n+1}=\boldsymbol{x}_n-\boldsymbol{y}_n
+\boldsymbol{q}_n,
\end{array}
\right.\\
\end{array}
\end{equation}
which we rewrite as an instance of \eqref{e:fbf}, namely,
\begin{equation}
\begin{array}{l}
\text{for}\;n=0,1,\ldots\\
\left\lfloor
\begin{array}{l}
\boldsymbol{b}_n^*=\gamma_n\boldsymbol{S}\boldsymbol{x}_n\\
\boldsymbol{m}_n=J_{\gamma_n\boldsymbol{M}}
(\boldsymbol{x}_n-\boldsymbol{b}_n^*)\\
\boldsymbol{x}_{n+1}=\boldsymbol{m}_n-\gamma_n\boldsymbol{S}
\boldsymbol{m}_n+\boldsymbol{b}_n^*.
\end{array}
\right.\\
\end{array}
\end{equation}
It then follows from Theorem~\ref{t:6} and Lemma~\ref{l:31z}
that $(\boldsymbol{x}_n)_{n\in\NN}$ converges weakly to a point in 
$\zer(\boldsymbol{M}+\boldsymbol{S})\subset Z\times Z^*$,
as claimed.
\end{proof}

\begin{remark}
\label{r:c4}
The methodology of Theorem~\ref{t:6} is to find a Kuhn--Tucker
point, i.e., a zero of $\boldsymbol{M}+\boldsymbol{S}$. As noted in
\cite[Remark~2.9]{Siop11}, this can also be achieved by using the
Douglas--Rachford algorithm \eqref{e:dr6} which, upon setting
$U=(\Id+\,\gamma^2L^*\circ L)^{-1}$ and 
$V=(\Id+\,\gamma^2L\circ L^*)^{-1}$, and taking $\gamma\in\RPP$
and a sequence $(\lambda_n)_{n\in\NN}$ in $\left]0,2\right[$ such
that $\sum_{n\in\NN}\lambda_n(2-\lambda_n)=\pinf$, assumes the form
\begin{equation}
\label{e:main3}
\begin{array}{l}
\text{for}\;n=0,1,\ldots\\
\left\lfloor
\begin{array}{l}
x_n=U(y_{1,n}-\gamma L^*y^*_{2,n})\\
y^*_n=V(y^*_{2,n}+\gamma Ly_{1,n})\\
y_{1,n+1}=y_{1,n}+\lambda_n\big(J_{\gamma A}
(2x_n-y_{1,n})-x_n\big)\\
y^*_{2,n+1}=y^*_{2,n}+\lambda_n
\big(J_{\gamma B^{-1}}(2y^*_n-y^*_{2,n})-y^*_n\big).
\end{array}
\right.\\[2mm]
\end{array}
\end{equation}
Weak convergence of $(x_n,y^*_n)_{n\in\NN}$ to a point in 
$Z\times Z^*$ follows from Theorem~\ref{t:4}\ref{t:4i}.
The numerical effectiveness of \eqref{e:main3} depends on the ease
of implementation of the operators $U$ and $V$. This approach was
rediscovered in \cite{Ocon14} in an image restoration application. 
\end{remark}

\subsubsection{A Lagrangian approach to composite minimization}
\label{sec:Lagr}

We revisit the setting of Problem~\ref{prob:35}, which was
identified as an instance of Problem~\ref{prob:3} and can therefore
be solved using \eqref{e:ny27b} with $A=\partial f$ and 
$B=\partial g$. Following \cite[Section~4.5]{Bord18}, we explore a
different route which amounts to employing the embedding
$(\XXX,\sad_{\boldsymbol{F}},\TTT)$, where 
$\XXX=\HH\oplus\GG\oplus\GG$, 
\begin{equation}
\label{e:Kr2}
\begin{array}{ccll}
\sad_{\boldsymbol{F}}\colon&\XXX&\to&2^{\XXX}\\
&(x,y,v^*)&\mapsto&
\bigl(\partial f(x)+L^*v^*\bigr)\times
\bigl(\partial g(y)-v^*\bigr)\times\{-Lx+y\}
\end{array}
\end{equation}
is the saddle operator of \eqref{e:Kr}, and 
$\TTT\colon\XXX\to\HH\colon(x,y,v^*)\mapsto x$. 
Let us write
$\sad_{\boldsymbol{F}}=\boldsymbol{M}+\boldsymbol{S}$, where
\begin{equation}
\label{e:r76}
\begin{cases}
\boldsymbol{M}\colon (x,y,v^*)\mapsto
\partial f(x)\times\partial g(y)\times\{0\}\\
\boldsymbol{S}\colon (x,y,v^*)\mapsto
\big(L^*v^*,-v^*,-Lx+y\big).
\end{cases}
\end{equation}
Then $\|\boldsymbol{S}\|=\sqrt{1+\|L\|^2}$ and
$(\forall n\in\NN)$ $J_{\gamma_n\boldsymbol{M}}=
\prox_{\gamma_n f}\times\prox_{\gamma_n g}\times\,\Id$. Hence, 
applying Theorem~\ref{t:6} to this decomposition in $\XXX$, we
obtain the following realization of Framework~\ref{f:1}.

\begin{proposition}
\label{p:i}
Let $f\in\Gamma_0(\HH)$, $g\in\Gamma_0(\GG)$, and 
$L\in\BL(\HH,\GG)$ be such that 
$0\in\sri(L(\dom f)-\dom g)$. Suppose that the 
primal problem
\begin{equation}
\label{e:p321}
\minimize{x\in\HH}{f(x)+g(Lx)}
\end{equation}
admits solutions and consider the dual problem
\begin{equation}
\label{e:d321}
\minimize{v^*\in\GG}{f^*(-L^*v^*)+g^*(v^*)}.
\end{equation}
Let $(x_0,y_0,v^*_0)\in\HH\oplus\GG\oplus\GG$, let 
$\varepsilon\in\,]0,1/(1+\sqrt{1+\|L\|^2})[$, and let 
$(\gamma_n)_{n\in\NN}$ be a sequence in
$[\varepsilon,(1-\varepsilon)/\sqrt{1+\|L\|^2}]$. Iterate
\begin{equation}
\label{e:mp5}
\begin{array}{l}
\text{for}\;n=0,1,\ldots\\
\left\lfloor
\begin{array}{l}
r_n=\gamma_n(Lx_n-y_n)\\
m_{1,n}=\prox_{\gamma_n f}\big(x_n-\gamma_nL^*v^*_n\big)\\[1mm]
m_{2,n}=\prox_{\gamma_n g}\big(y_n+\gamma_nv^*_n\big)\\[1mm]
x_{n+1}=m_{1,n}-\gamma_nL^*r_n\\[1mm]
y_{n+1}=m_{2,n}+\gamma_nr_n\\[1mm]
v^*_{n+1}=v^*_n+\gamma_n\big(Lm_{1,n}-m_{2,n}\big).
\end{array}
\right.\\
\end{array}
\end{equation}
Then $(x_n)_{n\in\NN}$ and $(v^*_n)_{n\in\NN}$ converge weakly to 
solutions to \eqref{e:p321} and \eqref{e:d321},
respectively.
\end{proposition}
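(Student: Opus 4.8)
The plan is to recognize the iteration \eqref{e:mp5} as an instance of Tseng's forward-backward-forward scheme \eqref{e:fbf} run in the product space $\XXX=\HH\oplus\GG\oplus\GG$ on the splitting $\sad_{\boldsymbol{F}}=\boldsymbol{M}+\boldsymbol{S}$ of \eqref{e:r76}, and then to invoke Theorem~\ref{t:6}. First I would check the hypotheses of that theorem with $A=\boldsymbol{M}$ and $B=\boldsymbol{S}$. Since $\partial f$, $\partial g$, and the zero operator on $\GG$ are maximally monotone (Example~\ref{ex:1}), Lemma~\ref{l:0616} gives maximal monotonicity of $\boldsymbol{M}$. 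The operator $\boldsymbol{S}$ is linear and bounded, and a one-line computation shows $\scal{\boldsymbol{S}(x,y,v^*)}{(x,y,v^*)}=0$, so $\boldsymbol{S}$ is skew, hence monotone, and $\beta$-Lipschitzian with $\beta=\|\boldsymbol{S}\|=\sqrt{1+\|L\|^2}$. The ranges imposed on $\varepsilon$ and on $(\gamma_n)_{n\in\NN}$ in the statement are exactly those demanded by Theorem~\ref{t:6} for this value of $\beta$.

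Next I would settle the standing requirement $Z:=\zer(\boldsymbol{M}+\boldsymbol{S})\neq\emp$ of Theorem~\ref{t:6}, together with the recovery of primal-dual solutions. The key structural remark is that, with $A=\partial f$ and $B=\partial g$, the operator $\sad_{\boldsymbol{F}}$ of \eqref{e:Kr2} is precisely the saddle operator $\sad$ of \eqref{e:lim1}. Under the qualification condition $0\in\sri(L(\dom f)-\dom g)$, Problem~\ref{prob:35} identifies the solution sets of \eqref{e:p3}--\eqref{e:d3} with those of the primal problem \eqref{e:p321} and the dual problem \eqref{e:d321}. Since \eqref{e:p321} is assumed solvable, Lemma~\ref{l:lim1}\ref{l:lim1iv} yields $\zer\sad_{\boldsymbol{F}}\neq\emp$, which is the nonemptiness of $Z$ needed to launch Theorem~\ref{t:6}.

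The core step is the algebraic identification of \eqref{e:mp5} with \eqref{e:fbf}. Writing $\boldsymbol{x}_n=(x_n,y_n,v^*_n)$, the first forward step $\boldsymbol{b}_n^*=\gamma_n\boldsymbol{S}\boldsymbol{x}_n$ gives $\boldsymbol{x}_n-\boldsymbol{b}_n^*=(x_n-\gamma_nL^*v^*_n,\,y_n+\gamma_nv^*_n,\,v^*_n+r_n)$ with $r_n=\gamma_n(Lx_n-y_n)$; since $J_{\gamma_n\boldsymbol{M}}=\prox_{\gamma_n f}\times\prox_{\gamma_n g}\times\Id$ (Examples~\ref{ex:r1} and~\ref{ex:r5}), the backward step produces $\boldsymbol{m}_n=(m_{1,n},m_{2,n},v^*_n+r_n)$ with $m_{1,n},m_{2,n}$ as in \eqref{e:mp5}. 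Computing the second forward step $\boldsymbol{x}_{n+1}=\boldsymbol{m}_n-\gamma_n\boldsymbol{S}\boldsymbol{m}_n+\boldsymbol{b}_n^*$ coordinatewise then reproduces the three update lines for $x_{n+1}$, $y_{n+1}$, $v^*_{n+1}$ exactly. This verification, routine but the only place where genuine calculation occurs, is the main (mild) obstacle, the rest being supplied wholesale by Lemma~\ref{l:lim1} and Theorem~\ref{t:6}.

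Finally, Theorem~\ref{t:6} gives $\boldsymbol{x}_n=(x_n,y_n,v^*_n)\weakly(\bar x,\bar y,\bar v^*)\in\zer\sad_{\boldsymbol{F}}$, so $x_n\weakly\bar x$ and $v^*_n\weakly\bar v^*$ by weak continuity of the coordinate projections. Lemma~\ref{l:lim1}\ref{l:lim1ii} then places $(\bar x,\bar v^*)\in\zer\kut\subset Z\times Z^*$, whence $\bar x$ solves \eqref{e:p321} and $\bar v^*$ solves \eqref{e:d321}, which is the assertion.
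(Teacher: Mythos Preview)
Your proposal is correct and follows exactly the approach the paper uses: the paragraph preceding Proposition~\ref{p:i} already sets up the splitting $\sad_{\boldsymbol{F}}=\boldsymbol{M}+\boldsymbol{S}$ of \eqref{e:r76}, records $\|\boldsymbol{S}\|=\sqrt{1+\|L\|^2}$ and $J_{\gamma_n\boldsymbol{M}}=\prox_{\gamma_n f}\times\prox_{\gamma_n g}\times\Id$, and declares the proposition to be the outcome of applying Theorem~\ref{t:6} in $\XXX$. Your write-up simply supplies the details the paper leaves implicit---the coordinatewise verification that \eqref{e:mp5} matches \eqref{e:fbf}, and the appeal to Lemma~\ref{l:lim1}\ref{l:lim1ii}--\ref{l:lim1iv} for nonemptiness of $\zer\sad_{\boldsymbol{F}}$ and recovery of primal-dual solutions.
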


\begin{remark}
Let $(\mu_n)_{n\in\NN}$ be a sequence in  
$[\varepsilon,(1-\varepsilon)\text{min}\{1,1/\|L\|\}/2]$.
Algorithm \eqref{e:mp5} bears a certain resemblance with the 
iterative scheme
\begin{equation}
\label{e:ct94}
\begin{array}{l}
\text{for}\;n=0,1,\ldots\\
\left\lfloor
\begin{array}{l}
p_{n}=v^*_n+\mu_n(Lx_n-y_n)\\[1mm]
x_{n+1}=\prox_{\mu_n f}\big(x_n-\mu_nL^*p_{n}\big)\\[1mm]
y_{n+1}=\prox_{\mu_n g}\big(y_{n}+\mu_np_n\big)\\[1mm]
v^*_{n+1}=v^*_n+\mu_n\big(Lx_{n+1}-y_{n+1}\big)
\end{array}
\right.\\
\end{array}
\end{equation}
proposed in \cite{Chen94} to solve \eqref{e:p321}--\eqref{e:d321}
in a finite-dimensional setting.
\end{remark}

\begin{remark}
\label{e:jon}
In the finite-dimensional context of \cite{Ecks94}, the saddle
operator \eqref{e:Kr2} was split as
$\sad_{\boldsymbol{F}}=\boldsymbol{M}_1+\boldsymbol{M}_2$, where 
\begin{equation}
\label{e:r75}
\begin{cases}
\boldsymbol{M}_1\colon (x,y,v^*)\mapsto
\bigl(\partial f(x)+L^*v^*\bigr)\times\{0\}\times\{-Lx\}\\
\boldsymbol{M}_2\colon (x,y,v^*)\mapsto
\{0\}\times\bigl(\partial g(y)-v^*\bigr)\times\{y\}.
\end{cases}
\end{equation}
Given $\gamma\in\RPP$, $\mu_1\in\RR$, $\mu_2\in\RR$, and 
$(x_0,y_0,v^*_0)\in\HH\oplus\GG\oplus\GG$, applying the 
Douglas--Rachford algorithm \eqref{e:dr6} to find a zero of
$\boldsymbol{M}_1+\boldsymbol{M}_2$ leads to the 
algorithm \cite{Ecks94}
\begin{equation}
\label{e:7y}
\hspace{-2mm}
\begin{array}{l}
\text{for}\;n=0,1,\ldots\\
\left\lfloor
\begin{array}{l}
x_{n+1}\in\Argmind{x\in\HH}{\bigg(f(x)+\scal{Lx}{v^*_n}+
\dfrac{1}{2\gamma}\|Lx-y_n\|^2+\dfrac{\gamma\mu_1^2}{2}
\|x-x_n\|^2\bigg)}\\[3mm]
y_{n+1}=\underset{y\in\GG}{\text{argmin}}\;
\bigg(g(y)-\scal{y}{v^*_n}
+\dfrac{1}{2\gamma}\|Lx_{n+1}-y\|^2+\dfrac{\gamma\mu_2^2}{2}
\|y-y_n\|^2\bigg)\\[3mm]
v^*_{n+1}=v^*_n+\gamma^{-1}\big(Lx_{n+1}-y_{n+1}\big).
\end{array}
\right.\\
\end{array}
\end{equation}
When $\mu_1=\mu_2=0$, we recover the alternating direction method
of multipliers (ADMM) discussed in Remark~\ref{r:dr3}\ref{r:dr3ii}.
Just like ADMM, \eqref{e:7y} necessitates a potentially complex
minimization involving $f$ and $L$ jointly to construct $x_{n+1}$.
By contrast, \eqref{e:mp5} achieves full splitting of $f$, $g$, and
$L$.
\end{remark}

\begin{remark}
\label{r:dj1}
In view of Example~\ref{ex:f5}, the above saddle operator 
formalism can be extended to the more general primal-dual inclusion
pair of Problem~\ref{prob:3}. As in Proposition~\ref{p:i}, a zero
$(x,y,v^*)$ of the saddle operator $\sad$ of \eqref{e:lim1} can be
constructed by executing \eqref{e:mp5}, where $\prox_{\gamma_n f}$ 
is replaced with $J_{\gamma_n A}$ and $\prox_{\gamma_n g}$ with
$J_{\gamma_n B}$. In this setting, the weak limits $x$ and $v^*$
solve, respectively, the primal inclusion \eqref{e:p3} and the 
dual inclusion \eqref{e:d3}.  
\end{remark}

\subsubsection{Mixtures of composite, Lipschitzian, and 
parallel-sum operators} 
The Kuhn--Tucker operator of Lemma~\ref{l:31z} employed in
Section~\ref{sec:m+s} can be expressed in block format as
\begin{equation}
\label{e:kt1}
\kut=\boldsymbol{M}+\boldsymbol{S}=
\underbrace{
\begin{bmatrix}
A&0\\
0&B^{-1}
\end{bmatrix}
}_{\text{monotone}}
+
\underbrace{
\begin{bmatrix}
0&L^*\\
-L&0
\end{bmatrix}
}_{\text{skew}}.
\end{equation}
A Kuhn--Tucker point was obtained in Proposition~\ref{p:31} by
applying the forward-backward-forward algorithm \eqref{e:fbf} to
$\boldsymbol{M}$ and $\boldsymbol{S}$. In doing so, we did 
not exploit the linearity and skewness of $\boldsymbol{S}$, but
just the fact that it is monotone and Lipschitzian. Let us observe
that, if we fill the diagonal of $\boldsymbol{S}$ with monotone
Lipschitzian operators $Q\colon\HH\to\HH$ and
$D^{-1}\colon\GG\to\GG$, we obtain a new monotone and Lipschitzian
operator $\boldsymbol{Q}\colon\XXX\to\XXX$. In lieu of
\eqref{e:kt1}, we then consider the decomposition
\begin{equation}
\label{e:kt2}
\kut=\boldsymbol{M}+\boldsymbol{Q}=
\underbrace{
\begin{bmatrix}
A&0\\
0&B^{-1}
\end{bmatrix}
}_{\text{monotone}}
\;\;+
\underbrace{
\begin{bmatrix}
Q&L^*\\
-L&D^{-1}
\end{bmatrix}
}_{\text{monotone and Lipschitzian}}.
\end{equation}
Using \eqref{e:20115n}, we write 
\begin{equation}
\label{e:kt9}
\kut=
\begin{bmatrix}
A+Q&L^*\\
-L&(B\infconv D)^{-1}
\end{bmatrix}
\end{equation}
and interpret it as a variant of the Kuhn--Tucker operator
\eqref{e:kt3} associated with Problem~\ref{prob:3} in which $A$ is
replaced with $A+Q$ and $B$ with $B\infconv D$. In other words, the
primal inclusion is to
\begin{equation}
\label{e:32p}
\text{find}\;\;x\in\HH\;\;\text{such that}\;\;
0\in Ax+L^*\big((B\infconv D)(Lx)\big)+Qx
\end{equation}
and the dual inclusion is to
\begin{equation}
\label{e:32d}
\text{find}\;\;y^*\in\GG\;\;\text{such that}\;\;
0\in -L\big((A+Q)^{-1}(-L^*y^*)\big)+B^{-1}y^*+D^{-1}y^*
\end{equation}
or, equivalently, 
\begin{equation}
\label{e:32d'}
\text{find}\;\;y^*\in\GG\;\;\text{such that}\;\;(\exi x\in\HH)\;\;
\begin{cases}
-L^*y^*\in Ax+Qx\\
Lx\in B^{-1}y^*+D^{-1}y^*.
\end{cases}
\end{equation}
As in Lemma~\ref{l:31z}, for every $(x,y^*)\in\XXX$,
\begin{equation}
\label{e:as}
(x,y^*)\in\zer\kut\quad\Rightarrow\quad
\begin{cases}
x\;\text{solves \eqref{e:32p}}\\
y^*\;\text{solves \eqref{e:32d'}}
\end{cases}
\end{equation}
and we therefore recover the embedding principle of
Framework~\ref{f:1}. 

\begin{example}
\label{ex:f6}
In the above setting, set $\XXX=\HH\oplus\GG$, let $\kut$ be
the Kuhn--Tucker operator of \eqref{e:kt9}, and let
$\TTT\colon\XXX\to\HH\colon(x,y^*)\mapsto x$. Then 
$(\XXX,\kut,\TTT)$ is an embedding of \eqref{e:32p}.
\end{example}

The primal-dual inclusion problem \eqref{e:32p}--\eqref{e:32d} was
first investigated in \cite{Svva12}, where it was solved via
Tseng's forward-backward-forward algorithm. Here is
\cite[Theorem~3.1(ii)(c)--(d)]{Svva12}, which describes this
approach when the operators $L$, $B$, and $D$ above are deployed in
a product space $\GG=\GG_1\oplus\cdots\oplus\GG_p$ in the spirit of
Problem~\ref{prob:9} (further analysis of the asymptotic behavior
of the method in special cases can be found in \cite{Botr14}).

\begin{proposition}
\label{p:18}
Let $0<p\in\NN$, let $\mu\in\RPP$, let $A\colon\HH\to 2^{\HH}$ be 
maximally monotone, let $Q\colon\HH\to\HH$ be monotone and 
$\mu$-Lipschitzian. For every $k\in\{1,\ldots,p\}$, 
let $\nu_k\in\RPP$, let $\GG_k$ be a real Hilbert space, 
let $B_k\colon\GG_k\to 2^{\GG_k}$ be maximally monotone,
let $D_k\colon\GG_k\to 2^{\GG_k}$ be maximally monotone and such
that $D_k^{-1}\colon\GG_k\to\GG_k$ is $\nu_k$-Lipschitzian, and
assume that $0\neq L_k\in\BL(\HH,\GG_k)$. Suppose that the set $Z$
of solutions to the primal inclusion 
\begin{equation}
\label{e:p18}
\text{find}\;\;x\in\HH\;\;\text{such that}\;\;
0\in Ax+\sum_{k=1}^pL_k^*\big((B_k\infconv D_k)(L_kx)\big)+Qx
\end{equation}
is not empty and let $Z^*$ be the set of solutions to 
the dual inclusion
\begin{multline}
\label{e:d18}
\text{find}\;\;y^*_1\in\GG_1,\ldots,y^*_p\in\GG_p
\;\:\text{such that}\\
(\exi x\in\HH)\;
\begin{cases}
-\sum_{k=1}^pL_k^*{y^*_k}\in Ax+Qx\\
\bigl(\forall k\in\{1,\ldots,p\}\bigr)\;L_kx
\in B^{-1}_ky^*_k+D^{-1}_ky^*_k.
\end{cases}
\end{multline}
Set 
\begin{equation}
\label{e:06:25b}
\beta=\max\{\mu,\nu_1,\ldots,\nu_p\}+\sqrt{\sum_{k=1}^p\|L_k\|^2},
\end{equation}
let $x_0\in\HH$, let
$(y^*_{1,0},\ldots,y^*_{p,0})\in\GG_1\oplus\cdots\oplus\GG_p$, let 
$\varepsilon\in\left]0,1/(\beta+1)\right[$, and
let $(\gamma_n)_{n\in\NN}$ be a sequence in 
$[\varepsilon,(1-\varepsilon)/\beta]$. Iterate
\begin{equation}
\label{e:blackpage}
\begin{array}{l}
\text{for}\;n=0,1,\ldots\\
\left\lfloor
\begin{array}{l}
y_{1,n}=x_n-\gamma_n\big(Qx_n+\sum_{k=1}^pL_k^*y^*_{k,n}\big)\\
m_{1,n}=J_{\gamma_n A}\,y_{1,n}\\
\operatorname{for}\;k=1,\ldots,p\\
\left\lfloor
\begin{array}{l}
y^*_{2,k,n}=y^*_{k,n}+\gamma_n\big(L_kx_n-D_k^{-1}y^*_{k,n}\big)\\
m^*_{2,k,n}=J_{\gamma_n B_k^{-1}}y^*_{2,k,n}\\
q^*_{2,k,n}=m^*_{2,k,n}+\gamma_n\big(L_km_{1,n}-D_k^{-1}
m^*_{2,k,n}\big)\\
y^*_{k,n+1}=y^*_{k,n}-y^*_{2,k,n}+q^*_{2,k,n}
\end{array}
\right.\\[1mm]
q_{1,n}=m_{1,n}-\gamma_n\big(Qm_{1,n}+
\sum_{k=1}^pL_k^*m^*_{2,k,n}\big)\\
x_{n+1}=x_n-y_{1,n}+q_{1,n}.
\end{array}
\right.\\
\end{array}
\end{equation}
Then there exist $x\in Z$ and $(y^*_1,\ldots,y^*_p)\in Z^*$ such 
that $x_n\weakly{x}$, and, for every $k\in\{1,\ldots,p\}$,
$y^*_{k,n}\weakly{y^*_k}$.
\end{proposition}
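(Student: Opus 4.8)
The plan is to realize \eqref{e:blackpage} as an instance of the monotone+skew forward-backward-forward iteration \eqref{e:ny27b}, and hence ultimately of Tseng's algorithm \eqref{e:fbf}, by working in the product space $\XXX=\HH\oplus\GG_1\oplus\cdots\oplus\GG_p$ and using the Kuhn--Tucker decomposition \eqref{e:kt9}. Specifically, set $\GG=\GG_1\oplus\cdots\oplus\GG_p$, let $L\colon\HH\to\GG\colon x\mapsto(L_1x,\ldots,L_px)$ so that $L^*\colon(y^*_k)_{k}\mapsto\sum_{k=1}^pL^*_ky^*_k$, and let $B\colon\GG\to 2^\GG$ and $D\colon\GG\to 2^\GG$ be the direct sums of the $B_k$ and $D_k$. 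First I would write the operators in the format of \eqref{e:kt2}:
\begin{equation}
\boldsymbol{M}\colon(x,\boldsymbol{y}^*)\mapsto Ax\times B^{-1}\boldsymbol{y}^*,
\qquad
\boldsymbol{Q}\colon(x,\boldsymbol{y}^*)\mapsto
\bigl(Qx+L^*\boldsymbol{y}^*,\,-Lx+D^{-1}\boldsymbol{y}^*\bigr),
\end{equation}
where $\boldsymbol{y}^*=(y^*_k)_{1\le k\le p}$.

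The key structural step is to verify that $\boldsymbol{Q}$ is monotone and $\beta$-Lipschitzian with $\beta$ as in \eqref{e:06:25b}. Monotonicity follows because the skew part $(x,\boldsymbol{y}^*)\mapsto(L^*\boldsymbol{y}^*,-Lx)$ contributes nothing to the quadratic form (Lemma~\ref{l:31z}\ref{l:31zi-}) while $Q$ and $D^{-1}$ are monotone on their respective factors. For the Lipschitz constant I would estimate $\|\boldsymbol{Q}(x,\boldsymbol{y}^*)-\boldsymbol{Q}(x',\boldsymbol{y}'^*)\|$ by separating the diagonal monotone-Lipschitz contributions, bounded by $\max\{\mu,\nu_1,\ldots,\nu_p\}$, from the off-diagonal skew contribution, whose operator norm is $\|L\|=(\sum_{k=1}^p\|L_k\|^2)^{1/2}$; the triangle inequality then yields $\beta$. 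Since $A$ and each $B_k$ are maximally monotone, Lemma~\ref{l:0616} gives that $\boldsymbol{M}$ is maximally monotone, and by Example~\ref{ex:r1} its resolvent acts coordinatewise as $J_{\gamma_n\boldsymbol{M}}=J_{\gamma_nA}\times J_{\gamma_nB^{-1}}$, with $J_{\gamma_nB^{-1}}$ itself splitting over the $\GG_k$ as $(J_{\gamma_nB_k^{-1}})_k$.

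Next I would check that \eqref{e:blackpage} is exactly \eqref{e:ny27b} (equivalently \eqref{e:fbf}) for this $\boldsymbol{M}$ and this $\boldsymbol{Q}$. Setting $\boldsymbol{x}_n=(x_n,\boldsymbol{y}^*_n)$, the forward step $\boldsymbol{x}_n-\gamma_n\boldsymbol{Q}\boldsymbol{x}_n$ produces precisely $y_{1,n}$ in the first coordinate and $y^*_{2,k,n}$ in the remaining coordinates; the backward step $J_{\gamma_n\boldsymbol{M}}$ produces $m_{1,n}$ and $m^*_{2,k,n}$; the second forward step on $\boldsymbol{m}_n$ produces $q_{1,n}$ and $q^*_{2,k,n}$; and the final update $\boldsymbol{x}_{n+1}=\boldsymbol{x}_n-(\boldsymbol{x}_n-\gamma_n\boldsymbol{Q}\boldsymbol{x}_n)+(\boldsymbol{m}_n-\gamma_n\boldsymbol{Q}\boldsymbol{m}_n)$ reproduces the updates of $x_n$ and $y^*_{k,n}$. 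This is a routine but somewhat lengthy coordinate-by-coordinate matching.

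Finally, Theorem~\ref{t:6} applied in $\XXX$ yields weak convergence of $(\boldsymbol{x}_n)_{n\in\NN}$ to a zero of $\boldsymbol{M}+\boldsymbol{Q}=\kut$, the Kuhn--Tucker operator of \eqref{e:kt9}. The embedding implication \eqref{e:as} then guarantees that the limit $(x,\boldsymbol{y}^*)$ satisfies $x\in Z$ and $\boldsymbol{y}^*\in Z^*$, and the weak limits transfer coordinatewise to give $x_n\weakly x$ and $y^*_{k,n}\weakly y^*_k$ for each $k$. I expect the main obstacle to be twofold: first, the bookkeeping in the coordinate identification (keeping the two-forward-one-backward pattern aligned across the primal coordinate and the $p$ dual coordinates simultaneously), and second, the careful derivation of the Lipschitz constant $\beta$, where one must argue that the skew coupling norm $\|L\|=(\sum_k\|L_k\|^2)^{1/2}$ adds to---rather than interacts multiplicatively with---the diagonal constant $\max\{\mu,\nu_1,\ldots,\nu_p\}$, so that the admissible range $[\varepsilon,(1-\varepsilon)/\beta]$ for $(\gamma_n)_{n\in\NN}$ is precisely the one required by Theorem~\ref{t:6}.
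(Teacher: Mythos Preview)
Your proposal is correct and follows essentially the same route as the paper: cast \eqref{e:blackpage} as Tseng's forward-backward-forward iteration \eqref{e:fbf} applied in $\XXX=\HH\oplus\GG_1\oplus\cdots\oplus\GG_p$ to the pair $(\boldsymbol{M},\boldsymbol{Q})$ of \eqref{e:kt2}, then invoke Theorem~\ref{t:6} and \eqref{e:as}. One small imprecision: in general $\|L\|\leq\bigl(\sum_{k=1}^p\|L_k\|^2\bigr)^{1/2}$ rather than equality, but this only strengthens your Lipschitz bound and does not affect the argument.
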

\begin{proof}
The duality between \eqref{e:p18} and \eqref{e:d18} follows as in
Problem~\ref{prob:9}, by replacing $A$ with $A+Q$ and 
$(B^{-1}_k)_{1\leq k\leq p}$ with
$(B^{-1}_k+D^{-1}_k)_{1\leq k\leq p}$. Now set 
\begin{equation}
\label{e:GGD}
\begin{cases}
\GG=\GG_1\oplus\cdots\oplus\GG_p\\
B\colon\GG\to 2^{\GG}\colon(y_1,\ldots,y_p)\mapsto
B_1y_1\times\cdots\times B_py_p\\
D\colon\GG\to 2^{\GG}\colon(y_1,\ldots,y_p)\mapsto
D_1y_1\times\cdots\times D_py_p\\
L\colon\HH\to\GG\colon x\mapsto(L_1x,\ldots,L_px),
\end{cases}
\end{equation}
define $\boldsymbol{M}$ and $\boldsymbol{Q}$ as in \eqref{e:kt2},
and set
\begin{equation}
\label{e:hor}
(\forall n\in\NN)\quad 
\begin{cases}
\boldsymbol{x}_n=\bigl(x_n,y^*_{1,n},\ldots,y^*_{p,n}\bigr)\\
\boldsymbol{m}_n=\bigl(m_{1,n},m^*_{2,1,n},\ldots,
m^*_{2,p,n}\bigr).
\end{cases}
\end{equation}
Then $\boldsymbol{M}$ is maximally monotone and $\boldsymbol{Q}$ is
monotone and $\beta$-Lipschitzian \cite[Equation~(3.11)]{Svva12}
and, following the same steps as in the proof of
Proposition~\ref{p:31}, we rewrite \eqref{e:blackpage} as
\begin{equation}
\label{e:fbf3}
\begin{array}{l}
\text{for}\;n=0,1,\ldots\\
\left\lfloor
\begin{array}{l}
\boldsymbol{b}_n^*=\gamma_n\boldsymbol{Q}\boldsymbol{x}_n\\
\boldsymbol{m}_n=J_{\gamma_n\boldsymbol{M}}
(\boldsymbol{x}_n-\boldsymbol{b}_n^*)\\
\boldsymbol{x}_{n+1}=\boldsymbol{m}_n-\gamma_n\boldsymbol{Q}
\boldsymbol{m}_n+\boldsymbol{b}_n^*
\end{array}
\right.\\
\end{array}
\end{equation}
and conclude by invoking Theorem~\ref{t:6} and \eqref{e:as}.
\end{proof}

\begin{remark}
\label{r:i3}
In \eqref{e:p18}, suppose that $p=1$, $\GG_1=\HH$, $L_1=\Id$, 
$B_1=B$, $D_1=\{0\}^{-1}$, and $\zer(A+B+Q)\neq\emp$. Let
$x_0\in\HH$, let $y^*_0\in\HH$, let 
$\varepsilon\in\left]0,1/(\mu+2)\right[$, and let 
$(\gamma_n)_{n\in\NN}$ be a sequence in 
$[\varepsilon,(1-\varepsilon)/(\mu+1)]$.
Then we deduce from Proposition~\ref{p:18} that the sequence
$(x_n)_{n\in\NN}$ generated by the iterations 
\begin{equation}
\label{e:blackpage2}
\begin{array}{l}
\text{for}\;n=0,1,\ldots\\
\left\lfloor
\begin{array}{l}
y_{n}=x_n-\gamma_n\big(Qx_n+y^*_{n}\big)\\
p_n=J_{\gamma_n A}\,y_n\\
q^*_n=J_{\gamma_n B^{-1}}(y^*_{n}+\gamma_nx_n)\\
x_{n+1}=x_n-y_{n}+p_n-\gamma_n\big(Qp_n+q^*_n\big)\\
y^*_{n+1}=q^*_n+\gamma_n(p_n-x_n).
\end{array}
\right.\\
\end{array}
\end{equation}
converges weakly to a zero of $A+B+Q$. An alternative method to
solve this inclusion is proposed in \cite{Bang20}, with constant
proximal parameters $(\gamma_n)_{n\in\NN}$ and the feature that it
coincides with the unrelaxed version of the Douglas--Rachford
algorithm when $Q=0$ (in the spirit of the method of 
Section~\ref{sec:three} where $Q$ is cocoercive).
\end{remark}

\begin{example}
\label{ex:33}
In Proposition~\ref{p:18}, make the additional assumptions that 
$Q=0$ and, for every $k\in\{1,\ldots,p\}$, $\GG_k=\HH$, 
$L_k=\Id$, and $D_k^{-1}$ is strictly monotone. Then \eqref{e:p18}
collapses to 
\begin{equation}
\label{e:p1833}
\text{find}\;\;x\in\HH\;\;\text{such that}\;\;
0\in Ax+\sum_{k=1}^p(B_k\infconv D_k)(x).
\end{equation}
It is shown in \cite[Proposition~4.2]{Siop13} that \eqref{e:p1833} 
is an exact relaxation of the (possibly inconsistent) instance of 
the problem
\begin{equation}
\label{e:p1834}
\text{find}\;\;x\in\HH\;\;\text{such that}\;\;
0\in Ax\;\;\text{and}\;\;
\bigl(\forall k\in\{1,\ldots,p\}\bigr)\;\;0\in B_kx
\end{equation}
in the sense that the solutions to \eqref{e:p1833} are the same 
as those to \eqref{e:p1834} when the latter happen to exist.
\end{example}

The specialization of Proposition~\ref{p:18} to minimization is as
follows. It features the ability to split infimal convolutions
(see \eqref{e:infconv1}) together with linearly composed functions.

\begin{example}[{\protect{\cite[Theorem~4.2(ii)(b)--(c)]{Svva12}}}]
\label{ex:18}
Let $0<p\in\NN$, let $\mu\in\RPP$, let $f\in\Gamma_0(\HH)$, 
and let $h\colon\HH\to\RR$ be convex, differentiable, and such 
that $\nabla h$ is $\mu$-Lipschitzian. For every
$k\in\{1,\ldots,p\}$, let $\nu_k\in\RPP$, let $\GG_k$ be a real
Hilbert space, let $g_k\in\Gamma_0(\GG_k)$, let
$\ell_k\in\Gamma_0(\GG_k)$ be $1/\nu_k$-strongly convex, 
and suppose that $0\neq L_k\in\BL(\HH,\GG_k)$. Let $Z$ be the set
of solutions to the primal problem
\begin{equation}
\label{e:ex18p}
\minimize{x\in\HH}
{f(x)+\sum_{k=1}^p(g_k\infconv\ell_k)(L_kx)+h(x)},
\end{equation}
let $Z^*$ be the set of solutions to the dual problem
\begin{equation}
\label{e:ex18d}
\minimize{y^*_1\in\GG_1,\ldots,y^*_p\in\GG_p}
{(f^*\infconv h^*)\biggl(-\sum_{k=1}^pL_k^*{y^*_k}\biggr)
+\sum_{k=1}^p\bigl(g_k^*(y^*_k)+\ell_k^*(y^*_k)\bigr)},
\end{equation}
and suppose that
\begin{equation}
\label{e:svv12}
\zer\biggl(\partial f+\sum_{k=1}^pL_k^*\circ\bigl(
\partial g_k\infconv\partial\ell_k\bigr)\circ L_k
+\nabla h\biggr)\neq\emp.
\end{equation}
Set 
\begin{equation}
\beta=\max\{\mu,\nu_1,\ldots,\nu_p\}+
\sqrt{\sum_{k=1}^p\|L_k\|^2},
\end{equation}
let $x_0\in\HH$, let
$(y^*_{1,0},\ldots,y^*_{p,0})\in\GG_1\oplus\cdots\oplus\GG_p$, let 
$\varepsilon\in\left]0,1/(\beta+1)\right[$, and
let $(\gamma_n)_{n\in\NN}$ be a sequence in 
$[\varepsilon,(1-\varepsilon)/\beta]$. Iterate
\begin{equation}
\label{e:blackpage3}
\begin{array}{l}
\text{for}\;n=0,1,\ldots\\
\left\lfloor
\begin{array}{l}
y_{1,n}=x_n-\gamma_n\big(\nabla h(x_n)+
\sum_{k=1}^pL_k^*y^*_{k,n}\big)\\
m_{1,n}=\prox_{\gamma_n f}\,y_{1,n}\\
\operatorname{for}\;k=1,\ldots,p\\
\left\lfloor
\begin{array}{l}
y^*_{2,k,n}=y^*_{k,n}+\gamma_n\big(L_kx_n-
\nabla\ell_k^*(y^*_{k,n})\big)\\
m^*_{2,k,n}=\prox_{\gamma_n g_k^*}y^*_{2,k,n}\\
q^*_{2,k,n}=m^*_{2,k,n}+\gamma_n\big(L_km_{1,n}-\nabla\ell_k^*
(m^*_{2,k,n})\big)\\
y^*_{k,n+1}=y^*_{k,n}-y^*_{2,k,n}+q^*_{2,k,n}
\end{array}
\right.\\[1mm]
q_{1,n}=m_{1,n}-\gamma_n\big(\nabla h(m_{1,n})+
\sum_{k=1}^pL_k^*m^*_{2,k,n}\big)\\
x_{n+1}=x_n-y_{1,n}+q_{1,n}.
\end{array}
\right.\\
\end{array}
\end{equation}
Then there exist $x\in Z$ and $(y^*_1,\ldots,y^*_p)\in Z^*$ such 
that $x_n\weakly{x}$, and, for every $k\in\{1,\ldots,p\}$,
$y^*_{k,n}\weakly{y^*_k}$.
\end{example}

\begin{remark}
\label{r:31}
Conditions under which \eqref{e:svv12} holds are provided in 
\cite[Proposition~4.3]{Svva12}.
\end{remark}

\section{Forward-backward splitting}
\label{sec:fb}

\subsection{Preview}
The forward-backward splitting method is a basic algorithm for
solving Problem~\ref{prob:0} when $B$ is cocoercive. At iteration
$n$, given a step size $\gamma_n\in\RPP$, a discrete dynamics 
associated with the Cauchy problem \eqref{e:ev} with $M=A+B$ is
\begin{equation}
\label{e:fwb}
\dfrac{x_n-x_{n+1}}{\gamma_n}\in Ax_{n+1}+Bx_n.
\end{equation}
It amounts to performing a forward Euler step relative to the
operator $B$ and a backward Euler step relative to the operator
$A$. In view of \eqref{e:ee14}, this means that
$x_{n+1}=J_{\gamma_n A}(x_n-\gamma_nBx_n)$. This iteration scheme
goes back to the gradient-projection method \cite{Gold64,Levi66}
for the constrained minimization of a smooth function (see
Example~\ref{ex:53} below) and its extension to variational
inequalities \cite{Baku74,Merc79}.

\subsection{Fej\'erian algorithm}
\label{sec:fbw}
We establish a new, geometric proof of the convergence of a
relaxed primal-dual version of the forward-backward algorithm found
in \cite[Proposition~4.4(iii)]{Yama15} for the primal result and in
\cite[Theorem~26.14(ii)]{Livre1} for the dual result, where the
proximal parameters $(\gamma_n)_{n\in\NN}$ are constant. Related
primal results and special cases can be found in
\cite{Gaba83,Lema96,Lema97,Merc80,Tsen91}. The importance of
cocoercivity in establishing weak convergence was first identified
by Mercier \cite{Merc79} in the context of variational inequalities
and, more generally, in \cite{Merc80}.

\begin{theorem}
\label{t:5}
Let $\alpha\in\RPP$, let $A\colon\HH\to 2^{\HH}$ be maximally
monotone, and let $B\colon\HH\to\HH$ be $\alpha$-cocoercive. Let 
$\varepsilon\in\left]0,\alpha/(\alpha+1)\right[$, let
$(\gamma_n)_{n\in\NN}$ be a sequence in
$\left[\varepsilon,(2-\varepsilon)\alpha\right]$, and let
\begin{equation}
\label{e:aqp63}
(\forall n\in\NN)\quad\varepsilon\leq\mu_n\leq
(1-\varepsilon)\dfrac{4\alpha-\gamma_n}{2\alpha}.
\end{equation}
Suppose that the set $Z$ of solutions to the problem 
\begin{equation}
\label{e:5p}
\text{find}\;\;x\in\HH\;\;\text{such that}\;\;0\in Ax+Bx
\end{equation}
is not empty and let $Z^*$ be the set of solutions to the dual
problem 
\begin{equation}
\label{e:5d}
\text{find}\;\;x^*\in\HH\;\;\text{such that}\;\;
0\in -A^{-1}(-x^*)+B^{-1}x^*.
\end{equation}
Let $x_0\in\HH$ and iterate
\begin{equation}
\label{e:fb}
\begin{array}{l}
\text{for}\;n=0,1,\ldots\\
\left\lfloor
\begin{array}{l}
b_n^*=\gamma_nBx_n\\
w_n=J_{\gamma_nA}(x_n-b_n^*)\\
x_{n+1}=x_n+\mu_n(w_n-x_n).
\end{array}
\right.\\
\end{array}
\end{equation}
Then the following hold:
\begin{enumerate}
\item
\label{t:5i}
$(x_n)_{n\in\NN}$ converges weakly to a point in $Z$.
\item
\label{t:5i+}
$Z^*$ contains a single point $\overline{x}^*$ and
$(\forall z\in Z)$ $Bz=\overline{x}^*$.
\item
\label{t:5ii}
$(Bx_n)_{n\in\NN}$ converges strongly to $\overline{x}^*$.
\end{enumerate}
\end{theorem}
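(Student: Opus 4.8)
The plan is to recognize the forward-backward iteration \eqref{e:fb} as an instance of the warped resolvent algorithm \eqref{e:fejer8} and to invoke Theorem~\ref{t:8}, recovering the strong convergence of $(Bx_n)_{n\in\NN}$ from the cocoercivity slack already present in the governing half-spaces. I would apply Theorem~\ref{t:8} with
\begin{equation}
W=A,\quad C=B,\quad\text{and}\quad(\forall n\in\NN)\quad
U_n=\gamma_n^{-1}\Id-B,\quad q_n=x_n.
\end{equation}
Because $U_n+W+C=\gamma_n^{-1}\Id+A$ is strongly monotone, hence injective, and has full range by Minty's theorem (Lemma~\ref{l:0}\ref{l:0iii}), these kernels are admissible. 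Lemma~\ref{l:warp}\ref{l:warpi} then gives $w_n=J_{\gamma_nA}(x_n-\gamma_nBx_n)$, the forward-backward step, together with $t_n^*=w_n^*+Bx_n=\gamma_n^{-1}(x_n-w_n)$, so that $\delta_n=(\gamma_n^{-1}-(4\alpha)^{-1})\|x_n-w_n\|^2$ and $d_n=\frac{4\alpha-\gamma_n}{4\alpha}(x_n-w_n)$. Hence \eqref{e:fb} coincides with \eqref{e:fejer8} once $\lambda_n=4\alpha\mu_n/(4\alpha-\gamma_n)$, and \eqref{e:aqp63} forces $\lambda_n\in[\varepsilon,2(1-\varepsilon)]$.

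For \ref{t:5i}, I would verify the hypotheses of Theorem~\ref{t:8}\ref{t:8ii}. Condition \ref{t:8iid} holds by the previous paragraph, so \eqref{e:aqp1} gives $\|d_n\|\to 0$; since $(4\alpha-\gamma_n)/(4\alpha)\geq(2+\varepsilon)/4>0$, this yields $w_n-x_n\to 0$. Consequently $w_n-x_n\weakly 0$, $w_n-q_n=w_n-x_n\to 0$, and $U_nw_n-U_nx_n=\gamma_n^{-1}(w_n-x_n)-(Bw_n-Bx_n)\to 0$ (using $\gamma_n\geq\varepsilon$ and the $1/\alpha$-Lipschitz continuity of $B$), so condition \ref{t:8iia} is met and $x_n\weakly x\in Z$. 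For \ref{t:5i+} I would argue elementarily: if $z_1,z_2\in Z$ then $-Bz_1\in Az_1$ and $-Bz_2\in Az_2$, so monotonicity of $A$ gives $\scal{z_1-z_2}{Bz_1-Bz_2}\leq 0$, whereas cocoercivity of $B$ gives $\scal{z_1-z_2}{Bz_1-Bz_2}\geq\alpha\|Bz_1-Bz_2\|^2$; hence $Bz_1=Bz_2$ and $B$ is constant on $Z$ with value $\overline{x}^*$. Moreover $x^*\in Z^*$ holds if and only if some $x\in\HH$ satisfies $-x^*\in Ax$ and $x^*=Bx$, i.e.\ $x\in Z$ and $x^*=\overline{x}^*$; since $Z\neq\emp$, this gives $Z^*=\{\overline{x}^*\}$.

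The main obstacle is \ref{t:5ii}, because the strong convergence of $(Bx_n)_{n\in\NN}$ is not delivered by Theorem~\ref{t:8} and must be extracted from the cocoercivity slack. Fixing $z\in Z$ and revisiting the chain \eqref{e:c33}--\eqref{e:c36} in the proof of Theorem~\ref{t:1c} with $q_n=x_n$ and $Bz=\overline{x}^*$, I would retain the completed square, obtaining
\begin{equation}
\scal{z-w_n}{t_n^*}\leq\frac{\|w_n-x_n\|^2}{4\alpha}-\theta_n,
\qquad
\theta_n=\bigg\|\sqrt{\alpha}\,(Bx_n-\overline{x}^*)
+\frac{w_n-x_n}{\sqrt{4\alpha}}\bigg\|^2.
\end{equation}
Keeping the inner-product term discarded in \eqref{e:65} then yields
\begin{equation}
\|x_{n+1}-z\|^2\leq\|x_n-z\|^2-\lambda_n(2-\lambda_n)\|d_n\|^2
-2\lambda_n\frac{\delta_n}{\|t_n^*\|^2}\,\theta_n,
\end{equation}
and telescoping gives $\sum_{n\in\NN}\lambda_n(\delta_n/\|t_n^*\|^2)\theta_n<\pinf$. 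Since $\delta_n/\|t_n^*\|^2=(4\alpha-\gamma_n)\gamma_n/(4\alpha)$ is bounded below by a positive constant on $[\varepsilon,(2-\varepsilon)\alpha]$ while $\lambda_n\geq\varepsilon$ (and the indices with $\delta_n\leq 0$ satisfy $x_n=w_n\in Z$, whence $\theta_n=0$), I deduce $\theta_n\to 0$. Together with $w_n-x_n\to 0$ from \ref{t:5i}, this gives $\sqrt{\alpha}\,(Bx_n-\overline{x}^*)\to 0$, i.e.\ $Bx_n\to\overline{x}^*$, establishing \ref{t:5ii}.
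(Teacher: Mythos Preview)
Your proof is correct, and the setup together with part~\ref{t:5i} matches the paper's approach exactly: both cast \eqref{e:fb} as an instance of \eqref{e:fejer8} with $W=A$, $C=B$, $U_n=\gamma_n^{-1}\Id-B$, $q_n=x_n$, and invoke Theorem~\ref{t:8}\ref{t:8ii} via conditions \ref{t:8iid} and \ref{t:8iia}. Your argument for \ref{t:5i+} is a more hands-on variant of the paper's (which instead invokes strong monotonicity of $B^{-1}$ and a cited corollary). The real difference is in \ref{t:5ii}: the paper takes a shorter path by stopping at the intermediate inequality \eqref{e:c33}, which after rearrangement gives $\alpha\|Bx_n-Bz\|^2\leq\scal{x_n-w_n}{Bx_n-Bz}+\scal{w_n-z}{t_n^*}$, and then applies Cauchy--Schwarz together with the boundedness of $(x_n)$ and $(w_n)$ and the already-established fact $w_n-x_n\to 0$ to conclude $Bx_n\to Bz$ directly. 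Your route---retaining the completed square from \eqref{e:c36}, propagating it through \eqref{e:65}, and telescoping to obtain summability of $\theta_n$---is more quantitative (it yields $\sum_n\theta_n<\pinf$ rather than merely $\theta_n\to 0$) but requires reopening the Fej\'er inequality; the paper's argument avoids this by working only with facts already available after \ref{t:5i}.
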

\begin{proof}
The proof hinges on an application of Theorem~\ref{t:8} with
\begin{equation}
\label{e:aqp29}
W=A,\;C=B,\;\text{and}\;(\forall n\in\NN)\;\;
U_n=\gamma_n^{-1}\Id-B\;\text{and}\;q_n=x_n. 
\end{equation}
In this setting
\begin{equation}
(\forall n\in\NN)\quad J_{W+C}^{U_n}=J_{A+B}^{U_n}=
\bigl(\gamma_n^{-1}\Id+A\bigr)\circ\bigl(\gamma_n^{-1}\Id-B\bigr)
=J_{\gamma_nA}\circ(\Id-\gamma_n B)
\end{equation}
and the variables of \eqref{e:fejer8} become
\begin{equation}
\label{e:aqp30}
(\forall n\in\NN)\quad 
\begin{cases}
w_n=J_{\gamma_nA}(x_n-\gamma_n Bx_n)\\[2mm]
t_n^*=\dfrac{x_n-w_n}{\gamma_n}\\[4mm]
\delta_n=\biggl(\dfrac{1}{\gamma_n}
-\dfrac{1}{4\alpha}\biggr)\|w_n-x_n\|^2.
\end{cases}
\end{equation}
Furthermore, we derive from \cite[Proposition~3.9]{Jmaa20} that 
\eqref{e:aqp27} holds. Now set
\begin{equation}
\label{e:aqp34}
(\forall n\in\NN)\quad\lambda_n=
\dfrac{4\alpha\mu_n}{4\alpha-\gamma_n}.
\end{equation}
Then \eqref{e:aqp63} yields
\begin{equation}
\label{e:ke}
(\forall n\in\NN)\quad 
\varepsilon\leq\dfrac{4\alpha\varepsilon}{4\alpha-\varepsilon}
\leq\lambda_n\leq
\dfrac{4\alpha(1-\varepsilon)(4\alpha-\gamma_n)}
{(4\alpha-\gamma_n)2\alpha}
\leq 2-\varepsilon.
\end{equation}
We also deduce from \eqref{e:aqp30} that
\begin{equation}
\label{e:aqp37}
(\forall n\in\NN)\quad\delta_n\leq 0\;\Leftrightarrow\;w_n=x_n
\;\Leftrightarrow\;t^*_n=0.
\end{equation}
Hence, \eqref{e:fejer8} yields
\begin{equation}
\label{e:aqp36}
(\forall n\in\NN)\quad d_n=\dfrac{\mu_n}{\lambda_n}(x_n-w_n).
\end{equation}
Altogether, we arrive at the conclusion that the sequence
$(x_n)_{n\in\NN}$ produced by \eqref{e:fb} coincides with that of
\eqref{e:fejer8}. Hence, by Theorem~\ref{t:8}\ref{t:8i} and
\eqref{e:ke}, 
\begin{equation}
\sum_{n\in\NN}\|d_n\|^2<\pinf. 
\end{equation}
In turn, upon invoking \eqref{e:aqp36}, we obtain
\begin{equation}
\label{e:aqp35}
w_n-x_n\to 0.
\end{equation}

\ref{t:5i}:
In view of \eqref{e:ke}, condition \ref{t:8iid} in 
Theorem~\ref{t:8}\ref{t:8ii} is fulfilled. On the other hand,
since Lemma~\ref{l:20}\ref{l:20iii} asserts that the operators
$(\gamma_nU_n)_{n\in\NN}$ are nonexpansive, \eqref{e:aqp35} implies
that $\|U_nw_n-U_nx_n\|\leq\|w_n-x_n\|/\varepsilon\to 0$, 
so that condition \ref{t:8iia} is also fulfilled.
Thus, the assertion follows from Theorem~\ref{t:8}\ref{t:8ii}.

\ref{t:5i+}:
The strong monotonicity of $B^{-1}$ implies that of 
$-A^{-1}\circ(-\Id)+B^{-1}$. Hence, 
\cite[Corollary~23.37(ii)]{Livre1} asserts that \eqref{e:5d} admits
a unique solution $\overline{x}^*$. Now let $z\in Z$. Then 
$-Bz\in Az$ and therefore $-z\in-A^{-1}(-Bz)$. Thus,
$0=-z+z\in-A^{-1}(-Bz)+B^{-1}(Bz)$, i.e., 
$Bz\in Z^*=\{\overline{x}^*\}$.

\ref{t:5ii}:
It follows from \ref{t:5i} and \eqref{e:aqp35} that 
$(x_n)_{n\in\NN}$ and $(w_n)_{n\in\NN}$ are bounded. 
Now let $z\in Z$. We retrieve from \eqref{e:c33} that
\begin{equation}
\label{e:sc33}
(\forall n\in\NN)\quad\scal{z-w_n}{t_n^*}
\leq\scal{x_n-w_n}{Bx_n-Bz}-\alpha\|Bx_n-Bz\|^2.
\end{equation}
Hence, the Cauchy--Schwarz inequality, 
\eqref{e:coco}, \eqref{e:aqp30}, and \eqref{e:aqp35} imply that
\begin{align}
\label{e:s34}
\alpha\|Bx_n-Bz\|^2
&\leq\|w_n-x_n\|\,\|Bx_n-Bz\|+\|w_n-z\|\,\|t^*_n\|
\nonumber\\
&\leq\dfrac{1}{\alpha}\|w_n-x_n\|\,\|x_n-z\|+
\dfrac{1}{\gamma_n}\|w_n-z\|\,\|w_n-x_n\|
\nonumber\\
&\to 0.
\end{align}
In view of \ref{t:5i+}, $Bx_n\to Bz=\overline{x}^*$.
\end{proof}

The following examples address Example~\ref{prob:21} and
Example~\ref{prob:22}, respectively.

\begin{example}
\label{ex:51'}
Let $\alpha\in\RPP$, let $f\in\Gamma_0(\HH)$,
let $B\colon\HH\to\HH$ be $\alpha$-cocoercive, suppose that 
the set $Z$ of solutions to the variational inequality 
\begin{equation}
\label{e:51'}
\text{find}\;\;x\in\HH\;\;\text{such that}\;\;
(\forall y\in\HH)\;\;\scal{x-y}{Bx}+f(x)\leq f(y)
\end{equation}
is not empty, and let $Z^*$ be the set of solutions to the dual 
problem 
\begin{equation}
\label{e:51'd}
\text{find}\;\;x^*\in\HH\;\;\text{such that}\;\;
0\in -\partial f^*(-x^*)+B^{-1}x^*.
\end{equation}
Let $x_0\in\HH$, let 
$\varepsilon\in\left]0,\alpha/(\alpha+1)\right[$,
let $(\gamma_n)_{n\in\NN}$ be a sequence in
$\left[\varepsilon,(2-\varepsilon)\alpha\right]$, and suppose 
that $(\mu_n)_{n\in\NN}$ satisfies \eqref{e:aqp63}. Iterate
\begin{equation}
\label{e:algo511}
\begin{array}{l}
\text{for}\;n=0,1,\ldots\\
\left\lfloor
\begin{array}{l}
b_n^*=\gamma_nBx_n\\
w_n=\prox_{\gamma_nf}(x_n-b_n^*)\\
x_{n+1}=x_n+\mu_n(w_n-x_n).
\end{array}
\right.\\
\end{array}
\end{equation}
Then $(x_n)_{n\in\NN}$ converges weakly to a point in $Z$ and 
$(Bx_n)_{n\in\NN}$ converges strongly to the unique point in 
$Z^*$.
\end{example}
\begin{proof}
Use Example~\ref{ex:1} and Example~\ref{ex:r5} and set
$A=\partial f$ in Theorem~\ref{t:5}.
\end{proof}

\begin{example}
\label{ex:51}
Let $\alpha\in\RPP$, let $C$ be a nonempty closed convex subset of 
$\HH$, let $B\colon\HH\to\HH$ be $\alpha$-cocoercive, suppose
that the set $Z$ of solutions to the variational inequality 
\begin{equation}
\label{e:p22bis}
\text{find}\;\;x\in C\;\;\text{such that}\;\;
(\forall y\in C)\;\;\scal{x-y}{Bx}\leq 0
\end{equation}
is not empty, and let $Z^*$ be the set of solutions to the dual 
problem 
\begin{equation}
\label{e:22bisd}
\text{find}\;\;x^*\in\HH\;\;\text{such that}\;\;
0\in -\partial\sigma_C(-x^*)+B^{-1}x^*.
\end{equation}
Let $x_0\in\HH$, let 
$\varepsilon\in\left]0,\alpha/(\alpha+1)\right[$,
let $(\gamma_n)_{n\in\NN}$ be a sequence in
$\left[\varepsilon,(2-\varepsilon)\alpha\right]$, and suppose 
that $(\mu_n)_{n\in\NN}$ satisfies \eqref{e:aqp63}. Iterate
\begin{equation}
\label{e:algo512}
\begin{array}{l}
\text{for}\;n=0,1,\ldots\\
\left\lfloor
\begin{array}{l}
b_n^*=\gamma_nBx_n\\
w_n=\proj_C(x_n-b_n^*)\\
x_{n+1}=x_n+\mu_n(w_n-x_n).
\end{array}
\right.\\
\end{array}
\end{equation}
Then $(x_n)_{n\in\NN}$ converges weakly to a point in $Z$ and 
$(Bx_n)_{n\in\NN}$ converges strongly to the unique point in 
$Z^*$.
\end{example}
\begin{proof}
Use Example~\ref{ex:r6} and \eqref{e:si}, and set $f=\iota_C$ in
Example~\ref{ex:51'}.
\end{proof}

The following example focuses on the minimization in the setting of
Problem~\ref{prob:27}\ref{prob:27ii}. This framework has found a 
multitude of applications, especially in the areas of signal
processing and machine learning 
\cite{Argy12,Beck10,Chan08,MaPa18,Smms05,Dext22,Jena11,Vait18}.

\begin{example}
\label{ex:52}
Let $\beta\in\RPP$, let $f\in\Gamma_0(\HH)$ and let 
$g\colon\HH\to\RR$ be convex and differentiable. Suppose that
$\nabla g$ is $\beta$-Lipschitzian and that the set $Z$ of 
solutions to the problem
\begin{equation}
\label{e:p27'}
\minimize{x\in\HH}{f(x)+g(x)}
\end{equation}
is not empty, and let $Z^*$ be the set of solutions to the dual 
problem 
\begin{equation}
\label{e:p27'd}
\minimize{x^*\in\HH}{f^*(-x^*)+g^*(x^*)}.
\end{equation}
Let $x_0\in\HH$, let 
$\varepsilon\in\left]0,1/(\beta+1)\right[$,
let $(\gamma_n)_{n\in\NN}$ be a sequence in
$\left[\varepsilon,(2-\varepsilon)/\beta\right]$, and suppose 
that 
\begin{equation}
\label{e:aqp65}
(\forall n\in\NN)\quad\varepsilon\leq\mu_n\leq
(1-\varepsilon)\dfrac{4-\beta\gamma_n}{2}. 
\end{equation}
Iterate
\begin{equation}
\label{e:27'}
\begin{array}{l}
\text{for}\;n=0,1,\ldots\\
\left\lfloor
\begin{array}{l}
b_n^*=\gamma_n\nabla g(x_n)\\
w_n=\prox_{\gamma_nf}(x_n-b_n^*)\\
x_{n+1}=x_n+\mu_n(w_n-x_n).
\end{array}
\right.\\
\end{array}
\end{equation}
Then $(x_n)_{n\in\NN}$ converges weakly to a point in $Z$ and 
$(\nabla g(x_n))_{n\in\NN}$ converges strongly to the unique point
in $Z^*$.
\end{example}
\begin{proof}
The claim is established by applying Theorem~\ref{t:5}\ref{t:5i}
with $A=\partial f$ (see Example~\ref{ex:1}) and $B=\nabla g$ (see
Lemma~\ref{l:bh}).
\end{proof}

\begin{remark}
\label{r:71}
In some applications, it may be of interest to quantify the
asymptotic behavior of the function values
$(f(x_n)+g(x_n))_{n\in\NN}$ produced by \eqref{e:27'}. This topic
has been the focus of a lot of interest since the publication of
the influential papers \cite{Bec09a,Bec09b,Cham15}; see
\cite{Garr23} and its bibliography for recent results on the
unrelaxed implementation of \eqref{e:27'} with constant proximal
parameters.
\end{remark}

The following example, taken from \cite{Smms05}, models linear
inverse problems in which the prior knowledge is modeled by
penalizing the coefficients of the decomposition of the ideal
solution in an orthonormal basis (see \cite{MaPa18,Daub04,Figu03}
for special cases).

\begin{example}
\label{ex:52'}
Suppose that $\HH$ is separable, let $(e_k)_{k\in\KK\subset\NN}$ be
an orthonormal basis of $\HH$, let $y\in\GG$, suppose that $0\neq
L\in\BL(\HH,\GG)$, and let $(\phi_k)_{k\in\KK}$ be functions in
$\Gamma_0(\RR)$ such that $(\forall k\in\KK)$ $\phi_k\geq
0=\phi_k(0)$. Suppose that the set $Z$ of solutions to the problem
\begin{equation}
\label{e:p52'}
\minimize{x\in\HH}{\sum_{k\in\KK}\phi_k\bigl(\scal{x}{e_k}\bigr)+
\dfrac{1}{2}\|Lx-y\|^2}
\end{equation}
is not empty. Let $x_0\in\HH$, let 
$\varepsilon\in\left]0,1/(\|L\|^2+1)\right[$,
let $(\gamma_n)_{n\in\NN}$ be a sequence in
$\left[\varepsilon,(2-\varepsilon)/\|L\|^2\right]$, and suppose 
that 
\begin{equation}
\label{e:aqp66}
(\forall n\in\NN)\quad\varepsilon\leq\mu_n\leq
(1-\varepsilon)\dfrac{4-\|L\|^2\gamma_n}{2}. 
\end{equation}
Iterate
\begin{equation}
\label{e:52'}
\begin{array}{l}
\text{for}\;n=0,1,\ldots\\
\left\lfloor
\begin{array}{l}
b_n^*=\gamma_nL^*(Lx_n-y)\\
w_n=\sum_{k\in\KK}\bigl(\prox_{\gamma_n\phi_k}
\scal{x_n-b_n^*}{e_k}\bigr)e_k\\
x_{n+1}=x_n+\mu_n(w_n-x_n).
\end{array}
\right.\\
\end{array}
\end{equation}
Then $(x_n)_{n\in\NN}$ converges weakly to a point in $Z$.
\end{example}
\begin{proof}
Set $f\colon x\mapsto\sum_{k\in\KK}\phi_k(\scal{x}{e_k})$ and 
$g\colon x\mapsto \|Lx-y\|^2/2$. Then, as shown in 
\cite[Example~2.19]{Smms05}, $f\in\Gamma_0(\HH)$
and $\prox_{\gamma f}\colon x\mapsto
\sum_{k\in\KK}(\prox_{\gamma_n\phi_k}\scal{x}{e_k})e_k$.
On the other hand, $g$ is convex and differentiable and 
$\nabla g\colon x\mapsto L^*(Lx-y)$ is $\|L\|^2$-Lipschitzian.
Altogether, the conclusion follows from Example~\ref{ex:52}.
\end{proof}

Next, we specialize Example~\ref{ex:52} to the gradient-projection
method, which minimizes a smooth function over a convex set 
(see Example~\ref{prob:50}) and goes back to \cite{Gold64,Levi66}.

\begin{example}
\label{ex:53}
Let $\beta\in\RPP$, let $C$ be a nonempty closed convex subset of
$\HH$, and let $g\colon\HH\to\RR$ be convex and differentiable.
Suppose that $\nabla g$ is $\beta$-Lipschitzian and that the 
set $Z$ of solutions to the problem 
\begin{equation}
\minimize{x\in C}{g(x)} 
\end{equation}
is not empty, and let $Z^*$ be the set of solutions to the dual 
problem 
\begin{equation}
\minimize{x^*\in\HH}{\sigma(-x^*)+g^*(x^*)}.
\end{equation}
Let $x_0\in\HH$, let $\varepsilon\in\left]0,1/(\beta+1)\right[$,
let $(\gamma_n)_{n\in\NN}$ be a sequence in
$\left[\varepsilon,(2-\varepsilon)/\beta\right]$, and suppose 
that $(\mu_n)_{n\in\NN}$ satisfies \eqref{e:aqp65}. Iterate
\begin{equation}
\label{e:algo543}
\begin{array}{l}
\text{for}\;n=0,1,\ldots\\
\left\lfloor
\begin{array}{l}
b_n^*=\gamma_n\nabla g(x_n)\\
w_n=\proj_C(x_n-b_n^*)\\
x_{n+1}=x_n+\mu_n(w_n-x_n).
\end{array}
\right.\\
\end{array}
\end{equation}
Then $(x_n)_{n\in\NN}$ converges weakly to a point in $Z$ and 
$(\nabla g(x_n))_{n\in\NN}$ converges strongly to the unique point
in $Z^*$.
\end{example}
\begin{proof}
Set $f=\iota_C$ in Example~\ref{ex:52}. Alternatively, set
$B=\nabla g$ in Example~\ref{ex:51}.
\end{proof}

\begin{remark}
\label{e:}
In \cite{Atto18}, the backward-forward iterations  
\begin{equation}
\label{e:bf}
\begin{array}{l}
\text{for}\;n=0,1,\ldots\\
\left\lfloor
\begin{array}{l}
p_n=J_{\gamma A}x_n\\
q_n=p_n-\gamma Bp_n\\
x_{n+1}=x_n+\mu_n(q_n-x_n)
\end{array}
\right.\\
\end{array}
\end{equation}
are studied and shown to be related to the forward-backward
iterations applied to Yosida envelopes of $B$ and $A$.
\end{remark}

\subsection{Haugazeau-like algorithm}
\label{sec:fbs}
As seen in \cite[Remark~5.12]{Smms05}, the strong convergence of 
$(x_n)_{n\in\NN}$ in Theorem~\ref{t:5}\ref{t:5i} may fail. Item
\ref{t:5si} below on the strong convergence of a best approximation
forward-backward algorithm extends \cite[Theorem~5.6(i) and
Remark~5.5]{Jnca05}, where $(\forall n\in\NN)$
$\gamma_n=\gamma\in\left]0,2\alpha\right[$ and $\mu_n\leq 1$.

\begin{theorem}
\label{t:5s}
Let $\alpha\in\RPP$, let $A\colon\HH\to 2^{\HH}$ be maximally
monotone, and let $B\colon\HH\to\HH$ be $\alpha$-cocoercive. Let 
$\varepsilon\in\left]0,\min\{1/2,2\alpha\}\right[$,
let $(\gamma_n)_{n\in\NN}$ be a sequence in
$\left[\varepsilon,2\alpha\right]$, and let 
\begin{equation}
\label{e:aqp64}
(\forall n\in\NN)\quad\varepsilon\leq\mu_n\leq
\dfrac{4\alpha-\gamma_n}{4\alpha}.
\end{equation}
Suppose that the set $Z$ of solutions to the problem 
\begin{equation}
\label{e:5ps}
\text{find}\;\;x\in\HH\;\;\text{such that}\;\;0\in Ax+Bx
\end{equation}
is not empty and let $Z^*$ be the set of solutions to the dual
\begin{equation}
\label{e:5ds}
\text{find}\;\;x^*\in\HH\;\;\text{such that}\;\;
0\in -A^{-1}(-x^*)+B^{-1}x^*.
\end{equation}
Let $x_0\in\HH$ and iterate
\begin{equation}
\label{e:algo5s}
\begin{array}{l}
\text{for}\;n=0,1,\ldots\\
\left\lfloor
\begin{array}{l}
b_n^*=\gamma_nBx_n\\
w_n=J_{\gamma_nA}(x_n-b_n^*)\\
x_{n+1}=\Qq\bigl(x_0,x_n,x_n+\mu_n(w_n-x_n)\bigr),
\end{array}
\right.\\
\end{array}
\end{equation}
where $\Qq$ is defined in Lemma~\ref{l:2}. 
Then the following hold:
\begin{enumerate}
\item
\label{t:5si}
$(x_n)_{n\in\NN}$ converges strongly to $\proj_Zx_0$.
\item
\label{t:5si+}
$Z^*$ contains a single point $\overline{x}^*$ and
$(Bx_n)_{n\in\NN}$ converges strongly to $\overline{x}^*$.
\end{enumerate}
\end{theorem}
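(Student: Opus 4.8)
The plan is to run the same reduction used for the weakly convergent Theorem~\ref{t:5}, but to feed it into the Haugazeau-like engine of Theorem~\ref{t:8s} in place of the Fej\'erian one. Concretely, I would take the data of \eqref{e:aqp29}, i.e.\ $W=A$, $C=B$, and, for every $n\in\NN$, $U_n=\gamma_n^{-1}\Id-B$ and $q_n=x_n$. Then $J_{W+C}^{U_n}=J_{\gamma_nA}\circ(\Id-\gamma_nB)$, the range and injectivity requirements \eqref{e:aqp27} hold by \cite[Proposition~3.9]{Jmaa20}, and the internal variables of \eqref{e:haug8} reduce exactly to the expressions in \eqref{e:aqp30}: $w_n=J_{\gamma_nA}(x_n-\gamma_nBx_n)$, $t_n^*=(x_n-w_n)/\gamma_n$, and $\delta_n=(1/\gamma_n-1/(4\alpha))\|w_n-x_n\|^2$. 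By Lemma~\ref{l:warp}\ref{l:warpi} one also has $(w_n,w_n^*)\in\gra A$, a fact that will be needed for the dual statement.

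The decisive bookkeeping step is to recognize \eqref{e:algo5s} as a particular case of \eqref{e:haug8}. Putting $\lambda_n=4\alpha\mu_n/(4\alpha-\gamma_n)$, a short computation from \eqref{e:aqp30} gives $d_n=((4\alpha-\gamma_n)/(4\alpha))(x_n-w_n)$, so that $x_n-\lambda_nd_n=x_n+\mu_n(w_n-x_n)$ and the inner point $r_n$ of \eqref{e:haug8} is precisely the middle argument of $\Qq$ in \eqref{e:algo5s}. The crux---and essentially the only real departure from Theorem~\ref{t:5}---is checking that this $\lambda_n$ lands in $\rzeroun$: the tightened ceiling \eqref{e:aqp64}, $\mu_n\leq(4\alpha-\gamma_n)/(4\alpha)$, is exactly what forces $\lambda_n\leq 1$, while $\mu_n\geq\varepsilon$ together with $\gamma_n\leq2\alpha$ gives $\lambda_n\geq\varepsilon$. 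Thus $\lambda_n\in[\varepsilon,1]$, the admissible range for the Haugazeau scheme, which is also why the relaxation window here is strictly smaller than in the Fej\'erian version. I expect the inequality $\lambda_n\leq1$ to be the main obstacle.

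With this identification in hand, Theorem~\ref{t:8s}\ref{t:8si+} yields $\sum_{n\in\NN}\lambda_n^2\|d_n\|^2<\pinf$, and since $\inf_{n\in\NN}\lambda_n\geq\varepsilon$ condition \ref{t:8siid} holds and $d_n\to0$; as $(4\alpha-\gamma_n)/(4\alpha)\geq1/2$, this forces $w_n-x_n\to0$. To verify condition \ref{t:8siia}, I would note that $\alpha B$ is firmly nonexpansive, so $\Id-2\alpha B$ is nonexpansive by \eqref{e:ne} and hence $\gamma_nU_n=\Id-\gamma_nB$, being a convex combination of $\Id$ and $\Id-2\alpha B$, is nonexpansive throughout $[\varepsilon,2\alpha]$ (cf.\ Lemma~\ref{l:20}\ref{l:20iii}). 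Consequently $\|U_nw_n-U_nx_n\|\leq\|w_n-x_n\|/\varepsilon\to0$, and $w_n-q_n=w_n-x_n\to0$, so \ref{t:8siia} is met and Theorem~\ref{t:8s}\ref{t:8sii} delivers $x_n\to\proj_Zx_0$, which is \ref{t:5si}.

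Finally, \ref{t:5si+} transfers essentially verbatim from the proof of Theorem~\ref{t:5}\ref{t:5i+}--\ref{t:5ii}. Since $B$ is $\alpha$-cocoercive, $B^{-1}$ is $\alpha$-strongly monotone, whence $-A^{-1}\circ(-\Id)+B^{-1}$ is strongly monotone and \eqref{e:5ds} has a unique solution $\overline{x}^*$ by \cite[Corollary~23.37(ii)]{Livre1}; moreover $Bz=\overline{x}^*$ for every $z\in Z$. Combining the cocoercivity estimate \eqref{e:sc33} (legitimate because $(w_n,w_n^*)\in\gra A$) with the boundedness of $(x_n)_{n\in\NN}$ and $(w_n)_{n\in\NN}$, with $w_n-x_n\to0$, and with $t_n^*=(x_n-w_n)/\gamma_n\to0$ then yields $\alpha\|Bx_n-Bz\|^2\to0$, i.e.\ $Bx_n\to Bz=\overline{x}^*$, completing \ref{t:5si+}. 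Once the relaxation parameter is pinned inside $\rzeroun$, everything else transfers mechanically from the Fej\'erian analysis.
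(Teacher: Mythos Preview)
Your proposal is correct and follows essentially the same reduction to Theorem~\ref{t:8s} that the paper uses, with the same kernels $U_n=\gamma_n^{-1}\Id-B$, $q_n=x_n$, and relaxation parameters $\lambda_n=4\alpha\mu_n/(4\alpha-\gamma_n)$. For \ref{t:5si+} the paper takes a shorter path: once $x_n\to\proj_Zx_0$ strongly, continuity of $B$ (it is $1/\alpha$-Lipschitzian) together with Theorem~\ref{t:5}\ref{t:5i+} gives $Bx_n\to B(\proj_Zx_0)=\overline{x}^*$ directly, so there is no need to revisit the cocoercivity estimate \eqref{e:sc33}.
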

\begin{proof}
We apply Theorem~\ref{t:8s} in the setting of \eqref{e:aqp29},
using the same variables as in \eqref{e:aqp30} and
$(\lambda_n)_{n\in\NN}$ defined as in \eqref{e:aqp34}. Then
\eqref{e:aqp37} holds and
\begin{equation}
\label{e:aqp40}
(\forall n\in\NN)\quad 
\varepsilon\leq\dfrac{4\alpha\varepsilon}{4\alpha-\varepsilon}
\leq\lambda_n\leq 1.
\end{equation}
Therefore the sequence $(x_n)_{n\in\NN}$ produced by
\eqref{e:algo5s} coincides with that of \eqref{e:haug8}. Hence, by
Theorem~\ref{t:8s}\ref{t:8si+},
\begin{equation}
\label{e:aqp41}
w_n-x_n\to 0.
\end{equation}

\ref{t:5si}:
This follows from Theorem~\ref{t:8s}\ref{t:8sii} since,
as in the proof of Theorem~\ref{t:5}\ref{t:5i}, its
conditions \ref{t:8siid} and \ref{t:8siib} are fulfilled.

\ref{t:5si+}:
Since $B$ is continuous, \ref{t:5si} and
Theorem~\ref{t:5}\ref{t:5i+} imply that 
$Bx_n\to B(\proj_Zx_0)\in Z^*$, where $Z^*$ is a singleton. 
\end{proof}

\subsection{Special cases and variants}

\subsubsection{Projected Landweber method}

In inverse problems, constrained least-squares estimation has a
long history \cite{Benn18,Bert97,Eick92,Neub88,Phil62}. We address
the numerical solution of this problem from the viewpoint of the
forward-backward algorithm to obtain a relaxed version of the
projected Landweber method with iteration-dependent parameters.

\begin{proposition}
\label{p:541}
Let $\GG$ be a real Hilbert space, suppose that
$0\neq L\in\BL(\HH,\GG)$, let $y\in\GG$, and let $C$ be a closed
convex subset of $\HH$ such that the set $Z$ of solutions to the
problem 
\begin{equation}
\label{e:541}
\minimize{x\in C}{\dfrac{1}{2}\|Lx-y\|^2}
\end{equation}
is not empty. Let $x_0\in\HH$, let 
$\varepsilon\in\left]0,1/(\|L\|^2+1)\right[$,
let $(\gamma_n)_{n\in\NN}$ be a sequence in
$\left[\varepsilon,(2-\varepsilon)/\|L\|^2\right]$,
and suppose that $(\mu_n)_{n\in\NN}$ satisfies \eqref{e:aqp66}.
Iterate
\begin{equation}
\label{e:algo541}
\begin{array}{l}
\text{for}\;n=0,1,\ldots\\
\left\lfloor
\begin{array}{l}
b_n^*=\gamma_nL^*(Lx_n-y)\\
w_n=\proj_C(x_n-b_n^*)\\
x_{n+1}=x_n+\mu_n(w_n-x_n).
\end{array}
\right.\\
\end{array}
\end{equation}
Then $(x_n)_{n\in\NN}$ converges weakly to a point in $Z$.
\end{proposition}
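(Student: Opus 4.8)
The plan is to recognize the least-squares problem \eqref{e:541} as an instance of the gradient-projection setting of Example~\ref{ex:53}. First I would introduce $g\colon\HH\to\RR\colon x\mapsto\tfrac{1}{2}\|Lx-y\|^2$ and observe that $g$ is convex and Fr\'echet differentiable with $\nabla g\colon x\mapsto L^*(Lx-y)$, so that \eqref{e:541} is exactly the problem of minimizing $g$ over the closed convex set $C$, whose solution set $Z$ is assumed nonempty.

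Next I would pin down the Lipschitz constant of $\nabla g$. For all $x$ and $x'$ in $\HH$ we have $\nabla g(x)-\nabla g(x')=L^*L(x-x')$, whence $\|\nabla g(x)-\nabla g(x')\|\leq\|L^*L\|\,\|x-x'\|=\|L\|^2\|x-x'\|$, using the identity $\|L^*L\|=\|L\|^2$. Thus $\nabla g$ is $\beta$-Lipschitzian with $\beta=\|L\|^2$, which is the only quantitative input that Example~\ref{ex:53} requires.

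With this value of $\beta$, the standing hypotheses of Proposition~\ref{p:541} translate verbatim into those of Example~\ref{ex:53}: the interval $\left]0,1/(\|L\|^2+1)\right[$ for $\varepsilon$ is $\left]0,1/(\beta+1)\right[$; the admissible range $[\varepsilon,(2-\varepsilon)/\|L\|^2]$ for $(\gamma_n)_{n\in\NN}$ is $[\varepsilon,(2-\varepsilon)/\beta]$; and the relaxation condition \eqref{e:aqp66} is precisely \eqref{e:aqp65}. Moreover, since $b_n^*=\gamma_nL^*(Lx_n-y)=\gamma_n\nabla g(x_n)$ and $w_n=\proj_C(x_n-b_n^*)$, the recursion \eqref{e:algo541} coincides term by term with \eqref{e:algo543}. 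Invoking Example~\ref{ex:53} then delivers the weak convergence of $(x_n)_{n\in\NN}$ to a point in $Z$, which is the claim (here we only retain the primal conclusion).

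Because every hypothesis of Example~\ref{ex:53} is satisfied and the two iterations are literally identical, there is no genuine obstacle in this argument. The only points that warrant a line of care are the computation $\beta=\|L\|^2$ via $\|L^*L\|=\|L\|^2$, and the verification that the nonemptiness of $Z$ posited in Proposition~\ref{p:541} is exactly the nonemptiness hypothesis needed by Example~\ref{ex:53}. (Alternatively, one could set $B=\nabla g$ and route through Example~\ref{ex:51} after invoking the Baillon--Haddad theorem, Lemma~\ref{l:bh}, to obtain the $1/\|L\|^2$-cocoercivity of $\nabla g$; the direct appeal to Example~\ref{ex:53} is shorter.)
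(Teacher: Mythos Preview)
Your proposal is correct and matches the paper's approach exactly: the paper's proof is the single line ``Apply Example~\ref{ex:53} with $g\colon x\mapsto\|Lx-y\|^2/2$,'' and your verification of the Lipschitz constant $\beta=\|L\|^2$ and the alignment of hypotheses simply spells out what that one-line appeal entails.
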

\begin{proof}
Apply Example~\ref{ex:53} with $g\colon x\mapsto\|Lx-y\|^2/2$.
\end{proof}

Proposition~\ref{p:541} was established in
\cite[Section~3.1]{Eick92} with $(\forall n\in\NN)$ $\lambda_n=1$
and $\gamma_n=\gamma\in\left]0,2/\|L\|^2\right[$. There, it was
also conjectured that the convergence was strong, which was
disproved in \cite[Remark~5.12]{Smms05}. This motivates the
following result.

\begin{proposition}
\label{p:541s}
Let $\GG$ be a real Hilbert space, suppose that
$0\neq L\in\BL(\HH,\GG)$, let $y\in\GG$, let $C$ be a closed
convex subset of $\HH$, and suppose that the set $Z$ of solutions
to \eqref{e:541} is not empty. Let $x_0\in\HH$, let 
$\varepsilon\in\left]0,\min\{1/2,2/\|L\|^2\})\right[$,
let $(\gamma_n)_{n\in\NN}$ be a sequence in
$\left[\varepsilon,2/\|L\|^2\right]$, and suppose that
$(\forall n\in\NN)$ $\varepsilon\leq\mu_n\leq 1-\|L\|^2\gamma_n/4$.
Iterate
\begin{equation}
\label{e:algo542s}
\begin{array}{l}
\text{for}\;n=0,1,\ldots\\
\left\lfloor
\begin{array}{l}
b_n^*=\gamma_nL^*(Lx_n-y)\\
w_n=\proj_C(x_n-b_n^*)\\
x_{n+1}=\Qq\bigl(x_0,x_n,x_n+\mu_n(w_n-x_n)\bigr),
\end{array}
\right.\\
\end{array}
\end{equation}
where $\Qq$ is defined in Lemma~\ref{l:2}\ref{l:2ii}. Then
$(x_n)_{n\in\NN}$ converges strongly to $\proj_Zx_0$.
\end{proposition}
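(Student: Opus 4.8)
The plan is to recognize \eqref{e:algo542s} as a realization of the Haugazeau-like forward-backward recursion \eqref{e:algo5s} and to conclude by Theorem~\ref{t:5s}\ref{t:5si}. This is simply the strongly convergent companion of Proposition~\ref{p:541}, obtained in the same way that the latter was derived from Example~\ref{ex:53}, but routing through Theorem~\ref{t:5s} in place of Theorem~\ref{t:5}.

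First I would set $g\colon\HH\to\RR\colon x\mapsto\|Lx-y\|^2/2$ and $B=\nabla g\colon x\mapsto L^*(Lx-y)$. Since $\nabla g$ is $\|L\|^2$-Lipschitzian, Lemma~\ref{l:bh} (Baillon--Haddad) guarantees that $B$ is $\alpha$-cocoercive with $\alpha=1/\|L\|^2$. I would then take $A=N_C=\partial\iota_C$, which is maximally monotone by Example~\ref{ex:2}, and record from Example~\ref{ex:r5} and Example~\ref{ex:r6} that $J_{\gamma_nA}=\prox_{\gamma_n\iota_C}=\proj_C$ for every $n\in\NN$ (since $\gamma_n\iota_C=\iota_C$). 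With these identifications $b_n^*=\gamma_nBx_n=\gamma_nL^*(Lx_n-y)$ and $w_n=J_{\gamma_nA}(x_n-b_n^*)=\proj_C(x_n-b_n^*)$, so the three lines of \eqref{e:algo542s} are term-for-term those of \eqref{e:algo5s}.

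Next I would match the hypotheses. Substituting $\alpha=1/\|L\|^2$ into the standing conditions of Theorem~\ref{t:5s} turns $\varepsilon\in\left]0,\min\{1/2,2\alpha\}\right[$ into $\varepsilon\in\left]0,\min\{1/2,2/\|L\|^2\}\right[$, the admissible range $[\varepsilon,2\alpha]$ for $(\gamma_n)_{n\in\NN}$ into $[\varepsilon,2/\|L\|^2]$, and the bound \eqref{e:aqp64} into $\varepsilon\leq\mu_n\leq 1-\|L\|^2\gamma_n/4$; these are exactly the hypotheses imposed in Proposition~\ref{p:541s}. It remains to verify that $Z=\zer(A+B)$: since $\dom g=\HH$, the subdifferential sum rule yields $\partial(\iota_C+g)=N_C+\nabla g=A+B$, whence Fermat's rule (Example~\ref{ex:1}) gives $Z=\Argmin(\iota_C+g)=\zer\partial(\iota_C+g)=\zer(A+B)$, which is nonempty by assumption. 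Theorem~\ref{t:5s}\ref{t:5si} then delivers $x_n\to\proj_Zx_0$.

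There is no genuine obstacle here: the argument is a verification that the two recursions and their parameter windows coincide under the cocoercivity constant $\alpha=1/\|L\|^2$. The only point demanding a line of care is the identification $Z=\zer(A+B)$, i.e.\ that the constrained least-squares minimizers are precisely the zeros of $N_C+\nabla g$; this is immediate from the everywhere-differentiability of $g$ and Fermat's rule, and is the same identification already used implicitly in Example~\ref{ex:53} and Proposition~\ref{p:541}.
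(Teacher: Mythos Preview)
Your proposal is correct and follows essentially the same approach as the paper: apply Theorem~\ref{t:5s}\ref{t:5si} with $A=N_C$ and $B\colon x\mapsto L^*(Lx-y)$, using Example~\ref{ex:r6} for the resolvent identification and the Baillon--Haddad cocoercivity constant $\alpha=1/\|L\|^2$ to match the parameter ranges.
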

\begin{proof}
Follow the pattern of the proof of Proposition~\ref{p:541} and
use Example~\ref{ex:r6} to apply Theorem~\ref{t:5s}\ref{t:5si} with
$A=N_C$ and $B\colon x\mapsto L^*(Lx-y)$.
\end{proof}

Here is an application of Proposition~\ref{p:541} to the problem of
finding the best approximation to a point from a linearly
transformed convex set.

\begin{example}
\label{ex:39}
Consider the setting of Proposition~\ref{p:541} with the 
assumption that $L(C)$ is closed, which guarantees that
\eqref{e:541} admits solutions. Then $x_n\weakly x$, where $x$
solves \eqref{e:541}. Furthermore, if we set $p=Lx$, then
$p=\proj_{L(C)}y$. Hence, upon rewriting \eqref{e:algo541} as
\begin{equation}
\label{e:algo39}
\begin{array}{l}
\text{for}\;n=0,1,\ldots\\
\left\lfloor
\begin{array}{l}
q_n=Lx_n\\
b_n^*=\gamma_nL^*(q_n-y)\\
w_n=\proj_C(x_n-b_n^*)\\
x_{n+1}=x_n+\mu_n(w_n-x_n)
\end{array}
\right.\\
\end{array}
\end{equation}
and invoking the weak continuity of $L$, we conclude that
$q_n\weakly\proj_{L(C)}y$. 
\end{example}

\begin{example}
\label{ex:39'}
Let $\GG$ be a real Hilbert space, and suppose that
$0\neq L\in\BL(\HH,\GG)$ and that $\ran L$ is closed. Additionally,
let $x_0\in\HH$, let 
$\varepsilon\in\left]0,1/(\|L\|^2+1)\right[$,
and let $(\nu_n)_{n\in\NN}$ be a sequence in
$\left[\varepsilon,(2-\varepsilon)/\|L\|^2\right]$. Iterate
\begin{equation}
\label{e:algo39'}
\begin{array}{l}
\text{for}\;n=0,1,\ldots\\
\left\lfloor
\begin{array}{l}
q_n=Lx_n\\
x_{n+1}=x_n-\nu_nL^*q_n
\end{array}
\right.\\
\end{array}
\end{equation}
and let $q$ be the minimal-norm element of $\ran L$. Then
$q_n\weakly q$. 
\end{example}
\begin{proof}
Apply Example~\ref{ex:39} with $C=\HH$ and $y=0$.
\end{proof}

The next example is about a composite best approximation problem.

\begin{example}
\label{ex:542}
Let $\GG$ be a real Hilbert space, let $y\in\GG$, and let
$0<p\in\NN$. For every 
$k\in\{1,\ldots,p\}$, let $\HH_k$ be a real Hilbert space, let 
$C_k$ be a nonempty closed convex subset of $\HH_k$, let 
$0\neq L_k\in\BL(\HH_k,\GG)$, and let $x_{k,0}\in\HH_k$. Suppose
that $\sum_{k=1}^pL_k(C_k)$ is closed and set
$\beta=\sum_{k=1}^p\|L_k\|^2$. Furthermore, let 
$\varepsilon\in\left]0,1/(\beta+1)\right[$,
let $(\gamma_n)_{n\in\NN}$ be a sequence in
$\left[\varepsilon,(2-\varepsilon)/\beta\right]$, 
and suppose that $(\mu_n)_{n\in\NN}$ satisfies \eqref{e:aqp65}.
Iterate
\begin{equation}
\label{e:algo542}
\begin{array}{l}
\text{for}\;n=0,1,\ldots\\
\left\lfloor
\begin{array}{l}
q_n=\sum_{k=1}^pL_kx_{k,n}\\
\text{for}\;k=1,\ldots,p\\
\left\lfloor
\begin{array}{l}
b_{k,n}^*=\gamma_nL_k^*(q_n-y)\\
w_{k,n}=\proj_{C_k}(x_{k,n}-b_{k,n}^*)\\
x_{k,n+1}=x_{k,n}+\mu_n(w_{k,n}-x_{k,n}).
\end{array}
\right.\\
\end{array}
\right.\\
\end{array}
\end{equation}
Then $q_n\weakly\proj_{\sum_{k=1}^pL_k(C_k)}y$. 
\end{example}
\begin{proof}
Set $\HHH=\HH_1\oplus\cdots\oplus\HH_p$, 
$\boldsymbol{C}=C_1\times\cdots\times C_p$, and 
\begin{equation}
\boldsymbol{L}\colon\HHH\to\GG\colon(x_k)_{1\leq k\leq p}
\mapsto\sum_{k=1}^pL_kx_k.
\end{equation}
Then $\proj_{\boldsymbol{C}}\colon(x_k)_{1\leq k\leq p}\mapsto
(\proj_{C_k}x_k)_{1\leq k\leq p}$ (see Examples~\ref{ex:r6} and
\ref{ex:r1}), $\|L\|^2=\beta$, and 
$\boldsymbol{L}^*\colon\GG\to\HHH\colon
y^*\mapsto(L_1^*y^*,\ldots,L_p^*y^*)$.
Altogether, the result is an application of Example~\ref{ex:39} to 
$\boldsymbol{C}$ and $\boldsymbol{L}$ in $\HHH$.
\end{proof}

As an application of Example~\ref{ex:542}, we address the problem
of computing the best approximation from the Minkowski sum of
closed convex sets; see 
\cite{Minh19,Eave84,Mart94,Qinx19,Seeg98,Wang20,Wonj19}
for instances of decompositions with respect to such sums.

\begin{example}
\label{ex:546}
Let $z\in\HH$ and $0<p\in\NN$. For every $k\in\{1,\ldots,p\}$, let
$C_k$ be a nonempty closed convex subset of $\HH$ and let 
$x_{k,0}\in\HH$. Suppose that $\sum_{k=1}^pC_k$ is closed,
let $\varepsilon\in\left]0,1/(p+1)\right[$,
let $(\gamma_n)_{n\in\NN}$ be a sequence in
$\left[\varepsilon,(2-\varepsilon)/p\right]$,
and suppose that $(\forall n\in\NN)$
$\varepsilon\leq\mu_n\leq (1-\varepsilon)(2-p\gamma_n/2)$.
Iterate
\begin{equation}
\label{e:algo545}
\begin{array}{l}
\text{for}\;n=0,1,\ldots\\
\left\lfloor
\begin{array}{l}
q_n=\sum_{k=1}^px_{k,n}\\
b_n^*=\gamma_n(q_n-z)\\
\text{for}\;k=1,\ldots,p\\
\left\lfloor
\begin{array}{l}
w_{k,n}=\proj_{C_k}(x_{k,n}-b_n^*)\\
x_{k,n+1}=x_{k,n}+\mu_n(w_{k,n}-x_{k,n}).
\end{array}
\right.\\
\end{array}
\right.\\
\end{array}
\end{equation}
Then $q_n\weakly\proj_{\sum_{k=1}^pC_k}z$. 
\end{example}
\begin{proof}
Apply Example~\ref{ex:542} with $\GG=\HH$, $y=z$, and 
$(\forall k\in\{1,\ldots,p\})$ $\HH_k=\HH$ and $L_k=\Id$.
\end{proof}

\subsubsection{Partial Yosida approximation to inconsistent common
zero problems}

We extend a framework proposed in \cite[Section~6.3]{Opti04}, where
no linear transformations were present. We start with the following
composite common zero problem (see \cite{Byrn12} for a
special case).

\begin{problem}
\label{prob:71}
Let $A\colon\HH\to 2^{\HH}$ be maximally monotone and let
$0<p\in\NN$. For every $k\in\{1,\ldots,p\}$, let $\GG_k$ be a real 
Hilbert space, let $B_k\colon\GG_k\to 2^{\GG_k}$ be
maximally monotone, and suppose that $0\neq L_k\in\BL(\HH,\GG_k)$. 
The objective is to 
\begin{equation}
\label{e:p71}
\text{find}\;\;x\in\zer A\;\;\text{such that}\;\;
\bigl(\forall k\in\{1,\ldots,p\}\bigr)\;\;L_kx\in\zer B_k.
\end{equation}
\end{problem}

\begin{example}
\label{ex:62}
Suppose that, in Problem~\ref{prob:71}, $A=N_C$, where $C$ is a
nonempty closed convex subset of $\HH$, and, for every
$k\in\{1,\ldots,p\}$, $B_k=N_{D_k}$, where $D_k$ is a nonempty
closed convex subset of $\GG_k$. Then \eqref{e:p71} is the
\emph{split feasibility problem} \cite{Reic20} 
\begin{equation}
\label{e:p62}
\text{find}\;\:x\in C\;\:\text{such that}\;\:
\bigl(\forall k\in\{1,\ldots,p\}\bigr)\quad L_kx\in D_k.
\end{equation}
\end{example}

\begin{example}
\label{ex:61}
Suppose that, in Problem~\ref{prob:71}, $A=\partial f$, where
$f\in\Gamma_0(\HH)$, and, for every
$k\in\{1,\ldots,p\}$, $\GG_k=\HH$, $L_k=\Id$, and 
$B_k=\partial f_k$, where $f_k\in\Gamma_0(\HH)$. 
Then \eqref{e:p71} becomes
\begin{equation}
\label{e:p61}
\text{find}\;\:x\in\bigl(\Argmin f\bigr)
\cap\bigcap_{k=1}^p\Argmin f_k.
\end{equation}
\end{example}

\begin{example}
\label{ex:63}
Suppose that, in Problem~\ref{prob:71}, $A=N_C$, where $C$ is a
nonempty closed convex subset of $\HH$, and, for every
$k\in\{1,\ldots,p\}$, $B_k=(\Id-F_k+r_k)^{-1}-\Id$, where 
$F_k\colon\GG_k\to\GG_k$ is firmly nonexpansive and $r_k\in\GG_k$.
Then \eqref{e:p71} becomes
\begin{equation}
\label{e:p63}
\text{find}\;\:x\in C\;\:\text{such that}\;\:
\bigl(\forall k\in\{1,\ldots,p\}\bigr)\quad F_k(L_kx)=r_k.
\end{equation}
Note that the operators $(\Id-F_k+r_k)_{1\leq k\leq p}$ are firmly
nonexpansive as well, which makes the operators 
$(B_k)_{1\leq k\leq p}$ maximally monotone by 
Lemma~\ref{l:0}\ref{l:0iii}. This formulation was investigated in
\cite{Siim22} in the context of recovering a signal in $C$ from $p$
nonlinear observations modeled as outputs of Wiener systems (see
also Example~\ref{ex:z23}).
\end{example}

Our focus here is on situations in which \eqref{e:p71} is not
guaranteed to have solutions (see
\cite{Cens18,Sign99,Siim19,Gold85} for concrete illustrations). In
such environments, it is natural to approximate it by a more
general problem, which exhibits better regularity properties and
admits solutions. We propose the following relaxation of
Problem~\ref{prob:71}, in which $\dom A$ serves as a hard
constraint.

\begin{problem}
\label{prob:72}
Consider the setting of Problem~\ref{prob:71} and let 
$(\rho_k)_{1\leq k\leq p}$ and $(\omega_k)_{1\leq k\leq p}$ be in
$\RPP$. The objective is to solve the \emph{partial Yosida
approximation} 
\begin{equation}
\label{e:p72}
\text{find}\;\;x\in\HH\;\;\text{such that}\;\;
0\in Ax+\sum_{k=1}^p\omega_kL_k^*\big(\moyo{B_k}{\rho_k}(L_kx)\big)
\end{equation}
to Problem~\ref{prob:71}.
\end{problem}

The fact that Problem~\ref{prob:72} is an appropriate relaxation of
Problem~\ref{prob:71} is supported by the following argument.

\begin{proposition}
\label{p:56}
Suppose that the set of solutions to Problem~\ref{prob:71} is
not empty. Then it coincides with the set of solutions to 
Problem~\ref{prob:72}.
\end{proposition}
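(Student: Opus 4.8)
The plan is to denote by $S$ the solution set of Problem~\ref{prob:71} and by $Z$ that of Problem~\ref{prob:72}, and to establish the two inclusions $S\subset Z$ and $Z\subset S$ separately. Throughout I would lean on the two basic properties of the Yosida approximation recorded in Example~\ref{ex:13y}: for every $k\in\{1,\ldots,p\}$ the operator $\moyo{B_k}{\rho_k}$ is single-valued and $\rho_k$-cocoercive, and $\zer\moyo{B_k}{\rho_k}=\zer B_k$ (see also \eqref{e:yosi2}).

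The inclusion $S\subset Z$ is immediate and does not even use the nonemptiness hypothesis. Indeed, if $x\in S$, then $L_kx\in\zer B_k=\zer\moyo{B_k}{\rho_k}$ for every $k$, so that $\moyo{B_k}{\rho_k}(L_kx)=0$ and hence $\sum_{k=1}^p\omega_kL_k^*\big(\moyo{B_k}{\rho_k}(L_kx)\big)=0$; combining this with $0\in Ax$ yields at once $0\in Ax+\sum_{k=1}^p\omega_kL_k^*\big(\moyo{B_k}{\rho_k}(L_kx)\big)$, i.e., $x\in Z$.

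For the reverse inclusion I would fix $x\in Z$ and, crucially, invoke the hypothesis $S\neq\emp$ to pick a reference solution $s\in S$. Writing the membership $x\in Z$ as the existence of $a^*\in Ax$ with $a^*=-\sum_{k=1}^p\omega_kL_k^*\big(\moyo{B_k}{\rho_k}(L_kx)\big)$, and using $(s,0)\in\gra A$ together with the monotonicity of $A$, I would obtain $\scal{x-s}{a^*}\geq 0$, which rearranges to $\sum_{k=1}^p\omega_k\scal{L_kx-L_ks}{\moyo{B_k}{\rho_k}(L_kx)}\leq 0$. On the other hand, since $\moyo{B_k}{\rho_k}(L_ks)=0$, the $\rho_k$-cocoercivity \eqref{e:coco} of $\moyo{B_k}{\rho_k}$ gives $\scal{L_kx-L_ks}{\moyo{B_k}{\rho_k}(L_kx)}\geq\rho_k\|\moyo{B_k}{\rho_k}(L_kx)\|^2\geq 0$ for every $k$. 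As the weights $\omega_k$ are strictly positive, a sum of nonnegative terms that is $\leq 0$ must vanish termwise, forcing $\moyo{B_k}{\rho_k}(L_kx)=0$, i.e.\ $L_kx\in\zer B_k$, for every $k$. Feeding this back into the expression for $a^*$ gives $a^*=0$, so $0\in Ax$, and therefore $x\in S$.

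The argument is short, and I do not anticipate a genuine obstacle; the only point requiring care is the termwise-vanishing step, which is exactly where both the strict positivity of the weights $(\omega_k)_{1\leq k\leq p}$ and, through the choice of the comparison point $s$, the standing assumption $S\neq\emp$ enter. It is worth emphasizing that $S\subset Z$ holds unconditionally, so the real content of the proposition is the reverse inclusion, which can fail when $S=\emp$---the very situation motivating the relaxation.
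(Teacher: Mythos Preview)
Your proof is correct and follows essentially the same approach as the paper: both directions are handled identically, using $\moyo{B_k}{\rho_k}(L_ks)=0$ for a reference solution $s$, the monotonicity of $A$ to obtain the key inequality, and the $\rho_k$-cocoercivity of $\moyo{B_k}{\rho_k}$ to force each term to vanish.
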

\begin{proof}
Let $\overline{x}$ be a solution to Problem~\ref{prob:71}. Then 
\eqref{e:yosi2} yields
\begin{equation}
\label{e:240}
0=-\sum_{k=1}^p\omega_kL_k^*\big(\moyo{B_k}{\rho_k}
(L_k\overline{x})\big)\in A\overline{x},
\end{equation}
which shows that $\overline{x}$ solves Problem~\ref{prob:72}. Now
let $x$ be a solution to Problem~\ref{prob:72}. Then 
\begin{equation}
\label{e:241}
-\sum_{k=1}^p\omega_kL_k^*\big(\moyo{B_k}{\rho_k}(L_kx)\big)\in Ax.
\end{equation}
It follows from \eqref{e:240}, \eqref{e:241}, the monotonicity
of $A$, and the cocoercivity of the operators 
$(\moyo{B_k}{\rho_k})_{1\leq k\leq p}$ (see Example~\ref{ex:13y})
that 
\begin{align}
0&\geq\Scal{x-\overline{x}}{\sum_{k=1}^p
\omega_kL_k^*\big(\moyo{B_k}{\rho_k}(L_kx)\big)-
\sum_{k=1}^p\omega_kL_k^*\big(\moyo{B_k}{\rho_k}
(L_k\overline{x})\big)}
\nonumber\\
&=\sum_{k=1}^p\omega_k\Scal{L_kx-L_k\overline{x}}{
\moyo{B_k}{\rho_k}(L_kx)-\moyo{B_k}{\rho_k}(L_k\overline{x})}
\nonumber\\
&\geq\sum_{k=1}^p\omega_k\rho_k
\big\|\moyo{B_k}{\rho_k}(L_kx)-\moyo{B_k}{\rho_k}(L_k\overline{x})
\big\|^2
\nonumber\\
&=\sum_{k=1}^p\omega_k\rho_k
\big\|\moyo{B_k}{\rho_k}(L_kx)\big\|^2.
\end{align}
Hence, we deduce from \eqref{e:yosi2} that
$(\forall k\in\{1,\ldots,p\})$ 
$L_kx\in\zer\moyo{B_k}{\rho_k}=\zer B_k$.
In view of \eqref{e:241}, we conclude that $x$ solves
Problem~\ref{prob:71}.
\end{proof}

\begin{remark}
\label{r:56}
It should be emphasized that Problem~\ref{prob:72} is a relaxation
of Problem~\ref{prob:71}, and not of the inclusion
\begin{equation}
\label{e:p73}
\text{find}\;\;x\in\HH\;\;\text{such that}\;\;
0\in Ax+\sum_{k=1}^p\omega_kL_k^*\big(B_k(L_kx)\big).
\end{equation}
In particular, $\zer(A+\moyo{B}{\rho})\neq\zer(A+B)$ when 
$\zer(A+B)\neq\emp$. However, the
problem of finding a zero of $A+\moyo{B}{\rho}$ can be regarded as
a regularization of that of finding a zero of $A+B$ in the sense
that solutions to the former approaches a particular solution of
the latter as $\rho\to 0$ \cite{Mahe93,Merc80,Moud00}.
\end{remark}

\begin{example}
\label{ex:62'}
Consider the setting of Example~\ref{ex:62} and let
$(\forall k\in\{1,\ldots,p\})$ $\rho_k=1$. Then
\eqref{e:p72} relaxes the possibly inconsistent problem
\eqref{e:p62} to the problem
\begin{equation}
\label{e:56}
\minimize{x\in C}{\sum_{k=1}^p\omega_kd_{D_k}^2(L_kx)}.
\end{equation}
\begin{enumerate}
\item
Assume that, for every $k\in\{1,\ldots,p\}$,
$\GG_k=\HH$ and $L_k=\Id$. Then \eqref{e:56} is the relaxed
formulation of \cite{Sign99}. 
\item
Assume that $\HH=\RR^N$, $C=\RR^N$, and, for every
$k\in\{1,\ldots,p\}$, $\GG_k=\RR$, $L_k\colon x\mapsto u_k^\top x$
with $u_k\in\RR^N$, and $D_k=\{\eta_k\}$ with $\eta_k\in\RR$. Let
$U\in\RR^{p\times N}$ be the matrix with rows
$u_1^\top$,\,\ldots,$\,u_p^\top$ and set 
$y=(\eta_k)_{1\leq k\leq p}$.
Then \eqref{e:p62} amounts to solving the linear system $Ux=y$ and
\eqref{e:56} to minimizing $x\mapsto\|Ux-y\|^2$. This least-squares
relaxation was proposed by Legendre \cite{Lege05} and rediscovered
by Gauss \cite{Gaus09}.
\end{enumerate}
\end{example}

\begin{example}
\label{ex:61'}
Consider the setting of Example~\ref{ex:61} and recall that
$(\forall k\in\{1,\ldots,p\})$ 
$\moyo{(\partial f_k)}{\rho_k}=\{\nabla(\moyo{f_k}{\rho_k})\}$
\cite[Example~23.3]{Livre1}. 
Thus, \eqref{e:p72} relaxes the possibly inconsistent problem
\eqref{e:p61} to the problem
\begin{equation}
\label{e:56''}
\minimize{x\in\HH}{f(x)+\sum_{k=1}^p\omega_k
\bigl(\moyo{f_k}{\rho_k}\bigr)(x)}.
\end{equation}
This formulation arises in particular in federated learning
\cite{Path20}.
\end{example}

\begin{example}
\label{ex:63'}
Consider the setting of Example~\ref{ex:63} and let
$(\forall k\in\{1,\ldots,p\})$ $\rho_k=1$. Then it follows from
Example~\ref{ex:2} and \eqref{e:yosi1} that \eqref{e:p72} relaxes
the possibly inconsistent problem \eqref{e:p63} to the variational
inequality problem (see Problem~\ref{prob:22})
\begin{equation}
\label{e:63'}
\text{find}\:\;x\in C\:\;\text{such that}\:\;
(\forall y\in C)\;\:\sum_{k=1}^p\omega_k
\scal{L_k(y-x)}{F_k(L_kx)-r_k}\geq 0,
\end{equation}
which is precisely the relaxation of \eqref{e:p63} studied in
\cite{Siim22}.
\end{example}

Let us now solve Problem~\ref{prob:72} with the forward-backward
algorithm.

\begin{proposition}
\label{p:57}
Consider the setting of Problem~\ref{prob:72}, suppose that its set
$Z$ of solutions is not empty, and set 
\begin{equation}
\alpha=\dfrac{1}{\displaystyle{\sum_{k=1}^p}
\dfrac{\omega_k\|L_k\|^2}{\rho_k}}.
\end{equation}
Let $x_0\in\HH$, let
$\varepsilon\in\left]0,\alpha/(\alpha+1)\right[$, let
$(\gamma_n)_{n\in\NN}$ be a sequence in
$\left[\varepsilon,(2-\varepsilon)\alpha\right]$,
and suppose that $(\mu_n)_{n\in\NN}$ satisfies \eqref{e:aqp63}. 
Iterate
\begin{equation}
\label{e:algo57}
\begin{array}{l}
\text{for}\;n=0,1,\ldots\\
\left\lfloor
\begin{array}{l}
\text{for}\;k=1,\ldots,p\\
\left\lfloor
\begin{array}{l}
y_{k,n}=L_kx_n\\
p_{k,n}=\rho_k^{-1}\big(y_{k,n}-J_{\rho_kB_k}y_{k,n}\big)\\
\end{array}
\right.\\
b_n^*=\gamma_n\Sum_{k=1}^p\omega_kL_k^*p_{k,n}\\
w_n=J_{\gamma_nA}(x_n-b_n^*)\\
x_{n+1}=x_n+\mu_n(w_n-x_n).
\end{array}
\right.\\
\end{array}
\end{equation}
Then $(x_n)_{n\in\NN}$ converges weakly to a point in $Z$.
\end{proposition}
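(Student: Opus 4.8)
The plan is to recognize \eqref{e:algo57} as a realization of the forward-backward iteration \eqref{e:fb} of Theorem~\ref{t:5}, applied to the maximally monotone operator $A$ and the single-valued operator
\begin{equation*}
B=\sum_{k=1}^p\omega_kL_k^*\circ\moyo{B_k}{\rho_k}\circ L_k,
\end{equation*}
whose zero set $\zer(A+B)$ is, by construction, exactly the solution set $Z$ of \eqref{e:p72}. First I would confirm the algorithmic identification: since \eqref{e:yosi1} gives $\moyo{B_k}{\rho_k}=\rho_k^{-1}(\Id-J_{\rho_kB_k})$, the vector $p_{k,n}$ in \eqref{e:algo57} equals $\moyo{B_k}{\rho_k}(L_kx_n)$, whence $b_n^*=\gamma_n\sum_{k=1}^p\omega_kL_k^*p_{k,n}=\gamma_nBx_n$, and the updates for $w_n$ and $x_{n+1}$ coincide verbatim with those of \eqref{e:fb}.

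The crux is to show that $B$ is $\alpha$-cocoercive with the stated value of $\alpha$. Each $\moyo{B_k}{\rho_k}$ is single-valued, defined on all of $\GG_k$, and $\rho_k$-cocoercive by Example~\ref{ex:13y}, so $B\colon\HH\to\HH$ is well defined. Writing $u_k=\moyo{B_k}{\rho_k}(L_kx)-\moyo{B_k}{\rho_k}(L_ky)$ for arbitrary $x,y\in\HH$, the adjoint identity $\scal{x-y}{L_k^*u_k}=\scal{L_kx-L_ky}{u_k}$ together with the cocoercivity of each $\moyo{B_k}{\rho_k}$ yields
\begin{equation*}
\scal{x-y}{Bx-By}=\sum_{k=1}^p\omega_k\scal{L_kx-L_ky}{u_k}\geq\sum_{k=1}^p\omega_k\rho_k\|u_k\|^2.
\end{equation*}
For the companion upper bound I would invoke the triangle inequality and $\|L_k^*\|=\|L_k\|$, and then apply the Cauchy--Schwarz inequality to the splitting $\omega_k\|L_k\|\,\|u_k\|=\sqrt{\omega_k\|L_k\|^2/\rho_k}\cdot\sqrt{\omega_k\rho_k}\,\|u_k\|$, obtaining
\begin{equation*}
\|Bx-By\|^2\leq\Big(\sum_{k=1}^p\omega_k\|L_k\|\,\|u_k\|\Big)^2\leq\Big(\sum_{k=1}^p\frac{\omega_k\|L_k\|^2}{\rho_k}\Big)\Big(\sum_{k=1}^p\omega_k\rho_k\|u_k\|^2\Big)=\frac{1}{\alpha}\sum_{k=1}^p\omega_k\rho_k\|u_k\|^2.
\end{equation*}
Combining the two displays gives $\scal{x-y}{Bx-By}\geq\alpha\|Bx-By\|^2$, that is, the $\alpha$-cocoercivity of $B$.

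With $B$ established as $\alpha$-cocoercive, $\zer(A+B)=Z\neq\emp$, and the hypotheses on $\varepsilon$, $(\gamma_n)_{n\in\NN}$, and $(\mu_n)_{n\in\NN}$ matching verbatim those of Theorem~\ref{t:5} (in particular \eqref{e:aqp63}), the weak convergence of $(x_n)_{n\in\NN}$ to a point in $Z$ follows at once from Theorem~\ref{t:5}\ref{t:5i}. I expect the main obstacle to be the cocoercivity computation, specifically the choice of weights in the Cauchy--Schwarz step so that the factor $\sum_{k=1}^p\omega_k\|L_k\|^2/\rho_k=1/\alpha$ emerges exactly; the rest reduces to a routine check that the iteration and its parameter ranges fit the template of Theorem~\ref{t:5}.
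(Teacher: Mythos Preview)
Your proposal is correct and follows essentially the same approach as the paper: define $B=\sum_{k=1}^p\omega_kL_k^*\circ\moyo{B_k}{\rho_k}\circ L_k$, establish its $\alpha$-cocoercivity, identify \eqref{e:algo57} with the forward-backward iteration \eqref{e:fb}, and invoke Theorem~\ref{t:5}\ref{t:5i}. The only difference is cosmetic: the paper cites \cite[Proposition~4.12]{Livre1} together with Example~\ref{ex:13y} for the cocoercivity constant, whereas you compute it directly via Cauchy--Schwarz, which is exactly what that proposition encapsulates.
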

\begin{proof}
Define 
\begin{equation}
B=\sum_{k=1}^p\omega_kL_k^*\circ(
\moyo{\hspace{-.3mm}B_k}{\rho_k})\circ L_k.
\end{equation}
Then it follows from \cite[Proposition~4.12]{Livre1} and
Example~\ref{ex:13y} that $B$ is $\alpha$-cocoercive. Since
\eqref{e:algo57} is a specialization of \eqref{e:fb},
Theorem~\ref{t:5}\ref{t:5i} furnishes the desired conclusion.
\end{proof}

\subsubsection{Backward-backward splitting}

We focus on the following special case of Problem~\ref{prob:72}.

\begin{problem}
\label{prob:11}
Let $A\colon\HH\to 2^{\HH}$ and $B\colon\HH\to 2^{\HH}$ be 
maximally monotone, and let $\rho\in\RPP$. The objective is to
\begin{equation}
\label{e:p11}
\text{find}\;\;x\in\HH\;\;\text{such that}\;\;
0\in Ax+\moyo{B}{\rho}x.
\end{equation}
\end{problem}

\begin{proposition}
\label{p:11}
Consider the setting of Problem~\ref{prob:11} under the assumption
that $Z=\zer(A+\moyo{B}{\rho})\neq\emp$. Let $x_0\in\HH$, let
$\varepsilon\in\left]0,\rho/(\rho+1)\right[$, let
$(\gamma_n)_{n\in\NN}$ be a sequence in
$\left[\varepsilon,(2-\varepsilon)\rho\right]$,
and suppose that $(\mu_n)_{n\in\NN}$ satisfies 
\eqref{e:aqp63} with $\alpha=\rho$. 
Iterate
\begin{equation}
\label{e:algo11}
\begin{array}{l}
\text{for}\;n=0,1,\ldots\\
\left\lfloor
\begin{array}{l}
p_n=\rho^{-1}\big(x_n-J_{\rho B}x_n\big)\\
w_n=J_{\gamma_nA}(x_n-\gamma_np_n)\\
x_{n+1}=x_n+\mu_n(w_n-x_n).
\end{array}
\right.\\
\end{array}
\end{equation}
Then $(x_n)_{n\in\NN}$ converges weakly to a point in $Z$.
\end{proposition}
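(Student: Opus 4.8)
The plan is to recognize \eqref{e:algo11} as an instance of the forward--backward iteration \eqref{e:fb} of Theorem~\ref{t:5}, or equivalently as the specialization of Proposition~\ref{p:57} obtained by taking $p=1$, $\GG_1=\HH$, $L_1=\Id$, $\omega_1=1$, $B_1=B$, and $\rho_1=\rho$. With this choice the cocoercivity constant appearing in Proposition~\ref{p:57} becomes $\alpha=\rho$, which matches the standing hypotheses here.

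First I would set $\widetilde{B}=\moyo{B}{\rho}$ and record two facts about it. By Example~\ref{ex:13y}, $\moyo{B}{\rho}$ is $\rho$-cocoercive, hence in particular single-valued, maximally monotone, and defined on all of $\HH$, with $\zer\moyo{B}{\rho}=\zer B$. Next, the defining identity \eqref{e:yosi1}, namely $\moyo{B}{\rho}=(\Id-J_{\rho B})/\rho$, shows that the auxiliary variable in \eqref{e:algo11} satisfies
\begin{equation}
(\forall n\in\NN)\quad p_n=\rho^{-1}\big(x_n-J_{\rho B}x_n\big)=\moyo{B}{\rho}x_n=\widetilde{B}x_n.
\end{equation}
Substituting this into \eqref{e:algo11} yields precisely the forward--backward recursion
\begin{equation}
(\forall n\in\NN)\quad
w_n=J_{\gamma_n A}\big(x_n-\gamma_n\widetilde{B}x_n\big),\qquad
x_{n+1}=x_n+\mu_n(w_n-x_n),
\end{equation}
which is \eqref{e:fb} with the $\rho$-cocoercive forward operator $\widetilde{B}$ in place of $B$ and with $\alpha=\rho$.

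It then remains only to match the parameter ranges. The conditions $\varepsilon\in\left]0,\rho/(\rho+1)\right[$, $\gamma_n\in[\varepsilon,(2-\varepsilon)\rho]$, and the bound \eqref{e:aqp63} on $\mu_n$ taken with $\alpha=\rho$ are exactly the hypotheses of Theorem~\ref{t:5} for the operator pair $(A,\widetilde{B})$. Since $Z=\zer(A+\widetilde{B})\neq\emp$ by assumption, Theorem~\ref{t:5}\ref{t:5i} delivers the weak convergence of $(x_n)_{n\in\NN}$ to a point in $Z$, which is the claim. There is no genuine obstacle: the entire content lies in the bookkeeping identification $p_n=\moyo{B}{\rho}x_n$ together with the $\rho$-cocoercivity of the Yosida approximation, after which the conclusion is immediate from the already-established forward--backward theorem.
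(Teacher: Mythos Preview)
Your proposal is correct and matches the paper's approach: the paper's one-line proof simply invokes Proposition~\ref{p:57} with $p=1$, $\GG_1=\HH$, $L_1=\Id$, $B_1=B$, $\omega_1=1$, and $\rho_1=\rho$, which is precisely the identification you spell out (and explicitly mention), with the cocoercivity constant reducing to $\alpha=\rho$. Your additional unpacking of $p_n=\moyo{B}{\rho}x_n$ via \eqref{e:yosi1} and the direct appeal to Theorem~\ref{t:5}\ref{t:5i} is just the same argument with the intermediate layer of Proposition~\ref{p:57} removed.
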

\begin{proof}
Apply Proposition~\ref{p:57} with $p=1$, $\GG_1=\HH$, $L_1=\Id$,
$B_1=B$, $\omega_1=1$, and $\rho_1=\rho$.
\end{proof}

\begin{example}
\label{ex:bb1}
In particular, if we execute \eqref{e:algo11} with, for every
$n\in\NN$, $\gamma_n=\rho$ and $\mu_n=1$, then
\begin{equation}
\label{e:bb1}
(\forall n\in\NN)\quad
x_{n+1}=J_{\rho A}\bigl(J_{\rho B}x_n\bigr).
\end{equation}
This recursion is known as the \emph{backward-backward algorithm},
as it alternates two backward Euler steps. As derived above,
it is a special case of \eqref{e:algo57} and therefore of the
forward-backward algorithm \eqref{e:fb}. Its asymptotic behavior
has been studied in \cite{Nona05,Merc80} (see also
\cite{Lion78,Pass79} for ergodic convergence).
\end{example}

\begin{example}
\label{ex:bb2}
Let $f$ and $g$ be functions in $\Gamma_0(\HH)$. In
Problem~\ref{prob:11}, suppose that $A=\partial f$ and 
$B=\partial g$. Then, as in Example~\ref{ex:61'},
\eqref{e:bb1} becomes
\begin{equation}
\label{e:bb-2}
\minimize{x\in\HH}{f(x)+\moyo{g}{\rho}(x)}
\end{equation}
and \eqref{e:bb1} reduces to the
\emph{alternating proximal point algorithm}
\begin{equation}
\label{e:bb2}
(\forall n\in\NN)\quad
x_{n+1}=\prox_{\rho f}\bigl(\prox_{\rho g}x_n\bigr).
\end{equation}
This method was first investigated in \cite{Acke80}, with further
developments in \cite{Nona05}.
\end{example}

\begin{example}
\label{ex:bb3}
Let $C$ and $D$ be nonempty closed convex subsets of $\HH$. 
In Example~\ref{ex:bb2}, suppose that $f=\iota_C$ and $g=\iota_D$.
Then \eqref{e:bb2} is the problem of finding a point in $C$ at
minimal distance from $D$ and \eqref{e:bb2} yields the
\emph{alternating projection method}
\begin{equation}
\label{e:bb3}
(\forall n\in\NN)\quad
x_{n+1}=\proj_C\bigl(\proj_{D}x_n\bigr),
\end{equation}
which was first investigated in \cite{Chen59}. Its weak
convergence was established in \cite[Theorem~2]{Gubi67}
\end{example}

\begin{example}
\label{ex:bb4}
Let $f\in\Gamma_0(\HH)$, $h\in\Gamma_0(\HH)$, $z\in\HH$, 
and $\rho\in\RPP$. The problem is to 
\begin{equation}
\label{e:bb4}
\minimize{x\in\HH, w\in\HH}{f(x)+h(w)+\dfrac{1}{2\rho}
\|x+w-z\|^2}.
\end{equation}
Following \cite[Section~4.4]{Smms05}, set 
$g\colon y\mapsto h(z-y)$. Then, with the change of variable 
$y=z-w$, the objective of \eqref{e:bb4} is to
\begin{equation}
\label{e:bB4}
\minimize{x\in\HH, y\in\HH}{f(x)+g(y)+\dfrac{1}{2\rho}
\|x-y\|^2},
\end{equation}
which is precisely \eqref{e:bb-2} in terms of the variable $x$.
Now let $x_0\in\HH$, let
$\varepsilon\in\left]0,\rho/(\rho+1)\right[$, let
$(\gamma_n)_{n\in\NN}$ be a sequence in
$\left[\varepsilon,(2-\varepsilon)\rho\right]$,
and let $(\mu_n)_{n\in\NN}$ be a sequence in
$\left[\varepsilon,1\right]$.
Applying algorithm \eqref{e:algo11} to $A=\partial f$ and
$B=\partial g$, and noting that $J_{\rho B}=\prox_{\rho g}\colon
x\mapsto z-\prox_{\rho h}(z-x)$ yields
\begin{equation}
\label{e:algobb4}
\begin{array}{l}
\text{for}\;n=0,1,\ldots\\
\left\lfloor
\begin{array}{l}
p_n=\rho^{-1}\big(x_n-z+\prox_{\rho h}(z-x_n)\big)\\
w_n=\prox_{\gamma_nf}(x_n-\gamma_np_n)\\
x_{n+1}=x_n+\mu_n(w_n-x_n).
\end{array}
\right.\\
\end{array}
\end{equation}
It follows from Proposition~\ref{p:11} that $(x_n)_{n\in\NN}$
converges weakly to a point $x$ such that 
$(x,\prox_{\rho h}(z-x))$ solves \eqref{e:bb4}.
\end{example}

Next, we revisit the problem of projecting onto the Minkowski sum
of two convex sets (see Example~\ref{ex:546}).

\begin{example}
\label{ex:bb5}
Let $C$ and $D$ be nonempty closed convex subsets of $\HH$ such
that $C+D$ is closed, and let $z\in\HH$. Upon setting $f=\iota_C$,
$h=\iota_D$, and $\rho=1$ in Example~\ref{ex:bb4}, \eqref{e:bb4}
specializes to the problem of finding the projection of $z$ onto
$C+D$. Now let $x_0\in C$, let $\varepsilon\in\left]0,1/2\right[$,
let $(\gamma_n)_{n\in\NN}$ be a sequence in
$\left[\varepsilon,2-\varepsilon\right]$, and let
$(\mu_n)_{n\in\NN}$ be a sequence in $\left[\varepsilon,1\right]$.
Then \eqref{e:algobb4} assumes the form 
\begin{equation}
\label{e:algobb5}
\begin{array}{l}
\text{for}\;n=0,1,\ldots\\
\left\lfloor
\begin{array}{l}
p_n=x_n-z+\proj_{D}(z-x_n)\\
w_n=\proj_{C}(x_n-\gamma_np_n)\\
x_{n+1}=x_n+\mu_n(w_n-x_n)
\end{array}
\right.\\
\end{array}
\end{equation}
and it follows from Proposition~\ref{p:11} that $(x_n)_{n\in\NN}$
converges weakly to a point $x$ such that 
$\proj_{C+D}z=x+\proj_{D}(z-x)$. This best approximation algorithm
was first obtained in \cite[Theorem~2.1]{Seeg98} in the case when
$(\forall n\in\NN)$ $\gamma_n=\mu_n=1$, i.e., 
\begin{equation}
(\forall n\in\NN)\quad
x_{n+1}=\proj_{C}\bigl(z-\proj_{D}(z-x_n)\bigr).
\end{equation}
\end{example}

\subsubsection{Dual implementation}

We present a framework for solving strongly monotone composite 
inclusion problems by applying the forward-backward algorithm to
the dual problem. The embedding underlying this approach is that of
Example~\ref{ex:f13}.

\begin{problem}
\label{prob:26}
Let $\rho\in\RPP$, let $0<p\in\NN$, let $z\in\HH$, and let 
$A\colon\HH\to 2^{\HH}$ be maximally monotone. For every
$k\in\{1,\ldots,p\}$, let $B_k\colon\GG_k\to 2^{\GG_k}$ be
maximally monotone, let $\nu_k\in\RPP$, let 
$D_k\colon\GG_k\to 2^{\GG_k}$ be maximally 
monotone and $\nu_k$-strongly monotone, and suppose that 
$0\neq L_k\in\BL(\HH,\GG_k)$. Further, suppose that
\begin{equation}
\label{e:26a}
z\in\ran\bigg(A+\sum_{k=1}^pL_k^*\circ(B_k\infconv D_k)\circ
L_k+\rho\Id\bigg).
\end{equation}
The problem is to solve the primal inclusion
\begin{equation}
\label{e:26p}
\text{find}\;\;x\in\HH\;\;\text{such that}\;\;
z\in Ax+\sum_{k=1}^pL_k^*\big((B_k\infconv D_k)
(L_kx)\big)+\rho x,
\end{equation}
together with the dual inclusion
\begin{multline}
\label{e:26d}
\text{find}\;\;{y^*_1}\in\GG_1,\ldots,{y^*_p}\in\GG_p\;\;
\text{such that}\;\;
\bigl(\forall k\in\{1,\ldots,p\}\bigr)\\
0\in -L_k\bigg(J_{A/\rho}\bigg(\dfrac{1}{\rho}\bigg(z-
\sum_{j=1}^pL_j^*{y^*_j}\bigg)\bigg)\bigg)+
B_k^{-1}{y^*_k}+D_k^{-1}{y^*_k}.
\end{multline}
\end{problem}

We refer to \cite[Proposition~5.2(iv)]{Opti14} for sufficient
conditions that guarantee \eqref{e:26a}. The mechanism to solve
\eqref{e:26p} dually hinges on the following properties.

\begin{proposition}[{\protect{\cite[Proposition~5.2(ii)--(iii)]%
{Opti14}}}]
\label{p:26}
Consider the setting of Problem~\ref{prob:26} and set
\begin{equation}
\label{e:26u}
M=A+\sum_{k=1}^pL_k^*\circ(B_k\infconv D_k)\circ L_k
\quad\text{and}\quad
\overline{x}=J_{M/\rho}\big(z/\rho\big).
\end{equation}
Then the following hold:
\begin{enumerate}
\item
\label{p:26i}
$\overline{x}$ is the unique solution to the primal problem 
\eqref{e:26p}.
\item
\label{p:26ii}
The dual problem \eqref{e:26d} admits solutions and, if
$(\overline{y}_k^*)_{1\leq k\leq p}$ solves \eqref{e:26d}, then
\begin{equation}
\overline{x}=J_{A/\rho}\Biggl(\rho^{-1}
\biggl(z-\sum_{k=1}^pL_k^*\overline{y}^*_k\biggr)\Biggr).
\end{equation}
\end{enumerate}
\end{proposition}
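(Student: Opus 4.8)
The plan is to unfold the resolvent $\overline{x}=J_{M/\rho}(z/\rho)$, lean on the strong monotonicity of $\rho\,\Id+M$ for well-posedness and uniqueness, and then simply read off the dual variables from the set-valued sum that defines $M$ in \eqref{e:26u}. I would begin with \ref{p:26i}. Since $A$ is monotone and each $L_k^*\circ(B_k\infconv D_k)\circ L_k$ is monotone (the parallel sum $B_k\infconv D_k=(B_k^{-1}+D_k^{-1})^{-1}$ is monotone by \eqref{e:20115n}, and pre/post-composition by $L_k$ and $L_k^*$ preserves monotonicity), the operator $M$ is monotone; hence $\rho\,\Id+M$ is $\rho$-strongly monotone and therefore injective. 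Equivalently $\Id+M/\rho$ is injective with single-valued inverse on its range, and $z/\rho\in\ran(\Id+M/\rho)$ holds precisely under the range condition \eqref{e:26a}, so $\overline{x}=J_{M/\rho}(z/\rho)$ is well defined. Solving \eqref{e:26p} means $z\in(\rho\,\Id+M)x$, i.e.\ $x\in(\rho\,\Id+M)^{-1}z$; rescaling gives $(\rho\,\Id+M)^{-1}z=J_{M/\rho}(z/\rho)$, and injectivity forces this to be a single point. Thus $\overline{x}$ is the unique solution, which is \ref{p:26i}. Note that I only use monotonicity of $M$ together with \eqref{e:26a}, not maximal monotonicity of $M$ (which would otherwise require a qualification condition); this is exactly why \eqref{e:26a} is imposed.

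For the existence half of \ref{p:26ii}, I would start from the primal solution $\overline{x}$ furnished by \ref{p:26i}, which satisfies $z-\rho\overline{x}\in M\overline{x}$. Unfolding the set-valued sum \eqref{e:d8} produces $a^*\in A\overline{x}$ and, for each $k$, a vector $\overline{y}^*_k\in(B_k\infconv D_k)(L_k\overline{x})$ with $z-\rho\overline{x}=a^*+\sum_k L_k^*\overline{y}^*_k$. Then $a^*=z-\rho\overline{x}-\sum_k L_k^*\overline{y}^*_k\in A\overline{x}$ is, via \eqref{e:ee14} applied to $A/\rho$, equivalent to $\overline{x}=J_{A/\rho}(\rho^{-1}(z-\sum_k L_k^*\overline{y}^*_k))$, while $\overline{y}^*_k\in(B_k\infconv D_k)(L_k\overline{x})$ means, by the inverse relation in \eqref{e:20115n}, that $L_k\overline{x}\in B_k^{-1}\overline{y}^*_k+D_k^{-1}\overline{y}^*_k$. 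Substituting the resolvent expression for $\overline{x}$ into the latter membership yields exactly the dual inclusion \eqref{e:26d}, so $(\overline{y}^*_k)_{1\le k\le p}$ solves the dual and the dual solution set is nonempty.

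For the recovery formula, I would argue conversely: given any dual solution $(\overline{y}^*_k)$, set $\hat{x}=J_{A/\rho}(\rho^{-1}(z-\sum_k L_k^*\overline{y}^*_k))$. Reading \eqref{e:ee14} backwards gives $z-\sum_k L_k^*\overline{y}^*_k-\rho\hat{x}\in A\hat{x}$, and \eqref{e:26d} together with \eqref{e:20115n} gives $\overline{y}^*_k\in(B_k\infconv D_k)(L_k\hat{x})$; summing over $k$ produces $z-\rho\hat{x}\in A\hat{x}+\sum_k L_k^*\big((B_k\infconv D_k)(L_k\hat{x})\big)=M\hat{x}$, so $\hat{x}$ solves \eqref{e:26p}, and uniqueness from \ref{p:26i} forces $\hat{x}=\overline{x}$. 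This is the claimed identity in \ref{p:26ii}. I do not expect a genuine obstacle here; the only care required is bookkeeping, namely translating membership in the parallel sum through \eqref{e:20115n} and the resolvent characterization \eqref{e:ee14} consistently with the scaling by $\rho$, and observing that the existence of the dual variables $\overline{y}^*_k$ is not an additional hypothesis but falls directly out of the decomposition of $M\overline{x}$, which is available as soon as the primal solution exists under \eqref{e:26a}.
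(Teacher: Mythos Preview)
Your argument is correct and complete. The paper itself does not prove this proposition; it simply cites it as \cite[Proposition~5.2(ii)--(iii)]{Opti14}, so there is no in-text proof to compare against. Your route---using monotonicity of $M$ plus the range hypothesis \eqref{e:26a} to obtain well-definedness and uniqueness of $J_{M/\rho}(z/\rho)$ without invoking maximal monotonicity of $M$, then unfolding the set-valued sum via \eqref{e:d8} and \eqref{e:20115n} to produce dual variables, and finally closing the loop with the resolvent characterization \eqref{e:ee14}---is exactly the natural direct argument, and all the bookkeeping with the scaling by $\rho$ is handled correctly.
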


We now apply the forward-backward algorithm of Theorem~\ref{t:5} 
to the dual inclusion \eqref{e:26d} to construct a sequence
$(x_n)_{n\in\NN}$ which converges strongly to the solution to
primal inclusion \eqref{e:26p}. The following result is an
adaptation of \cite[Corollary~5.4]{Opti14}.

\begin{proposition}
\label{p:26c}
Consider the setting of Problem~\ref{prob:26} and set
\begin{equation}
\label{e:beta1}
\nu=\min_{1\leq k\leq p}\nu_k\quad\text{and}\quad
\alpha=\frac{1}{\dfrac{1}{\nu}+\dfrac{1}{\rho}
\displaystyle{\displaystyle{\sum_{1\leq k\leq p}}\|L_k\|^2}}.
\end{equation}
Let $\varepsilon\in\left]0,\alpha/(\alpha+1)\right[$,
let $(\gamma_n)_{n\in\NN}$ be a 
sequence in $[\varepsilon,(2-\varepsilon)\alpha]$, 
suppose that $(\mu_n)_{n\in\NN}$ satisfies \eqref{e:aqp63},
and, for every $k\in\{1,\ldots,p\}$, let $y^*_{k,0}\in\GG_k$.
Iterate 
\begin{equation}
\label{e:-08a}
\begin{array}{l}
\text{for}\;n=0,1,\ldots\\
\left\lfloor
\begin{array}{l}
q_n=z-\sum_{k=1}^pL_k^*y^*_{k,n}\\
x_n=J_{A/\rho}(q_n/\rho)\\
\operatorname{for}\;k=1,\ldots,p\\
\left\lfloor
\begin{array}{l}
w_{k,n}=y^*_{k,n}+\gamma_n\big(L_kx_n-D_k^{-1}y^*_{k,n}
\big)\\[2mm]
y^*_{k,n+1}=y^*_{k,n}+\mu_n\big(J_{\gamma_nB_k^{-1}}
w_{k,n}-y^*_{k,n}\big).\\[1mm] 
\end{array}
\right.\\[2mm]
\end{array}
\right.
\end{array}
\end{equation}
Then the following hold for the solution $\overline{x}$ to 
\eqref{e:26p} and for some solution $\overline{\boldsymbol{y}}^*
=(\overline{y}^*_1,\ldots,\overline{y}^*_p)$ to \eqref{e:26d}:
\begin{enumerate}
\item
\label{p:26ci}
$(\forall k\in\{1,\ldots,p\})$ $y^*_{k,n}\weakly\overline{y}^*_k$. 
\item
\label{p:26cii}
$x_n\to\overline{x}$.
\end{enumerate}
\end{proposition}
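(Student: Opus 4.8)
The plan is to recast the dual inclusion \eqref{e:26d} as a problem of finding a zero of a sum $\boldsymbol{W}+\boldsymbol{C}$ in the product space $\XXX=\GG_1\oplus\cdots\oplus\GG_p$, with $\boldsymbol{W}$ maximally monotone and $\boldsymbol{C}$ cocoercive, and then to invoke the forward-backward theorem (Theorem~\ref{t:5}). Concretely, I would set $L\colon\HH\to\XXX\colon x\mapsto(L_kx)_{1\le k\le p}$, so that $L^*\boldsymbol{y}^*=\sum_kL_k^*y_k^*$, introduce $\boldsymbol{B}=B_1\times\cdots\times B_p$ and $\boldsymbol{D}=D_1\times\cdots\times D_p$, and write the dual operator $\MMM$ of \eqref{e:96d} as $\MMM=\boldsymbol{W}+\boldsymbol{C}$ with $\boldsymbol{W}=\boldsymbol{B}^{-1}$ and $\boldsymbol{C}\colon\boldsymbol{y}^*\mapsto-L\bigl(J_{A/\rho}(\rho^{-1}(z-L^*\boldsymbol{y}^*))\bigr)+\boldsymbol{D}^{-1}\boldsymbol{y}^*$. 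By Lemma~\ref{l:0616} and Example~\ref{ex:r1}, $\boldsymbol{W}$ is maximally monotone and $J_{\gamma_n\boldsymbol{W}}$ acts coordinatewise as $(J_{\gamma_nB_k^{-1}})_k$. Writing $x_n=J_{A/\rho}(q_n/\rho)$, one checks that $\boldsymbol{y}_n^*-\gamma_n\boldsymbol{C}\boldsymbol{y}_n^*$ has coordinates $w_{k,n}=y_{k,n}^*+\gamma_n(L_kx_n-D_k^{-1}y_{k,n}^*)$, so that \eqref{e:-08a} is exactly the forward-backward iteration \eqref{e:fb} driven by $(\boldsymbol{W},\boldsymbol{C})$, with the ranges of $(\gamma_n)$ and $(\mu_n)$ matching those of Theorem~\ref{t:5} verbatim.

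The main obstacle is to verify that $\boldsymbol{C}$ is $\alpha$-cocoercive with the precise constant of \eqref{e:beta1}. I would handle the two summands separately. Since $D_k$ is $\nu_k$-strongly monotone, each $D_k^{-1}$ is $\nu_k$-cocoercive, so $\boldsymbol{D}^{-1}$ is $\nu$-cocoercive with $\nu=\min_k\nu_k$. For the first summand $\boldsymbol{C}_1\colon\boldsymbol{y}^*\mapsto-L(J_{A/\rho}(\rho^{-1}(z-L^*\boldsymbol{y}^*)))$, setting $u=\rho^{-1}(z-L^*\boldsymbol{y}^*)$ and $u'=\rho^{-1}(z-L^*\boldsymbol{v}^*)$ and using $-L^*(\boldsymbol{y}^*-\boldsymbol{v}^*)=\rho(u-u')$ together with the firm nonexpansiveness of $J_{A/\rho}$, a short computation gives $\scal{\boldsymbol{y}^*-\boldsymbol{v}^*}{\boldsymbol{C}_1\boldsymbol{y}^*-\boldsymbol{C}_1\boldsymbol{v}^*}=\rho\scal{u-u'}{J_{A/\rho}u-J_{A/\rho}u'}\ge\rho\|J_{A/\rho}u-J_{A/\rho}u'\|^2$, while $\|\boldsymbol{C}_1\boldsymbol{y}^*-\boldsymbol{C}_1\boldsymbol{v}^*\|^2\le(\sum_k\|L_k\|^2)\|J_{A/\rho}u-J_{A/\rho}u'\|^2$; hence $\boldsymbol{C}_1$ is $\rho/\sum_k\|L_k\|^2$-cocoercive. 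Finally, the elementary inequality $a\|\xi\|^2+b\|\eta\|^2\ge\frac{ab}{a+b}\|\xi+\eta\|^2$ (valid for $a,b\in\RPP$) shows that the sum of an $a$-cocoercive and a $b$-cocoercive operator is $\frac{ab}{a+b}$-cocoercive; with $a=\rho/\sum_k\|L_k\|^2$ and $b=\nu$ this yields exactly the constant $\alpha$ of \eqref{e:beta1}. (This cocoercivity is also implicit in Example~\ref{ex:f13} and Proposition~\ref{p:26}.)

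With the decomposition in place, $\zer\MMM=\zer(\boldsymbol{W}+\boldsymbol{C})$ is the solution set of \eqref{e:26d}, which is nonempty by Proposition~\ref{p:26}\ref{p:26ii} (equivalently, by \eqref{e:26a}). Theorem~\ref{t:5}\ref{t:5i} then gives $\boldsymbol{y}_n^*\weakly\overline{\boldsymbol{y}}^*$ for some $\overline{\boldsymbol{y}}^*=(\overline{y}_k^*)_{1\le k\le p}\in\zer\MMM$, i.e.\ a solution of \eqref{e:26d}; reading this coordinatewise proves \ref{p:26ci}.

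For the strong convergence \ref{p:26cii} I would exploit firm nonexpansiveness once more. By Proposition~\ref{p:26}\ref{p:26ii}, $\overline{x}=J_{A/\rho}(\rho^{-1}(z-L^*\overline{\boldsymbol{y}}^*))$, so with $u_n=q_n/\rho$ and $\overline{u}=\rho^{-1}(z-L^*\overline{\boldsymbol{y}}^*)$ one has $u_n-\overline{u}=-\rho^{-1}L^*(\boldsymbol{y}_n^*-\overline{\boldsymbol{y}}^*)$ and therefore $\|x_n-\overline{x}\|^2\le\scal{u_n-\overline{u}}{x_n-\overline{x}}=\rho^{-1}\scal{\boldsymbol{y}_n^*-\overline{\boldsymbol{y}}^*}{\boldsymbol{C}_1\boldsymbol{y}_n^*-\boldsymbol{C}_1\overline{\boldsymbol{y}}^*}$. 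Substituting $\boldsymbol{C}_1=\boldsymbol{C}-\boldsymbol{D}^{-1}$ and discarding the nonnegative $\boldsymbol{D}^{-1}$ term (monotonicity of $\boldsymbol{D}^{-1}$), this is bounded above by $\rho^{-1}\scal{\boldsymbol{y}_n^*-\overline{\boldsymbol{y}}^*}{\boldsymbol{C}\boldsymbol{y}_n^*-\boldsymbol{C}\overline{\boldsymbol{y}}^*}\le\rho^{-1}\|\boldsymbol{y}_n^*-\overline{\boldsymbol{y}}^*\|\,\|\boldsymbol{C}\boldsymbol{y}_n^*-\boldsymbol{C}\overline{\boldsymbol{y}}^*\|$. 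Now $(\boldsymbol{y}_n^*)$ is bounded, and Theorem~\ref{t:5}\ref{t:5ii}--\ref{t:5i+} gives $\boldsymbol{C}\boldsymbol{y}_n^*\to\boldsymbol{C}\overline{\boldsymbol{y}}^*$, whence $x_n\to\overline{x}$, the solution of \eqref{e:26p} by Proposition~\ref{p:26}\ref{p:26i}. The delicate point worth emphasizing is that the weak convergence $\boldsymbol{y}_n^*\weakly\overline{\boldsymbol{y}}^*$ does not transfer through the merely nonexpansive map $J_{A/\rho}$; it is precisely the cocoercive structure, and the resulting strong convergence of $\boldsymbol{C}\boldsymbol{y}_n^*$, that forces the strong convergence of $x_n$.
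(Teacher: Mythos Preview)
Your proposal is correct and follows essentially the same route as the paper: recast the dual inclusion as $\zer(\boldsymbol{W}+\boldsymbol{C})$ in $\GG_1\oplus\cdots\oplus\GG_p$ with $\boldsymbol{W}=\bigtimes_k B_k^{-1}$ and $\boldsymbol{C}=\boldsymbol{D}^{-1}-\boldsymbol{L}\circ T\circ\boldsymbol{L}^*$ (the paper writes $\boldsymbol{A}$ and $\boldsymbol{B}$ for your $\boldsymbol{W}$ and $\boldsymbol{C}$, and introduces $T=J_{A/\rho}(\rho^{-1}(z-\cdot))$), apply Theorem~\ref{t:5}\ref{t:5i} for the weak dual convergence, and then for \ref{p:26cii} combine the firm nonexpansiveness (equivalently, $\rho$-cocoercivity of $-T$) with the monotonicity of $\boldsymbol{D}^{-1}$ and Theorem~\ref{t:5}\ref{t:5i+}--\ref{t:5ii} to force $\|x_n-\overline{x}\|\to 0$. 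The only cosmetic difference is that the paper obtains the cocoercivity constant by citing \cite[Proposition~4.12]{Livre1} and the bound $\|\boldsymbol{L}\|^2\le\sum_k\|L_k\|^2$, whereas you compute it directly via the inequality $a\|\xi\|^2+b\|\eta\|^2\ge\frac{ab}{a+b}\|\xi+\eta\|^2$; the two arguments are equivalent.
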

\begin{proof}
We deduce from \cite[Proposition~22.11(ii)]{Livre1} that,
for every $k\in\{1,\ldots,p\}$, $D_k^{-1}$ is $\nu_k$-cocoercive
with $\dom D_k^{-1}=\GG_k$. Let us set
$\GGG=\GG_1\oplus\cdots\oplus\GG_p$ and 
\begin{equation}
\label{e:-01a}
\begin{cases}
T\colon\HH\to\HH\colon x\mapsto J_{\rho^{-1}A}
\big(\rho^{-1}(z-x)\big)\\[3mm]
{\boldsymbol A}\colon\GGG\to2^{\GGG}\colon\boldsymbol{y}^*\mapsto
{\displaystyle\bigtimes_{1\leq k\leq p}}B^{-1}_ky^*_k\\[5mm]
{\boldsymbol D}\colon\GGG\to\GGG\colon\boldsymbol{y}^*\mapsto
\bigl(D^{-1}_ky^*_k\bigr)_{1\leq k\leq p}\\[2mm]
{\boldsymbol L}\colon\HH\to\GGG\colon x\mapsto
\big(L_kx\big)_{1\leq k\leq p}\\
\boldsymbol{B}=\boldsymbol{D}-\boldsymbol{L}\circ T\circ
\boldsymbol{L}^*.
\end{cases}
\end{equation}
It follows from Lemmas~\ref{l:0602} and \ref{l:0616} that
$\boldsymbol{A}$ is maximally monotone, from \eqref{e:beta1} that
${\boldsymbol D}$ is $\nu$-cocoercive, from 
Lemma~\ref{l:0}\ref{l:0iii} that $-T$ is $\rho$-cocoercive, and
hence from \cite[Proposition~4.12]{Livre1} that 
\begin{equation}
\label{e:D+}
\boldsymbol{B}=
\boldsymbol{D}+\boldsymbol{L}\circ(-T)\circ \boldsymbol{L}^*\;
\text{is $1/(1/\nu+\|\boldsymbol{L}\|^2/\rho)$-cocoercive}.
\end{equation}
Since $\|{\boldsymbol L}\|^2\leq\sum_{k=1}^p\|L_k\|^2$,
\eqref{e:beta1} implies that $\boldsymbol{B}$ is
$\alpha$-cocoercive.
Next, let us define $(\forall n\in\NN)$
$\boldsymbol{y}^*_n=(y^*_{k,n})_{1\leq k\leq p}$ and 
$\boldsymbol{w}_n=(w_{k,n})_{1\leq k\leq p}$.
Then, upon combining \eqref{e:-01a} and Example~\ref{ex:r1},
\eqref{e:-08a} can be rewritten as 
\begin{equation}
\label{e:-08c}
\begin{array}{l}
\text{for}\;n=0,1,\ldots\\
\begin{array}{l}
\left\lfloor
\begin{array}{l}
\boldsymbol{w}_n=\boldsymbol{y}^*_n-\gamma_n 
\boldsymbol{B}\boldsymbol{y}^*_n\\[1mm]
\boldsymbol{y}^*_{n+1}={\boldsymbol y}^*_n+\mu_n
\big(J_{\gamma_n\boldsymbol{A}}\,
\boldsymbol{w}_n-\boldsymbol{y}^*_n\big),
\end{array}
\right.\\[2mm]
\end{array}
\end{array}
\end{equation}
and the dual problem \eqref{e:26d} as
\begin{equation}
\label{e:2012-04-08b}
\text{find}\;\;\boldsymbol{y}^*\in\GGG\;\;
\text{such that}\;\;
\boldsymbol{0}\in\boldsymbol{A}{\boldsymbol{y}^*}+
\boldsymbol{B}{\boldsymbol{y}^*}.
\end{equation}

\ref{p:26ci}: In view of the above, 
the claim follows from Theorem~\ref{t:5}\ref{t:5i}.

\ref{p:26cii}:
We derive from Proposition~\ref{p:26}, \eqref{e:-08a}, and
\eqref{e:-01a} that
\begin{equation}
\overline{x}=T(\boldsymbol{L}^*\overline{\boldsymbol{y}}^*)
\quad\text{and}\quad
(\forall n\in\NN)\quad x_n=T(\boldsymbol{L}^*\boldsymbol{y}^*_n).
\end{equation}
In turn, we deduce from the $\rho$-cocoercivity of $-T$,
\ref{p:26ci}, the monotonicity of ${\boldsymbol D}$, and the
Cauchy--Schwarz inequality that
\begin{align}
\label{e:2012-04-09c}
(\forall n\in\NN)\quad\rho\|x_n-\overline{x}\|^2
&=\rho\|T(\boldsymbol{L}^*\boldsymbol{y}^*_n)-
T(\boldsymbol{L}^*\overline{\boldsymbol{y}}^*)\|^2
\nonumber\\
&\leq\scal{\boldsymbol{L}^*(\boldsymbol{y}^*_n-
{\overline{\boldsymbol y}^*})}{T(\boldsymbol{L}^*
\overline{\boldsymbol{y}}^*)-T(\boldsymbol{L}^*\boldsymbol{y}^*_n)}
\nonumber\\
&=\scal{\boldsymbol{y}^*_n-\overline{\boldsymbol{y}}^*}
{(\boldsymbol{L}\circ T\circ \boldsymbol{L}^*)
\overline{\boldsymbol{y}}^*
-(\boldsymbol{L}\circ T\circ \boldsymbol{L}^*)\boldsymbol{y}^*_n}
\nonumber\\
&\leq
\scal{\boldsymbol{y}^*_n-\overline{\boldsymbol{y}}^*}
{{\boldsymbol D}\boldsymbol{y}^*_n-
{\boldsymbol D}\overline{\boldsymbol{y}}^*}\nonumber\\
&\quad-\scal{\boldsymbol{y}^*_n-\overline{\boldsymbol{y}}^*}
{(\boldsymbol{L}\circ T\circ \boldsymbol{L}^*)
\boldsymbol{y}^*_n
-(\boldsymbol{L}\circ T\circ \boldsymbol{L}^*)
\overline{\boldsymbol{y}}^*}
\nonumber\\
&=\scal{\boldsymbol{y}^*_n-\overline{\boldsymbol{y}}^*}
{\boldsymbol{B}{\boldsymbol{y}^*_n-
\boldsymbol{B}\overline{\boldsymbol{y}}^*}}
\nonumber\\
&\leq\delta\|\boldsymbol{B}\boldsymbol{y}^*_n-
\boldsymbol{B}\overline{\boldsymbol{y}}^*\|
\end{align}
where, by \ref{p:26ci},
\begin{equation}
\delta=\sup_{n\in\NN}\|\boldsymbol{y}^*_n-
\overline{\boldsymbol{y}}^*\|<\pinf.
\end{equation}
Therefore, using \eqref{e:-08c} and 
Theorem~\ref{t:5}\ref{t:5i+}--\ref{t:5ii},
we conclude that $\|x_n-\overline{x}\|\to 0$.
\end{proof}

Here is an application to strongly convex minimization problems
that arise in particular in mechanics \cite{Ekel74,Merc80} and in
signal processing \cite{Svva10,Jmaa11,Pott93}.

\begin{example}
\label{ex:26c}
Let $0<p\in\NN$, let $z\in\HH$, let $f\in\Gamma_0(\HH)$, and let
$\moyo{(f^*)}{1}$ be the Moreau envelope of $f^*$
(see \eqref{e:jjm3}). For
every $k\in\{1,\ldots,p\}$, let $g_k\in\Gamma_0(\GG_k)$, let
$\nu_k\in\RPP$, let $h_k\in\Gamma_0(\GG_k)$ be $\nu_k$-strongly
convex, and suppose that $0\neq L_k\in\BL(\HH,\GG_k)$. Define 
$\alpha$ as in \eqref{e:beta1} and suppose that  
\begin{equation}
\label{e:cq1}
z\in\ran\bigg(\partial f+\sum_{k=1}^pL_k^*\circ
(\partial g_k\infconv\partial h_k)\circ L_k+\Id\bigg).
\end{equation}
Then the primal problem 
\begin{equation}
\label{e:p26c}
\minimize{x\in\HH}{f(x)+\sum_{k=1}^p\,(g_k\infconv h_k)
(L_kx)+\frac{1}{2}\|x-z\|^2}
\end{equation}
admits a unique solution $\overline{x}$, namely
\begin{equation}
\overline{x}=\prox_{f+\sum_{k=1}^p\,(g_k\infconv h_k)\circ L_k}z,
\end{equation}
and the dual problem is
\begin{equation}
\label{e:d26c}
\minimize{{y^*_1}\in\GG_1,\:\ldots,\:{y^*_p}\in\GG_p}{
\moyo{\bigl(f^*\bigr)}{1}\biggl(z-\sum_{k=1}^pL_k^*y^*_k\biggr)
+\sum_{k=1}^p\bigl(g_k^*(y^*_k)+h_k^*(y^*_k)\bigr)}.
\end{equation}
Now let $\varepsilon\in\left]0,\alpha/(\alpha+1)\right[$,
let $(\gamma_n)_{n\in\NN}$ be a sequence in 
$[\varepsilon,(2-\varepsilon)\alpha]$, suppose that 
$(\mu_n)_{n\in\NN}$ satisfies \eqref{e:aqp63}, and, for every 
$k\in\{1,\ldots,p\}$, let $y^*_{k,0}\in\GG_k$. Iterate 
\begin{equation}
\label{e:+08c}
\begin{array}{l}
\text{for}\;n=0,1,\ldots\\
\left\lfloor
\begin{array}{l}
q_n=z-\sum_{k=1}^pL_k^*y^*_{k,n}\\
x_n=\prox_fq_n\\
\operatorname{for}\;k=1,\ldots,p\\
\left\lfloor
\begin{array}{l}
w_{k,n}=y^*_{k,n}+\gamma_n\bigl(L_kx_n-\nabla h_k^*(y^*_{k,n})
\bigr)\\[2mm]
y^*_{k,n+1}=y^*_{k,n}+\mu_n\bigl(\prox_{\gamma_ng_k^*}
w_{k,n}-y^*_{k,n}\bigr).\\[1mm] 
\end{array}
\right.\\[2mm]
\end{array}
\right.
\end{array}
\end{equation}
Then the following hold:
\begin{enumerate}
\item
\label{ex:26ci}
There exists a solution 
$(\overline{y}_1^*,\ldots,\overline{y}_p^*)$ to \eqref{e:d26c} 
such that $(\forall k\in\{1,\ldots,p\})$ 
$y^*_{k,n}\weakly\overline{y}^*_k$. 
\item
\label{ex:26cii}
$x_n\to\overline{x}$.
\end{enumerate}
\end{example}
\begin{proof}
Apply Proposition~\ref{p:26c} with $\rho=1$, $A=\partial f$, and
$(\forall k\in\{1,\ldots,p\})$ $B_k=\partial g_k$ and 
$D_k=\partial h_k$ (see \cite[Eample~5.6]{Opti14} for details).
\end{proof}

\begin{remark}
\label{r:26}
In Example~\ref{ex:26c}, suppose that $\HH=H_0^1(\Omega)$, where 
$\Omega$ is a bounded open domain in $\RR^2$, $p=1$, 
$\GG_1=L^2(\Omega)\oplus L^2(\Omega)$, $L_1=\nabla$, 
$g_1=\mu\|\cdot\|_{2,1}$ with $\mu\in\RPP$, and
$h_1=\iota_{\{0\}}$. Then \eqref{e:p26c} reduces to
\begin{equation}
\label{e:p26t}
\minimize{x\in H_0^1(\Omega)}{f(x)+
\mu\int_\Omega|\nabla x(\omega)|_2d\omega+\frac{1}{2}\|x-z\|^2}.
\end{equation}
In mechanics, \eqref{e:p26t} has been studied
for certain potentials $f$ \cite{Ekel74}. For instance,
$f=0$ yields Mossolov's problem and its dual analysis is carried
out in \cite[Section~IV.3.1]{Ekel74}. In image processing,
Mossolov's problem corresponds to the total variation denoising
problem. In 1980, Mercier \cite{Merc80} proposed a
dual projection algorithm to solve Mossolov's problem. In image
processing, this approach was rediscovered in a discrete setting
in \cite{Cham04,Cham05}.
\end{remark}

\subsubsection{Barycentric Dykstra-like algorithm}

Using Proposition~\ref{p:26c} and, thereby, the forward-backward
algorithm, we obtain a method for computing the resolvent of a 
sum of maximally monotone operators. This result, which generalizes
the barycentric Dykstra algorithm of \cite{Gaff89} for projecting
onto an intersection of closed convex sets, was originally derived
in \cite[Theorem~3.3]{Joca09} with different techniques. 

\begin{proposition}
\label{p:26w}
Let $0<p\in\NN$, let $z\in\HH$, and, for every 
$k\in\{1,\ldots,p\}$, let $A_k\colon\HH\to 2^{\HH}$ be maximally
monotone. Suppose that  
\begin{equation}
\label{e:cq4}
z\in\ran\bigg(\sum_{k=1}^p A_k+\Id\bigg)
\end{equation}
and consider the inclusion problem
\begin{equation}
\label{e:p26w}
\text{find}\;\;x\in\HH\;\;\text{such that}\;\;
z\in\sum_{k=1}^pA_kx+x.
\end{equation}
Set $x_0=z$ and $(\forall k\in\{1,\ldots,p\})$ $z_{k,0}=z$.
Iterate
\begin{equation}
\label{e:-08r}
\begin{array}{l}
\text{for}\;n=0,1,\ldots\\
\left\lfloor
\begin{array}{l}
\operatorname{for}\;k=1,\ldots,p\\
\left\lfloor
\begin{array}{l}
r_{k,n}=J_{pA_k}z_{k,n}\\
\end{array}
\right.\\[1mm]
x_{n+1}=(1/p)\sum_{k=1}^pr_{k,n}\\
\operatorname{for}\;k=1,\ldots,p\\
\left\lfloor
\begin{array}{l}
z_{k,n+1}=z_{k,n}-r_{k,n}+x_{n+1}.
\end{array}
\right.\\[2mm]
\end{array}
\right.
\end{array}
\end{equation}
Then $x_n\to J_{\sum_{k=1}^p A_k}z$.
\end{proposition}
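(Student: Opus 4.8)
The plan is to lift \eqref{e:p26w} to the $p$-fold product space $\HHH=\HH^{p}$ and to read \eqref{e:-08r} as an instance of the dual forward--backward scheme underlying Proposition~\ref{p:26c}. I introduce the maximally monotone operator $\boldsymbol{A}\colon\HHH\to2^{\HHH}\colon(x_k)_{1\le k\le p}\mapsto A_1x_1\times\cdots\times A_px_p$ (maximal monotonicity by Lemma~\ref{l:0616}), whose resolvent is separable, $J_{p\boldsymbol{A}}\colon(x_k)_{1\le k\le p}\mapsto(J_{pA_k}x_k)_{1\le k\le p}$ (Example~\ref{ex:r1}), together with the diagonal subspace $\boldsymbol{V}=\menge{(x,\ldots,x)}{x\in\HH}$, for which $\proj_{\boldsymbol{V}}(x_k)_{1\le k\le p}=(\overline{x},\ldots,\overline{x})$ with $\overline{x}=p^{-1}\sum_{k=1}^{p}x_k$ (Example~\ref{ex:r6}). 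Setting $\boldsymbol{z}_n=(z_{k,n})_{1\le k\le p}$ and $\boldsymbol{r}_n=J_{p\boldsymbol{A}}\boldsymbol{z}_n$, the barycentric step reads $(x_{n+1},\ldots,x_{n+1})=\proj_{\boldsymbol{V}}\boldsymbol{r}_n$, so that \eqref{e:-08r} collapses to the single recursion $\boldsymbol{z}_{n+1}=\boldsymbol{z}_n-\proj_{\boldsymbol{V}^{\bot}}\boldsymbol{r}_n$.

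First I would isolate the conserved quantity that identifies the limit. Because $\proj_{\boldsymbol{V}}\proj_{\boldsymbol{V}^{\bot}}=0$ and $\boldsymbol{z}_0=(z,\ldots,z)$, the recursion yields $\proj_{\boldsymbol{V}}\boldsymbol{z}_n=(z,\ldots,z)$, i.e.\ $\sum_{k=1}^{p}z_{k,n}=pz$, for every $n\in\NN$. This invariant is exactly the feasibility condition that pins down a stationary point: if $\boldsymbol{z}_{\ast}$ satisfies $\proj_{\boldsymbol{V}^{\bot}}\boldsymbol{r}_{\ast}=0$ then $\boldsymbol{r}_{\ast}=(\overline{x},\ldots,\overline{x})$ for some $\overline{x}\in\HH$, and \eqref{e:ee14} gives $(z_{\ast,k}-\overline{x})/p\in A_k\overline{x}$; summing these $p$ inclusions and using $\sum_{k}z_{\ast,k}=pz$ produces $z-\overline{x}\in\sum_{k=1}^{p}A_k\overline{x}$, that is $\overline{x}=J_{\sum_{k=1}^{p}A_k}z$ (existence follows from \eqref{e:cq4}, uniqueness from the strong monotonicity contributed by $\Id$). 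Hence the common component $x_{n+1}$ of $\proj_{\boldsymbol{V}}\boldsymbol{r}_n$ is the correct candidate for the limit.

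To obtain convergence I would match the recursion to the dual forward--backward iteration \eqref{e:-08a} and invoke Proposition~\ref{p:26c}. In this identification the backward step is the separable resolvent $J_{p\boldsymbol{A}}$, while the cocoercive forward operator is produced by the diagonal coupling $\proj_{\boldsymbol{V}^{\bot}}$ together with the regularizing identity term coming from the ``$+x$'' in \eqref{e:p26w}; as in the chain of estimates culminating in \eqref{e:2012-04-09c}, it is precisely this term (the firm nonexpansiveness of the resolvent producing $x_n$) that upgrades the merely weak convergence of the auxiliary sequence $(\boldsymbol{z}_n)_{n\in\NN}$ into strong convergence of the primal iterates. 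Proposition~\ref{p:26c}\ref{p:26cii} then delivers $x_n\to\overline{x}=J_{\sum_{k=1}^{p}A_k}z$.

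The main obstacle is this last identification, i.e.\ exhibiting \eqref{e:-08r} as a genuine specialization of \eqref{e:-08a}: one must fix the renormalization so that the product resolvent yields $J_{pA_k}$ (which forces the proximal scale to be $p$, corresponding to $\rho=1/p$ in the notation of Problem~\ref{prob:26}), confirm that the barycenter coincides with $\proj_{\boldsymbol{V}}$, and verify that the solvability hypothesis \eqref{e:cq4} is the correct transcription of the range condition \eqref{e:26a}. A secondary point demanding care is to check that the zero set of the embedded operator projects onto the singleton $\{J_{\sum_{k}A_k}z\}$, so that the strong limit is unambiguous; once these bookkeeping steps are in place, the convergence is immediate from Proposition~\ref{p:26c}.
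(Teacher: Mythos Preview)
Your plan is the paper's plan: specialize Problem~\ref{prob:26} so that \eqref{e:-08r} becomes an instance of \eqref{e:-08a} and invoke Proposition~\ref{p:26c}\ref{p:26cii}. One correction: in that specialization the paper takes $A=0$, $\GG_k=\HH$, $B_k=A_k$, $L_k=\Id$, $D_k=\{0\}^{-1}$, and $\rho=1$ (not $\rho=1/p$; the parameter $\rho$ in Problem~\ref{prob:26} is the strong-monotonicity constant supplying the ``$+x$'' term in \eqref{e:p26w}, not a proximal scale). The factor $p$ enters instead through the cocoercivity constant $\alpha=1/p$ in \eqref{e:beta1} and the choice $\gamma_n=1/p$, $\mu_n=1$, $y^*_{k,0}=0$. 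The ``bookkeeping'' you flag as the main obstacle is carried out in the paper by the explicit change of variables $z_{k,n}=p\,y^*_{k,n}+x_n$ together with the resolvent identity $J_{A_k^{-1}/p}(\,\cdot\,/p)=p^{-1}(\Id-J_{pA_k})$ from \eqref{e:yosi1}; this turns \eqref{e:-08a} into \eqref{e:-08r} and simultaneously yields the invariant $\sum_{k}z_{k,n}=pz$ that you discovered. Your product-space packaging with the diagonal $\boldsymbol{V}$ is an equivalent wrapper for the same computation.
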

\begin{proof}
First, we observe that \eqref{e:cq4}--\eqref{e:p26w} is the 
special case of \eqref{e:26a}--\eqref{e:26p} in which $A=0$ and, 
for every $k\in\{1,\ldots,p\}$, $\GG_k=\HH$, $B_k=A_k$, $L_k=\Id$,
and $D_k=\{0\}^{-1}$. Moreover, the cocoercivity constant in
\eqref{e:beta1} is $\alpha=1/p$. With this scenario, implementing
\eqref{e:-08a} with, for every $n\in\NN$, $\mu_n=1$ and
$\gamma_n=1/p$, and, for every $k\in\{1,\ldots,p\}$, $y^*_{k,0}=0$
leads to the recursion
\begin{equation}
\label{e:-08w}
\begin{array}{l}
\text{for}\;n=0,1,\ldots\\
\left\lfloor
\begin{array}{l}
x_n=z-\sum_{k=1}^py^*_{k,n}\\
\operatorname{for}\;k=1,\ldots,p\\
\left\lfloor
\begin{array}{l}
y^*_{k,n+1}=J_{A_k^{-1}/p}\bigl(y^*_{k,n}+x_n/p\bigr)\\[1mm] 
\end{array}
\right.\\[2mm]
\end{array}
\right.
\end{array}
\end{equation}
and Proposition~\ref{p:26c}\ref{p:26cii} guarantees that
$x_n\to J_{\sum_{k=1}^pA_k}z$. Alternatively, with the 
initialization $x_0=z$, we rewrite \eqref{e:-08w} as 
\begin{equation}
\label{e:-08s}
\begin{array}{l}
\text{for}\;n=0,1,\ldots\\
\left\lfloor
\begin{array}{l}
\operatorname{for}\;k=1,\ldots,p\\
\left\lfloor
\begin{array}{l}
y^*_{k,n+1}=J_{A_k^{-1}/p}\bigl(y^*_{k,n}+x_n/p\bigr)\\[1mm] 
\end{array}
\right.\\[2mm]
x_{n+1}=z-\sum_{k=1}^py^*_{k,n+1}.
\end{array}
\right.
\end{array}
\end{equation}
Let us introduce the variables
$(\forall n\in\NN)(\forall k\in\{1,\ldots,p\})$ 
$z_{k,n}=py^*_{k,n}+x_n$, where $z_{k,0}=x_0=z$. Then
\eqref{e:-08s} corresponds to the iterations
\begin{equation}
\label{e:-08t}
\begin{array}{l}
\text{for}\;n=0,1,\ldots\\
\left\lfloor
\begin{array}{l}
x_{n+1}=z-\sum_{k=1}^pJ_{A_k^{-1}/p}\bigl(z_{k,n}/p\bigr)\\
\operatorname{for}\;k=1,\ldots,p\\
\left\lfloor
\begin{array}{l}
z_{k,n+1}=p\,J_{A_k^{-1}/p}\bigl(z_{k,n}/p\bigr)+x_{n+1}.
\end{array}
\right.\\[2mm]
\end{array}
\right.
\end{array}
\end{equation}
By construction, 
\begin{equation}
(\forall n\in\NN)\quad\sum_{k=1}^pz_{k,n}=pz.
\end{equation}
Hence, appealing to \eqref{e:yosi1}, \eqref{e:-08t} becomes 
\begin{equation}
\label{e:-08m}
\begin{array}{l}
\text{for}\;n=0,1,\ldots\\
\left\lfloor
\begin{array}{l}
x_{n+1}=(1/p)\sum_{k=1}^pJ_{pA_k}z_{k,n}\\
\operatorname{for}\;k=1,\ldots,p\\
\left\lfloor
\begin{array}{l}
z_{k,n+1}=z_{k,n}-J_{pA_k}z_{k,n}+x_{n+1},
\end{array}
\right.\\[2mm]
\end{array}
\right.
\end{array}
\end{equation}
which is precisely \eqref{e:-08r}.
\end{proof}

\begin{example}
\label{ex:26z}
Consider the instantiation of Proposition~\ref{p:26w} in which,
for every $k\in\{1,\ldots,p\}$, $A_k=\partial f_k$, with
$f_k\in\Gamma_0(\HH)$, and execute \eqref{e:-08r}, which becomes
\begin{equation}
\label{e:-08f}
\begin{array}{l}
\text{for}\;n=0,1,\ldots\\
\left\lfloor
\begin{array}{l}
\operatorname{for}\;k=1,\ldots,p\\
\left\lfloor
\begin{array}{l}
r_{k,n}=\prox_{pf_k}z_{k,n}\\
\end{array}
\right.\\[1mm]
x_{n+1}=(1/p)\sum_{k=1}^pr_{k,n}\\
\operatorname{for}\;k=1,\ldots,p\\
\left\lfloor
\begin{array}{l}
z_{k,n+1}=z_{k,n}-r_{k,n}+x_{n+1}.
\end{array}
\right.\\[2mm]
\end{array}
\right.
\end{array}
\end{equation}
Then $x_n\to\prox_{\sum_{k=1}^pf_k}z$. 
\end{example}

Our last example addresses the barycentric Dykstra algorithm
\emph{per se}. The original Dykstra algorithm was devised in
\cite{Dyks83} to project onto the intersection of closed convex
cones (see also \cite{Hans88} for general closed convex sets whose
intersection has a nonempty interior) in Euclidean spaces using
periodic applications of the projectors onto the individual sets.
Convergence of this periodic scheme in the general case of
arbitrary closed and convex sets in Hilbert spaces was established
in \cite{Boyl86} (see \cite{Pjoo08} for an extension to
monotone operators). The barycentric version described below,
in which all the projectors are used at each iteration, was devised
in \cite[Section~6]{Gaff89}. Its connection with the
forward-backward algorithm is discussed in
\cite[Remark~3.8]{Svva10} and \cite[Remark~2.3]{Jmaa11}, and its
asymptotic behavior in the inconsistent case in
\cite[Theorem~6.1]{Baus94}.

\begin{example}
\label{ex:26n}
In Example~\ref{ex:26z}, suppose that, for every
$k\in\{1,\ldots,p\}$, $f_k=\iota_{C_k}$, where $C_k$ is a
nonempty closed convex subset of $\HH$. Then algorithm
\eqref{e:-08f} becomes
\begin{equation}
\label{e:-08n}
\begin{array}{l}
\text{for}\;n=0,1,\ldots\\
\left\lfloor
\begin{array}{l}
\operatorname{for}\;k=1,\ldots,p\\
\left\lfloor
\begin{array}{l}
r_{k,n}=\proj_{C_k}z_{k,n}\\
\end{array}
\right.\\[1mm]
x_{n+1}=(1/p)\sum_{k=1}^pr_{k,n}\\
\operatorname{for}\;k=1,\ldots,p\\
\left\lfloor
\begin{array}{l}
z_{k,n+1}=z_{k,n}-r_{k,n}+x_{n+1}
\end{array}
\right.\\[2mm]
\end{array}
\right.
\end{array}
\end{equation}
and $x_n\to\proj_{\bigcap_{k=1}^pC_k}z$. 
\end{example}

\subsubsection{Renorming}
\label{sec:ren2}

We preface our discussion with a renormed version of
Theorem~\ref{t:5}.

\begin{proposition}
\label{p:6}
Let $\alpha\in\RPP$, let $\beta\in\RPP$, let 
$A\colon\HH\to 2^{\HH}$ be maximally monotone, let 
$B\colon\HH\to\HH$ be $\alpha$-cocoercive, let
$U\in\BL(\HH)$ be self-adjoint and $\beta$-strongly monotone, and
let $\mathcal{X}$ be the real Hilbert space obtained by endowing
$\HH$ with the scalar product $(x,y)\mapsto\scal{Ux}{y}$. Let 
$\varepsilon\in\left]0,\alpha\beta/(\alpha\beta+1)\right[$, let
$(\gamma_n)_{n\in\NN}$ be a sequence in
$\left[\varepsilon,(2-\varepsilon)\alpha\beta\right]$, and let
$(\lambda_n)_{n\in\NN}$ be a sequence in 
$\left[\varepsilon,1\right]$. Suppose that the set $Z$ of solutions
to the problem 
\begin{equation}
\label{e:35p}
\text{find}\;\;x\in\HH\;\;\text{such that}\;\;0\in Ax+Bx
\end{equation}
is not empty and let $Z^*$ be the set of solutions to the dual
problem 
\begin{equation}
\label{e:35d}
\text{find}\;\;x^*\in\HH\;\;\text{such that}\;\;
0\in -A^{-1}(-x^*)+B^{-1}x^*.
\end{equation}
Let $x_0\in\HH$ and iterate
\begin{equation}
\label{e:algo35}
\begin{array}{l}
\text{for}\;n=0,1,\ldots\\
\left\lfloor
\begin{array}{l}
u_n^*=\gamma_n^{-1}Ux_n-Bx_n\\
w_n=\big(\gamma_n^{-1}U+A\big)^{-1}u_n^*\\
x_{n+1}=x_n+\lambda_n(w_n-x_n).
\end{array}
\right.\\
\end{array}
\end{equation}
Then the following hold:
\begin{enumerate}
\item
\label{p:6i}
$(x_n)_{n\in\NN}$ converges weakly to a point in $Z$.
\item
\label{p:6i+}
$Z^*$ contains a single point $\overline{x}^*$ and
$(\forall z\in Z)$ $Bz=\overline{x}^*$.
\item
\label{p:6ii}
$(Bx_n)_{n\in\NN}$ converges strongly to $\overline{x}^*$.
\end{enumerate}
\end{proposition}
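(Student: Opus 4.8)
The plan is to recognize \eqref{e:algo35} as the forward--backward algorithm of Theorem~\ref{t:5} executed in the renormed space $\mathcal{X}$, in the spirit of the embedding of Example~\ref{ex:f9} and the renormed proximal-point results of Propositions~\ref{p:3} and \ref{p:3s}. First I would set $\mathsf{A}=U^{-1}\circ A$ and $\mathsf{B}=U^{-1}\circ B$, regarded as operators on $\mathcal{X}$. By Lemma~\ref{l:9}\ref{l:9i}, $\mathsf{A}$ is maximally monotone on $\mathcal{X}$; by Lemma~\ref{l:9}\ref{l:9ii}, $\mathsf{B}$ is $\alpha\beta$-cocoercive on $\mathcal{X}$; and since $U^{-1}$ is a linear bijection we have $U^{-1}\circ A+U^{-1}\circ B=U^{-1}\circ(A+B)$, so Lemma~\ref{l:9}\ref{l:9i-} gives $\zer(\mathsf{A}+\mathsf{B})=\zer(A+B)=Z$.

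The key identification is that \eqref{e:algo35} is exactly the forward--backward iteration \eqref{e:fb} for $\mathsf{A},\mathsf{B}$ in $\mathcal{X}$. Here I would use that the resolvent $J_{\gamma_n\mathsf{A}}=(\Id+\gamma_n U^{-1}\circ A)^{-1}$ does not depend on the scalar product and, as in Example~\ref{ex:b12}, equals $(\gamma_n^{-1}U+A)^{-1}\circ(\gamma_n^{-1}U)$. A one-line computation gives $\gamma_n^{-1}U(x_n-\gamma_n\mathsf{B}x_n)=\gamma_n^{-1}Ux_n-Bx_n=u_n^*$, whence $J_{\gamma_n\mathsf{A}}(x_n-\gamma_n\mathsf{B}x_n)=(\gamma_n^{-1}U+A)^{-1}u_n^*=w_n$; thus \eqref{e:algo35} coincides with \eqref{e:fb} for $\mathsf{A},\mathsf{B}$ in $\mathcal{X}$, with the same $\gamma_n$ and with $\mu_n$ in the role of $\lambda_n$. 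Since the weak and strong topologies of $\HH$ and $\mathcal{X}$ coincide, Theorem~\ref{t:5}\ref{t:5i} then yields $x_n\weakly x\in Z$, which is assertion \ref{p:6i}.

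For \ref{p:6i+} I would argue directly in $\HH$, duplicating the proof of Theorem~\ref{t:5}\ref{t:5i+}: cocoercivity of $B$ makes $B^{-1}$ $\alpha$-strongly monotone, so $-A^{-1}\circ(-\Id)+B^{-1}$ is strongly monotone and \eqref{e:35d} has a unique solution $\overline{x}^*$ by \cite[Corollary~23.37(ii)]{Livre1}; and any $z\in Z$ satisfies $0\in-A^{-1}(-Bz)+B^{-1}(Bz)$, so $Bz=\overline{x}^*$. For \ref{p:6ii}, Theorem~\ref{t:5}\ref{t:5ii} applied in $\mathcal{X}$ shows that $\mathsf{B}x_n=U^{-1}Bx_n$ converges strongly in $\mathcal{X}$, hence in $\HH$; applying the bounded operator $U$ shows that $Bx_n$ converges strongly in $\HH$, and since $x_n\weakly x$ while $B$ (maximally monotone by Example~\ref{ex:13}) has a weak-strong sequentially closed graph (Lemma~\ref{l:12}), the strong limit is forced to equal $Bx=\overline{x}^*$.

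The main obstacle is matching the parameter ranges. Running Theorem~\ref{t:5} in $\mathcal{X}$ requires condition \eqref{e:aqp63} with $\alpha$ replaced by $\alpha\beta$, namely $\mu_n\le(1-\varepsilon)(4\alpha\beta-\gamma_n)/(2\alpha\beta)$; over $\gamma_n\le(2-\varepsilon)\alpha\beta$ the right-hand side is bounded below by $1-(\varepsilon+\varepsilon^2)/2<1$, so the relaxation window $[\varepsilon,1]$ of the present statement is not literally contained in that of Theorem~\ref{t:5}. I would resolve this by invoking Theorem~\ref{t:5} with a smaller tolerance $\varepsilon'=\varepsilon/(2+\varepsilon)$ in place of $\varepsilon$: since $\varepsilon'<\varepsilon$ one checks $\gamma_n\in[\varepsilon',(2-\varepsilon')\alpha\beta]$, $\lambda_n\ge\varepsilon'$, and, in the extremal case $\gamma_n=(2-\varepsilon)\alpha\beta$, $(1-\varepsilon')(4\alpha\beta-\gamma_n)/(2\alpha\beta)=(1-\varepsilon')(2+\varepsilon)/2=1\ge\lambda_n$, so all hypotheses of Theorem~\ref{t:5} hold with $\varepsilon'$. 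Verifying these elementary inequalities, and confirming $\varepsilon'\in\left]0,\alpha\beta/(\alpha\beta+1)\right[$, is the only genuinely delicate bookkeeping in the argument.
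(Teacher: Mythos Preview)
Your proposal is correct and follows exactly the paper's approach: recognize \eqref{e:algo35} as the forward--backward iteration \eqref{e:fb} for $U^{-1}\circ A$ and $U^{-1}\circ B$ in the renormed space $\mathcal{X}$ via Lemma~\ref{l:9} and Example~\ref{ex:b12}, and then invoke Theorem~\ref{t:5}. In fact, you are more careful than the paper: the paper's proof simply says ``the assertions follow from Theorem~\ref{t:5}'' without checking that the relaxation range $\lambda_n\in[\varepsilon,1]$ fits condition \eqref{e:aqp63} (with $\alpha$ replaced by $\alpha\beta$), and your observation that it does not fit literally, together with your fix via the smaller tolerance $\varepsilon'=\varepsilon/(2+\varepsilon)$, closes a gap the paper leaves implicit. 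Your separate treatment of \ref{p:6i+} directly in $\HH$ and your argument for \ref{p:6ii} via Lemma~\ref{l:12} are likewise more explicit than the paper's one-line appeal to Theorem~\ref{t:5}, and are correct.
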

\begin{proof}
We derive from Lemma~\ref{l:9} and Example~\ref{ex:b12} that 
\begin{equation}
\label{e:fu}
(\forall n\in\NN)\quad x_{n+1}=x_n+\lambda_n\Bigl(
J_{\gamma_n U^{-1}\circ A}\bigl(x_n-\gamma_nU^{-1}(Bx_n)\bigr)
-x_n\Bigr),
\end{equation}
where $U^{-1}\circ A\colon\mathcal{X}\to 2^{\mathcal{X}}$ is
maximally monotone, 
$U^{-1}\circ B\colon\mathcal{X}\to\mathcal{X}$ is
$\alpha\beta$-cocoercive, and $\zer(A+B)=\zer(U^{-1}\circ(A+B))$.
Hence the assertions follow from Theorem~\ref{t:5} applied to
$U^{-1}\circ A$ and $U^{-1}\circ B$ in $\mathcal{X}$.
\end{proof}

\begin{remark}
\label{r:wr2}
In terms of the warped resolvents of Section~\ref{sec:wr}, 
\eqref{e:algo35} can be condensed into
\begin{equation}
(\forall n\in\NN)\quad
x_{n+1}=x_n+\lambda_n\Bigl(J_{\gamma_n(A+B)}^{U_n}x_n-x_n\Bigr),
\;\;\text{where}\;\;U_n=U-\gamma_n B.
\end{equation}
\end{remark}

We present an approach proposed in \cite{Bang13}, which
revisited the primal-dual setting of \cite{Svva12} discussed in
Proposition~\ref{p:18} by replacing the monotone Lipschitz
property of the operators $C$ and $(D_k^{-1})_{1\leq k\leq p}$ with
the stronger cocoercivity property. 

\begin{proposition}[{\protect{\cite[Theorem~3.1(i)]{Bang13}}}]
\label{p:21}
Let $0<p\in\NN$, let $\alpha\in\RPP$, let $A\colon\HH\to 2^{\HH}$ 
be maximally monotone, and let $C\colon\HH\to\HH$ be 
$\alpha$-cocoercive. For every $k\in\{1,\ldots,p\}$, let 
$\beta_k\in\RPP$, let $\GG_k$ be a real Hilbert space, 
let $B_k\colon\GG_k\to 2^{\GG_k}$ be maximally monotone,
let $D_k\colon\GG_k\to 2^{\GG_k}$ be maximally monotone and
$\beta_k$-strongly monotone, and
suppose that $0\neq L_k\in\BL(\HH,\GG_k)$. Additionally, suppose
that the set $Z$ of solutions to the primal inclusion 
\begin{equation}
\label{e:bap}
\text{find}\;\;x\in\HH\;\;\text{such that}\;\;
0\in Ax+\sum_{k=1}^pL_k^*\big((B_k\infconv D_k)(L_kx)\big)+Cx
\end{equation}
is not empty and let $Z^*$ be the set of solutions to 
the dual inclusion
\begin{multline}
\label{e:bad}
\text{find}\;\;y^*_1\in\GG_1,\ldots,y^*_p\in\GG_p
\;\:\text{such that}\\
(\exi x\in\HH)\;
\begin{cases}
x\in(A+C)^{-1}\biggl(-\sum_{k=1}^pL_k^*{y^*_k}\biggr)\\
\bigl(\forall k\in\{1,\ldots,p\}\bigr)\;L_kx
\in B^{-1}_ky^*_k+D^{-1}_ky^*_k.
\end{cases}
\end{multline}
Let $\varepsilon\in\zeroun$, let $(\lambda_n)_{n\in\NN}$ be a
sequence in $[\varepsilon,1]$, let $x_0\in\HH$, let
$(y^*_{1,0},\ldots,y^*_{p,0})\in\GG_1\oplus\cdots\oplus\GG_p$, 
let $\tau\in\RPP$, and let
$(\sigma_1,\ldots,\sigma_p)\in\RPP^p$. Set
\begin{equation}
\label{e:betav}
\aleph=\min\{\alpha,\beta_1,\ldots,\beta_p\}\quad\text{and}\quad
\beta=\dfrac{1-\sqrt{\tau\sum_{k=1}^p\sigma_k\|L_k\|^2}}
{\max\{\tau,\sigma_1,\ldots,\sigma_p\}}
\end{equation}
and assume that
\begin{equation}
\label{e:z5}
\aleph\beta>\dfrac{1}{2}.
\end{equation}
Iterate
\begin{equation}
\label{e:v1}
\begin{array}{l}
\text{for}\;n=0,1,\ldots\\
\left\lfloor
\begin{array}{l}
x^*_n=\tau\bigl(\sum_{k=1}^pL_k^*y_{k,n}^*+Cx_n\bigr)\\
p_n=J_{\tau A}(x_n-x^*_n)\\
x_{n+1}=x_n+\lambda_n(p_n-x_n)\\
\text{for}\;k=1,\ldots,p\\
\left\lfloor
\begin{array}{l}
y_{k,n}=\sigma_k\bigl(L_k(2p_n-x_n)-D_k^{-1}y^*_{k,n}\bigr)\\
q^*_{k,n}=J_{\sigma_k B_k^{-1}}(y^*_{k,n}+y_{k,n})\\
y^*_{k,n+1}=y^*_{k,n}+\lambda_n(q^*_{k,n}-y^*_{k,n}).
\end{array}
\right.\\[1mm]
\end{array}
\right.\\[1mm]
\end{array}
\end{equation}
Then there exist $x\in Z$ and $(y^*_1,\ldots,y^*_p)\in Z^*$ such 
that $x_n\weakly{x}$, and, for every $k\in\{1,\ldots,p\}$,
$y^*_{k,n}\weakly{y^*_k}$.
\end{proposition}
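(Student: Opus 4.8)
The plan is to recognize the iteration \eqref{e:v1} as a renormed forward--backward recursion (Proposition~\ref{p:6}) carried out in a primal--dual product space for a suitably decomposed Kuhn--Tucker operator, exactly in the spirit of the derivation of the Chambolle--Pock algorithm in Example~\ref{ex:1Ub}, but now with a genuine cocoercive forward term. First I would set $\GGG=\GG_1\oplus\cdots\oplus\GG_p$ and $\XXX=\HH\oplus\GGG$, with generic element $(x,\boldsymbol{y}^*)$ where $\boldsymbol{y}^*=(y_k^*)_{1\le k\le p}$, and introduce the Kuhn--Tucker operator attached to \eqref{e:bap}--\eqref{e:bad}, which in block form (cf.\ \eqref{e:kt9}) reads
\[
\kut\colon(x,\boldsymbol{y}^*)\mapsto
\Bigl(Ax+Cx+\textstyle\sum_k L_k^*y_k^*\Bigr)\times
\bigtimes_{1\le k\le p}\bigl(-L_kx+B_k^{-1}y_k^*+D_k^{-1}y_k^*\bigr).
\]
I would then split $\kut=\boldsymbol{M}+\boldsymbol{C}$, where $\boldsymbol{M}\colon(x,\boldsymbol{y}^*)\mapsto(Ax+\sum_k L_k^*y_k^*)\times\bigtimes_{1\le k\le p}(-L_kx+B_k^{-1}y_k^*)$ is precisely the Kuhn--Tucker operator \eqref{e:kt37} of Problem~\ref{prob:9} (set-valued operators plus skew coupling), and $\boldsymbol{C}\colon(x,\boldsymbol{y}^*)\mapsto(Cx,D_1^{-1}y_1^*,\ldots,D_p^{-1}y_p^*)$ is the diagonal single-valued part.

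Next I would check the hypotheses of Proposition~\ref{p:6}. The operator $\boldsymbol{M}$ is maximally monotone by Lemma~\ref{l:37z}\ref{l:37zi}. Since each $D_k$ is $\beta_k$-strongly monotone, $D_k^{-1}$ is $\beta_k$-cocoercive with full domain (\cite[Proposition~22.11(ii)]{Livre1}), so $\boldsymbol{C}$ is $\aleph$-cocoercive with $\aleph=\min\{\alpha,\beta_1,\ldots,\beta_p\}$, matching \eqref{e:betav}. For the renorming I would use the self-adjoint operator
\[
\boldsymbol{U}\colon(x,\boldsymbol{y}^*)\mapsto
\Bigl(\tau^{-1}x-\textstyle\sum_k L_k^*y_k^*,\,
\sigma_1^{-1}y_1^*-L_1x,\ldots,\sigma_p^{-1}y_p^*-L_px\Bigr),
\]
and show, by a Cauchy--Schwarz estimate paralleling the one in Example~\ref{ex:1Ub} but with $p$ dual blocks (bounding the cross term $2\sum_k\scal{L_kx}{y_k^*}$), that $\boldsymbol{U}$ is $\beta$-strongly monotone with the constant $\beta$ of \eqref{e:betav}; this is where the normalization $\tau\sum_k\sigma_k\|L_k\|^2<1$ (implicit in $\beta>0$) is consumed.

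Then I would verify that \eqref{e:v1} is exactly the renormed forward--backward scheme \eqref{e:algo35} applied to $\boldsymbol{M}$ (backward), $\boldsymbol{C}$ (forward), with renorming $\boldsymbol{U}$ and constant step $\gamma_n\equiv1$. Writing $\boldsymbol{x}_n=(x_n,\boldsymbol{y}_n^*)$ and $\boldsymbol{p}_n=(p_n,\boldsymbol{q}_n^*)$, the backward step $\boldsymbol{p}_n=(\boldsymbol{U}+\boldsymbol{M})^{-1}(\boldsymbol{U}\boldsymbol{x}_n-\boldsymbol{C}\boldsymbol{x}_n)$ decouples: in the primal block the $\sum_k L_k^*q_k^*$ contributions from $\boldsymbol{U}$ and $\boldsymbol{M}$ cancel, yielding $p_n=J_{\tau A}\bigl(x_n-\tau(\sum_k L_k^*y_{k,n}^*+Cx_n)\bigr)$, while in each dual block the two copies of $-L_kp_n$ combine to produce the over-relaxed argument $L_k(2p_n-x_n)$, yielding $q_{k,n}^*=J_{\sigma_kB_k^{-1}}\bigl(y_{k,n}^*+\sigma_k(L_k(2p_n-x_n)-D_k^{-1}y_{k,n}^*)\bigr)$. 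Together with the relaxation $\boldsymbol{x}_{n+1}=\boldsymbol{x}_n+\lambda_n(\boldsymbol{p}_n-\boldsymbol{x}_n)$, this reproduces \eqref{e:v1} line by line (using Example~\ref{ex:r1} for the product resolvent).

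Finally I would invoke Proposition~\ref{p:6}. In the renormed space endowed with $\scal{\boldsymbol{U}\cdot}{\cdot}$, Lemma~\ref{l:9}\ref{l:9ii} shows $\boldsymbol{U}^{-1}\circ\boldsymbol{C}$ is $\aleph\beta$-cocoercive; condition \eqref{e:z5}, namely $\aleph\beta>1/2$, guarantees that for a sufficiently small threshold parameter in Proposition~\ref{p:6} the constant step $\gamma_n\equiv1$ lies in the admissible range $[\varepsilon',(2-\varepsilon')\aleph\beta]$ while $\lambda_n\in[\varepsilon,1]$ satisfies the relaxation requirement. Proposition~\ref{p:6}\ref{p:6i} then gives $\boldsymbol{x}_n\weakly$ a zero of $\kut=\boldsymbol{M}+\boldsymbol{C}$, and the Kuhn--Tucker correspondence of Lemma~\ref{l:37z} (adapted by replacing $A$ with $A+C$ and $B_k^{-1}$ with $B_k^{-1}+D_k^{-1}$, as in the proof of Proposition~\ref{p:18}) identifies the primal component $x\in Z$ and the dual component $(y_1^*,\ldots,y_p^*)\in Z^*$, delivering the stated weak convergences. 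The main obstacle is the pair of routine-but-delicate verifications in the middle steps: the strong-monotonicity estimate pinning down $\beta$ in \eqref{e:betav}, and the resolvent decoupling with $p$ dual blocks; it is precisely the matching of this $\beta$ against the threshold \eqref{e:z5} that renders the unit step size admissible and thereby closes the argument.
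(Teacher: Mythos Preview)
Your proposal is correct and follows essentially the same route as the paper: define the product space $\XXX=\HH\oplus\GG_1\oplus\cdots\oplus\GG_p$, split the Kuhn--Tucker operator as $\boldsymbol{M}+\boldsymbol{C}$ exactly as you do, introduce the same self-adjoint $\boldsymbol{U}$, recognize \eqref{e:v1} as the renormed forward--backward scheme \eqref{e:algo35} with $\gamma_n\equiv 1$, and conclude via Proposition~\ref{p:6}\ref{p:6i} together with the adapted Lemma~\ref{l:37z}\ref{l:37zii}. The only cosmetic difference is that the paper outsources the strong-monotonicity estimate for $\boldsymbol{U}$ to \cite[Equation~(3.20)]{Bang13} rather than redoing the Cauchy--Schwarz computation you sketch.
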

\begin{proof}
Set $\XXX=\HH\oplus\GG_1\oplus\cdots\oplus\GG_p$ and 
\begin{equation}
\label{e:v2}
\begin{cases}
\boldsymbol{M}\colon\XXX\to 2^{\XXX}\colon
(x,y^*_1,\ldots,y^*_p)\mapsto\\
\hspace{20mm}\bigl(Ax+\sum_{k=1}^pL_k^*y^*_k\big)\times
\bigl(-L_1x+B^{-1}_1y^*_1\bigr)\times
\cdots\times\bigl(-L_px+B^{-1}_py^*_p\bigr)\\
\boldsymbol{C}\colon\XXX\to\XXX\colon(x,y^*_1,\ldots,y^*_p)
\mapsto\bigl(Cx,D_1^{-1}y^*_1,\ldots,D_p^{-1}y^*_p\bigr)\\
\boldsymbol{U}\colon\XXX\to\XXX\colon(x,y^*_1,\ldots,y^*_p)
\mapsto\\
\hspace{20mm}\bigl(\tau^{-1}x-\sum_{k=1}^pL_k^*y^*_k,
-L_1x+\sigma_1^{-1}y^*_1,\ldots,-L_px+\sigma_p^{-1}y^*_p\bigr).
\end{cases}
\end{equation}
As in \eqref{e:UU}, $\boldsymbol{M}$ is maximally monotone,
while $\boldsymbol{C}$ is $\aleph$-cocoercive.
Furthermore, $\boldsymbol{U}\in\BL(\HH)$ is self-adjoint and, as
shown in \cite[Equation~(3.20)]{Bang13}, \eqref{e:z5} implies that
it is $\beta$-strongly monotone. Now set $(\forall n\in\NN)$
$\boldsymbol{x}_n=(x_n,y^*_{1,n},\ldots,y^*_{p,n})$ and
$\boldsymbol{w}_n=(p_n,q^*_{1,n},\ldots,q^*_{p,n})$. Then, 
adopting the same pattern as in the proof of 
Example~\ref{ex:1Ub}, we rewrite \eqref{e:v1} as
\begin{equation}
\label{e:algo35b}
\begin{array}{l}
\text{for}\;n=0,1,\ldots\\
\left\lfloor
\begin{array}{l}
\boldsymbol{u}_n^*=\boldsymbol{U}
\boldsymbol{x}_n-\boldsymbol{C}\boldsymbol{x}_n\\
\boldsymbol{w}_n=\big(\boldsymbol{U}+
\boldsymbol{M}\big)^{-1}\boldsymbol{u}_n^*\\
\boldsymbol{x}_{n+1}=\boldsymbol{x}_n+
\lambda_n(\boldsymbol{w}_n-\boldsymbol{x}_n),
\end{array}
\right.\\
\end{array}
\end{equation}
and thus recover \eqref{e:algo35} with $(\forall n\in\NN)$ 
$\gamma_n=1<2\aleph\beta$. 
We therefore appeal to Proposition~\ref{p:6}\ref{p:6i} to obtain
the weak convergence of $(\boldsymbol{x}_n)_{n\in\NN}$ to a point 
$(x,y^*_1,\ldots,y_p^*)\in\zer(\boldsymbol{M}+\boldsymbol{C})$.
However, replacing $A$ with $A+C$ and 
$(B^{-1}_k)_{1\leq k\leq p}$ with
$(B^{-1}_k+D^{-1}_k)_{1\leq k\leq p}$ in
Lemma~\ref{l:37z}\ref{l:37zii} yields
$\zer(\boldsymbol{M}+\boldsymbol{C})\subset Z\times Z^*$.
\end{proof}

\begin{remark}
In terms of Framework~\ref{f:1}, the embedding underlying
Proposition~\ref{p:21} employs
$\XXX=\HH\oplus\GG_1\oplus\cdots\oplus\GG_p$,
$\MMM=\boldsymbol{M}+\boldsymbol{C}$, and
$\TTT\colon\XXX\to\HH\colon(x,y^*_1,\ldots,y^*_p)\mapsto x$.
\end{remark}

The following application to minimization revisits the setting of
Example~\ref{ex:18} and Remark~\ref{r:31}.

\begin{example}
\label{ex:20h}
Let $0<p\in\NN$, let $\alpha\in\RPP$, let $f\in\Gamma_0(\HH)$, 
and let $h\colon\HH\to\RR$ be convex, differentiable, and such
that $\nabla h$ is $1/\alpha$-Lipschitzian. For every
$k\in\{1,\ldots,p\}$, let 
$\beta_k\in\RPP$, let $\GG_k$ be a real Hilbert space, 
let $g_k\in\Gamma_0(\GG_k)$,
let $\ell_k\in\Gamma_0(\GG_k)$ be $\beta_k$-strongly convex, 
and suppose that $0\neq L_k\in\BL(\HH,\GG_k)$. Let $Z$ be the set
of solutions to the primal problem
\begin{equation}
\label{e:bap2}
\minimize{x\in\HH}
{f(x)+\sum_{k=1}^p(g_k\infconv\ell_k)(L_kx)+h(x)},
\end{equation}
let $Z^*$ be the set of solutions to the dual problem
\begin{equation}
\label{e:bad2}
\minimize{y^*_1\in\GG_1,\ldots,y^*_p\in\GG_p}
{(f^*\infconv h^*)\biggl(-\sum_{k=1}^pL_k^*{y^*_k}\biggr)
+\sum_{k=1}^p\bigl(g_k^*(y^*_k)+\ell_k^*(y^*_k)\bigr)},
\end{equation}
and suppose that
\begin{equation}
\label{e:sv12}
\zer\biggl(\partial f+\sum_{k=1}^pL_k^*\circ\bigl(
\partial g_k\infconv\partial\ell_k\bigr)\circ L_k
+\nabla h\biggr)
\neq\emp.
\end{equation}
Let $\varepsilon\in\zeroun$, let $(\lambda_n)_{n\in\NN}$ be a
sequence in $[\varepsilon,1]$, let $x_0\in\HH$, let
$(y^*_{1,0},\ldots,y^*_{p,0})\in\GG_1\oplus\cdots\oplus\GG_p$, 
let $\tau\in\RPP$, and let
$(\sigma_1,\ldots,\sigma_p)\in\RPP^p$ be such that
\eqref{e:betav}--\eqref{e:z5} hold. Iterate
\begin{equation}
\label{e:v4}
\begin{array}{l}
\text{for}\;n=0,1,\ldots\\
\left\lfloor
\begin{array}{l}
x^*_n=\tau\bigl(\sum_{k=1}^pL_k^*y_{k,n}^*+\nabla h(x_n)\bigr)\\
p_n=\prox_{\tau f}(x_n-x^*_n)\\
x_{n+1}=x_n+\lambda_n(p_n-x_n)\\
\text{for}\;k=1,\ldots,p\\
\left\lfloor
\begin{array}{l}
y_{k,n}=\sigma_k\bigl(L_k(2p_n-x_n)-\nabla\ell_k^{*}(y^*_{k,n})
\bigr)\\
q^*_{k,n}=\prox_{\sigma_k g_k^*}(y^*_{k,n}+y_{k,n})\\
y^*_{k,n+1}=y^*_{k,n}+\lambda_n(q^*_{k,n}-y^*_{k,n}).
\end{array}
\right.\\[1mm]
\end{array}
\right.\\[1mm]
\end{array}
\end{equation}
Then there exist $x\in Z$ and $(y^*_1,\ldots,y^*_p)\in Z^*$ such 
that $x_n\weakly{x}$, and, for every $k\in\{1,\ldots,p\}$,
$y^*_{k,n}\weakly{y^*_k}$.
\end{example}
\begin{proof}
It follows from the arguments presented in \cite[Section~4]{Svva12}
that this is an application of Proposition~\ref{p:21} with
$A=\partial f$, $C=\nabla h$, and $(\forall k\in\{1,\ldots,p\})$
$B_k=\partial g_k$ and $D_k=\partial \ell_k$.
\end{proof}

\begin{remark}
\label{r:cv}
If we make the additional assumptions that, for every
$k\in\{1,\ldots,p\}$, $\ell_k=\iota_{\{0\}}$ and
$\sigma_k=\sigma_1$, Example~\ref{ex:20h} was independently
obtained in \cite[Section~5]{Cond13}. For this reason,
\eqref{e:v4} in this particular setting is called the
\emph{Condat--V\~u} algorithm. 
\end{remark}

\subsection{Forward-backward-half-forward splitting}
\label{sec:fbfh}

Let $A\colon\HH\to 2^{\HH}$ be maximally monotone, let
$C\colon\HH\to\HH$ be cocoercive, and let $Q\colon\HH\to\HH$ be
monotone and Lipschitzian. Then a zero of $M=A+C+Q$ can be
constructed through the forward-backward-forward algorithms of
Theorem~\ref{t:6} or Theorem~\ref{t:6s}, applied to $A$ and the
monotone and Lipschitzian operator $B=C+Q$. These algorithms
require two applications of $B$, i.e., two applications of $C$ and
$Q$, at each iteration. However, the algorithms discussed so far
require two applications of a monotone Lipschitzian operator per
iteration, as in the Antipin--Korpelevi\v{c} method of
Section~\ref{sec:korp} and the 
forward-backward-forward methods of Sections~\ref{sec:fbfw} and
\ref{sec:fbfs}, but only one application of a cocoercive operator,
as in the Euler method of Section~\ref{sec:eul} and the
forward-backward methods of Sections~\ref{sec:fbw} and
\ref{sec:fbs}. It is therefore natural to ask whether one can find
a zero of $A+C+Q$ using only one application of $C$ per iteration.
A positive answer to this question was given in \cite{Bric18} 
with the following forward-backward-half-forward splitting
algorithm. We provide a simple proof of its convergence 
using our geometric framework. 

\begin{proposition}[{\protect{\cite[Theorem~2.3.1]{Bric18}}}]
\label{p:22}
Let $\alpha\in\RPP$, let $\beta\in\RPP$, let
$A\colon\HH\to 2^{\HH}$ be maximally monotone, let 
$C\colon\HH\to\HH$ be $\alpha$-cocoercive, let $Q\colon\HH\to\HH$
be monotone and $\beta$-Lipschitzian, and suppose that the set of
solutions $Z$ to the inclusion
\begin{equation}
\label{e:n4}
\text{find}\;\;x\in\HH\;\;\text{such that}\;\;0\in Ax+Cx+Qx
\end{equation}
is not empty. Let $x_0\in\HH$, set
$\chi=4\alpha/(1+\sqrt{1+16\alpha^2\beta^2})$, let 
$\varepsilon\in\left]0,\chi/(\chi+1)\right[$, and let
$(\gamma_n)_{n\in\NN}$ be a sequence in 
$\left[\varepsilon,(1-\varepsilon)\chi\right]$.
Iterate
\begin{equation}
\label{e:half}
\begin{array}{l}
\text{for}\;n=0,1,\ldots\\
\left\lfloor
\begin{array}{l}
c_n^*=\gamma_nCx_n\\
q_n^*=\gamma_nQx_n\\
w_n=J_{\gamma_nA}(x_n-c_n^*-q_n^*)\\
x_{n+1}=w_n-\gamma_nQw_n+q_n^*.
\end{array}
\right.\\
\end{array}
\end{equation}
Then $(x_n)_{n\in\NN}$ converges weakly to a point in $Z$.
\end{proposition}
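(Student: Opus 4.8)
The plan is to realize the iteration \eqref{e:half} as a special case of the warped resolvent algorithm \eqref{e:fejer8} and invoke Theorem~\ref{t:8}, following the pattern of the forward-backward proof (Theorem~\ref{t:5}) and Tseng's proof (Theorem~\ref{t:6}). I would decompose $M=A+C+Q$ by keeping $C$ as the cocoercive operator of the framework and setting $W=A+Q$, which is maximally monotone by Example~\ref{ex:11} and Lemma~\ref{l:11}\ref{l:11i} since $\dom Q=\HH$. For the kernels I would take $U_n=\gamma_n^{-1}\Id-C-Q$ and the probe point $q_n=x_n$. Then $U_n+W+C=\gamma_n^{-1}\Id+A$ is strongly monotone and, by Minty's theorem (Lemma~\ref{l:0}\ref{l:0iii}), surjective, so the warped resolvent is well defined and $J_{W+C}^{U_n}=J_{\gamma_nA}\circ(\Id-\gamma_n(C+Q))$, matching the computation of $w_n$. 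The crucial bookkeeping is that $w_n^*=U_nx_n-U_nw_n-Cw_n$ simplifies to $\gamma_n^{-1}(x_n-w_n)-Cx_n-Qx_n+Qw_n$ (the two occurrences of $Cw_n$ cancel), so that $t_n^*=w_n^*+Cq_n=\gamma_n^{-1}(x_n-w_n)+Qw_n-Qx_n$ requires only a single evaluation of $C$, at $x_n$, which is the whole point of the half-forward scheme.

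Next I would fix the relaxation parameters by the Tseng-type rule $\lambda_n=\gamma_n\|t_n^*\|^2/\delta_n$ whenever $\delta_n>0$ (and $\lambda_n=\varepsilon$ otherwise), so that $x_{n+1}=x_n-\lambda_nd_n=x_n-\gamma_nt_n^*$ reproduces the last line $x_{n+1}=w_n-\gamma_nQw_n+q_n^*$ of \eqref{e:half}. The heart of the argument is then to show that $(\lambda_n)_{n\in\NN}$ stays in some $[\varepsilon',2-\varepsilon']$. Writing $u=x_n-w_n$ and $v=Qw_n-Qx_n$, monotonicity of $Q$ gives $\scal{u}{v}\leq 0$ and its $\beta$-Lipschitz continuity gives $\|v\|\leq\beta\|u\|$, while $\alpha$-cocoercivity of $C$ enters only through the term $-\|u\|^2/(4\alpha)$ in $\delta_n$. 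A direct expansion reduces $\lambda_n<2$ to $\beta^2\gamma_n^2+\gamma_n/(2\alpha)-1<0$. The constant $\chi=4\alpha/(1+\sqrt{1+16\alpha^2\beta^2})$ is precisely the positive root of $\beta^2\gamma^2+\gamma/(2\alpha)-1=0$, so the hypothesis $\gamma_n\leq(1-\varepsilon)\chi$ yields this inequality with a uniform margin, hence $\sup_n\lambda_n<2$; the lower bound follows from $\lambda_n\geq(1-\gamma_n\beta)^2\geq(1-(1-\varepsilon)\chi\beta)^2>0$, using that $\chi\beta<1$.

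Finally I would harvest the convergence. Theorem~\ref{t:8}\ref{t:8i} gives $\sum_n\lambda_n(2-\lambda_n)\|d_n\|^2<\pinf$ unconditionally, and the two-sided bound on $\lambda_n$ forces $d_n\to 0$, whence $t_n^*=(\lambda_n/\gamma_n)d_n\to 0$. Because $\scal{u}{t_n^*}\geq(\gamma_n^{-1}-\beta)\|u\|^2$ with $\gamma_n^{-1}-\beta>1/(4\alpha)>0$, the Cauchy--Schwarz inequality then yields $w_n-x_n\to 0$ strongly, and the Lipschitz bounds on $C$ and $Q$ give $U_nw_n-U_nx_n\to 0$. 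Thus condition \ref{t:8iia} holds (with $q_n=x_n$, so $w_n-q_n\to 0$) together with condition \ref{t:8iid}, and Theorem~\ref{t:8}\ref{t:8ii} delivers weak convergence of $(x_n)_{n\in\NN}$ to a point of $Z=\zer(W+C)=\zer(A+C+Q)$. The main obstacle is the sharp step-size analysis of the middle paragraph: recognizing that the threshold for $\lambda_n<2$ is governed by a quadratic whose positive root is exactly $\chi$, and checking that the cocoercive contribution $-\|u\|^2/(4\alpha)$ combines with the monotone-Lipschitz contributions to give this constant rather than the cruder bound $4\alpha/(4\alpha\beta+1)$ that mere positivity of $\delta_n$ would suggest. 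Note also that the naive route through condition \ref{t:8iib} is unavailable here, since $U_n$ is not strongly monotone with constant exceeding $1/(4\alpha)$, so the residual condition must be obtained directly via $w_n-x_n\to 0$ as above.
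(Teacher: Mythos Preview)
Your proposal is correct and follows essentially the same route as the paper: the same decomposition $W=A+Q$, $C$ cocoercive, kernels $U_n=\gamma_n^{-1}\Id-C-Q$, probe $q_n=x_n$, and relaxation $\lambda_n=\gamma_n\|t_n^*\|^2/\delta_n$, concluding via conditions \ref{t:8iid} and \ref{t:8iia} of Theorem~\ref{t:8}. The paper simply asserts that ``the assumptions yield'' $\inf_n\lambda_n>0$ and $\sup_n\lambda_n<2$; your explicit identification of $\chi$ as the positive root of $\beta^2\gamma^2+\gamma/(2\alpha)-1=0$ and the bound $\lambda_n\geq(1-\gamma_n\beta)^2$ fill in that gap. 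For the residual step $w_n-x_n\to 0$, the paper phrases it via the $\varepsilon$-strong monotonicity of $\Id-\gamma_nQ$ (Lemma~\ref{l:20}\ref{l:20i}, using $\chi\leq 1/\beta$), which is exactly your inequality $\scal{u}{t_n^*}\geq(\gamma_n^{-1}-\beta)\|u\|^2$ after scaling by $\gamma_n$.
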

\begin{proof}
The claims will be established as an application of
Theorem~\ref{t:8} with 
\begin{equation}
\label{e:aqp50}
W=A+Q,\;\text{and}\;\;(\forall n\in\NN)\;\;
U_n=\gamma_n^{-1}\Id-C-Q\;\:\text{and}\:\;q_n=x_n. 
\end{equation}
In this setting, \cite[Proposition~3.9]{Jmaa20} implies that 
\eqref{e:aqp27} is satisfied, we have
\begin{align}
(\forall n\in\NN)\quad J_{W+C}^{U_n}
&=\bigl(\gamma_n^{-1}\Id+A\bigr)\circ
\bigl(\gamma_n^{-1}\Id-C-Q\bigr)\nonumber\\
&=J_{\gamma_nA}\circ\bigl(\Id-\gamma_n(C+Q)\bigr),
\end{align}
and the variables of \eqref{e:fejer8} become
\begin{equation}
\label{e:aqp51}
(\forall n\in\NN)\quad 
\begin{cases}
w_n=J_{\gamma_nA}\bigl(x_n-\gamma_n(Cx_n+Qx_n)\bigr)\\[2mm]
t^*_n=\bigl(\gamma_n^{-1}\Id-Q\bigr)x_n-
\bigl(\gamma_n^{-1}\Id-Q\bigr)w_n\\[2mm]
\delta_n=\biggl(\dfrac{1}{\gamma_n}
-\dfrac{1}{4\alpha}\biggr)\|w_n-x_n\|^2
-\scal{w_n-x_n}{Qw_n-Qx_n}.
\end{cases}
\end{equation}
Now set
\begin{equation}
\label{e:aqp52}
(\forall n\in\NN)\quad\lambda_n=
\begin{cases}
\dfrac{\gamma_n\|t_n^*\|^2}{\delta_n},
&\text{if}\:\:\delta_n>0;\\
\varepsilon,&\text{otherwise}
\end{cases}
\end{equation}
and note that the assumptions yield
\begin{equation}
\label{e:kE}
\inf_{n\in\NN}\lambda_n>0\;\;\text{and}\;\;
\sup_{n\in\NN}\lambda_n<2.
\end{equation}
As a consequence of \eqref{e:aqp51} and the properties of $Q$, 
we have
\begin{align}
\label{e:aqp53}
(\forall n\in\NN)\quad\delta_n\leq 0
&\Rightarrow\;
\biggl(\dfrac{1}{\gamma_n}
-\dfrac{1}{4\alpha}-\beta\biggr)\|w_n-x_n\|^2\leq 0\nonumber\\
&\Leftrightarrow\;w_n=x_n\nonumber\\
&\Leftrightarrow\;t^*_n=0.
\end{align}
Hence, \eqref{e:fejer8} yields
\begin{equation}
\label{e:aqp54}
(\forall n\in\NN)\quad d_n=\dfrac{\gamma_n}{\lambda_n}t^*_n
=\dfrac{1}{\lambda_n}\bigl(
x_n-w_n+\gamma_n(Qw_n-Qx_n)\bigr).
\end{equation}
As a result, the sequence
$(x_n)_{n\in\NN}$ produced by \eqref{e:half} coincides with that of
\eqref{e:fejer8}. Hence, by Theorem~\ref{t:8}\ref{t:8i} and
\eqref{e:kE}, $\sum_{n\in\NN}\|d_n\|^2<\pinf$ which, in view of
\eqref{e:aqp54}, yields 
\begin{equation}
\label{e:aqp55}
(\Id-\gamma_nQ)w_n-(\Id-\gamma_nQ)x_n\to 0.
\end{equation}
However, since $\chi\leq1/\beta$, $(\gamma_n)_{n\in\NN}$ lies
in $[\varepsilon,(1-\varepsilon)/\beta]$ and 
Lemma~\ref{l:20}\ref{l:20i} implies that the
operators $(\Id-\gamma_nQ)_{n\in\NN}$ are $\varepsilon$-strongly
monotone. Hence,
\begin{equation}
(\forall n\in\NN)\quad\varepsilon\|w_n-x_n\|^2\leq
\scal{w_n-x_n}{(\Id-\gamma_nQ)w_n-(\Id-\gamma_nQ)x_n}
\end{equation}
and, by the Cauchy--Schwarz inequality and \eqref{e:aqp55},
\begin{equation}
\label{e:aqp56}
\|w_n-x_n\|\leq\varepsilon^{-1}
\|(\Id-\gamma_nQ)w_n-(\Id-\gamma_nQ)x_n\|\to 0.
\end{equation}
In turn, since $C$ is $1/\alpha$-Lipschitzian, 
these facts confirm that
\begin{align}
\|U_nw_n-U_nx_n\|
&\leq\gamma_n^{-1}
\|(\Id-\gamma_nQ)w_n-(\Id-\gamma_nQ)x_n\|+\|Cw_n-Cx_n\|
\nonumber\\
&\leq\varepsilon^{-1}
\|(\Id-\gamma_nQ)w_n-(\Id-\gamma_nQ)x_n\|+\alpha^{-1}\|w_n-x_n\|
\nonumber\\
&\to 0.
\end{align}
Thus, the assertion follows from Theorem~\ref{t:8}\ref{t:8ii} 
since its conditions \ref{t:8iid} and \ref{t:8iia} are fulfilled.
\end{proof}

\begin{remark}
\label{r:22}
We complement Proposition~\ref{p:22} with a few commentaries.
\begin{enumerate}
\item
Suppose that $C=0$. Then, since $\alpha$ can be arbitrarily large,
$\chi=1/\beta$ and \eqref{e:half} reverts to the
forward-backward-forward algorithm \eqref{e:fbf}.
\item
Suppose that $Q=0$. Then, since $\beta=0$, $\chi=2\alpha$ and
\eqref{e:half} becomes an unrelaxed version of forward-backward 
algorithm \eqref{e:fb}.
\item
Using the geometric pattern of the proof given above, a strongly
convergent version of the forward-backward-half-forward algorithm
can be derived from Theorem~\ref{t:8s}.
\end{enumerate}
\end{remark}

As an illustration, we extend the Lagrangian approach of
Proposition~\ref{p:i}.

\begin{example}
\label{ex:221}
Let $f\in\Gamma_0(\HH)$, $g\in\Gamma_0(\GG)$, and 
$L\in\BL(\HH,\GG)$ be such that 
$0\in\sri(L(\dom f)-\dom g)$. Let $\alpha\in\RPP$ and let 
$h\colon\HH\to\RR$ be convex and
differentiable and such that $\nabla h$ is
$1/\alpha$-Lipschitzian.
Suppose that the primal problem
\begin{equation}
\label{e:221p}
\minimize{x\in\HH}{f(x)+g(Lx)+h(x)}
\end{equation}
admits solutions and consider the dual problem
\begin{equation}
\label{e:221d}
\minimize{v^*\in\GG}{(f^*\infconv h^*)(-L^*v^*)+g^*(v^*)}.
\end{equation}
Let $(x_0,y_0,v^*_0)\in\HH\oplus\GG\oplus\GG$, set
$\chi=4\alpha/(1+\sqrt{1+16\alpha^2(1+\|L\|^2)}\,)$, let 
$\varepsilon\in\left]0,\chi/(\chi+1)\right[$, and let
$(\gamma_n)_{n\in\NN}$ be a sequence in 
$\left[\varepsilon,(1-\varepsilon)\chi\right]$.
Iterate
\begin{equation}
\label{e:mp9}
\begin{array}{l}
\text{for}\;n=0,1,\ldots\\
\left\lfloor
\begin{array}{l}
c_n^*=\gamma_n\nabla h(x_n)\\
q^*_{1,n}=\gamma_nL^*v^*_n\\
q^*_{2,n}=-\gamma_nv^*_n\\
q^*_{3,n}=\gamma_n(y_n-Lx_n)\\
a_{1,n}=\prox_{\gamma_n f}\big(x_n-c^*_n-q^*_{1,n}\big)\\[1mm]
a_{2,n}=\prox_{\gamma_n g}\big(y_n-q^*_{2,n}\big)\\[1mm]
x_{n+1}=a_{1,n}+\gamma_nL^*q^*_{3,n}\\[1mm]
y_{n+1}=a_{2,n}-\gamma_nq^*_{3,n}\\[1mm]
v^*_{n+1}=v^*_n+\gamma_n\big(La_{1,n}-a_{2,n}\big).
\end{array}
\right.\\
\end{array}
\end{equation}
Then $(x_n)_{n\in\NN}$ and $(v^*_n)_{n\in\NN}$ converge weakly to 
solutions to \eqref{e:221p} and \eqref{e:221d},
respectively.
\end{example}
\begin{proof}
We adapt the approach of Section~\ref{sec:Lagr}. The saddle
operator of \eqref{e:Kr2}--\eqref{e:r76} becomes
$\sad=\boldsymbol{A}+\boldsymbol{C}+\boldsymbol{Q}$, where
\begin{equation}
\label{e:r77}
\begin{cases}
\boldsymbol{A}\colon (x,y,v^*)\mapsto
\partial f(x)\times\partial g(y)\times\{0\}\\
\boldsymbol{C}\colon (x,y,v^*)\mapsto
\big(\nabla h(x),0,0\big)\\
\boldsymbol{Q}\colon (x,y,v^*)\mapsto
\big(L^*v^*,-v^*,-Lx+y\big).
\end{cases}
\end{equation}
As in Section~\ref{sec:Lagr}, $\boldsymbol{A}$ is maximally
monotone and $\boldsymbol{Q}$ is monotone and 
$\sqrt{1+\|L\|^2}$-Lipschitzian. 
Further, by virtue of Lemma~\ref{l:bh}, $\boldsymbol{C}$ is
$\alpha$-cocoercive. Now set $(\forall n\in\NN)$ 
$\boldsymbol{x}_n=(x_n,y_n,v^*_n)$,
$\boldsymbol{c}^*_n=(c^*_n,0,0)$,
$\boldsymbol{q}^*_n=(q^*_{1,n},q^*_{2,n},q^*_{3,n})$,
and $\boldsymbol{w}_n=(a_{1,n},a_{2,n},v_n^*-q^*_{3,n})$.
Then \eqref{e:mp9} assumes the form 
\begin{equation}
\label{e:half3}
\begin{array}{l}
\text{for}\;n=0,1,\ldots\\
\left\lfloor
\begin{array}{l}
\boldsymbol{c}_n^*=\gamma_n\boldsymbol{C}\boldsymbol{x}_n\\
\boldsymbol{q}_n^*=\gamma_n\boldsymbol{Q}\boldsymbol{x}_n\\
\boldsymbol{w}_n=J_{\gamma_n\boldsymbol{A}}
(\boldsymbol{x}_n-\boldsymbol{c}_n^*-\boldsymbol{q}_n^*)\\
\boldsymbol{x}_{n+1}=\boldsymbol{w}_n-
\gamma_n\boldsymbol{Q}\boldsymbol{w}_n+\boldsymbol{q}_n^*,
\end{array}
\right.\\
\end{array}
\end{equation}
which is \eqref{e:half}. Hence, by Proposition~\ref{p:22},
$(x_n,y_n,v^*_n)_{n\in\NN}$ converges weakly to a point
$(x,y,v^*)\in\zer\sad$.
\end{proof}

\begin{remark}
\label{r:dj2}
Let $\alpha\in\RPP$, let $A\colon\HH\to 2^{\HH}$ and 
$B\colon\GG\to 2^{\GG}$ be maximally monotone, let 
$C\colon\HH\to\HH$ be $\alpha$-cocoercive, 
and let $0\neq L\in\BL(\HH,\GG)$.
As in Remark~\ref{r:dj1}, the saddle approach of
Example~\ref{ex:221} has a natural
extension to the problem of finding a zero of
$A+L^*\circ B\circ L+C$ and the dual problem of finding a zero 
of $-L\circ (A+C)^{-1}\circ(-L^*)+B^{-1}$. In this setting, the 
saddle operator is
\begin{equation}
\label{e:s2}
\begin{array}{ccll}
\sad\colon&\HH\oplus\GG\oplus\GG&\to&
2^{\HH\oplus\GG\oplus\GG}\\
&(x,y,v^*)&\mapsto&
(Ax+Cx+L^*v^*)\times(By-v^*)\times\{-Lx+y\}.
\end{array}
\end{equation}
Accordingly, it suffices to replace $\nabla h$ 
with $C$, $\prox_{\gamma_n f}$ with $J_{\gamma_n A}$, and
$\prox_{\gamma_n g}$ with $J_{\gamma_n B}$ in \eqref{e:mp9} to find
primal-dual solutions.
\end{remark}

\section{Block-iterative Kuhn--Tucker projective splitting}
\label{sec:ps}

\subsection{Preview}
Unlike the methods described so far, those described in this
section were explicitly designed by employing the geometric
principle of Theorem~\ref{t:1}. The terminology
\emph{projective splitting} was coined in \cite{Svai08} in the
context of an algorithm to solve Problem~\ref{prob:0} by choosing
points in the graph of $A$ and $B$ to construct half-spaces
containing an ``extended solution set.'' In the language
of Lemma~\ref{l:31z}, this set is actually the set of zeros of 
the Kuhn--Tucker operator \eqref{e:kt3}, which collapses to
\begin{equation}
\zer\kut=\menge{(x,x^*)\in\HH\oplus\HH}
{-x^*\in Ax\;\;\text{and}\;\;x\in B^{-1}x^*}.
\end{equation}
The paper \cite{Svai08} initiated a fruitful line of work towards
more complex monotone inclusions 
\cite{Siop14,Nfao15,Bedn18,Buin22,MaPr18,Ecks17,Svai09,John19,%
John20,John21,John22,Mach18,Mach23,Sicr20}.
We use the term \emph{Kuhn--Tucker projective splitting} to
describe a method that operates through the principles of
Framework~\ref{f:1}, where $\MMM$ is a Kuhn-Tucker operator. As we
shall see, projective splitting algorithms have features quite 
different from those of the traditional methods of
Sections~\ref{sec:ppa}--\ref{sec:fb} and they display an
unprecedented level of flexibility in terms of implementation.

\subsection{Primal-dual composite inclusions}

Let us go back to the composite Problem~\ref{prob:3}. The sets of
primal and dual solutions are, respectively,
\begin{equation}
\label{e:400}
Z=\zer(A+L^*\circ B\circ L)\quad\text{and}\quad
Z^*=\zer\bigl(-L\circ A^{-1}\circ(-L^*)+B^{-1}\bigr).
\end{equation}
Moreover, as pointed out in Example~\ref{ex:f3}, an embedding 
of \eqref{e:p3} is 
$(\XXX,\kut,\TTT)$, where $\XXX=\HH\oplus\GG$, $\kut$ is 
the Kuhn--Tucker operator of \eqref{e:kt3}, that is, 
\begin{equation}
\label{e:i41}
\kut\colon\XXX\to 2^{\XXX}\colon(x,y^*)\mapsto
\big(Ax+L^*y^*\big)\times\big(B^{-1}y^*-Lx\big),
\end{equation}
and $\TTT\colon\XXX\to\HH\colon(x,y^*)\mapsto x$.
The task is therefore to find a
zero of $\kut$. This is the path followed in the monotone+skew
approach of Section~\ref{sec:m+s}. However, this method requires
knowledge of $\|L\|$ (or of a tight upper bound for it), which may
be difficult to obtain in certain problems. The renormed algorithms
of Example~\ref{ex:1Ub} and \cite{Botr13}, the saddle algorithm of
Remark~\ref{r:dj2}, or the minimal lifting algorithm of
\cite{Arag23} share the same potential limitation. On the other
hand, the method of Proposition~\ref{p:2013}, which was derived
from the method of partial inverses, requires the inversion of
linear operators, a task that may also face implementation issues. 

A strategy which circumvents the above shortcomings was proposed in
\cite{Siop14}, where the approach of \cite{Svai08} for solving
Problem~\ref{prob:0} was extended to Problem~\ref{prob:3}. More
precisely, it employs the geometric principle of
Proposition~\ref{p:1} as follows. Let us assume that, at iteration
$n$, points $(a_n,a^*_n)\in\gra A$ and $(b_n,b^*_n)\in\gra B$ are
available and set
\begin{equation}
\boldsymbol{m}_n=(a_n,b^*_n)\quad\text{and}\quad
\boldsymbol{m}^*_n=(a^*_n+L^*b^*_n,b_n-La_n).
\end{equation}
Then it is clear from \eqref{e:i41} that 
$(\boldsymbol{m}_n,\boldsymbol{m}^*_n)\in\gra\kut$.
Hence, given $\lambda_n\in\left]0,2\right[$, iteration $n$ of
algorithm \eqref{e:fejer14} updates $(x_n,y^*_n)\in\XXX$ via
the routine
\begin{equation}
\label{e:13e}
\left\lfloor
\begin{array}{l}
(t_n,t^*_n)=(b_n-La_n,a^*_n+L^*b^*_n)\\
\tau_n={\|t_n\|^2+\|t^*_n\|^2}\\
\text{if}\;\tau_n>0\\
\left\lfloor
\begin{array}{l}
\theta_n=\dfrac{\lambda_n}{\tau_n} \,
\text{\rm max} \Big\{0,\scal{x_n}{t^*_n}+\scal{t_n}{y^*_n}
-\scal{a_n}{a^*_n}-\scal{b_n}{b^*_n}\Big\} 
\end{array} 
\right.\\
\text{else}\;\theta_n=0\\
(x_{n+1},y^*_{n+1})=(x_n-\theta_nt^*_n,y^*_n-\theta_nt_n).
\end{array}
\right.\\[4mm]
\end{equation}
In view of Proposition~\ref{p:1}\ref{p:1ii}, the task is now to
specify $(a_n,a^*_n)\in\gra A$ and $(b_n,b^*_n)\in\gra B$ so as to
guarantee that $\boldsymbol{m}_n-(x_n,y_n^*)\weakly 0$ and 
$\boldsymbol{m}_n^*\to 0$, that is,  
\begin{equation}
a_n-x_n\weakly 0,\;b^*_n-y^*_n\weakly 0,\;
b_n-La_n\to 0,\;\text{and}\;a^*_n+L^*b^*_n\to 0.
\end{equation}
Given $\gamma_n$ and $\sigma_n$ in $\RPP$, choosing
\begin{equation}
\label{e:y1}
(a_n,a^*_n)=\Bigl(J_{\gamma_nA}(x_n-\gamma_nL^*y^*_n),
\gamma_n^{-1}\bigl(x_n-J_{\gamma_nA}(x_n-\gamma_nL^*y^*_n)\bigr)
-L^*y^*_n\Bigr)
\end{equation}
and
\begin{equation}
\label{e:y2}
(b_n,b^*_n)=\Bigl(J_{\sigma_nB}(Lx_n+\sigma_ny^*_n),\sigma_n^{-1}
\bigr(Lx_n-J_{\sigma_nB}(Lx_n+\sigma_ny^*_n)\bigr)+y^*_n\Bigr)
\end{equation}
satisfies this requirement, which leads to the following result.

\begin{proposition}[{\protect{\cite[Proposition~3.5]{Siop14}}}]
\label{p:42}
Let $A\colon\HH\to 2^{\HH}$ and $B\colon\GG\to 2^{\GG}$ be
maximally monotone, and let $L\in\BL(\HH,\GG)$. Suppose that the
set $Z$ of solutions to the primal inclusion
\begin{equation}
\label{e:p3f}
\text{find}\;\;x\in\HH\;\;\text{such that}\;\;
0\in Ax+L^*\big(B(Lx)\big)
\end{equation}
is not empty and let $Z^*$ be the set of solutions to 
the dual inclusion
\begin{equation}
\label{e:d3f}
\text{find}\;\;y^*\in\GG\;\;\text{such that}\;\;
0\in -L\bigl(A^{-1}(-L^*y^*)\bigr)+B^{-1}y^*.
\end{equation}
Let $\varepsilon\in\zeroun$, let $(\gamma_n)_{n\in\NN}$ and 
$(\sigma_n)_{n\in\NN}$ be sequences in
$[\varepsilon,1/\varepsilon]$, 
let $(\lambda_n)_{n\in\NN}$ be a sequence in 
$[\varepsilon,2-\varepsilon]$, let $x_0\in\HH$, and let 
$y_0^*\in\GG$. Iterate
\begin{equation}
\label{e:201310}
\begin{array}{l}
\text{for}\;n=0,1,\ldots\\
\left\lfloor
\begin{array}{l}
a_n=J_{\gamma_n A}(x_n-\gamma_n L^*y_n^*)\\
l_n=Lx_n\\
b_n=J_{\sigma_n B}(l_n+\sigma_n y_n^*)\\
t_n=b_n-La_n\\
t^*_n=\gamma_n^{-1}(x_n-a_n)+\sigma_n^{-1}L^*(l_n-b_n)\\
\tau_n=\|t_n\|^2+\|t^*_n\|^2\\
\text{if}\;\tau_n>0\\
\left\lfloor
\begin{array}{l}
\theta_n=\lambda_n\big(\gamma_n^{-1}\|x_n-a_n\|^2+\sigma_n^{-1}
\|l_n-b_n\|^2\big)/\tau_n\\
\end{array}
\right.\\
\text{else}\;\theta_n = 0\\
x_{n+1}=x_n-\theta_n t^*_n\\
y^*_{n+1}=y^*_n-\theta_n t_n.
\end{array}
\right.\\[4mm]
\end{array}
\end{equation}
Then $(x_n)_{n\in\NN}$ converges weakly to a point $x\in Z$ and
$(y_n^*)_{n\in\NN}$ converges weakly to a point $y^*\in Z^*$.
\end{proposition}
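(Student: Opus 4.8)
The plan is to realize the iteration \eqref{e:201310} as an instance of the geometric scheme of Proposition~\ref{p:1} applied to the Kuhn--Tucker operator $\kut$ of \eqref{e:i41} in the product space $\XXX=\HH\oplus\GG$. By Lemma~\ref{l:31z}, $\kut$ is maximally monotone, and Lemma~\ref{l:31z}\ref{l:31zii}--\ref{l:31ziv} give $\emp\neq\zer\kut\subset Z\times Z^*$, with $Z\neq\emp$ equivalent to $\zer\kut\neq\emp$. So it suffices to show that the sequence $\boldsymbol{x}_n=(x_n,y^*_n)$ converges weakly to a point in $\zer\kut$, and then read off the primal and dual components. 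The target set is $Z=\zer\kut\neq\emp$, which is nonempty closed convex by \eqref{e:zero}.

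First I would identify the graph points. Setting $(a_n,a^*_n)$ as in \eqref{e:y1} and $(b_n,b^*_n)$ as in \eqref{e:y2}, the characterization \eqref{e:ee14} of the resolvent shows $(a_n,a^*_n)\in\gra A$ and $(b_n,b^*_n)\in\gra B$. Then define
\begin{equation}
\boldsymbol{m}_n=(a_n,b^*_n)\quad\text{and}\quad
\boldsymbol{m}^*_n=(a^*_n+L^*b^*_n,\,b_n-La_n),
\end{equation}
and check directly from \eqref{e:i41} that $(\boldsymbol{m}_n,\boldsymbol{m}^*_n)\in\gra\kut$. A routine computation using \eqref{e:y1}--\eqref{e:y2} shows that $\boldsymbol{m}^*_n=(t^*_n,t_n)$ with $t_n,t^*_n$ as in \eqref{e:201310}, that $\tau_n=\|\boldsymbol{m}^*_n\|^2_{\XXX}$, and that the scalar product $\scal{\boldsymbol{x}_n-\boldsymbol{m}_n}{\boldsymbol{m}^*_n}_{\XXX}$ simplifies to $\gamma_n^{-1}\|x_n-a_n\|^2+\sigma_n^{-1}\|l_n-b_n\|^2$; this uses the cancellation of cross terms coming from the resolvent definitions and is the identity that turns the generic update of \eqref{e:fejer14} into the explicit $\theta_n$ of \eqref{e:201310}. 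With these identifications, $\boldsymbol{x}_{n+1}=\boldsymbol{x}_n-\theta_n\boldsymbol{m}^*_n$ is exactly the step of Proposition~\ref{p:1} with $M=\kut$, $m_n=\boldsymbol{m}_n$, $m^*_n=\boldsymbol{m}^*_n$, and the relaxation parameter $\lambda_n\in[\varepsilon,2-\varepsilon]\subset\left]0,2\right[$.

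It remains to verify the hypotheses of Proposition~\ref{p:1}\ref{p:1ii}, namely $\boldsymbol{m}_n-\boldsymbol{x}_n\weakly 0$ and $\boldsymbol{m}^*_n\to 0$. Proposition~\ref{p:1}\ref{p:1i} first yields $\sum_n\lambda_n(2-\lambda_n)\|\theta_n\boldsymbol{m}^*_n\|^2<\pinf$, and Fej\'er monotonicity (Theorem~\ref{t:1}\ref{t:1i}) gives boundedness of $(\boldsymbol{x}_n)_{n\in\NN}$. The main obstacle is the asymptotic analysis: from the summability one must extract, using the bounds $\gamma_n,\sigma_n\in[\varepsilon,1/\varepsilon]$ and $\lambda_n\in[\varepsilon,2-\varepsilon]$, that $x_n-a_n\to 0$ and $l_n-b_n\to 0$, whence $\boldsymbol{m}^*_n\to 0$ (so $a^*_n+L^*b^*_n\to 0$ and $b_n-La_n\to 0$) and, since $a_n-x_n\to 0$ strongly and $b^*_n-y^*_n\to 0$, the weak-cluster condition $\boldsymbol{m}_n-\boldsymbol{x}_n\weakly 0$. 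This step requires controlling the interplay between the step sizes $\theta_n$ and the residuals, and is where the proof of \cite[Proposition~3.5]{Siop14} does the real work.

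Finally, Proposition~\ref{p:1}\ref{p:1ii} delivers $\boldsymbol{x}_n\weakly(x,y^*)\in\zer\kut$; by weak continuity of the coordinate projections, $x_n\weakly x$ and $y^*_n\weakly y^*$, and Lemma~\ref{l:31z}\ref{l:31zii} places $(x,y^*)\in Z\times Z^*$, completing the argument.
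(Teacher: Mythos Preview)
Your proposal is correct and follows exactly the approach the paper takes: embed \eqref{e:p3f}--\eqref{e:d3f} via the Kuhn--Tucker operator $\kut$ of \eqref{e:i41}, recognize \eqref{e:201310} as the iteration \eqref{e:fejer14} of Proposition~\ref{p:1} with graph points $(a_n,a^*_n)\in\gra A$ and $(b_n,b^*_n)\in\gra B$ supplied by \eqref{e:y1}--\eqref{e:y2}, and verify the conditions of Proposition~\ref{p:1}\ref{p:1ii}. The paper, like you, defers the technical asymptotic estimates (deriving $x_n-a_n\to 0$, $l_n-b_n\to 0$ from the summability in Proposition~\ref{p:1}\ref{p:1i} under the parameter bounds) to the cited source \cite[Proposition~3.5]{Siop14}.
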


\begin{remark}
\label{r:e1}
Here are notable instantiations of Proposition~\ref{p:42}.
\begin{enumerate}
\item
The first instance of \eqref{e:201310} in the literature seems
to be that of \cite{Dong05}, where $\HH$ and $\GG$ are Euclidean
spaces, $A=0$, and $(\forall n\in\NN)$ $\gamma_n=\sigma_n=1$ and 
$\lambda_n=\lambda\in\left]0,2\right[$. Convergence of the primal
sequence $(x_n)_{n\in\NN}$ was established by different means.
\item
In the setting of Problem~\ref{prob:0} (i.e., $\GG=\HH$ and
$L=\Id$), \eqref{e:201310} was studied in \cite{Svai08}. Under the
additional assumptions that $A+B$ is maximally monotone or that
$\HH$ is finite-dimensional, weak convergence was established in
\cite[Proposition~3]{Svai08} for a version of \eqref{e:201310} 
which allows for an additional relaxation parameter in the
definition of $a_n$. 
\end{enumerate}
\end{remark}

\begin{remark}
\label{r:e2}
So far, we have presented several methods to solve 
Problem~\ref{prob:3}; see Proposition~\ref{p:2013},
Example~\ref{ex:1Ub}, Proposition~\ref{p:31}, and
Remark~\ref{r:dj2}. Some features that distinguish the 
splitting algorithm \eqref{e:201310} from them are as follows.
\begin{enumerate}
\item
At each iteration of \eqref{e:201310}, different proximal
parameters $\gamma_n$ and $\sigma_n$ can be used for the operators
$A$ and $B$ and, since $\varepsilon$ is chosen by the user, their
values can be arbitrarily large.
\item
The execution of \eqref{e:201310} does not require that $\|L\|$ or
an approximation thereof be known, or the inversion of linear
operators.
\item
\label{r:e2iii}
A variant of \eqref{e:201310} exploiting the cocoercivity of some
of the operators and activating them via Euler steps is discussed
in \cite{John21}.
\item
The complexity of certain special cases and variants of
\eqref{e:201310} is investigated in \cite{John19,Mach23}.
\end{enumerate}
\end{remark}

The following strongly convergent projective splitting algorithm
results from Proposition~\ref{p:2}.

\begin{proposition}[{\protect{\cite[Proposition~3.5]{Nfao15}}}]
\label{p:42s}
Let $A\colon\HH\to 2^{\HH}$ and $B\colon\GG\to 2^{\GG}$ be
maximally monotone, and let $L\in\BL(\HH,\GG)$. Suppose that the
set $Z$ of solutions to the primal inclusion
\begin{equation}
\label{e:p3a}
\text{find}\;\;x\in\HH\;\;\text{such that}\;\;
0\in Ax+L^*\big(B(Lx)\big)
\end{equation}
is not empty and let $Z^*$ be the set of solutions to 
the dual inclusion
\begin{equation}
\label{e:d3a}
\text{find}\;\;y^*\in\GG\;\;\text{such that}\;\;
0\in -L\bigl(A^{-1}(-L^*y^*)\bigr)+B^{-1}y^*.
\end{equation}
Let $\varepsilon\in\zeroun$, let $(\gamma_n)_{n\in\NN}$ and 
$(\sigma_n)_{n\in\NN}$ be sequences in
$[\varepsilon,1/\varepsilon]$, 
let $(\lambda_n)_{n\in\NN}$ be a sequence in 
$[\varepsilon,1]$, let $x_0\in\HH$, and let 
$y_0^*\in\GG$. Iterate
\begin{equation}
\label{e:2014}
\begin{array}{l}
\text{for}\;n=0,1,\ldots\\
\left\lfloor
\begin{array}{l}
a_n=J_{\gamma_n A}(x_n-\gamma_n L^*y_n^*)\\
l_n=Lx_n\\
b_n=J_{\sigma_n B}(l_n+\sigma_n y_n^*)\\
t_n=b_n-La_n\\
t^*_n=\gamma_n^{-1}(x_n-a_n)+\sigma_n^{-1}L^*(l_n-b_n)\\
\tau_n=\|t_n\|^2+\|t^*_n\|^2\\
\text{if}\;\tau_n>0\\
\left\lfloor
\begin{array}{l}
\theta_n=\lambda_n\big(\gamma_n^{-1}\|x_n-a_n\|^2+\sigma_n^{-1}
\|l_n-b_n\|^2\big)/\tau_n\\
\end{array}
\right.\\
\text{else}\;\theta_n = 0\\
r_n=x_n-\theta_n t^*_n\\
r^*_n=y^*_n-\theta_n t_n\\
\chi_n=\theta_n\bigl(
\scal{x_0-x_n}{t^*_n}+\scal{t_n}{y_0^*-y_n^*}\bigr)\\
\mu_n=\|x_0-x_n\|^2+\|y_0^*-y_n^*\|^2\\
\nu_n=\theta_n^2\tau_n\\
\rho_n=\mu_n\nu_n-\chi_n^2\\
\text{if}\;\rho_n=0\;\text{and}\;\chi_n\geq 0\\
\left\lfloor
\begin{array}{l}
x_{n+1}=r_n\\
y^*_{n+1}=r_n^*
\end{array}
\right.\\
\text{if}\;\rho_n>0\;\text{and}\;\chi_n\nu_n\geq\rho_n\\
\left\lfloor
\begin{array}{l}
x_{n+1}=x_0-\theta_n(1+\chi_n/\nu_n)t^*_n\\
y^*_{n+1}=y_0^*-\theta_n(1+\chi_n/\nu_n)t_n
\end{array}
\right.\\
\text{if}\;\rho_n>0\;\text{and}\;\chi_n\nu_n<\rho_n\\
\left\lfloor
\begin{array}{l}
x_{n+1}=x_n+(\nu_n/\rho_n)\bigl(\chi_n(x_0-x_n)
-\mu_n\theta_nt^*_n\bigr)\\
y^*_{n+1}=y_n^*+(\nu_n/\rho_n)\bigl(\chi_n(y_0^*-y_n^*)
-\mu_n\theta_nt_n\bigr).
\end{array}
\right.\\
\end{array}
\right.\\[4mm]
\end{array}
\end{equation}
Then $(x_n)_{n\in\NN}$ converges strongly to a point $x\in Z$ and
$(y_n^*)_{n\in\NN}$ converges strongly to a point $y^*\in Z^*$.
\end{proposition}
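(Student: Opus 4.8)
The plan is to recognize the iteration \eqref{e:2014} as the Haugazeau-like graph-cut scheme \eqref{e:haug14} of Proposition~\ref{p:2} applied to the Kuhn--Tucker operator in the primal-dual space, and then to invoke Proposition~\ref{p:2}\ref{p:2ii}. First I would set $\XXX=\HH\oplus\GG$ and let $\kut$ be the Kuhn--Tucker operator of \eqref{e:i41}. By Lemma~\ref{l:31z}, $\kut$ is maximally monotone; since $Z\neq\emp$ its zero set is nonempty and $\zer\kut$ is a closed convex subset of $Z\times Z^*$, so the target set of Proposition~\ref{p:2} is $\zer\kut$. Writing $\boldsymbol{x}_n=(x_n,y^*_n)$, the two resolvent steps produce $(a_n,a^*_n)\in\gra A$ and $(b_n,b^*_n)\in\gra B$ with $a^*_n=\gamma_n^{-1}(x_n-a_n)-L^*y^*_n$ and $b^*_n=\sigma_n^{-1}(l_n-b_n)+y^*_n$.

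Next I would form $\boldsymbol{m}_n=(a_n,b^*_n)$ and $\boldsymbol{m}^*_n=(t^*_n,t_n)$ and verify, directly from \eqref{e:i41}, that $(\boldsymbol{m}_n,\boldsymbol{m}^*_n)\in\gra\kut$; a short computation confirms $t^*_n=a^*_n+L^*b^*_n$ and $t_n=b_n-La_n$. The same computation yields the key identity $\scal{\boldsymbol{x}_n-\boldsymbol{m}_n}{\boldsymbol{m}^*_n}_{\XXX}=\gamma_n^{-1}\|x_n-a_n\|^2+\sigma_n^{-1}\|l_n-b_n\|^2\geq 0$, so that the nonnegative $\theta_n$ of \eqref{e:2014} coincides with $\lambda_n\scal{\boldsymbol{x}_n-\boldsymbol{m}_n}{\boldsymbol{m}^*_n}/\|\boldsymbol{m}^*_n\|^2$ and $(r_n,r^*_n)=\boldsymbol{x}_n-\theta_n\boldsymbol{m}^*_n$ is precisely the point $\boldsymbol{x}_n-\lambda_nd_n$ of \eqref{e:haug14}. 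The block of conditionals driven by $\chi_n,\mu_n,\nu_n,\rho_n$ is then exactly the evaluation of $\Qq(\boldsymbol{x}_0,\boldsymbol{x}_n,(r_n,r^*_n))$ prescribed by Lemma~\ref{l:2}\ref{l:2ii}, so $(\boldsymbol{x}_n)_{n\in\NN}$ is the sequence generated by \eqref{e:haug14} for $M=\kut$.

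The main obstacle is verifying the convergence hypotheses of Proposition~\ref{p:2}\ref{p:2ii}, namely $\boldsymbol{m}_n-\boldsymbol{x}_n\weakly 0$ and $\boldsymbol{m}^*_n\to 0$. The Haugazeau structure (Theorem~\ref{t:2}\ref{t:2ii}) already guarantees that $(\boldsymbol{x}_n)_{n\in\NN}$ is bounded; fixing $(z,z^*)\in\zer\kut$ and using the nonexpansiveness of $J_{\gamma_nA}$ and $J_{\sigma_nB}$ together with $\gamma_n,\sigma_n\in[\varepsilon,1/\varepsilon]$ then shows that $(a_n)$, $(b_n)$, $(a^*_n)$, $(b^*_n)$, and hence $(\tau_n)$ are bounded. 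From Proposition~\ref{p:2}\ref{p:2i} and $\lambda_n\geq\varepsilon$ I obtain $d_n\to 0$, i.e.\ $(\scal{\boldsymbol{x}_n-\boldsymbol{m}_n}{\boldsymbol{m}^*_n})^2/\tau_n\to 0$; the uniform bound on $\tau_n$ upgrades this to $\gamma_n^{-1}\|x_n-a_n\|^2+\sigma_n^{-1}\|l_n-b_n\|^2\to 0$, whence $x_n-a_n\to 0$ and $l_n-b_n\to 0$. Since $t_n=(b_n-l_n)+L(x_n-a_n)$ and $t^*_n=\gamma_n^{-1}(x_n-a_n)+\sigma_n^{-1}L^*(l_n-b_n)$, both $t_n$ and $t^*_n$ tend to $0$, giving $\boldsymbol{m}^*_n\to 0$; likewise $b^*_n-y^*_n=\sigma_n^{-1}(l_n-b_n)\to 0$ forces $\boldsymbol{m}_n-\boldsymbol{x}_n\to 0$, which is even stronger than the required weak convergence.

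Finally, Proposition~\ref{p:2}\ref{p:2ii} yields $\boldsymbol{x}_n\to\proj_{\zer\kut}\boldsymbol{x}_0$ strongly. Writing this limit as $(x,y^*)\in\zer\kut\subset Z\times Z^*$, the coordinate projections give $x_n\to x\in Z$ and $y^*_n\to y^*\in Z^*$, which completes the argument. The only genuinely delicate point is the uniform control of $\tau_n$ used to pass from $d_n\to 0$ to the vanishing of the residuals; everything else is a transcription of the Fej\'erian reasoning behind Proposition~\ref{p:42} into the best-approximation setting.
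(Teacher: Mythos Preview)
Your proposal is correct and follows exactly the route the paper indicates: the paper states that Proposition~\ref{p:42s} ``results from Proposition~\ref{p:2}'' and, in the preamble to Proposition~\ref{p:42}, sets up precisely the graph-point construction $\boldsymbol{m}_n=(a_n,b^*_n)$, $\boldsymbol{m}^*_n=(a^*_n+L^*b^*_n,\,b_n-La_n)\in\gra\kut$ that you rebuild here. The bounds you use to upgrade $d_n\to 0$ into $x_n-a_n\to 0$ and $l_n-b_n\to 0$ (boundedness of $\tau_n$ via nonexpansiveness of the resolvents and $\gamma_n,\sigma_n\in[\varepsilon,1/\varepsilon]$) are the standard ones implicit in the paper's treatment of the Fej\'erian version, so the argument is complete.
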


\subsection{Block-iterative asynchronous method}

We consider a refinement of Problem~\ref{prob:9} in which the
primal variable is specified in terms of finitely many 
coordinates, say
$\boldsymbol{x}=(x_1,\ldots,x_m)$, where each $x_i$ lies in a
Hilbert space $\HH_i$. Such coupled systems of inclusions
arise in particular in multivariate optimization 
\cite{Acke80,Atto08,Sico10,Siop13}, 
domain decomposition methods \cite{Aldu23,Nume16,Atto11},
image processing \cite{Aujo05,Jmiv11,Chau13,Vese04},
game theory \cite{Belg21,Boer21,Bric13,Joca22}, 
network flow problems \cite{Bert98,Buim22,Rock84,Rock95},
machine learning \cite{Bric19,Jena11,Mich13,Vill14},
signal processing \cite{Bric09},
mean field games \cite{Bri18b},
statistics \cite{Ejst20,Bien21},
tensor completion \cite{Gand11,Mizo19},
and semi-definite programming \cite{Huwo23,Oliv18}.

\begin{problem}
\label{prob:12}
Let $I=\{1,\ldots,m\}$ and $K=\{1,\ldots,p\}$ be nonempty finite
sets. For every $i\in I$ 
and every $k\in K$, let $\HH_i$ and $\GG_k$ be real 
Hilbert spaces, let $A_i\colon\HH_i\to 2^{\HH_i}$ and
$B_k\colon\GG_k\to 2^{\GG_k}$ be maximally monotone, 
and let $L_{ki}\in\BL(\HH_i,\GG_k)$. Set 
\begin{equation}
\HHH=\bigoplus_{i\in I}\HH_i\quad\text{and}\quad
\GGG=\bigoplus_{k\in K}\GG_k.
\end{equation}
The objective is to solve the primal inclusion
\begin{multline}
\label{e:p12}
\text{find}\;\;\boldsymbol{x}\in\HHH
\;\;\text{such that}\\
(\forall i\in I)\quad
0\in A_ix_i+\Sum_{k\in K}L_{ki}^*
\bigg(B_k\bigg(\Sum_{j\in I}L_{kj}{x}_j\bigg)\bigg)
\end{multline}
together with the dual inclusion
\begin{multline}
\label{e:d12}
\text{find}\;\;\boldsymbol{y}^*\in\GGG
\;\;\text{such that}\\
(\exi\boldsymbol{x}\in\HHH)\;\;
\begin{cases}
(\forall i\in I)\;\;
x_i\in A_i^{-1}\biggl(-\Sum_{k\in K}L_{ki}^*y^*_k\biggr)\\[5mm]
(\forall k\in K)\;\;
\Sum_{i\in I}L_{ki}x_i\in B_k^{-1}{y}^*_k.
\end{cases}
\end{multline}
\end{problem}

\begin{remark}
\label{r:5}
There is an oversight in the dual problem given in
\cite[Problem~1]{MaPr18}, the correct formulation of the
dual inclusion is \eqref{e:d12}.
\end{remark}

The counterpart of Lemma~\ref{l:37z} for Problem~\ref{prob:12} is
as follows.

\begin{lemma}
\label{l:p12}
In the setting of Problem~\ref{prob:12}, set $\XXX=\HHH\oplus\GGG$,
and let $\boldsymbol{Z}$ and $\boldsymbol{Z}^*$ be the sets of
solutions to \eqref{e:p12} and \eqref{e:d12}, respectively. Define
the Kuhn--Tucker operator of Problem~\ref{prob:12} as 
\begin{multline}
\label{e:kt39}
\kut\colon\XXX\to 2^{\XXX}\colon
(\boldsymbol{x},\boldsymbol{y}^*)\mapsto\\
\hspace{18mm}
\biggl(A_1x_1+\sum_{k\in K}L_{k1}^*y_k^*\biggr)\times\cdots\times
\biggl(A_mx_m+\sum_{k\in K}L_{km}^*y_k^*\biggr)\\
\times
\biggl(-\sum_{i\in I}L_{1i}x_i+B_1^{-1}y^*_1\biggr)
\times\cdots\times
\biggl(-\sum_{i\in I}L_{pi}x_i+B_p^{-1}y^*_p\biggr)
\end{multline}
and the set of Kuhn--Tucker points as $\zer\kut$.
Then the following hold:
\begin{enumerate}
\item 
\label{l:p12i} 
$\kut$ is maximally monotone.
\item 
\label{l:p12ii} 
$\zer\kut$ is a closed convex subset of 
$\boldsymbol{Z}\times\boldsymbol{Z}^*$.
\item 
\label{l:p12iii} 
$\boldsymbol{Z}^*\neq\emp$ $\Leftrightarrow$ 
$\zer\kut\neq\emp$ $\Rightarrow$ $\boldsymbol{Z}\neq\emp$.
\end{enumerate}
\end{lemma}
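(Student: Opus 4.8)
The plan is to recognize that Lemma~\ref{l:p12} is the exact multivariate analogue of Lemma~\ref{l:37z}, so the proof will follow the same structure, reducing everything to the two-operator Kuhn--Tucker setting of Lemma~\ref{l:31z}. First I would recast the primal system \eqref{e:p12} as a single composite inclusion in the product space. Setting $\boldsymbol{A}\colon\HHH\to 2^{\HHH}\colon(x_i)_{i\in I}\mapsto\mbox{\LARGE{$\times$}}_{i\in I}A_ix_i$ and $\boldsymbol{B}\colon\GGG\to 2^{\GGG}\colon(y_k)_{k\in K}\mapsto\mbox{\LARGE{$\times$}}_{k\in K}B_ky_k$, together with the single block operator $\boldsymbol{L}\colon\HHH\to\GGG\colon\boldsymbol{x}\mapsto\big(\sum_{i\in I}L_{ki}x_i\big)_{k\in K}$, one checks that $\boldsymbol{L}\in\BL(\HHH,\GGG)$ with adjoint $\boldsymbol{L}^*\colon\boldsymbol{y}^*\mapsto\big(\sum_{k\in K}L_{ki}^*y_k^*\big)_{i\in I}$. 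With these identifications, \eqref{e:p12} is precisely $0\in\boldsymbol{A}\boldsymbol{x}+\boldsymbol{L}^*(\boldsymbol{B}(\boldsymbol{L}\boldsymbol{x}))$ and \eqref{e:d12} is its dual \eqref{e:d3}, while the operator $\kut$ of \eqref{e:kt39} is exactly the Kuhn--Tucker operator \eqref{e:kt3} built from $\boldsymbol{A}$, $\boldsymbol{B}$, and $\boldsymbol{L}$.

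The three assertions then transfer directly. By Lemma~\ref{l:0616}, $\boldsymbol{A}$ and $\boldsymbol{B}$ are maximally monotone as finite direct products of maximally monotone operators; hence Lemma~\ref{l:31z}\ref{l:31zi} gives \ref{l:p12i}, and Lemma~\ref{l:31z}\ref{l:31zii} gives \ref{l:p12ii} since $\boldsymbol{Z}$ and $\boldsymbol{Z}^*$ coincide with the primal and dual solution sets. For \ref{l:p12iii}, Lemma~\ref{l:31z}\ref{l:31ziv} supplies the chain of equivalences $\boldsymbol{Z}\neq\emp\Leftrightarrow\zer\kut\neq\emp\Leftrightarrow\boldsymbol{Z}^*\neq\emp$, which in particular yields the stated $\boldsymbol{Z}^*\neq\emp\Leftrightarrow\zer\kut\neq\emp\Rightarrow\boldsymbol{Z}\neq\emp$. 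Thus the cleanest route is simply to write ``Apply Lemma~\ref{l:31z} to $\boldsymbol{A}$, $\boldsymbol{B}$, and $\boldsymbol{L}$,'' just as the proof of Lemma~\ref{l:37z} refers back to Lemma~\ref{l:31z}.

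The only genuinely non-routine step is verifying that the product Kuhn--Tucker operator $\boldsymbol{M}+\boldsymbol{S}$ from \eqref{e:31z}, when instantiated with the above $\boldsymbol{A}$, $\boldsymbol{B}$, $\boldsymbol{L}$, reproduces the block expression \eqref{e:kt39} coordinate by coordinate. Here $\boldsymbol{M}\colon(\boldsymbol{x},\boldsymbol{y}^*)\mapsto\boldsymbol{A}\boldsymbol{x}\times\boldsymbol{B}^{-1}\boldsymbol{y}^*$ and $\boldsymbol{S}\colon(\boldsymbol{x},\boldsymbol{y}^*)\mapsto(\boldsymbol{L}^*\boldsymbol{y}^*,-\boldsymbol{L}\boldsymbol{x})$; expanding $\boldsymbol{L}^*\boldsymbol{y}^*$ in its $i$-th coordinate gives $\sum_{k\in K}L_{ki}^*y_k^*$ and expanding $\boldsymbol{L}\boldsymbol{x}$ in its $k$-th coordinate gives $\sum_{i\in I}L_{ki}x_i$, and since $\boldsymbol{B}^{-1}$ acts coordinatewise as $(B_k^{-1})_{k\in K}$, the components match \eqref{e:kt39} term for term. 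This is a direct but slightly tedious index-chasing computation, and it is the one place where care is required to keep the roles of $I$ and $K$ (and of $L_{ki}$ versus $L_{ki}^*$) straight. The main obstacle, then, is not conceptual but notational bookkeeping across the two index sets; once the correspondence is set up, the result is immediate from Lemma~\ref{l:31z}.
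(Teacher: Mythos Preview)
Your proposal is correct and matches the paper's implicit approach. The paper states Lemma~\ref{l:p12} without proof, evidently regarding it as a routine multivariate extension; your reduction---packaging $(A_i)_{i\in I}$, $(B_k)_{k\in K}$, and $(L_{ki})$ into product operators $\boldsymbol{A}$, $\boldsymbol{B}$, $\boldsymbol{L}$ and invoking Lemma~\ref{l:31z}---is exactly the mechanism behind the one-line proof of the intermediate Lemma~\ref{l:37z} (``Similar to that of Lemma~\ref{l:31z}''), and your index-chasing verification that \eqref{e:kt39} coincides with $\boldsymbol{M}+\boldsymbol{S}$ is the only thing that needs writing down.
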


\begin{example}
\label{ex:f7}
In the setting of Problem~\ref{prob:12}, set 
$\XXX=\HHH\oplus\GGG$, let $\kut$ be the Kuhn--Tucker operator of 
\eqref{e:kt39}, and let
$\TTT\colon\XXX\to\HHH\colon(\boldsymbol{x},\boldsymbol{y}^*)
\mapsto\boldsymbol{x}$. Then it follows from 
Lemma~\ref{l:p12}\ref{l:p12ii} that $(\XXX,\kut,\TTT)$ is an
embedding of \eqref{e:p12}.
\end{example}

When the monotone operators $(A_i)_{1\leq i\leq m}$ and
$(B_k)_{1\leq k\leq p}$ are taken to be subdifferentials,
Problem~\ref{prob:12} specializes to a multivariate minimization
problem under a suitable qualification condition.

\begin{example}
\label{ex:e12}
Define $\HHH$ and $\GGG$ as in Problem~\ref{prob:12}. For every 
$i\in I$ and every $k\in K$, let $f_i\in\Gamma_0(\HH_i)$, let 
$g_k\in\Gamma_0(\GG_k)$, and let $L_{ki}\in\BL(\HH_i,\GG_k)$.
Suppose that (existence of a Kuhn--Tucker point) 
\begin{equation}
\label{e:cq5}
(\exi\boldsymbol{x}\in\HHH)(\exi\boldsymbol{y}^*\in\GGG)\quad
\begin{cases}
(\forall i\in I)\;\;
-\Sum_{k\in K}L_{ki}^*y^*_k\in\partial f_i(x_i)\\[5mm]
(\forall k\in K)\;\;
\Sum_{i\in I}L_{ki}x_i\in\partial g_k^*(y^*_k).
\end{cases}
\end{equation}
The objective is to solve the primal minimization problem
\begin{equation}
\label{e:23p}
\minimize{\boldsymbol{x}\in\HHH}
{\sum_{i\in I}f_i(x_i)+\sum_{k\in K} 
g_k\bigg(\sum_{i\in I}L_{ki}x_i\bigg)}
\end{equation}
together with its dual problem
\begin{equation}
\label{e:23d}
\minimize{\boldsymbol{y}^*\in\GGG}
{\sum_{i\in I}f_i^*\bigg(-\sum_{k\in K}L_{ki}^*y^*_k\bigg)
+\sum_{k\in K}g^*_k(y^*_k)}.
\end{equation}
\end{example}

In an attempt to recast Problem~\ref{prob:12} as a realization
of Problem~\ref{prob:3}, let us define
\begin{equation}
\label{e:GGG}
\begin{cases}
A\colon\HHH\to 2^{\HHH}\colon\boldsymbol{x}\mapsto
A_1x_1\times\cdots\times A_mx_m\\
B\colon\GGG\to 2^{\GGG}\colon\boldsymbol{y}\mapsto
B_1y_1\times\cdots\times B_py_p\\
L\colon\HHH\to\GGG\colon\boldsymbol{x}\mapsto
\bigl(\sum_{i\in I}L_{1i}x_i,\ldots,\sum_{i\in I}L_{pi}x_i\bigr).
\end{cases}
\end{equation}
Upon injecting these operators into \eqref{e:201310} and invoking
Example~\ref{ex:r1}, we obtain an algorithm that requires that
$m+p$ resolvents be evaluated at each iteration. In large-scale
problems, $m$ and/or $p$ can be huge and this requirement poses
implementation issues as the only information flow within an
iteration is from the $m$ operators $(A_i)_{i\in I}$ 
calculations to the $p$ operators $(B_k)_{k\in K}$ calculations.
This results in an algorithm in which large blocks of calculations
must be performed before any information is exchanged between
subsystems. Thus, if some small subset of the subsystems
represented by the operators $(A_i)_{i\in I}$ or
$(B_k)_{k\in K}$ are more computation-intensive than others,
load balancing can become problematic: most processors may have to
sit idle while the remaining few complete their tasks. More
generally, none of the methods discussed so far can handle
block-processing or asynchronicity.

The algorithm we present now was conceived in \cite{MaPr18} around
combined objectives which were beyond the reach of the existing
splitting algorithms:
\begin{itemize}
\item
{\bfseries Block iterations:} 
At iteration $n$, it necessitates calculation of new points in the
graphs of only some of the operators, say $(A_i)_{i\in I_n}$ and 
$(B_k)_{k\in K_n}$ with $I_n\subset I$ and $K_n\subset K$. The
deterministic control sequences $(I_n)_{n\in\NN}$ and
$(K_n)_{n\in\NN}$ dictate how frequently the various operators are
used. 
\item
{\bfseries Asynchronicity:} 
A new point $(a_{i,n},a^*_{i,n})\in\gra A_i$ being incorporated
into the calculations at iteration $n$ may be based on data
$x_{i,\pi_i(n)}$ and $(y^*_{k,\pi_i(n)})_{k\in K}$ available at 
some possibly earlier iteration $\pi_i(n)\leq n$. Therefore, the
calculation of $(a_{i,n},a^*_{i,n})$ could have been initiated at
iteration $\pi_i(n)$, with its results becoming available only at
iteration $n$. Likewise, for every $k\in K_n$, the computation of
$(b_{k,n},b^*_{k,n})\in\gra B_k$ can be initiated at some iteration
$\omega_k(n)\leq n$, based on $(x_{i,\omega_k(n)})_{i\in I}$ and
$y^*_{k,\omega_k(n)}$. 
\item
{\bfseries Convergence:} 
It guarantees (weak or strong) convergence of the iterates to
primal and dual solutions.
\end{itemize}

\begin{remark}
\label{r:13}
Regarding block iterations for Problem~\ref{prob:12}, a product 
space version of the Douglas--Rachford algorithm was introduced 
in \cite{Siop15}, which features random activation of the blocks.
A random block-iterative version of the forward-backward
algorithm was also proposed in \cite{Siop15}, which led in 
\cite{Pesq15} to algorithms for Problem~\ref{prob:12} via the
renorming techniques presented in Section~\ref{sec:ren2}
(for specialized block-iterative forward-backward 
algorithms tailored for instances of Example~\ref{ex:e12}, see
\cite{Bric19,Liuj15,Salz22,Trao23}). These methods differ from the
deterministic ones presented below in that they operate under
stochastic assumptions on the underlying processes, have a less
predictable computational load over the iterations, have less
freedom in the choice of the proximal parameters, and offer only
almost sure convergence guarantees (see also \cite{Icas22} for
numerical comparisons).
\end{remark}

Going back to \eqref{e:13e} in the setting of \eqref{e:GGG} and
Lemma~\ref{l:p12}, what is actually needed at iteration $n$ to
create the half-space containing $\zer\kut$ are points
\begin{equation}
\begin{cases}
(a_{i,n},a^*_{i,n})\in\gra A_i,&\text{for}\;i\in I;\\
(b_{k,n},b^*_{k,n})\in\gra B_k,&\text{for}\;k\in K.
\end{cases}
\end{equation}
The key observation is that not all of these points have to be new
in order to obtain a new half-space. In other words, we can update
only some of them while keeping old ones and still create a new
half-space onto which the current primal-dual iterate
$(\boldsymbol{x}_n,\boldsymbol{y}_n^*)
=(x_{1,n},\ldots,x_{m,n},y^*_{1,n},\ldots,y^*_{p,n})$ will be
projected. How often the points in the individual graphs should be
updated, and in which fashion, will be regulated by the following
rules.

\begin{assumption}
\label{a:01}
Given $0<R\in\NN$, $(I_n)_{n\in\NN}$ is a sequence of nonempty
subsets of $I$, and $(K_n)_{n\in\NN}$ is a sequence of nonempty
subsets of $K$ such that
\begin{equation}
\label{e:-24G}
I_0=I,\;K_0=K,\;\;\text{and}\;\;
(\forall n\in\NN)\;
\begin{cases}
\displaystyle\bigcup_{j=n}^{n+R-1}I_j=I\\
\displaystyle\bigcup_{j=n}^{n+R-1}K_j=K.
\end{cases}
\end{equation}
\end{assumption}

\begin{assumption}
\label{a:02}
$T\in\NN$ and, for every $i\in I$ and every $k\in K$, 
$(\pi_i(n))_{n\in\NN}$ and $(\omega_k(n))_{n\in\NN}$ are sequences
in $\NN$ such that $(\forall n\in\NN)$ $n-T\leq\pi_i(n)\leq n$ and
$n-T\leq\omega_k(n)\leq n$.
\end{assumption}

With these considerations and by making selections for the updated
points $(a_{i,n},a^*_{i,n})_{i\in I_n}$ and 
$(b^*_{k,n},b^*_{k,n})_{k\in K_n}$ akin to those of 
\eqref{e:y1} and \eqref{e:y2}, we arrive at the following 
realization of \eqref{e:13e}.

\begin{algorithm}
\label{algo:3}
Consider the setting of Problem~\ref{prob:12}, suppose that 
Assumptions~\ref{a:01} and \ref{a:02} are in force, let
$\varepsilon\in\zeroun$, and let $(\lambda_n)_{n\in\NN}$ be a
sequence in $[\varepsilon,2-\varepsilon]$.
For every $i\in I$, let
$(\gamma_{i,n})_{n\in\NN}$ be a sequence in 
$[\varepsilon,1/\varepsilon]$ and let $x_{i,0}\in\HH_i$.
For every $k\in K$, let
$(\sigma_{k,n})_{n\in\NN}$ be a sequence in 
$[\varepsilon,1/\varepsilon]$ and let $y^*_{k,0}\in\GG_k$.
Iterate
\begin{equation}
\label{e:nyc2015-04-03a}
\begin{array}{l}
\text{for}\;n=0,1,\ldots\\
\left\lfloor
\begin{array}{l}
\begin{array}{l}
\text{for every}\;i\in I_n\\
\left\lfloor
\begin{array}{l}
l^*_{i,n}=\sum_{k\in K}L_{ki}^*y_{k,\pi_i(n)}^*\\
a_{i,n}=J_{\gamma_{i,\pi_i(n)}A_i}
\big(x_{i,\pi_i(n)}-\gamma_{i,\pi_i(n)}l^*_{i,n}\big)\\
a_{i,n}^*=\gamma_{i,\pi_i(n)}^{-1}
(x_{i,\pi_i(n)}-a_{i,n})-l^*_{i,n}\\
\end{array}
\right.\\[1mm]
\text{for every}\;i\in I\smallsetminus I_n\\
\left\lfloor
\begin{array}{l}
\bigl(a_{i,n},a_{i,n}^*\bigr)=\bigl(a_{i,n-1},a_{i,n-1}^*\bigr)\\
\end{array}
\right.\\[1mm]
\text{for every}\;k\in K_n\\
\left\lfloor
\begin{array}{l}
l_{k,n}=\sum_{i\in I}L_{ki}x_{i,\omega_k(n)}\\
b_{k,n}=J_{\sigma_{k,\omega_k(n)}B_k}
\big(l_{k,n}+\sigma_{k,\omega_k(n)}y_{k,\omega_k(n)}^*\big)\\
b^*_{k,n}=y_{k,\omega_k(n)}^*+
\sigma_{k,\omega_k(n)}^{-1}(l_{k,n}-b_{k,n})\\
\end{array}
\right.\\[1mm]
\text{for every}\;k\in K\smallsetminus K_n\\
\left\lfloor
\begin{array}{l}
\bigl(b_{k,n},b^*_{k,n}\bigr)=\bigl(b_{k,n-1},b^*_{k,n-1}\bigr)\\
\end{array}
\right.\\[1mm]
\text{for every}\;i\in I\\
\left\lfloor
\begin{array}{l}
t^*_{i,n}=a^*_{i,n}+\sum_{k\in K}L_{ki}^*b^*_{k,n}\\
\end{array}
\right.\\
\text{for every}\;k\in K\\
\left\lfloor
\begin{array}{l}
t_{k,n}=b_{k,n}-\sum_{i\in I}L_{ki}a_{i,n}
\end{array}
\right.\\
\tau_n=\sum_{i\in I}\|t_{i,n}^*\|^2+\sum_{k\in K}\|t_{k,n}\|^2\\
\text{if}\;\tau_n>0\\
\left\lfloor
\begin{array}{l}
\theta_n=\dfrac{\lambda_n}{\tau_n}\,\text{\rm max} 
\Bigl\{0,\sum_{i\in I}\bigl(\scal{x_{i,n}}{t^*_{i,n}}-
\scal{a_{i,n}}{a^*_{i,n}}\bigr)\\
\hspace{26.5mm}+\sum_{k\in K}
\bigl(\scal{t_{k,n}}{y^*_{k,n}} -\scal{b_{k,n}}{b^*_{k,n}}\bigr)
\Bigr\}\\
\end{array}
\right.\\
\text{else}\;\theta_n=0\\
\text{for every}\;i\in I\\
\left\lfloor
\begin{array}{l}
x_{i,n+1}=x_{i,n}-\theta_n t^*_{i,n}\\
\end{array}
\right.\\
\text{for every}\;k\in K\\
\left\lfloor
\begin{array}{l}
y^*_{k,n+1}=y^*_{k,n}-\theta_n t_{k,n}.
\end{array}
\right.\\
\end{array}
\end{array}
\right.\\[4mm]
\end{array}
\end{equation}
\end{algorithm}

Weak convergence is obtained by applying the principles of
Proposition~\ref{p:1}\ref{p:1ii}.

\begin{theorem}[{\protect{\cite[Theorem~13]{MaPr18}}}]
\label{t:7}
Consider the setting of Problem~\ref{prob:12} and
Algorithm~\ref{algo:3}, and suppose that the Kuhn--Tucker
operator $\kut$ of \eqref{e:kt39} has zeros. Then, for
every $i\in I$, $(x_{i,n})_{n\in\NN}$ converges weakly
to a point $x_i\in\HH_i$ and, for every $k\in K$,
$(y^*_{k,n})_{n\in\NN}$ converges weakly to a point
$y^*_k\in\GG_k$. In addition, $(x_i)_{i\in I}$ solves the primal
problem \eqref{e:p12} and $(y^*_k)_{k\in K}$ solves the
dual problem \eqref{e:d12}.
\end{theorem}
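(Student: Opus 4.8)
The plan is to recognize Algorithm~\ref{algo:3} as a concrete realization of the generic Fej\'erian half-space scheme of Theorem~\ref{t:1}, specialized as in Proposition~\ref{p:1} to the maximally monotone Kuhn--Tucker operator $\kut$ of \eqref{e:kt39} acting on $\XXX=\HHH\oplus\GGG$, and then to verify the asymptotic conditions of Proposition~\ref{p:1}\ref{p:1ii} in spite of the block-iterative and asynchronous bookkeeping. First I would set, for every $n$, $\boldsymbol{m}_n=\big((a_{i,n})_{i\in I},(b^*_{k,n})_{k\in K}\big)$ and $\boldsymbol{m}^*_n=\big((t^*_{i,n})_{i\in I},(t_{k,n})_{k\in K}\big)$. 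The resolvent steps together with \eqref{e:ee14} give $(a_{i,n},a^*_{i,n})\in\gra A_i$ and $(b_{k,n},b^*_{k,n})\in\gra B_k$, and a direct comparison with \eqref{e:kt39} then shows $(\boldsymbol{m}_n,\boldsymbol{m}^*_n)\in\gra\kut$. Expanding the cross terms $\sum_{i,k}\scal{L_{ki}a_{i,n}}{b^*_{k,n}}$, which cancel, identifies the bracket defining $\theta_n$ with $\scal{(\boldsymbol{x}_n,\boldsymbol{y}^*_n)-\boldsymbol{m}_n}{\boldsymbol{m}^*_n}$ and the update of \eqref{e:nyc2015-04-03a} with $\boldsymbol{x}_{n+1}=\boldsymbol{x}_n-\lambda_n\boldsymbol{d}_n$. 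Monotonicity of $\kut$ yields $\zer\kut\subset\boldsymbol{H}_n$, so the hypotheses of Theorem~\ref{t:1} hold with $Z=\zer\kut$, which is nonempty, closed and convex by assumption and Lemma~\ref{l:p12}\ref{l:p12ii}.

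From Theorem~\ref{t:1}\ref{t:1i}--\ref{t:1iii} I would extract that $(\boldsymbol{x}_n,\boldsymbol{y}^*_n)_{n\in\NN}$ is bounded and Fej\'er monotone relative to $\zer\kut$, that $\sum_{n}\|(\boldsymbol{x}_{n+1},\boldsymbol{y}^*_{n+1})-(\boldsymbol{x}_n,\boldsymbol{y}^*_n)\|^2<\pinf$ (the relaxation parameters lie in $[\varepsilon,2-\varepsilon]$), and that the normalized cut depths $\|\boldsymbol{d}_n\|\to 0$. The decisive consequence of square-summability of the increments, combined with the uniform delay bound $T$ of Assumption~\ref{a:02}, is asymptotic regularity over bounded windows: for every fixed $r\le T$ one gets $\|(\boldsymbol{x}_{n-r},\boldsymbol{y}^*_{n-r})-(\boldsymbol{x}_n,\boldsymbol{y}^*_n)\|\to 0$, so that all delayed data $x_{i,\pi_i(n)}$ and $y^*_{k,\omega_k(n)}$ agree with the current iterate up to a vanishing error.

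The heart of the argument, and the step I expect to be the main obstacle, is then to certify the two conditions of Proposition~\ref{p:1}\ref{p:1ii}, namely $\boldsymbol{m}^*_n\to 0$ and $\boldsymbol{m}_n-(\boldsymbol{x}_n,\boldsymbol{y}^*_n)\weakly 0$, despite the fact that at iteration $n$ only the blocks in $I_n$ and $K_n$ carry freshly computed graph points while the others are frozen. Here I would exploit Assumption~\ref{a:01}: every index is refreshed at least once in each window of $R$ consecutive iterations, so no component of $\boldsymbol{m}_n$ or $\boldsymbol{m}^*_n$ rests on data older than $R+T$ iterations. Combining this window-covering with the firm nonexpansiveness of the resolvents, the vanishing of $\|\boldsymbol{d}_n\|$, and asymptotic regularity, one shows that the residuals $a^*_{i,n}+\sum_k L_{ki}^*b^*_{k,n}$ and $b_{k,n}-\sum_i L_{ki}a_{i,n}$ tend to $0$ and that $a_{i,n}-x_{i,n}\weakly 0$ and $b^*_{k,n}-y^*_{k,n}\weakly 0$ for every $i$ and $k$. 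The delicate, genuinely technical point is the bookkeeping that propagates a single refresh of one block through the frozen blocks while keeping every discrepancy summable and hence asymptotically negligible.

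Finally, passing to a weak sequential cluster point $(\boldsymbol{x},\boldsymbol{y}^*)$ of the bounded sequence along a subsequence chosen so that all blocks are simultaneously controlled, I would apply Lemma~\ref{l:12} coordinatewise to the graphs of the $A_i$ and the $B_k$, using the strong convergence of the residual components against the weak convergence of $a_{i,n}$ and $b^*_{k,n}$, to conclude $(\boldsymbol{x},\boldsymbol{y}^*)\in\zer\kut$. Thus $\WC(\boldsymbol{x}_n,\boldsymbol{y}^*_n)_{n\in\NN}\subset\zer\kut$, and Theorem~\ref{t:1}\ref{t:1iv} delivers weak convergence of the entire sequence to a single Kuhn--Tucker point. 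Lemma~\ref{l:p12}\ref{l:p12ii}--\ref{l:p12iii} then identifies $\boldsymbol{x}=(x_i)_{i\in I}$ as a solution to the primal problem \eqref{e:p12} and $\boldsymbol{y}^*=(y^*_k)_{k\in K}$ as a solution to the dual problem \eqref{e:d12}, which is the assertion.
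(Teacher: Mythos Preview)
Your proposal is correct and follows exactly the route the paper indicates: the sentence preceding Theorem~\ref{t:7} says that weak convergence is obtained by applying the principles of Proposition~\ref{p:1}\ref{p:1ii}, and the full proof is deferred to the cited reference. Your identification of $(\boldsymbol{m}_n,\boldsymbol{m}^*_n)\in\gra\kut$, the cancellation of the cross terms, and the use of Assumptions~\ref{a:01}--\ref{a:02} together with square-summability of the increments to handle the frozen and delayed blocks are precisely the ingredients of that proof. One small simplification: at the end you need not invoke Lemma~\ref{l:12} coordinatewise on the $A_i$ and $B_k$; since $\kut$ itself is maximally monotone (Lemma~\ref{l:p12}\ref{l:p12i}), it is cleaner to apply Lemma~\ref{l:12} directly to $\kut$ with $(\boldsymbol{m}_n,\boldsymbol{m}^*_n)\in\gra\kut$, $\boldsymbol{m}_n\weakly(\boldsymbol{x},\boldsymbol{y}^*)$, and $\boldsymbol{m}^*_n\to\boldsymbol{0}$, which is exactly what Proposition~\ref{p:1}\ref{p:1ii} does internally.
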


\begin{remark}
\label{r:12}
Here are a few comments on algorithm \eqref{algo:3}.
\begin{enumerate}
\item
The synchronous implementation is obtained by taking, for every
$n\in\NN$, every $i\in I_n$, and every $k\in K_n$,
$\pi_i(n)=\omega_k(n)=n$.
\item
We recover \cite[Theorem~4.3]{Siop14} (and in particular
Proposition~\ref{p:42s} when $m=p=1$) in the special case when 
the implementation is synchronous, and at every iteration $n$, 
every operator is used (i.e., $I_n=I$ and 
$K_n=K$), with $\gamma_{i,n}=\gamma_n$ for every
$i\in I$ and $\sigma_{k,n}=\sigma_n$ for every 
$k\in K$.
\item
The specialization of Theorem~\ref{t:7} to the minimization setting
of Example~\ref{ex:e12} is obtained by replacing each
$J_{\gamma_{i,\pi_i(n)}A_i}$ with $\prox_{\gamma_{i,\pi_i(n)}f_i}$
and each $J_{\sigma_{k,\omega_k(n)}B_k}$ with 
$\prox_{\sigma_{k,\omega_k(n)}g_k}$.
Numerical experiments are presented in \cite{Icas22} in the  
context of signal recovery and machine learning, and in 
\cite{Ecks23} in the context of stochastic programming.
\item
For the strongly convergent variant of Theorem~\ref{t:7} based on
Proposition~\ref{p:2}, see \cite[Theorem~15]{MaPr18}.
\item
\label{r:12v}
When $m=1$ and $A=0$, a variant that takes into account the fact
that some of the operators $(B_k)_{k\in K}$ may be monotone
and Lipschitzian, and which activate them via Euler steps is
presented in \cite{John22} (see also \cite{John20}).
\end{enumerate}
\end{remark}

\section{Block-iterative saddle projective splitting}
\label{sec:sad}

\subsection{Preview}
In all the algorithms discussed so far, each monotone operator has
one of three properties: it is set-valued, single-valued and
cocoercive, or single-valued and Lipschitzian. In addition, at each
iteration, a set-valued operator is used once via its 
resolvents, a cocoercive operator once via a Euler step, 
and a Lipschitzian operator twice via Euler steps. This is 
particularly the case in the forward-backward-half-forward
algorithm of Section~\ref{sec:fbfh}, the objective of which is to
find a zero of 
\begin{equation}
\label{e:acq}
M=A+C+Q,\;\;\text{where}\;\; 
\begin{cases}
A\colon\HH\to 2^{\HH}&\text{is maximally monotone}\\
C\colon\HH\to\HH&\text{is cocoercive}\\
Q\colon\HH\to\HH&\text{is monotone and Lipschitzian.}
\end{cases}
\end{equation}
On the other hand, the Kuhn--Tucker projective
splitting techniques of Section~\ref{sec:ps} activate all the
operators via their resolvents (exceptions were noted in
Remarks~\ref{r:e2}\ref{r:e2iii} and \ref{r:12}\ref{r:12v}, but they
concern special cases of Problem~\ref{prob:12}). Furthermore, they
are not designed to handle problems such as \eqref{e:p18} or
\eqref{e:bap}, which incorporate parallel sums.

In this section, following \cite{Moor22}, we unify all the problem
formulations encountered in Sections~\ref{sec:ppa}--\ref{sec:ps} by
including parallel sums in the system of monotone inclusions of
Problem~\ref{prob:12}, and decomposing each operator in the
resulting problem as in \eqref{e:acq}. In addition, nonlinear
coupling operators $(R_i)_{i\in I}$ are incorporated.

\begin{problem}
\label{prob:20}
Let $(\HH_i)_{i\in I}$ and $(\GG_k)_{k\in K}$ be finite families of
real Hilbert spaces, and set 
\begin{equation}
\label{e:atl1}
\HHH=\bigoplus_{i\in I}\HH_i\quad\text{and}\quad
\GGG=\bigoplus_{k\in K}\GG_k. 
\end{equation}
For every $i\in I$ and every 
$k\in K$, suppose that the following are satisfied:
\begin{enumerate}[label={\rm[\alph*]}]
\item
\label{prob:20a}
$A_i\colon\HH_i\to 2^{\HH_i}$ is maximally monotone,
$C_i\colon\HH_i\to\HH_i$ is cocoercive with constant
$\alpha_i^{\CC}\in\RPP$,
$Q_i\colon\HH_i\to\HH_i$ is monotone and Lipschitzian
with constant $\alpha_i^{\LL}\in\RP$, and 
$R_i\colon\HHH\to\HH_i$.
\item
\label{prob:20b}
$B_k^{\MM}\colon\GG_k\to 2^{\GG_k}$ is maximally monotone,
$B_k^{\CC}\colon\GG_k\to\GG_k$ is cocoercive with constant
$\beta_k^{\CC}\in\RPP$, and $B_k^{\LL}\colon\GG_k\to\GG_k$
is monotone and Lipschitzian with constant $\beta_k^{\LL}\in\RP$.
\item
\label{prob:20c}
$D_k^{\MM}\colon\GG_k\to 2^{\GG_k}$ is maximally monotone,
$D_k^{\CC}\colon\GG_k\to\GG_k$ is cocoercive with constant
$\delta_k^{\CC}\in\RPP$, and $D_k^{\LL}\colon\GG_k\to\GG_k$
is monotone and Lipschitzian with constant $\delta_k^{\LL}\in\RP$.
\item
\label{prob:20d}
$L_{ki}\in\BL(\HH_i,\GG_k)$.
\end{enumerate}
In addition, 
\begin{enumerate}[resume,label={\rm[\alph*]}]
\item
\label{prob:20e}
$\boldsymbol{R}\colon\HHH\to\HHH\colon
\boldsymbol{x}\mapsto(R_i\boldsymbol{x})_{i\in I}$
is monotone and Lipschitzian with constant $\chi\in\RP$.
\end{enumerate}
The objective is to solve the primal problem
\begin{multline}
\label{e:20p}
\text{find}\;\:\boldsymbol{x}=(x_i)_{i\in I}\in\HHH
\;\:\text{such that}\;\:(\forall i\in I)\;\;
0\in A_ix_i+C_ix_i+Q_ix_i
+R_i\boldsymbol{x}\\
\hskip -20mm+\Sum_{k\in K}L_{ki}^*\Bigg(\Big(
\big(B_k^{\MM}+B_k^{\CC}+B_k^{\LL}\big)\infconv
\big(D_k^{\MM}+D_k^{\CC}+D_k^{\LL}\big)\Big)
\Bigg(\Sum_{j\in I}L_{kj}x_j\Bigg)\Bigg)
\end{multline}
and the associated dual problem
\begin{multline}
\label{e:20d}
\hskip -8mm
\text{find}\;\:\boldsymbol{y}^*=(y^*_k)_{k\in K}\in\GGG
\;\:\text{such that}\;\:
(\exi\boldsymbol{x}\in\HHH)\\
\hskip -18mm
\begin{cases}
(\forall i\in I)\;\;-\Sum_{k\in K}L_{ki}^*y_k^*\in
A_ix_i+C_ix_i+Q_ix_i+R_i\boldsymbol{x}\\
(\forall k\in K)\;\;y_k^*\in
\Big(\big(B_k^{\MM}+B_k^{\CC}+B_k^{\LL}\big)\infconv
\big(D_k^{\MM}+D_k^{\CC}+D_k^{\LL}\big)\Big)
\Bigg(\Sum_{i\in I}L_{ki}x_i\Bigg).
\end{cases}
\end{multline}
\end{problem}

Here is an instance of Problem~\ref{prob:20} which is not captured
by previous monotone inclusion models.

\begin{example}
\label{ex:80}
We consider a game theoretic minimax problem. 
Let $I$ be a finite set and suppose that $\emp\neq J\subset I$.
For every $i\in I$, the strategy $x_i$ of player $i$ belongs to a
real Hilbert space $\HH_i$. A strategy profile is a point
\begin{equation}
\boldsymbol{x}=(x_i)_{i\in I}\in\bigoplus_{i\in I}\HH_i, 
\end{equation}
and the associated profile of the players other than $i\in I$ is 
$\boldsymbol{x}_{\smallsetminus i}=
(x_j)_{j\in I\smallsetminus\{i\}}$. For every $i\in I$ and every 
\begin{equation}
(x_i,\boldsymbol{y})\in\HH_i\oplus\bigoplus_{j\in I}\HH_j,
\end{equation}
we set
$(x_i;\boldsymbol{y}_{\smallsetminus i})=(y_1,\ldots,y_{i-1},x_i,
y_{i+1},\ldots,y_p)$. Now set 
\begin{equation}
\UUU=\bigoplus_{i\in I\smallsetminus J}\HH_i,\quad
\VVV=\bigoplus_{j\in J}\HH_j,
\quad\text{and}\quad\HHH=\UUU\oplus\VVV, 
\end{equation}
and, for every $i\in I$, let $f_i\in\Gamma_0(\HH_i)$. Further, let
$\boldsymbol{F}\colon\HHH\to\RR$ be
differentiable with a Lipschitzian gradient and 
such that, for every $\boldsymbol{u}\in\UUU$ and every
$\boldsymbol{v}\in\VVV$, the functions
${-}\boldsymbol{F}(\boldsymbol{u},\cdot)$ and 
$\boldsymbol{F}(\cdot,\boldsymbol{v})$ are
convex. We consider the multivariate minimax problem
\begin{equation}
\label{e:mima}
\minmax{\boldsymbol{u}\in\UUU}{\boldsymbol{v}\in\VVV}
{\sum_{i\in I\smallsetminus J}f_i(u_i)
+\boldsymbol{F}(\boldsymbol{u},\boldsymbol{v})
-\sum_{j\in J}f_j(v_j)}.
\end{equation}
Now define
\begin{equation}
(\forall i\in I)\quad
\boldsymbol{h}_{\!i}\colon\HHH\to\RR\colon
(\boldsymbol{u},\boldsymbol{v})\mapsto
\begin{cases}
\boldsymbol{F}(\boldsymbol{u},\boldsymbol{v}),
&\text{if}\;\:i\in I\smallsetminus J;\\
{-}\boldsymbol{F}(\boldsymbol{u},\boldsymbol{v}),
&\text{if}\;\:i\in J.
\end{cases}
\end{equation}
Then \eqref{e:mima} can be put in the form
\begin{equation}
\label{e:77}
\text{find}\;\:\boldsymbol{x}\in\HHH\;\:
\text{such that}\;\:(\forall i\in I)\;\;
{x}_i\in\Argmin{f_i+\boldsymbol{h}_{\!i}(\cdot;
\boldsymbol{x}_{\smallsetminus i})}.
\end{equation}
Since
\begin{equation}
\label{e:78}
(\forall i\in I)(\forall\boldsymbol{x}\in\HHH)\quad
\pnabla{i}\boldsymbol{h}_{\!i}(\boldsymbol{x})=
\begin{cases}
\pnabla{i}\boldsymbol{F}(\boldsymbol{x}),
&\text{if}\;\:i\in I\smallsetminus J;\\
{-}\pnabla{i}\boldsymbol{F}(\boldsymbol{x}),
&\text{if}\;\:i\in J,
\end{cases}
\end{equation}
the operator
\begin{equation}
\boldsymbol{R}\colon\HHH\to\HHH\colon
\boldsymbol{x}\mapsto
\big(\pnabla{i}\boldsymbol{h}_{\!i}(\boldsymbol{x})\big)_{i\in I}=
\Bigl(\big(\pnabla{i}\boldsymbol{F}(\boldsymbol{x})
\big)_{i\in I\smallsetminus J},
\big({-}\pnabla{j}\boldsymbol{F}
(\boldsymbol{x})\big)_{j\in J}\Bigr)
\end{equation}
is monotone \cite{Roc70b,Roc71d} and Lipschitzian. 
Now, for every $i\in I$, set $A_i=\partial f_i$. Then, by
Fermat's rule, \eqref{e:77} is equivalent to 
\begin{equation}
\label{e:79}
\text{find}\;\:\boldsymbol{x}\in\HHH
\;\:\text{such that}\;\:(\forall i\in I)\;\;
0\in A_ix_i+R_i\boldsymbol{x},
\end{equation}
which shows that \eqref{e:mima} is an instantiation of 
\eqref{e:20p}. Special cases of \eqref{e:mima} under the above 
assumptions arise in 
\cite{Sico10,Sign21,Mont15,Nemi04,Rock95,Thek19,Yoon21}.
\end{example}

Our objective is to solve Problem~\ref{prob:20} with the same level
of flexibility and the same primal-dual convergence guarantees as
in Theorem~\ref{t:7}, i.e., to achieve full splitting of all the
operators using an asynchronous block-iterative algorithm without
knowledge of the norms of the linear operators or inversion of
linear operators. In addition, all the single-valued operators
should be activated via Euler steps. 

\subsection{Saddle operator formulation}

The approach adopted in Section~\ref{sec:ps} to break
Problem~\ref{prob:12} into manageable pieces hinged on the
Kuhn--Tucker operator of Lemma~\ref{l:p12} to obtain the embedding
of Framework~\ref{f:1}. This strategy does not appear to lead to a
full splitting of Problem~\ref{prob:20}, as it contains a larger
number of operators. We therefore require an embedding in a space
$\XXX$ which is bigger than the primal-dual space
$\HH_1\oplus\cdots\HH_m\oplus\GG_1\oplus\cdots\oplus\GG_p$ of
Theorem~\ref{t:7}. As discussed in Remark~\ref{r:dj2}, saddle 
operators are defined on a bigger space than Kuhn--Tucker 
operators (for instance, $\HH\oplus\GG\oplus\GG$ versus
$\HH\oplus\GG$ in \eqref{e:s2}) and their zeros still provide
primal-dual solutions. Following Framework~\ref{f:1}, as we did in
Example~\ref{ex:f5}, the methodology of 
\emph{saddle projective splitting}
is to introduce a saddle operator for Problem~\ref{prob:20}. We
shall then devise asynchronous block-iterative splitting algorithms
based on the geometric principles of Theorems~\ref{t:1c} and
\ref{t:2c} to find a zero of it, from which solutions to
Problem~\ref{prob:20} will be extracted. This is outlined in the
following lemma.

\begin{lemma}[{\protect{\cite[Proposition~1]{Moor22}}}]
\label{l:sad}
Define $\HHH$ and $\GGG$ as in \eqref{e:atl1}, set 
$\XXX=\HHH\oplus\GGG\oplus\GGG\oplus\GGG$, and define the 
saddle operator of Problem~\ref{prob:20} as
\begin{align}
\label{e:saddle}
\sad\colon\XXX\to 2^{\XXX}\colon
(\boldsymbol{x},\boldsymbol{y},\boldsymbol{z},\boldsymbol{v}^*)
&\mapsto\nonumber\\
&\Biggl(\bigtimes_{i\in I}
\biggl(A_ix_i+C_ix_i+Q_ix_i+R_i\boldsymbol{x}
+\sum_{k\in K}L^*_{ki}v^*_k\biggr),\nonumber\\
&\;\bigtimes_{k\in K}\big(B_k^{\MM}y_k
+B_k^{\CC}y_k+B_k^{\LL}y_k-v_k^*\big),\nonumber\\
&\;\bigtimes_{k\in K}\big(D_k^{\MM}z_k
+D_k^{\CC}z_k+D_k^{\LL}z_k-v_k^*\big),\nonumber\\
&\;\bigtimes_{k\in K}\bigg\{
y_k+z_k-\sum_{i\in I}L_{ki}x_i\bigg\}~\Biggr),
\end{align}
let $\boldsymbol{Z}$ be the set of solutions to \eqref{e:20p} and 
let $\boldsymbol{Z}^*$ be the set of solutions to \eqref{e:20d}.
Then the following hold:
\begin{enumerate}
\item
\label{l:sadi}
$\sad$ is maximally monotone.
\item
\label{l:sadii}
$\zer\sad$ is closed and convex.
\item
\label{l:sadiii}
Suppose that 
$(\boldsymbol{x},\boldsymbol{y},
{\boldsymbol{z}},\boldsymbol{v}^*)
\in\zer\sad$. Then $(\boldsymbol{x},
\boldsymbol{v}^*)\in\boldsymbol{Z}\times\boldsymbol{Z}^*$.
\item
\label{l:sadiv}
$\boldsymbol{Z}^*\neq\emp$ $\Leftrightarrow$ $\zer\sad\neq\emp$ 
$\Rightarrow$ $\boldsymbol{Z}\neq\emp$.
\end{enumerate}
\end{lemma}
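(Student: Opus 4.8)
The plan is to realize $\sad$ as the sum of a purely set-valued maximally monotone operator and a single-valued, monotone, Lipschitzian operator with full domain, so that items \ref{l:sadi} and \ref{l:sadii} fall out of the standard calculus of maximal monotonicity, and then to read off items \ref{l:sadiii} and \ref{l:sadiv} by inspecting the four coordinate blocks of $\zer\sad$ in conjunction with the definition \eqref{e:20115n} of the parallel sum. First I would split $\sad=\boldsymbol{M}+\boldsymbol{T}$, where $\boldsymbol{M}$ collects the set-valued ingredients,
\[
\boldsymbol{M}\colon(\boldsymbol{x},\boldsymbol{y},\boldsymbol{z},\boldsymbol{v}^*)\mapsto\Big(\bigtimes_{i\in I}A_ix_i\Big)\times\Big(\bigtimes_{k\in K}B_k^{\MM}y_k\Big)\times\Big(\bigtimes_{k\in K}D_k^{\MM}z_k\Big)\times\{\boldsymbol{0}\},
\]
and $\boldsymbol{T}$ gathers everything else: the cocoercive terms $C_i,B_k^{\CC},D_k^{\CC}$, the monotone Lipschitzian terms $Q_i,B_k^{\LL},D_k^{\LL}$, the coupling operator $\boldsymbol{R}$, and the linear coupling through the $L_{ki}$ and the multipliers $v_k^*$. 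By Lemma~\ref{l:0616}, $\boldsymbol{M}$ is maximally monotone, since each factor is maximally monotone and the last factor is the zero operator on $\GGG$.

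The routine verification is that $\boldsymbol{T}$ is monotone, single-valued, and Lipschitzian with $\dom\boldsymbol{T}=\XXX$. I would write $\boldsymbol{T}=\boldsymbol{C}+\boldsymbol{Q}+\boldsymbol{S}$, where $\boldsymbol{C}$ assembles the cocoercive operators (hence monotone and Lipschitzian), $\boldsymbol{Q}$ assembles $\boldsymbol{R}$ together with the $Q_i,B_k^{\LL},D_k^{\LL}$ (monotone and Lipschitzian by hypothesis \ref{prob:20a}--\ref{prob:20c} and \ref{prob:20e}), and $\boldsymbol{S}$ is the bounded linear operator
\[
\boldsymbol{S}\colon(\boldsymbol{x},\boldsymbol{y},\boldsymbol{z},\boldsymbol{v}^*)\mapsto\Big(\Big(\sum_{k\in K}L_{ki}^*v_k^*\Big)_{i\in I},\,(-v_k^*)_{k\in K},\,(-v_k^*)_{k\in K},\,\Big(y_k+z_k-\sum_{i\in I}L_{ki}x_i\Big)_{k\in K}\Big).
\]
A direct expansion shows $\scal{\boldsymbol{S}\boldsymbol{w}}{\boldsymbol{w}}=0$ for every $\boldsymbol{w}\in\XXX$ (the terms involving the $L_{ki}$ cancel, as do those involving the $v_k^*$), so $\boldsymbol{S}$ is skew, hence monotone. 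Thus $\boldsymbol{T}$ is a finite sum of monotone, Lipschitzian operators with full domain; being continuous and single-valued, it is maximally monotone by Example~\ref{ex:11}. Since $\dom\boldsymbol{T}=\XXX$, Lemma~\ref{l:11}\ref{l:11i} gives that $\sad=\boldsymbol{M}+\boldsymbol{T}$ is maximally monotone, which is \ref{l:sadi}, and then \ref{l:sadii} is immediate from \eqref{e:zero}.

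For \ref{l:sadiii}, let $(\boldsymbol{x},\boldsymbol{y},\boldsymbol{z},\boldsymbol{v}^*)\in\zer\sad$ and set each of the four blocks to $\boldsymbol{0}$. The second and third blocks give $v_k^*\in(B_k^{\MM}+B_k^{\CC}+B_k^{\LL})y_k$ and $v_k^*\in(D_k^{\MM}+D_k^{\CC}+D_k^{\LL})z_k$, while the fourth gives $y_k+z_k=\sum_{i\in I}L_{ki}x_i$. Inverting the first two inclusions, adding them, and invoking \eqref{e:20115n} yields
\[
v_k^*\in\big((B_k^{\MM}+B_k^{\CC}+B_k^{\LL})\infconv(D_k^{\MM}+D_k^{\CC}+D_k^{\LL})\big)\Big(\sum_{i\in I}L_{ki}x_i\Big)
\]
for every $k\in K$. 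Together with the first block this is exactly \eqref{e:20d}, so $\boldsymbol{v}^*\in\boldsymbol{Z}^*$; substituting these $v_k^*$ back into the first block reproduces \eqref{e:20p}, so $\boldsymbol{x}\in\boldsymbol{Z}$. Finally, for \ref{l:sadiv} the implications $\zer\sad\neq\emp\Rightarrow\boldsymbol{Z}^*\neq\emp$ and $\zer\sad\neq\emp\Rightarrow\boldsymbol{Z}\neq\emp$ follow from \ref{l:sadiii}, and for the converse $\boldsymbol{Z}^*\neq\emp\Rightarrow\zer\sad\neq\emp$ I would start from $\boldsymbol{v}^*\in\boldsymbol{Z}^*$ with an associated $\boldsymbol{x}$ as in \eqref{e:20d} and run the parallel-sum characterization in the \emph{opposite} direction: the membership of $v_k^*$ in the parallel sum at $\sum_{i\in I}L_{ki}x_i$ produces $y_k,z_k$ with $y_k+z_k=\sum_{i\in I}L_{ki}x_i$, $v_k^*\in(B_k^{\MM}+B_k^{\CC}+B_k^{\LL})y_k$, and $v_k^*\in(D_k^{\MM}+D_k^{\CC}+D_k^{\LL})z_k$, whence $(\boldsymbol{x},\boldsymbol{y},\boldsymbol{z},\boldsymbol{v}^*)$ annihilates all four blocks and lies in $\zer\sad$. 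The only point requiring care is keeping the inclusion/inverse bookkeeping for the parallel sum consistent in both directions; the rest is routine sum-and-product calculus of monotone operators, and the main obstacle is simply organizing the index-heavy skewness computation for $\boldsymbol{S}$ cleanly.
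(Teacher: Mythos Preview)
Your argument is correct. The paper does not supply its own proof of this lemma; it simply records the result with a citation to \cite[Proposition~1]{Moor22}. Your decomposition $\sad=\boldsymbol{M}+\boldsymbol{T}$ with $\boldsymbol{T}=\boldsymbol{C}+\boldsymbol{Q}+\boldsymbol{S}$ is precisely the natural one, and your verification of the skewness of $\boldsymbol{S}$, the monotonicity and Lipschitz continuity of $\boldsymbol{T}$, the application of Example~\ref{ex:11} and Lemma~\ref{l:11}\ref{l:11i}, and the parallel-sum bookkeeping for \ref{l:sadiii}--\ref{l:sadiv} are all sound. This is essentially the approach taken in the cited source.
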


We thus obtain the following generalization of Example~\ref{ex:f5}.

\begin{example}
\label{ex:f8}
In the setting of Problem~\ref{prob:20}, set 
\begin{equation}
\XXX=\HHH\oplus\GGG\oplus\GGG\oplus\GGG, 
\end{equation}
let $\sad$ be the
saddle operator of \eqref{e:saddle}, and let
\begin{equation}
\TTT\colon\XXX\to\HHH\colon
(\boldsymbol{x},\boldsymbol{y},\boldsymbol{z},\boldsymbol{v}^*)
\mapsto\boldsymbol{x}. 
\end{equation}
Then it follows from 
Lemma~\ref{l:sad}\ref{l:sadiii} that $(\XXX,\sad,\TTT)$ is 
an embedding of \eqref{e:20p}.
\end{example}

Thus, to solve Problem~\ref{prob:20} via 
Theorem~\ref{t:1c}, we need a decomposition of the saddle 
operator \eqref{e:saddle} as 
$\sad=\boldsymbol{\mathsf{W}}+\boldsymbol{\mathsf{C}}$, where
$\boldsymbol{\mathsf{W}}\colon\XXX\to 2^{\XXX}$ is maximally
monotone and $\boldsymbol{\mathsf{C}}\colon\XXX\to\XXX$ is
$\alpha$-cocoercive. This will be achieved with 
\begin{equation}
\label{e:2796}
\boldsymbol{\mathsf{C}}\colon\XXX\to\XXX\colon
(\boldsymbol{x},\boldsymbol{y},\boldsymbol{z},\boldsymbol{v}^*)
\mapsto\Bigl(
\bigl(C_ix_i\bigr)_{i\in I},\bigl(B_k^{\CC}y_k\bigr)_{k\in K},
\bigl(D_k^{\CC}z_k\bigr)_{k\in K},\boldsymbol{0}\Bigr)
\end{equation}
and $\alpha=\min\{\alpha_i^{\CC},\beta_k^{\CC},
\delta_k^{\CC}\}_{i\in I,k\in K}$. These considerations lead to
the following implementation of \eqref{e:fejer19}.

\begin{algorithm}
\label{algo:20}
In the setting of Problem~\ref{prob:20}, set
\begin{equation}
\alpha=\min\bigl\{\alpha_i^{\CC},\beta_k^{\CC},
\delta_k^{\CC}\bigr\}_{\substack{i\in I\\ k\in K}}, 
\end{equation}
let $\sigma\in\left]1/(4\alpha),\pinf\right[$ and 
$\varepsilon\in\zeroun$ be such that
\begin{equation}
\label{e:a2i'}
\dfrac{1}{\varepsilon}>\sigma+\max\Bigl\{\alpha_i^{\LL}+\chi,
\beta_k^{\LL},\delta_k^{\LL}\Bigr\}_{\substack{i\in I\\k\in K}},
\end{equation}
and let $(\lambda_n)_{n\in\NN}$ be a sequence in
$\left[\varepsilon,2-\varepsilon\right]$.
For every $i\in I$, let
$(\gamma_{i,n})_{n\in\NN}$ be a sequence in 
$\left[\varepsilon,1/(\alpha_i^{\LL}+\chi+\sigma)\right]$ and let 
$x_{i,0}\in\HH_i$. For every $k\in K$, let
$(\mu_{k,n})_{n\in\NN}$ be a sequence in 
$\left[\varepsilon,1/(\beta_k^{\LL}+\sigma)\right]$, let
$(\rho_{k,n})_{n\in\NN}$ be a sequence in 
$\left[\varepsilon,1/(\delta_k^{\LL}+\sigma)\right]$, let
$(\sigma_{k,n})_{n\in\NN}$ be a sequence in 
$\left[\varepsilon,1/\varepsilon\right]$, and let
$\{y_{k,0},z_{k,0},v_{k,0}^*\}\subset\GG_k$.
Suppose that Assumptions~\ref{a:01} and \ref{a:02} are in force
and iterate
\newpage
\begin{align}
\label{e:long1}
\hspace{-11mm}
\begin{array}{l}
\text{for}\;n=0,1,\ldots\\
\left\lfloor
\begin{array}{l}
\text{for every}\;i\in I_n\\
\left\lfloor
\begin{array}{l}
l_{i,n}^*=Q_ix_{i,\pi_i(n)}
+R_i\boldsymbol{x}_{\pi_i(n)}
+\sum_{k\in K}L_{ki}^*v_{k,\pi_i(n)}^*;\\
a_{i,n}=J_{\gamma_{i,\pi_i(n)}A_i}\big(
x_{i,\pi_i(n)}-\gamma_{i,\pi_i(n)}
(l_{i,n}^*+C_ix_{i,\pi_i(n)})\big);\\
a_{i,n}^*=\gamma_{i,\pi_i(n)}^{-1}(x_{i,\pi_i(n)}
-a_{i,n})-l_{i,n}^*+Q_ia_{i,n};\\
\xi_{i,n}=\|a_{i,n}-x_{i,\pi_i(n)}\|^2;
\end{array}
\right.\\
\text{for every}\;i\in I\smallsetminus I_n\\
\left\lfloor
\begin{array}{l}
a_{i,n}=a_{i,n-1};\;a_{i,n}^*=a_{i,n-1}^*;\;
\xi_{i,n}=\xi_{i,n-1};\\
\end{array}
\right.\\
\text{for every}\;k\in K_n\\
\left\lfloor
\begin{array}{l}
u_{k,n}^*=v_{k,\omega_k(n)}^*-B_k^{\LL}y_{k,\omega_k(n)};
w_{k,n}^*=v_{k,\omega_k(n)}^*-D_k^{\LL}z_{k,\omega_k(n)};\\
b_{k,n}=J_{\mu_{k,\omega_k(n)}B_k^{\MM}}\big(y_{k,\omega_k(n)}
+\mu_{k,\omega_k(n)}(u_{k,n}^*-B_k^{\CC}y_{k,\omega_k(n)})
\big);\\
d_{k,n}=J_{\rho_{k,\omega_k(n)}D_k^{\MM}}\big(z_{k,\omega_k(n)}
+\rho_{k,\omega_k(n)}(w_{k,n}^*-D_k^{\CC}z_{k,\omega_k(n)})\big);\\
e_{k,n}^*=\sigma_{k,\omega_k(n)}\big(
\sum_{i\in I}L_{ki}x_{i,\omega_k(n)}
-y_{k,\omega_k(n)}-z_{k,\omega_k(n)}\big)\\
\hspace{10mm}\;+\:v_{k,\omega_k(n)}^*;\\
q_{k,n}^*=\mu_{k,\omega_k(n)}^{-1}(y_{k,\omega_k(n)}-b_{k,n})
+u_{k,n}^*+B_k^{\LL}b_{k,n}-e_{k,n}^*;\\
t_{k,n}^*=\rho_{k,\omega_k(n)}^{-1}(z_{k,\omega_k(n)}-d_{k,n})
+w_{k,n}^*+D_k^{\LL}d_{k,n}-e_{k,n}^*;\\
\eta_{k,n}=\|b_{k,n}-y_{k,\omega_k(n)}\|^2
+\|d_{k,n}-z_{k,\omega_k(n)}\|^2;\\
e_{k,n}=b_{k,n}+d_{k,n}-\sum_{i\in I}L_{ki}a_{i,n};
\end{array}
\right.\\
\text{for every}\;k\in K\smallsetminus K_n\\
\left\lfloor
\begin{array}{l}
b_{k,n}=b_{k,n-1};\;
d_{k,n}=d_{k,n-1};\;
e_{k,n}^*=e_{k,n-1}^*;\\
q_{k,n}^*=q_{k,n-1}^*;\;
t_{k,n}^*=t_{k,n-1}^*;\;\eta_{k,n}=\eta_{k,n-1};\\
e_{k,n}=b_{k,n}+d_{k,n}-\sum_{i\in I}L_{ki}a_{i,n};
\end{array}
\right.\\
\text{for every}\;i\in I\\
\left\lfloor
\begin{array}{l}
p_{i,n}^*=a_{i,n}^*
+R_i\boldsymbol{a}_n
+\sum_{k\in K}L_{ki}^*e_{k,n}^*;
\end{array}
\right.\\
\begin{aligned}
\Delta_n&=\textstyle
{-}(4\alpha)^{-1}\big(\sum_{i\in I}\xi_{i,n}
+\sum_{k\in K}\eta_{k,n}\big)
+\sum_{i\in I}\scal{x_{i,n}-a_{i,n}}{p_{i,n}^*}\\
&\textstyle
\quad\;+\sum_{k\in K}\big(\scal{y_{k,n}-b_{k,n}}{q_{k,n}^*}
+\scal{z_{k,n}-d_{k,n}}{t_{k,n}^*}\\
&\hspace{20mm}+\scal{e_{k,n}}{v_{k,n}^*-e_{k,n}^*}\big);
\end{aligned}\\
\text{if}\;\Delta_n>0\\
\left\lfloor
\begin{array}{l}
\theta_n=\lambda_n\Delta_n/
\big(\sum_{i\in I}\|p_{i,n}^*\|^2\!+\!\sum_{k\in K}\big(
\|q_{k,n}^*\|^2\!+\!\|t_{k,n}^*\|^2\!+\!\|e_{k,n}\|^2\big)\big);\\
\text{for every}\;i\in I\\
\left\lfloor
\begin{array}{l}
x_{i,n+1}=x_{i,n}-\theta_np_{i,n}^*;
\end{array}
\right.\\
\text{for every}\;k\in K\\
\left\lfloor
\begin{array}{l}
y_{k,n+1}=y_{k,n}-\theta_nq_{k,n}^*;\;
z_{k,n+1}=z_{k,n}-\theta_nt_{k,n}^*;\\
v_{k,n+1}^*=v_{k,n}^*-\theta_ne_{k,n};
\end{array}
\right.\\[1mm]
\end{array}
\right.\\
\text{else}\\
\left\lfloor
\begin{array}{l}
\text{for every}\;i\in I\\
\left\lfloor
\begin{array}{l}
x_{i,n+1}=x_{i,n};
\end{array}
\right.\\
\text{for every}\;k\in K\\
\left\lfloor
\begin{array}{l}
y_{k,n+1}=y_{k,n};\;z_{k,n+1}=z_{k,n};\;v_{k,n+1}^*=v_{k,n}^*.
\end{array}
\right.\\[1mm]
\end{array}
\right.\\[9mm]
\end{array}
\right.
\end{array}
\end{align}
\end{algorithm}

\subsection{Convergence}
The convergence properties of Algorithm~\ref{algo:20} are 
laid out in the following theorem.

\begin{theorem}[{\protect{\cite[Theorem~1(iv)]{Moor22}}}]
\label{t:20}
Consider the setting of Problem~\ref{prob:20} and
Algorithm~\ref{algo:20}, and suppose that the saddle operator
$\sad$ of \eqref{e:saddle} has zeros. Then, for every $i\in I$,
$(x_{i,n})_{n\in\NN}$ converges weakly to a point $x_i\in\HH_i$
and, for every $k\in K$, $(v^*_{k,n})_{n\in\NN}$ converges weakly
to a point $v^*_k\in\GG_k$. In addition, $(x_i)_{i\in I}$ solves
the primal problem \eqref{e:20p} and $(v^*_k)_{k\in K}$ solves the
dual problem \eqref{e:20d}.
\end{theorem}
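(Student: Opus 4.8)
The plan is to realize Algorithm~\ref{algo:20} as an asynchronous block-iterative instance of the graph-based cutting scheme of Theorem~\ref{t:1c}, carried out in the product space $\XXX=\HHH\oplus\GGG\oplus\GGG\oplus\GGG$ furnished by the embedding $(\XXX,\sad,\TTT)$ of Example~\ref{ex:f8}. First I would decompose the saddle operator of \eqref{e:saddle} as $\sad=\boldsymbol{\mathsf{W}}+\boldsymbol{\mathsf{C}}$, where $\boldsymbol{\mathsf{C}}$ is the operator of \eqref{e:2796} and $\boldsymbol{\mathsf{W}}=\sad-\boldsymbol{\mathsf{C}}$ gathers the maximally monotone components $(A_i)_{i\in I}$, $(B_k^{\MM})_{k\in K}$, $(D_k^{\MM})_{k\in K}$ together with the monotone Lipschitzian terms $(Q_i)_{i\in I}$, $(B_k^{\LL})_{k\in K}$, $(D_k^{\LL})_{k\in K}$, $\boldsymbol{R}$, and the skew linear coupling induced by the operators $(L_{ki})$. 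Since this second group is monotone, Lipschitzian, and everywhere defined, Lemma~\ref{l:11}\ref{l:11i} shows $\boldsymbol{\mathsf{W}}$ to be maximally monotone, while Example~\ref{ex:13} and the block-diagonal structure give that $\boldsymbol{\mathsf{C}}$ is $\alpha$-cocoercive with $\alpha=\min\{\alpha_i^{\CC},\beta_k^{\CC},\delta_k^{\CC}\}$. By Lemma~\ref{l:sad}\ref{l:sadi}--\ref{l:sadii}, $Z=\zer\sad=\zer(\boldsymbol{\mathsf{W}}+\boldsymbol{\mathsf{C}})$ is closed, convex, and nonempty by hypothesis.

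The crux of the reduction is to recognize that, at each iteration, the quantities assembled in \eqref{e:long1} constitute a single instance of the cut of \eqref{e:fejer19}. The resolvent steps producing $(a_{i,n})_{i\in I}$, $(b_{k,n})_{k\in K}$, $(d_{k,n})_{k\in K}$, followed by the half-forward evaluations of $Q_i$, $B_k^{\LL}$, $D_k^{\LL}$ and of the coupling through $(L_{ki})$ and $\boldsymbol{R}$, amount to computing a point $(\boldsymbol{w}_n,\boldsymbol{w}_n^*)\in\gra\boldsymbol{\mathsf{W}}$ (in the spirit of the warped resolvents of Lemma~\ref{l:warp} used for the forward-backward-forward and forward-backward-half-forward methods), while the delayed iterates $x_{i,\pi_i(n)}$, $y_{k,\omega_k(n)}$, $z_{k,\omega_k(n)}$ serve as the point $\boldsymbol{q}_n$ at which $\boldsymbol{\mathsf{C}}$ is evaluated, so that $\sum_{i\in I}\xi_{i,n}+\sum_{k\in K}\eta_{k,n}=\|\boldsymbol{w}_n-\boldsymbol{q}_n\|^2$ and $\Delta_n$ is exactly the scalar $\delta_n$. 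Reproducing the estimate \eqref{e:c35}--\eqref{e:c36} from the proof of Theorem~\ref{t:1c} in $\XXX$ then yields $\scal{\boldsymbol{\xi}-\boldsymbol{w}_n}{\boldsymbol{w}_n^*+\boldsymbol{\mathsf{C}}\boldsymbol{q}_n}\leq\|\boldsymbol{w}_n-\boldsymbol{q}_n\|^2/(4\alpha)$ for every $\boldsymbol{\xi}\in Z$, so the associated half-space $\boldsymbol{\mathsf{H}}_n$ contains $Z$, and the move along $(p_{i,n}^*,q_{k,n}^*,t_{k,n}^*,e_{k,n})$ weighted by $\theta_n$ is the relaxed projection of the current iterate onto $\boldsymbol{\mathsf{H}}_n$. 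Theorem~\ref{t:1c}\ref{t:1ci}--\ref{t:1cii} then delivers boundedness of the iterates and the summability underpinning asymptotic regularity.

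The delicate point, and the one I expect to be the main obstacle, is to upgrade this per-iteration Fej\'er estimate to the demiclosedness conditions required by Theorem~\ref{t:1c}\ref{t:1ciii}, namely $\boldsymbol{w}_n-\boldsymbol{x}_n\weakly\boldsymbol{0}$, $\boldsymbol{w}_n-\boldsymbol{q}_n\to\boldsymbol{0}$, and the dual residual tending to $\boldsymbol{0}$, in the simultaneous presence of partial block activation and bounded communication delays. Here I would exploit Assumption~\ref{a:01} (every index refreshed within any window of $R$ consecutive iterations) and Assumption~\ref{a:02} (delays bounded by $T$) through a telescoping argument over such finite windows: combined with the boundedness of the iterates and with the boundedness of the parameter sequences $(\gamma_{i,n})$, $(\mu_{k,n})$, $(\rho_{k,n})$, $(\sigma_{k,n})$ away from $0$ and $\pinf$, this shows that the stale coordinates carried over for inactive blocks cannot obstruct the vanishing of the active residuals, so the coordinatewise convergences propagate to all of $I$ and $K$. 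Once these hold, the weak--strong sequential closedness of $\gra(\boldsymbol{\mathsf{W}}+\boldsymbol{\mathsf{C}})$ (Lemma~\ref{l:12}) identifies every weak cluster point of $(\boldsymbol{x}_n,\boldsymbol{y}_n,\boldsymbol{z}_n,\boldsymbol{v}^*_n)$ as a zero of $\sad$, and Theorem~\ref{t:1c}\ref{t:1ciii} yields weak convergence of the whole sequence to one such zero. Finally, Lemma~\ref{l:sad}\ref{l:sadiii} and the weak continuity of the coordinate projections give that the weak limits $(x_i)_{i\in I}$ and $(v^*_k)_{k\in K}$ solve the primal problem \eqref{e:20p} and the dual problem \eqref{e:20d}, respectively.
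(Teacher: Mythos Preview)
Your proposal is correct and follows precisely the route the paper lays out: the paper explicitly introduces the decomposition $\sad=\boldsymbol{\mathsf{W}}+\boldsymbol{\mathsf{C}}$ with $\boldsymbol{\mathsf{C}}$ as in \eqref{e:2796}, states that Algorithm~\ref{algo:20} is an implementation of the graph-based cutting scheme \eqref{e:fejer19} of Theorem~\ref{t:1c}, and then simply cites \cite[Theorem~1(iv)]{Moor22} for the details rather than giving a self-contained proof. Your identification of the block/delay bookkeeping under Assumptions~\ref{a:01}--\ref{a:02} as the nontrivial part, and your use of Lemma~\ref{l:sad}\ref{l:sadiii} for the final primal-dual conclusion, match the intended argument.
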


\begin{remark}
\label{r:20}
The strongly convergent variant of Theorem~\ref{t:20} based on
Theorem~\ref{t:2c} is proposed in \cite[Theorem~2(iv)]{Moor22}.
\end{remark}

\begin{remark}
\label{r:21}
A fact that has not be appreciated previously is that 
Theorem~\ref{t:20} contains as special cases various weak
convergence results of Sections~\ref{sec:fbf}--\ref{sec:fb}. Thus,
suppose that
\begin{equation}
\label{e:59}
I=K=\{1\},\;R_1=0,\;\text{and}\;\;L_{11}=0.
\end{equation}
Then Problem~\ref{prob:20} reduces to finding a zero of 
$A_1+C_1+Q_1$ (see \eqref{e:n4}), \eqref{e:long1} reduces to the 
forward-backward-half-forward
algorithm \eqref{e:half}, and Theorem~\ref{t:20} reduces to 
Proposition~\ref{p:22}. This covers both the 
forward-backward-forward algorithm \eqref{e:fbf} for $C_1=0$
(Theorem~\ref{t:6}) and the unrelaxed forward-backward algorithm 
\eqref{e:fb} for $Q_1=0$ (Theorem~\ref{t:5}).
In a similar fashion, we can recover the multivariate
forward-backward-forward algorithm of \cite{Siop13} 
by choosing
\begin{equation}
\label{e:459}
(\forall i\in I)(\forall k\in K)\;\;C_i=R_i=0
\;\;\text{and}\;\;
B_k^{\CC}=B_k^{\LL}=D_k^{\CC}=D_k^{\LL}=0.
\end{equation}
Going back to the simple inclusion problem 
\eqref{e:n4}, Theorem~\ref{t:20} offers several other
possibilities, for instance by implementing it with 
\begin{multline}
I=K=\{1\},
A_1=A,\; 
R_1=C_1=Q_1=0,\; 
L_{11}=\Id,\\ 
B_1^{\MM}=0\;,
B_1^{\CC}=C,\;B_1^{\LL}=Q, \;\text{and}\;
D_1^{\MM}=D_1^{\CC}=D_1^{\LL}=\{0\}^{-1}.
\end{multline}
\end{remark}

As mentioned earlier, Problem~\ref{prob:20} encompasses all the
problems discussed earlier. Theorem~\ref{t:20} can therefore be
used to provide alternative algorithms to solve them in an
asynchronous and block-iterative manner, and with
operator-dependent proximal parameters (these features are absent
from the algorithms of Sections~\ref{sec:ppa}--\ref{sec:fb}). Here
is an example.

\begin{example}
\label{ex:x1}
In Problem~\ref{prob:20}, suppose that 
\begin{multline}
\label{e:x1}
I=\{1\},\;K=\{1,\ldots,p\},\;A_1=A, C_1=R_1=0, Q_1=Q,\;\text{and}\;
(\forall k\in K)\\
L_{k1}=L_k, B_k^{\MM}=B_k, B_k^{\CC}=B_k^{\LL}=0,
D_k^{\MM}=D_k, \text{and}\; D_k^{\CC}=D_k^{\LL}=0.
\end{multline}
Then we obtain the primal-dual inclusions
\eqref{e:p18}--\eqref{e:d18} of Proposition~\ref{p:18},
and Theorem~\ref{t:20} furnishes a
flexible alternative to Proposition~\ref{p:18} which, in addition,
places no restriction on the operators $(D_k)_{k\in K}$,
with the algorithm
\begin{align}
\label{e:long2}
\begin{array}{l}
\text{for}\;n=0,1,\ldots\\
\left\lfloor
\begin{array}{l}
l_n^*=Qx_{\pi(n)}
+\sum_{k\in K}L_k^*v_{k,\pi(n)}^*;\\
a_n=J_{\gamma_{\pi(n)}A}\big(
x_{\pi(n)}-\gamma_{\pi(n)}l_n^*\big);\\
a_n^*=\gamma_{\pi(n)}^{-1}(x_{\pi(n)}-a_n)-l_n^*+Qa_n;\\
\text{for every}\;k\in K_n\\
\left\lfloor
\begin{array}{l}
b_{k,n}=J_{\mu_{k,\omega_k(n)}B_k}\big(y_{k,\omega_k(n)}
+\mu_{k,\omega_k(n)}v_{k,\omega_k(n)}^*\big);\\
d_{k,n}=J_{\rho_{k,\omega_k(n)}D_k}\big(z_{k,\omega_k(n)}
+\rho_{k,\omega_k(n)}v_{k,\omega_k(n)}^*\big);\\
e_{k,n}^*=\sigma_{k,\omega_k(n)}\big(L_kx_{\omega_k(n)}
-y_{k,\omega_k(n)}-z_{k,\omega_k(n)}\big)+v_{k,\omega_k(n)}^*
;\\
q_{k,n}^*=\mu_{k,\omega_k(n)}^{-1}(y_{k,\omega_k(n)}-b_{k,n})
+v_{k,\omega_k(n)}^*-e_{k,n}^*;\\
t_{k,n}^*=\rho_{k,\omega_k(n)}^{-1}(z_{k,\omega_k(n)}-d_{k,n})
+v_{k,\omega_k(n)}^*-e_{k,n}^*;\\
\eta_{k,n}=\|b_{k,n}-y_{k,\omega_k(n)}\|^2
+\|d_{k,n}-z_{k,\omega_k(n)}\|^2;\\
e_{k,n}=b_{k,n}+d_{k,n}-L_ka_n;
\end{array}
\right.\\
\text{for every}\;k\in K\smallsetminus K_n\\
\left\lfloor
\begin{array}{l}
b_{k,n}=b_{k,n-1};\;
d_{k,n}=d_{k,n-1};\;
e_{k,n}^*=e_{k,n-1}^*;\\
q_{k,n}^*=q_{k,n-1}^*;\;
t_{k,n}^*=t_{k,n-1}^*;\;\eta_{k,n}=\eta_{k,n-1};\\
e_{k,n}=b_{k,n}+d_{k,n}-L_{k}a_{n};
\end{array}
\right.\\
p_{n}^*=a_{n}^*+\sum_{k\in K}L_{k}^*e_{k,n}^*;\\
\begin{aligned}
\Delta_n&=\textstyle
{-}(4\alpha)^{-1}\big(\|a_n-x_{\pi(n)}\|^2
+\sum_{k\in K}\eta_{k,n}\big)
+\scal{x_n-a_n}{p_n^*}\\
&\textstyle
\quad\;+\sum_{k\in K}\big(\scal{y_{k,n}-b_{k,n}}{q_{k,n}^*}
+\scal{z_{k,n}-d_{k,n}}{t_{k,n}^*}\\
&\hspace{20mm}+\scal{e_{k,n}}{v_{k,n}^*-e_{k,n}^*}\big);
\end{aligned}\\
\text{if}\;\Delta_n>0\\
\left\lfloor
\begin{array}{l}
\theta_n=\lambda_n\Delta_n/
\big(\|p_n^*\|^2+\sum_{k\in K}\big(
\|q_{k,n}^*\|^2+\|t_{k,n}^*\|^2+\|e_{k,n}\|^2\big)\big);\\
x_{n+1}=x_n-\theta_np_{n}^*;\\
\text{for every}\;k\in K\\
\left\lfloor
\begin{array}{l}
y_{k,n+1}=y_{k,n}-\theta_nq_{k,n}^*;\;
z_{k,n+1}=z_{k,n}-\theta_nt_{k,n}^*;\\
v_{k,n+1}^*=v_{k,n}^*-\theta_ne_{k,n};
\end{array}
\right.\\[1mm]
\end{array}
\right.\\
\text{else}\\
\left\lfloor
\begin{array}{l}
x_{n+1}=x_{n};\\
\text{for every}\;k\in K\\
\left\lfloor
\begin{array}{l}
y_{k,n+1}=y_{k,n};\;z_{k,n+1}=z_{k,n};\;v_{k,n+1}^*=v_{k,n}^*.
\end{array}
\right.\\[1mm]
\end{array}
\right.\\[1mm]
\end{array}
\right.
\end{array}
\end{align}
\end{example}

\begin{remark}
In the same vein as Example~\ref{ex:x1}, we can solve the 
primal-dual inclusions \eqref{e:bap}--\eqref{e:bad} of
Proposition~\ref{p:21} via Theorem~\ref{t:20} by making the
modifications $C_1=C$ and $Q_1=0$ in \eqref{e:x1}.
\end{remark}

\section{Extensions and variants}
\label{sec:var}

The following flowchart summarizes the articulation of the main
splitting methods presented in the previous sections (a similar
flowchart can be drawn for the chain of strong convergence results
starting with the Haugazeau principle of Theorem~\ref{t:2}, 
then Theorem~\ref{t:2c}, etc.).
\begin{align}
\label{e:j1}
&\bullet\;
\text{Cutting plane Fej\'er principle (Theorem~\ref{t:1})}
\nonumber\\
&\qquad\Downarrow\nonumber\\
&\qquad\bullet\;\text{Graph-based cuts (Theorem~\ref{t:1c})}
\nonumber\\
&\qquad\qquad\bullet\;
\text{Section~\ref{sec:ps} (Block-iterative Kuhn--Tucker 
projective splitting)}
\nonumber\\
&\qquad\qquad\bullet\;
\text{Section~\ref{sec:sad} (Block-iterative saddle 
projective splitting)}
\nonumber\\
&\qquad\qquad\bullet\;
\text{Warped resolvent splitting (Theorem~\ref{t:8})}
\nonumber\\
&\qquad\qquad\qquad\Downarrow\nonumber\\
&\qquad\qquad\qquad\bullet\;
\text{Section~\ref{sec:ppa} (Proximal point algorithm)}
\nonumber\\
&\qquad\qquad\qquad\bullet\;
\text{Section~\ref{sec:dr} (Douglas--Rachford splitting)}
\nonumber\\
&\qquad\qquad\qquad\bullet\;
\text{Section~\ref{sec:fbf} (Forward-backward-forward splitting)}
\nonumber\\
&\qquad\qquad\qquad\bullet\;
\text{Section~\ref{sec:fb} (Forward-backward splitting).}
\end{align}
This flowchart suggests that any extension or variant of the main
theorems of Section~\ref{sec:4} (Theorems~\ref{t:1}, \ref{t:1c},
and \ref{t:8}) will lead to further splitting methods or, at least,
different implementations of them. We discuss some of the possible
variations on the basic geometric principles we have employed.

The basic operating principle of Theorem~\ref{t:1} is 
Fej\'er-monotonicity, i.e., its property \ref{t:1i}. There are
extensions of this notion which preserve the main weak convergence
conclusions. For instance the notion of quasi-Fej\'er monotonicity,
introduced in \cite{Ermo68} and studied in detail in 
\cite{Else01}, requires that there exist a summable sequence 
$(\varepsilon_n)_{n\in\NN}$ in $\RP$ such that
\begin{equation}
\label{e:qf}
(\forall z\in Z)(\forall n\in\NN)\quad
\|x_{n+1}-z\|^2\leq\|x_n-z\|^2+\varepsilon_n.
\end{equation}
It follows from \cite[Section~3]{Else01} that Theorem~\ref{t:1}
remains valid if, for some sequence $(e_n)_{n\in\NN}$ in $\HH$
such that $\sum_{n\in\NN}\lambda_n\|e_n\|<\pinf$, we use an
approximate projection $p_n=\proj_{H_n}x_n+e_n$ in \eqref{e:5}
(see also \cite{Siop15} for a stochastic version of
this result that allows for random iteration modeling). 
This summable error framework can be propagated in \eqref{e:j1} to
recover approximate implementation results from 
\cite{Botr13,Opti04,Siop13,Svva12,Cond13,Roc76a,Bang13}. 
Variable metric quasi-Fej\'er-monotonicity is an extension of
\eqref{e:qf} described by
\begin{equation}
\label{e:qf2}
(\forall z\in Z)(\forall n\in\NN)\quad
\|x_{n+1}-z\|_{U_{n+1}}^2\leq\|x_n-z\|_{U_{n}}^2+\varepsilon_n,
\end{equation}
where $(U_n)_{n\in\NN}$ is a sequence of strongly monotone
operators in $\BL(\HH)$ satisfying certain properties 
\cite{Nonl13}. It follows from \cite[Theorem~3.3]{Nonl13} that
the conclusions of Theorem~\ref{t:1} remain valid in this setting,
which amounts to changing the metric of $\HH$ at each iteration.
See \cite{Chen97,Opti14} for applications to forward-backward
splitting, \cite{Rock24} for applications to multiplier methods,
and \cite{Ragu15} for considerations on the choice of the variable
metrics. All the results derived from Theorem~\ref{t:1} can be
revisited in this variable-metric context. Another extension of
\eqref{e:qf} of interest is the multi-step
quasi-Fej\'er-monotonicity notion
\begin{equation}
(\forall z\in Z)(\forall n\in\NN)\quad
\|x_{n+1}-x\|^2\leq\Sum_{j=0}^{n}\mu_{n,j}\|x_j-x\|^2
+\varepsilon_n
\end{equation}
of \cite[Lemma~2.2]{Anon21}, where  
$(\mu_{n,j})_{n\in\NN,0\leq j\leq n}$ is an array in
$\RP$ satisfying certain properties. This setting led to
deterministic block-iterative implementations of the
forward-backward algorithm \cite[Proposition~4.9]{Anon21} in the
spirit of methods found in \cite{Mish20,Mokh18} in the minimization
case.

The hybrid proximal-extragradient/projection methods of 
\cite{Solo04,Sol99b,Sol99c,Solo01} revolve around a variant of 
Proposition~\ref{p:1} in which, at iteration $n$, $(m_n,m_n^*)$ is
merely required to be in the graph of a perturbed version of $M$,
which permits us to recover certain iterative methods beyond the
proximal point algorithm. See also \cite{Svai14} for more recent 
work along these lines, where approximate resolvents are used to
recover an instance of the forward-backward algorithm.

As is apparent from \eqref{e:j1}, many convergence results we have
discussed follow from Theorem~\ref{t:8}. We now present a 
perturbed extension of it in which, at iteration $n$, the warped
resolvent is applied at a point $\widetilde{x}_n$ and not
necessarily at the current iterate $x_n$. The special case when
$C=0$, $(q_n)_{n\in\NN}=(w_n)_{n\in\NN}$, and conditions
\ref{t:8iid} and \ref{t:8iia} of Theorem~\ref{t:8} are fulfilled
appears in \cite[Theorem~4.2]{Jmaa20}.

\begin{theorem}
\label{t:9}
Let $\alpha\in\RPP$, 
let $W\colon\HH\to 2^{\HH}$ be maximally monotone, 
let $C\colon\HH\to\HH$ be $\alpha$-cocoercive and
such that $Z=\zer(W+C)\neq\emp$, let $x_0\in\HH$, and let
$(\lambda_n)_{n\in\NN}$ be a sequence in $\left]0,2\right[$.
Further, for every $n\in\NN$, let $\widetilde{x}_n\in\HH$ and 
let $U_n\colon\HH\to\HH$ be an operator such that 
$\ran U_n\subset\ran(U_n+W+C)$ and $U_n+W+C$ is injective. Iterate
\begin{equation}
\label{e:fejer9}
\begin{array}{l}
\text{for}\;n=0,1,\ldots\\
\left\lfloor
\begin{array}{l}
w_n=J_{W+C}^{U_n}\widetilde{x}_n\\
w_n^*=U_n\widetilde{x}_n-U_nw_n-Cw_n\\
q_n\in\HH\\
t_n^*=w_n^*+Cq_n\\
\delta_n=\scal{x_n-w_n}{t_n^*}-\|w_n-q_n\|^2/(4\alpha)\\[1mm]
d_n=
\begin{cases}
\dfrac{\delta_n}
{\|t_n^*\|^2}t_n^*,&\text{if}\:\:\delta_n>0;\\
0,&\text{otherwise}\\
\end{cases}\\
x_{n+1}=x_n-\lambda_nd_n.
\end{array} 
\right. 
\end{array} 
\end{equation}
Suppose that $\widetilde{x}_n-x_n\to 0$. Then the conclusions of
Theorem~\ref{t:8} remain valid if the condition 
$U_nw_n-U_nx_n\to 0$ in \ref{t:8iia} is replaced by
$U_nw_n-U_n\widetilde{x}_n\to 0$.
\end{theorem}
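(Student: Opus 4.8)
The plan is to observe that the perturbed recursion \eqref{e:fejer9} remains an instance of the graph-based scheme of Theorem~\ref{t:1c}, so that only the implications establishing its hypotheses need to be revisited. The key point is that the defining identity $w_n^*=U_n\widetilde{x}_n-U_nw_n-Cw_n$ together with $w_n=J_{W+C}^{U_n}\widetilde{x}_n$ and Lemma~\ref{l:warp}\ref{l:warpi} (applied at $\widetilde{x}_n$ rather than at $x_n$) still yields $(w_n,w_n^*)\in\gra W$. Since the subsequent quantities $t_n^*$, $\delta_n$, $d_n$, and the update $x_{n+1}=x_n-\lambda_nd_n$ are computed exactly as in \eqref{e:fejer8}, the sequence $(x_n)_{n\in\NN}$ produced by \eqref{e:fejer9} is governed by Theorem~\ref{t:1c} verbatim. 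Consequently part \ref{t:8i} and the deduction $\|d_n\|\to 0$ under either \ref{t:8iic} or \ref{t:8iid} transfer without change, since their proofs never refer to the point at which the warped resolvent is evaluated.

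First I would treat the direct case, in which the modified \ref{t:8iia} holds. Writing $t_n^*=(U_n\widetilde{x}_n-U_nw_n)+(Cq_n-Cw_n)$ and invoking the $1/\alpha$-Lipschitz continuity of $C$ (from \eqref{e:coco}), I would estimate $\|t_n^*\|\leq\|U_nw_n-U_n\widetilde{x}_n\|+\|w_n-q_n\|/\alpha$, which tends to $0$ under the replaced hypotheses $U_nw_n-U_n\widetilde{x}_n\to 0$ and $w_n-q_n\to 0$. Together with $w_n-x_n\weakly 0$, this places us in the setting of Theorem~\ref{t:1c}\ref{t:1ciii}, giving weak convergence of $(x_n)_{n\in\NN}$ to a point in $Z$.

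The substantive work lies in re-establishing the implication \ref{t:8iib}$\Rightarrow$(modified \ref{t:8iia}). Here I would first note that $(x_n)_{n\in\NN}$ is bounded by Theorem~\ref{t:1c}\ref{t:1ci}, hence so is $(\widetilde{x}_n)_{n\in\NN}$ because $\widetilde{x}_n-x_n\to 0$; the $\beta_2/\beta_1$-Lipschitz bound $\|w_n-z\|=\|J_{W+C}^{U_n}\widetilde{x}_n-J_{W+C}^{U_n}z\|\leq(\beta_2/\beta_1)\|\widetilde{x}_n-z\|$ for $z\in Z$ then bounds $(w_n)_{n\in\NN}$ and, in turn, $(\|t_n^*\|)_{n\in\NN}$. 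Next, decomposing $x_n-w_n=(x_n-\widetilde{x}_n)+(\widetilde{x}_n-w_n)$ in $\delta_n=\scal{x_n-w_n}{t_n^*}-\|w_n-q_n\|^2/(4\alpha)$, I would reproduce the chain of strong-monotonicity and cocoercivity estimates from the proof of Theorem~\ref{t:8}\ref{t:8iib} with $\widetilde{x}_n$ in the role of $x_n$, obtaining a lower bound of the form $\delta_n\geq(\beta_1-1/(4\alpha))\|w_n-q_n\|^2+\|q_n-\widetilde{x}_n\|(\cdots)+\scal{x_n-\widetilde{x}_n}{t_n^*}$. Since $q_n-\widetilde{x}_n\to 0$ (as $q_n-x_n\to 0$ and $\widetilde{x}_n-x_n\to 0$), the bracketed factor is bounded, and the cross term $\scal{x_n-\widetilde{x}_n}{t_n^*}\to 0$ by boundedness of $\|t_n^*\|$; combining this with $\varlimsup\delta_n\leq 0$ forces $w_n-q_n\to 0$. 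From there $w_n-\widetilde{x}_n\to 0$, hence $w_n-x_n\to 0$ and $\|U_nw_n-U_n\widetilde{x}_n\|\leq\beta_2\|w_n-\widetilde{x}_n\|\to 0$, which is precisely the modified \ref{t:8iia}.

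The main obstacle will be the bookkeeping in this last step: rerouting the strong-monotonicity inequality through $\widetilde{x}_n$ and controlling the extra cross term $\scal{x_n-\widetilde{x}_n}{t_n^*}$ requires the uniform bound on $\|t_n^*\|$ to be secured before the estimate is run, which is why establishing boundedness of $(w_n)_{n\in\NN}$ and $(\|t_n^*\|)_{n\in\NN}$ must precede the $\delta_n$ computation. Once the perturbation terms are seen to be negligible in the appropriate sense, the argument collapses onto the original one.
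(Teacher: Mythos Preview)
Your proposal is correct and is precisely the adaptation the paper has in mind: its own proof reads in full ``Adapt the pattern of the proof of Theorem~\ref{t:8},'' and you have carried out exactly that adaptation, replacing $x_n$ by $\widetilde{x}_n$ at the warped-resolvent evaluation, absorbing the resulting cross term $\scal{x_n-\widetilde{x}_n}{t_n^*}$ via the boundedness of $(\|t_n^*\|)_{n\in\NN}$ and the hypothesis $\widetilde{x}_n-x_n\to 0$, and noting that $q_n-\widetilde{x}_n\to 0$ so the strong-monotonicity chain goes through with $\widetilde{x}_n$ in place of $x_n$.
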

\begin{proof}
Adapt the pattern of the proof of Theorem~\ref{t:8}.
\end{proof}

\begin{remark}
\label{r:200}
The auxiliary sequence $(\widetilde{x}_n)_{n\in\NN}$ in 
Theorem~\ref{t:9} adds considerable breadth to the scope 
of the algorithm, compared to that of Theorem~\ref{t:8}. 
Here are some illustrations of the condition
$\widetilde{x}_n-x_n\to 0$, where we assume that
$\inf_{n\in\NN}\lambda_n>0$ and $\sup_{n\in\NN}\lambda_n<2$.
\begin{enumerate}
\item
At iteration $n$, $\widetilde{x}_n$ can model an additive 
perturbation of $x_n$, say $\widetilde{x}_n=x_n+e_n$. Here, the
error sequence $(e_n)_{n\in\NN}$ need only satisfy $\|e_n\|\to 0$
and not the usual summability condition
$\sum_{n\in\NN}\|e_n\|<\pinf$ required in the quasi-Fej\'erian
splitting methods of
\cite{Botr13,Else01,Opti04,Siop13,Svva12,Bang13}.
\item
\label{r:2ii}
In the spirit of inertial methods 
\cite{Atto20,Bec09a,Cham15,Siop17,Poly64}, 
let $(\alpha_n)_{n\in\NN}$ be a sequence in $\RR$ and set
$(\forall n\in\NN\smallsetminus\{0\})$ 
$\widetilde{x}_n=x_n+\alpha_n(x_n-x_{n-1})$. In these methods,
$\alpha_n(x_n-x_{n-1})\to 0$, which guarantees that
$\|\widetilde{x}_n-x_n\|\to 0$, as required.
\item
More generally, weak convergence results can be derived from
Theorem~\ref{t:9} for iterations with memory, that is,
\begin{multline}
(\forall n\in\NN)\quad \widetilde{x}_n=\sum_{j=0}^n\mu_{n,j}x_j,
\quad\text{where}\\
(\mu_{n,j})_{0\leq j\leq n}\in\RR^{n+1}
\;\;\text{and}\;\;\sum_{j=0}^n\mu_{n,j}=1.
\end{multline}
Here we have $\widetilde{x}_n-x_n\to 0$ if
$(1-\mu_{n,n})x_n-\sum_{j=0}^{n-1}\mu_{n,j}x_j\to 0$.
In the case of standard inertial methods, weak convergence requires
more stringent conditions on the weights 
$(\mu_{n,j})_{n\in\NN,0\leq j\leq n}$ \cite{Siop17}.
\item
As indicated in \eqref{e:j1}, Theorem~\ref{t:7} on the Kuhn--Tucker
projective splitting algorithm was derived from 
Proposition~\ref{p:1}, hence from Theorem~\ref{t:1c}, and it
does not appear possible to derive it from Theorem~\ref{t:8}.
However, as shown in \cite[Corollary~4]{Buin22}, Theorem~\ref{t:7}
follows from Theorem~\ref{t:9} (implemented with $C=0$ and
$q_n=w_n$) through a suitable choice of the auxiliary sequence
$(\widetilde{x}_n)_{n\in\NN}$. This last example provides further
confirmation of the effectiveness of warped resolvents.
\end{enumerate}
\end{remark}

\medskip
{\bfseries Acknowledgment.} The author thanks Minh N. B\`ui for 
his careful proofreading of the paper and his suggestions.

\label{lastpage}
\end{document}